\renewcommand\theenumi{\arabic{enumi}}
\DeclareMathAlphabet{\mathcalligra}{T1}{calligra}{m}{n}
\DeclareFontShape{T1}{calligra}{m}{n}{<->s*[1.5]callig15}{}
\newtheorem{theorem}{Theorem}[section]
\newtheorem*{theoremstar}{Theorem}
\newtheorem{lemma}[theorem]{Lemma}
\newtheorem{proposition}[theorem]{Proposition}
\newtheorem{corollary}[theorem]{Corollary}
\newtheorem{conjecture}[theorem]{Conjecture}
\newtheorem*{conjecturestar}{Conjecture}
\theoremstyle{definition}
\newtheorem{definition}[theorem]{Definition}
\newtheorem{example}[theorem]{Example}
\newtheorem{remark}[theorem]{Remark}
\newtheorem{theorem-definition}[theorem]{Theorem-Definition}
\newtheorem{lemma-definition}[theorem]{Lemma-Definition}
\numberwithin{equation}{section}
\renewcommand\part{%
   \if@noskipsec \leavevmode \fi
   \par
   \addvspace{4ex}%
   \@afterindentfalse
   \secdef\@part\@spart}
\def\@part[#1]#2{%
    \ifnum \c@secnumdepth >\m@ne
      \refstepcounter{part}%
      \addcontentsline{toc}{part}{Part \thepart.\hspace{1em}#1}%
    \else
      \addcontentsline{toc}{part}{#1}%
    \fi
    {\parindent \z@ \raggedright
     \interlinepenalty \@M
     \normalfont
     \ifnum \c@secnumdepth >\m@ne
     \centering 
       \large\bfseries \partname\nobreakspace\thepart
       \nobreak. 
     \fi
     \large \bfseries { #2}%
     \par}%
    \nobreak
    \vskip 3ex
    \@afterheading}
\def\@spart#1{%
    {\parindent \z@ \raggedright
     \interlinepenalty \@M
     \normalfont
     \huge \bfseries #1\par}%
     \nobreak
     \vskip 3ex
     \@afterheading}
\renewcommand{\thepart}{\Roman{part}}
\renewcommand{\AA} {\mathbb{A}}
\newcommand{\CC} {\mathbb{C}}
\newcommand{\DD} {\mathbb{D}}
\newcommand{\FF} {\mathbb{F}}
\newcommand{\GG} {\mathbb{G}}
\newcommand{\LL} {\mathbb{L}}
\newcommand{\NN} {\mathbb{N}}
\newcommand{\PP} {\mathbb{P}}
\newcommand{\QQ} {\mathbb{Q}}
\newcommand{\RR} {\mathbb{R}}
\renewcommand{\SS} {\mathbb{S}}
\newcommand{\VV} {\mathbb{V}}
\newcommand{\ZZ} {\mathbb{Z}}
\newcommand {\shA} {\mathcal{A}}
\newcommand {\shB} {\mathcal{B}}
\newcommand {\shC} {\mathcal{C}}
\newcommand {\shD} {\mathcal{D}}
\newcommand {\shE} {\mathcal{E}}
\newcommand {\shF} {\mathcal{F}}
\newcommand {\shK} {\mathcal{K}}
\newcommand {\shL} {\mathcal{L}}
\newcommand {\shN} {\mathcal{N}}
\newcommand {\shQ} {\mathcal{Q}}
\newcommand {\shR} {\mathcal{R}}
\newcommand {\shS} {\mathcal{S}}
\newcommand {\shT} {\mathcal{T}}
\newcommand {\shU} {\mathcal{U}}
\newcommand {\shV} {\mathcal{V}}
\newcommand {\shZ} {\mathcal{Z}}
\newcommand {\sE} {\mathscr{E}}
\newcommand {\sF} {\mathscr{F}}
\newcommand {\sG} {\mathscr{G}}
\newcommand {\sH} {\mathscr{H}}
\newcommand {\sI} {\mathscr{I}}
\newcommand {\sK} {\mathscr{K}}
\newcommand {\sL} {\mathscr{L}}
\newcommand {\sN} {\mathscr{N}}
\newcommand {\sO} {\mathscr{O}}
\newcommand {\sP} {\mathscr{P}}
\newcommand {\sQ} {\mathscr{Q}}
\newcommand {\sT} {\mathscr{T}}
\newcommand {\sV} {\mathscr{V}}
\newcommand {\sW} {\mathscr{W}}
\newcommand {\foh}  {\mathfrak{h}}
\newcommand {\fom}  {\mathfrak{m}}
\newcommand {\fon}  {\mathfrak{n}}
\newcommand {\foL} {\mathfrak{L}}
\newcommand {\foR} {\mathfrak{R}}
\newcommand {\Ann} {\operatorname{Ann}}
\newcommand{\blank}{\underline{\hphantom{A}}}
\newcommand {\codim} {\operatorname{codim}}
\newcommand {\Coh} {\operatorname{Coh}}
\newcommand {\coh} {\operatorname{coh}}
\newcommand {\Coker} {\operatorname{Coker}}
\newcommand {\cone} {\operatorname{cone}}
\newcommand {\D} {\operatorname{D}}
\newcommand {\Ext} {\operatorname{Ext}}
\newcommand{\sExt}{\mathscr{E} \kern -1pt xt}
\newcommand {\Gr} {\operatorname{Gr}}
\newcommand{\Hilb}{\mathrm{Hilb}}
\newcommand {\Hom} {\operatorname{Hom}}
\newcommand {\sHom}{\mathscr{H}\kern-5pt\mathcalligra{om}}
\newcommand {\id} {\operatorname{id}}
\newcommand {\Id} {\operatorname{Id}}
\renewcommand {\Im} {\operatorname{Im}}
\newcommand {\Jac} {\operatorname{Jac}}
\newcommand {\kk} {\Bbbk}
\renewcommand {\ker } {\operatorname{Ker}}
\newcommand {\Ker} {\operatorname{Ker}}
\newcommand {\Pic} {\operatorname{Pic}}
\newcommand {\Proj} {\operatorname{Proj}}
\newcommand {\pr} {\operatorname{pr}}
\newcommand {\rank} {\operatorname{rank}}
\newcommand {\Spec} {\operatorname{Spec}}
\newcommand {\Supp} {\operatorname{Supp}}
\newcommand {\Sym} {\operatorname{Sym}}
\newcommand {\Tor} {\operatorname{Tor}}
\newcommand{\sTor}{\mathscr{T} \kern -3pt or}
\newcommand {\Tot} {\operatorname{Tot}}
\newcommand {\Bl} {\operatorname{Bl}}
\newcommand {\Quot} {\operatorname{Quot}}
\newcommand {\foQuot} {\mathfrak{Quot}}
\newcommand {\bR} {\mathbf{R}}
\newcommand {\Qcoh} {\operatorname{Qcoh}}
\newcommand{\Mod}{\operatorname{Mod}}
\renewcommand {\b} {\mathrm{b}}
\newcommand {\qc} {\mathrm{qc}}
\newcommand{\Perf}{\mathrm{Perf}}
\newcommand{\Db}{\mathrm{D}^{\mathrm{b}}}
\newcommand{\Dqc}{\mathrm{D}_{\mathrm{qc}}}
\newcommand{\Dpc}{\mathrm{D}_{\mathrm{pc}}}
\renewcommand {\SS}{\mathrm{S}}
\newcommand{\bQ}{\mathbf{Q}}  
\newcommand{\RsHom}{\mathbf{R} \sHom}
\newcommand{\RHom}{\mathbf{R} \Hom}
\newcommand{\ihom}{{\underline{\hom}}} 
\newcommand{\shom}{\sHom}
\newcommand{\Fitt}{\operatorname{Fitt}}
\newcommand{\depth}{\operatorname{depth}}
\newcommand{\pdepth}{\operatorname{p.depth}}
\title[]{Derived categories of Quot schemes of locally free quotients}
\author[Q.Y.\ JIANG]{Qingyuan Jiang}
\address{School of Mathematics, University of Edinburgh, James Clerk Maxwell Building, Peter Guthrie Tait Road, Edinburgh EH9 3FD, United Kingdom.}
\email{qingyuan.jiang@ed.ac.uk}
\begin{document}

\begin{abstract} 

This paper studies the derived category of the Quot scheme of rank $d$ locally free quotients of a sheaf $\sG$ of homological dimension $\le 1$ over a scheme $X$. In particular, we propose a conjecture about the structure of its derived category and verify the conjecture in various cases. This framework allows us to relax certain regularity conditions on various known formulae -- such as the ones for blowups (along Koszul-regular centers), Cayley's trick, standard flips, projectivizations, and Grassmannain-flips -- and supplement these formulae with the results on mutations and relative Serre functors. This framework also leads us to many new phenomena such as virtual flips, and structural results for the derived categories of (i) $\Quot_2$ schemes, (ii) flips from partial desingularizations of $\rank \le 2$ degeneracy loci, and (iii) blowups along determinantal subschemes of codimension $\le 4$.
\vspace{-2mm} 
\end{abstract}
\maketitle
\tableofcontents

\section{Introduction} 

This series of papers studies the derived category of the Quot scheme $\Quot_{X,d}(\sG)$ of locally free quotients of an $\sO_X$-module $\sG$ of homological dimension $\le 1$. In this paper:
	\begin{enumerate}[leftmargin=*]
		\item We set up the relevant foundations for this work and sequels, including:
			\begin{enumerate}[leftmargin=*]
				\item A study of the fundamental properties of Quot schemes and sheaves of finite homological dimensions, after Grothendieck;
				\item An investigation of the theory of relative Fourier--Mukai transforms for quasi-compact, quasi-separated schemes, which is of independent interest on its own; 
				\item Lascoux-type resolutions for the images of generators under the correspondences of Quot schemes;
			\end{enumerate}
		\item We propose a semiorthogonal decomposition of the derived category of $\Quot_{X,d}(\sG)$ in terms of the derived categories of the Quot schemes $\Quot_{X, d_-}(\sK)$ of its ``dual" $\sK = \sExt^1(\sG, \sO_X)$, and verify this proposal in various cases. 
	\end{enumerate}

In particular, this paper focuses on the cases when the Fourier--Mukai kernels are given by {\em vector bundles}; The sequels study the general case based on the results of this paper.

\subsection{Quot schemes of locally free quotients}
Let $X$ be a scheme, let $\sG$ be a quasi-coherent $\sO_X$-module, and let $d\ge0$ be an integer. The {\em (relative) Quot scheme $\Quot_{X,d}(\sG)$ of locally free quotients of $\sG$ over $X$}, introduced by Grothendieck (see \cite{Gro, EGAI, AK, Nit}), parametrizes rank $d$ locally free quotients of $\sG$; see Def. \ref{def:Quot} for the precise definition. We will simply call them {\em Quot schemes} in this paper \footnote{This paper reserves the name ``Grassmannian" and the notation ``$\Gr_d(\sE)$" for the (usual) Grassmannian bundles of rank-$d$ {\em subbundles} of a {\em locally free sheaf} $\sE$. In particular, if $\sE$ is locally free, $\Gr_d(\sE)=\Quot_d(\sE^\vee)$.}. 
All the fibers of $\pi \colon \Quot_{X,d}(\sG) \to X$ are (usual) Grassmannian varieties (see Rmk. \ref{rmk:Quot:fiber}) but in general of different dimensions. 

 We will investigate the fundamental properties of Quot schemes and their relations with degeneracy loci in \S \ref{sec:Quot_deg}, Part \ref{part:FM}, after Grothendieck. Examples of Quot schemes include projectivizations $\PP(\sG)$, (usual) Grassmannian bundles, blowups along Koszul-regularly immersed centers (Lem. \ref{lem:blowup:univ}), blowups along determinantal ideals (Lem. \ref{lem:Quot=Bldet} and Lem. \ref{lem:Quotfib=Bl}). 


Assume $\sG$ has homological dimension $\le 1$ and rank $\delta$, then the ``interesting part" of information of the derived dual $\sG^\vee =\RsHom(\sG,\sO_X)$ of $\sG$ is encoded by the extension sheaf 
	$$\sK : = \sExt^1_{\sO_X}(\sG, \sO_X),$$
which, under mild assumptions, also has finite homological dimension; see Lem. \ref{lem:hdK}.

Motivated by the philosophy of Orlov \cite{O05}, Kuznetsov and Shinder \cite{KS}, we make the following conjecture based on computations in the Grothendieck ring of varieties in \S \ref{sec:K0}:

\begin{conjecturestar}[The ``Quot formula", Conj. \ref{conj:cat}] Assume that certain Tor-independent condition \eqref{assumption:intro:Tor} holds. Then for each $i \in [0,\min\{d,\delta\}]$, there are Fourier--Mukai kernels
	$$E_{i,\alpha} \in \Perf(\Quot_{X,d-i}(\sK) \times_X \Quot_{X,d}(\sG))$$
parametrised by Young diagrams $\alpha$ inscribed in a box of size $i \times(\delta-i)$ such that the corresponding relative Fourier--Mukai functors $\Phi_{\shE_{i,\alpha}}\colon \D(\Quot_{X,d-i}(\sK) \to  \D(\Quot_{X,d}(\sG))$ are fully faithful. Moreover, these functors induce a semiorthogonal decomposition
	$$ \D(\Quot_{X,d}(\sG)) = \left \langle  \text{$\binom{\delta}{i}$-copies of~~} \D(\Quot_{X,d-i}(\sK) \right \rangle_{i \in [0,\min\{d,\delta\}]}.$$
\end{conjecturestar}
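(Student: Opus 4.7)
The plan is to construct the Fourier--Mukai kernels $E_{i,\alpha}$ explicitly as Schur-functor twists of tautological sheaves on a twin incidence scheme, and then reduce full faithfulness, semiorthogonality, and generation to cohomological computations on relative Quot-scheme fibrations. Concretely, let $Q = \Quot_{X,d}(\sG)$ and $Q_i' = \Quot_{X,d-i}(\sK)$, with their universal quotients $\sG \twoheadrightarrow \sF$ on $Q$ and $\sK \twoheadrightarrow \sF'$ on $Q_i'$. My expectation is that the kernels are supported on a locus $Z_i \subset Q_i' \times_X Q$ parametrising compatible length-$i$ flag refinements of $\sG$ and $\sK$, and take the form $E_{i,\alpha} \simeq \SS^\alpha \sR \otimes \shO_{Z_i}$ up to a fixed Borel--Weil twist, where $\sR$ is the tautological rank-$i$ bundle on $Z_i$ and $\alpha$ ranges over Young diagrams inscribed in the $i \times (\delta-i)$ box. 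The Tor-independence assumption \eqref{assumption:intro:Tor} should be exactly what guarantees that $Z_i$ has the expected codimension and that formation of $E_{i,\alpha}$ commutes with arbitrary base change along $X$.

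Full faithfulness and pairwise semiorthogonality would then be established by computing the convolutions $E_{i,\alpha}^\dual \star E_{j,\beta}$. Unravelling this via the relative Fourier--Mukai formalism for quasi-compact quasi-separated schemes developed earlier in the paper, together with derived base change, each convolution becomes a pushforward along a two-step Quot-scheme fibration over $X$ of a tensor product of Schur functors in the universal sheaves. The Lascoux-type resolutions developed earlier then rewrite these pushforwards as finite complexes whose terms are Schur functors of $\sG$ and $\sK$, after which the required vanishings and the identification of the diagonal term with $\shO_{\Delta_Q}$ reduce fibrewise to Borel--Weil--Bott on partial flag varieties, in the spirit of Kapranov's original argument for Grassmannians. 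The ordering of the decomposition and the precise set of indices $(i,\alpha)$ that produce nonzero diagonal contributions are controlled by the dominance order on the partitions.

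Generation is the final step. Over the locally free locus $U \subset X$ of $\sG$, the sheaf $\sK$ vanishes, $Q|_U \to U$ is a Grassmannian bundle, and the proposed decomposition specialises to Kapranov's relative decomposition, which already accounts for every summand. To propagate across the degeneracy strata I would induct on $d$ using the geometry relating $\Quot_{X,d}(\sG)$ to $\Quot_{X,d-1}(\sG)$ via the universal projectivisation, combined with the analogous relation on the $\sK$-side; at each step the inductive hypothesis together with the semiorthogonalities already established should identify the admissible subcategory generated by the $E_{i,\alpha}$ with the full derived category of $Q$. A cleaner alternative is to argue via a spanning class of structure sheaves of fibres of $Q \to X$ and reduce to the case of an affine base where $\sG$ admits a two-term locally free presentation.

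The main obstacle I anticipate is precisely this generation step: away from the locally free locus of $\sG$, the Quot scheme $Q$ has jumping fibre dimensions and genuine singularities, the kernels $E_{i,\alpha}$ are only perfect rather than honest vector bundles, and every cohomological vanishing and base-change identity must be justified at the derived level, which is why the relative Fourier--Mukai formalism for qcqs schemes is indispensable. This is also why the present paper only verifies the conjecture in cases where the Fourier--Mukai kernels happen to be vector bundles -- blowups along Koszul-regular centres, Cayley's trick, standard flips, projectivisations, Grassmannian flips -- and why the general derived-kernel situation is left to the sequels.
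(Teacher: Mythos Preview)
This statement is a \emph{conjecture}, not a theorem; the paper offers no proof of it in the stated generality. You recognise this yourself in your final paragraph, and that assessment is accurate. The paper verifies only the cases $d\le 2$, $\rank \sG \le 3$, and $\ell \le 2$, explicitly defers the general case to sequels, and records in its ``Updates'' that Toda subsequently proved the conjecture (for smooth quasi-projective $X$ over $\CC$) by categorical wall-crossing and Hall products---a method entirely different from yours.

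That said, your strategic outline is worth comparing with what the paper actually does in its verified cases. Two points of divergence stand out. First, the paper does not introduce an incidence locus $Z_i$ of ``compatible length-$i$ flag refinements''; the kernels live on the plain fiber product $\Quot_{X,d-i}(\sK) \times_X \Quot_{X,d}(\sG)$, and in every case treated they are honest vector bundles---pullbacks of Schur functors $\Sigma^{\alpha^t}\shU_+^\vee$ or line-bundle twists of the structure sheaf---not derived objects on a deeper stratum. Second, neither of your proposed generation strategies (induction on $d$ via the universal projectivisation, or a spanning-class reduction to an affine base) is what the paper uses. In each verified case, generation is established by a direct combinatorial chase: the key lemma produces an explicit Lascoux-type resolution of the image of each generator $\Sigma^\alpha \shQ_-^\vee$ under the correspondence $r_{+\,*}r_-^*$, and one then checks term-by-term, using the mutation calculus for Kapranov's collection on Grassmannians, that these images together with the already-constructed pieces exhaust a generating set for $\Perf(\shZ_+)$. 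It is exactly this combinatorial bookkeeping that becomes unmanageable for general $(d,\delta)$ with vector-bundle kernels, which is why the paper stops where it does and why the sequels switch to derived Schur functors of the tautological two-term complex.
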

We will discuss the Tor-independent condition \eqref{assumption:intro:Tor} in \S \ref{sec:intro:Tor} below. Here and for the rest of the introduction, for a scheme $Y$, the derived category $\D(Y)$ stands for any one of the following categories: (i) the category of perfect complexes $\Perf(Y)$; (ii) the bounded (pseudo)coherent category $\Db(Y)$; (iii) the unbounded quasi-coherent category $\Dqc(Y)$; see \S \ref{sec:generalities:derived}.

The ``Quot formula" unifies various known formulae such as the formulae for Grassmannian bundles, blowups, Cayley's tricks and standard flips; see \S \ref{sec:intro:global:known} for more examples. 

The conjecture in the case  $d=1$ is equivalent to the {\em projectivization formula} 
	$$\D(\PP(\sG)) = \big \langle \D(\PP(\sK)),  ~\text{$\delta$-copies of} ~ \D(X)\big \rangle,$$
proved by the author and Leung in \cite{JL18}; see \cite{JL18} or Thm. \ref{thm:projectivization} for more details.

This paper proves the conjecture in the case $d=2$ (see \S \ref{sec:intro:global} or Thm. \ref{thm:d=2}):
		\begin{align*}
			\D(\Quot_{X,2}(\sG)) = \Big \langle \D(\Quot_{X,2}(\sK)), ~\text{$\delta$-copies of} ~ \D(\PP(\sK)), ~  \text{$\binom{\delta}{2}$-copies of} ~ \D(X) \Big \rangle.
		\end{align*}
		
If $\sG$ is {\em locally free}, then Beilinson and Orlov's theorem (resp. Kapranov's theorem) implies that $\D(\PP(\sG))$ (resp. $\D(\Quot_{X,2}(\sG)) =\D(\Gr_2(\sG^\vee))$) is generated by a relative exceptional sequence of length $\delta$ (resp. $\binom{\delta}{2}$). Therefore the above two formulae can be viewed as ``correcting" Beilinson and Orlov's and Kapranov's theorems over the locus where $\sG$ is {\em not locally free}; The ``corrections" are precisely given by the derived categories of Quot schemes of the ``dual" $\sK=\sExt^1_{\sO_X}(\sG, \sO_X)$ who is supported on the non-locally-free locus of $\sG$.

In addition to the case (i) $d=2$ mentioned above, we also verify the conjecture in various other cases, including (ii) $\rank \sG \le 3$ and (iii) $\ell \le 2$, where $\ell = n - d$ if $\sG$ is $n$-generated. See subsection \S \ref{sec:intro:global} below for more results and details.

\begin{remark} In the above conjecture, we do not specify the Fourier--Mukai kernels $E_{i, \alpha}$. In fact, there are at least two natural choices for the kernels:
	\begin{enumerate}[leftmargin=*]
		\item (Vector bundle choices) $E_{i,\alpha}$ are given by natural vector bundles on the fiber products;
		\item (Universal choices) $E_{i, \alpha}$ are given by derived Schur perfect complexes associated to tautological two-term complexes on the fiber products;
	\end{enumerate}
For example, in the projectivization formula \cite{JL18} mentioned above, the last component ``$\text{$\delta$-copies of} ~ \D(X)$" could be induced by either of the following relative exceptional sequences:
	\begin{enumerate}[leftmargin=*]
		\item (Vector bundle choices) $\{\sO_{\PP(\sG)}, \sO_{\PP(\sG)(1)}, \ldots, \sO_{\PP(\sG)(\delta-1)}\}$; Or
		\item (Universal choices) $\{\mathbf{\Lambda}^{\delta-1}L_{\PP(\sG)/X} (\delta-1), \ldots, L_{\PP(\sG)/X}(1), \sO_{\PP(\sG)}\}$, where $L_{\PP(\sG)/X}$ is the relative {\em cotangent complex}, and $\mathbf{\Lambda}^i$ denotes the {\em derived} $i$th exterior power over $\PP(\sG)$.
	\end{enumerate}
This paper focuses on the ``vector bundle choices" for kernels; The sequels focus on the ``universal choices". These two choices are in general different but mutation-equivalent. This means that for all the cases covered in this paper, in particular for all the examples of \S \ref{sec:intro:global} below, the presentations of Fourier--Mukai kernels here would be simpler. 
\end{remark}

\subsection{Local situation and Lascoux-type resolutions} 
In Part \ref{part:local}, we prove our main results in the universal local situation $X= |\Hom_\kk(W,V)|$, where $W$ and $V$ are vector spaces over a field $\kk$ of ranks $m$ and $n$, $m \le n$. Let $\sG$ be the cokernel of the tautological map $\tau \colon W \otimes \sO_X \to V \otimes \sO_X$, then for a given pair of integers $(d_+, d_-)$, the Quot schemes 
	$$\shZ_+ =\Quot_{X, d_+}(\sG) \to \DD_{\ell_+} \subseteq X, \qquad \shZ_- =\Quot_{X, d_-}(\sK) \to \DD_{\ell_-} \subseteq X$$ 
are in general resolutions of the degeneracy loci $\DD_{\ell_\pm} \subset X$ (which are the loci where $\rank \tau \le \ell_{\pm}$, see \S \ref{sec:deg}), where $\ell_+ = n - d_+$, $\ell_- = m - d_-$. These schemes play crucial roles in studying determinantal varieties, see \cite{ACGH, Ful, FP, Wey, Laz04} and the references therein. 

The key technical result for the local case, Lem. \ref{lem:key}, is a {\em Lascoux-type resolution} for the image of each generator of $\Db(\shZ_-)$ in $\Db(\shZ_+)$ via the correspondence 
 	\begin{equation*}
	\begin{tikzcd}[row sep=1 em, column sep = 2.6 em]
			 &\widehat{\shZ} := \shZ_- \times_X \shZ_+ \ar{ld}[swap]{r_-} \ar{rd}{r_+} & \\
			 \shZ_- & & \shZ_+.
	\end{tikzcd}
	\end{equation*}
More precisely, if $\ell_+ \ge \ell_-$, i.e., $\DD_{\ell_+} \supseteq \DD_{\ell_-}$, then for each generator $\Sigma^{\alpha} \shQ_{-}^\vee$ of $\Db(\shZ_-)$, where $\alpha \in B_{\ell_-,d_-}$ is a Young diagram \S \ref{sec:Young_Grassmannian}, there is a resolution:
	$$r_{+\,*} \, r_{-}^*(\Sigma^{\alpha} \shQ_{-}^\vee) = \{0 \to F^{-(\ell_+-\ell_-)d_-} \to \cdots \to F^{-1} \to  F^0  \to 0\} \in \Db(\shZ_+),$$
where $F^p$'s are explicitly given in terms of generators of $\Db(\shZ_+)$ in Lem. \ref{lem:key}.

The above resolution is a far-reaching generalization of the {\em Lascoux resolutions} of determinantal ideals \cite{Lasc}, \cite[\S 6.1]{Wey}. In fact, in the special case when $d_-=0$, $\alpha = (0)$, our complex $F^\bullet$ of Lem. \ref{lem:key} coincide with the Lascoux complex for the ideal sheaf $\sI_{\DD_{\ell_+}}$; see Ex. \ref{ex:Lascoux}. In general, if the Young diagram $\alpha \ne (0)$, the complex $F^\bullet$ still enjoys nice patterns similar to the Lascoux complexes but complicated and ``twisted" by the contributions of $\alpha$. The patterns of these complexes $F^\bullet$ allow us to prove our main results in the local situation.

\begin{remark}
The local Part \ref{part:local} is the only place in this paper where we use the {\em characteristic zero} assumption. It is well-known that the Lascoux complexes \cite{Lasc} depend on characteristics; see Hashimoto's example \cite{Has} for the different behaviours of these complexes when $\kk = \QQ$ and  $\kk = \FF_3$. Hence, as a generalization of Lascoux resolutions, the complex $F^\bullet$ also depends on characteristics. However, when the complex $F^\bullet$ is characteristic-free, for example when it is a Koszul complex, our results on derived categories are also characteristic-free. Furthermore, the combinatorial patterns of the nonzero-terms $F^\bullet$ appear not to depend on characteristics. Hence we do expect analogous statements of Lem. \ref{lem:key} and the main results of this paper to hold in arbitrary characteristics.
\end{remark}

\subsection{From local to global: the theory of relative Fourier--Mukai transforms}
The key to globalizing our local results is the theory of relative Fourier--Mukai transforms, especially Tor-independent base-change \cite{Kuz06, Kuz11} and descent theory \cite{Ela, Shi, BS, BOR, AE}. 

In Part \ref{part:FM}, \S \ref{sec:FM}, we investigate the theory of relative Fourier-Mukai transforms for general {\em quasi-compact, quasi-separated schemes}. Our definition of a relative Fourier-Mukai transform follows the definition of Bergh and Schn{\"u}rer \cite{BS}. Such a framework is vital for our study of Quot schemes in this paper and sequels, where we use the powerful tools of mutation theory, relative Serre functor, base-changes, descent for {\em non-flat} families over general bases.

The possible novelty of our investigation of this part might be the use of $\Hom$-spaces  modified by {\em coherators}. More precisely, for a morphism $f \colon X \to S$ between of quasi-compact, quasi-separated schemes, and $F, G \in \Dqc(X)$, we define the $\Dqc(S)$-valued Hom-space by:
	$$\shom_S(F,G) : = f_* \,  \bQ_X \circ \RsHom_X(F,G) = \bQ_S \circ f_*  \RsHom_X(F,G) \in \Dqc(S),$$	
where $\bQ_X$ (resp. $\bQ_S$) is the {\em coherator} \cite{SGA, TT}, i.e., the right adjoint of $j_{\qc} \colon \Dqc(X) \hookrightarrow D(\sO_X)$ (resp. $j_{\qc} \colon \Dqc(S) \hookrightarrow D(\sO_S)$); see Def. \ref{def:shom}. Then $\Dqc(X)$ is a closed symmetric monoidal category (or an {\em unital algebraic stable homotopy category} in the sense of \cite{HPS}), enriched over $\Dqc(S)$ by $\shom_S(\blank, \blank)$; see Thm. \ref{thm:Lipman}. Here we build on the work of Lipman \cite{Lip}; The idea of using coherators also appeared in \cite{TLRG} in their study of $DQcoh(X)$.

One benefit of using Hom-spaces modified by coherators is that the Grothendieck--Serre duality takes neater forms than usual, see Thm. \ref{thm:Lipman} \eqref{thm:Lipman-2} \eqref{thm:Lipman-3} \eqref{thm:Lipman-4}, while these formulae for the usual sheafified Homs typically require certain boundedness assumptions on $F, G$ or on $f$.
 These features allow us to investigate in the same general framework  the theory of relative Serre duality \S \ref{sec:Serre}, linear categories and base-change \S \ref{sec:bc}, relative Fourier--Mukai transforms, descent theory and convolutions \S \ref{sec:relFM}, \S \ref{sec:relFM:cov}, relative exceptional collections and their mutations \S \ref{sec:relexc}. We also include the discussions on projective bundles \S \ref{sec:proj.bundle} and Grassmannian bundles \S \ref{sec:Grass_bundles}  for completeness and the reader's convenience.

\subsection{Global situation and main results} \label{sec:intro:global} Part \ref{part:global}, which builds on the first two parts \ref{part:FM} and \ref{part:local}, contains our main results in the global situation.

\subsubsection{Tor-independent condition} \label{sec:intro:Tor} We say {\em \eqref{assumption:intro:Tor} holds for a pair of integers $(d_+,d_-)$} if:
\begin{enumerate}[label=$(\dagger)$, ref=$\dagger$]
	\item \label{assumption:intro:Tor} The pair $(\Quot_{X,d_+}(\sG), \Quot_{X,d_-}(\sK))$ is a {\em Tor-independent} base-change from the ``universal situation" (see \S \ref{sec:Tor-ind:Quot:bc}, Def. \ref{def:Tor-ind:quot} for more details). 
\end{enumerate}
This condition is independent of a local presentation (Lem. \ref{lem:Tor-ind-ind}). If $X$ is Cohen--Macaulay (for example, if $X$ is smooth), then the Tor-independent condition \eqref{assumption:intro:Tor} is equivalent to certain {\em expected dimension conditions}; see Lem. \ref{lem:Tor-ind:quot:CM}. 

In the situation of proving a case of Quot formula, we say {\em \eqref{assumption:intro:Tor} holds} if above condition holds for any pair of integers $(d_+,d_-)$ that appears in the desired semiorthogonal decomposition.

\subsubsection{} \label{sec:intro:global:known} On the one hand, the framework of this paper provides a {\em unified} treatment of various known formulae; In these cases, our approach usually leads to a different proof from the existing ones, allows us to relax certain regularity conditions, and  supplements these formulae with the results about mutations and relative Serre functors. These cases include:
	\begin{enumerate}[leftmargin=*]
		\item Orlov's formula \cite[Thm. 4.3]{Orlov92} (see also \cite[Thm. 6.9]{BS} for the stacky case) for blowups along Koszul-regularly immersed centers; see Thm. \ref{thm:blow-up}; 
		\item Orlov's formula {\cite[Prop. 2.10]{Orlov06}, \cite{RT}} for Cayley's trick; see Thm. \ref{thm:Cayley};
		\item The projectivization formula \cite{JL18} of the author and Leung; see Thm. \ref{thm:projectivization};
		\item Pirozhkov's formula \cite{Pi20} for generalized Cayley's trick; see Thm. \ref{thm:gen:Cayley};
		\item The embedding of derived categories for Grassmannian flips \cite{BLV2, BLV3, DS, LX, BCF+}; see Thm.\ref{thm:Grassflips};
		\item Bondal--Orlov's formula for standard flips \cite{BO, Tod2, BLT}; see Thm. \ref{thm:standardflip}.
	\end{enumerate}
Notice that one common feature of these formulae (apart from the one for Grassmannian flips) is that, they provide a complete description of the structure of the derived category of $\Quot_{X,d}(\sG)$ in terms of that of {\em two different} schemes of the form $\Quot_{X,d_-}(\sK)$. In the following, we will prove formulae involving {\em three} (or {\em four}) different schemes.

\subsubsection{} \label{sec:intro:global:new} On the other hand, this approach also leads to various new phenomena. Let us assume for simplicity that $X$ is Cohen--Macaulay over a field $\kk$ of characteristic zero, and Tor-independent condition \eqref{assumption:intro:Tor}   holds in all the following corresponding situations. 

First, we have the following general results (let $\D$ stand for $\Perf$, $\Db$ or $\Dqc$):
	\begin{enumerate}[leftmargin=*]
		\item If $d \le \rank \sG=:\delta$, then  $\{\Sigma^{\alpha^t} \sQ_d \}_{\alpha \in B_{\delta-d,d}^{\preceq}}$ forms a relative exceptional collection of $\D(\Quot_{X,d}(\sG))$, see Prop. \ref {prop:top:bottom} (which also contains a ``dual" description about contributions from bottom strata);
		\item If $\Quot_{X,d}(\sK) \neq \emptyset$, then there exists the {\em virtual flip} phenomenon Thm. \ref{thm:virtualflips}, i.e., a fully faithful embedding $\D(\Quot_{X,d}(\sK)) \hookrightarrow \D(\Quot_{X,d}(\sG))$ for the ``virtual flip $\Quot_{X,d}(\sG) \dashrightarrow \Quot_{X,d}(\sK)$". The virtual flip is closely related to Toda's d-critical flips \cite{Tod2};
	\end{enumerate}	

Secondly, we have the following structural results. 

\subsubsection{$\Quot_2$--formula}  If $d = 2$, and $\delta := \rank \sG \ge 2$, then $\pi \colon \Quot_{X,2}(\sG) \to X$ is a generic Grassmannian $\Gr_2(\delta)$-bundle. For the structure of  $\D(\Quot_{X,2}(\sG))$, we have: 

\begin{theoremstar}[$\Quot_2$--formula, {Thm. \ref{thm:d=2}}] There is a semiorthogonal decomposition:
	\begin{align*}
			\D(\Quot_{X,2}(\sG)) = \Big \langle \D(\Quot_{X,2}(\sK)), ~\text{$\delta$-copies of} ~ \D(\PP(\sK)), ~  \text{$\binom{\delta}{2}$-copies of} ~ \D(X) \Big \rangle
		\end{align*}
(provided that the Tor-independent condition \eqref{assumption:intro:Tor} holds). Here, the last component is induced by a relative exceptional sequence over $X$, and $\D$ stands for $\Perf$, $\Db$ or $\Dqc$. 
\end{theoremstar}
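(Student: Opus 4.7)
The plan is to reduce the assertion to the universal local situation $X_0 = |\Hom_\kk(W,V)|$ of Part \ref{part:local} and then globalize using the relative Fourier--Mukai and descent formalism of Part \ref{part:FM}. By the Tor-independence hypothesis \eqref{assumption:intro:Tor}, each of the fiber products $\Quot_{X,2}(\sG) \times_X \Quot_{X,2-i}(\sK)$ for $i \in \{0,1,2\}$ arises by a Tor-independent base-change from the universal model (with $\sG_0 = \Coker(\tau)$). The base-change/descent theorem for relative Fourier--Mukai kernels developed in \S \ref{sec:bc}--\S \ref{sec:relFM} preserves both fully-faithfulness and the semiorthogonality of components, so it suffices to establish the decomposition in the universal setting.

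Next, I would specify the ``vector bundle'' Fourier--Mukai kernels component by component. The $\binom{\delta}{2}$ copies of $\D(X)$ (the $i=2$ stratum of the Quot conjecture) are realized by the Schur bundles $\{\Sigma^{\alpha^t} \sQ_2\}_{\alpha \in B_{\delta-2,2}}$ on $\Quot_{X,2}(\sG)$, where $\sQ_2$ is the tautological rank-$2$ quotient; this is the top-stratum relative exceptional sequence supplied by Prop. \ref{prop:top:bottom}. The $\delta$ copies of $\D(\PP(\sK))$ (the $i=1$ stratum) and the virtual flip component $\D(\Quot_{X,2}(\sK))$ (the $i=0$ stratum) are embedded via the incidence correspondences $\Quot_{X,1}(\sK) \times_X \Quot_{X,2}(\sG)$ and $\Quot_{X,2}(\sK) \times_X \Quot_{X,2}(\sG)$ respectively, with natural vector bundle kernels indexed by Young diagrams in the appropriate boxes $1 \times (\delta-1)$ and $0 \times \delta$.

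The central computational step, which covers both the fully-faithfulness of each component functor and the Hom-vanishings between distinct components, is to evaluate $r_{+\,*}\, r_-^*$ on the standard generators $\Sigma^{\alpha} \sQ_-^\vee$ of $\Db(\shZ_-)$. This is precisely the content of the Lascoux-type resolution of Lem. \ref{lem:key}: the pushforward is rewritten as a bounded complex of explicit Schur bundles on $\shZ_+$, and each nonvanishing term can be identified, up to mutation, with a generator in the target component. Borel--Weil--Bott on the Grassmannian fibers then produces the required $\Ext$-vanishings. I expect the main obstacle to be the combinatorial bookkeeping inherent in Lem. \ref{lem:key}: one must track precisely which Young diagrams appear in each resolution and verify that no ``forbidden'' Schur bundles arise that would violate the semiorthogonality conditions on the component ordering.

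For generation, one shows that the full subcategory generated by the three components is all of $\D(\Quot_{X,2}(\sG))$. Over the open locus $X \setminus \DD_{\delta-1}$ where $\sG$ is locally free, $\Quot_{X,2}(\sG)$ is a Grassmannian bundle and generation by the top stratum alone is Kapranov's theorem; over the degeneracy loci $\DD_\ell \subset X$, the extra components $\D(\PP(\sK))$ and $\D(\Quot_{X,2}(\sK))$ supply the ``corrections'' predicted by the motivic identity in $K_0(\mathrm{Var}/\kk)$ recorded in \S \ref{sec:K0}. In the universal local case this is verified directly by induction on the rank stratification, using that each component is admissible (by the relative Serre duality framework of \S \ref{sec:Serre}) and that the resulting semiorthogonal collection matches the class of $[\Quot_{X,2}(\sG)]$. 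Descent then transfers the conclusion to the general $X$.
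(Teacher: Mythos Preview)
Your overall architecture matches the paper's: prove the decomposition in the universal local model $X_0=|\Hom_\kk(W,V)|$ (Thm.~\ref{thm:local:d=2}), then globalize via the Tor-independent base-change and fppf descent machinery of \S\ref{sec:univHom}. The identification of the kernels and the use of the Lascoux-type resolutions (Lem.~\ref{lem:key}) to establish fully-faithfulness and semiorthogonality is also correct in outline, though the paper actually routes the inter-stratum vanishings through the specific mutation lemma for $\Gr_2(n)$ (Lem.~\ref{lem:G_2:mut}) rather than raw Borel--Bott--Weil: one shows that twisted Schur bundles such as $\Sigma^{\alpha^t}\shU_+^\vee\otimes\sO_+(t)$ lie in the ``vanishing subcategory'' $\langle\Sigma^\lambda\shQ_+\rangle_{\lambda\in B_{\ell_+,2}\setminus B_{\ell_+-\delta,2}}$, and this reduction is what Lem.~\ref{lem:G_2:mut} provides.

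The genuine gap is your generation argument. Matching the class in $K_0(\mathrm{Var}/\kk)$ is only heuristic: equality of motivic classes does not imply that a semiorthogonal sequence generates the derived category. Likewise, knowing that the top stratum alone generates over the open locus $X\setminus\DD_{\delta-1}$ via Kapranov says nothing about generation on all of $\shZ_+$, since restriction to an open does not detect the orthogonal complement supported over the boundary. The paper's proof of generation (end of Thm.~\ref{thm:local:d=2}) is a direct and rather delicate computation: one shows that the span of the images contains the explicit generating set $\{p_+^*\Sigma^\alpha\shQ_+\otimes\sO_+(-1)\}_{\alpha\in B_{\ell_+,2}}$ of $\Db(\shZ_+)$. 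This requires identifying certain ``staircase'' subcategories $\shC_i\subset\Db(\GG_+)$ contributed by the $\Psi_{\alpha,k}$ functors, then using the twisted key lemma (Lem.~\ref{lem:key:twist}) to show that the $\Phi_i$ images fill in the complementary Young diagrams, and finally invoking Lem.~\ref{lem:G_2:mut} once more to conclude. Your ``induction on the rank stratification'' does not supply this step.
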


In the above formula, one can regard the last component ``$\text{$\binom{\delta}{2}$-copies of} ~\D(X)$" as the ``Grassmannian part" for the generic $\Gr_2(\delta)$-bundle structure of $\pi \colon \Quot_2(\sG) \to X$; the middle component as the ``first-order correction" for the difference between $\pi$ and a genuine $\Gr_2(\delta)$-bundle; and first component $\D(\Quot_{X,2}(\sK))$ as the ``virtual flip part" for the  ``virtual flip $\Quot_{X,2}(\sG) \dashrightarrow \Quot_{X,2}(\sK)$".

\subsubsection{Flips from partial desingularizations of $\rank \le 2$ loci} \label{sec:intro:fliprk2}  Let $\sW$ and $\sV$ be vector bundles over $X$ of rank $m$ and $n$, and let $\sigma \colon \sW \to \sV$ be an $\sO_X$-module map (such that certain Tor-independent condition is verified; see Def. \ref{def:Tor-ind:quot}). Denote $Y_\ell : = D_{\ell}(\sigma)$ the {\em degeneracy locus} \S \ref{sec:deg} of $\sigma$ of $\rank \le \ell$. Set $\sG = \Coker (\sigma)$ and $\sK = \Coker(\sigma^\vee)$. Then the Quot schemes
	$$Y_\ell^+ : = \Quot_{X,n-\ell}(\sG) \to Y_\ell \subseteq X \quad  \text{and} \quad Y_\ell^- : = \Quot_{X, m -\ell}(\sK) \to Y_\ell \subseteq X$$
give rise to two different partial desingularizations of $Y_\ell$. The birational map $Y_1^+ \dashrightarrow Y_1^-$ is a {\em standard flip} considered in \S \ref{sec:standardflip}, and the previous Thm.\ref{thm:standardflip} describes the structure of $\D(Y_1^+)$ in terms of $\D(Y_1^-)$ and $\D(Y_0)$. Regarding the flip $Y_2^+ \dashrightarrow Y_2^-$:

\begin{theoremstar}[Thm. \ref{thm:rk<=2flip}]
There is a semiorthogonal decomposition:
	\begin{align*}
	\D(Y_2^+) = \big \langle  \text{$\binom{\delta}{2}$-copies of} ~ \D(Y_0),  ~\text{$\delta$-copies of} ~ \D(Y_1^-), ~\D(Y_2^-) \big \rangle
	\end{align*}
(provided that \eqref{assumption:intro:Tor} holds; $\D$ stands for $\Perf$, $\Db$, or $\Dqc$ as usual). 
\end{theoremstar}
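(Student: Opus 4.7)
The plan is to recognise the statement as a direct consequence of the main Quot formula (Conj. \ref{conj:cat}) in the case $\ell \le 2$, which is one of the cases verified in this paper. The key observation is that $\sG = \Coker(\sigma)$ is $n$-generated via the tautological surjection $\sV \twoheadrightarrow \sG$, so for $Y_2^+ = \Quot_{X, n-2}(\sG)$ the parameter $\ell = n - d$ equals exactly $2$. Writing $\delta = \rank \sG = n - m$, the $\ell = 2$ case of the Quot formula then supplies, under the hypothesis \eqref{assumption:intro:Tor}, a semiorthogonal decomposition
\begin{align*}
\D(Y_2^+) = \Big\langle \tbinom{\delta}{i}\text{-copies of } \D(\Quot_{X,\, n-2-i}(\sK)) \Big\rangle_{i = 0,\, 1,\, \ldots,\, \min(n-2,\,\delta)}.
\end{align*}

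Next I would eliminate the empty summands. Since $\sK$ is a quotient of $\sW^\vee$, the sheaf $\sK$ has fiber dimension at most $m$ everywhere, hence $\Quot_{X, k}(\sK) = \emptyset$ for $k > m$. So the $i$-th summand vanishes unless $n-2-i \le m$, i.e.\ $i \ge \delta - 2$, leaving only $i \in \{\delta-2,\, \delta-1,\, \delta\}$, with multiplicities $\binom{\delta}{2}$, $\delta$, and $1$. The identification $\Quot_{X, m-\ell}(\sK) = Y_\ell^-$ for $\ell = 1, 2$ is by the definition of $Y_\ell^-$, and a rank-$m$ locally free quotient of $\sK$ can exist only where $\sK$ itself has rank $m$, i.e.\ where $\sigma$ vanishes, so $\Quot_{X, m}(\sK) = Y_0$. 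Substituting yields the decomposition in precisely the stated order.

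The actual work is therefore the verification of the $\ell = 2$ case of the Quot formula, carried out in Part \ref{part:global}. The strategy there is to reduce via Tor-independent base change to the universal local model $X = |\Hom_\kk(W,V)|$, compute the images $r_{+*}\, r_-^*(\Sigma^\alpha \sQ_-^\vee)$ of the generators of $\D(\Quot_{X, n-2-i}(\sK))$ along the correspondence $\widehat{\shZ} = Y_\ell^- \times_X Y_2^+$ using the Lascoux-type resolutions of Lem. \ref{lem:key}, and then invoke the relative mutation and descent formalism of Part \ref{part:FM} to upgrade full faithfulness, mutual semi-orthogonality, and joint generation from the local to the global setting. The chief obstacle is this last step: controlling the Lascoux-type complexes tightly enough (in the $\ell = 2$ regime they are short and explicit, but still longer than in the projectivization case) to establish the semi-orthogonalities between the three families simultaneously, rather than the routine bookkeeping outlined above.
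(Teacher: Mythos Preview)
Your proposal is correct and follows essentially the same path as the paper: the proof of Theorem~\ref{thm:rk<=2flip} is precisely the globalization of the local case Theorem~\ref{thm:local:l=2} via the procedure of \S\ref{subsec:univHom} (Tor-independent base change plus fppf descent), with the local computation resting on the Lascoux-type resolutions of Lemma~\ref{lem:key}, exactly as your final paragraph outlines. Your framing via ``apply the Quot formula in the $\ell=2$ case and eliminate empty summands'' is mildly circular, since this theorem \emph{is} the $\ell=2$ verification rather than a consequence of it, but the substance of the argument you sketch is the paper's own.
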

The flip $Y_2^+ \dashrightarrow Y_2^-$ is {\em stratified} in the sense that the flipping center has two strata $Y_0$ and $Y_1\backslash Y_0$. Hence in the above formula, while the last component $\D(Y_2^-)$ comes from the flip $Y_2^+ \dashrightarrow Y_2^-$, the first and middle components can be regarded as the ``correction terms" of the flip functor by the respective contributions of the two strata of the flipping center.

\subsubsection{Blowups along determinantal subschemes of codimension $\le 4$} Let $X$ be a Cohen--Macaulay scheme, and let $\sG$ be a quasi-coherent $\sO_X$-module of homological dimension $\le 1$ and $\rank \sG = \delta$. Consider the following sequence of degeneracy loci \S \ref{sec:deg}:
	$$Z_3: = X^{\ge \delta+3}(\sG) \quad \subset \quad Z_2 := X^{\ge \delta+2}(\sG) \quad \subset \quad Z = Z_1 := X^{\ge \delta+1}(\sG) \quad \subset \quad X = X^{\ge \delta}(\sG).$$
Then (under Tor-independent condition \eqref{assumption:intro:Tor}) the determinantal subscheme $Z \subset X$ is Cohen--Macaulay and has codimension $\delta+1$. The Quot scheme $\pi \colon \Quot_{X,\delta}(\sG) = \Bl_Z X \to X$ is the blowup of $X$ along $Z$; see Lem. \ref{lem:Quot=Bldet}. $\widetilde{Z}_3 = \Quot_{X,3}(\sK) \to Z_3$, $\widetilde{Z}_2 := \Quot_{X,2}(\sK) \to Z_2$ and $\widetilde{Z} := \PP(\sK) \to Z$ are partial desingularizations of $Z_3$, $Z_2$ and $Z_1=Z$.

In the case $\delta=1$, the author and Leung  \cite{JL18} prove the following blowup formula for blowups along Cohen--Macaulay subschemes of codimension two: 
	\begin{align*}\D(\Bl_Z X) = \langle \D(\widetilde{Z}), ~\D(X) \otimes \sO_{\Bl_Z X}(1)\rangle =  \langle \D(X) , ~\D(\widetilde{Z}) \rangle.
	\end{align*} 
See \cite{JL18} or Cor. \ref{cor:rk=1} for more details. For the cases $\delta=2,3$, we have:

\begin{theoremstar}[Cor. \ref{cor:rk=2} and Cor. \ref{cor:rk=3}] In the above situation (assume \eqref{assumption:intro:Tor} holds, $\D$ stands for $\Perf$, $\Db$, or $\Dqc$), if $\delta=2$, i.e., $\codim_Z(X) = 3$, then there is semiorthogonal decomposition:
	\begin{align*}\D(\Bl_Z X) = \langle  \D(X),  ~\text{2-copies of} ~ \D(\widetilde{Z}), ~\D(\widetilde{Z}_2) \rangle;
	\end{align*}
If $\delta=3$, i.e., $\codim_Z(X) = 4$, then there is semiorthogonal decomposition:
	\begin{align*}
	\D(\Bl_Z X) = \big \langle \D(X), ~\text{3-copies of} ~ \D(\widetilde{Z}), ~\text{3-copies of} ~ \D(\widetilde{Z}_2), ~\D(\widetilde{Z}_3)\big \rangle.
	\end{align*}
\end{theoremstar}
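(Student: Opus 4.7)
The plan is to obtain both corollaries as the $d = \delta$ specialization of the general Quot formula. Since the Quot formula has already been verified earlier in the paper for all the relevant cases (the $\Quot_2$-formula for $d = 2$, and case (ii) of the Introduction, namely $\rank \sG \le 3$), the work in the corollaries themselves is essentially to match the Quot schemes appearing in the conjecture with the geometric players $\Bl_Z X, \widetilde{Z}, \widetilde{Z}_2, \widetilde{Z}_3$ in the statement. The key identification, from Lem. \ref{lem:Quot=Bldet}, is that when $d = \delta = \rank \sG$ the Quot scheme $\Quot_{X, \delta}(\sG)$ is precisely the blowup $\Bl_Z X$ of $X$ along the codimension-$(\delta+1)$ determinantal centre $Z = X^{\ge \delta+1}(\sG)$. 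Simultaneously the ``dual'' Quot schemes appearing in the Quot formula are, by definition, $\Quot_{X,0}(\sK) = X$, $\Quot_{X,1}(\sK) = \PP(\sK) = \widetilde{Z}$, $\Quot_{X,2}(\sK) = \widetilde{Z}_2$, and $\Quot_{X,3}(\sK) = \widetilde{Z}_3$, exactly the list of partial desingularizations in the statement.

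For $\delta = 2$ the result is then the direct image of the $\Quot_2$-formula (Thm. \ref{thm:d=2}) with $d = \delta = 2$, whose three semiorthogonal components (indexed by $i = 0, 1, 2$) substitute to $\D(\widetilde{Z}_2)$, two copies of $\D(\widetilde{Z})$, and $\D(X)$. For $\delta = 3$ one invokes in the same way the $\rank \sG \le 3$ instance of the Quot formula with $d = 3$, yielding four pieces for $i = 0, 1, 2, 3$, namely $\D(\widetilde{Z}_3)$, three copies of $\D(\widetilde{Z}_2)$, three copies of $\D(\widetilde{Z})$, and $\D(X)$. To present the decompositions in the order written in the corollaries (with $\D(X)$ leftmost rather than rightmost) I would mutate the decomposition to its reverse, exactly as is done for the $\delta = 1$ blowup formula displayed just above the theorem, where both orderings are given. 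The Tor-independent hypothesis $(\dagger)$ for each of the pairs $(\Bl_Z X, \widetilde{Z}_j)$ is precisely what is needed to apply the Quot formula in these cases; by Lem. \ref{lem:Tor-ind:quot:CM} and the Cohen--Macaulay assumption on $X$, it translates into the expected codimension conditions $\codim_X Z_k = k(k+\delta)$ together with the analogous dimension conditions on the relevant fibre products.

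The substantive content of the proof is therefore located upstream: it is the verification of the Quot formula in the $d \le 2$ and $\rank \sG \le 3$ regimes, which relies on the Lascoux-type resolutions of Lem. \ref{lem:key} in the universal local situation and their globalization via the relative Fourier--Mukai machinery of Part \ref{part:FM}. Granted those inputs, the present corollaries are bookkeeping, and the only small subtlety is the reordering of the semiorthogonal decomposition to the form stated. The main obstacle I would anticipate, were one trying to push this approach to $\delta \ge 4$, is that the ``bookkeeping'' becomes contingent on the full Quot conjecture rather than on the verified partial cases; but for $\delta \le 3$ everything assembles cleanly.
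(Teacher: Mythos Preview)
Your proposal is correct and matches the paper's approach: both corollaries are obtained by specializing the general $\rank\sG=\delta$ Quot formula (Thm.~\ref{thm:rk=2} for $\delta=2$, Thm.~\ref{thm:rk=3} for $\delta=3$) to $d=\delta$ and invoking Lem.~\ref{lem:Quot=Bldet} to identify $\Quot_{X,\delta}(\sG)$ with $\Bl_Z X$. The only cosmetic difference is that for $\delta=2$ you cite the $\Quot_2$-formula Thm.~\ref{thm:d=2} rather than Thm.~\ref{thm:rk=2}, but the two coincide at $d=\delta=2$; and your observation about mutating to place $\D(X)$ leftmost is exactly the reordering the paper uses (cf.\ the remark after Thm.~\ref{thm:rk=2:local}) to pass from the form in Cor.~\ref{cor:rk=2} to the form stated in the introduction.
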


In fact, we obtain these formulae more generally for all $d$, see \S \ref{sec:rk<=3}, Thms. \ref{thm:rk=1}, \ref{thm:rk=2}, \ref{thm:rk=3}.)

Notice that if we base-change the above formulae to the Zariski open dense subset $X \backslash Z_2$, we recover Orlov's blowup formula Thm. \ref{thm:blow-up} for the blowup of $X \backslash Z_2$ along the local complete intersection subscheme $Z\backslash Z_2$. Therefore these formulae could be viewed as extending Orlov's blowup formula {\em beyond} the local complete intersection locus.

\subsection{Applications}
The results of this paper are immediately applicable to various moduli spaces of objects on curves, surfaces or $K3$ categories.
 
\subsubsection{Linear series on curves}
Let $C$ be a smooth projective curve of genus $g$ over a field $\kk$ of characteristic zero, and let $X = \Pic^{g-1+\delta}(C)$ be the Picard variety, $\delta \ge 0$. Then \cite{JL18} shows that there is a tautological sheaf $\sG$ on $X$ which ``sheafifies" $H^0(C,\sL)^\vee$, $[\sL] \in X$ and has homological dimension $\le 1$; Then $\sK: = \sExt^1(\sG,\sO_X)$ ``sheafifies" $H^1(C,\sL)$ for $[\sL] \in X$, and $\sK \ne 0$ if and only if $0 \le \delta \le g-1$. Furthermore, $\PP(\sG) = \Sym^{g-1+\delta}(C)$ and $\PP(\sK) = \Sym^{g-1-\delta}(C)$ are the symmetric products. In general, we have $\Quot_{X, r+1}(\sG) = G_{g-1+\delta}^r(C)$ and $\Quot_{X,r+1}(\sK) = G_{g-1-\delta}^r(C)$, where $G_k^{r}(C)= \{\text{$g_k^r$'s  on $C$}\}$ is the variety of linear series on $C$ of degree $k$ and dimension $r$ (see \cite[Ch. IV, \S 3]{ACGH}).

In \cite[Cor. 5.11]{Tod2}, Toda shows that for all $\delta \ge 0$, the following holds:
	\begin{align*}
	\D(\Sym^{g-1+\delta} (C))   = \big\langle \D(\Sym^{g-1-\delta} (C)), ~ 
	 \D(\Jac(C))(1), \ldots, \D(\Jac(C))(\delta)\big\rangle,
	\end{align*}
where $\Jac(C)$ denotes the Jacobian variety of $C$. We also provide a different proof of this result using the {\em projectivization formula} in \cite{JL18}; See also \cite{BK19}.

The main results of this paper are directly applicable to this situation; For example, Prop. \ref {prop:top:bottom} and $\Quot_2$-formula Thm. \ref{thm:d=2} immediately imply:

\begin{corollary} Let $C$ be a general curve, $\D$ stand for $\Perf, \Db$ or $\Dqc$. For any $\delta \ge 0$,
	 \begin{enumerate}
	 	\item $\D(G_{g-1-\delta}^{r-i}(C)) \hookrightarrow \D(G_{g-1+\delta}^r(C))$ for $i=0,1,\dots, \min\{r+1, \delta\}$.
		\item If $-1 \le r \le \delta-1$, $\D(G_{g-1+\delta}^r(C))$ contains $\binom{\delta}{r+1}$-copies of $\D(Jac \,C)$, induced by a relative exceptional sequence of $\D(G_{g-1+\delta}^r(C))$ of length $\binom{\delta}{r+1}$ over $\Pic^{g-1+\delta}(C)$;
		\item There is a semiorthogonal decomposition:	
	\begin{align*}
	\D(G_{g-1+\delta}^1(C))  = \Big \langle \D(G_{g-1-\delta}^1(C)), ~\text{$\delta$-copies of}~~\D(\Sym^{g-1-\delta}(C)),~ 
	  \text{$\binom{\delta}{2}$-copies of} ~~\D({\rm Jac}(C)) \Big \rangle.
	\end{align*}
 	\end{enumerate}
 \end{corollary}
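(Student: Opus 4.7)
The plan is to specialise the general structural theorems of this paper — Proposition \ref{prop:top:bottom}, the virtual flip Theorem \ref{thm:virtualflips}, and the $\Quot_2$-formula Theorem \ref{thm:d=2} — to the Brill--Noether setting of \cite{JL18}. Set $X = \Pic^{g-1+\delta}(C)$, which is smooth of dimension $g$ (hence Cohen--Macaulay) and a torsor under $\Jac(C)$, so $\D(X) \simeq \D(\Jac(C))$. Let $\sG$ be the tautological sheaf of \cite{JL18}: it has homological dimension $\le 1$ and, by Riemann--Roch, rank $\delta$. Under the identifications $\Quot_{X,j}(\sG) = G_{g-1+\delta}^{\,j-1}(C)$, $\Quot_{X,j}(\sK) = G_{g-1-\delta}^{\,j-1}(C)$, and $\PP(\sK) = \Sym^{g-1-\delta}(C)$, the three parts of the corollary reduce to direct translations of the theorems just cited.

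The first technical point is to verify the Tor-independent hypothesis \eqref{assumption:intro:Tor} for every pair $(d_+, d_-)$ entering the desired decompositions. Since $X$ is Cohen--Macaulay, Lemma \ref{lem:Tor-ind:quot:CM} reduces this to checking that each degeneracy locus $\DD_\ell \subset X$ — which, under our identifications, is precisely the Brill--Noether locus $W^{s}_{g-1+\delta}(C)$ for the corresponding $s$ — attains its expected codimension. For a \emph{general} curve $C$, this is exactly the classical Brill--Noether theorem (Griffiths--Harris, Gieseker, Fulton--Lazarsfeld): $W^{s}_{k}(C)$ is either empty or of pure dimension $\rho(g,s,k) = g - (s+1)(s+g-k)$. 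Thus \eqref{assumption:intro:Tor} is satisfied in every instance we shall need.

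Granted this, statement (2) is the ``top strata'' part of Proposition \ref{prop:top:bottom}: for $-1 \le r \le \delta - 1$, setting $d := r+1 \in [0,\delta]$ yields a relative exceptional collection $\{\Sigma^{\alpha^t}\sQ_d\}_{\alpha \in B_{\delta-d,d}^{\preceq}}$ of $\D(\Quot_{X,d}(\sG))$ over $X$, of length $\binom{\delta}{d} = \binom{\delta}{r+1}$, which produces that many copies of $\D(X) \simeq \D(\Jac(C))$. Statement (3) is the direct specialisation of the $\Quot_2$-formula, Theorem \ref{thm:d=2}, to $d=2$ (so $r=1$), together with the identifications above. Statement (1) is obtained by combining Theorem \ref{thm:virtualflips} (which handles the $i=0$ case via the virtual-flip embedding $\D(\Quot_{X,r+1}(\sK)) \hookrightarrow \D(\Quot_{X,r+1}(\sG))$) with the ``dual/bottom strata'' statement of Proposition \ref{prop:top:bottom}, which supplies the analogous embedding for each intermediate $i$ up to $\min\{r+1,\delta\}$; translating through $\Quot_{X,r+1-i}(\sK) = G_{g-1-\delta}^{r-i}(C)$ then yields the stated embeddings.

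The main obstacle I expect is the stratum-by-stratum Tor-independent check in step one: each pair $(d_+, d_-)$ appearing in parts (1), (2), (3) generates a distinct expected-codimension condition on a Brill--Noether locus, and it is the \emph{non-excess} form of the Brill--Noether theorem for a general curve (not merely non-emptiness) that is required. A secondary difficulty is extracting the intermediate-$i$ embeddings in (1) uniformly from Proposition \ref{prop:top:bottom}, rather than iterating the virtual-flip theorem through auxiliary Quot schemes. Once dimensional transversality is in hand, however, the rest is a routine transport of the paper's theorems through the dictionary $\Quot \leftrightarrow G^r_k \leftrightarrow \Sym \leftrightarrow \Jac$.
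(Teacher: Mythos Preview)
Your overall strategy---specialise Proposition \ref{prop:top:bottom}, the fully-faithful embedding results, and Theorem \ref{thm:d=2} to the Brill--Noether setup of \cite{JL18}, and verify Tor-independence via the classical Brill--Noether dimension theorem for a general curve---is exactly the paper's approach, and your treatment of statements (2) and (3) is correct.

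There is one misattribution in your argument for statement (1). The ``bottom strata'' part of Proposition \ref{prop:top:bottom} produces embeddings of $\D(Z)$, where $Z = D_0(\sigma)$ is the rank-zero degeneracy locus of a chosen presentation, into $\D(\Quot_{X,d}(\sG))$; it does \emph{not} produce embeddings of $\D(\Quot_{X,d_-}(\sK))$ for intermediate values $0 < d_- < m$. The result you actually need is the Proposition immediately following Proposition \ref{prop:top:bottom} (the globalisation of Lemma \ref{lem:sod:O(i)}, equivalently of Lemma \ref{lem:local:ff}), which asserts that $r_{+\,*}\,r_-^* \colon \D(\Quot_{X,d_-}(\sK)) \to \D(\Quot_{X,d}(\sG))$ is fully faithful for every $d_-$ with $\max\{d-\delta,0\} \le d_- \le d$. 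Setting $d = r+1$ and $d_- = r+1-i$, this covers the entire range $i \in [0,\min\{r+1,\delta\}]$ in one stroke; your separate invocation of Theorem \ref{thm:virtualflips} for $i=0$ then becomes the special case $d_- = d$ of this proposition, and no patching together of top/bottom/virtual pieces is required. With this correction the argument goes through.
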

The phenomena of standard flips Thm.\ref{thm:standardflip}, two-step stratified flips, Thm. \ref{thm:rk<=2flip}, virtual flips Thm. \ref{thm:virtualflips}, blowups along determinantal ideals Cors. \ref{cor:rk=1}, \ref{cor:rk=2}, \ref{cor:rk=3} also occur among these moduli spaces $\{G_{k}^r(C)\}$; we omit the details here.	

\subsubsection{(Nested) Hilbert schemes of points on surface.} Let $S$ be a smooth complex algebraic surface, for any $n \ge 0$, denote $\Hilb_n(S)$ the Hilbert scheme of ideals of $S$ of colength $n$. For any $d \ge 1$, consider the following scheme:
 	$$\Hilb_{n,n+d}^{\dagger}(S) : = \{ (I_n \supset I_{n+d}) \mid I_{n}/I_{n+d} \simeq \kappa(p)^{\oplus d} \text{~ for some $p \in S$}\} \subset \Hilb_n \times \Hilb_{n+d}.$$
(By convention, $\Hilb_{n,n}^{\dagger}(S) = \Hilb_n(S) \times S$, and $\Hilb_{n,n+d}^{\dagger}(S) = \emptyset$ if $d<0$.) If $d=1$, $\Hilb_{n,n+1}^{\dagger}(S) = \Hilb_{n,n+1}(S)$ is the usual (two-step) nested Hilbert scheme, hence $\Hilb_{n,n+d}^{\dagger}(S) $ could be viewed as the {\em $d$-fold (two-step) nested Hilbert scheme}. 

By \cite[\S 5.3]{J20}, the family of schemes $\{ \Hilb_{n,n+d}^{\dagger}(S)\}$ fits into the framework of this paper. In particular, Thm. \ref{thm:rk=1} implies: 
 \begin{corollary}For integers $n,d,k \ge 1$, there is a semiorthogonal decomposition:
 	$$\D(\Hilb_{n,n+d}^{\dagger}(S)) =  \langle \D(\Hilb_{n-d,n}^{\dagger}(S)), \D(\Hilb_{n-d+1,n}^{\dagger}(S)) \rangle.$$
 \end{corollary}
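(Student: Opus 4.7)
The plan is to identify the $d$-fold nested Hilbert schemes with Quot schemes over $X = \Hilb_n(S)$ and then apply the $\delta = 1$ case of the Quot formula, which is Thm.~\ref{thm:rk=1} (equivalently Cor.~\ref{cor:rk=1} in the $d = 1$ case). First, I would invoke the construction of \cite[\S 5.3]{J20}: on $X = \Hilb_n(S)$ there is a natural rank-$1$ sheaf $\sG$ of homological dimension $\le 1$, built from the universal ideal sheaf (essentially by pushing forward a two-term complex that resolves the structure sheaf of the universal zero-dimensional subscheme), such that
$$\Hilb_{n,n+d}^{\dagger}(S) \simeq \Quot_{X,d}(\sG)$$
compatibly with the tautological quotient. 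The same reference identifies the "dual" sheaf $\sK = \sExt^1_{\sO_X}(\sG, \sO_X)$ and gives, for every $d' \ge 0$, an isomorphism $\Quot_{X,d'}(\sK) \simeq \Hilb_{n-d',n}^{\dagger}(S)$, matching the fiberwise duality between extending an ideal by a length-$d'$ quotient and taking a length-$d'$ subideal.

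Next, with these identifications in hand, I would specialize the Quot formula to $\delta = \rank \sG = 1$. For $d \ge 1$ we have $\min\{d,\delta\} = 1$, and the set of Young diagrams inscribed in a box of size $i \times (\delta - i)$ for $i \in \{0,1\}$ is a single point, so the formula collapses to a two-term semiorthogonal decomposition
$$\D(\Quot_{X,d}(\sG)) = \bigl\langle \D(\Quot_{X,d}(\sK)),\ \D(\Quot_{X,d-1}(\sK)) \bigr\rangle.$$
Translating under the identifications of the previous step yields exactly
$$\D(\Hilb_{n,n+d}^{\dagger}(S)) = \bigl\langle \D(\Hilb_{n-d,n}^{\dagger}(S)),\ \D(\Hilb_{n-d+1,n}^{\dagger}(S)) \bigr\rangle,$$
with the convention that $\Hilb_{n-d,n}^{\dagger}(S) = \emptyset$ when $n - d < 0$, in which case the first summand is trivial and the statement is consistent with Thm.~\ref{thm:rk=1}.

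Finally, one must check that the Tor-independence hypothesis \eqref{assumption:intro:Tor} holds for the relevant pair $(\Quot_{X,d}(\sG), \Quot_{X,d-1}(\sK))$. Since $S$ is smooth and $X = \Hilb_n(S)$ is smooth of dimension $2n$ by Fogarty's theorem, Lem.~\ref{lem:Tor-ind:quot:CM} reduces this to a comparison of actual and expected dimensions of the nested Hilbert schemes, which is standard.

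The main obstacle is the identification step: verifying that $\Hilb_{n,n+d}^{\dagger}(S)$ is literally the Quot scheme of rank-$d$ locally free quotients of a single rank-$1$, homological-dimension-$\le 1$ sheaf $\sG$ on $\Hilb_n(S)$, and matching $\sExt^1(\sG, \sO_X)$ with the data cutting out the "dual side" nested Hilbert schemes. This is delicate because the natural universal object over $\Hilb_n(S) \times S$ lives in mixed dimensions and the rank-$1$ reduction uses the $2$-dimensionality of $S$ in an essential way; fortunately, this is precisely the content of \cite[\S 5.3]{J20}, so we can invoke it directly. Once the identifications are granted, the derived-categorical statement is a formal consequence of Thm.~\ref{thm:rk=1}.
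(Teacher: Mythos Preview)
Your approach is essentially the same as the paper's: invoke \cite[\S 5.3]{J20} to identify the $d$-fold nested Hilbert schemes as Quot schemes of a rank-$1$ sheaf of homological dimension $\le 1$, then apply Thm.~\ref{thm:rk=1}. One small correction: the base scheme should be $X = \Hilb_n(S) \times S$, not $\Hilb_n(S)$, since the point $p \in S$ at which $I_n/I_{n+d} \simeq \kappa(p)^{\oplus d}$ is supported is part of the data being parametrized (this is consistent with the paper's convention $\Hilb_{n,n}^{\dagger}(S) = \Hilb_n(S) \times S$, which must equal $\Quot_{X,0}(\sK) = X$).
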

 
If $d=1$, this recovers the formula for usual (two-step) nested Hilbert scheme \cite{BK19, JL18}.

\subsubsection{Other applications} \label{sec:intro:otherapp} The following situations also fit into the framework of ``Quot geometry" considered in this paper: Moduli of stable sheaves on surfaces and their Hecke correspondences \cite{Neg1, Neg, MN}; Brill--Noether theory of moduli of stable sheaves on K3 surfaces \cite{Markman, AT20}, or more generally, Brill--Noether theory of moduli of stable objects in K3 categories \cite{B,BCJ2}; The pair of $\rank$ two Thaddeus moduli spaces \cite{Tha} (when the parameters are large) with their maps to the moduli of rank $2$ vector bundles on curves, considered by \cite{KT}. The results of this paper could be applied verbatim to these situations; we omit the details here in this already long paper.

\subsection{Further directions} The current paper is the first in the series that study the structures of derived categories of Quot schemes. This paper focuses on the cases when essentially no more than three different schemes of the type $\Quot_{X, d_-}(\sK)$ are involved in the formula (except in one case where four are involved), and when the Fourier--Mukai kernels can be given by vector bundles. In the sequels,  we will use the framework of derived algebraic geometry and the theory of derived Schur functor to prove the Quot formula in general.

Next, there are rich algebraic structures among these Quot formulae for different $d$. The Quot formula describes $\D(\Quot_d(\sG))$ in terms of $\D(\Quot_{d-j}(\sK))$, $j=0,\ldots,d$; On the other hand, the categories $\D(\Quot_d(\sG))$ (resp. $\D(\Quot_{d-j}(\sK))$) with different $d$ are connected via flag correspondence schemes. In the flop case $\delta=0$, the algebraic structures among the Quot schemes from the above correspondences are closed related to``flop--flop=twist" phenomena \cite{JL18, ADM, AT, DS}; the case $d=1$ is studied in \cite{JL18}.  In the flip case $\delta \ne 0$, this question is also (or even more) intriguing. For example, in the situation of subsection \S \ref{sec:intro:fliprk2}, Thm. \ref{thm:standardflip} describes $\D(Y_1^+)$ by $\D(Y_1^-)$ and $\D(Y_0)$, and Thm. \ref{thm:rk<=2flip} describes $\D(Y_2^+)$ by $\D(Y_2^-)$, $\D(Y_1^-)$ and  $\D(Y_0)$. Then the question in this case reduces to how Thm. \ref{thm:standardflip} and Thm. \ref{thm:rk<=2flip} intertwine with each other under flag correspondences.
  
Thirdly, the study of degeneracy loci for generic matrices has its counterparts for symmetric and anti-symmetric matrices, see for example \cite{Ful, FP, Wey}.  Similarly, there are parallel theories of the Quot formula of this paper in symmetric and anti-symmetric situations.

Finally, besides the applications mentioned in \S \ref{sec:intro:otherapp}, we also expect close connections between our work and the categorification of Donaldson--Thomas theory studied by \cite{Tod1, Tod2, GT1, GT2} and the geometric categorification studied by \cite{CKL1, CKL2}.

\subsection{Related works} This paper extends the work \cite{JL18} of the author and Leung. The Chow-theoretical counterpart of the current work is studied by the author in \cite{J19,J20}. 

The Quot formula is related to the following previous works: Beilinson's work on projective spaces \cite{Be}; Orlov's formulae for blowups \cite[Thm. 4.3]{Orlov92}  \cite[Thm. 6.9]{BS} and Cayley's trick {\cite[Prop. 2.10]{Orlov06}, \cite{RT}}; Buchweitz, Leuschke and Van den Bergh's work on Grassmannians \cite{BLV} and Grassmannian flips \cite{BLV2,BLV3}; Efimov's work on Grassmannians \cite{Ef}; Bondal--Orlov's formula for standard flops and flips \cite{BO, ADM, Tod2, BLT}. 

Pirozhkov's theorem \cite{Pi20} corresponds to case $m=1$ of Quot formula. The local-to-global strategy of this paper is similar to Pirozhkov's in \cite{Pi20}. 

The embedding of the derived categories under Grassmannian flips is studied in \cite{BLV2, BLV3, DS, LX, BCF+}; In particular, Ballard et al.'s work \cite{BCF+} describes the orthogonal component of the image in terms of matrix factorizations. 

The virtual flip phenomena are closely related the Toda's d-critical flips \cite{Tod1, Tod2, Tod3, Tod4}.

The geometry of Quot schemes is closely related to correspondence spaces for various moduli spaces: the moduli of stable sheaves on surfaces and their Hecke correspondences studied by Maulik and Negu{\c{t}} \cite{Neg1, Neg, MN}; The pair of Thaddeus moduli spaces studied by Thaddeus \cite{Tha}, Koseki and Toda \cite{KT}; The moduli spaces from the Brill--Noether theory of moduli of stable sheaves on K3 surfaces studied by Markman \cite{Markman}, Addington and Takahashi \cite{AT}; The nested Hilbert schemes of points studied by Gholampour and  Thomas \cite{GT1, GT2}; The correspondences in the theory of geometric categorification and Hecke correspondences studied by Cautis, Kamnitzer and Licata \cite{CKL1, CKL2}.

{\em Updates.} 
The Quot formula conjecture of this paper has been proved by Yukinobu Toda \cite{Tod6} (in the case where $X$ is a smooth quasi-projective variety over $\CC$) and by the author \cite{J23} (for general $X$ over $\QQ$). Toda's method \cite{Tod6} is based on techniques of categorical wall-crossing for Donaldson--Thomas theory and categorified Hall products \cite{Tod5}. Our method is based on the theory of derived Grassmannians, flags and derived Schur functors, developed in \cite{J22,J22b}.


\subsection{Organization of the paper} Part \ref{part:FM} is the preliminary part contains two sections. \S \ref{sec:Quot_deg} studies the basic properties of Quot schemes, degeneracy loci and sheaves of finite homological dimensions.  \S \ref{sec:FM}. investigates the theory of relative Fourier--Mukai transforms.

Part \ref{part:local} contains the main results of this paper in the local situation. In \S \ref{sec:Young_Grassmannian}, we review the theory of Grassmannians and study their mutation theory that will be used later. In \S \ref{sec:local}, we prove our main results in the universal local situation.

Part \ref{part:global} contains the main results in the global situation. In \S \ref{sec:univHom}, we discuss the Tor-independent condition and the general procedure to pass from local to global siutations. The rest of \S \ref{sec:global} contains the majority of cases of the main results of this paper. We single out the results on flips and virtual flips in \S \ref{sec:flips}.

Appendix \S \ref{sec:K0} contains the computations in Grothendieck rings of varieties and the conjectures.  Appendix \S \ref{sec:proj.bundle} includes characteristic-free results on projective bundles.

\subsection*{Acknowledgement}  The author would like to thank Arend Bayer for many helpful discussions throughout this work, J{\'a}nos Koll{\'a}r and Mikhail Kapranov for inspiring discussions on Quot schemes, and Richard Thomas for valuable discussions on nested Hilbert schemes, degeneracy loci and many helpful suggestions, Yukinobu Toda for many helpful communications on the derived categories of Quot schemes, and his solution to the conjecture on Quot formula via categorical wall-crossing and Hall algebras. This work was supported by the Engineering and Physical Sciences Research Council [EP/R034826/1].

 \newpage 
\part{Preliminaries on Quot schemes and Fourier--Mukai transforms}  \label{part:FM}
 
\addtocontents{toc}{\vspace{0.5\normalbaselineskip}}	
\section{Quot schemes and degeneracy loci}  \label{sec:Quot_deg}
\noindent \textbf{Notations.} Let $X$ be a scheme, and $\sE$ a quasi-coherent sheaf on $X$. For a point $x \in X$, we denote $\sE_x$ the stalk of $\sE$ at $x$, and $\sE|_x : = \sE_x/\fom_{x} \sE_x = \sE_x \otimes_{\sO_{X,x}} \kappa(x)$ the fiber of $\sE$ over $x$, where $\fom_x$ is the maximal ideal of $\sO_{X,x}$, $\kappa(x)$ denote the residue field of $x$. 
In this section, for a morphism $\phi \colon T \to X$, $\sE_T: = \phi^* \sE$ denotes the {\em underived} pullback of $\sE$.

\subsubsection*{Polynomial depth} For an ideal $I \subset R$ and a $R$-module $M$ such that $I M \ne M$, $\depth_R(I, M)$ is defined as the supremum of the lengths of all $M$-regular sequences contained in $I$. The {\em polynomial depth of $M$ with respect to $I$} is defined by (see e.g. \cite{No,  Gl, IR, AT}): 
	$$\pdepth_R(I; M) : = \lim_{t \to \infty} \depth_{R[x_1,\ldots,x_t]} (I \cdot R[x_1,\ldots, x_t]; M[x_1, \ldots, x_t]).$$
It is often convenient to set $\pdepth_R(I;M) = + \infty$ if $I M = M$.
In the case when $(R,\fom)$ is a local ring, and $I = \fom$, we denote $\pdepth_R(M) = \pdepth_R(\fom; M)$. 

Let $X$ be a scheme, let $\sI \subset \sO_X$ be a finite type quasi-coherent ideal, and let $0 \ne \sE$ be a quasi-coherent $\sO_X$-module. The polynomial depth of $\sE$ with respect to $\sI$ is defined by
	$$\pdepth_X(\sI; \sE) : = \inf_{x \in X}  \pdepth_{\sO_{X,x}} (\sI_x; \sE_x) .$$
If $X=\Spec R$, $\sI = \tilde{I}$ for a finitely generated ideal $I \subset R$, and $\sE = \tilde{M}$ for a $R$-module $M$. Then by \cite[Thm. 7.1.11]{Gl}, $\pdepth_X(\sI; \sE)  = \pdepth_R(I; M)$. This justifies our definition. 

In the case when $\sI_Z$ is the ideal of a closed subscheme $Z \subset X$, and $\sF = \sO_X$, 
	$$\pdepth(Z,X) : = \pdepth_{X}(\sI_Z; \sO_X)$$
is called the polynomial depth of the closed subscheme $Z$ on $X$. Notice that if $\pdepth_X(\sI; \sE)  \ge r$, where $r\ge0$ is an integer, then by definition $ \pdepth_{\sO_{X,x}} (\sI_x; \sE_x) \ge r$ for all $x \in X$.

\subsubsection*{Weak associated points} A point $x$ of a scheme $X$ is called a {\em weak associated point}, denoted by $x \in {\rm WeakAss}(X)$, if there is an element $a \in \sO_{X,x}$ whose annihilator $\Ann_{\sO_{X,x}}(a)$ has radical equal to $\fom_x$ (the maximal ideal of the local ring $\sO_{X,x}$). In particular, any generic point of an irreducible component of $X$ is a weak associated point of $X$. If $X$ is locally noetherian, then weak associated points coincide with associated points: ${\rm WeakAss}(X) = {\rm Ass}(X)$. See \cite{IR}, or \cite[\href{https://stacks.math.columbia.edu/tag/056K}{Tag 056K}]{stacks-project} for more details about weak associated points.

\subsection{Quot schemes of locally free quotients} \label{sec:Quot}

The Quot schemes, introduced by Grothendieck \cite{Gro}, further developed by Mumford and by Altman--Kleiman \cite{AK}, plays an important role in modern algebraic geometry, especially for deformation theory and moduli problems. See Nitsure \cite{Nit} for a nice survey of the construction, and Ciocan-Fontanine and Kapranov's \cite{C-FK} for a generalisation in the setting of derived algebraic geometry. 

In this paper, we will be mainly concerned with Quot schems of {\em locally free quotients}. (Our to-be-defined $\foQuot_{X,d}(\sE)$ is the functor $\foQuot_{\sE/X/X}^{d, \sO_X}$ of \cite{Nit}.)

\begin{definition}\label{def:Quot} Let $X$ be scheme, $\sE$ a quasi-coherent sheaf on $X$, and let $d>0$ be an integer. The contravariant functor $\foQuot_{X, d}(\sE) \colon (\mathbf{Sch}/X)^{\rm op} \to \mathbf{Sets}$ is defined as follows: 
	\begin{itemize}[leftmargin=1 em]
		\item For any $X$-scheme $\phi \colon T \to X$, $\foQuot_{X,d}(\sE)(T)$ is the set of equivalence classes of quotients $q' \colon \sE_T \twoheadrightarrow \sP$, where $\sP$ is locally free of rank $d$; 
		Two quotients $q \colon \sE_T  \twoheadrightarrow \sP$ and $q \colon \sE_T  \twoheadrightarrow \sP'$ are said to be equivalent if $\ker(q) = \ker(q')$.
		\item For any morphism $g \colon T' \to T$ over $X$, $\foQuot_{X,d}(\sE)(g) \colon \foQuot_{X,d}(\sE)(T) \to \foQuot_{X,d}(\sE)(T')$ is the map which sends an epimorphism $\sE_{T} \twoheadrightarrow \sP$ to the pullback $\sE_{T'} \twoheadrightarrow g^*\sP$, where $g^*$ is the usual {\em underived} pullback. (This is well defined since the pullback $g^*$ is right exact.)
	\end{itemize}
\end{definition}
We will write $\Quot_{d}(\sE) = \Quot_{X,d}(\sE)$ if the base scheme $X$ is clear from context. We now summarise Grothendieck's related results in the following theorem:

\begin{theorem}[Grothendieck] \label{thm:Quot} For any scheme $X$ and any quasi-coherent $\sO_X$-module $\sE$, the functor $\foQuot_{X, d}(\sE)$ is representable by a separated $X$-scheme $\pi \colon \Quot_{X,d}(\sE) \to X$. There is a {\em tautological quotient bundle} $\sQ =\sQ_d$ which is locally free of rank $d$, and a {\em tautological quotient} map $\pi^* \sE \twoheadrightarrow \sQ_d$, such that for any $X$-scheme $T \to X$ and any rank $d$ locally free quotient $p_T \colon \sE_T \twoheadrightarrow \sP$, there is a unique map $f \colon T \to \Quot_{X,d}(\sE)$ such that $p_T$ is the pullback of the tautological quotient. If $\sE$ is of finite type (resp. of finite presentation), then $\pi$ is of finite type (resp. of finite presentation). Furthermore, the following holds:
\begin{enumerate}[leftmargin=*]
	\item \label{thm:Quot-1} 
	{\em (The formation of Quot schemes commutes with base change.)} For any map of schemes $g \colon X' \to X$, $\Quot_{X',d}(g^*\sE)$ is canonically identified with $\Quot_{X,d}(\sE)\times_X X'$, with tautological bundle given by the inverse image of $\sQ$.
	\item \label{thm:Quot-2}
	For any epimorphism $\psi \colon \sE \twoheadrightarrow \sE'$ of quasi-coherent $\sO_X$-modules, there is a caonical closed immersion $i_{\psi} \colon \Quot_{X,d}(\sE') \hookrightarrow \Quot_{X,d}(\sE)$ defined by sending the epimorphism $\sE_T \twoheadrightarrow \sP$ to the composition $\sE_T' \twoheadrightarrow \sE_T \twoheadrightarrow \sP$ for all $T \to X$. The tautological bundle of $\Quot_{X,d}(\sE')$ is canonically identified with $i_{\psi} ^* \sQ$.
	\item \label{thm:Quot-3}
	There is a canonical closed immersion, called {\em Pl{\"u}cker embedding}, $\varpi_{\sE} \colon \Quot_{X,d}(\sE) \hookrightarrow \PP(\bigwedge^d \sE)$ defined by sending $\sE \twoheadrightarrow \sQ_d$ to $\wedge^d \sE \twoheadrightarrow \wedge^d \sQ_d$, such that $\varpi_{\sE}^* \sO_{\PP(\bigwedge^d \sE)}(1) \simeq \wedge^d \sQ_d$. We denote the line bundle $\wedge^d \sQ_d$ by $\sO_{\Quot_{X,d}}(1)$. If $\sE$ is of finite presentation, then $\varpi_{\sE}$ is finite presented. For any epimorphism $\psi \colon \sE \twoheadrightarrow \sE'$, the Pl{\"u}cker embeddings $\varpi_{\sE}$, $\varpi_{\sE'}$ commute with the closed immersions defined by $\psi$ in \eqref{thm:Quot-2}.
\end{enumerate} 
\end{theorem}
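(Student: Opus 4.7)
My plan is to establish representability by a two-step reduction: first to an affine base $X = \Spec A$, using that $\foQuot_{X,d}(\sE)$ is a Zariski sheaf (the data of a quotient with locally free kernel of the stated rank glues since underived pullback is right exact and locally free quotients of the prescribed rank glue); second, for affine $X$, to the case where $\sE$ is a finite free $\sO_X$-module, via a presentation of $\sE$ and a colimit argument. The geometric engine throughout is the Plücker embedding of assertion (3): a rank-$d$ locally free quotient $\sE_T \twoheadrightarrow \sP$ determines a line bundle quotient $\bigwedge^d \sE_T \twoheadrightarrow \det \sP$, and the data of $\sP$ can be recovered from $\det \sP$ together with the Plücker (decomposability) relations in $\bigwedge^{d-1}\sE_T \otimes \bigwedge^{d+1}\sE_T$.

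For the classical case $\sE = \sO_X^{\oplus n}$, I would identify $\Quot_{X,d}(\sO_X^{\oplus n})$ with $X \times \Gr(n-d, n)$, built by gluing the standard affine Plücker charts (each parametrising quotients whose kernel is the graph of a $(n-d)\times d$ matrix of functions); separatedness, finite presentation, and the universal property are all manifest on these charts, and the Plücker embedding into $\PP(\bigwedge^d \sO_X^{\oplus n})$ is a closed immersion cut out by the classical quadratic Plücker relations. For $\sE$ finitely generated on affine $X$, choose a surjection $\sO_X^{\oplus n} \twoheadrightarrow \sE$ with kernel $\sN$, and define $\Quot_{X,d}(\sE)$ to be the closed subscheme of $X \times \Gr(n-d, n)$ cut out by the vanishing of the image of $\sN$ in the universal rank-$d$ quotient of $\sO_X^{\oplus n}$; this is a closed condition defined by a finite-type quasi-coherent ideal and manifestly represents the functor. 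For arbitrary quasi-coherent $\sE$, write $\sE = \varinjlim \sE_\alpha$ as the filtered colimit of its finitely generated subsheaves; since a rank-$d$ locally free quotient is locally generated by $d$ sections, each such quotient of $\sE$ factors locally through some $\sE_\alpha$, and one obtains $\Quot_{X,d}(\sE) = \varinjlim \Quot_{X,d}(\sE_\alpha)$ along the closed immersions induced by the transition maps.

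Once representability is in hand, assertions (1)--(3) follow from the universal property. Base change (1) is formal, as underived pullback preserves the locally free quotient condition, so the pullback of the tautological family satisfies the universal property on the base-changed Quot functor. For (2), the epimorphism $\psi \colon \sE \twoheadrightarrow \sE'$ identifies an $\sE'$-quotient with the $\sE$-quotient obtained by composition, so the induced morphism $\Quot_{X,d}(\sE') \to \Quot_{X,d}(\sE)$ is a monomorphism cut out by the closed condition ``$\ker(\psi)$ annihilates the tautological quotient $\sQ_d$'', which defines a quasi-coherent ideal of finite type. For (3), the Pl\"ucker map $\varpi_\sE$ classifies $\bigwedge^d \sE \twoheadrightarrow \bigwedge^d \sQ_d$ and factors through $\Quot_{X,1}(\bigwedge^d \sE) = \PP(\bigwedge^d \sE)$; that it is a closed immersion reduces, via (2) applied to a free presentation, to the corresponding classical statement for the Grassmannian of a free sheaf.

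The main technical obstacle is the closed-immersion property of $\varpi_\sE$ for general quasi-coherent $\sE$, since without finite presentation hypotheses one cannot directly invoke the Noetherian Grassmannian picture. The colimit presentation circumvents this: the Plücker relations live in the universally defined sheaf $\bigwedge^{d-1}\sE \otimes \bigwedge^{d+1}\sE$ regardless of the rank of $\sE$, and they cut out the decomposable locus inside $\PP(\bigwedge^d \sE)$ compatibly along the transition maps $\Quot_{X,d}(\sE_\alpha) \hookrightarrow \Quot_{X,d}(\sE_\beta)$. The finite type (resp.\ finite presentation) assertion then follows by taking the approximating $\sE_\alpha = \sE$ itself and observing that the Plücker ideal is generated by finitely many (resp.\ finitely presented) equations as soon as $\sE$ is.
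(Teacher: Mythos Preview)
The paper's own proof consists solely of citations to \cite{EGAI}, sections 9.7--9.8, so there is no argument to compare against beyond noting that your sketch broadly follows the same strategy as Grothendieck's construction there: Zariski descent to the affine case, the explicit Grassmannian for free modules, and then the closed-immersion trick of assertion~(2) to handle quotients of free modules.

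There is, however, a genuine gap in your reduction for \emph{arbitrary} quasi-coherent $\sE$. You write $\sE = \varinjlim \sE_\alpha$ over its finitely generated \emph{subsheaves} and then assert that one obtains $\Quot_{X,d}(\sE) = \varinjlim \Quot_{X,d}(\sE_\alpha)$ ``along the closed immersions induced by the transition maps.'' But the transition maps $\sE_\alpha \hookrightarrow \sE_\beta$ are monomorphisms, not epimorphisms, so assertion~(2) does not apply and there are no natural morphisms $\Quot_{X,d}(\sE_\alpha) \to \Quot_{X,d}(\sE_\beta)$ in either direction: a quotient of $\sE_\alpha$ does not canonically extend to one of $\sE_\beta$ (you would have to choose how $\sE_\beta/\sE_\alpha$ maps to $\sP$), and a quotient of $\sE_\beta$ need not restrict to a surjection on $\sE_\alpha$. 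Your local factorisation observation is correct but does not produce a diagram of schemes to take a colimit of.

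The fix, as in \cite{EGAI}, is to work instead with free modules of \emph{arbitrary} (possibly infinite) rank: for $\sE = \sO_X^{(I)}$ one constructs $\Quot_{X,d}(\sO_X^{(I)})$ directly by gluing affine charts indexed by $d$-element subsets $J \subset I$ (the chart $U_J$ parametrises quotients on which $\sO_X^J \hookrightarrow \sO_X^{(I)} \twoheadrightarrow \sP$ is an isomorphism), and then for general $\sE$ on affine $X$ one chooses a surjection $\sO_X^{(I)} \twoheadrightarrow \sE$ and applies~(2). Your treatment of the finitely generated case, of assertions (1)--(2), and of the reduction of~(3) to the free case is otherwise sound.
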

\begin{proof} The statement before ``furthermore" part is a combination of \cite[Thm. 9.7.4 \& Prop. 9.7.7]{EGAI}; For the ``furthermore" part, \eqref{thm:Quot-1} is \cite[Prop. 9.7.6]{EGAI}, \eqref{thm:Quot-2} is \cite[Prop. 9.7.8]{EGAI}, \eqref{thm:Quot-3} is \cite[Prop 9.8.3 \& Prop. 9.8.4]{EGAI}. \end{proof}

 \begin{example}[Projectivization] \label{ex:proj}  The {\em projectivization} of $\sE$, denoted by $\PP(\sE) = \PP_X(\sE) :=\Proj \Sym_{\sO_X}^\bullet \sE$, is the Quot scheme of rank $d=1$ locally free quotients: $\PP(\sE) = \Quot_{X,1}(\sE)$. The reason is that, for any $X$-scheme $\phi \colon T \to X$, to give a $X$-morphism $\phi \colon T \to \PP_X(\sE)$ is equivalent to give a line bundle $\sL$ over $T$ together with an epimorphic $\sO_T$-module map $\phi^* \sE \twoheadrightarrow \sL$. If $\sE$ is locally free, we also use the notation $\PP_{\rm sub}(\sE) : = \PP(\sE^\vee)$.
\end{example}

\begin{example}[Grassmannian bundles]\label{ex:Grass} If $\sE$ is locally free of rank $r$ over $X$, then for any integer $1 \le d \le r$, the {\em rank $d$ Grassmannian bundle of $\sE$ over $X$} is defined to be 
	$$\pi \colon \Gr_d(\sE) : = \Gr(\sE^\vee, d) : =  \Quot_{d}(\sE^\vee) \to X,$$
where $\sE^\vee: = \sHom_{\sO_X}(\sE,\sO_X).$ Notice in the case $d=1$, $\Gr_1(\sE) = \PP_{\rm sub}(\sE) := \PP(\sE^\vee)$. The scheme $\Gr_d(\sE)$ by definition parametrises rank $d$ sub-bundles of $\sE$, or equivalently rank $d$ locally free quotients of $\sE^\vee$. Denote by $\pi^* \sE^\vee \twoheadrightarrow \sQ$ the universal quotient for $\Gr_d(\sE) = \Quot_d(\sE^\vee)$, then the vector subbundle $\shU := \sQ^\vee \subseteq \pi^* \sE$ is called the {\em universal subbundle} of $\sE$ of rank $d$, and $\shQ: = \pi^* \sE/ \shU$ is called the {\em universal quotient bundle} of $\sE$ of rank $r - d$. There is thus a {\em tautological  exact sequence} of vector bundles over $\Gr_d(\sE)$:
	$$0 \to \shU \to \pi^* \sE \to \shQ \to 0.$$
Notice that if $X = \Spec \kk$ for a field $\kk$, and $\sE = V$ is a $\kk$-vector space of rank $r$, then the notation $\Gr_d(\sE)$ agrees with the notation $\Gr_d(V) = \Gr_d(r)$ for the {\em (usual) Grassmannian} varieties of $d$-dimension $\kk$-linear subspaces of $V$ of  \cite{Ful}.
\end{example}

\begin{remark}[Fibers of $\Quot_{d}$ are Grassmannians]\label{rmk:Quot:fiber}
Let $K$ be any field, and consider a $K$-point $u \colon \Spec K \to X$ which corresponds to a point $x \in X$ and a field extension $\kappa(x) \to K$ such that $x = u(\Spec K)$. Then it follows from \eqref{thm:Quot-2} that the fiber of $\Quot_{X,d}(\sE) \to X$ over $\Spec K$ is the (usual) Grassmannian variety of $d$-dimension $K$-linear subspaces. More precisely, $\Quot_{X,d}(\sE) \times_X \Spec  K = \Gr_d((\sE|_x \otimes_{\kappa(x)} K)^*)$, where $\sE|_x = \sE_x / \fom_{x} \sE_x$ is the fiber of $\sE$ over $x$, and $(\blank)^* = \Hom_K(\blank, K)$ is taking the dual of $K$-vector spaces. 
\end{remark}

\subsection{Degeneracy loci} \label{sec:deg} The theory of degeneracy loci has been  studied in the context of manifolds and algebraic varieties; see \cite{FP, Ful, Laz04, ACGH}. This theory extends to schemes as follows. Let $R$ be a commutative ring, $\varphi \colon F \to E$ a map of free modules. For $j \ge 0$, define $I_{j}(\varphi)$ to be image of the map $\wedge^j F\otimes \wedge^j E^\vee \to R$ induced by $\wedge^j \varphi \colon \wedge^j F \to \wedge^j E$. If we choose bases for $F$ and $E$, then $\varphi$ is a matrix with coefficients in $R$, and $I_{j}(\varphi)$ is the ideal generated by $j \times j$-minors of $\varphi$. By convention we set $I_j(\varphi) = R$ if $j \le 0$. Let $M$ be a finite type $R$-module, and choose a presentation $F \xrightarrow{\varphi} E \to M \to 0$, where $E$ is a finite free module of rank $n$. Then the {\em $i$-th Fitting ideal} is defined as $\Fitt_i(M) := I_{n - i}(\varphi)$; Fitting lemma \cite[Cor.-Def. 20.4]{Ei} states that the Fitting ideal is independent of the choice of a presentation. By convention, we set $\Fitt_{j}(M) =0$ for $j<0$. For any $0 \ne f \in R$, $\Fitt_j(M)_f = \Fitt_j(M_f)$ and $I_j(\varphi)_f = I_j(\varphi_f)$. Hence these definitions extend to schemes. More precisely, for a scheme $X$ and a finite type quasi-coherent $\sO_X$-module $\sG$, the $j$-th Fitting ideal $\Fitt_j(\sG)$ of $\sG$ is defined as follows:
 for any affine open $U = \Spec R \subset X$, $\Fitt_j(\sG)|_U := \Fitt_j(\Gamma(U,\sG))^{\sim}$. For an $\sO_X$-module morphism $\varphi \colon \sF \to \sE$ of locally free sheaves, where $\sE$ is of finite type, the ideal $I_{j}(\varphi)$ is defined to be the image of $\bigwedge^j \sF \otimes \bigwedge^j \sE^\vee \to \sO_X$.

\begin{definition}
\begin{enumerate}[leftmargin=*]
	\item  Let $X$ be a scheme, $\sG$ a finite type quasi-coherent $\sO_X$-module, $r \ge 0$ an integer. The {\em degeneracy locus of $\sG$ of rank $\ge r$}, denoted by $X^{\ge r}(\sG)$, is the closed subscheme defined by the Fitting ideal $\Fitt_{r-1}(\sG)$. By convention, $X^{\ge r}(\sG) = X$ if $r \le 0$.
	\item Let $X$ be a scheme, $\varphi: \sF \to \sE$ an $\sO_X$-module map of locally free sheave on $X$, where $\sE$ is of finite type, and let $\ell \ge 0$ be an integer. The {\em degeneracy locus of $\varphi$ of rank $\le \ell$}, denoted by $D_{\ell}(\varphi)$, is the closed subscheme defined by the ideal $I_{\ell+1}(\varphi)$.
\end{enumerate}
\end{definition}
By definition, if $\sF \xrightarrow{\varphi} \sE \to \sG \to 0$ is a presentation of $\sG$, where $\sE$ is a finite locally free of rank $n$, then there is a canonical identification $X^{\ge r}(\sG) = D_{n - r}(\varphi)$. 

\begin{lemma}[{See also \cite[\href{https://stacks.math.columbia.edu/tag/05P8}{Tag 05P8}]{stacks-project}}]\label{lem:degloci} Let $X$ be a scheme, and let $\sG$ be a finite type quasi-coherent $\sO_X$-module. Then there is a locally finite sequence of closed subschemes
	$$X = X^{\ge 0}(\sG) \supseteq X^{\ge 1}(\sG) \supseteq X^{\ge 2}(\sG) \supseteq \ldots$$
such that $X^{\ge 1}(\sG) \supseteq \Supp \sG$ and $X^{\ge 1}(\sG)_{\rm red} = (\Supp \sG)_{\rm red}$, where $\Supp \sG$ is the scheme-theoretic support of $\sG$, and $(\blank)_{\rm red}$ denotes the reduced scheme. Furthermore:
	\begin{enumerate}[leftmargin=*]
		\item \label{lem:degloci-1}
		{\em (The formation of degeneracy loci commutes with base change.)}  For any $X' \to X$, there is a canonical identification $X'^{\ge r} (\sG_{X'}) = X'^{\ge r}(\sG) \times_X X'$;
		\item \label{lem:degloci-2}
		A map $T \to X$ factors through $(X \backslash X^{\ge r}(\sG) ) \subseteq  X$ iff $\sG_T$ can be locally generated by $\le (r-1)$ elements. In particular, the underlying set $|X^{\ge r}(\sG)| $ of $X^{\ge r}(\sG)$ satisfies
		\begin{align*}
			|X^{\ge r}(\sG)| & = \{x \in X \mid  \dim_{\kappa(x)} \sG|_x \ge r\} \\
				&= \{x \in X \mid  \text{$\sG_x$ cannot be generated by $r$ elements} \}.
		\end{align*}
		\item \label{lem:degloci-3}
		For any map $T \to X$, the pullback $\sG_T$ is locally free of constant rank $r$ iff $T \to X$ factors through $T \to X^{\ge r}(\sG) \, \backslash X^{\ge r+1}(\sG) \hookrightarrow X$. 
	\end{enumerate}
\end{lemma}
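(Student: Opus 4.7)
My plan is to reduce the whole statement to three classical facts about Fitting ideals $\Fitt_j$ of a finitely generated module $M$ over a commutative ring $R$. First, Fitting's lemma (already recalled in the paragraph before the statement) says $\Fitt_j(M)$ is independent of the chosen presentation. Second, they commute with arbitrary ring extensions $R \to R'$: a presentation $R^m \xrightarrow{\varphi} R^n \to M \to 0$ pulls back, by right exactness of $\otimes_R R'$, to a presentation of $M \otimes_R R'$, and the $(n-j) \times (n-j)$-minors of $\varphi$ pull back to those of $\varphi \otimes R'$; hence $\Fitt_j(M) \cdot R' = \Fitt_j(M \otimes_R R')$. Third, Nakayama's lemma combined with the definition by minors gives the pointwise criteria I will keep invoking: $M$ can be generated by $\le r$ elements on a localization iff $\Fitt_r(M)$ equals the unit ideal there; $M$ is locally free of rank exactly $r$ iff $\Fitt_{r-1}(M) = 0$ and $\Fitt_r(M) = R$ locally; and one has the sandwich $\Fitt_0(M) \subseteq \Ann(M) \subseteq \sqrt{\Fitt_0(M)}$.

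Given these, I would assemble the proof as follows. The inclusions $X^{\ge r}(\sG) \supseteq X^{\ge r+1}(\sG)$ come from $\Fitt_{r-1}(\sG) \subseteq \Fitt_r(\sG)$, which is obtained by expanding an $(n-r+1)$-minor along any row. Local finiteness follows because $\sG$ is of finite type: on any affine open where $\sG$ admits $n$ generators, $\Fitt_r(\sG) = \sO_X$ for all $r \ge n$, so the chain terminates locally. The relation $X^{\ge 1}(\sG) \supseteq \Supp \sG$ with equal underlying reduced subschemes then drops out of the sandwich applied to the defining ideal $\Fitt_0(\sG)$ of $X^{\ge 1}(\sG)$. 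For (1), I would locally present $\sG$, pull back the presentation along $X' \to X$, and read off from the second fact above that $\Fitt_{r-1}(\sG_{X'}) = \Fitt_{r-1}(\sG) \cdot \sO_{X'}$, which is precisely the scheme-theoretic base-change identity $X'^{\ge r}(\sG_{X'}) = X^{\ge r}(\sG) \times_X X'$. For (2), a morphism $T \to X$ factors through the open complement $X \setminus X^{\ge r}(\sG)$ iff $\Fitt_{r-1}(\sG) \cdot \sO_T = \sO_T$, iff $\Fitt_{r-1}(\sG_T) = \sO_T$ by (1), iff $\sG_T$ is locally generated by $\le r-1$ sections by the third fact. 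The pointwise description of $|X^{\ge r}(\sG)|$ follows by specializing to $T = \Spec \kappa(x)$ and applying Nakayama. For (3), $\sG_T$ being locally free of rank exactly $r$ means, locally, $\Fitt_{r-1}(\sG_T) = 0$ and $\Fitt_r(\sG_T) = \sO_T$; via base-change this reads as $T \to X$ factoring through the locally closed subscheme $X^{\ge r}(\sG) \setminus X^{\ge r+1}(\sG)$.

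The only mild subtlety, which I would flag but expect to pose no real obstacle, is that $X$ is a completely general scheme with $\sG$ only assumed of finite type (no noetherian or finite-presentation hypothesis). Accordingly, the Fitting ideals $\Fitt_j(\sG)$ are, a priori, only quasi-coherent ideals of finite type; one must check that they patch well across an affine cover, but this is exactly the content of Fitting's lemma applied Zariski-locally. All three facts in the opening paragraph hold verbatim for finitely generated $M$ over an arbitrary commutative $R$, without finite-presentation or noetherian assumptions, so no further hypotheses are needed. The main ``work'' of the proof is therefore bookkeeping; the only genuine technical point is the base-change compatibility $\Fitt_j(\sG) \cdot \sO_{X'} = \Fitt_j(\sG_{X'})$, and once that is in hand the rest is a direct translation of Nakayama-type criteria into geometric language.
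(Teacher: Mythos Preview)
Your proposal is correct and follows essentially the same approach as the paper's proof: both reduce to the standard commutative-algebra facts about Fitting ideals (base-change compatibility, the sandwich $(\Ann M)^n \subseteq \Fitt_0 M \subseteq \Ann M$, and the Nakayama-type criteria for local generation and local freeness), with the paper citing Eisenbud's \textit{Commutative Algebra} (Cor.~20.5, Props.~20.6--20.8) for precisely the three facts you isolate. Your extra care about the non-noetherian, non-finitely-presented setting is well placed and not made explicit in the paper's proof.
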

\begin{proof} The sequence of closed subschemes follow from the the increasing sequence of the Fitting ideals $\Fitt_{-1}(\sG) \subset \Fitt_{0}(\sG) \subset \Fitt_{1}(\sG) \subset \ldots$. Since all the statements are local, we may assume $X = \Spec R$, and $\sG=\tilde{M}$ for a finite type $R$-module $M$. By \cite[Prop. 20.7]{Ei}, if $M$ is generated by $n$ elements, then $(\Ann M)^n \subseteq \Fitt_0 M \subseteq \Ann M$, where $\Ann(M)$ is the annihilator of $M$. This proves the stated relationship between the subscheme $X^{\ge 1}(\sG)$ and $\Supp \sG$. 
For the ``furthermore" part: the statement \eqref{lem:degloci-1} follows from $\Fitt_j(\sG_T) = \phi^{-1}(\Fitt_j \sG) \cdot \sO_T$, which is \cite[Cor. 20.5]{Ei}. For statement \eqref{lem:degloci-2}: by \cite[Prop. 20.6]{Ei}, for any $x \in X$, $x \not\in V(\Fitt_j(\sG))$ iff $\sG_x$ can be generated by $j$ elements; together with \eqref{lem:degloci-1} this implies \eqref{lem:degloci-2}. For \eqref{lem:degloci-3}: from \cite[Prop. 20.8]{Ei} , $\sG_T$ is locally free of constant rank $r$ iff $\Fitt_{r-1} \sG_T = 0$ and $\Fitt_{r} \sG_T = \sO_T$, together with \eqref{lem:degloci-1}, this proves \eqref{lem:degloci-3}. (See also \cite[\href{https://stacks.math.columbia.edu/tag/05P8}{Tag 05P8}]{stacks-project} for the proof of \eqref{lem:degloci-2} and \eqref{lem:degloci-3}.)
\end{proof}

One can easily translate the theorem into an analogous theorem about the degeneracy loci $D_{\ell}(\varphi)$ of an $\sO_X$-module map $\varphi \colon \sF \to \sE$; we leave it to the readers.

\begin{corollary} \label{cor:Quot:degloci} Let $X$ be a scheme and $\sG$ a finite type quasi-coherent $\sO_X$-module. For any given pair of integers $d \ge 0$ and $r \ge 0$, denote $\pi \colon \Quot_{X,d}(\sG) \to X$ the Quot scheme and $\mathring{X}^{r} (\sG): = X^{\ge r}(\sG) \, \backslash X^{\ge r+1}(\sG)$ the degeneracy locus. Then $\pi$ factorises through 
	$\Quot_{X,d}(\sG) \to X^{\ge d}(\sG) \subseteq X.$
If $\mathring{X}^{d}(\sG) \ne \emptyset$, then $\pi$ induces an isomorphism of schemes $\pi |_{\pi^{-1}(\mathring{X}^{d}(\sG))} \colon \Quot_{X,d}(\sG)|_{\mathring{X}^{d}(\sG) } \simeq \mathring{X}^{d}(\sG)$. If $r > d$, then $\sG|_{\mathring{X}^{r}(\sG)}$ is a locally free sheaf of the rank $r$, and the restriction 
$\pi |_{\pi^{-1}(\mathring{X}^{r}(\sG) )} \colon \Quot_{X,d}(\sG) |_{\mathring{X}^{r}(\sG) }= \Gr_d((\sG|_{\mathring{X}^{r}(\sG)})^\vee) \to \mathring{X}^{r}(\sG) $ is the rank $d$ Grassmannian bundle of the rank $r$ locally free sheaf $(\sG|_{\mathring{X}^{r}(\sG)})^\vee$.
\end{corollary}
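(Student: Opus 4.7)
My plan is to verify the three assertions in turn, each by reducing to Fitting-ideal manipulations plus the universal property of $\Quot$.

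For the factorization $\Quot_{X,d}(\sG) \to X^{\ge d}(\sG)$, I would start from the tautological surjection $\pi^{*}\sG \twoheadrightarrow \sQ_d$ with $\sQ_d$ locally free of rank $d$ (Theorem \ref{thm:Quot}). The key Fitting-ideal calculation is
\[
\pi^{-1}\Fitt_{d-1}(\sG)\cdot \sO_{\Quot_{X,d}(\sG)} \;=\; \Fitt_{d-1}(\pi^{*}\sG) \;\subseteq\; \Fitt_{d-1}(\sQ_d) \;=\; 0,
\]
using the base-change formula for Fitting ideals (recalled in the proof of Lemma \ref{lem:degloci}), the fact that Fitting ideals can only enlarge under surjections of finite type modules (concatenate presentations), and the vanishing of $\Fitt_{d-1}$ on a rank $d$ locally free sheaf. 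Hence the ideal defining $X^{\ge d}(\sG)$ pulls back to zero, yielding the factorization.

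For assertion (2), I would base-change to $\mathring{X}^{d}(\sG)$: by Theorem \ref{thm:Quot}(1),
\[
\Quot_{X,d}(\sG)\big|_{\mathring{X}^{d}(\sG)} \;=\; \Quot_{\mathring{X}^{d}(\sG),\,d}\bigl(\sG|_{\mathring{X}^{d}(\sG)}\bigr),
\]
and by Lemma \ref{lem:degloci}(3) the restriction $\sG|_{\mathring{X}^{d}(\sG)}$ is locally free of rank $d$. The claim then reduces to the elementary observation that $\Quot_{Y,d}(\sF) \simeq Y$ for any locally free $\sF$ of rank $d$ on $Y$: the identity quotient supplies a section of $\pi$, and for any test morphism $T \to Y$ a rank $d$ locally free quotient of $\sF_T$ has finitely generated kernel whose fibres all vanish for rank reasons, so the kernel vanishes by Nakayama and the quotient is equivalent to the identity. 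Assertion (3) is the same reduction applied in the non-degenerate range: for $r > d$, $\sG|_{\mathring{X}^{r}(\sG)}$ is locally free of rank $r$ by Lemma \ref{lem:degloci}(3), and the base-changed Quot scheme is, by the definition in Example \ref{ex:Grass}, the Grassmannian bundle $\Gr_d((\sG|_{\mathring{X}^{r}(\sG)})^\vee)$.

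The only genuinely technical step is the Fitting-ideal comparison in (1); everything else is definition chasing once Lemma \ref{lem:degloci}(3) and the base-change property of Theorem \ref{thm:Quot}(1) are invoked.
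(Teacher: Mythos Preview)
Your proposal is correct and follows essentially the same route as the paper. The only cosmetic difference is in step (1): the paper localizes at a point $z$, splits $(\pi^*\sG)_z \simeq \sQ_z \oplus \sG'$ using projectivity of $\sQ_z$, and reads off $\Fitt_{d-1}((\pi^*\sG)_z)=0$ from the direct-sum formula for Fitting ideals, whereas you use the global inclusion $\Fitt_{d-1}(\pi^*\sG)\subseteq \Fitt_{d-1}(\sQ_d)=0$ via monotonicity under surjections---these are equivalent one-line arguments, and the remaining two assertions are handled identically (base-change via Theorem~\ref{thm:Quot}\eqref{thm:Quot-1}, local freeness from Lemma~\ref{lem:degloci}\eqref{lem:degloci-3}, then the observation that a surjection of rank-$d$ locally free sheaves is an isomorphism).
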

\begin{proof} Let $\pi^* \sG \to \sQ$ denote the tautological quotient map, where $\sQ=\sQ_d$ is the universal locally free sheaf of rank $d$. Then over each point $z \in \Quot_{X,d}(\sG)$, $\sQ_z \simeq \sO_{z}^{\oplus d}$ is projective $\sO_z$-module, hence there is a splitting $(\pr^* \sG)_{z} \simeq \sO_{z}^{\oplus d} \oplus \sG'$. Therefore $\Fitt_{d-1}((\pr^* \sG)_{z}) = \Fitt_{d-1}(\pr^* \sG)_{z} =0$. This implies $\pi^{-1} \Fitt_{d-1}(\sG) \cdot \sO_{\Quot_d(\sG)} = \Fitt_{d-1}(\pr^* \sG) = 0$, therefore we obtain that $\pi$ factorizes through the closed immersion $X^{\ge d}(\sG) \subseteq X$. If $\mathring{X}^{d}(\sG) \ne \emptyset$, then since for any $T \to X$, the set of $T$-points $\Quot_{X,d}(\sG)|_{\mathring{X}^{d}(\sG)}(T)$ is the set of locally free quotients $\varphi \colon \sE_T \to \sP$ where $\sE_T$ and $\sP$ are both locally free of rank $d$. Hence $\varphi$ is necessarily an isomorphism. Therefore by Lem. \ref{lem:degloci}, the projection $\pi |_{\pi^{-1}(\mathring{X}^{d}(\sG))} \colon \Quot_{X,d}(\sG)|_{\mathring{X}^{d}(\sG) } \simeq \mathring{X}^{d}(\sG)$ is an isomorphism of functors, hence induces an isomorphism of schemes. The last statement is proved similarly, in view of Thm. \ref{thm:Quot} and Lem. \ref{lem:degloci}.
\end{proof}

\begin{definition}
Let $\sG$ be a finite presented quasi-coherent sheaf, with a local presentation $\sF \xrightarrow{\varphi} \sE \to \sG$, where $\sF$ and $\sE$ are finite locally free of ranks $m$ and $n$. Let $r \ge 0$ be an integer and set $\ell = n - r$. Then the {\em expected codimension} of $X^{\ge r}(\sG)=D_{\ell}(\varphi)$ is:
	$${\rm exp.codim}_X \, D_{\ell}(\varphi) = {\rm exp.codim}_X \, X^{\ge r}(\sG) := (m -\ell) (n - \ell) = (m - n + r) r.$$
\end{definition}

\begin{remark} Recall the following classical results regarding expected dimensions: 
\begin{enumerate}[leftmargin=*]
	\item (Macaulay--Eagon--Northcott) If $X$ is locally noetherian, equidimensional, and $X^{\ge r}(\sG) \ne \emptyset$, then for any irreducible component $Z$ of $X^{\ge r}(\sG)$, 
			$$\codim_X(Z) \le  {\rm exp.codim}_X \, X^{\ge r}(\sG) : =  (m - n + r) r.$$
	\item (Hochster--Eagon) If $X$ is Cohen-Macaulay, and the locus $X^{\ge r}(\sG) \ne \emptyset$ achieves the expected codimension (i.e. it is equidimensional, and for any irreducible component $Z \subseteq X^{\ge r}(\sG)$, $\codim_X(Z) = (m - n+ r) r$), then $X^{\ge r}(\sG)$ is Cohen-Macaulay.
\end{enumerate}
See, for example, \cite[Ex. 10.9, \S 18.5, Thm. 18.18]{Ei} for these statements in affine cases.
\end{remark}

\subsection{Quasi-coherent sheaves of finite homological dimensions}
Let $X$ be a scheme and $\sG$ a finite type quasi-coherent $\sO_X$-module. If $X$ is integral, then $\sG$ has a well-defined rank, namely the dimension of $\sG$ over the generic point. In general, the rank might not be well-defined; however, it is the case for a class of quasi-coherent sheaves.

\begin{definition} \label{def:hd} A finite type quasi-coherent $\sO_X$-module $\sG$ over a scheme $X$ is said to have {\em finite homological dimension} (in the strong senes) if there exists an integer $n \ge 0$ such that locally over $X$, $\sG$ admits a resolution by {\em finite} locally free sheaves of length $n$. The smallest number $n\ge 0$ satisfying the above condition is called the (strong) {\em homological dimension} of $\sG$, denoted by ${\rm hd}_X(\sG)$. By convention, ${\rm hd}_X(\sG) \le 0$ iff $\sG$ is finite locally free.
\end{definition}

\begin{remark} Notice that $\sG$ has homological dimension $n$ in the sense of Def. \ref{def:hd} is equivalent to it has perfect amplitude $[-n,0]$ in \cite[p. 121, Def. 4.7]{SGA}. Hence ``$\sG$ has perfect dimension $n$" or ``$\sG$ has finite perfect dimension" would be a more precise terminology. However, as ``perfect dimension" is a far less used term than ``homological dimension", we choose to use the latter in this paper.
\end{remark}

\begin{remark} If $X$ is a regular noetherian scheme of finite Krull dimension, then by Serre's theorem, any finite type quasi-coherent $\sO_X$-module $\sG$ has finite homological dimension, with ${\rm hd}_X(\sG)$ bounded above by the dimension of $X$.
\end{remark}

\begin{remark} If $X$ is noetherian, then our definition of ${\rm hd}_X(\sG)$ agrees with the {\em projective dimension} ${\rm pd}_X(\sG) = \sup_{x \in X} {\rm pd}_{\sO_{X,x}} (\sG_x)$ defined in \cite[IV, (17.2.14)]{EGA}. However, in the non-noetherian case, for example in the affine case $X = \Spec R$, $\sG = \tilde{M}$, where $M$ is a $R$-module, and $R$ is not necessarily noetherian, then ${\rm hd}_X(\sG)$ corresponds to the {\em restricted} projective dimension ${\rm pd}^*_R(M)$ defined in \cite[\S 3.4]{No} (which is defined as the minimal length of a supplementatble projective resolution of $M$) rather than the usual ${\rm pd}_R(M)$. In general, ${\rm pd}_R(M) \le {\rm pd}^*_R(M)$, and the inequality could be strict;  however, by \cite[Ch. 3, Lem. 3]{No}, if ${\rm pd}^*_R(M) < \infty$, then ${\rm pd}^*_R(M) = {\rm pd}_R(M)$. If $R$ is noetherian and $M$ is finitely generated, then there is no difference between ${\rm pd}^*_R(M)$ and  ${\rm pd}_R(M)$.
\end{remark}

\begin{definition}[Rank] Let $X$ be a scheme and $\sG$ a quasi-coherent $\sO_X$-module of finite homological dimension (in the sense of Def. \ref{def:hd}). Let $U \subset X$ be an open subset over which $\sG$ admits a finite locally free resolution $0 \to \sF_n \to \sF_{n-1} \to \ldots \to \sF_1 \to \sF_0 \to \sG$. Then for any point $x\in U$, the localization $\sF_{i, x}$ is a free $\sO_{X,x}$-module of finite rank. By  \cite[\S 4.4 Thm. 19]{No}, the function $|X| \to \ZZ$,
	$$x \mapsto \sum_{i=0}^n (-1)^i \rank_{\sO_{X,x}} \sF_{i , x} \in \ZZ $$
is a well-defined {\em locally constant} function in the Zariski topology of $X$, and is independent of the choice a local resolution $\sF_\bullet$ of $\sG$. We call this {\em locally constant function} the {\em rank} of $\sG$, and denote it by $\rank \sG$. If $X$ is connected, then $\rank \sG$ is a constant number.
\end{definition}

\begin{lemma}\label{lem:rank} Let $X$ be a connected scheme, $0 \ne \sG$ a quasi-coherent $\sO_X$-module of finite homological dimension, and let $\delta = \rank \sG$. Then:
	\begin{enumerate}[leftmargin=*]
		\item \label{lem:rank-1}
		{\em (Positivity of rank)}  $\delta \ge 0$; $\delta=0$ iff $\, \Fitt_0(\sG) \ne 0$;
		\item \label{lem:rank-2}
		If  $r \le \delta$, then $X^{\ge r}(\sG) = X$. Furthermore, $X^{\ge \delta+1}(\sG) \ne X$; 
		\item \label{lem:rank-3}
		For any weak associated point $x \in {\rm WeakAss}(X)$ (in particular, for any generic point $x$ of an irreducible component of $X$), $\sG_x$ is a {\em free} $\sO_{X,x}$-module of rank $\delta$.
	\end{enumerate}
\end{lemma}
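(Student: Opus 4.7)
The plan is to prove the three parts in the order $(3) \Rightarrow (2) \Rightarrow (1)$, since $(3)$ is the key local input and $(1)$ will fall out formally from $(2)$. For $(3)$, I would argue at the stalk $R := \sO_{X,x}$, $M := \sG_x$ for a weak associated point $x \in \mathrm{WeakAss}(X)$. By the defining property of weak associated points, $\fom_x = \sqrt{\Ann_R(a)}$ for some $a \in R$, so every element of $\fom_x$ is a zero-divisor in $R$, i.e.\ $\pdepth_R(R) = 0$ in the polynomial-depth framework recalled at the beginning of \S\ref{sec:Quot_deg}. Since $M$ is finitely generated of finite (restricted) projective dimension (as $\sG$ has finite homological dimension), the non-noetherian Auslander--Buchsbaum formula (cf.\ Northcott \cite{No}) yields $\mathrm{pd}^*_R(M) + \pdepth_R(M) = \pdepth_R(R) = 0$, whence $M$ is projective and, over the local ring $R$, free; its rank equals $\delta$ by the alternating-sum definition of $\rank$ on any finite free resolution.

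For $(2)$, I would work locally with a presentation $\sF \xrightarrow{\varphi} \sE \to \sG \to 0$, $\sE$ free of rank $n$, so that $\Fitt_{j}(\sG) = I_{n-j}(\varphi)$. By $(3)$, at each $x \in \mathrm{WeakAss}(X)$ the image $\varphi_x(\sF_x) \subset \sE_x$ is a free direct summand of rank exactly $n - \delta$. This makes every $(n - r + 1)$-minor of $\varphi$ vanish at $x$ whenever $r \le \delta$ (since $n - r + 1 \ge n - \delta + 1$), giving $\Fitt_{r-1}(\sG)_x = 0$; meanwhile a suitable $(n-\delta) \times (n-\delta)$ minor is a unit at $x$, giving $\Fitt_{\delta}(\sG)_x = R_x$. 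The local-to-global passage rests on the standard fact that any non-zero submodule $Ra \subseteq R$ admits a weak associated prime containing $\Ann_R(a)$: this forces an ideal vanishing at every point of $\mathrm{WeakAss}(X)$ to vanish globally, while an ideal $I$ with $I_{\fop} = R_{\fop}$ at some $\fop \in \mathrm{WeakAss}(X)$ is automatically non-zero. Combining these yields $\Fitt_{r-1}(\sG) = 0$ (i.e.\ $X^{\ge r}(\sG) = X$) for $r \le \delta$ and $\Fitt_{\delta}(\sG) \ne 0$ (i.e.\ $X^{\ge \delta+1}(\sG) \ne X$). The inequality $\delta \ge 0$ in $(1)$ is then immediate from $(3)$, and the equivalence $\delta = 0 \iff \Fitt_0(\sG) \ne 0$ is a direct corollary of the two halves of $(2)$ applied respectively with $\delta = 0$ and with $r = 1$.

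The main obstacle is $(3)$: it hinges on the non-noetherian version of Auslander--Buchsbaum formulated via polynomial depth and restricted projective dimension, together with the (easy but crucial) identification $\pdepth_R(R) = 0$ at a weak associated point. Once $(3)$ is in hand, $(2)$ reduces to routine linear algebra over $R_x$ paired with the local-to-global bootstrap via existence of weak associated primes of non-zero modules, and $(1)$ follows as a formal consequence of $(2)$.
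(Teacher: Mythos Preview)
Your argument for part~(3) is essentially identical to the paper's: both identify $\pdepth_{\sO_{X,x}}(\sO_{X,x})=0$ at a weak associated point and then invoke the non-noetherian Auslander--Buchsbaum formula (Northcott, \cite[Ch.~6, Thm.~2]{No}) to conclude freeness of the stalk.

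For parts~(1) and~(2), however, you take a different route. The paper simply cites \cite[\S4.3, Cor.~1--3]{No}, which establish directly that a finitely generated module of finite (restricted) projective dimension has nonnegative Euler characteristic and that the corresponding Fitting ideals behave as stated. You instead \emph{derive} (1) and (2) from (3), by exploiting the local-to-global principle that a quasi-coherent ideal vanishing at every weak associated point vanishes identically (via $\mathrm{WeakAss}(Ra)\subseteq\mathrm{WeakAss}(R)$ for $0\neq a$). This is correct and arguably more self-contained: it avoids appealing to the somewhat intricate results of \cite[\S4.3]{No} on the MacRae invariant and rank positivity, trading them for the more elementary theory of weak associated primes. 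The cost is that your proof of (2) is longer on the page, and you still rely on Northcott for the Auslander--Buchsbaum step in (3); the paper's route is terser but black-boxes more.
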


\begin{proof} \eqref{lem:rank-1} follows from \cite[\S 4.3 Cor. 1 \& Cor. 3]{No}, and \eqref{lem:rank-2} follows from \cite[\S 4.3 Cor. 2]{No}. For \eqref{lem:rank-3}, let $x \in {\rm WeakAss}(X)$, and choose a finite locally free resolution $\sF_{\bullet} = (\sF_{i})_{i\in[0,n]}\to \sG$ in an open neighbourhood of $x$. Since $x$ is a weak associated point, $\pdepth_{\sO_{X,x}}(\sO_{X,x}) = 0$ (see e.g. \cite[Lem. 2.8]{HM}).
Then by the non-noetherian {\em Auslander--Buchsbaum--Hochster theorem} of Northcott \cite[Ch. 6, Thm. 2]{No} applied to $R= \sO_{X,x}$, $M = \sG_x$, we have 
	$$\pdepth_{\sO_{X,x}}(\sG_x) + {\rm pd}_{\sO_{X,x}}(\sG_x) = \pdepth_{\sO_{X,x}}(\sO_{X,x})=0.$$ 
Hence ${\rm pd}_{\sO_{X,x}}(\sG_x) =0$, i.e. $\sG_x$ is a free $\sO_{X,x}$-module. Then $0 \to \sF_{n, x} \to \ldots \to \sF_{0, x} \to \sG_x\to 0$ is an exact sequence of free modules, therefore by \cite[\S 3.5, Thm. 19]{No} $\rank \sG_x = \sum_{i=0}^n (-1)^i \rank_{\sO_{X,x}} \sF_{i, x}$, and the latter is by definition $\rank \sG =\delta$.
\end{proof}

\begin{remark} Our proof of \eqref{lem:rank-3} works for any {\em attached prime} in the sense of \cite{No} (which is equivalent to the notion of a {\em strong Krull prime} of \cite{IR}). The readers are referred to \cite{IR, AT} for various notions of associated primes and grades in the non-noetherian case. \end{remark}

Recall the determinant of a locally free sheaf $\sF$ of finite rank over $X$ is the line bundle $\det \sF : = \bigwedge^{\rank \sF} \sF$. This definition could be generalized as follows:
 
\begin{definition}[Determinant] \label{def:det}  Let $X$ be a scheme and $\sG$ a quasi-coherent $\sO_X$-module of finite homological dimension. The {\em determinant of $\sG$} is the line bundle $\det \sG$ defined as follows: if $U \subset X$ be an open subset such that $\sG$ admits a finite locally free resolution $0 \to \sF_n \to \ldots  \to \sF_0 \to \sG$, then $\det \sG$ over $U$ is defined by the formula:
	$$\det \sG : = \bigotimes_{i=1}^n(\det \sF_i)^{\otimes (-1)^i}.$$
The line bundle on the right hand side is independent of the choice of the resolution $\sF_\bullet$ of $\sG$, hence we obtain a well-defined line bundle $\det \sG \in \Pic(X)$.
\end{definition}

\subsubsection{Quasi-coherent sheaves of homological dimension $\le 1$}
Let $X$ be a connected scheme and $\sG$ a quasi-coherent $\sO_X$-module of homological dimension $\le 1$ with $\rank \sG = \delta$. Then by Lem. \ref{lem:rank}, we have $\delta \ge 0$; and $\delta =0$ iff $X^{\ge 1}(\sG) \ne X$ iff $\Fitt_0(\sG)$ is a principal ideal generated by a non-zerodivisor (see \cite[\S 3.5 Theorem 21]{No}). The case ${\rm hd}_X(\sG)=1$ is especially interesting, since the ``interesting" piece of information of the ``derived dual" of $\sG$ is reflected by another sheaf 
	$$\sK : = \sExt^1_{\sO_X}(\sG, \sO_X),$$
which, under mild assumptions on $\sG$, also has finite homological dimension.
	
\begin{lemma}\label{lem:hdK} Let $X$ be a connected scheme, and let $\sG$ be a quasi-coherent $\sO_X$-module of homological dimension $\le 1$ with $\rank \sG = \delta \ge 0$, and denote $\sK= \sExt^1_{\sO_X}(\sG, \sO_X)$. Then 
\begin{enumerate}[leftmargin=*]
	\item \label{lem:hdK-1}
	$\Fitt_{\delta+j}(\sG) = \Fitt_{j}(\sK)$ and $X^{\ge \delta+j}(\sG) = X^{\ge j}(\sK)$ for all $j \in \ZZ$;
	\item \label{lem:hdK-2}
	$X = X^{\ge 0}(\sK) = X^{\ge \delta}(\sG)$. Denote by $Z: =  X^{\ge 1}(\sK)=X^{\ge \delta+1}(\sG) \subsetneq X$ the {\em first degeneracy locus} of $\sG$ and $\sK$ (cf. Lem. \ref{lem:rank} \eqref{lem:rank-2}), and assume further that: 
	\begin{equation}\label{eqn:pdepthZ}
	\pdepth(Z, X) \ge \delta+1.
	\end{equation}
	Then $\sK$ has homological dimension $\le \delta+1$.
	\item \label{lem:hdK-3}
	If $X$ is Cohen--Macaulay, then \eqref{eqn:pdepthZ} holds iff $\codim_X(Z) =\delta+1$ (the expected codimension). If any of these conditions holds, then $Z$ is also  Cohen--Macaulay, and $\sK$ is a maximal Cohen--Macaulay $\sO_Z$-module on $Z$. 
\end{enumerate}
\end{lemma}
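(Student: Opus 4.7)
The plan is to work locally on $X$ and exploit a finite locally free resolution
$$ 0 \to \sF \xrightarrow{\varphi} \sE \to \sG \to 0 $$
afforded by ${\rm hd}_X(\sG) \le 1$, in which $\sF$ and $\sE$ have constant ranks $m$ and $n = m+\delta$ (the injectivity of $\varphi$ follows from the hypothesis on homological dimension, and the rank count from Lem.~\ref{lem:rank}). Since $\sE$ is locally free, dualizing gives the canonical four-term exact sequence
$$ 0 \to \sG^\vee \to \sE^\vee \xrightarrow{\varphi^\vee} \sF^\vee \to \sK \to 0, $$
in particular a two-term presentation $\sE^\vee \xrightarrow{\varphi^\vee} \sF^\vee \twoheadrightarrow \sK$ of $\sK$ by finite locally free sheaves, which will serve as the starting point for everything. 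For part \eqref{lem:hdK-1}, I would simply read off Fitting ideals from these two presentations: by the definition of Fitting ideals one has $\Fitt_{\delta+j}(\sG) = I_{n-\delta-j}(\varphi) = I_{m-j}(\varphi)$ and $\Fitt_{j}(\sK) = I_{m-j}(\varphi^\vee) = I_{m-j}(\varphi)$, so they agree. The identity of degeneracy subschemes is then immediate from $X^{\ge r}(\blank) = V(\Fitt_{r-1}(\blank))$ and the independence of Fitting ideals from the chosen presentation.

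For part \eqref{lem:hdK-2}, the equalities $X^{\ge 0}(\sK) = X = X^{\ge \delta}(\sG)$ follow from the convention and from Lem.~\ref{lem:rank}\eqref{lem:rank-2}. To bound ${\rm hd}_X(\sK)$, the plan is to invoke the Buchsbaum--Rim complex $\mathcal{BR}(\varphi^\vee)$ attached to the generically surjective map $\varphi^\vee\colon \sE^\vee \to \sF^\vee$ of locally free sheaves of ranks $n \ge m$ (generic surjectivity follows from Lem.~\ref{lem:rank}\eqref{lem:rank-3}, since the original sequence splits at each weak associated point). This is a complex of finite locally free sheaves of length $n - m + 1 = \delta+1$ whose augmentation is exactly the presentation $\sE^\vee \to \sF^\vee \twoheadrightarrow \sK$ above. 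Its exactness is classically equivalent (Buchsbaum--Rim) to the grade condition $\grade I_m(\varphi^\vee) \ge \delta+1$ on the ideal of maximal minors; in the non-noetherian generality required here, I would invoke the polynomial-depth analogue, in the spirit of Northcott's ``What makes a complex exact'' extended to polynomial grade. Since $V(I_m(\varphi^\vee))_{\rm red} = Z_{\rm red}$ by part \eqref{lem:hdK-1} and $I_m(\varphi^\vee) = \Fitt_0(\sK)$ cuts out $Z$ scheme-theoretically, the hypothesis $\pdepth(Z,X) \ge \delta+1$ translates directly into the polynomial-depth bound needed for exactness, producing a locally free resolution of $\sK$ of length $\delta+1$. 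Verifying this polynomial-depth form of the Buchsbaum--Rim exactness criterion is the main technical obstacle, and is where essentially all of the work lies.

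For part \eqref{lem:hdK-3}, once $X$ is Cohen--Macaulay, polynomial depth of a proper ideal coincides with its codimension; combined with the Macaulay--Eagon--Northcott bound $\codim_X Z \le \delta+1$ (the expected codimension for $Z = X^{\ge \delta+1}(\sG)$), the inequality $\pdepth(Z,X) \ge \delta+1$ is then equivalent to equality $\codim_X Z = \delta+1$, giving the stated ``iff''. Under this equality, the Hochster--Eagon theorem (applied to the determinantal scheme $Z$ cut out by $\Fitt_0(\sK)$ of expected codimension) yields that $Z$ is Cohen--Macaulay. Finally, with the length-$(\delta+1)$ locally free resolution of $\sK$ from part \eqref{lem:hdK-2} in hand, the Auslander--Buchsbaum formula at each point $x \in Z$ gives $\depth_{\sO_{X,x}}(\sK_x) = \depth \sO_{X,x} - (\delta+1) = \dim \sO_{Z,x}$, which exhibits $\sK$ as a maximal Cohen--Macaulay $\sO_Z$-module and completes the proof.
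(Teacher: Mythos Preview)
Your proposal is correct and follows essentially the same route as the paper: both localize to a two-term presentation, identify Fitting ideals via $I_j(\varphi) = I_j(\varphi^\vee)$, resolve $\sK$ by the Buchsbaum--Rim complex and establish its exactness via Northcott's polynomial-grade version of the Buchsbaum--Eisenbud acyclicity criterion, and then settle the Cohen--Macaulay case by the standard determinantal package. The only cosmetic differences are that the paper cites Eisenbud's \cite[Thm.~A2.10]{Ei} explicitly for the fact that all the rank-ideals $I_{r_k}(\varphi_k)$ of the Buchsbaum--Rim differentials share the same radical $\sqrt{I_m(\sigma)}$ (which is why a single depth hypothesis on $Z$ suffices), and for part \eqref{lem:hdK-3} it invokes \cite[Cor.~A2.13]{Ei} as a packaged statement rather than unpacking Hochster--Eagon and Auslander--Buchsbaum separately as you do.
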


\begin{proof} Since the problem is local, we may assume $\sG$ admits a presentation $0 \to \sW \xrightarrow{\sigma} \sV \to \sG$, where $\sW$ and $\sV$ are finite locally free of rank $m$ and $n$. Then $\delta = n-m \ge 0$, and $\sK$ fits into an exact sequence: $\sV^\vee \xrightarrow{\sigma^\vee} \sW^\vee \to \sK \to 0$. Since for any $j \in \ZZ$, $I_{j}(\sigma) = I_{j}(\sigma^\vee)$, \eqref{lem:hdK-1} immediately follows. For \eqref{lem:hdK-2}, we consider the {\em Buchsbaum--Rim complex}:
	$$\sF_\bullet \colon \qquad 0 \to \sF_{\delta+1} \xrightarrow{\varphi_{\delta+1}} \sF_{\delta} \xrightarrow{\varphi_{\delta}}  \ldots \to \sF_{2} \xrightarrow{\varphi_{2}} \sF_{1}\xrightarrow{\varphi_{1}}  \sF_0,$$
where $\sF_0 = \sW^\vee$, $\sF_1 = \sV^\vee$, $\varphi_1 = \sigma^\vee$, and for $t= 1, 2, \ldots, \delta$, 
	$$\sF_{1+t}: = \bigwedge^{m+t} \sV^\vee \otimes \Gamma^{t-1} (\sW) \otimes \det \sW,$$
where $\Gamma^{k}(\sW)$ is the $k$-th divided power of $\sW$ (i.e., $\Gamma^{k}(\sW) = (S^k \sW^\vee)^\vee$), and $\det \sW = \bigwedge^m \sW$ is the determinant line bundle. This is the second complex in the family of Eagon--Northcott complexes, which is denoted by $\shC^{1}$ in \cite[\S A.2.6.1]{Ei} and by ${\rm EN}_1$ in \cite[\S B.2]{Laz04}; We refer the readers to these references for details of the definition. If we denote $r_k: = \sum_{i=k}^{\delta} (-1)^{i-k} \rank (\sF_i)$, the expected rank of $\varphi_k$. Then it is shown by Eisenbud \cite[Thm. A.2.10]{Ei} that $\sF_\bullet$ is indeed a complex, $I_{r_k}(\varphi_k) \subseteq I_{m}(\sigma)$ and $\sqrt{I_{r_k}(\varphi_k)} = \sqrt{I_{m}(\sigma)}$ for all $k\in [1, \delta+1]$. Therefore if (\ref{eqn:pdepthZ}) holds, which in particular implies for all $x \in X$, 
	$$\pdepth_{\sO_{X,x}}(I_{r_k}(\varphi_k)_x; \sO_{X,x})  = \pdepth_{\sO_{X,x}}(I_{m}(\sigma)_x; \sO_{X,x}) \ge \delta+1, \quad k=1,\ldots, \delta+1.$$
Then by Northcott's generalized Eisenbud--Buchsbaum criterion for exactness of complexes \cite[\S 6.4, Thm. 15]{No} (see also \cite[page 250, 7.2.3]{Gl}), $(\sF_\bullet)_x$ is exact at each point $x \in X$, and hence $\sF_\bullet$ is a locally free resolution of $\Coker(\varphi_1) = \sK$. This proves \eqref{lem:hdK-2}. For \eqref{lem:hdK-3}, notice for a noetherian scheme $X$, polynomial depth agrees with usual depth: $\pdepth(Z,X) = \depth(Z,X)$, hence \eqref{lem:hdK-3}  follows from \cite[Cor. A.2.13]{Ei}.
\end{proof}

Regarding the determinant of $\sG$ when ${\rm hd}(\sG) \le 1$, we have the following:

\begin{lemma} Let $X$ be a connected scheme, and let $\sG$ be a quasi-coherent $\sO_X$-module of homological dimension $\le 1$ with $\rank \sG = \delta \ge 0$. Let $Z: = X^{\ge \delta+1}(\sG) \subsetneq X$ be the first degeneracy locus, and denote by $\sI_Z \subseteq \sO_X$ the ideal sheaf of $Z$. Assume (\ref{eqn:pdepthZ}) holds, i.e.
	$
	\pdepth(Z, X) \ge \delta+1.
	$
(If $X$ is Cohen--Macaulay, then this is equivalent to $Z \subseteq X$ has expected codimension $\delta+1$.) Then there is a natural isomorphism of sheaves:
	$$\bigwedge^{r} \sG \otimes (\det \sG)^{-1} \xrightarrow{\sim} \sI_Z \subset \sO_X.$$
In particular, if $\delta=0$, then $\sI_Z \simeq  (\det \sG)^{-1}$ is an invertible ideal.
\end{lemma}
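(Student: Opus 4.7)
The plan is to construct a natural morphism $\phi \colon \bigwedge^{\delta}\sG \otimes (\det\sG)^{-1} \to \sO_X$ (reading the exponent $r$ as the rank $\delta$), identify its image with $\sI_Z$, and establish injectivity using the depth hypothesis. The problem being local, I would choose a finite free presentation $0 \to \sW \xrightarrow{\sigma} \sV \to \sG \to 0$ with $\sW, \sV$ locally free of ranks $m$ and $m+\delta$; by Def.~\ref{def:det}, $\det\sG = \det\sV \otimes (\det\sW)^{-1}$. Exterior multiplication in $\bigwedge^\bullet \sV$ composed with $\bigwedge^m \sigma$ gives
\[
\bigwedge^\delta \sV \otimes \det\sW \to \det\sV,\quad (v_1 \wedge \cdots \wedge v_\delta) \otimes (w_1 \wedge \cdots \wedge w_m) \mapsto v_1 \wedge \cdots \wedge v_\delta \wedge \sigma(w_1) \wedge \cdots \wedge \sigma(w_m).
\]
This descends to $\bigwedge^\delta \sG \otimes \det\sW \to \det\sV$: replacing some $v_i$ by $v_i + \sigma(w')$ contributes an extra term with $m+1$ wedge-factors from $\sigma(\sW)$, which vanishes since $\bigwedge^{m+1}\sW = 0$. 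Twisting by $(\det\sV)^{-1}\otimes\det\sW$ yields~$\phi$.

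For the image, fix local bases $\sV = \bigoplus_i \sO_X e_i$, $\sW = \bigoplus_j \sO_X f_j$ so that $\sigma$ is represented by an $(m+\delta) \times m$ matrix $A = (a_{ij})$. The Cauchy--Binet expansion of $\sigma(f_1) \wedge \cdots \wedge \sigma(f_m)$ shows that (after trivialising $\det\sG$) $\phi(\bar e_{i_1} \wedge \cdots \wedge \bar e_{i_\delta})$ equals $\pm$ the $m \times m$ minor $A_J$ indexed by $J = \{1, \ldots, m+\delta\} \setminus \{i_1, \ldots, i_\delta\}$. As $\{i_1 < \cdots < i_\delta\}$ ranges, these minors generate $I_m(\sigma) = \Fitt_\delta(\sG) = \sI_Z$ by Lem.~\ref{lem:hdK}~\eqref{lem:hdK-1}. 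Hence $\phi$ factors as a surjection $\widetilde\phi \colon \bigwedge^\delta \sG \otimes (\det\sG)^{-1} \twoheadrightarrow \sI_Z$.

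The main obstacle is injectivity of $\widetilde\phi$, where the depth condition $\pdepth(Z,X) \ge \delta+1$ enters. On $X \setminus Z$, $\sG$ is locally free of rank $\delta$, so $\bigwedge^\delta \sG = \det\sG$ and $\widetilde\phi$ is the identity; thus $\ker\widetilde\phi$ is supported on $Z$. The strategy is to match the right exact presentation
\[
\sW \otimes \bigwedge^{\delta-1}\sV \otimes (\det\sG)^{-1} \to \bigwedge^\delta \sV \otimes (\det\sG)^{-1} \twoheadrightarrow \bigwedge^\delta \sG \otimes (\det\sG)^{-1} \to 0
\]
with the last two nontrivial terms of the Eagon--Northcott complex $\shC^0(\sigma^\vee)$ of the dual map, using the natural isomorphisms $\bigwedge^{\delta-k}\sV \otimes (\det\sG)^{-1} \simeq \bigwedge^{m+k}\sV^\vee \otimes \det\sW$; under this identification the augmentation of $\shC^0(\sigma^\vee)$ to $\sO_X$ becomes the displayed surjection composed with $\phi$. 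Under $\pdepth(Z,X) \ge \delta+1$, Northcott's generalised Eisenbud--Buchsbaum exactness criterion \cite[\S 6.4, Thm. 15]{No} (as already invoked in the proof of Lem.~\ref{lem:hdK}) forces $\shC^0(\sigma^\vee)$ to be acyclic, so the kernel of $\bigwedge^\delta \sV \otimes (\det\sG)^{-1} \to \sO_X$ coincides with the image of $\sW \otimes \bigwedge^{\delta-1}\sV \otimes (\det\sG)^{-1}$; but this image is precisely the kernel of the surjection to $\bigwedge^\delta \sG \otimes (\det\sG)^{-1}$, so $\widetilde\phi$ must be injective. The construction is intrinsic and globalises; in the degenerate case $\delta = 0$ one has $\bigwedge^0 \sG = \sO_X$, recovering $(\det\sG)^{-1} \simeq \sI_Z$.
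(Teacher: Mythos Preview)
Your proof is correct and follows essentially the same approach as the paper: both localise to a presentation $0 \to \sW \xrightarrow{\sigma} \sV \to \sG$, identify the last two nontrivial terms of the Eagon--Northcott complex $\shC^0$ (via the duality $\bigwedge^{m+k}\sV^\vee \otimes \det\sW \simeq \bigwedge^{\delta-k}\sV \otimes (\det\sG)^{-1}$) with the canonical right-exact presentation $\sW \otimes \bigwedge^{\delta-1}\sV \to \bigwedge^\delta \sV \to \bigwedge^\delta \sG \to 0$, and then invoke Northcott's exactness criterion under the depth hypothesis to conclude. The only presentational difference is that you build the map $\phi$ by hand and identify its image via Cauchy--Binet before bringing in the complex, whereas the paper starts from $\shC^0$ and recognises $\Coker \partial_1$ afterwards.
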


\begin{proof} As before we may assume $\sG$ admits a presentation $0 \to \sW \xrightarrow{\sigma} \sV \to \sG$, where $\sW$ and $\sV$ are finite locally free of rank $m$ and $n$, $\delta = n - m$. Then the first three terms of the EN complex $\shC^{0}$ of  \cite[\S A.2.6.1]{Ei} take the form:
	$$\bigwedge^{m+1} \sV^\vee \otimes \sW \otimes \det \sW \xrightarrow{\partial_1} \bigwedge^{m} \sV^\vee \otimes \det \sW \xrightarrow{\partial_0} \sO_X,$$
where the second map $\partial_0$ is given by the map $\bigwedge^m \sigma^\vee \colon \bigwedge^m \sV \to \bigwedge^m \sW =\det \sW$, hence by definition the image of $\partial_0$ is $\sI_Z$. Then by Eisenbud \cite[Thm. A.2.10]{Ei} as in previous lemma, the condition (\ref{eqn:pdepthZ}) implies that above complex is indeed {\em exact}. If $\delta=0$, this already implies $\partial_0 \colon (\det \sV)^{-1} \otimes \det \sW \xrightarrow{\sim} \sI_Z $. If $\delta \ge 1$, we have canonical isomorphisms $\bigwedge^{m+i} \sV^\vee \simeq \bigwedge^{\delta-i} \sV \otimes (\det \sV)^{-1}$ for $i=0,1$, then from the construction of $\partial_1$ (see e.g. \cite[\S A.2.10]{Ei}), through the above isomorphisms the map $\partial_1$ coincides with the multiplication map $\partial_1' \colon \bigwedge^{\delta-1} \sV \otimes \sW \to \bigwedge^{\delta} \sV$ up to tensoring with the line bundle $(\det \sV)^{-1} \otimes \det \sW = (\det \sG)^{-1}$. On the other hand, by the canonical exact sequences of  exterior products we have $\Coker \partial_1' \simeq \bigwedge^\delta \sG$, see e.g. \cite[Prop. A.2.2 (d)]{Ei}, hence the map $\partial_0$ factorises through an isomorphism $\Coker \partial_1'  \otimes (\det \sG)^{-1} = \Coker \partial_1 \xrightarrow{\sim} \sI_Z$. 
\end{proof}

\subsection{Correspondences as Quot schemes} Let $X$ be a connected scheme, let $\sG$ be a quasi-coherent $\sO_X$-module of homological dimension $\le 1$ with $\rank \sG = \delta \ge 0$, and denote $\sK= \sExt^1_{X}(\sG, \sO_X)$. Let $(d_+,d_-)$ be a pair of non-negative integers, and consider:
	$$\pi_+ \colon \shZ_+: = \Quot_{X,d_+}(\sG) \to X, \qquad \pi_- \colon \shZ_- : = \Quot_{X, d_-}(\sK) \to X.$$
Let $\pi_+^* \sG \twoheadrightarrow \sQ_+$ and $\pi_-^* \sK \twoheadrightarrow \sQ_-$ be the tautological quotients on $\shZ_+$ and $\shZ_-$, and set:
	\begin{align*}
	\sG_+ : = \Ker (\pi_+^* \sG \twoheadrightarrow \sQ_{+}), \quad \sK_+ : = \pi_+^* \sK; \qquad
	\sK_- : = \Ker (\pi_-^* \sK \twoheadrightarrow \sQ_{d_-}), \quad \sG_- : = \pi_-^* \sG.
	\end{align*}
Then Thm. \ref{thm:Quot} \eqref{thm:Quot-1} implies:
	$$\shZ_+ \times_X \shZ_- = \Quot_{\shZ_+, d_-}(\sK_+) = \Quot_{\shZ_-, d_+}(\sG_-).$$
\begin{lemma}\label{lem:hdK:iterated} In the above situation (let ``CM" stand for ``Cohen--Macaulay"):
\begin{enumerate}[leftmargin=*]
	\item Assume $\delta \ge d_+$. If ${\rm p.depth}(\pi_+^{-1}(X^{\ge \delta+1}(\sG)), \shZ_+) \ge 1$, e.g., if $X$ is CM and
		$$\codim_{X}(X^{\ge \delta + i}(\sG)) \ge i(\delta+i) - i(\delta - d_+) + 1-i^2, \qquad \forall i \ge 1.$$
	Then $\sG_+$ has homological dimension $\le 1$ on $\shZ_+$, and $\sK_+ \simeq \sExt^1_{\shZ_+}(\sG_+, \sO_{\shZ_+})$. If furthermore ${\rm p.depth}(\pi_+^{-1}(X^{\ge \delta+1}(\sG)), \shZ_+) \ge \delta - d_+ +1$, e.g., if $X$ is CM and
		$$\codim_{X}(X^{\ge \delta + i}(\sG)) \ge i(\delta+i) - (i-1)(\delta - d_+) + 1-i^2,  \qquad \forall i \ge 1.$$
	Then $\sK_+$ has homological dimension $\le \delta - d_+ + 1$ on $\shZ_+$;
	\item Assume $\delta \le d_+$. If ${\rm p.depth}(\pi_+^{-1}(X^{\ge d_+ + 1}(\sG)), \shZ_+) \ge 1$, e.g., if $X$ is CM and
		$$\codim_{X}(X^{\ge d_+ + i}(\sG)) \ge (d_+ + i)(d_+ - \delta + i) - i(d_+ - \delta) + 1 -i ^2, \qquad \forall i \ge 1.$$
	Then $\sK_+$ has homological dimension $\le 1$ on $\shZ_+$, and $\sG_+ \simeq \sExt^1_{\shZ_+}(\sK_+, \sO_{\shZ_+})$. If furthermore ${\rm p.depth}(\pi_+^{-1}(X^{\ge d_+ +1}(\sG)), \shZ_+) \ge d_+ - \delta +1$,  e.g., if $X$ is CM and 
		$$\codim_{X}(X^{\ge d_+ + i}(\sG)) \ge  (d_+ + i)(d_+ - \delta + i) - (i-1)(d_+ - \delta) + 1 -i ^2,  \qquad \forall i \ge 1.$$
	Then $\sG_+$ has homological dimension $\le d_+ - \delta + 1$ on $\shZ_+$;
	\item If ${\rm p.depth}(\pi_-^{-1}(X^{\ge d_- + \delta + 1}(\sG)), \shZ_-) \ge 1$,  e.g., if $X$ is CM and
			$$\codim_{X}(X^{\ge d_- +\delta + i}(\sG)) \ge  (d_- +i)(d_- + \delta+i) - i(d_- + \delta) + 1-i^2, \qquad \forall i \ge 1.$$
	Then  $\sG_-$ has homological dimension $\le 1$ on $\shZ_-$, and $\sK_- \simeq \sExt^1_{\shZ_-}(\sG_-, \sO_{\shZ_-})$. If furthermore ${\rm p.depth}(\pi_-^{-1}(X^{\ge d_- + \delta + 1}(\sG)), \shZ_-) \ge d_- + \delta + 1$, e.g., if $X$ is CM and
		$$\codim_{X}(X^{\ge d_- + \delta + i}(\sG)) \ge  (d_- +i)(d_- + \delta+i) - (i-1)(d_- + \delta) + 1-i^2,  \qquad \forall i \ge 1.$$	
	 Then $\sK_-$ has homological dimension $\le \delta + d_- + 1$ on $\shZ_-$.
\end{enumerate}
\end{lemma}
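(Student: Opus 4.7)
The plan is to mimic the argument of Lemma \ref{lem:hdK}: working locally on $X$, choose a presentation $0 \to \sW \xrightarrow{\sigma} \sV \to \sG \to 0$ with $\sW, \sV$ locally free of ranks $m$ and $n$ (so $\delta = n-m$), build explicit two-term locally free presentations of the sheaves $\sG_\pm, \sK_\pm$ on $\shZ_\pm$, and control their exactness via the non-noetherian Auslander--Buchsbaum--Hochster machinery of Northcott \cite{No} under a suitable polynomial-depth hypothesis. The codimension conditions in the Cohen--Macaulay case then follow from the Grassmannian fiber structure of $\pi_\pm$ over the degeneracy strata.

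On $\shZ_+$, set $\sU_+ := \ker(\pi_+^*\sV \twoheadrightarrow \sQ_+)$, locally free of rank $n-d_+$. The composition $\pi_+^*\sW \to \pi_+^*\sV \to \sQ_+$ vanishes by the universal property of $\Quot$, so $\pi_+^*\sigma$ factors through a canonical map $\sigma_+ \colon \pi_+^*\sW \to \sU_+$ with $\Coker(\sigma_+) = \sG_+$; dually, $\sigma_+^\vee \colon \sU_+^\vee \to \pi_+^*\sW^\vee$ has cokernel $\sK_+ = \pi_+^*\sK$. In case~(1), $\delta \ge d_+$ means $\sigma_+$ has source rank $m \le n-d_+$; its maximal-minor degeneracy locus is contained set-theoretically in $\pi_+^{-1}(X^{\ge\delta+1}(\sG))$ (since on its complement the pulled-back presentation is fiberwise exact), so the non-noetherian injectivity criterion of \cite[Ch.~6]{No} used in the proof of Lemma \ref{lem:hdK} shows that the hypothesis $\pdepth \ge 1$ forces $\sigma_+$ to be injective. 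The resulting resolution $0 \to \pi_+^*\sW \to \sU_+ \to \sG_+ \to 0$ gives ${\rm hd}_{\shZ_+}(\sG_+)\le 1$, and the dual sequence computes $\sExt^1(\sG_+, \sO_{\shZ_+}) = \sK_+$. Since the first degeneracy locus of $\sG_+$ on $\shZ_+$ again coincides set-theoretically with $\pi_+^{-1}(X^{\ge\delta+1}(\sG))$ and $\rank\sG_+ = \delta-d_+$, Lemma \ref{lem:hdK}(2) applied to $\sG_+$ under the stronger hypothesis $\pdepth \ge \delta-d_++1$ yields ${\rm hd}(\sK_+) \le \delta-d_++1$.

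Parts~(2) and~(3) follow by entirely parallel constructions. For~(2), $\delta \le d_+$ reverses the ranks, so one runs the same argument with $\sigma_+^\vee \colon \sU_+^\vee \to \pi_+^*\sW^\vee$ (source rank $n-d_+ \le m$, degeneracy locus in $\pi_+^{-1}(X^{\ge d_++1}(\sG))$), obtaining $\sK_+$ of ${\rm hd}\le 1$ with $\sExt^1(\sK_+,\sO)=\sG_+$; Lemma \ref{lem:hdK}(2) applied to $\sK_+$ (of rank $d_+-\delta$) delivers the stronger bound. For~(3), use instead the presentation $\sV^\vee \xrightarrow{\sigma^\vee} \sW^\vee \to \sK \to 0$ on $X$ and work on $\shZ_- = \Quot_{X,d_-}(\sK)$: the symmetric construction produces a map $\sU_-^\vee \to \pi_-^*\sV$ whose cokernel is $\sG_- = \pi_-^*\sG$ of generic rank $\delta+d_-$, and the same injectivity/Ext argument plus Lemma \ref{lem:hdK}(2) gives the asserted bounds.

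The passage from polynomial-depth hypotheses to the displayed Cohen--Macaulay codimension inequalities is routine bookkeeping. Over the stratum $X^{\ge\delta+i}(\sG) \setminus X^{\ge\delta+i+1}(\sG)$, where $\sG$ is locally free of rank $\delta+i$, the fiber of $\pi_+$ (resp.\ $\pi_-$) jumps from its generic value $\Gr_{d_+}(\delta)$ (resp.\ a point) to $\Gr_{d_+}(\delta+i)$ (resp.\ $\Gr_{d_-}(d_-+i)$), contributing exactly $d_+ i$ (resp.\ $d_- i$) to the relative dimension. Consequently $\codim_{\shZ_\pm} \pi_\pm^{-1}(X^{\ge\bullet}(\sG)) = \codim_X(X^{\ge\bullet}(\sG)) - d_\pm i$ in the expected-dimension regime, and substituting these identities into the required $\pdepth$ bounds produces precisely the six displayed inequalities. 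The main technical obstacle is to ensure that Cohen--Macaulayness of $X$ together with the displayed codimension bounds propagates to Cohen--Macaulayness of the relevant subschemes of $\shZ_\pm$ (so that $\pdepth$ and $\codim$ agree there); this is handled inductively by the Hochster--Eagon theorem applied stratum-by-stratum to the pullbacks of the $X^{\ge\delta+i}(\sG)$.
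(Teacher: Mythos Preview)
Your proposal is correct and follows essentially the same route as the paper's (one-line) proof: build the two-term presentations by factoring $\pi_\pm^*\sigma$ through the tautological subbundles, verify injectivity via the Eisenbud--Buchsbaum/Northcott criterion under the $\pdepth \ge 1$ hypothesis, then invoke Lemma~\ref{lem:hdK}\eqref{lem:hdK-2} plus the Grassmannian-fiber dimension count of Corollary~\ref{cor:Quot:degloci}. One small refinement on the final paragraph: the CM passage works because the displayed codimension bounds already force $\shZ_\pm$ to have expected dimension (hence to be CM as a regular zero-locus in a Grassmannian bundle over CM $X$), so that $\pdepth = \codim$ holds on $\shZ_\pm$ directly---invoking Hochster--Eagon stratum-by-stratum for the preimage subschemes is not the relevant mechanism.
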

\begin{proof} This is a direct application of Cor. \ref{cor:Quot:degloci}, (the proof of) Lem. \ref{lem:hdK} \eqref{lem:hdK-2}, and Eisenbud--Buchsbaum's criterion \cite[\S 6.4, Thm. 15]{No}; we omit the details of the computations.
\end{proof}

\subsection{Blowups as Quot schemes}
\subsubsection{Blowing up Koszul-regularly immersed centers}
For a closed subscheme $Z$ of a scheme $X$ defined by a quasi-coherent ideal $\sI$, the {\em blowup of $X$ along the center $Z$} is
	$$\pi \colon \Bl_Z X = \underline{\Proj}_{X} \bigoplus_{n \ge 0} \sI^n \to X.$$
The {\em exceptional divisor} $E = \pi^{-1} Z =  \underline{\Proj}_{Z} \bigoplus_{n \ge 0} \sI^n/\sI^{n+1}$ is an effective Cartier divisor on $X$, and the $\pi$-relative very ample line bundle $\sO_{\Bl_Z X}(1)  = \sO_{\Bl_Z X}(-E)$ is the ideal of $E$.

\begin{lemma}[Blowing up Koszul-regularly immersed centers commutes with Tor-independent base-change] \label{lem:blowup_bc} Let $i \colon Z \hookrightarrow X$ be a Koszul-regular closed immersion, let $g \colon X' \to X$ be a base-change, and denote $Z' = Z \times_X X'$. Suppose $g$ is Tor-independent with respect to $i$, then $\Bl_{Z'} X' = \Bl_Z X \times_{X} X'$ with exceptional divisor $E' = E \times_{Z} Z'$.
\end{lemma}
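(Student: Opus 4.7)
The plan is to reduce the statement to the claim that the Rees algebra commutes with the base change $g$, and then invoke that $\underline{\Proj}$ commutes with pullback. Writing $\sI \subset \sO_X$ for the ideal of $Z$ and $\sI' \subset \sO_{X'}$ for the ideal of $Z'$ (so $\sI' = g^{-1}\sI \cdot \sO_{X'}$), the goal is to prove that the natural map
\begin{equation*}
g^*\sI^n \longrightarrow (\sI')^n
\end{equation*}
is an isomorphism for every $n \ge 0$. Once this is known, applying $\underline{\Proj}_{X'}$ to the isomorphism of graded $\sO_{X'}$-algebras $g^*\bigl(\bigoplus_n \sI^n\bigr) \cong \bigoplus_n (\sI')^n$ and using compatibility of $\underline{\Proj}$ with base change yields $\Bl_Z X \times_X X' \cong \Bl_{Z'} X'$.

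The heart of the proof, which I expect to be the main obstacle, is a Tor-vanishing induction:
\begin{equation*}
\Tor_p^{\sO_X}(\sI^n, \sO_{X'}) = 0 \qquad \text{for all } p \ge 1,\ n \ge 0.
\end{equation*}
For $n=0$ this is trivial, and for $n=1$ it follows from the short exact sequence $0 \to \sI \to \sO_X \to \sO_Z \to 0$ together with the Tor-independence hypothesis $\Tor_{\ge 1}^{\sO_X}(\sO_Z, \sO_{X'})=0$. For the inductive step I will use the fundamental feature of Koszul-regular immersions: there is a short exact sequence
\begin{equation*}
0 \to \sI^{n+1} \to \sI^n \to \Sym^n(\sI/\sI^2) \to 0,
\end{equation*}
in which $\sI/\sI^2$ is a locally free $\sO_Z$-module. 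The sheaf $\Sym^n(\sI/\sI^2)$ is thus a locally free $\sO_Z$-module, and its Tor-vanishing against $\sO_{X'}$ reduces to that of $\sO_Z$: for $\sF$ locally free of rank $r$ on $Z$, one has locally $i_*\sF \simeq \sO_X^{\oplus r} \otimes_{\sO_X} \sO_Z$, so that $i_*\sF \otimes^{\bL}_{\sO_X} \sO_{X'} \simeq \sO_{X'}^{\oplus r} \otimes_{\sO_{X'}} \sO_{Z'}$ is concentrated in degree zero. Plugging this into the long exact Tor sequence associated to the displayed short exact sequence advances the induction.

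The Tor-vanishing in particular gives that $g^*\sI^n \to g^*\sO_X = \sO_{X'}$ is injective for every $n$; its image is the ideal generated by $g^{-1}(\sI^n)$, which is $(\sI\cdot\sO_{X'})^n = (\sI')^n$. This proves the desired isomorphism $g^*\sI^n \cong (\sI')^n$. For the statement about the exceptional divisor, note that $E \subset \Bl_Z X$ is cut out by $\sI\cdot\sO_{\Bl_Z X}$ and is the preimage of $Z$. Base-changing along $g$, the closed subscheme $E\times_X X' \subset \Bl_Z X \times_X X' = \Bl_{Z'} X'$ is cut out by $\sI'\cdot\sO_{\Bl_{Z'}X'}$, so it equals the exceptional divisor $E'$ of $\Bl_{Z'} X'$; and by construction $E\times_X X' = E\times_Z Z'$, giving the asserted identification.
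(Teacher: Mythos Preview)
Your proof is correct and follows essentially the same approach as the paper: reduce to showing $g^*\sI^n \cong (\sI')^n$ by a Tor-vanishing induction using the filtration with successive quotients $\sI^n/\sI^{n+1} \cong \Sym^n(\sI/\sI^2)$ locally free on $Z$, then invoke compatibility of $\underline{\Proj}$ with base change. The paper packages the Tor-vanishing step as a separate lemma (proving $\Tor^{\sO_X}_{\ge 1}(\sO_X/\sI^{n},\sO_{X'})=0$ via the sequences $0 \to \sI^n/\sI^{n+1} \to \sO_X/\sI^{n+1} \to \sO_X/\sI^n \to 0$), but this is equivalent to your direct induction on $\sI^n$.
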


\begin{proof} Since blow-ups and fiber products can be computed affine-locally, we may assume $X = \Spec A$, $X' = \Spec B$, $g \colon X' \to X$ corresponds to a ring homomorphism $A \to B$, and $Z \subset X$ is defined by an ideal $I \subset A$ generated by an $A$-Koszul-regular sequence. Then $Z' \subset X'$ is defined by the ideal $I B$. Since relative Proj commutes with base change, see e.g. \cite[\href{https://stacks.math.columbia.edu/tag/01O3}{Tag 01O3}]{stacks-project}, $\Bl_{Z} X \times_X X' =\Proj_B \bigoplus_{n \ge 0} (I^n \otimes B)$. By Lem. \ref{lem:bc:CM}, $\Tor^A_i(A/I^n, B) = 0$ for all $i\ge1, n \ge 0$, therefore by tensoring the short exact sequence $0 \to I^n \to A \to A/I^n \to 0$ with $B$, we obtain that $I^n \otimes B = I^n B = (IB)^n$. Hence the lemma is proved.
\end{proof}

\begin{lemma} \label{lem:blowup:univ}
Let $X$ be a scheme, let $Z \subseteq X$ be a closed subscheme cut out by a Koszul-regular section $\sigma$ of a locally free sheaf $\sE$ on $X$ of constant rank $n \ge 1$, and set $\sG = \Coker(\sO_X \xrightarrow{\sigma} \sE)$. Then there is an isomorphism of $X$-schemes 
	$$\Bl_Z X \simeq \Quot_{X, n-1}(\sG) \to X,$$
such that $ \sO_{ \Quot_{X, n-1}}(1) = \sO_{\Bl_Z X}(1) \otimes \det \sE$.
Moreover, the map $\pi \colon \Bl_Z X \to X$ is a projective local complete intersection morphism, thus in particular perfect and proper.
\end{lemma}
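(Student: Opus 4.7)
The plan is to realise both $\Bl_Z X$ and $\Quot_{X, n-1}(\sG)$ as the same closed subscheme of the projective bundle $p \colon \PP(\sE^\vee) \to X$, namely the zero locus of the canonical ``projectivisation'' of $\sigma$. Since $\sE$ is locally free of rank $n$, rank $n-1$ quotients of $\sE$ correspond to rank one subbundles, so by the universal property $\Quot_{X, n-1}(\sE) = \PP(\sE^\vee)$, with tautological sequence
\[
0 \to \shU \to p^*\sE \to \shQ \to 0,
\]
where $\shU = \sO_{\PP(\sE^\vee)}(-1)$ is the universal line subbundle and $\shQ$ is locally free of rank $n-1$. By Thm.~\ref{thm:Quot}\eqref{thm:Quot-2}, the canonical surjection $\sE \twoheadrightarrow \sG$ induces a closed immersion $i \colon \Quot_{X, n-1}(\sG) \hookrightarrow \PP(\sE^\vee)$, and from the functor of points, the image of $i$ is the zero scheme $V(\tau) \subset \PP(\sE^\vee)$ of the composite section
\[
\tau \colon \sO_{\PP(\sE^\vee)} \xrightarrow{\,p^*\sigma\,} p^*\sE \twoheadrightarrow \shQ.
\]

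The key step is $V(\tau) = \Bl_Z X$ as schemes. Koszul-regularity of $\sigma$ implies Koszul-regularity of $\tau$ by a local computation: trivialise $\sE$ so that $\sigma = (\sigma_1, \ldots, \sigma_n)$ is a Koszul-regular sequence, and work on a standard chart $U_j = \{x_j \neq 0\}$ of $\PP(\sE^\vee)$ with affine coordinates $y_i = x_i/x_j$, $i \neq j$; then $\tau$ has components $\tau_i = \sigma_i - \sigma_j y_i$, and the enlarged sequence $(\tau_1, \ldots, \widehat{\tau_j}, \ldots, \tau_n, \sigma_j)$ differs from the Koszul-regular sequence $(\sigma_1, \ldots, \sigma_n)$ on $\Spec \sO_X[y_1, \ldots, \widehat{y_j}, \ldots, y_n]$ by an invertible triangular substitution, yielding Koszul-regularity of $(\tau_i)_{i \neq j}$. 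Consequently $V(\tau) \hookrightarrow \PP(\sE^\vee)$ is a Koszul-regular closed immersion of codimension $n-1$, in particular lci and Cohen--Macaulay. On $V(\tau) \cap U_j$ the image of $p^*\sigma$ inside the line bundle $\shU|_{V(\tau) \cap U_j}$ is the principal ideal $(\sigma_j)$, which is also $\sI_Z \cdot \sO_{V(\tau) \cap U_j}$ (since the relations $\sigma_i = \sigma_j y_i$ hold on $V(\tau)$); this principal ideal is invertible because $V(\sigma_j) \cap V(\tau) \cap U_j$ has codimension $\ge 1$ in the Cohen--Macaulay scheme $V(\tau)$. The universal property of blow-ups then provides an $X$-morphism $V(\tau) \to \Bl_Z X$. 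Conversely, on $\Bl_Z X$ the invertibility of $\sI_Z \cdot \sO_{\Bl_Z X} = \sO_{\Bl_Z X}(-E)$ makes $p^*\sigma$ a map into a line subbundle of $p^*\sE|_{\Bl_Z X}$, i.e.\ a $\Bl_Z X$-point of $\Quot_{X, n-1}(\sG) = V(\tau)$; the two morphisms are mutually inverse.

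The line bundle assertion follows from the tautological sequence: $\det \shQ \simeq p^*\det \sE \otimes \shU^{-1} = p^*\det \sE \otimes \sO_{\PP(\sE^\vee)}(1)$, and restricting to $\Bl_Z X$ gives
\[
\sO_{\Quot_{X, n-1}}(1) \;=\; \sO_{\PP(\sE^\vee)}(1)\big|_{\Bl_Z X} \otimes \det\sE \;=\; \sO_{\Bl_Z X}(1) \otimes \det \sE,
\]
using $\sO_{\PP(\sE^\vee)}(1)|_{\Bl_Z X} = \sO_{\Bl_Z X}(1)$, which is the identification of the inverse of $\shU|_{\Bl_Z X}$ with the ideal sheaf of $E$. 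Finally, $\Bl_Z X \hookrightarrow \PP(\sE^\vee)$ is a Koszul-regular closed immersion of codimension $n-1$, hence lci, and $\PP(\sE^\vee) \to X$ is smooth and projective, so the composition $\pi \colon \Bl_Z X \to X$ is a projective lci morphism, and in particular perfect and proper. The main obstacle is the scheme-theoretic identification $V(\tau) = \Bl_Z X$: one must verify Koszul-regularity of $\tau$ from that of $\sigma$, establish that $\sI_Z \cdot \sO_{V(\tau)}$ is actually invertible (not merely nonzero), and then construct mutually inverse morphisms via the two universal properties, rather than merely match underlying topological spaces.
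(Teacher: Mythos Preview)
Your route is genuinely different from the paper's. The paper reduces Zariski-locally to the universal case $X=\AA^n_\ZZ$, $Z=\{0\}$: there both $\Bl_{\{0\}}\AA^n$ and $\Quot_{\AA^n,n-1}(\sG_\ZZ)$ are visibly the subscheme of $\AA^n\times\PP^{n-1}$ cut out by the $2\times 2$ minors $x_iX_j-x_jX_i$, and the general case follows by Tor-independent base change along the map $X\to\AA^n$ determined by $\sigma$ (via Lem.~\ref{lem:blowup_bc} for blowups and Thm.~\ref{thm:Quot}\eqref{thm:Quot-1} for Quot); Koszul-regularity of $\Bl_ZX\hookrightarrow\PP(\sE^\vee)$ is then deduced from Lem.~\ref{lem:3squares} and Lem.~\ref{lem:bc:CM}. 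Your direct identification inside $\PP(\sE^\vee)$ via the two universal properties is more hands-on and avoids the base-change machinery, but two steps need repair.

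First, your triangular substitution only shows that the \emph{length-$n$} sequence $(\tau_i)_{i\neq j}\cup\{\sigma_j\}$ is Koszul-regular in $\sO_X[y]$; it does not follow formally that the length-$(n-1)$ subsequence $(\tau_i)_{i\neq j}$ is Koszul-regular, since Koszul-regularity is not in general inherited by subsequences. The claim is nonetheless true: the long exact sequence associated to $K(\tau)\otimes K(\sigma_j)$ shows that multiplication by $\sigma_j$ is bijective on $H_p(K(\tau))$ for every $p\ge 1$; localizing at a prime $\fop$, if $\sigma_j\in\fop$ then Nakayama (these Koszul homologies are finitely generated) forces $H_p(K(\tau))_\fop=0$, while if $\sigma_j\notin\fop$ the substitution $y_i\mapsto y_i-\sigma_i/\sigma_j$ turns $(\tau_i)$ into a unit times the polynomial variables. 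Second, the assertion that $V(\tau)$ is Cohen--Macaulay is unwarranted: a Koszul-regular immersion into $\PP(\sE^\vee)$ gives CM only when $X$ itself is CM, which is not assumed, so your codimension argument for invertibility of the principal ideal $(\sigma_j)$ on $V(\tau)\cap U_j$ breaks. But invertibility follows more directly anyway: the same long exact sequence at $p=0,1$ shows that $\sigma_j$ is a nonzerodivisor on $\sO_{V(\tau)\cap U_j}$. With these two fixes your proof goes through; the paper's reduction to $\AA^n_\ZZ$ sidesteps both issues because over a smooth base the codimension count suffices.
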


\begin{proof} The problem being Zariski-local, we may assume $X = \Spec R$, $\sE = (R^{\oplus n})^{\sim}$, $\sigma$ is given by a $R$-Koszul-regular sequence $(f_1,\ldots, f_n)$. The ring homomorphism $A= \ZZ[x_1, \ldots, x_n] \to R$, $x_i \mapsto f_i$, induces a morphism of schemes $h \colon X \to \AA^n = \Spec A$, such that $Z = h^{-1}\{0\}$, where $\{0 \} \subset \AA^n$ is subscheme cut out by the ideal $(x_1,\ldots, x_n) \subset A$. By Lem. \ref{lem:bc:CM}, the base change $h$ is Tor-independent with respect to $\{0 \} \hookrightarrow \AA^n$. Denote $\sG_\ZZ : = \Coker(A \xrightarrow{(x_1,\ldots,x_n)} A^{\oplus n})^{\sim}$, then $\sG = h^* \sG_\ZZ$. However, for the inclusion $\{0 \} \hookrightarrow \AA^n$, we have $\Bl_{\{0  \} } \AA^n = \Quot_{\AA^n, n-1}(\sG_\ZZ)$, since both are represented by the subscheme of $\AA^n \times \Proj \ZZ[X_1,\ldots, X_n]$ cut out by the ideal $(x_i X_j - x_j X_i \mid 1 \le i < j \le n)$. By Lem. \ref{lem:blowup_bc}, $\Bl_Z X = \Bl_{\{0 \} } \AA^n \times_{\AA^n}  X$, and by Thm. \ref{thm:Quot} \eqref{thm:Quot-1}, $\Quot_{X, n-1}(\sG) =  \Quot_{\AA^n, n-1}(\sG_\ZZ) \times_{\AA^n} X$, hence $\Bl_Z X = \Quot_{X, n-1}(\sG)$. Finally, $\pi = q \circ \iota$ is the composition of the closed immersion $\iota \colon \Bl_Z X \hookrightarrow \PP_X(\sE^\vee)$ followed by the projection $q \colon \PP_X(\sE^\vee) \to X$, where $\iota$ is induced by $\sE^\vee \twoheadrightarrow 
\sI_Z$ via Thm. \ref{thm:Quot} \eqref{thm:Quot-2}, and it is a Koszul-regular closed immersion by Lem \ref{lem:3squares} and Lem. \ref{lem:bc:CM} \eqref{lem:bc:CM-1}. Hence $\pi$ is a projective local complete intersection morphism; in particular, it is perfect and proper. 

Finally, the comparison of $\sO(1)$'s follows from the fact that under the identification $\PP_X(\sE^\vee) = \Quot_{X,n-1}(\sE)$, if we denote $\sO_{\PP(\sE^\vee)}(1)$ the $\sO(1)$-bundle from projectivizaiton, and $\sQ_{n-1}$ the universal rank-$(n-1)$ quotient bundle from Quot scheme, then the Euler sequence $0 \to \sQ_{n-1}^\vee \to \sE^\vee \to \sO_{\PP(\sE^\vee)}(1) \to 0$ implies $\sO_{\PP(\sE^\vee)}(1)  \otimes \sO_{\Quot}(-1)  \simeq \det \sE^\vee$.
\end{proof}

\subsubsection{Blowing up along determinantal subschemes}
The following is proved in \cite{J20}.
\begin{lemma}[{\cite{J20}}]\label{lem:Quot=Bldet} Let $X$ be a connected scheme, let $\sG$ be a quasi-coherent $\sO_X$-module of homological dimension $\le 1$, and assume $\delta: = \rank \sG \ge 1$. Let $Z = X^{\ge \delta+1}(\sG) \subset X$ be the first degeneracy loci.
Consider the Quot scheme $\pi \colon \Quot_{X, \delta}(\sG) \to X$. Then $\pi^{-1}(Z) \subseteq \Quot_{X, \delta}(\sG)$ is a  locally principal closed subscheme. Furthermore:
	\begin{enumerate}[leftmargin=*]
	\item \label{lem:Quot=Bldet-1}
	If $\pi^{-1}(Z) \subseteq \Quot_{X, \delta}(\sG)$ is an effective Cartier divisor, then $\pi \colon \Quot_{X, \delta}(\sG) \to X$ is isomorphic to the blowup of $X$ along $Z$, and  
	$\sO_{ \Quot_{X, \delta}(\sG)}(1) = \sO_{\Bl_Z X}(1) \otimes \det \sG$;
	\item \label{lem:Quot=Bldet-2}
	If \eqref{lem:Quot=Bldet-1} holds and $X$ is reduced (resp. irreducible, integral), then so is $\Quot_{X, \delta}(\sG)$;
	\item \label{lem:Quot=Bldet-3}
	If $X$ is Cohen--Macaulay, and the following weak dimension conditions are satisfied:
	$$\codim_X (X^{\ge\delta +i}(\sG) ) \ge i\cdot \delta +1, \qquad \forall i \ge 1.$$
	Then the condition of \eqref{lem:Quot=Bldet-1} is satisfied, and $\Quot_{X, \delta}(\sG) = \Bl_Z X$ is also Cohen--Macaulay.
	\end{enumerate}
\end{lemma}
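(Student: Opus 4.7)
On $\shZ := \Quot_{X,\delta}(\sG)$, consider the tautological sequence $0 \to \sS \to \pi^*\sG \to \sQ_\delta \to 0$. Since $\sQ_\delta$ is locally free of rank $\delta$ and $\pi^*\sG$ has homological dimension $\le 1$, the kernel $\sS$ has homological dimension $\le 1$ and rank $\delta - \delta = 0$; locally $\sS$ is therefore the cokernel of a square matrix of locally free sheaves, so $\Fitt_0(\sS)$ is principally generated by a single determinant. The Fitting-ideal calculus yields
\[
\Fitt_0(\sS) = \Fitt_\delta(\pi^*\sG) = \pi^{-1}\Fitt_\delta(\sG)\cdot \sO_\shZ = \pi^{-1}\sI_Z\cdot \sO_\shZ = \sI_{\pi^{-1}(Z)},
\]
establishing local principality.

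\textbf{Parts (1) and (2).} Under the Cartier hypothesis, the universal property of the blowup gives a unique $X$-morphism $f \colon \shZ \to \Bl_Z X$; write $\rho \colon \Bl_Z X \to X$ for the blow-up map. For the inverse, use a local presentation $0 \to \sW \xrightarrow{\sigma} \sV \to \sG \to 0$ and consider $\wedge^m \sigma$, a section of $\wedge^m \sV \otimes (\det \sW)^{-1}$ whose zero locus is $Z$. On $\Bl_Z X$ this section equals (the defining equation of $E$) times a nowhere-vanishing section, yielding a Pl\"ucker-decomposable line subbundle $\rho^*\det \sW \otimes \sO_{\Bl_Z X}(-1) \hookrightarrow \rho^*\wedge^m \sV$ and hence, by Pl\"ucker, a rank-$m$ subbundle $\widetilde{\sW} \hookrightarrow \rho^*\sV$ (the strict transform of $\sigma(\sW)$) through which $\rho^*\sigma$ factors. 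The quotient $\rho^*\sV / \widetilde{\sW}$ is then the desired rank-$\delta$ locally free quotient of $\rho^*\sG$ and gives the inverse of $f$; computing determinants yields $\sO_\shZ(1) = \det(\rho^*\sV / \widetilde{\sW}) = \rho^*\det \sG \otimes \sO_{\Bl_Z X}(1) = \sO_{\Bl_Z X}(1) \otimes \det \sG$. For (2), affine-locally $\Bl_Z X = \underline{\Proj} \bigoplus_n \sI_Z^n$; the Rees algebra embeds as a graded subring of $\sO_X[t]$ via $a \mapsto at^n$, so reducedness and integrality descend from $\sO_X$, and irreducibility follows because $X \setminus Z$ is dense in $\Bl_Z X$ (no component of $\Bl_Z X$ lies over $Z$ since $E$ is a Cartier divisor).

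\textbf{Part (3) and main obstacle.} Locally via $0 \to \sW \xrightarrow{\sigma} \sV \to \sG \to 0$ with $\rank \sW = m$, $\rank \sV = m + \delta$, Thm.~\ref{thm:Quot}\eqref{thm:Quot-2} realizes $\shZ \hookrightarrow \Gr := \Gr_{X,\delta}(\sV)$ as the vanishing locus of the composite $\pi_{\Gr}^*\sW \to \sQ_\delta$, i.e., of a natural section $s$ of the rank-$m\delta$ locally free sheaf $\pi_{\Gr}^*\sW^\vee \otimes \sQ_\delta$, where $\pi_{\Gr}\colon \Gr \to X$ denotes the Grassmannian-bundle projection. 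The fibers of $\pi\colon \shZ \to X$ over $X^{\ge \delta+i}(\sG) \setminus X^{\ge \delta+i+1}(\sG)$ are $\Gr(\delta, \delta+i)$, of dimension $i\delta$, so the codimension of $\pi_{\Gr}^{-1}(X^{\ge \delta+i}(\sG)) \cap \shZ$ in $\Gr$ equals $\codim_X(X^{\ge \delta+i}(\sG)) + (m-i)\delta$, which is $\ge m\delta + 1$ for $i \ge 1$ by hypothesis, while it equals $m\delta$ over the dense open $X \setminus Z$. Hence $\shZ$ is of pure codimension $m\delta$ in the Cohen--Macaulay Grassmannian bundle $\Gr$; Northcott's generalized Eisenbud--Buchsbaum criterion \cite[\S 6.4, Thm.~15]{No} applied to the Koszul complex of $s$ then shows $s$ is a regular section and $\shZ$ is Cohen--Macaulay. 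Because $\pi$ is an isomorphism over $X \setminus Z$, the locally principal ideal $\sI_{\pi^{-1}(Z)}$ misses every associated (= minimal) prime of $\shZ$, so it is in fact Cartier, triggering (1). The main obstacle is precisely this codimension bookkeeping: the strict bound $\codim_X(X^{\ge \delta+i}(\sG)) \ge i\delta + 1$ is exactly what forces $\shZ$ to be pure of codim $m\delta$ in $\Gr$, ruling out extra components over the higher-degeneracy strata and unlocking the Koszul/Eagon--Northcott conclusion.
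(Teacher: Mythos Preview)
Your proof is essentially correct. The paper does not prove this lemma directly (it is cited from \cite{J20}), but it does prove the direct generalization Lem.~\ref{lem:Quotfib=Bl}, against which a comparison is possible.

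For part~(1), your route differs from the paper's. You obtain $\shZ \to \Bl_Z X$ from the universal property of the blowup and then build an explicit inverse via a Pl\"ucker argument. The paper instead shows directly that $\shZ$ (in the generalized form $\widehat{Y}$) satisfies the universal property of the blowup: given any $f\colon Y' \to X$ with $f^{-1}\sI_Z\cdot\sO_{Y'}$ invertible, the induced map $Y' \to \PP(\bigwedge^\ell \sV^\vee \otimes \bigwedge^\ell \sW)$ is shown to factor through the Pl\"ucker/Segre image of $\shZ$, yielding the unique lift. Both approaches rest on the same core fact---that the complement of an effective Cartier divisor is scheme-theoretically dense (\cite[\href{https://stacks.math.columbia.edu/tag/07ZU}{Tag 07ZU}]{stacks-project}), so a morphism landing in a closed sublocus over a dense open lands there globally---but you invoke it only implicitly: your phrases ``by Pl\"ucker'' and ``through which $\rho^*\sigma$ factors'' both depend on it. You also do not check that your two maps are mutually inverse; this is routine (both composites agree with the identity over $X\setminus Z$, hence everywhere by density and separatedness), but the paper's universal-property approach sidesteps the need for this verification entirely.

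For part~(3), your argument aligns with the paper's for Lem.~\ref{lem:Quotfib=Bl}\,\eqref{lem:Quotfib=Bl-3}: realize $\shZ$ as the zero locus of a section of a rank-$m\delta$ bundle on the Cohen--Macaulay Grassmannian bundle, use the stratification by degeneracy loci together with the hypothesis to check it has the expected codimension $m\delta$, and conclude Cohen--Macaulayness; the Cartier condition then follows since a locally principal subscheme of positive codimension in a Cohen--Macaulay scheme is Cartier.
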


\subsubsection{Blowing up of determinantal subschemes along further determinantal  subschemes}
The following is a generalisation of Lem. \ref{lem:Quot=Bldet}, and is closely related to the geometry of Grassmannian flips \S \ref{sec:Grass_flips}.

\begin{lemma}\label{lem:Quotfib=Bl} Let $X$ be a connected scheme, $\sG$ a quasi-coherent $\sO_X$-module of homological dimension $\le 1$ and rank $\delta$, and denote $\sK: = \sExt^1_{\sO_X}(\sG, \sO_X)$. Let $d \ge \delta$ be an integer. Consider the degeneracy loci $Y := X^{\ge d} (\sG)$ and $Z := X^{\ge d+1} (\sG)$, and the Quot schemes:
	$$\pi_+ \colon Y_+ := \Quot_{X, d}(\sG) \to X, \qquad \pi_- \colon Y_- := \Quot_{X, d - \delta}(\sK) \to X.$$
and their fiber product 
	$$\widehat{\pi} \colon \widehat{Y} := \Quot_{X, d}(\sG) \times_X \Quot_{X, d - \delta}(\sK)  \to X .$$	 
Then by Cor. \ref{cor:Quot:degloci}, the projections $\pi_{\pm} \colon Y_{\pm} \to X$ and $\widehat{\pi} \colon \widehat{Y} \to X$ factorise through $Y_{\pm} \to Y \subseteq X$ and $\widehat{Y} \to Y\subseteq X$ respectively, and they induce isomorphisms: $Y_\pm \backslash \pi_\pm^{-1}(Z) \simeq \widehat{Y} \backslash \widehat{\pi}^{-1}(Z) \simeq Y \backslash Z$. We claim that $\widehat{\pi}^{-1}(Z) \subseteq \widehat{Y}$ is a locally principal closed subscheme. Furthermore:
	\begin{enumerate}[leftmargin=*]
	\item \label{lem:Quotfib=Bl-1}
	If $\widehat{\pi}^{-1}(Z) \subseteq \widehat{Y}$ is an effective Cartier divisor, then $\widehat{\pi} \colon \widehat{Y} \to Y \subseteq X$ is isomorphic to the blowup of $Y$ along $Z$, and the following holds:
		$$\sO_{ \Quot_{X, d}(\sG)}(1) \boxtimes \sO_{ \Quot_{X, d-\delta}(\sK)}(1)  = \sO_{\Bl_Z Y}(1) \otimes \widehat{\pi}^* \det \sG;$$
	\item \label{lem:Quotfib=Bl-2}
	If \eqref{lem:Quotfib=Bl-1} holds, and $Y$ is reduced (resp. irreducible, integral), then so is $\widehat{Y}$;
	\item \label{lem:Quotfib=Bl-3}
	If $X$ is Cohen--Macaulay, and the following weak dimension conditions hold:
	\begin{align*}
	& \codim_X (Y) = d(d-\delta);\\
	& \codim_X (X^{\ge d +i}(\sG)) \ge (d+i)(d-\delta+i) + 1 - i^2,  \qquad \forall i \ge 1.
	\end{align*}
	(Notice that $X^{\ge d +i}(\sG) $ has expected codimension $(d+i)(d-\delta+i)$ in $X$.) Then the condition of \eqref{lem:Quotfib=Bl-1} is satisfied, and $Y$, $Y_\pm$ and $\widehat{Y} = \Bl_Z Y$ are also Cohen--Macaulay.
	\end{enumerate}
\end{lemma}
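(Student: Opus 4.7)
The plan is to work locally on $X$ and follow the strategy used for Lemma~\ref{lem:Quot=Bldet} in~\cite{J20} (the case $d=\delta$): reduce to a local presentation of $\sG$, identify an explicit ``determinantal'' section on $\widehat{Y}$ cutting out $\widehat{\pi}^{-1}(Z)$, and then use the universal properties of blowups and Quot schemes to compare $\widehat{Y}$ with $\Bl_Z Y$.

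We may localize on $X$ so that $\sG$ admits a presentation $0 \to \sW \xrightarrow{\sigma} \sV \to \sG \to 0$ with $\sW,\sV$ finite locally free of ranks $m$ and $n = m + \delta$; then $\sK = \Coker(\sigma^\vee)$, and by Theorem~\ref{thm:Quot}~\eqref{thm:Quot-2} we have closed embeddings $Y_+ \hookrightarrow \Gr_{n-d}(\sV)$ and $Y_- \hookrightarrow \Gr_{n-d}(\sW^\vee)$. On $\widehat{Y}$ let $\sS_+ \subset \sV$ denote the kernel of the tautological rank-$d$ quotient $\sV \twoheadrightarrow \sQ_+$, and let $\sT$ denote the rank-$(n-d)$ quotient of $\sW$ dual to the kernel $\sS'_-$ of $\sW^\vee \twoheadrightarrow \sQ_-$. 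The universal properties of $Y_\pm$ force $\sigma$ to factor on $\widehat{Y}$ as
\begin{equation*}
    \sigma \colon \sW \twoheadrightarrow \sT \xrightarrow{\tau} \sS_+ \hookrightarrow \sV,
\end{equation*}
where $\tau$ is a morphism between locally free sheaves of equal rank $n-d$. Since pre- and post-composition with a surjection and an injection of vector bundles do not change the image of $\bigwedge^{n-d}(-)$, the pullback ideal $\sI_Z \cdot \sO_{\widehat{Y}} = I_{n-d}(\sigma) \cdot \sO_{\widehat{Y}}$ coincides with the ideal locally generated by the section $\det(\tau) \in H^0(\widehat{Y},\ \det(\sT)^{-1} \otimes \det(\sS_+))$. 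Hence $\widehat{\pi}^{-1}(Z)$ is locally principal and is Cartier precisely when this section is a non-zero-divisor.

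Assuming the Cartier hypothesis of~\eqref{lem:Quotfib=Bl-1}, the universal property of blowups yields a unique factorization $\widehat{\pi} \colon \widehat{Y} \to \Bl_Z Y \to Y$. Conversely, on $\Bl_Z Y$ the ideal $\sI_Z$ becomes the invertible ideal $\sO(-\widetilde{E})$ of the exceptional divisor, and a local analysis as in~\cite{J20} produces canonical locally free quotients $\sG|_{\Bl_Z Y} \twoheadrightarrow \sP_+$ of rank $d$ and $\sK|_{\Bl_Z Y} \twoheadrightarrow \sP_-$ of rank $d-\delta$, providing an inverse morphism $\Bl_Z Y \to \widehat{Y}$; the two compositions restrict to the identity on the dense open $Y \setminus Z$ and are therefore identities on the whole. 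The line bundle identity is a direct Whitney-sum computation using $\sO_{Y_+}(1) = \det \sQ_+$, $\sO_{Y_-}(1) = \det \sQ_-$, the two short exact sequences $0 \to \sS_+ \to \sV \to \sQ_+ \to 0$ and $0 \to \sQ_-^\vee \to \sW \to \sT \to 0$, and the identification from the previous paragraph that $\det(\sT)^{-1} \otimes \det(\sS_+) = \sO(\widetilde{E}) = \sO_{\Bl_Z Y}(-1)$. Assertion~\eqref{lem:Quotfib=Bl-2} follows at once: $\widehat{\pi}$ is a proper birational morphism that is an isomorphism over the dense open $Y \setminus Z$, so reducedness, irreducibility, and integrality propagate from $Y$ to $\widehat{Y}$.

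The main technical obstacle lies in part~\eqref{lem:Quotfib=Bl-3}, where the Cartier hypothesis and the Cohen--Macaulay conclusions must be deduced from the codimension bounds. The plan is first to apply Hochster--Eagon to $Y = X^{\ge d}(\sG)$ to obtain that $Y$ is Cohen--Macaulay of the expected codimension $d(d-\delta)$, and then to apply Hochster--Eagon fibrewise---combining the Grassmannian-bundle structure on the non-degenerate strata with the codimension bounds on the deeper loci $X^{\ge d+i}(\sG)$ for $i \ge 1$---to deduce that $Y_\pm$ are Cohen--Macaulay, exactly as in Lemma~\ref{lem:hdK:iterated}. The same codimension hypotheses, fed into Northcott's generalized Buchsbaum--Eisenbud criterion for exactness of complexes~\cite[\S 6.4, Thm.~15]{No}, will show that an Eagon--Northcott-type complex resolves $\sO_{\widehat{Y}}/\det(\tau)\cdot \sO_{\widehat{Y}}$; in particular $\det(\tau)$ is a non-zero-divisor on $\widehat{Y}$, so $\widehat{\pi}^{-1}(Z)$ is Cartier, and by the preceding parts $\widehat{Y} = \Bl_Z Y$. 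Finally, $\widehat{Y}$, being the blowup of the Cohen--Macaulay $Y$ along a subscheme that is locally the zero locus of a regular section, inherits the Cohen--Macaulay property. The combinatorial verification that the stated codimension bounds on $\codim_X(X^{\ge d+i}(\sG))$ are precisely what Northcott's criterion requires at each step mirrors, and in fact extends, the computations of Lemma~\ref{lem:hdK:iterated}, and constitutes the technical heart of the argument.
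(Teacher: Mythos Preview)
Your computation that $\widehat{\pi}^{-1}(Z)$ is locally principal, via the factorization of $\sigma$ through rank-$(n-d)$ bundles and the section $\det(\tau)$, is exactly what the paper does. The approaches diverge at the identification $\widehat{Y}\simeq\Bl_Z Y$ in part~\eqref{lem:Quotfib=Bl-1}, and your route there has a gap. You propose to construct an inverse $\Bl_Z Y\to\widehat{Y}$ by producing, ``as in~\cite{J20}'', locally free quotients $\sG|_{\Bl_Z Y}\twoheadrightarrow\sP_+$ and $\sK|_{\Bl_Z Y}\twoheadrightarrow\sP_-$ of ranks $d$ and $d-\delta$. But the argument in \cite{J20} treats the case $d=\delta$, where $Y_-=X$ and only \emph{one} quotient is needed; it is not clear how that local analysis simultaneously produces both quotients here, and you offer no construction. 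The paper sidesteps this entirely by showing that $\widehat{Y}$ itself enjoys the universal property of the blowup: for any $f\colon Y'\to Y$ with $f^{-1}\sI_Z\cdot\sO_{Y'}$ invertible, the surjection $f^*(\bigwedge^\ell\sV^\vee\otimes\bigwedge^\ell\sW)\twoheadrightarrow f^{-1}\sI_Z\cdot\sO_{Y'}$ yields a morphism $\phi\colon Y'\to\PP(\bigwedge^\ell\sV^\vee\otimes_{\sO_X}\bigwedge^\ell\sW)$. Meanwhile $\widehat{Y}$ sits in this projective space via the Pl\"ucker embeddings of $Y_\pm$ followed by the Segre embedding. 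Over $Y'\setminus f^{-1}(Z)$ the map $\phi$ factors through $\widehat{Y}$ (via the isomorphism with $Y\setminus Z$); since the invertibility hypothesis makes $Y'\setminus f^{-1}(Z)$ scheme-theoretically dense (\cite[\href{https://stacks.math.columbia.edu/tag/07ZU}{Tag 07ZU}]{stacks-project}) and the Pl\"ucker--Segre map is a closed immersion, $\phi$ factors through $\widehat{Y}$ globally. Uniqueness follows from separatedness of $\widehat{Y}\to X$. The line-bundle identity drops out of this same Pl\"ucker--Segre picture rather than from a separate Whitney-sum computation.

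For part~\eqref{lem:Quotfib=Bl-3} your plan via Northcott's exactness criterion and an Eagon--Northcott resolution is more elaborate than needed. The paper simply checks that the stated bounds force $Y\subset X$, $Y_\pm\subset\GG_\pm$, and $\widehat{Y}\subset\GG_+\times_X\GG_-$ all to have their expected codimensions inside Cohen--Macaulay ambient schemes (hence are themselves Cohen--Macaulay by Hochster--Eagon), and force $\widehat{\pi}^{-1}(Z)\subset\widehat{Y}$ to have codimension $\ge 1$. Since in a Cohen--Macaulay scheme codimension equals depth, the locally principal ideal of $\widehat{\pi}^{-1}(Z)$ has depth $\ge 1$ and is therefore generated by a non-zerodivisor; no resolution is required.
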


\begin{proof} Since all statements are local, we may assume that there is a presentation $0 \to \sW \xrightarrow{\sigma} \sV \to \sG$, where $\rank \sW = m$, $\rank \sV = m+ \delta$. Then $X^{\ge d+i}(\sG) = D_{\ell-i}(\sigma)$ for $i \ge 0$, where $\ell: = n - d$. Consider $\GG_- = \Gr_{d-\delta}(\sW)$, $\GG_+ = \Gr_{d}(\sV)$ and the tautological sequences 
	$$0 \to \shU_- \to \sW \otimes \sO_{\GG_-} \to \shQ_- \to 0, \qquad 0 \to \shU_+ \to \sV^\vee \otimes \sO_{\GG_+} \to \shQ_+ \to 0.$$
By definition, the ideal $\sI_Z$ of $Z$ is the image of the map
	$\rho \colon \bigwedge^\ell \sV^\vee \otimes_{\sO_X} \bigwedge^\ell \sW \to \sO_X$
induced by $\bigwedge^\ell \sigma^\vee \colon \bigwedge^\ell \sV^\vee \to \bigwedge^\ell \sW$; Hence the ideal $\widehat{\pi}^{-1} \sI_Z \cdot \sO_{\widehat{Y}}$ is given by the image of $\widehat{\pi}^* (\rho)$. By Thm. \ref{thm:Quot} (see also the proof of Lem. \ref{lem:Sym}), the map $\widehat{\pi}^* \sigma$ factorises through 
	$$\widehat{\pi}^*(\sW) \twoheadrightarrow \shQ_-|_{\widehat{Y}} \xrightarrow{\widehat{\sigma}} \shQ_+^\vee|_{\widehat{Y}} \to \widehat{\pi}^* \sV.$$
Therefore $\widehat{\pi}^* (\rho)$ factorises through:
	$$\widehat{\pi}^* (\rho) \colon \widehat{\pi}^* (\bigwedge^\ell  \sV^\vee) \otimes_{\sO_{\widehat{Y}}}  \widehat{\pi}^* (\bigwedge^\ell  \sW) \twoheadrightarrow \bigwedge^\ell \shQ_+|_{\widehat{Y}}  \otimes_{\sO_{\widehat{Y}}} \bigwedge^\ell \shQ_-|_{\widehat{Y}}   \xrightarrow{\bigwedge^\ell \widehat{\sigma}} \sO_{\widehat{Y}}.$$
This shows that the ideal $\widehat{\pi}^{-1} \sI_Z \cdot \sO_{\widehat{Y}}$ is locally principal. 

Next, we show $\widehat{Y}$ enjoys the universal property of a blowup. Let $f \colon Y' \to Y \subseteq X$ be any morphism such that the dieal $f^{-1}\sI_Z \cdot \sO_{Y'} $ is invertible. Then there is a surjection:
 	$$f^* (\bigwedge^\ell \sV^\vee \otimes \bigwedge^\ell \sW) \twoheadrightarrow f^{-1}\sI_Z \cdot \sO_{Y'} \subseteq \sO_{Y'}.$$
Since $f^{-1}\sI_Z \cdot \sO_{Y'}$ is invertible, by Ex. \ref{ex:proj} this defines a morphism:
	$$\phi \colon Y' \to \PP(\bigwedge^\ell \sV^\vee \otimes_{\sO_X} \bigwedge^\ell \sW)$$
which is a lift of $f$, such that $\phi^* \sO_{ \PP(\bigwedge^\ell \sV^\vee \otimes_{\sO_X} \bigwedge^\ell \sW)}(1) \simeq f^{-1}\sI_Z \cdot \sO_{Y'}$.
On the other hand, denote by $\varpi_+ \colon Y_+ \to \PP(\bigwedge^\ell \shQ_+) \subseteq \PP(\bigwedge^\ell \sV^\vee)$ and  $\varpi_- \colon Y_- \to \PP(\bigwedge^\ell \shQ_-) \subseteq \PP(\bigwedge^\ell \sW)$ the Pl{\"u}cker embeddings of Thm. \ref{thm:Quot} \eqref{thm:Quot-3}, then there is a closed immersion:
	$$\widehat{\varpi} \colon \widehat{Y} \xrightarrow{\varpi_+ \times_X \varpi_-} \PP(\bigwedge^\ell \sV^\vee) \times_X  \PP(\bigwedge^\ell \sW) \xrightarrow{\varsigma_{\bigwedge^\ell \sV^\vee, \bigwedge^\ell \sW}} \PP(\bigwedge^\ell \sV^\vee \otimes_{\sO_X} \bigwedge^\ell \sW),$$
induced by the surjection $\widehat{\pi}^* (\bigwedge^\ell  \sV^\vee \otimes  \widehat{\pi}^* \bigwedge^\ell  \sW) \twoheadrightarrow \bigwedge^\ell \shQ_+|_{\widehat{Y}}  \otimes \bigwedge^\ell \shQ_-|_{\widehat{Y}}$, where $\varsigma_{\bigwedge^\ell \sV^\vee, \bigwedge^\ell \sW}$ is the Segre embedding (see \cite[Prop. 9.8.7]{EGAI}). Through the isomorphism $\widehat{Y} \backslash \widehat{\pi}^{-1}(Z) \simeq Y \backslash Z$ of Cor. \ref{cor:Quot:degloci}, the map $\phi|_{Y' \backslash f^{-1}(Z) } \colon Y' \backslash f^{-1}(Z)  \to \PP(\bigwedge^\ell \sV^\vee \otimes_{\sO_X} \bigwedge^\ell \sW)$ factorises through $\widehat{\varpi}$. Since $Y' \backslash f^{-1}(Z) $ is scheme-theoretically dense in $Y'$ (see \cite[\href{https://stacks.math.columbia.edu/tag/07ZU}{Tag 07ZU}]{stacks-project}) and $\widehat{\varpi}$ is a closed immersion, $\phi$ itself factorises through $\widehat{\varpi}$. Thus we obtain a lifting $Y' \to \widehat{Y}$ of $f$. Since $\widehat{Y} = Y_+ \times_X Y_-$ is separated over $X$ (Thm. \ref{thm:Quot}), and $\phi|_{Y' \backslash  f^{-1}(Z)}$ is determined by $f$ through $\widehat{Y} \backslash \widehat{\pi}^{-1}(Z) \simeq Y \backslash Z$, the lifting $Y' \to \widehat{Y}$ of $f$ is unique (see, e.g., \cite[\href{https://stacks.math.columbia.edu/tag/01RH}{Tag 01RH}]{stacks-project}).

Hence \eqref{lem:Quotfib=Bl-1} holds, which implies \eqref{lem:Quotfib=Bl-2}. For \eqref{lem:Quotfib=Bl-3}, it suffices to observe that by Cor. \ref{cor:Quot:degloci}, the dimension conditions of the lemma precisely imply that $Y \subset X$, $Y_\pm \subseteq \GG_\pm$ and $\widehat{Y} \subseteq \GG_+ \times_X \GG_-$ are closed subschemes of Cohen--Macauly schemes with expected codimensions (cf. Lem. \ref{lem:Tor-ind:quot:CM}), and that $\widehat{\pi}^{-1}(Z) \subseteq \widehat{Y}$ has codimension $\ge 1$. Since the codimension of a subscheme of a Cohen--Macaulay scheme coincides with the depth of its ideal, \eqref{lem:Quotfib=Bl-3} is proved. 
\end{proof}

 \addtocontents{toc}{\vspace{0.5\normalbaselineskip}}	
\section{Relative Fourier--Mukai transforms} \label{sec:FM}
The theory of relative Fourier-Mukai transforms has been a crucial ingredient in the study of derived categories. However, different references tend to make different, sometimes strong, assumptions on the schemes or the morphisms. We find it helpful to have a uniform {\em commutative} framework under which the various robust theories of derived categories -- mutation theory, relative Serre duality, base-change, descent theory, etc.; see \cite{Bo, BK, Kuz06, Kuz07, Kuz11, Huy, FMN, BS, P19} -- could be simultaneously applied. This section investigates the theory of relative Fourier-Mukai transforms for the category of {\em quasi-compact, quasi-separated} schemes.

\subsection{Generators of Triangulated Categories}
This subsection briefly reviews various notions of generators for triangulated categories (see \cite{Huy, Nee, BB}). 

Let $\shD$ be a triangulated category with translation functor $[i]$, $i \in \ZZ$ (see \cite[\href{https://stacks.math.columbia.edu/tag/0145}{Tag 0145}]{stacks-project}). To avoid awful terminology, in this paper, a full triangulated subcategory always means a {\em strictly} full triangulated subcategory (\cite[\href{https://stacks.math.columbia.edu/tag/001D}{Tag 001D}]{stacks-project}); direct sums (resp. products, filtered colimits, etc) always mean small direct sums (resp. products, filtered colimits, etc). 

For a class $\shE \subseteq {\rm Ob}(\shD)$ of objects of $\shD$, we let $\langle \shE \rangle$ denote the smallest full triangulated subcategory of $\shD$ containing $\shE$ (notice that our notation $\langle \shE \rangle$ is different from \cite{BB}, where the notation ``$\langle \shE \rangle$" there means the thick closure of our $\langle \shE \rangle$), and let $\shE^\perp \subseteq \shE$ (resp. $ {}^\perp \shE \subseteq \shD$) denote the full subcategory of $\shD$ spanned by objects $A \in \shD$ such that $\Hom_{\shD}(E,A[i]) = 0$ (resp. $\Hom_{\shD}(A,E[i]) = 0$) for all $E \in \shE$ and $i \in \ZZ$.  We will refer to $\shE^\perp$ (resp. ${}^\perp \shE$) as the right (resp. left) orthogonal of $\shE$. Then $\shE^\perp$ and ${}^\perp \shE$ are full, thick (i.e., closed under taking direct summands) triangulated subcategories of $\shD$, and it is easy to see that $\shE^\perp = \langle \shE \rangle^\perp$ and ${}^\perp\shE = {}^\perp \langle \shE \rangle$. 
Assume now that $\shD$ has (arbitrary, small) direct sums (i.e., coproducts). An object $K \in \shD$ is called {\em compact} if the functor $\Hom_{\shD}(K,\blank)$ commutes with direct sums. The compact objects of $\shD$ form a full, thick triangulated subcategory $\shD^c \subseteq \shD$.

\begin{definition}[Generators of Triangulated Categories; {see \cite{Huy, Nee, BB}}] Let $\shD$ be a triangulated category and let $\shE \subseteq {\rm Ob}(\shD)$ be a subset of objects.
\begin{itemize}
	\item We say $\shE$ {\em generates $\shD$ in the triangulated sense} if $\langle \shE \rangle = \shD$. 
	\item We say $\shE$ {\em thickly generates} (or {\em classically generates}) $\shD$ if the smallest full, thick (i.e., closed under direct summands) triangulated subcategory containing $\shE$ is $\shD$ itself. In the case where $\shE = \{E\}$ consists of a single object and $\{ E\}$ thickly generates $\shD$, we say that $E$ is a {\em thick (or classical) generator} of $\shD$. 
	\item  We say $\shE$ {\em spans} $\shD$ (or $\shE$ is a {\em spanning class of $\shD$}) if $\shE^\perp = {}^\perp \shE = 0$. 
	\item  We say $\shE$ {\em weakly generates} or {\em generates} $\shD$ (or $\shE$ is a {\em generating set} of $\shE$) if  $\shE^{\perp} = 0$. If $\shE = \{E\}$ consists of a single object and $\{ E\}$ generates $\shD$, we say $E$ is a {\em generator} of $\shE$. 
	\item Assume that $\shD$ has direct sums. We say that $\shE$ is a set of {\em compact generators} for $\shD$ (or $\shD$ is compactly generated by $\shE$) if $\shE$ consists of compact objects and generates $\shD$ in the weak sense (that is, $\shE \subseteq \shD^c$ and $\shE^\perp = 0$). A triangulated category $\shD$ is said to be {\em compactly generated} if it has arbitrary direct sums and has a set of compact generators (or equivalently, $\shD$ is generated by $\shD^{c}$). 
\end{itemize}
\end{definition}

Notice that we have the following result: if $\shD$ is compactly generated and let $\shE \subseteq \shD^{c}$ be a set of compact objects, then $\shD$ is generated by $\shE$ if and only if $\shD^c$ is thickly generated by $\shE$ (see \cite[Theorem 2.1]{Nee} or \cite[Theorem 2.1.2]{BB}).

The following representability theorems, due to Neeman \cite{Nee} (see also Krause \cite{Kra}) in the triangulated setting, play a dominant role in the theory of the next subsection:

\begin{theorem}[{Brown Representability Theorem; Neeman \cite[Theorem 3.1]{Nee}, Krause \cite[Theorem A]{Kra}}]
\label{thm:Brown}
Let $\shD$ be a compactly generated triangulated category. Then a functor $H \colon \shD^{\rm op} \to {\rm Ab}$ is representable (i.e., $H \simeq \Hom_{\shD}(\blank, D)$ for some $D \in \shD$) if and only if it is homological (i.e., carries exact triangles to long exact sequences) and carries direct sums in $\shD$ to products of abelian groups.
\end{theorem}

\begin{theorem}[{Adjoint Functor Theorem; Neeman \cite[Theorem 4.1]{Nee}}] 
\label{thm:Adjoint}
Let $F \colon \shD \to \shE$ be an exact functor between triangulated categories and assume that $\shD$ is compactly generated. Then $F$ admits a right adjoint if and only if it preserves direct sums. 
\end{theorem}

Notice that adjoint functors of an exact functor between triangulated categories are automatically exact functors (\cite[Lemma 5.3.6]{Nee01}).

\begin{remark}[Dual Statements] 
\label{rmk:dualBrown}
The ``dual" statements of Theorems \ref{thm:Brown} and \ref{thm:Adjoint} are only true under certain technical assumptions on the triangulated category $\shD$ or on the functors $F \colon \shD \to \shE$. For example, we could consider the following technical condition on $\shD$:
\begin{itemize}
	\item[(*)] $\shD$ is a compactly generated triangulated category which satisfies one of the following two conditions: either $\shD$ is $\kappa$-compactly generated for some regular cardinal $\kappa$ and the abelian category $\sE x(\{\shD^{\kappa}\}^{\rm op}, {\rm Ab})$ has enough injectives (see \cite[Theorem 1.18]{Nee01} for details), or $\shD$ has a set of symmetric generators in the sense of \cite[Definition 2]{Kra}.
\end{itemize}
Then we have the following dual version of the Brown Representability Theorem:
	\begin{itemize}
		\item If $\shD$ satisfies condition $(*)$, then a functor $H \colon \shD \to {\rm Ab}$ is representable if and only if it is homological and carries products in $\shD$ to products of abelian groups; see \cite[Theorem 1.18]{Nee01} and \cite[Theorem B]{Kra}.
	\end{itemize}
Consequently, we also have the following dual version of the Adjoint Functor Theorem: 
\begin{itemize}
	\item If $\shD$ satisfies condition $(*)$, then an exact functor $F \colon \shD \to \shE$ to a triangulated category $\shE$ admits a left adjoint if and only if it preserves products; see \cite[Theorem 8.4.4]{Nee01}.
\end{itemize}
The author does not know how to verify condition $(*)$ in practice; it would be extremely helpful to know the answer to the following question(s):
\begin{itemize}
	\item If $X$ is a quasi-compact, quasi-separated scheme, does $\Dqc(X)$ satisfy the condition $(*)$? (And if the answer is ``not always", when does $\Dqc(X)$ satisfy the condition $(*)$?)
\end{itemize}
There is potentially an alternative version of the dual Adjoint Functor Theorem as follows. 
In the context of (locally) presentable categories, it is known that a functor between (locally) presentable categories has a left adjoint if and only if it is accessible and preserves small limits (see \cite[Theorem 1.66]{AR}, \cite[Corollary 5.5.2.9]{HTT}). In view of this result, we believe the following theorem is true:
\begin{itemize}
	\item (Dual Brown Representability Theorem for Triangulated Categories; speculation) Let $F \colon \shD \to \shE$ be an exact functor between well generated triangulated categories (in the sense of \cite[Definition 1.15]{Nee01}). Then $F$ admits a left adjoint if and only if $F$ preserves small products and there exists a regular cardinal $\kappa$ such that $F$ preserves $\kappa$-filtered homotopy colimits (\cite[\href{https://stacks.math.columbia.edu/tag/090Z}{Tag 090Z}]{stacks-project}).  
\end{itemize}
We will not use this theorem in this paper, but it would be desirable to know whether it is true or not and have a reference.
\end{remark}

\begin{theorem}[{\cite[Theorem 5.1]{Nee}}]
\label{thm:Brown:cpt}
 Let $F \colon \shD \to \shE$ be an exact functor between triangulated categories and assume that $\shD$ is compactly generated by a set $S$ of objects, and let $G \colon \shE \to \shD$ be the right adjoint functor of $F$. Then $G$ preserves coproducts if and only if $F$ carries objects of $S$ to compact objects of $\shE$. Consequently, if we take $S = \shD^c$, then we obtain that $G$ preserves coproducts if and only if $F$ preserves compact objects. 
\end{theorem}

We find the following assertion very useful (whose various versions are well-known to experts):

\begin{corollary}[{Compare with \cite[Theorem 2.1.2 \& Lemma 3.2]{Nee}}]
\label{cor:Adjoint}
Let $\shD$ be a compactly generated triangulated category and let $\shC \subseteq \shD$ be a full triangulated subcategory which is closed under formation of direct sums in $\shD$. If $\shC$ contains a compact generating set of $\shD$, then $\shC= \shD$.
\end{corollary}
\begin{proof}
This assertion is essentially proved in \cite[Lemma 3.2]{Nee}; we present a different proof here as a fun exercise of the Adjoint Functor Theorem. Let $i_{\shC} \colon \shC \to \shD$ denote the inclusion functor. Then the condition of $\shC$ implies that $\shC$ is compactly generated and $i_{\shC}$ preserves direct sums. Consequently, by Theorem \ref{thm:Adjoint}, $i_{\shC}$ admits a right adjoint $i_{\shC}^! \colon \shD \to \shC$. For any $D \in \shD$, we consider the counit map $\eta \colon i_{\shC} i_{\shC}^! (D) \to D$ and complete it to an exact triangle in $\shD$:
	$$i_{\shC} i_{\shC}^! (D) \xrightarrow{\eta} D \to E \xrightarrow{[1]}.$$
From the adjunction $i_{\shC}  \dashv i_{\shC}^!$, we obtain that, for any object $C \in \shC$, the composite map
	$$\Hom_{\shC}(C, i_{\shC}^!(D)) \xrightarrow{\simeq} \Hom_{\shD}(i_{\shC}(C), i_{\shC} i_{\shC}^!(D)) \xrightarrow{\circ \eta} \Hom_{\shD}(i_{\shC}(C), D)$$
is an isomorphism. Therefore, $\Hom_{\shC}(C, E) \simeq 0$ for any $C$. Since $\shC$ contains a generating set of $\shD$, we conclude that $E \simeq 0$, and hence $  i_{\shC} i_{\shC}^! (D) \simeq D$. This proves $\shC = \shD$.
\end{proof}

Consequently, we have the following result: 

\begin{corollary}
\label{cor:Adjoint.isom}
Let $\Phi_1, \Phi_2 \colon \shD_1 \to \shD_2$ be two exact functors between triangulated categories. Assume that $\shD_1$ is compactly generated and the functors $\Phi_1,\Phi_2$ preserve direct sums. Then a natural transform $\phi \colon \Phi_1 \to \Phi_2$ is an isomorphism if and only if it is an isomorphism on a generating set of $\shD_1$.
\end{corollary}
\begin{proof}
We let $\shC \subseteq \shD_1$ be the full subcategory spanned by objects $A \in \shD_1$ such that the map 
	$$\phi(A[n]) \colon \Phi_1(A[n]) \to \Phi_2(A)[n]$$ 
is an isomorphism in $\shD_2$ for all $n \in \ZZ$. Then $\shC$ is a full triangulated subcategory which is closed under the formations of direct sums of $\shD_1$. By virtue of Corollary \ref{cor:Adjoint}, we have $\shC = \shD_1$. 
\end{proof}

\begin{corollary}
\label{CorollaryAdjoint.ff}
Let $\Phi \colon \shD_1 \to \shD_2$ be an exact functor between triangulated categories. Assume that $\shD_1$ is compactly generated and $\Phi$ admits a right adjoint. Then:
\begin{enumerate}
	\item 
	\label{CorollaryAdjoint.ff-1}
	If $\Phi$ admits a left adjoint, then $\Phi$ is fully faithful if and only if it is fully faithful on a generating set of $\shD_1$, that is, for a generating set $\shE$ of $\shD_1$, the canonical map
			$$\Hom(E_1, E_2) \xrightarrow{\Phi(\blank)} \Hom(\Phi(E_1), \Phi(E_2))$$
		is an isomorphism for all $E_1, E_2 \in \shE$.
	\item 
	\label{CorollaryAdjoint.ff-2}
	If $\shD_2$ is compactly generated and $\Phi$ is fully faithful, then $\Phi$ is an equivalence if and only if the essentially image of $\Phi$ contains a compact generating set of $\shD_2$.
\end{enumerate}
\end{corollary}
\begin{proof}
We first prove assertion \eqref{CorollaryAdjoint.ff-1}. Let $\Phi^L$ denote a left adjoint of $\Phi$ and let $\eta \colon \Phi^L \circ \Phi \to \id$ denote the counit map. For every $E \in \shE$, from the commutative diagram of functors
	$$
	\begin{tikzcd}
		\Hom(\blank, E) \ar{rd}[swap]{\Phi} \ar{r}{\eta_{E}}& \Hom(\blank, \Phi^L \circ \Phi (E) ) \ar{d}{\simeq} \\
		 	&\Hom(\Phi(\blank), \Phi(E))
	\end{tikzcd}
	$$
(see \cite[Lemma 1.21]{Huy}), we obtain that $\Phi$ is fully faithful on a generating set $\shE$ if and only if the counit map $\eta \colon \Phi^L \circ \Phi \to \id$ is an isomorphism on all objects of $\shE$. Now assertion \eqref{CorollaryAdjoint.ff-1} follows from applying Corollary \ref{cor:Adjoint.isom} to the functors $\Phi_1 = \Phi^L \circ \Phi$ and $\Phi_2 = \Id$.  

Assertion \eqref{CorollaryAdjoint.ff-2} follows from applying Corollary \ref{cor:Adjoint} to the essential image of $\Phi$.
\end{proof}

For later reference, we list the following known results regarding generators:

\begin{lemma} 
\label{lem:span:f.f.-1} 
{(Bridgeland \cite{Br}, Orlov \cite{Or}; see \cite[Proposition 1.49]{Huy})} 
Let $\Phi \colon \shD_1 \to \shD_2$ be an exact functor between triangulated categories. If $\Phi$ has both a left and a right adjoint, then $\Phi$ is fully faithful if and only if it is fully faithful on some spanning class. 
\end{lemma}

\begin{lemma}[{\cite{BB}}]\label{lem:generator} Let $f_i \colon X_i \to S$ be a morphisms of quasi-compact, quasi-separated schemes, where $i = 1,2$, and let $p_i \colon X_1 \times_S X_2 \to X_i$ denote the natural projections. Then $\Dqc(X_1 \times_S X_2)$ (resp. $\Perf(X_1 \times_S X_2)$) is compactly generated (resp.  thickly generated) by
	$$R := \{p^*_1 F_1 \otimes p^*_2 F_2 \mid F_1 \in \Perf(X_1), F_2 \in \Perf(X_2) \} \subseteq \Perf(X_1 \times_S X_2).$$ 
In particular, 
the smallest full triangulated subcategory of $\Dqc(X_1 \times_S X_2)$ containing $R$ which is closed under taking arbitrary direct sums (resp. taking direct summands) is $\Dqc(X_1 \times_S X_2)$ (resp. $\Perf(X_1 \times_S X_2)$).
\end{lemma}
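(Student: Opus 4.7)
The plan is to reduce to the compact generation of $\Dqc$ of a qcqs scheme by perfect complexes, due to Bondal--Van den Bergh \cite{BB} and Neeman \cite{Nee}. First, each $p_1^* F_1 \otimes p_2^* F_2 \in R$ is a perfect complex on the (again qcqs) scheme $X_1 \times_S X_2$, since the derived pullbacks $p_i^*$ and the derived tensor product both preserve perfectness; in particular each object of $R$ is compact in $\Dqc(X_1 \times_S X_2)$. By the general principle that, for a compactly generated triangulated category $\shD$, a family $\shE \subseteq \shD^c$ compactly generates $\shD$ if and only if $\shE$ thickly generates $\shD^c$, the two statements of the lemma are equivalent. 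It therefore suffices to prove that the localizing envelope $\shC$ of $R$ in $\Dqc(X_1 \times_S X_2)$ coincides with all of $\Dqc(X_1 \times_S X_2)$.

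As a first step I would enlarge $\shC$ by showing it contains every external tensor product $p_1^* F \otimes p_2^* G$ with $F \in \Dqc(X_1)$ and $G \in \Dqc(X_2)$. Fixing $F_2 \in \Perf(X_2)$, the full subcategory
$$\shC_1(F_2) := \{\, F \in \Dqc(X_1) \,:\, p_1^* F \otimes p_2^* F_2 \in \shC \,\}$$
is localizing (the functors $p_1^*$ and $(-) \otimes p_2^* F_2$ preserve direct sums and distinguished triangles, and $\shC$ is localizing) and contains $\Perf(X_1)$ by construction; since $\Dqc(X_1)$ is the localizing envelope of $\Perf(X_1)$ by \cite{BB, Nee}, we conclude $\shC_1(F_2) = \Dqc(X_1)$. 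A symmetric argument in the second variable now yields the claim.

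The remaining task is to pass from external tensor products to arbitrary objects of $\Dqc(X_1 \times_S X_2)$, which I would handle by a Mayer--Vietoris induction on the minimal number of affine opens needed to cover $X_1$, $X_2$, and $S$. The base case $X_1 = \Spec A_1$, $X_2 = \Spec A_2$, $S = \Spec R$ is trivial: then $X_1 \times_S X_2 = \Spec(A_1 \otimes_R A_2)$ is affine, and $\sO_{X_1 \times_S X_2} = p_1^* \sO_{X_1} \otimes p_2^* \sO_{X_2} \in R$ is already a compact generator of $\Dqc$ of an affine scheme, so $\shC = \Dqc(X_1 \times_S X_2)$. For the inductive step, writing (say) $X_1 = U \cup V$ with $V$ affine and $U$ covered by strictly fewer affines, the corresponding Mayer--Vietoris distinguished triangle for $X_1 \times_S X_2 = (U \times_S X_2) \cup (V \times_S X_2)$ together with the preliminary step (which lets us freely tensor external products against arbitrary members of $\Dqc$ of the smaller pieces) allows us to conclude that $\shC$ contains every object of $\Dqc(X_1 \times_S X_2)$.

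The main obstacle is the bookkeeping in the quasi-separated (non-semi-separated) case, where intersections of affine opens need not remain affine; nevertheless they remain qcqs with a strictly smaller minimal affine cover, so the induction closes. This is precisely the strategy of \cite[\S 3]{BB}, and this is the only place where the full qcqs hypothesis is used.
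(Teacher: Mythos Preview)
Your approach is correct in outline but takes a genuinely different route from the paper's. The paper's proof is a two-line citation: by \cite[Thm.~3.1.1]{BB} each $\Dqc(X_i)$ admits a \emph{single} compact generator $F_i$, and then \cite[Lem.~3.4.1]{BB} asserts directly that the single object $F_1 \boxtimes_S F_2 \in R$ is already a compact generator of $\Dqc(X_1 \times_S X_2)$; the statements about localizing and thick envelopes then follow from Neeman's general formalism. You instead bypass the (nontrivial) existence of a single compact generator and argue directly by Mayer--Vietoris, which amounts to reproving \cite[Lem.~3.4.1]{BB} by hand. The paper's route is shorter and leverages the strongest available black boxes; yours is more self-contained and makes the mechanism visible, at the cost of the bookkeeping you acknowledge.

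One technical point deserves tightening: your claim that $U \cap V$ ``remains qcqs with a strictly smaller minimal affine cover'' is not justified as stated, and a naive induction on the minimal number of affines can stall in the merely quasi-separated case. The standard fix (implicit in \cite[\S 3]{BB}, which you cite) is to observe that $U \cap V$, being open in the affine $V$, is in fact \emph{separated}; so one first runs the induction for separated (or semi-separated) $X_1$, where intersections of affines stay affine and the count genuinely drops, and then handles general qcqs $X_1$ in a second pass using that $U \cap V$ falls into the already-established separated case. With that correction your argument goes through. Your preliminary step enlarging $\shC$ to contain $p_1^* F \otimes p_2^* G$ for arbitrary $F \in \Dqc(X_1)$, $G \in \Dqc(X_2)$ is exactly what is needed to absorb the non-perfect $j_{1*} F'$ that appears after pushing forward along the open immersion, so that part is well designed.
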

\begin{proof}
By virtue of Theorem \ref{thm:BB}, we can choose compact generators $F_i$ of $\Dqc(X_i)$, $i=1,2$. Then by \cite[Lemma 3.4.1]{BB}, $F_1 \boxtimes_S F_2: =  p^*_1 F_1 \otimes p^*_2 F_2\in R$ is a compact generator for $\Dqc(X_1 \times_S X_2)$. Hence $\Dqc(X_1 \times_S X_2)$ is compactly generated by $R$. Since $R$ is closed under degree shift, if we let $\shR$ denote the smallest full triangulated subcategory of $\Dqc(X_1 \times_S X_2)$ containing $R$ that is closed under taking direct sums, then \cite[Theorem 2.1.2]{Nee} implies that $\shR = \Dqc(X_1 \times_S X_2)$. Therefore, \cite[Theorem 2.1.3]{Nee} 
implies that $\shR^c =  \Dqc(X_1 \times_S X_2)^c = \Perf(X_1 \times_S X_2) $ is the thick closure of $R$. 
\end{proof}

\begin{lemma} \label{lem:span} Let $f \colon Y \to X$ be an affine morphism between quasi-compact, quasi-separated schemes. Then $\{f^* F \mid F \in \Perf(X)\}$ spans $\Perf(Y)$ and compactly generates $\Dqc(Y)$.
\end{lemma}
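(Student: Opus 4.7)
The plan is to exploit two features of an affine morphism $f$: first, that $f^* \dashv f_*$ is an adjunction on $\Dqc$ (Thm.~\ref{thm:Neeman}\eqref{thm:Neeman-1}) with $f_*$ preserving direct sums; and second, that for affine $f$ the pushforward $f_*$ is both $t$-exact and conservative on $\Dqc$. The latter is the crucial input: affine-locally $f_*$ is the restriction-of-scalars functor along a ring map $A \to B$, which is faithful exact on quasi-coherent modules, so if $f_* G = 0$ in $\Dqc(Y)$ then taking cohomology sheaves gives $f_* H^i(G) = H^i(f_*G) = 0$ for every $i$, hence $H^i(G)=0$ and $G=0$.

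I would first address compact generation of $\Dqc(Y)$. For any $F \in \Perf(X)$, the pullback $f^*F$ lies in $\Perf(Y)$ and is therefore compact in $\Dqc(Y)$. Suppose $G \in \Dqc(Y)$ satisfies $\Hom_Y(f^*F[n], G) = 0$ for every $F \in \Perf(X)$ and every $n \in \ZZ$. By the adjunction $f^* \dashv f_*$, this translates to $\Hom_X(F[n], f_*G) = 0$ for all such $F, n$. Since $\Perf(X)$ compactly generates $\Dqc(X)$ for $X$ quasi-compact quasi-separated (\cite{BB}; cf.\ the references in \S\ref{sec:generalities:derived}), it follows that $f_*G = 0$, and conservativity of $f_*$ then forces $G = 0$. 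Hence $\{f^*F\}_{F \in \Perf(X)}$ is a set of compact generators for $\Dqc(Y)$.

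For the spanning statement I would not argue directly but invoke Neeman's theorem \cite[Thm.~2.1.3]{Nee}: in a compactly generated triangulated category, any set of compact generators thickly generates the subcategory of compact objects. Since $Y$ is quasi-compact quasi-separated, $\Dqc(Y)^c = \Perf(Y)$, so the family $\{f^*F\}_{F \in \Perf(X)}$ thickly generates $\Perf(Y)$. Thick generation is strictly stronger than spanning: if the thick closure of a set $\shE$ is all of $\Perf(Y)$, then $\shE^\perp \subseteq \Perf(Y)^\perp = 0$ and ${}^\perp\shE \subseteq {}^\perp\Perf(Y) = 0$, giving both required orthogonality vanishings.

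There is essentially no obstacle: the proof is a short assembly of standard adjunction, conservativity for affine morphisms, the Bondal--Van den Bergh compact generation of $\Dqc$ on a qcqs scheme, and Neeman's thick-closure theorem. The only point one needs to verify with some care is the conservativity of $f_*$ on $\Dqc(Y)$, which reduces by $t$-exactness of $f_*$ (valid because $f$ is affine) to the evident faithfulness of restriction of scalars on quasi-coherent modules.
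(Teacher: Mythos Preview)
Your argument for compact generation of $\Dqc(Y)$ is essentially identical to the paper's: adjunction $f^* \dashv f_*$, conservativity of $f_*$ for affine $f$, and compact generation of $\Dqc(X)$ by $\Perf(X)$.

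For the spanning claim the two proofs diverge. You deduce it indirectly: compact generation of $\Dqc(Y)$ plus Neeman's theorem \cite[Thm.~2.1.3]{Nee} gives thick generation of $\Perf(Y)$, which dominates spanning. The paper instead checks ${}^\perp(f^*\Perf(X)) \cap \Perf(Y) = 0$ directly by dualizing: for $G \in \Perf(Y)$ one has $\Hom_Y(G, f^*F) \simeq \Hom_X(F^\vee, f_*(G^\vee))$, so if this vanishes for all $F \in \Perf(X)$ then $f_*(G^\vee) = 0$, hence $G^\vee = 0$ by conservativity, hence $G = 0$. Your route is cleaner and yields the stronger conclusion of thick generation (which the paper actually uses elsewhere, e.g.\ in Lem.~\ref{lem:Z_i:generator}), at the cost of invoking Neeman's theorem as a black box; the paper's route is more elementary and self-contained, exploiting only that perfect complexes are dualizable. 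Both are correct.
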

\begin{proof} Observe that $(f^*, f_*)$ forms an adjoint pair between $\Dqc(X)$ and $\Dqc(Y)$, $f_*$ is conservative, and $f^*$ restricts to a functor $\Perf(X) \to \Perf(Y)$. For any $G \in (f^*\Perf(X))^\perp \subseteq \Dqc(Y)$, $\Hom_{Y}(f^*F,G) = \Hom_{X}(F,f_* G)=0$ for any $F \in \Perf(X)$, hence $f_*G =0$, therefore $G=0$. Similarly, for any $G \in {}^\perp(f^*\Perf(X)) \cap \Perf(Y)$, $\Hom_{Y}(G,f^*F) = \Hom_{X}(F^\vee,f_* (G^\vee))=0$ for any $F \in \Perf(X)$, therefore $f_*(G^\vee)=0$, which implies $G^\vee=0$, and hence $G=0$. 
\end{proof}

\subsection{Derived Categories of Quasi-Compact, Quasi-Separated Schemes} \label{sec:generalities:derived}
\subsubsection{Quasi-compact, quasi-separated schemes}
For a scheme $X$, we let $\Mod \sO_X$ denote the abelian category of $\sO_X$-modules, $\Qcoh X \subseteq \Mod \sO_X$ the abelian subcategory of quasi-coherent sheaves and $\coh X  \subseteq \Mod \sO_X$ the abelian subcategory of coherent sheaves. We let $\D(\sO_X) = D (\Mod \sO_X)$, $\D(\Qcoh X)$ and $\D(\coh X)$ denote the unbounded derived categories of these abelian categories (\cite[\S 1.2]{Lip}, \cite{Sp}), respectively. For a pair of integers $a \le b$, $a,b \in \ZZ$, we let $\D^{[a,b]}(\sO_X)$,   $\D^{[a,b]}(\Qcoh X)$, and $\D^{[a,b]}(\coh X)$ denote the full subcategories of $\D(\sO_X) = D (\Mod \sO_X)$, $\D(\Qcoh X)$ and $\D(\coh X)$, respectively, spanned by those complexes with vanishing cohomology sheaves in degrees $< a$ or $>b$. Similarly, for $? \in \{ -,+, \b\}$, we let $\D^{?}(\sO_X)$,   $\D^{?}(\Qcoh X)$, and $\D^{?}(\coh X)$ denotes the full subcategories spanned by complexes with bounded above, bounded below, and bounded cohomologies, respectively.

We will be mostly interested in following full triangulated subcategories of $\D(\sO_X)$:
	$$\Perf(X) \subseteq \Db(X) \subseteq \Dqc(X),$$
where $\Dqc(X)$ is full subcategory spanned by unbounded complexes of $\sO_X$-modules with quasi-coherent cohomologies, $\Db(X)$ is the full subcategory spanned by pseudo-coherent complexes with bounded cohomologies, and $\Perf(X)$ is the full subcategory spanned by perfect complexes. 

\begin{remark}
If $X$ is {\em noetherian} and has finite Krull dimension, then: 
	\begin{enumerate}
		\item There is a canonical equivalence $\Db(\coh X) \simeq \Db_{\coh}(X)$ (\cite[Corollary II.2.2.2.1]{SGA}) and a canonical identification $\Db(X) = \Db_{\coh}(X)$ as subcategories of $\Dqc(X)$ (\cite[\href{https://stacks.math.columbia.edu/tag/08E8}{Tag 08E8}]{stacks-project}).		
		\item There is a canonical equivalence $\D(\Qcoh X) \simeq \Dqc(X)$; see  \cite[\href{https://stacks.math.columbia.edu/tag/09T4}{Tag 09T4}]{stacks-project};
	\end{enumerate}
	
If $X$ is {\em quasi-compact} and {\em semi-separated} (i.e., $X$ has affine diagonal), then: 
	\begin{enumerate}
		\item There is canonical equivalence $\D(\Qcoh X) \simeq \Dqc(X)$; see \cite[\href{https://stacks.math.columbia.edu/tag/08DB}{Tag 08DB}]{stacks-project};  
		\item $X$ is a perfect stack in the sense of \cite{BFN}; see \cite[Proposition 3.1.9]{BFN} and note that $\Dqc(X)$ is compactly generated by perfect complexes \cite{LN, BB}. 
	\end{enumerate}
\end{remark}

In general, we have the following implications for a scheme $X$:
	\begin{align*}
	&	 \text{$X$ is separated} \implies \text{$X$ is semi-separated} \implies \text{$X$ is quasi-separated}.\\
	&	 \text{$X$ is noetherian} \implies \text{$X$ is quasi-compact, quasi-separated}.
	\end{align*}
All these implications are strict; see \cite{TLRG} for counterexamples of the inverse directions of the first two implications. In the following, we will work with the weakest hypothesis of the above implications, that is, we will work in the category of {\em quasi-compact, quasi-separated schemes}. \footnote{In literatures, the adjective ``quasi-compact, quasi-separated" is sometimes abbreviated as ``concentrated"; for example, quasi-compact, quasi-separated morphisms are also called concentrated maps (see \cite[\S 3.9]{Lip}).}
Notice that morphisms between quasi-compact, quasi-separated schemes are automatically quasi-compact, quasi-separated. The derived categories of quasi-compact, quasi-separated schemes are particularly well-behaved from a categorical perspective:

\begin{theorem}[{\cite[Theorem 3.1.1]{BB}; see also \cite{Nee, TT}}]
\label{thm:BB}
 Let $X$ be a quasi-compact, quasi-separated scheme. Then $\Dqc(X)$ is compactly generated, and the compact objects of $\Dqc(X)$ are precisely with perfect complexes. 
 In fact, $\Dqc(X)$ can be (classically) generated by a single perfect complex. 
\end{theorem}

\subsubsection{Morphisms and Grothendieck--Neeman Duality}

Recall that a morphism $f \colon X \to Y$ between schemes is called {\em pseudo-coherent} if $f$ is locally of finite type and $\sO_X$ is pseudo-coherent relative to $Y$ (
\cite[\href{https://stacks.math.columbia.edu/tag/067Z}{Tag 067Z}]{stacks-project}); $f$ is called {\em perfect} if it is pseudo-coherent and has finite Tor-dimension (\cite[\href{https://stacks.math.columbia.edu/tag/0685}{Tag 0685}]{stacks-project}).  Let $f \colon X \to Y$ be a morphism, we let $f^*$ denote the derived pullback functor $f^* \colon \Dqc(Y) \to \Dqc(X)$ (which is well-defined by virtue of \cite[Proposition 3.9.1]{Lip}); then it is direct to see that $f^*$ restricts to a functor $\Perf(Y) \to \Perf(X)$, also denoted by $f^*$, on the subcategories of perfect complexes.

We summarize the results of Lipman and Neeman in \cite{LN, Nee, Nee10, Lip} that are relevant to this paper in the following theorem:

\begin{theorem}[{Lipman--Neeman \cite{LN, Nee, Nee10, Lip}; see also Lurie \cite{SAG}}] \label{thm:Neeman-Lipman} 
Let $f \colon X \to Y$ be a morphism between quasi-compact, quasi-separated schemes 
and let $f^* \colon \Dqc(Y) \to \Dqc(X)$ denote the derived pullback functor.
\begin{enumerate}[leftmargin=*]
	\item \label{thm:Neeman-Lipman-1}
	\begin{enumerate}[label=(\theenumi\alph*), ref=\theenumi\emph{\alph*}]
		\item \label{thm:Neeman-Lipman-1i}
		The functor $f^*$ admits a right adjoint $f_* \colon \Dqc(X) \to \Dqc(Y)$, called derived pushforward functor, which has finite cohomological amplitude and preserves direct sums. Consequently, the derived pushforward functor $f_* \colon \Dqc(X) \to \Dqc(Y)$ admits a right adjoint $f^! \colon \Dqc(Y) \to \Dqc(X)$.
		\item \label{thm:Neeman-Lipman-1ii}
		For every Tor-independent square of quasi-compact, quasi-separated schemes
			$$
			\begin{tikzcd}
				X' \ar{d}{f'} \ar{r}{g'} & X \ar{d}{f} \\
				Y' \ar{r}{g} & Y,
			\end{tikzcd}
			$$
	the canonical base-change transformation $g^* f_* \to f'_* g'^*$ is a natural isomorphism of functors from $\Dqc(X)$ to $\Dqc(Y')$.
		\item \label{thm:Neeman-Lipman-1iii}
		For every pair of objects $\sF \in \Dqc(X)$ and $\sG \in \Dqc(Y)$, the canonical map $\sF \otimes f_* \sG \to f_*(f^* \sF \otimes \sG)$ is an equivalence.
	\end{enumerate}
	\item \label{thm:Neeman-Lipman-2}
	If $f$ is proper and perfect, then $f^* \colon \Dqc(Y) \to \Dqc(X)$ has finite cohomological amplitude and preserves (bounded) pseudo-coherent complexes. Moreover:
		\begin{enumerate}[label=(\theenumi\alph*), ref=\theenumi\emph{\alph*}]
		\item \label{thm:Neeman-Lipman-2i}
		The pushforward functor $f_* \colon \Dqc(X) \to \Dqc(Y)$ preserves (bounded) pseudo-coherent complexes and perfect complexes, respectively, and admits a right adjoint $f^! \colon \Dqc(Y) \to \Dqc(X)$ which preserves direct sums \footnote{Consequently, $f^!$ also admits a right adjoint $f_{(-1)}$; the functor $f_{(-1)}$ will not be used in this paper.} and has finite cohomological amplitude.
		\item \label{thm:Neeman-Lipman-2ii}
		 For every Tor-independent square of quasi-compact, quasi-separated schemes
			$$
			\begin{tikzcd}
				X' \ar{d}{f'} \ar{r}{g'} & X \ar{d}{f} \\
				Y' \ar{r}{g} & Y,
			\end{tikzcd}
			$$
	the canonical base-change transformation $g'^* f^! \to f'^!g'^*$ is a natural isomorohism of functors from $\Dqc(Y)$ to $\Dqc(X')$.
		\item \label{thm:Neeman-Lipman-2iii}
		For every pair of objects $\sF, \sG \in \Dqc(Y)$, the canonical map $f^!\sF \otimes f^* \sG \to f^!(\sF \otimes \sG)$ is an equivalence.
		\item \label{thm:Neeman-Lipman-2iv}
		The derived pullback functor $f^*$ admits a left adjoint $f_! \colon \Dqc(X) \to \Dqc(Y)$ which preserves perfect objects. Furthermore, for all $\sF \in \Perf(X)$, there is a canonical equivalence $f_!(\sF) \simeq (f_* \sF^\vee)^\vee \in \Perf(Y)$.
		\item \label{thm:Neeman-Lipman-2v}
		 For every Tor-independent square of quasi-compact, quasi-separated schemes
			$$
			\begin{tikzcd}
				X' \ar{d}{f'} \ar{r}{g'} & X \ar{d}{f} \\
				Y' \ar{r}{g} & Y,
			\end{tikzcd}
			$$
	the canonical base change transformation $f'_! g'^*\to g^*f_!$ is a natural isomorphism of functors from $\Dqc(X)$ to $\Dqc(Y')$.
		\item \label{thm:Neeman-Lipman-2vi}
		For every pair of objects $\sF \in \Dqc(X)$ and $\sG \in \Dqc(Y)$, the canonical map $f_!(f^* \sF \otimes \sG) \to \sF \otimes f_! \sG$ is an equivalence.
		\item \label{thm:Neeman-Lipman-2vii}
		Let $\omega_f = \omega_{X/Y}$ denote the {\em relative dualizing complex}, that is, $\omega_f = f^! \sO_Y \in \Dqc(X)$. Then there are canonical natural isomorphisms of functors
				$$f^!(\blank) \simeq f^*(\blank) \otimes \omega_f \quad \text{and} \quad f_!(\blank) \simeq f_*(\blank \otimes \omega_f).$$
		\end{enumerate}
	\item \label{thm:Neeman-Lipman-3}
	In the situation of \eqref{thm:Neeman-Lipman-2}, if we furthermore assume that the relative dualizing complex $\omega_f$ is perfect, then each member of the adjunction sequence $f_! \dashv f^* \dashv f_*\dashv f^!$ has finite cohomological amplitude, preserves perfect complexes and (bounded) pseudo-coherent complexes, respectively (in particular, in the case where $X$ and $Y$ are locally Noetherian, each of the functors $f_! , f^* , f_*$ and $f^!$ preserves bounded complexes of coherent sheaves).
\end{enumerate}
\end{theorem}

\begin{proof}
First, we focus on the results that are well documented in literatures:
\begin{itemize}
	\item Assertion \eqref{thm:Neeman-Lipman-1} is a consequence of Brown Representability Theorem. Concretely:
		\begin{itemize}
			\item Assertion \eqref{thm:Neeman-Lipman-1i}: Since $f^*$ preserves direct sums and perfect complexes (which coincide with compact objects by Theorem \ref{thm:BB}), by virtue of Theorems \ref{thm:Brown} and \ref{thm:Brown:cpt}, we obtain that $f^*$ admits a right adjoint $f_*$ which preserves direct sums (see also \cite[Corollary 3.9.3.3]{Lip}). Moreover, $f_*$ has finite cohomological amplitude (\cite[Proposition 3.9.2]{Lip}).
			\item Assertion \eqref{thm:Neeman-Lipman-1ii} follows from \cite[Theorem 3.10.3]{Lip}.
			\item Assertion \eqref{thm:Neeman-Lipman-1iii} is \cite[Proposition 3.9.4]{Lip}.
		\end{itemize}
	\item Assertion \eqref{thm:Neeman-Lipman-2}: generally, $f^*$ preserves perfectness and pseudo-coherence. Since $f$ has finite Tor-dimension, $f^*$ has finite cohomological amplitude.
		\begin{itemize}
			 \item Assertion \eqref{thm:Neeman-Lipman-2i}: by \cite[Proposition 2.1 \& Examples 2.2 (a)]{LN} or \cite[Proposition 4.7.1 \& Example 4.7.3(a)]{Lip}, $f_*$ preserves perfect complexes and $f^!$ preserves direct sums (or equivalently, has a right adjoint $f_{(-1)}$). By \cite[Theorem 1.2]{LN}, $f$ is quasi-proper, that is, $f_*$ preserves pseudo-coherent complexes, and $f^!$ has finite cohomological amplitude.
			 \item Assertion \eqref{thm:Neeman-Lipman-2ii} is \cite[Theorem 4.7.4]{Lip} or \cite[Theorem 2.7]{LN}.
			  \item Assertion \eqref{thm:Neeman-Lipman-2iii} follows from  
			 $f^!(\sF) \simeq  f^*(\sF) \otimes \omega_{f}$ of assertion \eqref{thm:Neeman-Lipman-2vii}.
			\item Assertion \eqref{thm:Neeman-Lipman-2vii}: the equivalence $f^! (\blank)= f^*(\blank) \otimes \omega_f$ is \cite[Proposition 4.7.1]{Lip} (see also \cite[Proposition 2.1, (iv)]{LN}).
		\end{itemize}
	\item Assertion \eqref{thm:Neeman-Lipman-3} follows from \eqref{thm:Neeman-Lipman-2}: if $\omega_f$ is a perfect complex, then $\otimes \omega_f$ has finite cohomological amplitude and preserves perfectness and pseudo-coherence, respectively.
\end{itemize}

It only remains to prove the statements about the left adjoint functor $f_!$ of $f^*$ of assertions \eqref{thm:Neeman-Lipman-2iv}, \eqref{thm:Neeman-Lipman-2v}, \eqref{thm:Neeman-Lipman-2vi} and \eqref{thm:Neeman-Lipman-2vii}. These are consequences of the above assertions \footnote{If the speculation of Remark \ref{rmk:dualBrown} is true, then we can deduce the existence of $f_!$ by the Adjoint Functor Theorem since $f^*$ preserves small filtered homotopy colimits.}; 
since these results appear to be not well-documented in the context of triangulate categories, we present a proof for the sake of completeness. Following an idea of Lurie presented in \cite[\S 6.4.5]{SAG}, we {\em define} a functor $f_! \colon \Dqc(X) \to \Dqc(Y)$ by the formula $f_!(\sF) = f_*(\sF \otimes \omega_f)$, where $\omega_f = f^!(\sO_Y)$, and we will show that $f_!$ is a left adjoint of $f^*$. Concretely, let $v_0 \colon f_*(\omega_f) = f_* f^!(\sO_Y) \to \sO_Y$ denote the counit map, then there is a natural transformation $v \colon f_! f^* \to \id$ given by the formula
	$$v \colon f_! f^*(\sG) = f_* (f^*\sG \otimes \omega_f) \simeq \sG \otimes (f_* \omega_f) \xrightarrow{\id \otimes v_0} \sG$$
for all $\sG \in \Dqc(Y)$. We wish to show that $v \colon f_! f^* \to \id$ exhibits $f_!$ as a left adjoint of $f^*$. In other words, we wish to show for all $\sF \in \Dqc(X)$ and $\sG \in \Dqc(Y)$, the composite map
	$$\rho_{\sF, \sG} \colon \Hom_{\Dqc(X)}(\sF, f^*\sG) \xrightarrow{f_!} \Hom_{\Dqc(Y)}(f_! \sF, f_! f^* \sG) \xrightarrow{v} \Hom_{\Dqc(Y)}(f_! \sF, \sG)$$
is an isomorphism. First, we consider the case where $\sF \in \Perf(X)$; in this case, the composite map
	$$f_*(\sF^\vee) \otimes f_*(\sF \otimes \omega_f) \to f_*(\sF^\vee \otimes \sF \otimes \omega_f) \to f_*(\omega_f) \xrightarrow{v_0} \sO_Y$$
induces a functorial map $\psi \colon f_! (\sF) \to f_*(\sF^\vee) ^\vee$ (here, the first morphism is given by the canonical lax monoidal structure, and $f_*(\sF^\vee)$ is perfect by assertion \eqref{thm:Neeman-Lipman-2i}). We wish to show that $\psi$ is an isomorphism for all $\sF \in \Perf(X)$. This follows from Yoneda's lemma and the following functorial isomorphisms for all $\sH \in \Dqc(Y)$:
	\begin{align*}
		&\Hom_{\Dqc(Y)}(\sH, f_!(\sF)) \simeq \Hom_{\Dqc(X)}(f^* \sH, \omega_f \otimes \sF) \simeq \Hom_{\Dqc(X)}((f^* \sH) \otimes \sF^\vee, \omega_f)  \\
		& = \Hom_{\Dqc(X)}((f^* \sH) \otimes \sF^\vee, f^!(\sO_Y)) \simeq \Hom_{\Dqc(Y)}(f_*(f^* \sH \otimes \sF^\vee), \sO_Y) \\
		& \simeq  \Hom_{\Dqc(Y)}(\sH \otimes f_*(\sF^\vee), \sO_Y) \simeq \Hom_{\Dqc(Y)}(\sH, (f_* \sF^\vee)^\vee).
	\end{align*}
Next, we fix $\sF \in \Perf(X)$. Then the above argument shows that $f_!(\sF) \simeq (f_* \sF^\vee)^\vee \in \Perf(Y)$. Consequently, both the functors $\Hom(\sF, f^*(\blank))$ and $\Hom(f_!(\sF), \blank)$ preserve direct sums. Let $\shE \subseteq \Dqc(Y)$ denote the full subcategory spanned by $\sG$ such that $\rho_{\sF, \sG}$ is an equivalence. Then $\shE$ is a full triangulated subcategory which is closed under formation of direct sums. By virtue of Corollary \ref{cor:Adjoint}, to show $\rho_{\sF, \sG}$ is an isomorphism for all $\sG \in \Dqc(Y)$, we are reduced to the case where $\sG$ is perfect, in which case the isomorphism $\rho_{\sF,\sG}$ follows from the isomorphism $\psi \colon f_! (\sF) \simeq (f_* \sF^\vee)^{\vee}$. Hence we have proved that $\rho_{\sF, \sG}$ is an isomorphism for all $\sF \in \Perf(X)$. However, for any fixed $\sG \in \Dqc(Y)$, both the source and target of $\rho_{\blank, \sG} \colon \Hom(\blank, f^* \sG) \to \Hom(f_*(\blank \otimes \omega_f), \sG)$ carry direct sums to products of abelian groups. Consequently, by applying Corollary \ref{cor:Adjoint}, we obtain that $\rho_{\sF, \sG}$ is an isomorphism for all $\sF \in \Dqc(X)$ from the case where $\sF \in \Perf(X)$. This proves assertions \eqref{thm:Neeman-Lipman-2iv} and \eqref{thm:Neeman-Lipman-2vii}. 

Finally, assertions \eqref{thm:Neeman-Lipman-2iii} and \eqref{thm:Neeman-Lipman-2iv} follow from assertions \eqref{thm:Neeman-Lipman-1ii} and \eqref{thm:Neeman-Lipman-1iii}, respectively, by virtue of the formula $f_! (\sF)= f_*(\sF \otimes \omega_f)$.
\end{proof}

\begin{remark}[Quasi-Properness and Quasi-Perfectness]
\label{rmk:quasi-perfect}
In Lipman and Neeman's papers \cite{LN, Lip}, the slightly more general concepts of quasi-properness and quasi-perfectness were introduced. Specifically, let $X$ and $Y$ be quasi-compact, quasi-separated schemes, then:
\begin{itemize}
	\item A map $f \colon X \to Y$ is said to be {\em quasi-proper} if the derived pushforwad $f_*$ preserves pseudo-coherent complexes. Then {\em Kiehl's Finiteness Theorem} can be formulated as: every proper, pseudo-coherent map is quasi-proper (\cite[Corollary 4.3.3.2]{Lip}). In the case where $Y$ is noetherian, a finite type, separated map $f \colon X \to Y$ is  quasi-proper if and only if it is proper (\cite[Corollary 4.3.3.3]{Lip}).
	\item A map $f \colon X \to Y$ is said to be {\em quasi-perfect} if $f_*$ preserves perfect complexes. By virtue of Theorems \ref{thm:Brown:cpt} and \ref{thm:Adjoint}, this condition is equivalent to requiring that $f^!$ has a right adjoint, or equivalently, $f^!$ preserves direct sums. The main result of \cite{LN} asserts that $f$ is quasi-perfect if and only if $f$ is quasi-proper and has ﬁnite tor-dimension, if and only if $f$ is quasi-proper and the functor $f^!$ has finite cohomological amplitude (see \cite[Theorem 1.2]{LN}). Consequently, Kiehl's Finiteness Theorem implies that any proper, perfect map is quasi-perfect; in the case where $X$, $Y$ are noetherian, a finite type separated map $f \colon X \to Y$ is quasi-perfect if and only if it is  proper and perfect.
\end{itemize}		

\end{remark}

\begin{remark} In the situation of Theorem \ref{thm:Neeman-Lipman} \eqref{thm:Neeman-Lipman-2}, \cite[Proposition 4.1]{BDS} asserts that the dualizing complex $\omega_{f}$ is a perfect complex if and only if $\omega_{f}$ is $\otimes$-invertible. Consequently, in the situation of \eqref{thm:Neeman-Lipman-2}, this result implies that $\omega_f$ is a perfect complex if and only if it is {\em invertible}, that is, it is isomorphic to objects of the form $\sL [n]$ on each connected component, where $\sL$ is a line bundle and $n \in \ZZ$ (see \cite[Corollary 2.9.5.7]{SAG}). Moreover, the main result \cite[Theorem 1.9]{BDS} implies that, in the situation of Theorem \ref{thm:Neeman-Lipman} \eqref{thm:Neeman-Lipman-3}, the adjunction sequence $f_! \dashv f^* \dashv f_*\dashv f^! \dashv f_{(-1)}$ fits into an infinite tower of adjoints which are connected to each other by so-called Wirthm\"uller isomorphisms; see \cite[Theorem 1.9]{BDS} for details. 
\end{remark}



\begin{example} \label{eg:lci} 
Let $f \colon X \to Y$ be a morphism between quasi-compact, quasi-separated schemes. Then in each of the following examples, $f$ is proper, perfect, with invertible 
$\omega_f$:
	\begin{enumerate}[leftmargin=*]
		\item \label{eg:lci-1} If $f \colon X \hookrightarrow Y$ is a Koszul-regular closed immersion (see \cite[\href{https://stacks.math.columbia.edu/tag/063D}{Tag 063D}]{stacks-project}), then $f$ is perfect (see \cite[\href{https://stacks.math.columbia.edu/tag/068C}{Tag 068C}]{stacks-project}) with $\omega_f \simeq \bigwedge^r \shN [-r]$ (see \cite[\href{https://stacks.math.columbia.edu/tag/0BR0}{Tag 0BR0}]{stacks-project}), where $\shN$ denotes the normal sheaf of $f \colon X \hookrightarrow Y$, which is locally free of rank $r$.
		\item  \label{eg:lci-2} $f \colon X \twoheadrightarrow Y$ is a smooth proper morphism of relative dimension $d$, then $f$ is perfect (by \cite[\href{https://stacks.math.columbia.edu/tag/068A}{Tag 068A}]{stacks-project}, since $f$ is flat and locally of finite presentation), with $\omega_f = \bigwedge^d \Omega_{X/Y} [d]$, where $\Omega_{X/Y}$ is the sheaf of relative differentials, see \cite[\href{https://stacks.math.columbia.edu/tag/0BRT}{Tag 0BRT}]{stacks-project}. \footnote{Notice this result is stated in \cite[\href{https://stacks.math.columbia.edu/tag/0BRT}{Tag 0BRT}]{stacks-project} under the condition that $Y$ is {\em noetherian}. However, the only place the noetherian condition is used is the second equality of the equation there, where it is referred to \cite[\href{https://stacks.math.columbia.edu/tag/0A9R}{Tag 0A9U}]{stacks-project}, which holds without noetherian assumption \cite[Proposition 2.1]{LN}; Alternatively, as Johan de Jong pointed out, one could also prove this by using absolute noetherian reduction and the fact that base change for the relative dualizing complex behaves well for flat proper morphisms of finite presentations.}
		\item  \label{eg:lci-3} If $f \colon X \to Y$ is a morphism between schemes which are smooth and proper over a quasi-compact, quasi-separated base scheme $S$, then $\omega_f = \omega_{X/S} \otimes f^*(\omega_{Y/S})^\vee$.
	\end{enumerate}
In particular, all these are examples of {\em local complete intersection morphisms}, that is, morphisms which locally factorize through a Koszul-regular closed immersion followed by a smooth morphism. Conversely, \eqref{eg:lci-1} and \eqref{eg:lci-2} imply that any proper local complete intersection morphism is perfect, with an invertible relative dualizing complex.
\end{example}

\subsection{Semiorthogonal Decompositions and Mutations}
For a triangulated category $\shD$ and objects $A,B \in \shD$, $\Hom^k(A,B): = \Ext^k(A,B) = \Hom(A,B[k])$ for $k \in \ZZ$, and $\Hom^\bullet(A,B)$ denotes $\ZZ$-graded the complex $\{\Hom^k(A,B)\}_{k \in \ZZ}$ with zero differential. For a family of objects $\shE = \{E\}$ of a triangulated category $\shD$, denote:
	$$\shE^\perp := \{ A \in \shD \mid \Hom^\bullet(E,A) = 0, ~~ \forall E \in \shE \}, \quad {}^\perp \shE :=\{ A \in \shD \mid \Hom^\bullet(A,E) = 0,~~  \forall E \in \shE\}$$ 
to be the {\em right} and respectively {\em left orthogonal} of $\shE$ inside $\shD$. The subcategory {\em generated} by $\shE$, denote by $\langle \shE \rangle$, is the smallest full triangulated subcategory of $\shD$ containing $\shE$. 

\begin{definition}
A {\em semiorthogonal decomposition}\label{def:SOD:D} of a triangulated category $\shD$, written as:
	\begin{equation} \label{eqn:SOD:D}
	\shD= \langle \shA_1, \shA_2, \ldots, \shA_{n} \rangle,
	\end{equation}
is formed by a sequence of full triangulated subcategories $\shA_1, \ldots, \shA_n$ of $\shT$ such that
	\begin{enumerate}
		\item  \label{eqn:SOD:D-1} (Semiorthogonality) $\shA_i \subset \shA_{j}^\perp$ for all $i < j$, i.e., $\Hom_\shD (A_j ,A_i) = 0$ for all $A_j \in \shA_j$ and $A_i \in \shA_i$ if $i < j$.
		\item  \label{eqn:SOD:D-2} (Generation) For any object $D \in \shD$, there is a sequence of objects $D_i$ and a diagram:
		\begin{equation*}
				\begin{tikzcd} [back line/.style={dashed}, row sep=1.8 em, column sep=1.6 em]
	0=D_{n} \ar{rr} 	& 	& D_{n-1} \ar{rr} \ar{ld}		&		& D_{n-2} \ar{r} \ar{ld}	&\cdots \ar{r} 	& D_{1} \ar{rr} 	&	& D_{0}=D \ar{ld}. \\
							& A_{n} \ar[dashed]{lu} &	& A_{n-1}  \ar[dashed]{lu} & & & & A_{1} \ar[dashed]{lu}
				\end{tikzcd}
			\end{equation*}
such that $A_i = \cone(D_i \to D_{i-1}) \in \shA_i $ for all $i \in [1,n]$.
	\end{enumerate}
The subcategories $\shA_i$'s are called {\em components} of $\shD$ with respect to \eqref{eqn:SOD:D}.  The condition \eqref{eqn:SOD:D-1} implies that the objects $T_i$ and $A_i$ in the diagram of \eqref{eqn:SOD:D-2} are unique (up to canonical isomorphisms), and the assignments $D \to D_i \in \shT$ and $D \mapsto A_i \in \shA_i$ are functorial. The exact functor
	$$\pr_i \colon \shD \to \shA_i, \qquad D \mapsto A_i \in \shA_i$$
is called the $i$-th \emph{projection functor} of the semiorthogonal decomposition \eqref{eqn:SOD:D}. A sequence $\shA_1, \ldots, \shA_n$ satisfying the condition \eqref{eqn:SOD:D-1} is called a {\em semiorthogonal sequence}.
\end{definition}

\begin{remark}[Partial order] \label{rmk:poset} The notion of semiorthogonal decomposition could be generalized to a sequence of full triangulated subcategories $\{\shA_i\}_{i \in I}$ indexed by a {\em finite well-ordered partial order set $(I, \prec)$}: we say the semiorthogonal order is {\em compatible} with the partial order set $(I, \prec)$ if $\shA_i \subseteq \shA_j^\perp$ if $i \prec j$; we say it is {\em strongly compatible} with $(I, \prec)$ if $\shA_i \subseteq \shA_j^\perp$ whenever $i \nsucceq j$. If the semiorthogonal order of a sequence $\{\shA_i\}_{i \in I}$ is strongly compatible with $(I, \prec)$, and $\{\shA_i\}_{i \in I}$ generates $\shD$ in the triangulated sense, then {\em any total order} (or called {\em linear order}) extending the partial order $(I, \prec)$ give rises to a semiorthogonal decomposition of $\shD$ in the sense of Def. \ref{def:SOD:D} of the form \eqref{eqn:SOD:D}. 
\end{remark}


\begin{definition} A full triangulated subcategory $\shA$ of $\shD$ is called {\em left admissible} (resp. {\em right admissible}) if the inclusion functor $i_{\shA}: \shA\hookrightarrow \shD$ has a left adjoint $i_{\shA}^* \colon \shD \to \shA$ (resp. a right adjoint $i_{\shA}^! \colon \shD \to \shA$). $\shA$ is called {\em admissible} in $\shD$ if it is both left and right admissible. A semiorthogonal decomposition \eqref{eqn:SOD:D} is called {\em admissible} if each component $\shA_i$ is admissible.
\end{definition}

Here are some basic properties about admissibility:
\begin{lemma}[{\cite{Bo, BK}}] \label{lem:mut:admissible}
	\begin{enumerate}[leftmargin=*]
		\item If $\shA$ is left (resp. right) admissible in $\shD$, and $\shD$ is left (resp. right) admissible in $\shD'$, then $\shA \subset \shD'$ is also left (resp. right) admissible. 
		\item $\shA \subset \shD$ is left admissible iff there is a semiorthogonal decomposition $\shD = \langle \shA, \shB \rangle$ (then in this case $\shB = {}^\perp \shA$), iff $\shD$ is generated by $\shA$ and ${}^\perp \shA$ as a triangulated category. Similarly for right admissibility.
		\item If $\shA \subset \shD$ is left admissible, then $( {}^\perp \shA)^\perp = \shA$. Similarly for right admissibility.
		\item If $\shA, \shB \subseteq \shD$ are left (resp. right) admissible in $\shD$ and $\shB \subseteq {}^\perp \shA$, then the triangulated subcategory $\langle \shA, \shB \rangle$ generated by $\shA$ and $\shB$ is left (resp. right) admissible in $\shD$.
	\end{enumerate}
\end{lemma}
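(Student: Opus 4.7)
The plan is to verify each of the four statements by unpacking definitions and combining adjoints/triangles; these are all standard results of Bondal and Bondal--Kapranov, but it is instructive to indicate the path.

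For (1), the left (resp.\ right) adjoint of $\shA \hookrightarrow \shD'$ is obtained by composing the left (resp.\ right) adjoint of $\shA \hookrightarrow \shD$ with the left (resp.\ right) adjoint of $\shD \hookrightarrow \shD'$.

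For (2), I would first assume $\shA \subset \shD$ is left admissible, with left adjoint $i_\shA^*$ and unit $\eta \colon \id_\shD \to i_\shA \, i_\shA^*$. For each $D \in \shD$, complete $\eta(D)$ to a triangle $B \to D \xrightarrow{\eta(D)} i_\shA \,i_\shA^*D$, and check $B \in {}^\perp \shA$ by applying $\Hom(-,A)$ for $A \in \shA$: the map $\Hom(i_\shA \,i_\shA^* D, A) \to \Hom(D,A)$ is an isomorphism by adjunction, so $\Hom(B,A) = 0$. This shows $\shD = \langle \shA, {}^\perp \shA \rangle$ is an SOD. Conversely, given an SOD $\shD = \langle \shA, \shB \rangle$, the projection $\pr_\shA \colon \shD \to \shA$ built from the canonical triangles is a left adjoint to the inclusion, and semiorthogonality forces $\shB \subseteq {}^\perp \shA$; the reverse inclusion follows by decomposing any $D \in {}^\perp \shA$ and observing that its $\shA$-component must vanish. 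The equivalence with the generation statement is immediate from the triangles.

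For (3), take $D \in ({}^\perp \shA)^\perp$. The SOD $\shD = \langle \shA, {}^\perp \shA \rangle$ from (2) furnishes a triangle $B \to D \to A$ with $B \in {}^\perp \shA$ and $A \in \shA$. Applying $\Hom(B', -)$ for $B' \in {}^\perp \shA$ shows in particular that $\Hom^\bullet(B, D)$ surjects onto $\Hom^\bullet(B,B)$, but $\Hom^\bullet(B, D) = 0$ by hypothesis, forcing $B = 0$, hence $D \simeq A \in \shA$.

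For (4), the main idea is to build a length-three SOD of $\shD$ and regroup. By (2), left admissibility of $\shA$ gives $\shD = \langle \shA, \shD' \rangle$ with $\shD' := {}^\perp \shA$. Since $\shB \subseteq \shD'$, the left adjoint $i_\shB^* \colon \shD \to \shB$ restricts to a left adjoint of $\shB \hookrightarrow \shD'$ (values automatically lie in $\shD'$ because $\shB \subseteq \shD'$), so $\shB$ is left admissible in $\shD'$ and (2) applied inside $\shD'$ yields $\shD' = \langle \shB, {}^\perp \shB \cap \shD' \rangle$. Splicing the two triangles, any $D \in \shD$ sits in a two-step filtration producing the three-term SOD
\[
\shD = \langle \shA, \shB, \, {}^\perp \shB \cap {}^\perp \shA \rangle,
\]
whose semiorthogonality conditions are immediate: $\Hom(\shB, \shA) = 0$ by hypothesis, and ${}^\perp \shB \cap {}^\perp \shA$ is by definition left-orthogonal to both $\shA$ and $\shB$. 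Combining the first two components, the resulting two-term SOD $\shD = \langle \langle \shA, \shB \rangle, \, {}^\perp \shB \cap {}^\perp \shA \rangle$ exhibits $\langle \shA, \shB \rangle$ as a left admissible subcategory of $\shD$ by another application of (2). The right admissible case is entirely dual. The only real bookkeeping obstacle is keeping track of the orthogonality/containment conditions across the gluing in (4); beyond that, everything reduces to the equivalence established in (2).
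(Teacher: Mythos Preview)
The paper does not give a proof of this lemma at all; it simply cites \cite{Bo, BK} and moves on, treating these as standard facts. Your proposal supplies the expected verification of each item and is correct.

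One small imprecision in your argument for (3): from the triangle $B \to D \to A$ and $B \in {}^\perp\shA$, applying $\Hom(B,-)$ gives that the map $\Hom^\bullet(B,B) \to \Hom^\bullet(B,D)$ is an isomorphism (not that $\Hom^\bullet(B,D)$ surjects onto $\Hom^\bullet(B,B)$; the map goes the other way). Since $D \in ({}^\perp\shA)^\perp$ forces $\Hom^\bullet(B,D)=0$, you still conclude $\Hom^\bullet(B,B)=0$ and hence $B=0$, so the slip does not affect the conclusion.
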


\begin{definition}[\cite{Bo, BK}] Let $\shA \subset \shD$ be admissible, hence we have semiorthogonal decompositions $\shD = \langle \shA^{\perp}, \shA \rangle = \langle \shA , {}^\perp \shA \rangle$. Let $\shA_\bullet = (\shA_1, \ldots, \shA_n)$, $n \ge 2$ be a semiorthogonal sequence of admissible subcategories inside $\shD$.
\begin{enumerate}[leftmargin=*]
	\item The {\em left mutation}, respectively, {\em right mutation} functors {\em through $\shA$} are defined as:
	$$\LL_\shA: = i_{\shA^\perp} i^*_{\shA^{\perp}} : \shA \to \shA \quad \text{and} \quad \RR_{\shA} : =  i_{{}^\perp \shA} i^!_{ {}^\perp \shA}: \shA \to \shA$$

	\item For $1 \le i \le n-1$, {\em the left mutation of $\shA_\bullet$ at position $i$} is the sequence of subcategories:
	$$\LL_{i}(\shA_\bullet) = (\shA_1, \ldots, \shA_{i-1}, \LL_{\shA_{i}} (\shA_{i+1}), \shA_{i}, \shA_{i+2}, \ldots, \shA_n).$$
For $2 \le i \le n$, the {\em right mutation of $\shA_\bullet$ at position $i$} is the sequence of subcategories:
	$$\RR_{i}(\shA_\bullet) = (\shA_1, \ldots, \shA_{i-1}, \shA_{i+1}, \RR_{\shA_{i+1}} (\shA_{i}),  \shA_{i+2}, \ldots, \shA_n).$$
\end{enumerate}
\end{definition}

The following is a collection of standard results about mutations, see also \cite{Bo, BK, Kuz07, Kuz09} for further references.

\begin{lemma}[{\cite{Bo, BK}}] \label{lem:mut}  Let $\shA$ be an admissible subcategory of $\shD$, and $\shA = (\shA_1, \ldots, \shA_n)$ be a semiorthogonal sequence of admissible subcategories inside $\shD$, $n \ge 2$.
	\begin{enumerate}[leftmargin=*]
		\item \label{lem:mut-1} For any $E \in \shD$, there are distinguished triangles
		$$ i_\shA i^!_{\shA} (E) \to E \to \LL_{\shA} \,E \xrightarrow{[1]}{},\qquad  \RR_{\shA} \,E \to E \to  i_\shA i^*_{\shA} (E)  \xrightarrow{[1]}{}.$$
		\item \label{lem:mut-2} The left mutation $\LL_{\shA}$ is zero on $\shA$, fully faithful on ${}^\perp \shA$; The right mutation $\RR_{\shA}$ is zero on $\shA$, fully faithful on $\shA^\perp$; The restrictions of left and right mutations induce mutually inverse equivalences $\LL_{\shA}\,|_{{}^\perp \shA} : {}^\perp \shA \simeq \shA^\perp$ and  $\RR_{\shA}\,|_{\shA^\perp } : \shA^\perp \simeq {}^\perp \shA$.
		\item \label{lem:mut-3} Denote by $\langle \shA_\bullet \rangle = \langle \shA_1, \ldots, \shA_n \rangle$ the triangulated subcategory generated by $\shA_1, \ldots, \shA_n$ as usual. Then there are canonical isomorphisms of functors:
		$$\LL_{\langle \shA_1,\shA_2, \ldots, \shA_n \rangle} = \LL_{\shA_1} \circ \LL_{\shA_2} \circ \cdots \circ \LL_{\shA_n}, \qquad \RR_{\langle \shA_1,\shA_2, \ldots, \shA_n \rangle} = \RR_{\shA_n} \circ \RR_{\shA_{n-1}} \circ \cdots \circ \RR_{\shA_1}.$$
		 \item \label{lem:mut-4} For any $2 \le i \le n$ (resp. $1 \le i \le n-1$), $\LL_i(\shA_\bullet)$ (resp. $\RR_i(\shA_\bullet)$) is also a semiorthogonal sequence. Furthermore $\shA_\bullet$ and $\LL_i(\shA_\bullet)$ (resp. $\RR_i(\shA_\bullet)$) generate the same triangulated subcategory inside $\shD$, i.e. $\langle \shA_\bullet \rangle = \langle \LL_{i}(\shA_\bullet) \rangle$ (resp. $\langle \shA_\bullet \rangle = \langle \RR_{i}(\shA_\bullet) \rangle$).
		\item \label{lem:mut-5} If $\Phi: \shD \to \shD$ is an autoequivalence, then there are canonical isomorphisms:
			$$\Phi \circ \LL_\shA  \simeq \LL_{\Phi(\shA)} \circ \Phi, 
				\quad \text{and} \quad \Phi \circ \RR_{\shA} = \RR_{\Phi(\shA)} \circ \Phi.$$
	\end{enumerate}
\end{lemma}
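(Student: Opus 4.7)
The plan is to deduce everything from the two semiorthogonal decompositions $\shD = \langle \shA^{\perp}, \shA \rangle = \langle \shA, {}^\perp \shA \rangle$ guaranteed by admissibility of $\shA$ (Lem. \ref{lem:mut:admissible}). These give four projection functors: $i_\shA i^!_\shA, i_\shA i^*_\shA \colon \shD \to \shA$ and $i_{\shA^\perp} i^*_{\shA^\perp}, i_{{}^\perp\shA} i^!_{{}^\perp\shA} \colon \shD \to \shD$, each coming with canonical unit/counit morphisms to or from $\id_{\shD}$.

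For \eqref{lem:mut-1}, I would apply the SOD $\shD = \langle \shA, {}^\perp \shA \rangle$ to a single object $E \in \shD$: the diagram defining the SOD produces the distinguished triangle $i_\shA i^!_\shA E \to E \to C \xrightarrow{[1]}$ with $C \in {}^\perp \shA$. But this $C$ is also the image of $E$ under projection onto $\shA^\perp$ inside the other SOD $\shD = \langle \shA^\perp, \shA \rangle$, since $\LL_\shA E$ is by construction $i_{\shA^\perp} i^*_{\shA^\perp} E$ and the cone of $i_\shA i^!_\shA E \to E$ is characterized as the unique object in $\shA^\perp$ fitting into such a triangle. The second triangle is symmetric. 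Part \eqref{lem:mut-2} then falls out of \eqref{lem:mut-1}: for $E \in \shA$, the counit $i_\shA i^!_\shA E \to E$ is an isomorphism (since $i_\shA$ is fully faithful), forcing $\LL_\shA E = 0$; for $E \in {}^\perp \shA$, we have $i^!_\shA E = 0$ and hence $E \xrightarrow{\sim} \LL_\shA E$ canonically, which gives a fully faithful embedding ${}^\perp \shA \hookrightarrow \shA^\perp$ (note $\LL_\shA$ lands in $\shA^\perp$ by definition). The symmetric statement for $\RR_\shA$ produces the inverse equivalence $\shA^\perp \xrightarrow{\sim} {}^\perp \shA$, since one checks $\RR_\shA \circ \LL_\shA \simeq \id_{{}^\perp \shA}$ via a direct diagram chase with the two defining triangles.

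For \eqref{lem:mut-3}, I proceed by induction on $n$, using the natural identification $\langle \shA_1, \ldots, \shA_n \rangle = \langle \shA_1, \langle \shA_2,\ldots,\shA_n\rangle \rangle$ (a semiorthogonal sequence of admissibles generates an admissible subcategory by Lem. \ref{lem:mut:admissible}(4)). The inductive step reduces to showing $\LL_{\langle \shA_1, \shA_2\rangle} \simeq \LL_{\shA_1} \circ \LL_{\shA_2}$, which follows from splicing together the two triangles of \eqref{lem:mut-1} for $\shA_1$ and $\shA_2$ and applying the octahedral axiom. Part \eqref{lem:mut-4} then requires two checks: (a) semiorthogonality, where for $j<i$ the subcategory $\LL_{\shA_i}(\shA_{i+1}) \subseteq \langle \shA_i, \shA_{i+1}\rangle$ is right-orthogonal to $\shA_j$, and orthogonality to $\shA_i$ is by construction ($\LL_{\shA_i}$ lands in $\shA_i^\perp$); (b) the equality $\langle \shA_\bullet \rangle = \langle \LL_i(\shA_\bullet)\rangle$, which follows since $\LL_{\shA_i}(\shA_{i+1}) \subseteq \langle \shA_i, \shA_{i+1}\rangle$ and conversely $\shA_{i+1} \subseteq \langle \shA_i, \LL_{\shA_i}(\shA_{i+1})\rangle$ by the triangle of \eqref{lem:mut-1}.

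Finally, \eqref{lem:mut-5} is a naturality statement: an autoequivalence $\Phi$ takes admissible subcategories to admissible subcategories and satisfies $\Phi(\shA^\perp) = \Phi(\shA)^\perp$, so $\Phi \circ i_{\shA^\perp} i^*_{\shA^\perp} \simeq i_{\Phi(\shA)^\perp} i^*_{\Phi(\shA)^\perp} \circ \Phi$ by the uniqueness of adjoints, and likewise for the right mutation. I expect the most delicate step will be part \eqref{lem:mut-4}, where care is required to confirm that $\LL_{\shA_i}(\shA_{i+1})$ remains admissible inside $\shD$ (which one gets from Lem. \ref{lem:mut:admissible}(1) combined with the equivalence $\LL_{\shA_i} \colon {}^\perp \shA_i \xrightarrow{\sim} \shA_i^\perp$ in \eqref{lem:mut-2}) and that the new semiorthogonal order is correctly oriented; everything else is a direct unwinding of the adjunction triangles.
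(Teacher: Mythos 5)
The paper offers no proof of this lemma---it is stated as a standard result with the citation to Bondal and Bondal--Kapranov---so there is no in-paper proof to compare against. Your overall strategy (derive everything from the two decompositions $\shD = \langle \shA^\perp, \shA\rangle = \langle \shA, {}^\perp\shA\rangle$, splice triangles via octahedron for \eqref{lem:mut-3}, track orthogonality for \eqref{lem:mut-4}, uniqueness of adjoints for \eqref{lem:mut-5}) is the standard one and is sound.

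There is, however, a slip in your treatment of part \eqref{lem:mut-1} that you should fix. You write that the SOD $\shD = \langle \shA, {}^\perp\shA\rangle$ ``produces the distinguished triangle $i_\shA i^!_\shA E \to E \to C$ with $C \in {}^\perp\shA$,'' but this mixes up both the adjoint and the component. In the SOD $\shD = \langle \shA, {}^\perp\shA\rangle$, the component $\shA$ sits on the \emph{left}, so $\shA$ is \emph{left} admissible in this picture and the decomposition triangle is $\RR_\shA E \to E \to i_\shA i^*_\shA E$ with $\RR_\shA E \in {}^\perp\shA$ and $i_\shA i^*_\shA E \in \shA$---this is exactly the second triangle of \eqref{lem:mut-1}. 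The first triangle $i_\shA i^!_\shA E \to E \to \LL_\shA E$ comes directly from the \emph{other} SOD $\shD = \langle \shA^\perp, \shA\rangle$, where $\shA$ sits on the right and is right admissible, giving $i^!_\shA$; here the cone $\LL_\shA E = i_{\shA^\perp} i^*_{\shA^\perp} E$ lies in $\shA^\perp$, not ${}^\perp\shA$. So no cross-referencing between the two SODs is needed: each triangle is literally the decomposition triangle of one SOD. The subsequent clause ``the cone of $i_\shA i^!_\shA E \to E$ is characterized as the unique object in $\shA^\perp$ fitting into such a triangle'' is true, but that uniqueness is within the SOD $\langle \shA^\perp, \shA\rangle$, and does not involve ${}^\perp\shA$ at all. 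Once this is corrected, parts \eqref{lem:mut-2}--\eqref{lem:mut-5} go through exactly as you describe; your verification that $\LL_{\shA_i}(\shA_{i+1})$ remains admissible (using Lem.~\ref{lem:mut:admissible}(1) together with \eqref{lem:mut-2}) is the right way to close the most delicate gap in \eqref{lem:mut-4}.
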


\begin{remark} If $\shD$ has a relative Serre functor $\SS_{\shD/S}$ over some base scheme $S$ in the senes of Def. \ref{def:Serre}, such that all objects of $\shD$ have $\Perf(S)$-valued $\Hom$-objects, then it follows from Lem. \ref{lem:Serre} that $\SS_{\shD/S}({}^\perp \shA) = \shA^\perp$, and $\SS_{\shD/S}^{-1}(\shA^\perp) = {}^\perp \shA$.
\end{remark}

\begin{definition} A triangulated subcategory $\shA \subseteq \shD$ is called {\em $\infty$-admissible} if it is admissible all its iterated right and left orthogonals are all admissible. A semiorthogonal decomposition \eqref{eqn:SOD:D} is called {\em $\infty$-admissible} if each component $\shA_i$ is admissible and all its iterated right and left mutations are admissible. 
\end{definition}

\begin{proposition}[\cite{BK}] Let $\shD = \langle \shA_1, \ldots, \shA_n \rangle$ be a  semiorthogonal decomposition that is $\infty$-admissible, they all its iterated left and right mutations are $\infty$-admissible semiorthogonal decompositions of $\shD$. Moreover, the mutation functors define a braid group action on the set of all $\infty$-admissible semiorthogonal decompositions of $\shD$, i.e. they satisfies the braid group relations: $\LL_i \circ \RR_i = \id = \RR_i \circ \LL_i$, $\LL_i \circ \LL_{i+1} \circ \LL_i  = \LL_{i+1} \circ \LL_i \circ \LL_{i+1}$, $\RR_i \circ \RR_{i+1} \circ \RR_i  = \RR_{i+1} \circ \RR_i \circ \RR_{i+1}$, and $\LL_i \circ \LL_j = \LL_j \circ \LL_i$, $\RR_i \circ \RR_j = \RR_j \circ \RR_i$ for $|i-j|\ge 2$.
\end{proposition}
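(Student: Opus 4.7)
The plan is to separate the two assertions of the proposition: preservation of $\infty$-admissibility under iterated mutations, and verification of the braid relations among the operators $\LL_i, \RR_i$. Both reduce to local statements because each mutation operator modifies only the components at positions $i$ and $i+1$. Consequently the relation $\LL_i\LL_j = \LL_j\LL_i$ for $|i-j|\ge 2$ is automatic from disjoint supports, the identity $\LL_i\RR_i = \id = \RR_i\LL_i$ concerns a single pair of positions, and the Yang--Baxter relation $\LL_i\LL_{i+1}\LL_i = \LL_{i+1}\LL_i\LL_{i+1}$ concerns three consecutive positions, so I may reduce it to the case $n = 3$.

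For preservation of $\infty$-admissibility I would proceed by induction on the number of mutations. The only new component produced by $\LL_i$ is $\LL_{\shA_i}\shA_{i+1}$; since $\langle \shA_i, \LL_{\shA_i}\shA_{i+1}\rangle = \langle \shA_i, \shA_{i+1}\rangle$ by Lem.~\ref{lem:mut}(4), and both $\shA_i$ and $\langle \shA_i,\shA_{i+1}\rangle$ are admissible in $\shD$ by hypothesis together with Lem.~\ref{lem:mut:admissible}, the Bondal--Kapranov characterisation of admissibility via orthogonal complements yields admissibility of $\LL_{\shA_i}\shA_{i+1}$. Every iterated orthogonal of the mutated decomposition likewise rewrites via Lem.~\ref{lem:mut}(3)--(4) as an iterated orthogonal of the original one, which is admissible by the $\infty$-admissibility hypothesis; the same argument handles $\RR_i$. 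Among the easy braid identities, $\LL_i\LL_j = \LL_j\LL_i$ for $|i-j|\ge 2$ is immediate. For $\LL_i\RR_i = \id$, applying $\RR_i$ places $(\shA_{i+1}, \RR_{\shA_{i+1}}\shA_i)$ at positions $(i,i+1)$, and then $\LL_i$ replaces this by $(\LL_{\shA_{i+1}}\RR_{\shA_{i+1}}\shA_i, \shA_{i+1})$; since $\shA_i \subseteq \shA_{i+1}^\perp$ by semiorthogonality and the restricted mutations are mutually inverse equivalences between $\shA_{i+1}^\perp$ and ${}^\perp \shA_{i+1}$ by Lem.~\ref{lem:mut}(2), we recover $\shA_i$, giving back the original sequence. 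The identity $\RR_i\LL_i = \id$ is symmetric.

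The heart of the proof is the Yang--Baxter relation, for which I may assume $n=3$. A direct bookkeeping computation gives
\begin{align*}
\LL_1\LL_2\LL_1(\shA_1,\shA_2,\shA_3) &= \bigl(\LL_{\LL_{\shA_1}\shA_2}\LL_{\shA_1}\shA_3,\; \LL_{\shA_1}\shA_2,\; \shA_1\bigr),\\
\LL_2\LL_1\LL_2(\shA_1,\shA_2,\shA_3) &= \bigl(\LL_{\shA_1}\LL_{\shA_2}\shA_3,\; \LL_{\shA_1}\shA_2,\; \shA_1\bigr),
\end{align*}
so the two sides agree precisely when $\LL_{\LL_{\shA_1}\shA_2}\circ \LL_{\shA_1} = \LL_{\shA_1}\circ \LL_{\shA_2}$ as functors on $\shD$. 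By Lem.~\ref{lem:mut}(3) these compositions identify with $\LL_{\langle \LL_{\shA_1}\shA_2,\,\shA_1\rangle}$ and $\LL_{\langle \shA_1,\,\shA_2\rangle}$ respectively, and by Lem.~\ref{lem:mut}(4) the two generated subcategories coincide. An entirely parallel computation yields the braid relation for $\RR$. The main obstacle I anticipate is the bookkeeping: one must verify that the iterated left-mutation functor through a subcategory is canonically determined by that subcategory and independent of its presentation as an iterated mutation -- a fact that is encoded in Lem.~\ref{lem:mut}(3)--(4) but must be applied carefully when tracking compositions. Once the three braid relations are in hand, the preservation step ensures that the operators $\LL_i,\RR_i$ generate a genuine braid group action on the set of $\infty$-admissible semiorthogonal decompositions of $\shD$.
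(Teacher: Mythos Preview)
The paper does not supply a proof of this proposition; it simply cites Bondal--Kapranov \cite{BK} and moves on. Your proposal is a correct reconstruction of the standard argument from that source: reduction to local moves, the inverse relation via Lem.~\ref{lem:mut}\eqref{lem:mut-2}, and the Yang--Baxter relation via the identity $\LL_{\LL_{\shA_1}\shA_2}\circ\LL_{\shA_1}=\LL_{\langle\LL_{\shA_1}\shA_2,\shA_1\rangle}=\LL_{\langle\shA_1,\shA_2\rangle}=\LL_{\shA_1}\circ\LL_{\shA_2}$ from Lem.~\ref{lem:mut}\eqref{lem:mut-3}--\eqref{lem:mut-4}. The one place where you are slightly breezy is the $\infty$-admissibility step: you should say explicitly that every iterated orthogonal of the new component $\LL_{\shA_i}\shA_{i+1}$ inside $\shD$ can be expressed, via the equality $\langle\LL_{\shA_i}\shA_{i+1},\shA_i\rangle=\langle\shA_i,\shA_{i+1}\rangle$, as an iterated orthogonal of components of the \emph{original} decomposition, hence is admissible by hypothesis. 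With that clarification the argument is complete.
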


\begin{definition} Let $\shA_\bullet = (\shA_1, \ldots, \shA_n)$ be a semiorthogonal sequence inside $\shD$ with admissible components. The {\em left dual semiorthogonal semiorthogonal sequence $\shB_\bullet=\foL(\shA_\bullet)$ of $\shA_\bullet$} is the semiorthogonal sequence $\shB_\bullet = (\shB_n, \ldots, \shB_1)$ defined by setting 
	$$\shB_1 = \shA_1, \qquad \shB_{i} = \LL_{\langle \shA_1, \ldots, \shA_{i-1}\rangle} \shA_i \quad \text{for $2 \le i \le n$}.$$
The {\em right dual semiorthogonal sequence $\shC_\bullet  = \foR(\shA_\bullet)$ of $\shA_\bullet$} 
is defined by setting 
	$$\shC_n = \shA_n, \qquad \shC_{i} = \RR_{\langle \shA_{i+1} , \ldots, \shA_{n}\rangle} \shA_i  \quad \text{for $1 \le i \le n-1$}. $$
\end{definition}

\begin{lemma}[\cite{Bo,Kuz09}] \label{lem:dualsod} Let $\shD = \langle \shA_1, \ldots, \shA_n \rangle$ be a semiorthogonal decomposition with admissible components, let $\shB_\bullet  = \foL(\shA_\bullet)$ and $\shC_\bullet = \foR(\shA_\bullet)$ be the left and right dual sequences.
\begin{enumerate}[leftmargin = *] 
	\item There are semiorthogonal decompositions 
		$$\shD = \langle \shB_n, \ldots, \shB_1 \rangle =  \langle \shA_1, \ldots, \shA_n \rangle= \langle \shC_n, \ldots, \shC_1 \rangle.$$ 
	\item $\forall i \le k \le n$, the following holds:
	$$\langle \shB_i, \ldots, \shB_1 \rangle = \langle \shA_1, \ldots, \shA_{i}  \rangle  \quad \text{and} \quad \langle \shC_n, \ldots, \shC_i\rangle = \langle \shA_i, \ldots, \shA_{n}  \rangle,$$
	$$\shB_i =\langle \shA_1, \ldots, \shA_{i-1}, \shA_{i+1}, \ldots, \shA_{n} \rangle^\perp, \quad \shC_i ={}^\perp \langle \shA_1, \ldots, \shA_{i-1}, \shA_{i+1}, \ldots, \shA_{n} \rangle.$$
	\item $\forall  1 \le k \le n$, the following functors induce mutually inverse equivalences of categories:
	$$\LL_{\langle \shA_1, \ldots, \shA_{k-1}\rangle}=\LL_{\shA_1} \cdots  \LL_{\shA_{k-1}} \colon \shA_k \simeq \shB_k, \qquad \RR_{\langle \shB_{k-1}, \ldots, \shB_1 \rangle} =\RR_{\shB_1}  \cdots  \RR_{\shB_{k-1}}   \colon \shB_k \simeq \shA_k.$$ 
Similarly, the following functors induce mutually inverse equivalence of categories:
	$$\RR_{\langle  \shA_{k+1}, \ldots, \shA_{n}  \rangle} = \RR_{\shA_n} \cdots \RR_{\shA_{k+1}} \colon \shA_k \simeq \shC_k, \qquad  \LL_{\langle \shC_n, \ldots, \shC_{k+1} \rangle} = \LL_{\shC_n} \cdots \LL_{\shC_{k+1}} \colon \shC_k \simeq \shA_k.$$ 
	\end{enumerate}
\end{lemma}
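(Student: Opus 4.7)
The plan is to prove all three parts by a single induction on $k$, using Lemma \ref{lem:mut:admissible} and Lemma \ref{lem:mut} as the sole ingredients. The inductive assertion is that for every $1 \le k \le n$ there is an admissible semiorthogonal decomposition
$$\shD = \langle \shB_k, \shB_{k-1}, \ldots, \shB_1, \shA_{k+1}, \ldots, \shA_n \rangle$$
together with the identification $\langle \shB_k, \ldots, \shB_1 \rangle = \langle \shA_1, \ldots, \shA_k \rangle$ of generated subcategories. The base case $k=1$ is immediate since $\shB_1 = \shA_1$; taking $k=n$ will yield the left-dual SOD of part (1), and the $\shC_\bullet$ statement will be proved in an entirely symmetric fashion using right mutations.

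For the inductive step, starting from the level $k-1$ decomposition I would iteratively left-mutate $\shA_k$ past $\shB_1, \shB_2, \ldots, \shB_{k-1}$. By Lemma \ref{lem:mut}(4) each such operation produces a new admissible SOD with the same generated subcategory, and by Lemma \ref{lem:mut}(3) the composite sends $\shA_k$ to
$$\LL_{\shB_{k-1}}\cdots \LL_{\shB_1}\,\shA_k \;=\; \LL_{\langle \shB_1, \ldots, \shB_{k-1}\rangle}\,\shA_k \;=\; \LL_{\langle \shA_1, \ldots, \shA_{k-1}\rangle}\,\shA_k \;=\; \shB_k,$$
the second equality being the inductive hypothesis applied inside the mutation functor. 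Admissibility of all intermediate subcategories propagates via Lemma \ref{lem:mut:admissible}(1),(4).

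Part (2) is then read off from the intermediate SODs. The identity $\langle \shB_k, \ldots, \shB_1 \rangle = \langle \shA_1, \ldots, \shA_k \rangle$ is the inductive claim itself. Interpreting $\shB_i$ as the leftmost component of the SOD $\shD = \langle \shB_i, \shB_{i-1}, \ldots, \shB_1, \shA_{i+1}, \ldots, \shA_n\rangle$ produced at step $i$, and using the standard characterization of the leftmost piece of a SOD as the right orthogonal of the subcategory generated by the remaining components, I obtain
$$\shB_i \;=\; \langle \shB_{i-1}, \ldots, \shB_1, \shA_{i+1}, \ldots, \shA_n \rangle^\perp \;=\; \langle \shA_1, \ldots, \shA_{i-1}, \shA_{i+1}, \ldots, \shA_n\rangle^\perp,$$
the last step again by the inductive identification. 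The $\shC_\bullet$-formula follows symmetrically.

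For part (3), semiorthogonality puts $\shA_k \subseteq {}^\perp\langle \shA_1, \ldots, \shA_{k-1}\rangle$, so by Lemma \ref{lem:mut}(2) the functor $\LL_{\langle \shA_1, \ldots, \shA_{k-1}\rangle}$ is fully faithful on $\shA_k$ with essential image $\shB_k$ by the very definition of $\shB_k$; its factorization as $\LL_{\shA_1}\cdots\LL_{\shA_{k-1}}$ is Lemma \ref{lem:mut}(3). The quasi-inverse is $\RR_{\langle \shA_1, \ldots, \shA_{k-1}\rangle} = \RR_{\langle \shB_{k-1}, \ldots, \shB_1\rangle}$ (using part (2)); this factors as $\RR_{\shB_1}\cdots\RR_{\shB_{k-1}}$ by Lemma \ref{lem:mut}(3), and is the quasi-inverse of $\LL$ on the corresponding orthogonals by Lemma \ref{lem:mut}(2). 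The $\shC_\bullet$-statements follow by swapping $\LL \leftrightarrow \RR$ throughout. No single step is the main obstacle; the argument is a careful unwinding of the mutation formalism, and the only thing requiring attention is to keep track of admissibility at every stage and to respect the direction of the orthogonals when applying the double-dual identities of Lemma \ref{lem:mut:admissible}(3).
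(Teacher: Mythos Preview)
Your proposal is correct and is precisely the detailed unpacking of the paper's one-line proof, which reads in its entirety: ``This is an easy consequence of Lem.~\ref{lem:mut}.'' Your inductive scheme, the use of Lem.~\ref{lem:mut}(3),(4) to iterate mutations, and the identification of $\shB_i$ as the right orthogonal of the remaining components are exactly what that sentence is pointing to; the paper simply leaves the bookkeeping to the reader.
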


\begin{proof} This is an easy consequence of Lem. \ref{lem:mut}. 
\end{proof}

\begin{remark} In the situation of Lem. \ref{lem:dualsod}, if we assume that $\shD$ has a relative Serre functor $\SS_{\shD/S}$ over some base scheme $S$ in the sense of \S \ref{sec:Serre}, then it will follow from Lem. \ref{lem:Serre} that the semiorthogonal decompositions $\foL(\shA_\bullet)$ and $\foR(\shA_\bullet)$ are also admissible, and the following holds:
	$$\foL^2 = \SS_{\shD/S}, \quad \text{and} \quad \foR^2 = \SS_{\shD/S}^{-1}.$$ 
\end{remark}

\subsection{Postnikov Systems and Convolutions}
Let $\shD$ be a triangulated category, and  $a\le b$ be two integers. Let $X^\bullet = \{ X^{a} \xrightarrow{d^{a}} X^{a+1} \xrightarrow{d^{a+1}} \cdots  \xrightarrow{d^{b-1}} X^b\}$ be a complex over $\shD$, that is, a collection of objects and morphisms in $\shD$ such that $d^{i+1} \circ d^{i} = 0$. 

\begin{definition} \label{def:Postnikov} A {\em (right) Postnikov system} attached to $X^\bullet$ is a diagram of the form:
 \begin{equation*}
	\begin{tikzcd} [row sep=1.8 em, column sep=0.5 em]
		&X^{a}[-a-1] \ar{rd}{i_{a}}	&	& &\cdots \cdots &	 &	&X^{b-1}[-b] \ar{rd}{i_{b-1}}&		& X^{b}[-b-1] \\
		Y=Y^{a} \ar[dashed]{ru}{j_{a-1}} &	&Y^{a+1}  \ar{ll}  & & \ar{ll} \cdots  \cdots    &  &  \ar{ll}  Y^{b-1} \ar[dashed]{ru}{j_{b-2}}	&	&  \ar{ll} Y^{b} = X^{b} [-b] \ar[dashed]{ru}[swap]{j_{b-1}=\Id} &
	\end{tikzcd}
	\end{equation*}
such that $j_k$ has degree $+1$, each triangle is distinguished, and $j_k \circ i_k = d^{k}[-k-1] \colon X^{k}[-k-1] \to X^{k+1}[-k-1]$ for each $k$. So in particular, $j_{b-1} = \Id \colon Y^b \simeq X^b[-b]$ and $i_{b-1} = d^{b-1}[-b]$. The object $Y= Y^{a}$ is called the {\em (right) convolution} of the (right) Postnikov system.
 \end{definition}
 
 We will call the object $Y^i$ in above diagram the ``$Y$-terms of the Postnikov system", and the object $X^i[-i]$ (notice the degree shift) the ``associated graded object of the right Postnikov system".
 
 \begin{remark}  There is also a notion of a {\em left Postnikov system} attached to $X^\bullet$, which is by definition a diagram of the form:
		\begin{equation*}
	\begin{tikzcd} [row sep=1.8 em, column sep=0.5 em]
		X^{a}  \ar{rd}{i_{a}=\Id} &	&X^{a+1}   \ar{rd}{i_{a+1}} &  &      \cdots \ar{rd}{i_{b-2}}  & 	&   X^{b-1} \ar{rd}{i_{b-1}}  	&	&  X^b  \ar{rd}{i_{b}}   & \\
		& Z^{a} = X^{a} \ar{ru}[swap]{j_{a}} &	 & Z^{a+1} \ar[dashed]{ll}  \ar{ru}[swap]{j_{a+1}}	&\cdots\cdots	     &Z^{b-2} \ar{ru}[swap]{j_{b-2}}&	&Z^{b-1} \ar[dashed]{ll} \ar{ru}[swap]{j_{b-1}}&		& Z^b = Z[b] \ar[dashed]{ll}
	\end{tikzcd}
	\end{equation*}
	such that each triangle is distinguished, and $j_k \circ i_k  = d^{k}\colon X^k  \to X^{k+1}$ for all $k$. So in particular $i_{a} = \Id$ and $j_a = d^{a} \colon X^{a} \to X^{a+1}$. The object $Z= Z^{b}[-b]$ is called the {\em left convolution} of the left Postnikov system. By octahedron axiom, there is a one-to-one correspondence between the class of right convolutions and the class of left convolutions. Therefore in this paper we will only use the {\em right} Postnikov system and omit the word ``right".
\end{remark}

\begin{remark}Note that our definition is slightly unconventional, but it is easy to see that up to degree shifts, above definition is equivalent to the ones given in \cite[\S IV.2]{GM} (in the case $[a,b] = [0,n]$), and the ones given in \cite[\S 1.2]{Kap88}, \cite[\S 1.3]{Or} (in the case $[a,b] =[-n,0]$). The benefits of our convention will be clear in the following examples.
\end{remark}

\begin{example} If $X^\bullet = \{X^{-1} \xrightarrow{d^{-1}} X^0 \}$, then the (right) convolution of $X^\bullet $ is uniquely given by $\cone(X^{-1} \xrightarrow{d^{-1}} X^0)$. If $X^{\bullet} = \{X^{0} \xrightarrow{d^{0}} X^{1} \}$, then the (right) convolution of $X^\bullet$ is uniquely given by the ``kernel" of $d^0$, i.e., $\cone(X^{0} \xrightarrow{d^{0}} X^{1})[-1]$.
\end{example}

\begin{example}[``Stupid" truncation] \label{eg:conv:stupid} If $\shA$ is an abelian category, $\shD = D^b(\shA)$ the bounded derived category of $\shA$, and $X^\bullet$ is given by a genuine bounded complex $E:=A^\bullet =  \{ A^{a} \xrightarrow{d^{a}} A^{a+1}  \xrightarrow{d^{a+1}} \cdots  \xrightarrow{d^{b-1}}  A^b\} \in D^b(\shA)$ with $a \le b$, where $A^i \in \shA$. Then the ``stupid" truncations (cf. \cite[\href{https://stacks.math.columbia.edu/tag/0118}{Tag 0118}]{stacks-project}) give rise to distinguished triangles:
	$$ \sigma^{\ge s+1} E \to \sigma^{\ge s} E \to A^s[-s] \xrightarrow{[1]}  \qquad \forall s \in \ZZ,$$
which induce a canonical (right) Postnikov system with $Y^i = \sigma^{\ge i} A^\bullet$:
  \begin{equation*}
	\begin{tikzcd} [row sep=1.8 em, column sep=0.5 em]
		&A^{a}[-a-1] \ar{rd}	&	 &\cdots \cdots 	 &	&A^{b-1}[-b] \ar{rd}&		& A^{b}[-b-1] \\
		E=\sigma^{\ge a} E \ar[dashed]{ru} &	& \sigma^{\ge a+1} E   \ar{ll}  & \cdots  \cdots    &    \sigma^{\ge b-1}  E \ar[dashed]{ru}	&	&  \ar{ll}  \sigma^{\ge b}  E  \ar[dashed]{ru} &
	\end{tikzcd}
	\end{equation*}
whose the right convolution is given by the complex itself $E=A^\bullet \in D^b(\shA)$.
\end{example}

\begin{example}[Koszul complex]\label{eg:conv:lci} Let $j \colon Z \hookrightarrow X$ be the regular immersion of the zero locus of a regular section $\xi$ of a vector bundle $\shE$ of rank $r$ over a quasi-compact, quasi-separated scheme $X$ (cf. case \eqref{eg:lci-1} of Example \ref{eg:lci}) and let $\shD = \Perf(X)$. The {\em Koszul complex} of $j$ is a complex of locally free sheaves in degree $[-r,0]$ given by $\shK^\bullet(j) := \{\shK^{-k} = \wedge^k(\shE^\vee), d^{-k} = \lrcorner \,\xi\}_{k=0,\ldots, r}$, see for example \cite[VII, \S 1]{SGA} or \cite[\S 17.4]{Ei}; we will often omit the differentials $d^{-k}$ in the expressions if there is no confusion. If we apply Example \ref{eg:conv:stupid} to the {Koszul complex} $\shK^\bullet(j)$, we obtain a canonical (right) Postnikov system attached to $\shK^\bullet(j)$ whose convolution is $j_{*} \sO_Z \in \Perf(X)$. Dually, by case \eqref{eg:lci-1} of Example \ref{eg:lci} and Grothendieck duality, if we apply Example \ref{eg:conv:stupid} to the {\em dual} Koszul complex $\shK^\bullet(j)^\vee = \{ (\shK^\vee)^{k} = \wedge^k \shE, d^k = (d^{-k})^\vee \}_{k=0,\ldots, r}$ (regarded as a complex in degree $[0,r]$), 
we obtain a canonical (right) Postnikov system attached to $\shK^\bullet(j)^\vee$ whose convolution is $j_{!} \sO_Z  \in \Perf(X)$.
\end{example}

The next lemma follows easily from definition:

\begin{lemma}[\cite{Or}] \label{lem:conv} If $\Phi \colon \shD_1 \to \shD_2$ is an exact functor between triangulated categories, $Y$ (resp. $Z$) is the right (resp. left) convolution of a right (resp. left) Postnikov system attached to a complex $X^\bullet$ over $\shD_1$. Then $\Phi(X^\bullet)$ is a complex over $\shD_2$, and the image under $\Phi$ of the right (resp. left) Postnikov is naturally a right (resp. left) Postnikov system in $\shD_2$ attached to $\Phi(X^\bullet)$, whose right (resp. left) convolution is given by  $\Phi(Y)$ (resp. $\Phi(Z)$).
\end{lemma}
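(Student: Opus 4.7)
The statement is essentially a formal consequence of the definition of an exact functor, so my plan is to verify that each ingredient in Def.~\ref{def:Postnikov} is preserved by $\Phi$. I will only treat the right Postnikov case; the left case is entirely analogous (or can be deduced from it by the octahedron-axiom bijection mentioned after Def.~\ref{def:Postnikov}).

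First, I would check that $\Phi(X^\bullet)$ is a complex in $\shD_2$. Since $\Phi$ is a functor, $\Phi(d^{i+1})\circ\Phi(d^i)=\Phi(d^{i+1}\circ d^i)=\Phi(0)=0$, so the sequence $\{\Phi(X^i),\Phi(d^i)\}$ is a complex concentrated in degrees $[a,b]$.

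Next, I would apply $\Phi$ term-by-term to the diagram of Def.~\ref{def:Postnikov}, setting $\widetilde Y^k:=\Phi(Y^k)$, $\widetilde i_k:=\Phi(i_k)$, $\widetilde j_k:=\Phi(j_k)$. Since $\Phi$ is exact, it commutes with the translation functor, so $\Phi(X^k[-k-1])=\Phi(X^k)[-k-1]$ and $\widetilde j_k$ still has degree $+1$; moreover, each triangle
\[
\widetilde Y^{k+1}\longrightarrow \widetilde Y^k \xrightarrow{\widetilde i_k}\Phi(X^k)[-k-1]\xrightarrow{\widetilde j_k}\widetilde Y^{k+1}[1]
\]
is distinguished in $\shD_2$ because $\Phi$ sends distinguished triangles to distinguished triangles. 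The compatibility condition $j_k\circ i_k=d^k[-k-1]$ is preserved by functoriality and commutation with shifts: $\widetilde j_k\circ\widetilde i_k=\Phi(j_k\circ i_k)=\Phi(d^k[-k-1])=\Phi(d^k)[-k-1]$. Finally, the terminal identity $j_{b-1}=\mathrm{Id}$ becomes $\Phi(\mathrm{Id})=\mathrm{Id}$, so $\widetilde Y^b=\Phi(X^b)[-b]$ as required.

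Thus the image diagram satisfies all the axioms of a right Postnikov system attached to the complex $\Phi(X^\bullet)$, and by construction its convolution is $\widetilde Y^a=\Phi(Y^a)=\Phi(Y)$. There is no real obstacle here: the only thing to notice is that one must use the exactness of $\Phi$ both for preservation of triangles and for the natural isomorphism $\Phi\circ[1]\cong[1]\circ\Phi$, which together ensure that all shifted morphisms and triangles transport correctly.
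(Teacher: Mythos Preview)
Your proof is correct and is exactly the verification the paper has in mind: the paper gives no argument beyond the remark ``the next lemma follows easily from definition'' and a citation to Orlov, and what you have written is precisely that straightforward check that an exact functor preserves complexes, distinguished triangles, shifts, and the compatibility $j_k\circ i_k=d^k[-k-1]$.
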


\subsection{Closed Symmetric Monoidal Structures}
This subsection considers the closed symmetric monoidal structures on $\Dqc(X)$ for a quasi-compact, quasi-separated scheme $X$. Recall that a {\em symmetric monoidal category} is a category $\shC$ together with the following data:
	\begin{itemize}
			\item (Tensor product functor) A functor $\otimes \colon \shC \times \shC \to \shC$;
			\item (Associativity constraints) A natural isomorphism 
					$$\alpha \colon ((\blank) \otimes (\blank) ) \otimes (\blank) \to (\blank)  \otimes ((\blank) \otimes (\blank) ) $$
					called {\em the associativity constraints of $\shC$}, with components of the form 
					$$\alpha _{X,Y,Z}: X \otimes (Y \otimes Z) \simeq (X \otimes Y) \otimes Z.$$
			\item (Unit) An element $\mathbf{1} \in \shC$ called {\em unit} of $\shC$, and functors $\lambda_X \colon 1 \otimes X \simeq X$ and $\rho_X \colon X \otimes 1 \simeq X$ called {\em left unit constraint} and {\em right unit constraint}, respectively.
			\item (Braidings) A natural isomorphism $\beta_{X, Y} \colon X \otimes Y \simeq Y \times X$ called {\em braidings}.
	\end{itemize}
Furthermore, these data are required to satisfy Pentagon identity, Hexagon identity, Braiding identity, and Triangle identity (see \cite[Definition 3.4.1]{Lip})
\footnote{We can equivalently define a symmetric monoidal category as a Grothendieck op-fibration $p \colon \shC^\otimes \to \shF{\rm in}_*$ over the Segal's category of pointed finite sets which satisfies Segal's condition; see \cite[Definition 2.0.0.7]{HA}.}.
A {\em closed symmetric monoidal category} is a symmetric monoidal category $(\shC, \otimes, \mathbf{1}, \alpha, \beta, \lambda, \rho)$ together with an {\em internal hom} functor $\ihom \colon \shC^{\rm op} \times \shC \to \shC$ and a functorial isomorphism
	$$\pi \colon \Hom(A \otimes B, C) \xrightarrow{\sim} \Hom(A, \ihom(B,C))$$
for all $A,B,C \in \shC$; see \cite[Definition 3.5.1]{Lip}.

\begin{example} 
\label{eg:D(O_X)^otimes}
Let $X$ be a scheme, and let $\otimes$ and $\RsHom_X$ denote the (derived) tensor product and (derived) sheaf $\Hom$ in the derived category $\D(\sO_X)$ of $\sO_X$-modules, respectively, let $\mathbf{1} = \sO_X$, and let $\alpha$, $\beta$, $\lambda$, $\rho$ be the natural isomorphisms, and let $\pi$ be the natural isomorphism
	$$\Hom_X(\sE \otimes \sF, \sG) \xrightarrow{\sim} \Hom_X(\sE, \RsHom_X(\sF,\sG))$$
from applying global section functor to \cite[Proposition 2.6.1]{Lip}. Then the collection of data 
	$$(\D(\sO_X), \otimes, \RsHom_X, \sO_X, \alpha, \beta, \lambda, \rho, \pi)$$
 forms a closed symmetric monoidal category (see \cite[Example (3.5.2)(d)]{Lip}).  Moreover, the formations of the symmetric monoidal categories $\D(\sO_X)$ are well-behaved under the (derived) pushforward and pullback functors for a composable pair of maps $X \xrightarrow{f} Y \xrightarrow{g} Z$ of schemes (\cite[\S 3.6, \S 3.7]{Lip}). Using the terminology of \cite[\S 3.6]{Lip}, let $\mathbf{S}$ denote the category of schemes, for each $X \in \mathbf{S}$, let $\mathbf{X}^* = \mathbf{X}_* = \D(X)$, and for each map $f \colon X \to Y$ in $\mathbf{S}$, let $f^*$ and $f_*$ denote the derived pushforward and derived pullback for $\sO_X$ and $\sO_Y$-modules, respectively. For a composable pair of maps $X \xrightarrow{f} Y \xrightarrow{g} Z$  in $\mathbf{S}$, we have functorial isomorphisms $(g\circ f)_* \simeq g_* \circ f_*$ and $(g\circ f)^* \simeq f^* \circ g^*$. Then the above collection of data defines an adjoint pair $({}^*, {}_*)$ of monoidal $\Delta$-pseudofunctors on $\mathbf{S}$; see \cite[(3.6.10) Scholium]{Lip}.

\end{example}

For a quasi-compact, quasi-separated scheme $X$, since the inclusion $j_{\qc} \colon \Dqc(X) \hookrightarrow \D(\sO_X)$ preserves direct sums and $\Dqc(X)$ is compactly generated, $j_\qc$ admits a right adjoint functor $\bQ_X \colon \D(\sO_X) \to \Dqc(X)$, called {\em coherator}  \cite{SGA, TT}. In the case where $X$ is furthermore semi-separated or noetherian, the coherator can be constructed as follows: let $Q_X$ be the right adjoint to the inclusion map $\Qcoh X \hookrightarrow \Mod \sO_X$, then the right derived functor $ \bR Q_X \colon \D(\sO_X) \to \D(\Qcoh X)$ is right adjoint to the natural map $\varphi_X \colon \D(\Qcoh X) \to \D(\sO_X)$. If $X$ is semi-separated or noetherian, then $\varphi_X$ is fully faithful and induces $\D(\Qcoh X)  \simeq \Dqc(X)$ (see \cite[\href{https://stacks.math.columbia.edu/tag/09T4}{Tag 09T4}, \& \href{https://stacks.math.columbia.edu/tag/08DB}{Tag 08DB}]{stacks-project}; consequently, in this case, $\bQ_X$ could be defined as 
	$\bQ_X : = \varphi_X \circ \bR Q_X \colon \D(\sO_X) \to \Dqc(X).$
In general, we can deduce the existence of $\bQ_X$ from the 
 from Neeman--Brown's representability theorem (Theorem \ref{thm:Adjoint}). Alternatively, one can reduce the general case to semi-separated cases by {\em reduction principle} \cite[Proposition 3.3.1]{BB}; we refer the readers to  \cite[\href{https://stacks.math.columbia.edu/tag/0CQZ}{Tag 0CQZ}]{stacks-project} for details. Consequently, the inclusion $j_{\qc} \colon \Dqc(X) \hookrightarrow \D(\sO_X)$ identifies $\Dqc(X)$ as a colocalizating (i.e., right admissible) subcategory of $\D(\sO_X)$, and for any $F \in \Dqc(X)$, the unit map $F \to \bQ_X \circ j_\qc (F)$ is an isomorphism. 

\begin{definition}[Internal Hom] 
\label{def:ihom}
For a quasi-compact, quasi-separated scheme $X$, we  define the {\em internal hom} $\ihom_X$ via the coherator $\bQ_X$ as follows: for any $\sF, \sG \in D(\sO_X)$,
	$$\ihom_X(F,G) : = \bQ_X \circ \RsHom_X(\sF,\sG) \in \Dqc(X).$$  
\end{definition}

For a quasi-compact and quasi-separated scheme $X$, we let $\otimes \colon \Dqc(X) \times \Dqc(X) \to \Dqc(X)$ denote the derived tensor product (\cite[(2.5.8.1)]{Lip}), and for all $\sE,\sF,\sG \in \Dqc(X)$, we let
	$$\alpha _{\sE, \sF, \sG}: \sE \otimes (\sF \otimes \sG)  \simeq (\sE \otimes \sF) \otimes \sG$$ 
	$$\lambda_{\sE} \colon \sO_X \otimes \sE \simeq \sE \qquad \rho_{\sE} \colon \sE \otimes \sO_X \simeq \sE \qquad \beta_{\sE, \sF} \colon \sE \otimes \sF \simeq \sF \otimes \sE$$
denote the natural functorial isomorphisms as in the case of $\D(\sO_X)$. Then we have the following result regarding the closed symmetric monoidal structure on $\Dqc(X)$ \footnote{A similar statement for the category $\D(\Qcoh X)$ is obtained by \cite{TLRG} under the condition that $X$ is {\em semi-separated}. The semi-separated condition is required in their case because they work with $\D(\Qcoh X)$, and the equivalence $\D(\Qcoh X) \simeq \Dqc(X)$ requires $X$ to be semi-separated or noetherian. However, since $\Dqc(X)$ is the category of interest for our purposes, we are able to drop these extra conditions.}: 

\begin{theorem}[{Lipman \cite[\S 4.2]{Lip}}] \label{thm:Lipman} For any quasi-compact and quasi-separated scheme $X$ and all $\sE, \sF, \sG \in \Dqc(X)$, there is a natural fuctorial isomorphism
	$$\pi_{\sE,\sF,\sG} \colon \Hom_{\Dqc(X)}(\sE \otimes \sF, \sG) \xrightarrow{\sim} \Hom_{\Dqc(X)}(\sE, \ihom_X(\sF,\sG))$$
such that the collection of data $(\Dqc(X), \otimes, \sO_X, \alpha, \beta, \lambda, \rho)$, together with the internal hom $\ihom_X$ (Definition \ref{def:ihom}) and the functorial isomorphisms $\pi_{\sE,\sF,\sG}$ form a closed symmetric monoidal category. Furthermore, for any morphism $f \colon X \to S$ of quasi-compact, quasi-separated schemes, the following statements are true:
\begin{enumerate}[leftmargin=*]
	\item \label{thm:Lipman-1} Let $\bR \Gamma$ denote the derived global section functor, then there are natural isomorphisms:
	$$\bR \Gamma(X, \bQ_X(\blank)) \xrightarrow{\sim} \bR \Gamma(X, \blank), 
	\quad f_* \, \bQ_X \xrightarrow{\sim}  \bQ_S \, f_*, 
	\quad f^! \, \bQ_S \xrightarrow{\sim} f^! .$$
	\item \label{thm:Lipman-2} For any $\sF \in \Dqc(X)$ $\sG \in \Dqc(S)$, there is a functorial isomorphism
		$$f_* \, \ihom_X(\sF, f^!\sG) \xrightarrow{\sim} \ihom_S(f_* \sF, \sG)$$
	to which the application of the functor $H^0 \bR \Gamma(S, \blank)$ produces the (usual) adjunction isomorphism (usually referred to as the global Grothendieck duality):
		$$\Hom_X(\sF, f^! \sG) \xrightarrow{\sim} \Hom_S(f_* \sF, \sG).$$
	\item \label{thm:Lipman-3} For any $\sF,\sG \in \Dqc(S)$, there is a functorial isomorphism
		$$\ihom_X(f^* \sF, f^! \sG) \xrightarrow{\sim} f^! \, \ihom_S(\sF,\sG).$$
	\item \label{thm:Lipman-4} For any $\sE \in \Dqc(X)$, $\sF \in \Dqc(S)$, there is a functorial isomorphism
		 $$f_* \, \ihom_X(f^*\sF, \sE) \xrightarrow{\sim} \ihom_S(\sF, f_* \sE).$$
\end{enumerate}
\end{theorem}
\begin{proof} Most of the above results are asserted in \cite[(4.2.3)]{Lip} under the condition that all schemes are {\em separated}. However, it will be clear in the following proof that separateness condition could be {dropped}. 
Since the tensor structure $\otimes \colon \Dqc(X) \times \Dqc(X) \to \Dqc(X)$ is induced from the derived tensor product $\otimes \colon \D(X) \times \D(X) \to \D(X)$ (\cite[(2.5.8.1)]{Lip}), we immediately obtain that the symmetric monoidal structure on $\D(\sO_X)$ (Example \ref{eg:D(O_X)^otimes}) induces a symmetric monoidal structure $(\Dqc(X), \otimes, \sO_X, \alpha, \beta, \lambda, \rho)$ on $\Dqc(X)$ and that the inclusion $j_{\qc} \colon \Dqc(X) \to \D(\sO_X)$ is a symmetric monoidal functor. For all $\sE,\sF,\sG \in \Dqc(X)$, we let $\pi_{\sE, \sF, \sG}$ denote the composition of functorial isomorphisms:
	\begin{align*}
	&\Hom_{\Dqc(X)}(\sE, \ihom_X(\sF,\sG)) \\
	&= \Hom_{\Dqc(X)}(\sE, \bQ_X \RsHom_X(\sF,\sG)) & (\text{by definition of $\ihom_X$})\\
	& \simeq \Hom_{\D(\sO_X)}(j_{\qc}(\sE), \RsHom_X(\sF, \sG)) & (j_\qc \dashv \bQ_X) \\
	& \simeq \Hom_{\D(\sO_X)}(\sE \otimes \sF, \sG) & (\otimes \dashv \RsHom \quad \text{in} \quad \D(\sO_X))\\
	& \simeq \Hom_{\Dqc(X)}(\sE \otimes \sF, \sG) & (\sE\otimes \sF, \sG \in \Dqc(X)). 
	\end{align*}
This establishes the adjunction $\otimes \dashv \ihom$ and gives rise to the closed structure of $\Dqc(X)$.

For the ``furthermore" part \eqref{thm:Lipman-1}, the first formula follows from the second, since the global section functor is a pushforward. For the last two isomorphisms: $f_* \, \bQ_X \simeq \bQ_S \, f_*$ holds since it is right conjugate to the natural isomorphism $f^* \circ j_\qc \simeq j_\qc \circ f^*$, and $f^! \, \bQ_S \simeq f^!$ holds since it is right conjugate to the natural isomorphism $f_* \circ j_\qc \simeq f_*|_{\Dqc}$. 

For \eqref{thm:Lipman-2}, we deduce it from the global Grothendieck duality as follows: for any $\sE \in \Dqc$,
	\begin{align*}
		&\Hom_{\Dqc(S)}(\sE, f_* \, \ihom_X(\sF, f^!\sG))   \\
		& \simeq \Hom_{\Dqc(S)}(\sE, \bQ_S \, f_* \, \RsHom_(\sF, f^!\sG)) & (f_* \, \bQ_X \simeq \bQ_S \,f_*) \\
		& \simeq \Hom_{\D(\sO_S)} (\sE, f_* \RsHom_X(\sF, f^! \sG))	 & (j_\qc \dashv \bQ_S )\\
		& \simeq \Hom_{\D(\sO_S)} (\sE, \RsHom_S(f_* \sF,\sG))	& (\text{Grothendieck duality \cite[Theorem 4.2]{Lip}}) \\
		& \simeq \Hom_{\Dqc(S)}(\sE, \ihom_S(f_* \sF, \sG)) 	&  (j_\qc \dashv \bQ_S).
	\end{align*}
By Yoneda's lemma,  \eqref{thm:Lipman-2} is proved.  

For \eqref{thm:Lipman-3} and  \eqref{thm:Lipman-4}:  \eqref{thm:Lipman-3} holds since it is right conjugate to the projection formula isomorphism $p_1 \colon f_* E \otimes (\blank) \simeq f_*(E \otimes f^*(\blank))$ of \cite[Proposition 3.9.4]{Lip}, and \eqref{thm:Lipman-4} holds since it is right conjugate to the canonical isomorphism $f^*(E \otimes \blank) \xrightarrow{\sim} f^*(E) \otimes f^*(\blank)$ of \cite[Proposition 3.2.4]{Lip}. In particular, in all the above steps, we require no other condition except for the schemes and morphisms being quasi-compact and quasi-separated. Hence the theorem is proved.
\end{proof}

\begin{remark} The theorem, together with Theorem \ref{thm:BB}, implies that $\Dqc(X)$ is a {\em unital algebraic stable homotopy category} in the sense of \cite{HPS}. In particular, the triangulated category $\Dqc(X)$ is compactly generated, and the following objects of $\Dqc(X)$ coincide: (i) (strong) dualizable objects, (ii) compact objects, (iii) perfect complexes. This theorem enables us to apply the theory of unital algebraic stable homotopy category of \cite{HPS} to $\Dqc(X)$. 
\end{remark}

\begin{remark}
\label{rmk:ihomcomposition}
As a direct consequence of the theorem, for all $\sE, \sF, \sG \in \Dqc(X)$, the identity map of $\ihom_X(\sF,\sG)$ induces the {\em evaluation map} $\ihom_X(\sF,\sG) \otimes \sF \to \sG$ for any $\sF,\sG$. Moreover, the composition of evaluation maps 
	$$\ihom_X(\sF,\sG) \otimes \ihom_X(\sE,\sF) \otimes \sE \to \ihom_X(\sF,\sG) \otimes \sF \to \sG$$
 induces the {\em composition map} of internal homs $\ihom_X(\sF,\sG) \otimes \ihom_X(\sE,\sF) \to \ihom_X(\sE,\sG)$.
\end{remark}

\begin{remark} For any $f \colon X \to S$ between quasi-compact and quasi-separated schemes, $f^*$ and $f_*$ restricts to $f^* \colon \Dqc(S) \to \Dqc(X)$ (\cite[Proposition 3.9.1]{Lip}) and $f_* \colon \Dqc(X) \to \Dqc(S)$ (\cite[Proposition 3.9.2]{Lip}). Since the symmetric monoidal structure $\Dqc(X)^{\otimes }$ is inherited from that of $\D(\sO_X)^{\otimes}$, the compatibility conditions \cite[(3.6.10)]{Lip} for $\D(\sO_X)$ (mentioned in the ``moreover" part of Example \ref{eg:D(O_X)^otimes}) are also true for $\Dqc(X)$. If we use the terminology of \cite[(3.6.10)]{Lip}, then we obtain: let $\mathbf{S}$ be the category of quasi-compact and quasi-separated schemes. For any $X \in \mathbf{S}$, we let $\mathbf{X}^* = \mathbf{X}_*$ denote the closed symmetric monoidal category $\Dqc(X)$ with product $\otimes$, unit $\sO_X$ and internal hom $\ihom_X$. For any morphism $f \colon X \to Y$ in $\mathbf{S}$, we let $f^* \colon \mathbf{Y}^* \to \mathbf{X}^*$ and $f_* \colon \mathbf{X}_* \to \mathbf{Y}_*$ denote the derived pullback and pushforward functors (Theorem \ref{thm:Neeman-Lipman}), respectively. For a composable pair of maps $X \xrightarrow{f} Y \xrightarrow{g} Z$  in $\mathbf{S}$, there are functorial isomorphisms $(g\circ f)_* \simeq g_* \circ f_*$ and $(g\circ f)^* \simeq f^* \circ g^*$. Then the above collection of data defines an adjoint pair $({}^*, {}_*)$ of monoidal $\Delta$-pseudofunctors on $\mathbf{S}$ (see \cite[(3.6.10)\&(4.2.3b)]{Lip} for more details about adjoint pair of monoidal $\Delta$-pseudofunctors).
\end{remark}

\begin{definition}\label{def:shom} For a morphism $f \colon X \to S$ of quasi-compact, quasi-separated schemes, $\Dqc(X)$ is naturally enriched over $\Dqc(S)$ with {\em $\Dqc(S)$-valued $\Hom$-object} given by:
	$$\shom_S(\sF,\sG) : = f_* \, \ihom_X(\sF,\sG) \in \Dqc(S), \qquad \forall \sF,\sG \in \Dqc(X).$$
By virtue of Theorem \ref{thm:Lipman}, there are functorial isomorphisms
	\begin{align*}
	\Hom_{\Dqc(X)}(f^*(\sE) \otimes \sF, \sG) & \xrightarrow{\simeq} \Hom_{\Dqc(X)}(f^*\sE, \ihom_X(\sF, \sG)) \\
	& \xrightarrow{\simeq} \Hom_{\Dqc(S)}(\sE, f_* \ihom_X(\sF,\sG)) \\
	&=\Hom_{\Dqc(S)}(\sE, \sHom_S(\sF,\sG))
	\end{align*}
for all $\sE \in \Dqc(S)$, $\sF, \sG \in \Dqc(X)$ which exhibits $\sHom_S(\sF,\sG)$ is the mapping object for $\sF$ and $\sG$.
There is a natural composition map $\shom_S(\sF,\sG) \otimes \shom_S(\sE,\sF) \to \shom_S(\sE,\sG)$ induced from the composition map of $\ihom_X$ of Remark  \ref{rmk:ihomcomposition} and the natural map $f_* A \otimes f_* B \to f_*(A \otimes B)$ since $f_*$ is lax symmetric monoidal (\cite[(3.2.4.2)]{Lip}). 
\end{definition}

As a corollary, we obtain the $S$-linear version Grothendieck--Serre duality:

\begin{corollary}[Grothendieck--Serre Duality]\label{cor:GD} For a morphism $f \colon X \to S$ of quasi-compact, quasi-separated schemes, any $A \in \Perf(X)$ and $B \in \Dqc(X)$, denote $\omega_f = f^!\sO_S$ as in Theorem \ref{thm:Neeman-Lipman}. Then there is a functorial isomorphism in $\Dqc(S)$:
	$$\shom_S(B,  A \otimes \omega_f) \xrightarrow{\sim}  \ihom_S(\shom_S(A,B), \sO_S).$$
\end{corollary}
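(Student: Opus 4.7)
The plan is to reduce the statement to Theorem \ref{thm:Lipman}\eqref{thm:Lipman-2} by exploiting the dualizability of $A$. First, I would unwind the right-hand side. By definition,
\[
\shom_S(A,B)^{\vee} = \ihom_S\!\big(f_*\,\ihom_X(A,B),\,\sO_S\big).
\]
Applying Theorem \ref{thm:Lipman}\eqref{thm:Lipman-2} with $F=\ihom_X(A,B)\in\Dqc(X)$ and $G=\sO_S$, and recalling that $\omega_f=f^{!}\sO_S$, this becomes
\[
\shom_S(A,B)^{\vee} \;\simeq\; f_*\,\ihom_X\!\big(\ihom_X(A,B),\,\omega_f\big).
\]

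Next, I would rewrite the inner $\ihom_X$ using that $A$ is perfect, hence (strongly) dualizable in the closed symmetric monoidal category $(\Dqc(X),\otimes,\ihom_X)$ by Theorem \ref{thm:Lipman}. For a dualizable object $A$ with dual $A^{\vee}=\ihom_X(A,\sO_X)$, the standard closed-monoidal identities give natural isomorphisms
\[
\ihom_X(A,B)\;\simeq\; A^{\vee}\otimes B,
\qquad
\ihom_X(A^{\vee}\otimes B, C)\;\simeq\; \ihom_X\!\big(B,\,\ihom_X(A^{\vee},C)\big),
\]
and, by double-dualizability ($A^{\vee\vee}\simeq A$), $\ihom_X(A^{\vee},C)\simeq A\otimes C$ for any $C\in\Dqc(X)$. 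Chaining these with $C=\omega_f$ yields
\[
\ihom_X\!\big(\ihom_X(A,B),\,\omega_f\big)\;\simeq\; \ihom_X(B,\,A\otimes\omega_f).
\]

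Combining the two displays and using the definition of $\shom_S$, I obtain
\[
\shom_S(A,B)^{\vee}\;\simeq\; f_*\,\ihom_X(B,\,A\otimes\omega_f)\;=\;\shom_S(B,\,A\otimes\omega_f),
\]
which is the asserted functorial isomorphism. The main obstacle is not any one of these steps in isolation but rather ensuring that the chain of natural isomorphisms above is indeed functorial and that the standard closed-monoidal identities for dualizable objects are legitimately available in $\Dqc(X)$ — this is precisely what Theorem \ref{thm:Lipman} guarantees by identifying perfect complexes with the dualizable objects of the unital algebraic stable homotopy category $\Dqc(X)$, so no extra hypothesis on $f$, $X$ or $S$ beyond quasi-compact and quasi-separated is required.
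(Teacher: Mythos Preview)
Your proof is correct and follows essentially the same approach as the paper: apply Theorem~\ref{thm:Lipman}\eqref{thm:Lipman-2} with $F=\ihom_X(A,B)$ and $G=\sO_S$, then use dualizability of $A\in\Perf(X)$ to identify $\ihom_X(\ihom_X(A,B),\omega_f)\simeq\ihom_X(B,A\otimes\omega_f)$. The paper states this in a single line, leaving the dualizability manipulation implicit; you have simply spelled it out.
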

\begin{proof} Apply Theorem \ref{thm:Lipman} \eqref{thm:Lipman-2} to the case $F = \ihom_X(A,B)$ and $G= \sO_S$. 
\end{proof}

\begin{remark}[Quasi-Perfect Cases] 
\label{rmk:quasi-perfect:hom_S}
If $f$ is quasi-perfect (see Remark \ref{rmk:quasi-perfect}; for example, if $f$ is proper and perfect), then $f_*$ preserves perfect complexes and
	$$\shom_S(A,B) = f_* \RsHom_X(A,B) \in \Perf(S) \qquad \text{for all $A, B \in \Perf(X)$}.$$
Consequently, the Grothendieck--Serre duality implies functorial equivalences 
	$$f_*\RsHom_X(B, A \otimes \omega_f) \simeq (f_* \RsHom_X(A,B))^\vee \in \Perf(S) \qquad \text{for all $A, B \in \Perf(X)$}.$$
\end{remark}

 \subsection{Linear categories} \label{sec:linear:cat}
\begin{definition} Let $f \colon X \to S$ be a morphism of schemes. A triangulated subcategory $\shD \subseteq \Dqc(X)$ is called {\em $S$-linear} if it is stable with respect to tensoring by pullbacks of perfect complexes on $S$, that is, $A \otimes f^*P \in \shD$ holds for any $A \in \shD$, $P \in \Perf(S)$.  A semiorthogonal decomposition \eqref{eqn:SOD:D} is called $S$-linear if all of its components are $S$-linear.
\end{definition}

\begin{lemma}[{\cite[Lem. 2.7]{Kuz11}}]\label{lem:linearSO} Let $f \colon X \to S$ be a morphism of quasi-compact quasi-separated schemes. A pair of $S$-linear subcategories $\shA, \shB \subseteq \Dqc(X)$ is semiorthogonal (i.e. $\Hom_X(B, A) = 0$ for any $A \in \shA$, $B \in \shB$) iff $\shom_S(B,A) = 0$ any $A \in \shA$, $B \in \shB$.
\end{lemma}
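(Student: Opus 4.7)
The plan is to prove the two directions separately. The ``if'' direction ($\shom_S(B,A)=0$ implies semiorthogonality) is formal: applying $\bR\Gamma(S,\blank)$ and using the identification $\bR\Gamma(S, f_*\ihom_X(B,A)) \simeq \bR\Gamma(X, \ihom_X(B,A))$ (together with $H^0\bR\Gamma(X, \ihom_X(B,A)) = \Hom_{\Dqc(X)}(\sO_X, \ihom_X(B,A)) = \Hom_X(B,A)$ via the $\otimes \dashv \ihom$ adjunction of Thm.~\ref{thm:Lipman}) immediately shows that vanishing of $\shom_S(B,A)$ forces $\Hom_X(B,A)=0$.

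For the ``only if'' direction, which is the one genuinely using $S$-linearity, I would argue by contraposition. Suppose $\shom_S(B,A) \ne 0$ in $\Dqc(S)$ for some $A\in \shA$, $B\in\shB$. By Thm.~\ref{thm:Lipman}, $\Dqc(S)$ is compactly generated by $\Perf(S)$, so there exist $P \in \Perf(S)$ and $n \in \ZZ$ such that
\[
\Hom_{\Dqc(S)}\bigl(P,\, \shom_S(B,A)[n]\bigr) \ne 0.
\]
Now unwind this $\Hom$-group using the two adjunctions available: first $f^* \dashv f_*$ on $\Dqc$ gives
\[
\Hom_{\Dqc(S)}(P, f_*\ihom_X(B,A)[n]) \simeq \Hom_{\Dqc(X)}(f^*P, \ihom_X(B,A)[n]),
\]
and then the closed monoidal adjunction $\otimes \dashv \ihom$ of Thm.~\ref{thm:Lipman} rewrites this as
\[
\Hom_{\Dqc(X)}(f^*P, \ihom_X(B,A)[n]) \simeq \Hom_X\bigl(B \otimes f^*P,\, A[n]\bigr).
\]
Hence we obtain a nonzero morphism $B\otimes f^*P \to A[n]$. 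But by $S$-linearity of $\shB$, the object $B\otimes f^*P$ lies in $\shB$, and by $S$-linearity of $\shA$ (or simply by closure under shift of the triangulated subcategory $\shA$), $A[n]\in\shA$; so the semiorthogonality hypothesis forces this $\Hom$-group to vanish, a contradiction.

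I do not anticipate any serious obstacles: the argument rests entirely on Thm.~\ref{thm:Lipman} (the closed monoidal structure on $\Dqc$, the adjunction $f^*\dashv f_*$ on $\Dqc$, and the compact generation of $\Dqc(S)$ by perfect complexes) together with the very definition of $S$-linearity. The only point that needs a moment's care is the verification that $\Perf(S)$ is a set of compact generators of $\Dqc(S)$, but this is precisely what the ``unital algebraic stable homotopy category'' structure from Thm.~\ref{thm:Lipman} provides.
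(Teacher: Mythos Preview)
Your proof is correct and follows essentially the same approach as the paper: both directions rest on the same ingredients (taking $H^0\bR\Gamma(S,\blank)$ for the ``if'' direction, and the chain of adjunctions $f^*\dashv f_*$, $\otimes\dashv\ihom$ together with compact generation of $\Dqc(S)$ by $\Perf(S)$ and $S$-linearity of $\shB$ for the ``only if'' direction). The only cosmetic difference is that you phrase the ``only if'' direction by contraposition with an explicit shift $[n]$, while the paper argues directly that $\Hom_S(P,\shom_S(B,A))=0$ for all $P\in\Perf(S)$ (absorbing shifts into $P$).
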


\begin{proof} The ``if" direction follows by taking $H^0\bR\Gamma(S, \blank)$ to $\shom_S(B,A)$ in the view of Thm. \ref{thm:Lipman} \eqref{thm:Lipman-1}. For any $A,B \in \Dqc(X)$ and $P \in \Perf(S)$, from the adjunction $\otimes \dashv \ihom$ we have:
	\begin{align*}
	\Hom_S(P, f_* \ihom_X(B,A)) = \Hom_X(f^*P, \ihom_X(B,A)) = \Hom_X(f^* P \otimes B, A) = 0.
	\end{align*}
(The last equality holds by $S$-linearity of $\shB$.) Since $\Dqc(S)$ is compactly generated by $P \in \Perf(S)$, $\shom_S(B,A) =  f_* \ihom_X(B,A) =0$. This proves the ``only if" direction.
\end{proof}

\begin{definition} Let $f \colon X \to S$ and $g \colon Y \to S$ be morphisms of schemes, and let $\shA \subseteq \Dqc(X)$ and $\shB \subseteq \Dqc(Y)$ be $S$-linear triangulated subcategories. An exact functor $F \colon \shA \to \shB$ is called {\em $S$-linear} if it preserves the action of $\Perf(S)$, i.e., for any $A \in \shA$, $P \in \Perf(S)$, there is a natural isomorphism $F(A \otimes f^* P) \simeq F(A) \otimes g^*P \in \shB$.
\end{definition}

By definition, if a subcategory is an $S$-linear subcategory, then the inclusion functor is an $S$-linear functor. If a semiorthogonal decomposition \eqref{eqn:SOD:D} is $S$-linear, then the projection functors $\pr_i$ are $S$-linear for all $i$; see \cite[Lem. 2.8]{Kuz11}.

\begin{lemma} \label{lem:linearLR} Let $f \colon X \to S$ and $g \colon Y \to S$ be morphisms of quasi-compact, quasi-separated schemes, and let $\shA \subseteq \Dqc(X)$ and $\shB \subseteq \Dqc(Y)$ be $S$-linear triangulated subcategories. Assume $L \colon \shA  \to \shB$ and $R \colon \shB \to \shA$ are exact functors.
	\begin{enumerate}[leftmargin=*]
		\item \label{lem:linearLR-1} If $L \dashv R$ is an adjoint pair, then $L$ is $S$-linear iff $R$ is $S$-linear;
		\item  \label{lem:linearLR-2} If $L$ is $S$-linear, then $L \dashv R$ iff for any $A \in \shA$, $B \in \shB$, there is a functorial isomorphism 
			$$\shom_S(LA, B) \simeq \shom_S(A, RB)$$
		to which the application of the functor $H^0 \bR \Gamma(S, \blank)$ produces the usual adjunction isomorphism $\Hom(LA, B) \simeq \Hom(A,RB)$.
	\end{enumerate}
\end{lemma}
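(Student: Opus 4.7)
The plan is to prove both parts by a Yoneda-type argument, testing against the compact generators $\Perf(S)$ of $\Dqc(S)$ (Thm.~\ref{thm:Lipman}) and repeatedly using the $\otimes\dashv\ihom$ adjunction together with the base-change identity $\Hom_S(P,h_*\ihom(-,-))=\Hom(h^*P\otimes -,-)$ for $h=f$ or $g$.

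For part \eqref{lem:linearLR-1}, assume $L$ is $S$-linear; we show $R$ is. Fix $B\in\shB$ and $P\in\Perf(S)$. Since $P$ is dualisable in $\Dqc(S)$ (Thm.~\ref{thm:Lipman}), $f^*P$ and $g^*P$ are dualisable with duals $f^*P^{\vee}$ and $g^*P^{\vee}$. For any $A\in\shA$ we then compute
\begin{align*}
\Hom(A,R(B\otimes g^*P))
&\simeq \Hom(LA,B\otimes g^*P)
\simeq \Hom(LA\otimes g^*P^{\vee},B)\\
&\simeq \Hom(L(A\otimes f^*P^{\vee}),B)
\simeq \Hom(A\otimes f^*P^{\vee},RB)
\simeq \Hom(A,RB\otimes f^*P),
\end{align*}
where the third isomorphism uses the $S$-linearity of $L$. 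By Yoneda, $R(B\otimes g^*P)\simeq RB\otimes f^*P$ functorially, so $R$ is $S$-linear. The reverse implication is symmetric: assuming $R$ is $S$-linear, the analogous chain
$\Hom(L(A\otimes f^*P),B)\simeq\Hom(A\otimes f^*P,RB)\simeq\Hom(A,R(B\otimes g^*P^{\vee}))\simeq\Hom(LA\otimes g^*P,B)$
gives $L(A\otimes f^*P)\simeq LA\otimes g^*P$ by Yoneda.

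For part \eqref{lem:linearLR-2}, assume $L$ is $S$-linear and $L\dashv R$. We upgrade the ordinary adjunction to a $\Dqc(S)$-valued one by testing against $P\in\Perf(S)$: using Thm.~\ref{thm:Lipman} and the $\otimes\dashv\ihom$ adjunction on $X$ and $Y$,
\begin{align*}
\Hom_S(P,\shom_S(LA,B))
&=\Hom_Y(g^*P\otimes LA,B)
\simeq\Hom_Y(L(f^*P\otimes A),B)\\
&\simeq\Hom_X(f^*P\otimes A,RB)
=\Hom_S(P,\shom_S(A,RB)),
\end{align*}
where the first isomorphism uses $S$-linearity of $L$ and the second is the given adjunction. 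Since $\Perf(S)$ compactly generates $\Dqc(S)$, Yoneda (see Lem.~\ref{lem:span:f.f.}\eqref{lem:span:f.f.-2}) yields a functorial isomorphism $\shom_S(LA,B)\simeq\shom_S(A,RB)$. Applying $H^0\bR\Gamma(S,-)$ and using Thm.~\ref{thm:Lipman}\eqref{thm:Lipman-1} (so that $H^0\bR\Gamma(S,\shom_S(F,G))=\Hom(F,G)$) returns the original $\Hom$-adjunction, as required. Conversely, any $\shom_S$-isomorphism whose global sections give the ordinary adjunction isomorphism tautologically witnesses $L\dashv R$.

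The arguments are essentially formal manipulations of adjunctions, so no real obstacle is expected; the only subtle point is ensuring naturality and that the compatibility with $H^0\bR\Gamma(S,-)$ holds on the nose. This follows because every isomorphism in the chain above is induced by a canonical adjunction unit/counit, so applying $\bR\Gamma$ transforms the displayed chain into the standard chain of isomorphisms computing $\Hom(LA,B)\simeq\Hom(A,RB)$, matching the underlying adjunction.
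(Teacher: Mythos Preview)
Your proof is correct and follows essentially the same approach as the paper: both parts are proved by testing against $P\in\Perf(S)$ using dualizability of $f^*P,g^*P$ for part~\eqref{lem:linearLR-1}, and the chain $\Hom_S(P,\shom_S(LA,B))\simeq\Hom_Y(g^*P\otimes LA,B)\simeq\Hom_Y(L(f^*P\otimes A),B)\simeq\Hom_X(f^*P\otimes A,RB)\simeq\Hom_S(P,\shom_S(A,RB))$ together with compact generation of $\Dqc(S)$ for part~\eqref{lem:linearLR-2}. Your treatment is in fact slightly more explicit in part~\eqref{lem:linearLR-1}, where the paper simply remarks that the claim ``follows easily from definition, since $f^*P$ and $g^*P$ are (strong) dualizable''; the only minor quibble is that Lem.~\ref{lem:span:f.f.}\eqref{lem:span:f.f.-2} concerns natural transformations between covariant functors rather than the Yoneda step you need here, but the conclusion via compact generation is of course valid.
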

\begin{proof} Let $P\in \Perf(S)$. \eqref{lem:linearLR-1} follows easily from definition, since $f^* P$ and $g^* P$ are (strong) dualizable. For \eqref{lem:linearLR-2}, the ``if" part is trivial by Thm. \ref{thm:Lipman} \eqref{thm:Lipman-1}; For ``only if" direction:
	\begin{align*}
	& \Hom_S(P, \shom_S(LA, B))  \\
	  & \simeq 	\Hom_Y(g^*P, \ihom_Y(LA,B))
	  \simeq	 \Hom_Y(g^* P \otimes LA, B)  & (g^* \dashv g_* \,\, \text{and} \, \,\otimes \dashv \ihom_Y)\\
	 & \simeq  \Hom_Y(L(f^*P \otimes A), B) 	& (\text{$L$ is $S$-linear} ) \\
	  & \simeq \Hom_X(f^*P \otimes A, RB) 		& (L \dashv R)\\
	 & \simeq \Hom_X(f^*P, \ihom_X(A,RB)) 
	 \simeq \Hom_S(P, \shom_S(A, RB)) 	&(\otimes \dashv \ihom_X \,\, \text{and}\,\, f^* \dashv f_*).
	\end{align*}
Since $\Dqc(S)$ is compactly generated by $P \in \Perf(S)$, the lemma is proved.\end{proof}

\subsection{Relative Serre functors} \label{sec:Serre}
In this subsection fix a morphism $f \colon X \to S$ between quasi-compact, quasi-separated schemes and an $S$-linear admissible subcategory $\shD \subseteq \Dqc(X)$.

\begin{definition}\label{def:proper}
The $S$-linear category $\shD$ is said to be {\em proper over $S$} if for any $D_1, D_2 \in \shD$, the Hom-object  (Definition \ref{def:shom}) $\shom_S(D_1,D_2) \in \Perf(S)$. 
\end{definition}

\begin{example}
If $f \colon X \to S$ is proper and perfect, then any $S$-linear admissible subcategory $\shD \subseteq \Perf(X)$ is proper over $S$ (Theorem \ref{thm:Neeman-Lipman} \eqref{thm:Neeman-Lipman-2i}). In particular, if $S = \Spec \kk$ for a field $\kk$, and $X$ is a proper, finitely presented scheme over $\kk$, then $\Perf(X)$ is proper over $\kk$.
\end{example}


\begin{definition}\label{def:Serre} Let $\shD$ be a proper $S$-linear category (Definition \ref{def:proper}). A {\em relative Serre functor} for $\shD$ over $S$ is an $S$-linear autoequivalence $\SS_{\shD/S} \colon \shD \xrightarrow{\simeq} \shD$, such that for any $A, B \in \shD$, there is a functorial isomorphism:
	$$\shom_S(B,  \SS_{\shD/S} (A)) \xrightarrow{\sim} \shom_S(A,B)^\vee,$$
where $\shom_S(\blank, \blank)$ is the $\Hom$-object of Definition \ref{def:shom}.
\end{definition}
If a relative Serre functor exists, then it is unique up to canonical isomorphisms. The next result shows the existence in geometric situations:

\begin{proposition}\label{prop:Serre} Let $f \colon X \to S$ be a quasi-perfect (e.g., proper and perfect) morphism between quasi-compact, quasi-separated schemes, and assume $\omega_f$ is invertible. Then $\Perf(X)$ admits a relative Serre functor over $S$ given by $\SS_{X/S}  = (\blank) \otimes \omega_f \colon \Perf(X) \simeq \Perf(X)$. 
\end{proposition}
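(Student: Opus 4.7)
The plan is to derive Prop. \ref{prop:Serre} as an essentially immediate consequence of the Grothendieck--Serre duality already established in Cor. \ref{cor:GD}. There are three things to check: (i) $\SS_{X/S} := (\blank)\otimes\omega_f$ is a well-defined endofunctor of $\Perf(X)$; (ii) it is an autoequivalence and $S$-linear; (iii) it satisfies the duality property of Def. \ref{def:Serre}. I do not expect any real obstacle; the content of the proposition is really a repackaging of Cor. \ref{cor:GD}.

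For (i) and (ii), since $f$ is quasi-perfect, Thm. \ref{thm:Neeman} ensures that $\omega_f = f^!\sO_S$ lies in $\Dqc(X)$; the extra hypothesis that $\omega_f$ is invertible means it is (the shift of) a line bundle, hence in particular a dualizable, perfect object of $(\Dqc(X),\otimes)$ that is invertible for the tensor product. Therefore $(\blank)\otimes\omega_f$ preserves $\Perf(X)$ and admits a two-sided inverse $(\blank)\otimes\omega_f^{-1}$, giving an autoequivalence $\SS_{X/S}\colon\Perf(X)\simeq\Perf(X)$. The $S$-linearity $\SS_{X/S}(A\otimes f^*P)\simeq \SS_{X/S}(A)\otimes f^*P$ is then immediate from the symmetry and associativity of $\otimes$ on $\Dqc(X)$ established in Thm. \ref{thm:Lipman}.

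For (iii), I would simply apply Cor. \ref{cor:GD} to the pair $(A,B)$ with $A\in\Perf(X)$ and $B\in\Perf(X)\subseteq\Dqc(X)$: this yields the functorial isomorphism
\[
\shom_S(B,\,\SS_{X/S}(A)) \;=\; \shom_S(B,\,A\otimes\omega_f) \;\xrightarrow{\sim}\; \shom_S(A,B)^\vee,
\]
which is exactly the defining property of a relative Serre functor. The only place the invertibility of $\omega_f$ is used is in turning $(\blank)\otimes\omega_f$ into an autoequivalence of $\Perf(X)$; the duality formula itself is already true without this assumption, as a consequence of Thm. \ref{thm:Lipman}\eqref{thm:Lipman-2}.
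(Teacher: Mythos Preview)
Your proposal is correct and follows exactly the route the paper intends: the paper's proof is the single sentence ``This follows from Grothendieck--Serre duality Cor.~\ref{cor:GD},'' and you have simply unpacked the three verifications (well-definedness on $\Perf(X)$, $S$-linear autoequivalence, and the duality isomorphism) that this sentence leaves implicit. There is nothing to add or correct.
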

\begin{proof} This follows from Grothendieck--Serre duality Corollary \ref{cor:GD}. 
\end{proof}

Recall that a triangulated subcategory $\shA \subseteq \shD$ is called {\em $\infty$-admissible} if it is admissible and all its iterated right and left orthogonals are admissible. 
		
\begin{lemma}\label{lem:Serre} Let $\shD$ be a proper $S$-linear category which admits a relative Serre functor $\SS_{\shD/S}$ over $S$ (Definition \ref{def:Serre}). 
Let $\shD = \langle \shA, \shB \rangle$ be an $S$-linear semiorthogonal decomposition. Then:
	\begin{enumerate}
		\item \label{lem:Serre-1} $\shA$ is admissible iff $\shB$ is admissible. If this happens, then both $\shA$ and $\shB$ are $\infty$-admissible,  $\shA^{\perp \perp} = \SS_{\shD/S} (\shA)$ and ${}^{\perp \perp} \shA = \SS_{\shD/S}^{-1} (\shA)$, and similarly for $\shB$.
		\item \label{lem:Serre-2} If $\shA \subseteq \shD$ is admissible, then $\shA$ also admits a relative Serre functor $\SS_{\shA/S} \colon \shA \simeq \shA$ over $S$, which is related to $\SS_{\shD/S}$ via the following isomorphisms:
			$$\SS_{\shA/S} \simeq i_{\shA}^! \circ \SS_{\shD/S} \circ i_{\shA} \quad \text{and} \quad \SS_{\shA/S}^{-1} \simeq  i_{\shA}^* \circ \SS_{\shD/S}^{-1} \circ i_{\shA}.$$
		\item \label{lem:Serre-3} If $\shB \subset \shD$ is admissible, and we have $S$-linear admissible semiorthogonal decompositions $\shD = \langle \shA, \shB \rangle = \langle \shB, \shC \rangle$ i.e. $\shA = \shB^\perp$ and $\shC = {}^\perp \shB$, then the following holds:
			\begin{align*}
				& \SS_{\shD/S}\,|_{\, \shC} \simeq \SS_{\shA/S} \circ (\LL_{\shB} |_{\, \shC} )\simeq (\LL_{\shB}|_{\, \shC}) \circ \SS_{\shC/S} \colon \qquad  \shC \xrightarrow{\sim} \shA; \\
				& \SS_{\shD/S}^{-1}\, |_{\, \shA} \simeq \SS_{\shC/S}^{-1} \circ (\RR_{\shB} |_{\, \shA}) \simeq (\RR_{\shB}|_{\, \shA}) \circ \SS_{\shA/S}^{-1} \colon	  \qquad \shA \xrightarrow{\sim} \shC.
			\end{align*}
	In other words, Serre functor and mutation agrees up to autoequivalences.
	\end{enumerate}
\end{lemma}

\begin{proof} Since $\shD$ is proper, and $(\blank)^\vee \colon \Perf(S)^{\rm op} \xrightarrow{\sim} \Perf(S)$ is an equivalence, we obtain that for all $F, G \in \shD$,
	$\shom_S(F,G) = 0 \iff \shom_S(G, \SS_{\shD/S}(F)) =0 \iff \shom_S( \SS_{\shD/S}^{-1}(G), F) = 0.$
	
For \eqref{lem:Serre-1}, assume $\shB$ is admissible and let $\shD = \langle \shB, \shC \rangle$. Then from Lemma \ref{lem:linearSO}, we have:
	$$A \in \shA=\shB^\perp \iff  \SS_{\shD/S}^{-1}(A) \in \shC = {}^\perp \shB.$$
Hence the equivalence $\SS_{\shD/S}^{-1}$ induces $\shA \xrightarrow{\sim} \shC = \SS_{\shD/S}^{-1}(\shA)$. Since $\shC$ is right admissible, we obtain that $\shA$ is also right admissible, hence admissible. Consequently, $\shC = {}^{\perp \perp} \shA$ is admissible. Continuing the process, we see that $\shA$ and $\shB$ are $\infty$-admissible. The other case is similar. 

For \eqref{lem:Serre-2}, thanks to Lemma \ref{lem:linearLR} \eqref{lem:linearLR-2}, we could upgrade all $S$-linear adjunction isomorphisms to $\Perf(S)$-valued ones, hence for any $E,F \in \shA$, we have functorial isomorphisms
	\begin{align*}
		& \shom_S(F, i_{\shA}^! \, \SS_{\shD/S} \, i_{\shA} \, E)
		 \simeq \shom_S(i_{\shA} F, \SS_{\shD/S} \, i_{\shA} \,E) 
		 \simeq \shom_S(i_{\shA} E, i_{\shA} \, F)^\vee 
		 \simeq \shom_S(E,F)^\vee; \\
		& \shom_S(i_{\shA}^* \, \SS_{\shD/S}^{-1} \, i_{\shA} F, E) 
		\simeq 	\shom_S( \SS_{\shD/S}^{-1} \, i_{\shA} F, i_{\shA} E)  
		\simeq 	 \shom_S(i_{\shA} E, i_{\shA} \, F)^\vee 
		\simeq \shom_S(E,F)^\vee.
	\end{align*}
By assertion \eqref{lem:Serre-1}, $i_{\shA}^! \, \SS_{\shD/S} \, i_{\shA}$ and $i_{\shA}^* \, \SS_{\shD/S}^{-1} \, i_{\shA}$ are mutually inverse equivalence of categories, hence above computations show that there indeed the relative Serre functor and its inverse.

For \eqref{lem:Serre-3}, for any $A \in \shA$, $C \in \shC$, similar to \eqref{lem:Serre-2}, we have the $\Perf(S)$-valued isomorphisms:
	\begin{align*}
	& \shom_S(A, \SS_{\shA/S} \circ \LL_\shB (C)) 
	\simeq \shom_S(\LL_\shB(C), A)^\vee 
	\simeq \shom_S(C,  i_{\shA} A)^\vee \\
	& \simeq \shom_S(i_{\shA} A, \SS_{\shD/S} (C))
	\simeq  \shom_S(A, \SS_{\shD/S} (C)).
	\end{align*}
This shows $\SS_{\shA/S} \circ \LL_\shB|_{\, \shC}  \simeq  \SS_{\shD/S} \,|_{\, \shC}$; taking right adjoint we obtain $\RR_{\shB}|_{\, \shA}  \circ \SS_{\shA/S}^{-1} \simeq  \SS_{\shD/S}^{-1}|_{\shA}$.
	\begin{align*}
	& \shom_S(\SS_{\shC/S}^{-1} \circ \RR_{\shB}(A), C)
	\simeq \shom_S(C, \RR_\shB(A))^\vee 
	\simeq \shom_S(i_{\shC} \,C,  A)^\vee \\
	& \simeq \shom_S(\SS_{\shD/S}^{-1} A, i_{\shC} \, C) 
	\simeq \shom_S(\SS_{\shD/S}^{-1} A, C).
	\end{align*}
This shows $\SS_{\shC/S}^{-1} \circ \RR_{\shB}|_{\, \shA} \simeq \SS_{\shD/S}^{-1}|_{\, \shA}$. Taking left adjoint we obtain $\LL_{\shB}|_{\shC} \circ \SS_{\shC/S} \simeq  \SS_{\shD/S}|_{\shC}$.
\end{proof}


We have the following analogue of \cite[Proposition 1.10]{BK}:
\begin{lemma}[Cf. {\cite[Proposition 1.10]{BK}}]\label{lem:iterated_mutations}  
Let $\shD$ be a proper $S$-linear category which admits a relative Serre functor $\SS_{\shD/S}$ over $S$ (Definition \ref{def:Serre}) and 
let $\shA \subseteq \shD$ is an $S$-linear admissible subcategory. 
Let $i_{\shA} \colon \shA \to \shD$ the inclusion functor and $i_{\shA}^*$ its left adjoint as usual. We define an infinite sequence of $S$-linear exact functors as follows: for $n \in \ZZ$, 
	$$i_{(n)}  : =  \SS_{\shD/S}^{\circ (n)} \circ i_{\shA} \circ \SS_{\shA/S}^{\circ (-n)} \colon \shA \to \shD \quad \text{and} \quad i^{(n)} :=  \SS_{\shA/S}^{\circ (n)} \circ i_{\shA}^* \circ \SS_{\shD/S}^{\circ (-n)}  \colon \shD \to \shA.$$
(Here, if $n <0$, $S^{\circ (n)} = (S^{-1})^{\circ (-n)}$ denotes the $(-n)$-fold composition of the inverse Serre functor $S$.) Then $i_{(n)}$ is fully faithful for each $n \in \ZZ$, $i_{(0)} = i_{\shA}$, $i^{(0)} = i_{\shA}^*$, and there are adjoint sequences $i^{(n)} \dashv i_{(n)} \dashv i^{(n+1)}$ for all $n \in \ZZ$. Furthermore, for $n > 0$ (resp. $n<0$), the exact functor $i_{(n)} \colon \shA \hookrightarrow \shD$ agrees with the $n$-fold iterated left mutations $\shA \xrightarrow{\sim} \shA^{\perp (2n)} \hookrightarrow \shD$ (resp. $(-n)$-fold iterated right mutations $\shA \xrightarrow{\sim} {}^{\perp (-2n)} \shA \hookrightarrow \shD$ ).
\end{lemma}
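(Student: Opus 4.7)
The plan is to follow the strategy of Bondal--Kapranov \cite[Prop.~1.10]{BK}, adapted to the enriched $S$-linear setting. The key identity driving everything is
$$i_{\shA}^! \simeq \SS_{\shA/S} \circ i_{\shA}^* \circ \SS_{\shD/S}^{-1},$$
which I would extract from Lem.~\ref{lem:Serre}\eqref{lem:Serre-2} by comparing two expressions for $\shom_S(i_\shA A, B)$: one via the enriched adjunction $i_\shA \dashv i_\shA^!$ (Lem.~\ref{lem:linearLR}\eqref{lem:linearLR-2}), the other by applying Serre duality on $\shD$, then the enriched adjunction $i_\shA^* \dashv i_\shA$, then Serre duality on $\shA$ (both available by assumption \eqref{lem:Serre-a}). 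The normalizations $i_{(0)} = i_\shA$ and $i^{(0)} = i_\shA^*$ hold by inspection, and since $\SS_{\shD/S}$, $\SS_{\shA/S}$ are autoequivalences and $i_\shA$ is fully faithful, each $i_{(n)}$ is automatically fully faithful.

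Next I would verify the adjoint sequence $i^{(n)} \dashv i_{(n)} \dashv i^{(n+1)}$. The adjunction $i^{(n)} \dashv i_{(n)}$ follows purely formally from $i_\shA^* \dashv i_\shA$ and the equivariance $\shom_S(\SS F, \SS G) \simeq \shom_S(F,G)$ that holds for any $S$-linear autoequivalence $\SS$. The adjunction $i_{(n)} \dashv i^{(n+1)}$ is where the key identity intervenes: for $A \in \shA$, $B \in \shD$, the chain
\begin{align*}
\shom_S(i_{(n)} A, B)
&\simeq \shom_S\bigl(i_\shA \SS_{\shA/S}^{-n}(A),\, \SS_{\shD/S}^{-n} B\bigr)
\simeq \shom_S\bigl(\SS_{\shA/S}^{-n}(A),\, i_\shA^! \SS_{\shD/S}^{-n} B\bigr) \\
&\simeq \shom_S\bigl(A,\, \SS_{\shA/S}^n\, i_\shA^!\, \SS_{\shD/S}^{-n} B\bigr)
\simeq \shom_S\bigl(A,\, i^{(n+1)} B\bigr)
\end{align*}
uses, in order: equivariance for $\SS_{\shD/S}^n$, the enriched adjunction $i_\shA \dashv i_\shA^!$, equivariance for $\SS_{\shA/S}^n$, and finally the key identity. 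Applying $H^0\bR\Gamma(S,\blank)$ recovers the ordinary adjunction by Thm.~\ref{thm:Lipman}\eqref{thm:Lipman-1}.

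Finally, to identify $i_{(n)}$ with the $n$-fold iterated left (resp.\ right) mutation for $n>0$ (resp.\ $n<0$), I would proceed by induction on $|n|$. By Lem.~\ref{lem:Serre}\eqref{lem:Serre-1}, all iterated orthogonals $\shA^{\perp(k)}$ are $\infty$-admissible with $\shA^{\perp(2k)} = \SS_{\shD/S}^k(\shA)$; transport of the Serre functor under the $S$-linear equivalence $\SS_{\shD/S}^k \colon \shA \xrightarrow{\sim} \shA^{\perp(2k)}$ yields $\SS_{\shA^{\perp(2k)}/S} \simeq \SS_{\shD/S}^k \circ \SS_{\shA/S} \circ \SS_{\shD/S}^{-k}$. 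Applying Lem.~\ref{lem:Serre}\eqref{lem:Serre-3} to the decomposition $\shD = \langle \shA^{\perp(2k+2)}, \shA^{\perp(2k+1)}\rangle = \langle \shA^{\perp(2k+1)}, \shA^{\perp(2k)}\rangle$ gives
$$\LL_{\shA^{\perp(2k+1)}}\big|_{\shA^{\perp(2k)}} \simeq \SS_{\shD/S} \circ \SS_{\shA^{\perp(2k)}/S}^{-1}.$$
Composing for $k=0,1,\ldots,n-1$, the conjugated factors $\SS_{\shD/S}^{k}\circ \SS_{\shA/S}^{-1} \circ \SS_{\shD/S}^{-k}$ telescope to give $\SS_{\shD/S}^n \circ \SS_{\shA/S}^{-n}$, which after composition with $\shA^{\perp(2n)}\hookrightarrow \shD$ is precisely $i_{(n)}$; the case $n<0$ is dual via the right-mutation formula of Lem.~\ref{lem:Serre}\eqref{lem:Serre-3}. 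I expect the principal obstacle to be the combinatorial bookkeeping in this last step---checking cleanly that the conjugated Serre functors telescope across iterated mutations---together with care that the enriched isomorphisms in the adjunction verification are genuinely functorial with respect to the coherator-based definition of $\shom_S$.
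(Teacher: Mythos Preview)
Your proposal is correct and follows essentially the same approach as the paper: the paper's proof simply states that the cases $n=\pm 1$ are direct consequences of Lem.~\ref{lem:Serre} and that the general case follows by induction, and you have explicitly unpacked these steps using parts \eqref{lem:Serre-1}, \eqref{lem:Serre-2}, \eqref{lem:Serre-3} of that lemma together with the telescoping computation. Your detailed verification of the key identity $i_\shA^! \simeq \SS_{\shA/S}\circ i_\shA^*\circ \SS_{\shD/S}^{-1}$, the adjunction chain, and the transport of Serre functors across $\SS_{\shD/S}^k$ is exactly what the paper's two-line proof leaves implicit.
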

\begin{proof}  The lemma for the cases $n =1$ and $n=-1$ is a direct consequence of Lemma \ref{lem:Serre}; The claim for general $n \in \ZZ$ then follows from induction.
\end{proof}

\begin{proposition}[{\cite[Proposition 4.8]{BK}}] \label{prop:Serre2sod}  
Let $\shD$ be a proper $S$-linear category and let $\shD = \langle \shA, \shB \rangle $ be an $S$-linear semiorthogonal decomposition such that $\shA$, $\shB$ are admissible with relative Serre functors $\SS_{\shA/S}$ and $\SS_{\shB/S}$, respecitvely. Then $\shD$ admits a relative Serre functor over $S$.
\end{proposition}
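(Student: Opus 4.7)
The plan is to adapt the classical construction of Bondal and Kapranov \cite{BK}, reformulated $S$-linearly using the $\Dqc(S)$-valued internal $\Hom$-objects of \S \ref{sec:Serre}. The key idea is to glue the Serre functors $\SS_{\shA/S}$ and $\SS_{\shB/S}$ along the ``off-diagonal'' data of the decomposition.

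First, using admissibility of $\shB$, I would form the companion semiorthogonal decomposition $\shD = \langle \shB, \shC \rangle$ with $\shC := {}^{\perp}\shB$. By Lemma \ref{lem:mut}, the right mutation $\RR_\shB|_{\shA}\colon \shA \xrightarrow{\sim} \shC$ is an $S$-linear equivalence with inverse $\LL_\shB|_\shC$; transporting $\SS_{\shA/S}$ along this equivalence gives a relative Serre functor $\SS_{\shC/S} := \RR_\shB \circ \SS_{\shA/S} \circ \LL_\shB|_\shC$ on $\shC$. Thus the task reduces to gluing the Serre functors $\SS_{\shB/S}$ and $\SS_{\shC/S}$ on the admissible components of the decomposition $\shD = \langle \shB, \shC \rangle$.

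Next, I would define an $S$-linear exact functor $\SS_\shD \colon \shD \to \shD$ on objects as follows. For $D \in \shD$, form the decomposition triangle $i_\shC i_\shC^! D \to D \to i_\shB i_\shB^{\,*} D$ associated to $\shD = \langle \shB, \shC \rangle$, and define $\SS_\shD(D)$ to fit in a distinguished triangle of the form
\[
i_\shB \,\SS_{\shB/S}(i_\shB^{\,*} D) \longrightarrow \SS_\shD(D) \longrightarrow i_\shA \,\SS_{\shA/S}\bigl(\LL_\shB (i_\shC^! D)\bigr),
\]
where the connecting morphism is the canonical one dictated by the $S$-linear Ext-pairing between $\shA$ and $\shB$, obtained via adjunction from the $\Dqc(S)$-valued Grothendieck-Serre duality of Thm. \ref{thm:Lipman}. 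Functoriality in $D$ follows from the functoriality of the mutation and adjunction data.

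To verify the defining Serre property $\shom_S(E, \SS_\shD D) \simeq \shom_S(D, E)^\vee$ for all $E,D \in \shD$, I would use Lemma \ref{lem:linearSO} and $S$-linearity to reduce to the case when both $E$ and $D$ range over generators of $\shA$ and $\shB$ separately. The four pairwise cases $(E \in \shA \text{ or } \shB, D \in \shA \text{ or } \shB)$ reduce directly to the Serre properties of $\SS_{\shA/S}$ and $\SS_{\shB/S}$ together with the semiorthogonality relations, using the triangle that defines $\SS_\shD(D)$ and the associated triangle for $\shom_S(E,D)$. Finally, $\SS_\shD$ is $S$-linear since each constituent is, and it is an equivalence because a dual construction using $\SS_{\shA/S}^{-1}$ and $\SS_{\shB/S}^{-1}$ produces its quasi-inverse; by Lemma \ref{lem:Serre} the resulting autoequivalence is the relative Serre functor of $\shD$.

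The main obstacle is making the gluing morphism in step two canonical and natural. Because $\Hom(\shA,\shB)$ is not zero in general, the Serre functor does not preserve either component; the connecting morphism must be constructed intrinsically from the SOD data (essentially as the unit/counit of the $(i_\shB^{\,*}, i_\shB)$-adjunction composed with the Serre duality isomorphisms) so that the Serre property assembles correctly across the decomposition triangle. Once this canonicity is established, the verification of the Serre property on the generators $\shA \cup \shB$ is routine, but setting it up cleanly in the relative setting is where most of the technical work lies.
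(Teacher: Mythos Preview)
Your gluing triangle is wrong, and this is not a cosmetic issue: it produces the wrong object. Take $D=B\in\shB$. In your triangle the right-hand term vanishes (since $\LL_\shB(i_\shC^! B)=i_\shA^* B=0$), so your construction gives $\SS_\shD(B)=\SS_{\shB/S}(B)\in\shB$. But the genuine Serre functor must send $\shB={}^{\perp}\shA$ into $\shA^{\perp}$, which is different from $\shB$ whenever the decomposition is nontrivial. Concretely, on $\PP^1$ with $\shA=\langle\sO\rangle$ and $\shB=\langle\sO(1)\rangle$ your formula yields $\SS_\shD(\sO(1))=\sO(1)$, whereas the actual Serre functor gives $\sO(-1)[1]$; and indeed your candidate fails the Serre identity since $\shom_S(\sO,\sO(1))\ne 0$ while $\shom_S(\sO(1),\sO)^\vee=0$. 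The paper's construction (following \cite{BK}) instead builds $\SS_\shD(D)$ as an extension
\[
\SS_{\shA/S}\big(i_\shA^*D\big)\;\longrightarrow\;\SS_\shD(D)\;\longrightarrow\;i_{\shA^\perp}^*\big(\SS_{\shB/S}(i_\shB^*D)\big)\;\xrightarrow{[1]}\;,
\]
which lives in the decomposition $\shD=\langle\shA^\perp,\shA\rangle$, not in $\langle\shA,\shB\rangle$; the extra projection $i_{\shA^\perp}^*$ on the $\shB$-piece is exactly what you are missing.

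There is also a methodological point. You propose to \emph{define} $\SS_\shD$ via a cone and then assert functoriality; but cones are not functorial, so ``functoriality follows from the functoriality of the mutation and adjunction data'' is not a proof. The paper sidesteps this entirely by arguing via representability: one shows that the covariant functor $h(\blank)=\shom_S(D,\blank)^\vee$ is representable for each $D$ by explicitly producing the representing object as above (the connecting map $\gamma\colon E_{\shA}\to F$ arises from applying $h$ to $E_\shA\to E$ and reading off where $\id_E$ goes). Once representability is established, Yoneda furnishes the functor automatically.
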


\begin{proof} Similar to \cite{BK}, we only need to show for any object $D \in \shD$, the functors of the form $\shom_S(D, \blank)^\vee$ and $\shom_S(\blank, D)^\vee$ are representable. We only need to consider the case $h(\blank)= \shom_S(D, \blank)^\vee \colon \shD \to \Perf(S)$; the other case is similar. We use $\shom_{\shA} = \shom_{\shA, S}$ to denote $\Perf(S)$-valued hom object for the category $\shA$; the notations $\shom_{\shB}$, $\shom_{\shD}$ are similarly defined. By assumption, $\shD = \langle \shA^\perp, \shA \rangle  = \langle  \shA, \shB \rangle = \langle \shB, {}^\perp \shB \rangle$, and we know that
	\begin{align*}
		&h|_{\shB} (\blank)= \shom_{\shB}(\blank, E), \qquad E = \SS_{\shB/S} \circ i_{\shB}^* (D) \in \shB; \\
		 &h|_{\shA} (\blank) = \shom_{\shA}(\blank, F), \qquad F = \SS_{\shA/S} \circ i_{\shA}^* (D) \in \shA;\\
		&\shom_{\shD}(A, E) = \shom_{\shA}(A, E_{\shA}) \quad \text{for all $A \in \shA$} , \qquad  E_{\shA} = i_{\shA}^! (E) \in \shA. 
	\end{align*}
Let $E_{\shA^\perp} = i_{\shA^\perp}^* (E)$, then there is a distinguished triangle $E_{\shA} \to E \to E_{\shA^\perp} \xrightarrow{[1]}$. Apply the functor $h(\blank)$ to the map $E_{\shA} \to E$, we obtain a map $\shom_{\shA}(E,E) \to \shom_{\shD}(E_{\shA}, F)$. Taking global section, the identity map $\id_E$ then corresponds to a canonical map $\gamma \colon E_{\shA} \to F$. Consider the composition $\delta \colon E_{\shA^\perp}[-1] \to E_{\shA} \xrightarrow{\gamma} F$, and let $X$ be the cone of $\delta$, then the same argument of \cite[Proposition 4.8]{BK} shows that $X$ represents the functor $h$.
\end{proof}

\begin{remark} For future reference, in the above proof we have shown that for any $D \in \shD$, the application of Serre functor $\SS_{\shD/S}(D)$ sits into an exact triangle:
	$$\SS_{\shA/S} \circ i_{\shA}^* (D)\to \SS_{\shD/S}(D) \to i_{\shA^\perp}^* \circ \SS_{\shB/S} \circ i_{\shB}^*(D) \xrightarrow{[1]}.$$
\end{remark}

\begin{proposition}[{\cite[Proposition 4.11, \& 4.12]{BK}}] \label{prop:Serresod}  Let $\shD$ be a proper $S$-linear category and let $\shD = \langle \shA_1, \ldots, \shA_n \rangle$ be an admissible $S$-linear semiorthogonal decomposition. Then $\shD$ admits a relative Serre functor over $S$ iff each $\shA_i$ admits a relative Serre functor over $S$. If this happens, then the semiorthogonal decomposition $\shA_\bullet$ is $\infty$-admissible.
\end{proposition}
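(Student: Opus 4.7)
The plan is to proceed by induction on $n$, with the base cases $n=1$ (trivial) and $n=2$ (which is exactly Prop. \ref{prop:Serre2sod}) already settled. The key inductive tools are Prop. \ref{prop:Serre2sod} together with Lem. \ref{lem:Serre}. For the forward direction, suppose $\shD$ admits a relative Serre functor $\SS_{\shD/S}$. Since each $\shA_i$ is admissible in $\shD$ by hypothesis, Lem. \ref{lem:Serre}\eqref{lem:Serre-2} directly produces a relative Serre functor on each $\shA_i$, given by the explicit formula $\SS_{\shA_i/S} \simeq i_{\shA_i}^! \circ \SS_{\shD/S} \circ i_{\shA_i}$.

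For the backward direction, I would split the decomposition as $\shD = \langle \shB, \shA_n\rangle$, where $\shB := \langle \shA_1, \ldots, \shA_{n-1}\rangle$. First I would verify that $\shB$ is an admissible subcategory of $\shD$ (using admissibility of $\shA_n$ via $\shB = {}^\perp \shA_n$ and Lem. \ref{lem:mut:admissible}), that $\shB$ inherits the $\Perf(S)$-valued $\Hom$-dualizability condition \eqref{lem:Serre-a} from $\shD$ automatically (since $\shom_S$ restricts), and that each $\shA_i$ for $i < n$ is admissible inside $\shB$ (by composing adjoints through the inclusion $\shB \hookrightarrow \shD$: if $i_{\shA_i,\shD'}$ denotes inclusion into $\shD$, then $i_{\shA_i,\shB}^{!} = i_{\shA_i,\shD}^{!} \circ i_{\shB,\shD}$ and analogously for the left adjoint). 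The induction hypothesis then yields a relative Serre functor on $\shB$. Applying Prop. \ref{prop:Serre2sod} to the $2$-term SOD $\shD = \langle \shB, \shA_n\rangle$, whose components now both carry relative Serre functors, produces the desired $\SS_{\shD/S}$.

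For the $\infty$-admissibility claim, I would combine two observations. First, Lem. \ref{lem:Serre}\eqref{lem:Serre-1} applied to each $2$-term decomposition $\shD = \langle \shA_i, \shA_i^\perp\rangle$ shows iterated orthogonals of each $\shA_i$ inside $\shD$ are related by powers of $\SS_{\shD/S}$, hence all admissible. Second, one checks that left and right mutations of the sequence $\shA_\bullet$ again have admissible components: the new component $\LL_{\shA_i}\shA_{i+1}$ is the image of the admissible subcategory $\shA_{i+1} \subseteq {}^\perp \shA_i$ under the equivalence $\LL_{\shA_i}|_{{}^\perp \shA_i} \colon {}^\perp \shA_i \xrightarrow{\sim} \shA_i^\perp$ of Lem. \ref{lem:mut}\eqref{lem:mut-2}, hence admissible inside $\shA_i^\perp$ and therefore inside $\shD$ (since $\shA_i^\perp \subseteq \shD$ is itself admissible by the forward direction applied to $\langle \shA_i, \shA_i^\perp\rangle$). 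The same argument handles right mutations, and iterating gives $\infty$-admissibility of $\shA_\bullet$.

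The main obstacle I anticipate is the bookkeeping of admissibility through the inductive step and through iterated mutations -- in particular, verifying cleanly that admissibility of $\shA_i \subseteq \shD$ descends to admissibility inside the intermediate subcategory $\shB$, and that the dualizability condition \eqref{lem:Serre-a} transfers without loss when restricting the enriched Hom from $\shD$ to $\shB$. Both are formal consequences of the adjoint calculus for composed fully faithful embeddings, but require careful identification of the adjoint functors at each level.
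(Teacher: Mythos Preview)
Your proposal is correct and follows essentially the same approach as the paper: induction on $n$, with Lem.~\ref{lem:Serre}\eqref{lem:Serre-2} giving the forward implication and Prop.~\ref{prop:Serre2sod} handling the inductive step for the backward implication, while $\infty$-admissibility comes from Lem.~\ref{lem:Serre}\eqref{lem:Serre-1} plus induction. Your discussion of the admissibility bookkeeping (transferring admissibility of $\shA_i$ from $\shD$ down to $\shB$, and verifying condition~\eqref{lem:Serre-a} restricts) is exactly the routine verification that the paper leaves implicit in the phrase ``the general case follows from induction,'' and your mutation argument for $\infty$-admissibility unpacks what the paper compresses into ``Lem.~\ref{lem:Serre}\eqref{lem:Serre-1} and induction.''
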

\begin{proof} For the first statement, if $n=2$, then the ``if" part is Proposition \ref{prop:Serre2sod} and the ``only if" part follows from Lemma \ref{lem:Serre} \eqref{lem:Serre-2}; The general case follows from induction. The second statement follows from Lemma \ref{lem:Serre} \eqref{lem:Serre-1} and induction.
\end{proof}

\subsection{Base change of linear categories} \label{sec:bc}

\begin{definition}[Tor-independent base-change] \label{def:bc} A base change $\phi: T \to S$ is called {\em Tor-independent with respect to a morphism $f: X \to S$} if the Cartesian square
	\begin{equation}\label{eqn:fiber}
	\begin{tikzcd}
	X_T \arrow{d}{}[swap]{f_T}  \arrow{r}{\phi_T} & X \arrow{d}{f} \\
	T\arrow{r}{\phi}  & S
	\end{tikzcd}
	\end{equation}
is Tor-independent, i.e. for all $t \in T$, $x \in X$, and $s \in S$ with $\phi(t) = s = f(x)$, $\Tor_i^{\sO_{S,\,s}}(\sO_{T,\,t}, \sO_{X,\,x}) = 0$ for all $i > 0$. 
A base change $\phi: T \to S$ is called \emph{Tor-independent with respect to a pair $(X, Y)$ of schemes over $S$}, if $\phi$ is Tor-independent with respect to the  morphisms $f: X \to S$, $g: Y \to S$ and $f \times_S g : X \times_S Y \to S$.
\end{definition}

\begin{remark}\label{rmk:Lipbc} By \cite[Thm. 3.10.3]{Lip}, a base-change $\phi: T \to S$ as above is Tor-independent iff the natural transformation $\phi^* \,f_* \to f_{T\,*} \, \phi_{T}^*: \Dqc(X) \to \Dpc(T)$ is  an isomorphism. The condition $\phi^* \,f_* \simeq f_{T\,*} \, \phi_{T}^*$ was taken as the definition of {\em faithful base-change} in \cite{Kuz11}. Consequently, if we assume \eqref{eqn:fiber} is Tor-independent, and all  schemes in \eqref{eqn:fiber} are quasi-compact and quasi-separated, then for any $F \in \Perf(X)$, $G \in \Dqc(X)$, the following holds:
	$$\shom_S(F, G)_T \simeq \shom_T(F_T, G_T) \in \Dqc(T),$$
where $\shom_S$ (resp. $\shom_T$) is the $\Dqc(S)$- (resp. $\Dqc(T)$-) valued $\Hom$-object of Def. \ref{def:shom}.
\end{remark}

\begin{lemma}[K\"unneth formula]\label{lem:Kunneth} Let $\phi: T \to S$ be a Tor-independent base-change with respect to a morphism $f: X \to S$ as in diagram \eqref{eqn:fiber}, and assume all the schemes are quasi-compact and quasi-separated. For any $F_1\in \Perf(X)$, $F_2 \in \Dqc(X)$, $G_1\in \Perf(T)$, $G_2 \in \Dqc(T)$, if we denote $F_i \boxtimes G_i := \phi_T^* F_i \otimes f_T^* G_i$ for $i=1,2$, then the following holds:
	$$\shom_S(F_1 \boxtimes G_1, F_2 \boxtimes G_2) \simeq \shom_S(F_1, F_2) \otimes \shom_S(G_1,G_2),$$
where $\shom_S$ is the $\Dqc(S)$-valued $\Hom$-object of Def. \ref{def:shom}.
\end{lemma}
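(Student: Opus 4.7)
The plan is to prove the statement by reducing both sides to tensor products using the perfectness (and hence dualizability) of $F_1$ and $G_1$, and then invoking the projection formula together with Tor-independent base change.

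First, I would simplify the internal hom on the left-hand side. Since $F_1 \in \Perf(X)$ and $G_1 \in \Perf(T)$ are dualizable in their respective closed symmetric monoidal categories $\Dqc(X)$ and $\Dqc(T)$ (by Thm.~\ref{thm:Lipman}), one has natural isomorphisms $\ihom_X(F_1, F_2) \simeq F_1^\vee \otimes F_2$ and $\ihom_T(G_1, G_2) \simeq G_1^\vee \otimes G_2$, where the duals $F_1^\vee, G_1^\vee$ are again perfect. Moreover, $F_1 \boxtimes G_1 = \phi_T^* F_1 \otimes f_T^* G_1 \in \Perf(X_T)$, and since the pullbacks $\phi_T^*$ and $f_T^*$ are symmetric monoidal, its dual satisfies $(F_1 \boxtimes G_1)^\vee \simeq \phi_T^*(F_1^\vee) \otimes f_T^*(G_1^\vee) = F_1^\vee \boxtimes G_1^\vee$. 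Therefore
\[
\ihom_{X_T}(F_1 \boxtimes G_1, F_2 \boxtimes G_2) \;\simeq\; (F_1^\vee \boxtimes G_1^\vee) \otimes (F_2 \boxtimes G_2) \;\simeq\; \phi_T^*(F_1^\vee \otimes F_2) \otimes f_T^*(G_1^\vee \otimes G_2).
\]

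Next, I would apply $(\phi \circ f_T)_* = \phi_* \circ (f_T)_*$ and run the projection formula twice. First, the projection formula along $f_T$ (which holds on $\Dqc$ by \cite[Prop.~3.9.4]{Lip}, as recalled in the proof of Thm.~\ref{thm:Lipman}) gives
\[
(f_T)_*\bigl(\phi_T^*(F_1^\vee \otimes F_2) \otimes f_T^*(G_1^\vee \otimes G_2)\bigr) \;\simeq\; (G_1^\vee \otimes G_2) \otimes (f_T)_* \phi_T^*(F_1^\vee \otimes F_2).
\]
Now the Tor-independence of the Cartesian square, invoked via Rmk.~\ref{rmk:Lipbc}, yields the flat base change isomorphism $(f_T)_* \phi_T^* \simeq \phi^* f_*$ on $\Dqc(X)$. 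Substituting and applying the projection formula along $\phi$ produces
\[
\phi_*\bigl((G_1^\vee \otimes G_2) \otimes \phi^* f_*(F_1^\vee \otimes F_2)\bigr) \;\simeq\; \phi_*(G_1^\vee \otimes G_2) \otimes f_*(F_1^\vee \otimes F_2) \;\simeq\; \shom_S(G_1, G_2) \otimes \shom_S(F_1, F_2),
\]
where in the last step I use again the identification $\ihom = (-)^\vee \otimes (-)$ for perfect first arguments together with the fact that $\ihom$ agrees with $F^\vee \otimes G$ on the nose once the result lies in $\Dqc$ (so the coherator acts as the identity).

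The argument is entirely formal once the pieces are in place; there is no substantial obstacle, only bookkeeping. The one point to be slightly careful about is that the projection formula and base change must be applied in the unbounded $\Dqc$ setting (not just for perfect or bounded objects), since $F_2$ and $G_2$ are arbitrary quasi-coherent complexes — but this is precisely the content of the results of Lipman and Neeman summarized in Thm.~\ref{thm:Neeman} and Thm.~\ref{thm:Lipman}. All the naturality of the isomorphisms assembled this way is standard, so they patch together into the claimed Künneth-type isomorphism in $\Dqc(S)$.
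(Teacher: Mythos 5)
Your argument is correct and essentially coincides with the paper's proof: both reduce via the perfectness (hence dualizability) of $F_1$ and $G_1$ to an identification $\ihom_{X_T}(F_1 \boxtimes G_1, F_2 \boxtimes G_2) \simeq \phi_T^*\ihom_X(F_1,F_2) \otimes f_T^*\ihom_T(G_1,G_2)$, and then push forward to $S$ using the projection formula and Tor-independent base change. The only difference is expository: the paper compresses the projection-formula-plus-base-change step into a single appeal to Lipman's K\"unneth independence isomorphism \cite[(3.10.3)]{Lip}, whereas you unwind it explicitly into the two applications of the projection formula along $f_T$ and $\phi$ separated by the base-change isomorphism $(f_T)_*\phi_T^* \simeq \phi^* f_*$.
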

\begin{proof} It is easy to see Tor-independence implies K\"unneth independence i.e. if we denote $h = f \circ \phi_T = \phi \circ f_T \colon X_T \to S$, then for any $A \in \Dqc(X)$ and $B \in \Dqc(T)$, there is a functorial isomorphism $h_* (\phi_T^* \otimes f_T^* B) \simeq \phi_* A \otimes f_* B$, see e.g. \cite[(3.10.3)]{Lip}. Take $A = \ihom_X(F_1,F_2)$ and $B = \ihom_T(G_1,G_2)$, then the lemma follows from Thm. \ref{thm:Lipman}. 
\end{proof}

\begin{lemma} \label{lem:3squares} In the following commutative diagram of fiber squares of  quasi-compact and quasi-separated schemes
\begin{equation*}
	\begin{tikzcd}
	X'' \ar{d}{f''} \ar{r}{\psi'} & X' \ar{d}{f'} \ar{r}{\psi} & X \ar{d}{f} \\
	S'' \ar{r}{\phi'}		& S' \ar{r}{\phi}		& S,
	\end{tikzcd}
\end{equation*}
if the right square is Tor-independent, then the outer square is Tor-independent if and only if the left square is Tor-independent.
\end{lemma}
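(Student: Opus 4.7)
The plan is to invoke the functorial criterion for Tor-independence from Remark \ref{rmk:Lipbc}, which states that a Cartesian square as in \eqref{eqn:fiber} is Tor-independent iff the base-change natural transformation $\phi^* f_* \to f'_* \psi^*$ is an isomorphism of functors $\Dqc(X) \to \Dqc(T)$. Under this reformulation, Tor-independence of the three squares becomes the assertions $\phi^* f_* \simeq f'_* \psi^*$ (Right), $\phi'^* f'_* \simeq f''_* \psi'^*$ (Left), and $(\phi\phi')^* f_* \simeq f''_*(\psi\psi')^*$ (Ambient). The direction Left $+$ Right $\Rightarrow$ Ambient is then a formal chain of isomorphisms
\[
(\phi\phi')^* f_* \simeq \phi'^*\phi^* f_* \simeq \phi'^* f'_* \psi^* \simeq f''_* \psi'^* \psi^* \simeq f''_*(\psi\psi')^*,
\]
requiring no further input.

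For the converse Ambient $+$ Right $\Rightarrow$ Left, the same chain yields only the weaker identity $\phi'^* f'_* \psi^* \simeq f''_* \psi'^* \psi^*$ as functors $\Dqc(X) \to \Dqc(X'')$, and the task is to ``cancel'' the $\psi^*$ on the right to obtain $\phi'^* f'_* \simeq f''_* \psi'^*$ on the whole of $\Dqc(X')$. My plan is to use Lem. \ref{lem:generator} applied to the fiber product $X' = X \times_S S'$: the category $\Dqc(X')$ is compactly generated by the objects $\psi^* F \otimes f'^* G$ with $F \in \Perf(X)$ and $G \in \Perf(S')$. Since every morphism in the diagram is quasi-compact and quasi-separated, both functors $\phi'^* f'_*$ and $f''_* \psi'^*$ preserve arbitrary direct sums by Thm. \ref{thm:Neeman} \eqref{thm:Neeman-1}, so by Neeman's criterion Lem. \ref{lem:span:f.f.} \eqref{lem:span:f.f.-2} it suffices to verify the desired isomorphism on such compact generators.

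For this I would run two calculations, each using the projection formula together with the monoidality of pullback and the commutativity $f'\circ\psi'=\phi'\circ f''$:
\[
\phi'^* f'_*(\psi^* F \otimes f'^* G) \simeq \phi'^*(f'_* \psi^* F \otimes G) \simeq (\phi'^* f'_* \psi^* F) \otimes \phi'^* G,
\]
\[
f''_* \psi'^*(\psi^* F \otimes f'^* G) \simeq f''_*\bigl((\psi\psi')^* F \otimes f''^* \phi'^* G\bigr) \simeq f''_*\bigl((\psi\psi')^* F\bigr) \otimes \phi'^* G.
\]
The equality of these two right-hand sides reduces, after stripping $\phi'^* G$, to the identity $\phi'^* f'_* \psi^* F \simeq f''_*(\psi\psi')^* F$ for every $F \in \Perf(X)$, which is exactly what the Ambient $+$ Right hypothesis provides. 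I do not foresee a genuine obstacle beyond keeping the direct-sum preservation hypotheses straight, which is handled uniformly by working in the quasi-compact quasi-separated setting where Lipman's framework of Thm. \ref{thm:Lipman} applies.
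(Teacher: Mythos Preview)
Your proposal is correct and follows essentially the same approach as the paper's proof: both reduce via Rmk.~\ref{rmk:Lipbc} to checking the base-change transformation on compact generators $\psi^* F \otimes f'^* G$ of $\Dqc(X')$ using Lem.~\ref{lem:generator}, then compute both sides via the projection formula and invoke Lem.~\ref{lem:span:f.f.}\,\eqref{lem:span:f.f.-2}. The only cosmetic difference is that the paper cites \cite[Lem.~3.10.3.2]{Lip} directly for the ``if'' direction rather than writing out the chain of isomorphisms.
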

\begin{proof} \cite[Lemma 3.10.3.2]{Lip} states Tor-independence is stable under compositions, this proves the ``if" part. The proof of the ``only if" part is similar to the proof of \cite[Lemma 2.25]{Kuz06}. More precisely, suppose the outer square is Tor-independent, hence by Rmk. \ref{rmk:Lipbc}, $\phi'^*\,\phi^*f_*\simeq f''_*\,\psi'^*\,\psi^*$. To show that left square is Tor-independent is equivalent to show $\phi'^*\,f'_*  \simeq f''_*\,\psi'^*$. 
By Lemma \ref{lem:generator}, $\Dqc(X')$ is compactly generated by elements of the form $f'^* A \otimes \psi^* B$ for $A \in \Perf(S')$ and $B \in \Perf(X)$. Moreover, there are functorial isomorphisms:
	\begin{align*}
	& f''_*\,\psi'^*(f'^* A \otimes \psi^* B) \\
	&\simeq f''_*(f''^*\phi'^* A \otimes \psi'^*\,\psi^* B)  &   (\text{$\psi'^*$ preserves $\otimes$}) \\
	& \simeq \phi'^*A \otimes  ( f''_*\,\psi'^*\,\psi^* B) & (\text{projection formula; Theorem \ref{thm:Neeman-Lipman} \eqref{thm:Neeman-Lipman-1iii})} \\
	& \simeq  \phi'^*A \otimes (\phi'^*\,\phi^*f_* B)  &(\text{Tor-indep. of outer square $\implies$} f''_*\,\psi'^*\,\psi^* \simeq \phi'^*\,\phi^*f_*) \\
	& \simeq \phi'^*(A \otimes \phi^*f_* B) & (\text{$\phi'^*$ preserves $\otimes$})   \\
	& \simeq \phi'^*(A \otimes f'_*\,\psi^* B) &(\text{Tor-indep. of right square $\implies$} \phi^*f_* \simeq f'_*\,\psi^*)\\
	& \simeq \phi'^*\,f'_* (f'^* A \otimes \psi^* B) & (\text{projection formula; Theorem \ref{thm:Neeman-Lipman} \eqref{thm:Neeman-Lipman-1iii})}).
	\end{align*}
Since $\phi'^*\,f'_*$ and $f''_*\,\psi'^*$ preserve direct sums, by Corollary \ref{cor:Adjoint.isom} we have $\phi'^*\,f'_*\simeq f''_*\,\psi'^*$.
\end{proof}

Recall a closed immersion of schemes $Z \hookrightarrow X$ is called Koszul-regular if locally the ideal $\sI_Z$ of $Z$ is generated by a Koszul-regular sequence, see \cite[\href{https://stacks.math.columbia.edu/tag/0638}{Tag 0638}]{stacks-project}.

\begin{lemma} \label{lem:bc:CM} In the situation of a Cartesian diagram \eqref{eqn:fiber}, and suppose $\phi \colon T \hookrightarrow S$ is a Koszul-regular closed immersion of codimension $r$, where $r>0$ is an integer. 
	\begin{enumerate}[leftmargin=*]
		\item \label{lem:bc:CM-1} The square \eqref{eqn:fiber} is Tor-independent iff the closed immersion $\phi_T \colon X_T \hookrightarrow X$  is also Koszul-regular of codimension $r$. In particular, if $X$ is locally {\em Cohen--Macaulay}, then the square \eqref{eqn:fiber} is Tor-independent iff ${\rm codim}_{X} (X_T) ={\rm codim}_{S}(T)$.
		\item \label{lem:bc:CM-2} Suppose \eqref{eqn:fiber} is Tor-independent. For $n \ge 0$, denote by $T^{(n)}$ the $n$-th infinitesimal neighborhood of $T$ inside $S$ (i.e., $T^{(n)}$ is the closed subscheme defined by the ideal $\sI_T^{n+1}$). Then $\sTor^{\sO_S}_i(\sO_{T^{(n)}}, \sO_X) =0$ for any $n \ge 0$, $i \ge 0$.
	\end{enumerate} 
\end{lemma}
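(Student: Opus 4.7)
For part \eqref{lem:bc:CM-1}, the problem is Zariski-local on $S$, so we may assume $\sI_T \subset \sO_S$ is generated by a Koszul-regular sequence $f_1, \ldots, f_r$. The Koszul complex $K_\bullet(f_1, \ldots, f_r)$ is then a finite free resolution of $\sO_T$, so $\sTor^{\sO_S}_*(\sO_T, \sO_X)$ is computed by the homology of the pulled-back Koszul complex $K_\bullet(f_1, \ldots, f_r) \otimes_{\sO_S} \sO_X = K_\bullet(f_1^*, \ldots, f_r^*)$, where $f_i^* \in \sO_X$ is the image of $f_i$ under $f^\sharp$. Thus Tor-independence is by definition equivalent to $(f_1^*, \ldots, f_r^*)$ being a Koszul-regular sequence in $\sO_X$. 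Since $\sI_{X_T} = (f_1^*, \ldots, f_r^*) \cdot \sO_X$, this is exactly the statement that $\phi_T \colon X_T \hookrightarrow X$ is Koszul-regular of codimension $r$.

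For the Cohen--Macaulay refinement: a Koszul-regular immersion of codimension $r$ satisfies $\codim_X X_T = r$, giving one direction automatically. Conversely, if $X$ is locally Cohen--Macaulay and $\codim_X X_T = r$, then the $r$-element sequence $(f_1^*, \ldots, f_r^*)$ generates an ideal of height $r$ in a Cohen--Macaulay ring, and hence is regular by the standard unmixedness theorem (\cite[Thm.~17.4]{Ei} or the Stacks Project). A regular sequence is Koszul-regular, so the square is Tor-independent.

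For part \eqref{lem:bc:CM-2}, I will argue by induction on $n$; the case $n=0$ is the Tor-independence hypothesis. For the inductive step, the conormal short exact sequence
\[
0 \to \sI_T^n/\sI_T^{n+1} \to \sO_{T^{(n)}} \to \sO_{T^{(n-1)}} \to 0
\]
and the induced long exact Tor sequence reduce the claim to showing $\sTor^{\sO_S}_i(\sI_T^n/\sI_T^{n+1}, \sO_X) = 0$ for $i \ge 1$. For a Koszul-regular immersion, the conormal sheaf $\sI_T/\sI_T^2$ is finite locally free on $T$ and the natural map $\Sym^n_{\sO_T}(\sI_T/\sI_T^2) \to \sI_T^n/\sI_T^{n+1}$ is an isomorphism (Stacks Project, Koszul-regular conormal filtration), so each $\sI_T^n/\sI_T^{n+1}$ is locally free, hence flat, over $\sO_T$. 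Using transitivity of derived tensor and the Tor-independence identity $\sO_T \otimes^{\bL}_{\sO_S} \sO_X \simeq \sO_{X_T}$, one computes
\[
\sI_T^n/\sI_T^{n+1} \otimes^{\bL}_{\sO_S} \sO_X \;\simeq\; \sI_T^n/\sI_T^{n+1} \otimes^{\bL}_{\sO_T} \bigl(\sO_T \otimes^{\bL}_{\sO_S} \sO_X\bigr) \;\simeq\; \sI_T^n/\sI_T^{n+1} \otimes_{\sO_T} \sO_{X_T},
\]
which is concentrated in degree $0$. This gives the desired vanishing.

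The only substantive input is the standard structure theorem for Koszul-regular immersions (local freeness of the conormal sheaf and $\sI^n/\sI^{n+1} \simeq \Sym^n(\sI/\sI^2)$); once this is invoked, both parts are essentially formal. The most delicate point is perhaps the CM direction of \eqref{lem:bc:CM-1}, which quietly relies on the coincidence of Koszul-regularity with regularity for sequences whose ideals have the expected height in a Cohen--Macaulay ring, so that the height hypothesis upgrades to Koszul-regularity of the pulled-back sequence.
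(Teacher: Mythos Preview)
Your proof is correct and follows essentially the same approach as the paper's: both parts reduce to local Koszul-complex computations, and for part \eqref{lem:bc:CM-2} both use the same induction via the conormal filtration and the fact that $\sI_T^n/\sI_T^{n+1}$ is locally free over $\sO_T$ for a Koszul-regular immersion. The only cosmetic difference is that you phrase the vanishing $\sTor^{\sO_S}_i(\sI_T^n/\sI_T^{n+1},\sO_X)=0$ via transitivity of the derived tensor product, whereas the paper simply observes that a finite free $\sO_T$-module has the same higher Tor as $\sO_T$ itself; these are equivalent.
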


\begin{proof} Since the problem is stalk-local, we may assume $f \colon X \to S$ is given by a local ring morphism $\varphi \colon (A,\fom) \to (B, \fon)$, and the Koszul-regular immersion $T \subseteq S$ is given by an ideal $I\subset A$ generated by an $A$-Koszul-regular sequence $\mathbf{x}=(x_1, \ldots, x_r)$, where $x_i \in \fom$, $r = {\rm codim}_S(T)$. Denote $y_i = \varphi (x_i) \in \fon$ the image of $x_i$ in $B$, then the closed immersion $X_T \subset X$ is given by the ideal $I B =(y_1,\ldots, y_r) \subset B$. For \eqref{lem:bc:CM-1}, since the Koszul complex $\shK_\bullet(\mathbf{x})$ (see \cite[\href{https://stacks.math.columbia.edu/tag/0621}{Tag 0621}]{stacks-project}) is a acyclic resolution of $A/I$, therefore:
	$$\Tor_{i}^A(A/I, B) = H_i(\shK_\bullet(\mathbf{x}) \otimes_A B) = H_i(\shK_\bullet(\mathbf{y})),$$
where $\shK_\bullet(\mathbf{y})$ is the Koszul complex of the sequence $\mathbf{y}=(y_1,\ldots, y_r)$. Then $H_i(\shK_\bullet(\mathbf{y}))=0$ for all $i \ne 0$ iff $\mathbf{y}$ is a $B$-Kosuzl-regular sequence. If $B$ is Cohen--Macaulay, then $H_i(\shK_\bullet(\mathbf{y}))=0$ for all $i \ne 0$ iff $\mathbf{y}$ is a $B$-regular sequence, iff ${\rm codim}_{B} (B/I B) = {\rm codim}_{X} X_T = r$.

For \eqref{lem:bc:CM-2}, we prove by induction that $\Tor^A_i(A/I^{n+1}, B) = 0$ for all $n \ge 0, i \ge 1.$ The base case $n=0$ is our Tor-independent assumption. Since $\mathbf{x}$ is Koszul-regular, hence it is quasi-regular (see \cite[\href{https://stacks.math.columbia.edu/tag/063C}{Tag 063C}]{stacks-project}), in particular $I^n/I^{n+1}$ is a finite free $A/I$-module for any $n \ge 1$. Hence $\Tor_i^A(I^n/I^{n+1},B)=\Tor_i^A(A/I, B)=0$ for all $i \ge 1, n \ge 1$. By induction and the short exact sequence $0 \to I^n/I^{n+1} \to  A/I^{n+1} \to A/I^n \to 0$, the lemma is proved. 
\end{proof}

\begin{proposition}[Base-change of linear perfect complexes categories]\label{prop:bclinear} Let $f \colon X \to S$, $\phi \colon T \to S$ be morphisms of quasi-compact, quasi-separated schemes, and let 
	$$\Perf(X) = \langle \shA_1, \shA_2, \ldots, \shA_n \rangle$$
be an $S$-linear semiorthogonal decomposition, and assume $\phi$ is Tor-independent with respect to $f$. Then there are $T$-linear semiorthogonal decompositions induced from base change:
	\begin{align*}
		\Perf(X_T) = \langle \shA_{1\, T},   \shA_{2\, T}, \ldots,  \shA_{n\, T}\rangle; \\
		\Dqc(X_T) = \langle  \hat{\shA}_{1\, T},   \hat{\shA}_{2\, T}, \ldots,  \hat{\shA}_{n\, T}\rangle
	\end{align*}
which are compatible with base-change functors $\phi_*$ and $\phi^*$. The base-change component $\shA_{i \, T}$ is defined as the minimal thick triangulated category subcategory which contains objects of the form $\phi^* F \otimes f^* G$ for $F \in \shA_i$ and $G \in \Perf(T)$, and the component $\hat{\shA}_{i \, T}$ is the minimal triangulated subcategory which is closed under direct sums and contains $\shA_{i \, T}$.
\end{proposition}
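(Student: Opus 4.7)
The plan is to establish the proposition in three successive steps: semiorthogonality of the base-changed components, generation of the target category by them, and compatibility with $\phi_T^*$ and $\phi_{T*}$. The decisive technical inputs are Remark \ref{rmk:Lipbc} (base-change for $\shom$-objects under Tor-independence), the K\"unneth formula of Lemma \ref{lem:Kunneth}, the criterion of $S$-linear semiorthogonality in Lemma \ref{lem:linearSO}, and the compact generator description of Lemma \ref{lem:generator}.

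\textbf{Semiorthogonality.} For $i<j$, pick $F\in\shA_i$, $F'\in\shA_j$ and $H,H'\in\Perf(T)$. By Lemma \ref{lem:linearSO}, $\shom_S(F',F)=0$. Tor-independence gives $\shom_T(F'_T,F_T)\simeq\phi^*\shom_S(F',F)=0$ (Remark \ref{rmk:Lipbc}). Since $f_T^*H$ and $f_T^*H'$ are dualizable, the internal-hom/tensor adjunction together with the projection formula yields
\[
\shom_T\bigl(\phi_T^*F'\otimes f_T^*H',\, \phi_T^*F\otimes f_T^*H\bigr)\simeq \shom_T(F'_T,F_T)\otimes H\otimes (H')^{\vee}=0.
\]
Since $\shom_T(\blank,\blank)$ is exact in each variable and commutes with direct sums in the second variable (while turning them into products in the first), this vanishing extends to the thick closures (and to the localizing closures for the $\hat{\shA}_{i,T}$). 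Invoking Lemma \ref{lem:linearSO} again gives the desired semiorthogonality $\shA_{i,T}\subseteq \shA_{j,T}^\perp$ and $\hat{\shA}_{i,T}\subseteq \hat{\shA}_{j,T}^\perp$.

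\textbf{Generation.} By Lemma \ref{lem:generator}, $\Perf(X_T)$ is thickly generated by objects of the form $\phi_T^*F\otimes f_T^*G$ with $F\in\Perf(X)$ and $G\in\Perf(T)$, and $\Dqc(X_T)$ is compactly generated by the same collection. For any $F\in\Perf(X)$, the given $S$-linear decomposition produces a canonical Postnikov tower whose associated graded pieces are $F^{(i)}\in\shA_i$. Pulling this tower back by $\phi_T^*$ (which is exact) and tensoring by the fixed perfect complex $f_T^*G$ (also exact) produces a Postnikov tower for $\phi_T^*F\otimes f_T^*G$ with graded pieces $\phi_T^*F^{(i)}\otimes f_T^*G\in\shA_{i,T}$ (by the very definition of $\shA_{i,T}$). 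Hence the generator lies in $\langle\shA_{1,T},\dots,\shA_{n,T}\rangle$, and taking thick closures we obtain $\Perf(X_T)=\langle\shA_{1,T},\dots,\shA_{n,T}\rangle$. The same argument with localizing envelopes in place of thick closures gives $\Dqc(X_T)=\langle\hat{\shA}_{1,T},\dots,\hat{\shA}_{n,T}\rangle$.

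\textbf{Base-change compatibility.} By construction, $\phi_T^*$ sends each $\shA_i$ into $\shA_{i,T}$, so it intertwines the two decompositions. For the other direction, the $S$-linear projection functors $\pr_i\colon\Perf(X)\to\shA_i$ are $S$-linear (recall the discussion after Definition \ref{def:bc}), and by Remark \ref{rmk:Lipbc} they commute with $\phi^*$ up to the natural Tor-independent base-change isomorphism. This gives the compatibility statement for both $\phi^*$ and, dually via adjunction and the projection formula, for $\phi_{T*}\colon\Dqc(X_T)\to\Dqc(X)$.

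\textbf{Main obstacle.} The principal technical delicacy is ensuring that the Postnikov towers coming from the $S$-linear decomposition behave well under the joint operation of pulling back along $\phi_T^*$ and tensoring with $f_T^*G$; this is exactly where $S$-linearity of each $\shA_i$ (ensuring the graded pieces stay in the right component after tensoring with $f^*\Perf(S)$) and Tor-independence of $\phi$ with respect to $f$ (ensuring $\phi^*$ commutes with $f_*$, hence with $\shom$-objects) must cooperate. Once these are in place, extending from the case of ``pure tensor'' generators $\phi_T^*F\otimes f_T^*G$ to all of $\Perf(X_T)$ and $\Dqc(X_T)$ is a formal consequence of the generator statements in Lemma \ref{lem:generator}, and the remaining compatibility and $\Dqc$-versus-$\Perf$ issues follow routinely.
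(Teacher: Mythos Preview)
Your proof is correct and follows essentially the same approach as the paper. The paper establishes semiorthogonality via the K\"unneth formula (Lemma~\ref{lem:Kunneth}) applied to $\shom_S(F_j\boxtimes G', F_i\boxtimes G)$, while you go through Remark~\ref{rmk:Lipbc} and then an explicit dualizability manipulation to reach the same vanishing of $\shom_T$; these are equivalent computations. For generation, both you and the paper invoke Lemma~\ref{lem:generator}; your explicit Postnikov-tower argument is simply a spelled-out version of what the paper compresses into one sentence.
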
		

\begin{proof} This is \cite[Prop. 5.1 \& 5.3]{Kuz11} which states the results for quasi-projective varieties over a field, and \cite[Lem. 3.15 (1)]{BLM+} which states the result under semi-separateness condition. The reason that the theorem holds in our stated generality is as follows:
	\begin{enumerate}[leftmargin=*]
		\item For any objects $F_i \in \shA_{i \, T}, F_j \in \shA_{j \, T}$ and $G, G' \in \Perf(T)$, $i < j$, the following holds thanks to Lem. \ref{lem:linearSO} and the $\Dqc(S)$-valued K\"unneth formula Lem. \ref{lem:Kunneth}:
			$$\shom_S(F_j \boxtimes G, F_i \boxtimes G) \simeq \shom_S(F_j, F_i) \otimes \shom_S(G,G') = 0.$$
		Hence by Lem. \ref{lem:linearSO}, this shows the semi-orthogonality of $\shA_{i \, T}$'s.
		\item The generation result follows from Lem. \ref{lem:generator}: $\Perf(X_T)$ coincides with the minimal thick (i.e. idempotent complete) triangulated subcategory of $\Dqc(X_T)$ which contains objects of the form $F \boxtimes G$ for all $F \in \Perf(X)$ and $G \in \Perf(T)$.
	\end{enumerate}
The reason that we could drop the semi-separateness condition of \cite[Lem. 3.15 (1)]{BLM+} is that we deduce the second step from Lem. \ref{lem:generator} instead of from \cite[Thm. 1.2]{BFN}, and the latter requires semi-separateness but the former does not. \end{proof}		

\subsection{Relative Fourier--Mukai transforms} \label{sec:relFM}
{\em In this subsection, all schemes are assumed to be quasi-compact, quasi-separated.} Notice that quasi-compactness and quasi-separateness are stable  under composition and arbitrary base-change, and any morphism between quasi-compact and quasi-separated are themselves quasi-compact and quasi-separated; see \cite{TT} for details. 
 
Following Bergh--Schn{\"u}rer \cite{BS}, we introduce the following definition:

\begin{definition}[{See \cite[Definition 3.3]{BS}}] \label{Def:FM:BS}
Let $X$ and $Y$ be schemes over a base scheme $S$ (in particular, they are all quasi-compact and quasi-separated).
A {\em relative Fourier--Mukai transform (datum) from $X$ to $Y$ over $S$} is a quadruple $\Omega= (K, p, q, \shK) \colon X \to Y$, where 
	$$
	\begin{tikzcd}[column sep = 3 em, row sep = 1.5 em]
		 & K \ar{dr}{q} \ar{dl}[swap]{p}& \\
		X  & & Y
	\end{tikzcd}
	$$
is a diagram of $S$-schemes and $\shK \in \Perf(K)$ is a perfect complex, such that $p$ and $q$ are proper and perfect, and the relative dualizing complex $\omega_q = q^!(\sO_Y)$ is a perfect complex. We will refer to $\shK$ as the {\em (Fourier--Mukai) kernel}. We say that a relative Fourier--Mukai transform $\Omega= (K, p, q, \shK) \colon X \to Y$ is {\em strong} if $\omega_p = p^!(\sO_Y)$ is a perfect complex. A relative Fourier--Mukai transform datum $\Omega = (K, p, q, \shK)$ over $S$ give rises to an exact functor 
	$$\Phi = \Phi(K,p,q,\shK) \colon \Dqc(X) \to \Dqc(Y) \qquad \sF \mapsto q_*(\shK \otimes p^*(\sF));$$
we will refer to $\Phi$ as the {\em (relative Fourier--Mukai) functor associated with $\Omega=(K, p, q, \shK)$
\footnote{The functor $\Phi$ is automatically $S$-linear in the sense of \S \ref{sec:linear:cat}. The $S$-linear functors obtained in this way are sometimes called {\em geometric $S$-linear functors} in literatures.}.}
We will also use notations $\Phi=\Phi_\shK$ to indicate the dependence on the kernel $\shK$.
\end{definition}

	
The preceding setup of the theory of relative Fourier-Mukai transforms guarantees the existence of left and right adjoints (Proposition \ref{prop:FM}) and that the formulation of the Fourier--Mukai transforms (and their adjoints) is stable under any Tor-independent base change (Lemma-Definition \ref{lem-def:relFM_bc}).  
Furthermore, semiorthogonal decompositions obtained from relative Fourier--Mukai transforms satisfy base-change properties (Theorem \ref{thm:bc}) and flat descent (Theorem \ref{thm:fppf}).

\begin{remark}
There are several ways to relax the conditions in the preceding definition. Instead of requiring $p$ and $q$ to be proper and perfect, we can require them to be quasi-perfect (Remark \ref{rmk:quasi-perfect}); see Remark \ref{rmk:relFM_bc}. Moreover, instead of requiring $\shK$ to be perfect, we can require $\shK$ to be relative perfect with respect to one of $p$ and $q$ (\cite[{E}xpos{\'e} III \S 4]{SGA}). We find that the above definition sufficient for many applications that we have in mind, and we hope that our exposition is clear enough to allow for generalization when necessary.
\end{remark}


We have the following slight generalization of \cite[Proposition 3.5]{BS} in our setting: 

\begin{proposition}\label{prop:FM} 
Let $\Omega=(K, p,q, \shK)$ be a relative Fourier--Mukai transform $X\to Y$ over $S$, and let $\Phi = \Phi_{\shK} \colon \Dqc(X) \to \Dqc(Y)$ denote the associated relative Fourier--Mukai functor. 
\begin{enumerate}
	\item \label{prop:FM-1} 
	Then $\Phi_{\shK}$ admits both a left adjoint $\Phi_{\shK}^L$ and a right adjoint $\Phi_{\shK}^R$ given by the formula:
	\begin{align*}
	\Phi_{\shK}^L(\blank) &= p_!(\shK^\vee \otimes q^*(\blank)) =  p_{*} (\shK^\vee \otimes \omega_{p} \otimes q^*(\blank))  \colon \Dqc(Y) \to \Dqc(X)\\
	 \Phi_{\shK}^R(\blank) &=p_{*} (  \shK^\vee \otimes q^!(\blank)) = p_{*} (  \shK^\vee \otimes \omega_{q} \otimes q^*(\blank))  \colon \Dqc(Y) \to \Dqc(X).
	\end{align*}
	Moreover:
	\begin{itemize}
		\item All three functors $\Phi_{\shK}$, $\Phi_{\shK}^L$, and $\Phi_{\shK}^R$ preserve perfect complexes. Consequently,  $\Phi_{\shK}^L \dashv \Phi_{\shK} \dashv \Phi_{\shK}^R$ forms an adjoint sequence on the subcategories of perfect complexes. 
		Moreover, $\Phi_{\shK}^L|_{\Perf(Y)}$ can be given by the formula $\Phi_{\shK}^L(\sF) = p_*(\shK \otimes q^*(\sF^\vee))^\vee$ for $\sF \in \Perf(Y)$.
		\item The functors $\Phi_{\shK}$ and $\Phi_{\shK}^R$ have finite cohomological amplitudes and preserve bounded pseudo-coherent complexes. Consequently, they induce an adjoint pair on the subcategories of  bounded pseudo-coherent complexes $\Phi_{\shK} \colon \Db(X) \stackrel{}{\rightleftarrows} \Db(Y) \colon \Phi_{\shK}^R$.
	\end{itemize} 
	\item \label{prop:FM-2} 
	If the relative Fourier--Mukai transform $\Omega=\Omega(K, p,q, \shK) \colon X \to Y$ is {\em strong}, then
	$$\Omega^L =(K, q,p,\shK^\vee \otimes \omega_p)\colon Y \to X \qquad \Omega^R= (K, q,p,\shK^\vee \otimes \omega_q) \colon Y \to X$$
	 are both {\em strong} relative Fourier--Mukai transforms from $Y$ to $X$ over $S$, whose associated Fourier-Mukai functors are the left and right adjoint functors $\Phi^L = \Phi_{\shK}^L$ and $\Phi^R =\Phi_{\shK}^R$ of $\Phi$ described in \eqref{prop:FM-1}, respectively. Moreover, all the three functors $\Phi_{\shK}^L, \Phi_{\shK}$ and $\Phi_{\shK}^R$ have finite cohomological amplitudes, preserve perfect complexes and (bounded) pseudo-coherent complexes
\end{enumerate}
\end{proposition}

\begin{proof} The desired results follow from Theorem \ref{thm:Neeman-Lipman}. Concretely, the existence of left and right adjoints is a consequence of \eqref{thm:Neeman-Lipman-2}. The property of $\Phi^R(\blank) = p_*(\shK^\vee \otimes q^!(\blank))$ in assertion \eqref{prop:FM-1} follows from Theorem \ref{thm:Neeman-Lipman}\eqref{thm:Neeman-Lipman-2i}, and Theorem \ref{thm:Neeman-Lipman} \eqref{thm:Neeman-Lipman-2iv} implies $\Phi^L(\blank) = p_!(\shK^\vee \otimes q^*(\blank))$ preserves perfect complexes with the desired formula. The assertion \eqref{prop:FM-2} follows from Theorem \ref{thm:Neeman-Lipman} \eqref{thm:Neeman-Lipman-3} as in this case both the morphisms $p$ and $q$ satisfy the conditions of \eqref{thm:Neeman-Lipman-3}.
\end{proof}

\begin{definition}
\label{def:bcFM}
We say a base-change $\phi \colon T \to S$ is {\em Tor-independent} for a relative Fourier--Mukai transform $\Omega =(K, p,q, \shK) \colon X \to Y$ if the base change $\phi$ are Tor-independent for the structural morphisms $X \to S$, $Y \to S$, and $K \to S$ (Definition \ref{def:bc}).
\end{definition}

\begin{lemma-definition}[Base Change of Relative Fourier--Mukai Transforms] \label{lem-def:relFM_bc} 
Let $\Omega =(K, p,q, \shK)$ be a relative Fourier--Mukai transform $X \to Y$ over $S$ and $\phi \colon T \to S$ a {\em Tor-independent} base-change of $\Omega$ (Definition \ref{def:bcFM}). Then there is an induced relative Fourier--Mukai transform $\Omega_T = (K_T, p_T, q_T, \shK_T) \colon X_T \to Y_T$ over $T$, where 
 	$$
	\begin{tikzcd}[column sep = 3 em, row sep = 1.5 em]
		 & K_T \ar{dr}{q_T} \ar{dl}[swap]{p_T}& \\
		X_T  & & Y_T
	\end{tikzcd}
	$$
is the diagram of $T$-schemes obtained from base change of the diagram in Definition \ref{Def:FM:BS} along $\phi$, and $\shK_T \in \Perf(K_T)$ is the (derived) pullback of $\shK \in \Perf(K)$ along the morphism $\phi_K \colon K_T \to K$. We will refer to the relative Fourier--Mukai transform $\Omega_T = (K_T, p_T, q_T, \shK_T)$ as the {\em base change of $\Omega =(K, p,q, \shK)$ along $\phi$}. Moreover, 
let  $\phi_X \colon X_T \to X$ and $\phi_Y \colon Y_T \to Y$ denote the morphisms induced by $\phi$, then there are natural isomorphisms of functors
	\begin{align*}
		\phi_Y^* \, \Phi_{\shK} \xrightarrow{\simeq} \Phi_{\shK_T} \, \phi_X^* 
		\qquad  
		\phi_Y^* \, \Phi_{\shK}^R \xrightarrow{\simeq}  \Phi_{\shK_T}^R \, \phi_X^* 
		\qquad 
		\phi_Y^* \,  \Phi_{\shK}^L \xrightarrow{\simeq}  \Phi_{\shK_T}^L \, \phi_X^*.
	\end{align*}
In addition, if $\Omega= (K, p,q, \shK)$ is strong, then $\Omega_T = (K_T, p_T,q_T, \shK_T)$ is strong. 
\end{lemma-definition}

\begin{proof} We verify that the conditions of Definition \ref{Def:FM:BS} are satisfied by $\Omega_T$:
	\begin{itemize}
		\item $p_T$ and $q_T$ are proper, perfect morphisms. This follows from the following facts: properness is stable under base change (\cite[\href{https://stacks.math.columbia.edu/tag/01W4}{Tag 01W4}]{stacks-project}), pseudo-coherence and relative perfectness are stable under Tor-independence base change, respectively (\cite[Page 233, Colollaire 1.10 \& Page 257, Colollaire 4.7.2]{SGA}).
		\item $\omega_{q_T}$ is perfect. By virtue of Theorem \ref{thm:Neeman-Lipman} \eqref{thm:Neeman-Lipman-2ii}, we have natural isomorphisms $\phi_K^* \, q^!(\sO_S) \simeq q_T^! \, \phi^*(\sO_S) = q_T^! (\sO_T)$. Consequently, $\omega_{q_T} \simeq \phi_K^* \omega_q$ is perfect.
		\item $\omega_{p_T}$ is perfect if $\Omega$ is strong. Similarly as above, $\omega_{p_T} \simeq \phi_K^* \omega_p$ is perfect if $\omega_p$ is perfect.
	\end{itemize}
It only remains to establish the three desired natural isomorphisms of functors. By virtue of Lemma \ref{lem:3squares}, the following squares are Tor-independent:
	$$
	\begin{tikzcd}
		K_T \ar{d}{p_T} \ar{r}{\phi_{K}}& K \ar{d}{p} \\
		X_T \ar{r}{\phi_X} & X
	\end{tikzcd}
	\qquad  \qquad
	\begin{tikzcd}
		K_T \ar{d}{q_T} \ar{r}{\phi_K}& K \ar{d}{q} \\
		Y_T \ar{r}{\phi_Y} & Y,
	\end{tikzcd}
	$$
where the horizontal arrows are the natural morphisms induced by the base change $\phi \colon T \to S$. By virtue of Tor-independent base change \eqref{thm:Neeman-Lipman-1ii} and the isomorphism $\omega_{q_T} \simeq \phi_K^* \omega_q$ established above, we obtain, for any $\sF \in \Dqc(Y)$, functorial isomorphisms
\begin{align*}
&\phi_X^*(\Phi_{\shK}^R(\sF)) = \phi_X^* p_*(\shK^\vee \otimes \omega_q \otimes q^*(\sF)) \xrightarrow{\simeq}  p_{T\,*} \phi_K^*(\shK^\vee \otimes \omega_q  \otimes q^*(\sF)) \\
&\simeq  p_{T\,*} (\phi_K^*(\shK^\vee) \otimes \phi_K^*(\omega_q) \otimes \phi_K^*q^*(\sF)) \simeq p_{T\,*} (\shK_T^\vee \otimes \omega_{q_T} \otimes q_T^* \phi_Y^*(\sF)) = \Phi_{\shK_T}^R(\phi_Y^*(\sF)).
\end{align*}
The other two desired natural isomorphisms are proved similarly.
 \end{proof}
 
\begin{remark} \label{rmk:relFM_bc}
If we modify the theory of relative Fourier--Mukai transforms in Definition \ref{Def:FM:BS} by requiring $p$ and $q$ to be quasi-perfect (Remark \ref{rmk:quasi-perfect}) rather than proper and perfect, then this setup
 is also stable under Tor-independent base change because quasi-perfectness is stable under any Tor-independent base change (\cite[Proposition (4.7.3.1)]{Lip}). 
\end{remark}

We will need the following generalization of \cite[Proposition 2.44]{Kuz06}:

\begin{proposition}\label{prop:relFM_bc} 
Let $\Omega =(K, p,q, \shK)$ be a relative Fourier--Mukai transform $X \to Y$ over $S$ with associated functor $\Phi \colon \Dqc(X) \to \Dqc(Y)$, let $\phi \colon T \to S$ be a {\em Tor-independent} base-change of $\Omega$, and let $\Omega_T = (K_T, p_T, q_T, \shK_T)$ be the base-change of $\Omega$ along $\phi$ (Lemma--Definition \ref{lem-def:relFM_bc}) with associated functor $\Phi_T \colon \Dqc(X_T) \to \Dqc(Y_T)$. Let $\Phi^L$ and $\Phi^R$ (resp. $\Phi_T^L$ and $\Phi_T^R$) denote the left and right adjoints of $\Phi$ (resp. $\Phi_T$) as in Proposition \ref{prop:FM}, respectively. Then
\begin{enumerate}
	\item \label{prop:relFM_bc-1} 
		\begin{enumerate}
			\item \label{prop:relFM_bc-1i} 
			If $\Phi|\Perf(X) \colon \Perf(X) \to \Perf(Y)$ is fully faithful, then 
			$\Phi \colon \Dqc(X) \to \Dqc(Y)$ and 
			$\Phi_T \colon \Dqc(X_T) \to \Dqc(Y_T) $ is fully faithful. 
			Conversely, if $\phi$ is faithfully flat and $\Phi_T|\Perf(X_T)$ is fully faithful, then $\Phi \colon \Dqc(X) \to \Dqc(Y)$ is fully faithful. 
			\item \label{prop:relFM_bc-1ii} 
			If $\Phi^R|\Perf(Y) \colon \Perf(Y) \to \Perf(X)$ is fully faithful, then $\Phi^R \colon \Dqc(Y) \to \Dqc(X)$ and $\Phi^R_T \colon \Dqc(Y_T) \to \Dqc(X_T) $ are fully faithful. Conversely, if $\phi$ is faithfully flat and $\Phi_T^R|\Perf(Y_T)$ is fully faithful, then $\Phi^R \colon \Dqc(Y) \to \Dqc(X)$ is fully faithful. 
			\item \label{prop:relFM_bc-1iii} 
			If $\Phi$ induces an equivalence $\Perf(X) \simeq \Perf(Y)$, then $\Phi_T$ induces an equivalence $\Dqc(X_T) \simeq \Dqc(Y_T)$ which restricts to equivalences $\Perf(X_T) \simeq \Perf(Y_T)$ and $\Db(X_T) \simeq \Db(Y_T)$ which are compatible with the natural inclusion functors.
		\end{enumerate}	
	\item \label{prop:relFM_bc-2} Let $\Omega' =(K', p',q',\shK') \colon \Dqc(X') \to \Dqc(Y)$ be another relative Fourier-Mukai transform $X' \to Y$ over $S$ with associated functor $\Phi'$ such that $\phi \colon T \to S$ is Tor-independent for $\Omega'$, and let $\Omega_T'$ denote the base change of $\Omega'$ along $\phi$. If $\Im (\Phi'|_{\Perf(X')}) \subseteq \Im (\Phi|_{\Perf(X)})^\perp$ in $\Perf(Y)$, then $\Im \Phi'_T \subseteq (\Im \Phi_T)^{\perp}$ in $\Dqc(Y_T)$ (and hence also in $\Perf(Y_T)$ and $\Db(Y_T)$).
\end{enumerate}
\end{proposition}

\begin{proof} 
For assertion \eqref{prop:relFM_bc-1i}: by virtue of Corollary \ref{CorollaryAdjoint.ff} \eqref{CorollaryAdjoint.ff-1}, we only need to verify $\Phi_T$ is fully faithful on the generating set of $\Dqc(X_T)$ consisting of elements of the form $\phi^* A \otimes f_T^* F$ for $A \in \Perf(X)$ and $F \in \Perf(T)$ (Lemma \ref{lem:generator}). It follows from Lemma-Definition \ref{lem-def:relFM_bc} and $T$-linearity that there are functorial isomorphisms
  	$$\Phi_T^R \Phi_T (\phi_X^* A \otimes f_T^* F) \simeq \Phi_T^R \Phi_T (\phi_X^* A) \otimes f_T^* F \simeq \phi_X^*(\Phi^R \Phi(A)) \otimes f_T^* F \simeq \phi_X^* A \otimes f_T^* F.$$
Conversely, if $\phi$ if faithfully flat, then $\phi_X^*$ is conservative, and hence (taking $F = \sO_T$) the above functorial isomorphisms $\Phi_T^R \Phi_T (\phi_X^* A) \simeq   \phi_X^*(\Phi^R \Phi(A)) \simeq \phi_X^*(A)$ imply functorial isomorphism $\Phi^R \Phi(A) \simeq A$ for all $A \in \Perf(X)$. By Corollary \ref{CorollaryAdjoint.ff} \eqref{CorollaryAdjoint.ff-1}, assertion \eqref{prop:relFM_bc-1i} is proved. Assertion \eqref{prop:relFM_bc-1ii} is proved in the same way. Assertion \eqref{prop:relFM_bc-1ii} follows similarly from Corollary \ref{CorollaryAdjoint.ff} \eqref{CorollaryAdjoint.ff-2}.

Assertion \eqref{prop:relFM_bc-2} is also proved in a similar manner. Observe that 
$\Im \Phi' \subseteq (\Im \Phi)^\perp$ (resp. $\Im \Phi'_T \subseteq (\Im \Phi_T)^{\perp}$) if and only if $\Phi^R \Phi' = 0$ (resp. $\Phi_T^R \Phi_T' = 0$).  For compact generators of $\Dqc(X'_T)$ of the form $\phi^* A' \otimes f_T'^* F$, $A' \in \Perf(X')$ and $F \in \Perf(T)$, by Lemma-Definition \ref{lem-def:relFM_bc} and $T$-linearity, we have
  	$$\Phi_T^R \Phi_T' (\phi^* A' \otimes f_T'^* F) \simeq \Phi_T^R ( \Phi_T' \phi^* A'  \otimes g_T^* F) \simeq \Phi_T^R ( \phi^* \Phi'(A'))  \otimes f_T^* F \simeq  \phi^*(\Phi^R \Phi'(A')) \otimes f_T^* F.$$
Hence $\Phi^R \Phi' = 0  \implies \Phi^R_T \Phi'_T =0$, by virtue of Corollary \ref{cor:Adjoint.isom}. Therefore, we are done.
 \end{proof}

One of the dominant features of our setup of the relative Fourier--Mukai transform theory is that it behaves well under Tor-independent base-change \cite{Kuz06, Kuz11}.


\begin{theorem}[Tor-independent Base-Change Theorem; see Kuznetsov {\cite[Theorem 2.46]{Kuz06}, \cite[Proposition 5.1 \& Theorem 5.6]{Kuz11}}] \label{thm:bc} Let $n \ge 2$ be an integer, and let $X_i \to S$ and $Y \to S$ be morphisms of quasi-compact, quasi-separated schemes, and let $\Omega_i =(K_i,p_i,q_i, \shK_i)$ be relative Fourier--Mukai transforms $X_i \to Y$ over $S$ with associated functors $\Phi_i \colon \Dqc(X_i) \to \Dqc(Y)$ where $i=1,\ldots, n$. Let $\phi \colon T \to S$ be a base-change which is Tor-independent with respect to $\Omega_i$ for all $i$. Let $\Omega_{i\,T} = (K_{i\,T}, p_{i\,T}, q_{i\,T}, \shK_{i\,T})$ be the relative Fourier--Mukai transforms $X_{i\, T} \to Y_T$ obtained from base change along $\phi$ (Lemma-Definition \ref{lem-def:relFM_bc}) and let $\Phi_{\shK_{i\,T}} \colon \Dqc(X_{i\,T}) \to \Dqc(Y_{T})$ denote the associated Fourier--Mukai functors. Assume that the restrictions of $\Phi_i$ to perfect complexes, $\Phi_i|_{\Perf(X_i)} \colon \Perf(X_i) \to \Perf(Y)$, are fully faithful for all $i$, and that there is an induced semiorthogonal decomposition
		$$\Perf(Y) = \langle \Phi_1(\Perf(X_1)),  \Phi_2(\Perf(X_2)), \ldots, \Phi_n(\Perf(X_n))\rangle.$$
Then the functor $\Phi_{i\,T} \colon \Dqc(X_{i\,T}) \to \Dqc(Y_T)$ is fully faithful for all $i$. (Consequently, the restrictions $\Phi_{i\,T}|_{\Perf(X_{i\,T})} \colon \Perf(X_{i\,T}) \to \Perf(Y_T)$ and $\Phi_{i \,T}|_{\Db(X_{i\,T})} \colon \Db(X_{i\,T}) \to \Db(Y_T)$ are also fully faithful for all $i$). Moreover, the essential images induce $T$-linear semiorthogonal decompositions with right admissible components:
		\begin{align*}
			\Perf(Y_T)  &= \langle \Phi_{1\,T}(\Perf(X_{1\,T})),  \Phi_{2\,T}(\Perf(X_{2\,T})), \ldots, \Phi_{n\,T}(\Perf(X_{n\,T}))\rangle, \\
			  \Db(Y_T) & = \langle \Phi_{1\,T}(\Db(X_{1\,T})), \, \Phi_{2\,T}(\Db(X_{2\,T})), \ldots, \Phi_{n\,T}(\Db(X_{n\,T}))\rangle,  \\
			   \Dqc(Y_T)  &= \langle \Phi_{1\,T}(\Dqc(X_{1\,T})),  \, \Phi_{2\,T}(\Dqc(X_{2\,T})), \ldots, \Phi_{n\,T}(\Dqc(X_{n\,T}))\rangle.
		\end{align*}		
Here, the first semiorthogonal decomposition is admissible, and these semiorthogonal decompositions are compatible with the inclusions $\Perf(X_{i\,T}) \subseteq \Db(X_{i\,T}) \subseteq \Dqc(X_{i\,T})$ and $\Perf(Y_T) \subseteq \Db(Y_T) \subseteq \Dqc(Y_T)$. Furthermore, if $\Phi_i$ are {\em strong} for all $i$, then $\Phi_{i \, T}$ are strong for all $i$, and all the above semiorthogonal decompositions are admissible.
\end{theorem}
	
\begin{proof} The results regarding fully-faithfulness and semiorthogonality follow directly from Proposition \ref{prop:relFM_bc}. It only remains to show fullness and compatibility. 

Denote $\shD$ the triangulated subcategory of $\Dqc(Y_T)$ generated by $\Perf(X_{1 \,T}), \ldots, \Perf(X_{n\, T})$. Then by using the decomposition for $\Perf(X)$ and the compatibility of these functors with base-change Lemma-Definition \ref{lem-def:relFM_bc}, $\shD$ contains all elements of the form $\phi^* A \otimes f_T^* F$ for $A \in \Perf(X)$ and $F \in \Perf(T)$. By Lemma \ref{lem:generator} these elements compactly generate $\Dqc(Y_T)$, hence $\shD^\perp=0$, and this shows the fullness of all these decompositions. (Alternatively, the generation for $\Perf(Y_T)$ could also follows from a similar argument as with Proposition \ref{prop:relFM_bc}.)

To show compatibility, denote $\shA_{i \,T} : = \Phi_{i \,T}(\Perf(X_{i \,T}))$, $\shA_{i \,T}^\b : = \Phi_{i \,T}(\Db(X_{i \,T}))$, and $\hat{\shA}_{i \,T} : = \Phi_{i \,T}(\Dqc(X_{i \,T}))$. Then $\shA_{i \,T} \subseteq \shA_{i \,T}^\b \subseteq \hat{\shA}_{i \,T}$. Denote by $\hat{\pr_i} \colon \Dqc(Y_T) \to \hat{\shA}_{i \,T}$ the projection functor, and by $\LL_i = \LL_{\hat{\shA}_{i \,T}}$ the left mutation functor through $\hat{\shA}_{i \,T}$, then it follows immediately from Lemma \ref{prop:FM} that $\hat{\pr_i} = (\Phi_{i \, T} \, \Phi_{i \, T}^R) \, \LL_{i+1} \ldots \LL_{n}$ preserves perfect complexes, pseudo-coherence and boundedness. Hence $\shA_{i \,T} = \hat{\shA}_{i \,T} \cap \Perf(Y_T)$ and $ \shA_{i \,T}^\b = \hat{\shA}_{i \,T}\cap \Db(Y_T)$. This shows compatibility. (Alternatively, similar to the proof of \cite[Theorem 6.4]{Kuz11}, since $\Phi_{i \,T}$ and $\Phi_{i\,T}^R$ preserve direct sums, $\hat{\shA}_{i \,T}$ coincides with the minimal subcategory of $\Dqc(Y_T)$ which is $T$-linear, triangulated, closed under direct sums and contains $\Phi_{i \,T}(\Perf(X_{i \, T})) = \shA_{i \,T}$ thanks to Lemma \ref{lem:generator}, hence our categories $\shA_{i \,T} \subseteq \shA_{i \,T}^\b \subseteq \hat{\shA}_{i \,T}$ coincide with the ones defined in \cite[Proposition 4.2 \& 4.3]{Kuz11} and compatibility follows.)
\end{proof}

Another key feature of the theory of relative Fourier--Mukai transforms is that it enjoys flat descent \cite{BS, BOR, AE}. The following result slightly generalizes Bergh--Schn{\"u}rer {\cite[Theorem 6.1\& 6.2]{BS} in the case of quasi-compact, quasi-separated schemes:

\begin{theorem}[Faithfully Flat Descent; Bergh--Schn{\"u}rer {\cite[Theorem 6.1\& 6.2]{BS}}] \label{thm:fppf} Let $X_i \to S$ and $Y \to S$ be morphisms of quasi-compact qusi-separated schemes, and let $\Omega_i = (K_i, p_i,q_i, \shK_i)$ be relative Fourier--Mukai transforms $X_i \to Y$ over $S$ with associated Fourier--Mukai functors $\Phi_i \colon \Dqc(X_i) \to \Dqc(X)$, where $i=1,\ldots, n$. Let $\phi \colon T \to S$ be a {\em faithfully flat} base change. Let 
	$$\Omega_{i\,T} = (K_{i\,T}, p_{i\,T}, q_{i\,T}, \shK_{i\,T}) \colon X_{i\, T} \to Y_T$$ 
be the base-change of $\Omega_i$ along $\phi$ (Lemma-Definition \ref{lem-def:relFM_bc}) and let $\Phi_{i\, T} \colon \Dqc(X_{i\,T}) \to \Dqc(Y_T)$ denote the associated Fourier--Mukai functors. 
	\begin{enumerate}[leftmargin=*]
		\item  \label{thm:fppf-1} 
		If $\Phi_{i \,T}|_{\Perf(X_{i\,T})}$ is fully faithful, then $\Phi_i$ is fully faithful (hence so are $\Phi_i|_{\Perf(X_i)}$ and $\Phi_i|_{\Db(X_i)}$). If $\Im (\Phi_{i \, T}|_{\Perf(X_{i\,T})}) \subseteq \Im (\Phi_{j \, T}|_{\Perf(X_{j\,T})})^{\perp}$, then $\Im \Phi_{i} \subseteq (\Im \Phi_{j})^{\perp}$.
		\item  \label{thm:fppf-2} 
		If $\Phi_{i \,T}|_{\Perf(X_{i\,T})}$ are fully faithful for all $i$ and induce a semiorthogonal decomposition
			$$\Perf(Y_T)  = \langle \Phi_{1\,T}(\Perf(X_{1\,T})),  \Phi_{2\,T}(\Perf(X_{2\,T})), \ldots, \Phi_{n\,T}(\Perf(X_{n\,T}))\rangle, $$
	then $\Phi_i \colon \Dqc(X_i) \to \Dqc(Y)$ (are fully faithful for all $i$, by the previous assertion, and their essential images) induce $S$-linear semiorthogonal decompositions:
		\begin{align*}
			\Perf(Y)  &= \langle \Phi_{1}(\Perf(X_{1})),  \Phi_{2}(\Perf(X_{2})), \ldots, \Phi_{n}(\Perf(X_{n}))\rangle, \\
			  \Db(Y) & = \langle \Phi_{1}(\Db(X_{1})), \, \Phi_{2}(\Db(X_{2})), \ldots, \Phi_{n}(\Db(X_{n}))\rangle, \\
			 \Dqc(Y)  &= \langle \Phi_{1}(\Dqc(X_{1})),  \, \Phi_{2}(\Dqc(X_{2})), \ldots, \Phi_{n}(\Dqc(X_{n}))\rangle.
		\end{align*}	
Here, the first semiorthogonal decomposition is admissible and the other two are right admissible, and the three semiorthogonal decompositions are compatible with the respective inclusions $\Perf(X_{i}) \subseteq \Db(X_{i}) \subseteq \Dqc(X_{i})$ and $\Perf(Y) \subseteq \Db(Y) \subseteq \Dqc(Y)$. Furthermore, if $\Phi_{i}$ are strong for all $i$, then all the above semiorthogonal decompositions are admissible.
	\end{enumerate}
\end{theorem}
\begin{proof} 
Assertion \eqref{thm:fppf-1} follows from Proposition \ref{prop:relFM_bc} \eqref{prop:relFM_bc-1i} and \eqref{prop:relFM_bc-1ii}. 
To prove assertion \eqref{thm:fppf-2}, we apply Theorem \ref{thm:bc} to the case of $\id \colon T \to T$ and obtain a semiorthogonal decomposition
	$$\Dqc(Y_T) = \Phi_{1\,T}(\Dqc(X_{1\,T})),  \Phi_{2\,T}(\Dqc(X_{2\,T})), \ldots, \Phi_{n\,T}(\Dqc(X_{n\,T}))$$
which is compatible with the given semiorthogonal decomposition of $\Perf(Y_T)$. Then we could apply \cite[Theorem 6.1\& 6.2]{BS}. 
\end{proof}


\begin{remark} 
There are various other versions of descent theory for semiorthogonal decompositions: Elagin \cite{Ela} showed that semiorthogonal decompositions satisfy descent along certain comonads (see also Shinder \cite{Shi}); Belmans--Okawa--Ricolfi \cite{BOR} and Antieau--Elmanto \cite{AE} independently showed that semiorthogonal decompositions satisfy fppf descent. 
\end{remark}

\subsection{Compositions of relative Fourier--Mukai transforms}\label{sec:relFM:cov}

\begin{definition}[Composability and Convolution] \label{defn:convolution} Let 
	$$\Omega=(K, p,q, \shK) \colon {X\to Y} \quad \text{and} \quad \Omega' = (K', p', q', \shK') \colon {Y \to Z}$$ 
be two relative Fourier--Mukai transforms over $S$. Then $\Omega$ and $\Omega'$ are said to be {\em composable} (over $S$) if the following fiber square is Tor-independent:
\begin{equation*} 
\begin{tikzcd} 
	K\times_Y K'  \ar{r}{q_{K'}} \ar{d}{p_{K}}& K' \ar{d}{p'}\\
	K \ar{r}{q} & Y.
\end{tikzcd}
\end{equation*}
In this case, their {\em composition} is defined to be the quadruple 
	$$\Omega' \circ \Omega = (K \times_Y K',  p \circ p_K,  q \circ q_{K'}, p_K^*(\shK) \otimes q_{K'}^* (\shK')) \colon X \to Z.$$ 
A {\em convolution of $\Omega$ and $\Omega'$} is a relative Fourier--Mukai transform $\Omega'' = (K'', p'', q'', \shK'')$ together with a morphism $h \colon K \times_Y K' \to K''$ and an isomorphism 
	$\beta \colon \shK'' \xrightarrow{\simeq} h_*(p_K^*\shK \otimes q_{K'}^* \shK').$
 \end{definition}

By definition, the convolutions of $\Omega$ and $\Omega'$ are not unique. However,  different choices of convolutions give rise to isomorphic Fourier--Mukai functors (Lemma \ref{lem:conv:FM}). 
There is always a trivial convolution given by letting $\Omega'' = \Omega \circ \Omega'$, $h = \id$ and $\beta = \id$. In practice, there are usually more ``economic" choices of involutions than the trivial ones (see Example \ref{eg:conv:can}). 
\begin{remark} In the case where $K = X \times_S Y$ and $K' = Y \times_S Z$, the composability condition is always satisfied holds if $S = \Spec \kk$, where $\kk$ is a field. The composability condition naturally showed up in our study of relative HPD in \cite{JLX17}, and it worths noting that this condition is not automatically satisfied over a general base $S$.
\end{remark}

\begin{example}\label{eg:conv:can}
 In the case where $\Omega= (X \times_S Y, p, q, \shK)$ and $\Omega' = (Y \times_S Z, r, s, \shL)$, a natural convolution is given by $\Omega'' = (X \times_S Z, u,v,  \shL * \shK)$, where $u \colon X\times_S Z \to X$ and $v \colon X \times_S Z \to Z$ are the natural projections, and $\shL * \shK$ is defined by
	$$ \shL * \shK = p_{XZ*}\, (p_{XY}^*\, \shK \otimes p_{YZ}^*\, \shL) \in \Perf(X \times_S Z),$$
 where $p_{XY} \colon X \times_S Y \times_S Z \to X \times_S Y$, $p_{YZ} \colon X \times_S Y \times_S Z \to Y \times_S Z$, and $p_{XZ} \colon X \times_S Y \times_S Z \to X \times_S Z$ are the natural projections. In the case where $S=\Spec \kk$, the above formula for $\shL * \shK$ is the usual definition of convolution of Fourier--Mukai kernels (see \cite[\S 5.1]{Huy}). 
\end{example}

\begin{lemma}[Convolutions of Fourier--Mukai Functors] 
\label{lem:conv:FM} 
Let $\Omega=(K, p,q, \shK) \colon {X\to Y}$ and $\Omega' = (K', p', q', \shK') \colon {Y \to Z}$ be {\em composable} relative Fourier--Mukai transforms over $S$, and let 
	$$(\Omega'' = (K'', p'', q'', \shK''), h \colon K \times_Y K' \to K'', \beta \colon \shK'' \xrightarrow{\simeq} h_*(p_K^*\shK \otimes q_{K'}^* \shK'))$$ 
be a convolution of $\Omega$ and $\Omega'$ over $S$. Then there is a naturally induced isomorphism of Fourier--Mukai functors	
	$$\Phi_{\shK''}^{X \to Z} \xrightarrow{\simeq} \Phi_{\shK'}^{Y \to Z} \circ \Phi_{\shK}^{X \to Y}  \colon \Dqc(X) \to \Dqc(Z).$$
\end{lemma}

\begin{proof} 
Similar to the absolute case (see \cite[Proposition 5.10]{Huy}), let $u=p \circ p_K$ and $v = q_{K'} \circ q$, and consider the commutative diagram
\begin{equation*} 
\begin{tikzcd}[back line/.style={}]		
		&	& K\times_Y K'  \ar{ld}[swap]{p_{K}} \ar{d}{h}\ar{dr}{q_{K'}}	 \ar[bend right]{ddll}[swap]{u}
		 \ar[bend left]{ddrr}{v} \\
		& K  \ar{ld}[swap]{p} \ar[back line]{rd}[swap]{q} & K'' \ar[crossing over]{lld}{p''} \ar[back line]{rrd}[swap]{q''} & K'   \ar[crossing over]{ld}{p'} \ar{rd}{q'} \\
	X	&	& Y &	& Z.
\end{tikzcd}
\end{equation*}
Then we have functorial isomorphisms for all $\sE \in \Dqc(X)$:
\begin{align*} 
\Phi_{\shK''}^{X \to Z}(\sE) 
& = q''_* \,(\shK'' \otimes p''^*(\sE)) \\
& \xrightarrow[\simeq]{\beta} q''_*\, \big(h_*(p_K^*\shK \otimes q_{K'}^* \shK') \otimes p''^*(\sE)\big)  \\
&  \simeq q''_*\, h_*\big(p_K^*\shK \otimes q_{K'}^* \shK'  \otimes u^*\sE) \big) &(\text{projection formula \eqref{thm:Neeman-Lipman} \eqref{thm:Neeman-Lipman-1iii})}) \\
& = v_* \big(p_K^*\shK \otimes q_{K'}^* \shK'  \otimes u^*\sE \big) \\
& = q'_*\, q_{K' \,*} \big(p_K^*\shK \otimes q_{K'}^* \shK'  \otimes p_K^*\, p^* \sE \big) \\
	& \simeq q'_* \big(\shK'  \otimes q_{K' \,*}  p_K^*(\shK \otimes p^* \sE) \big) &(\text{projection formula \eqref{thm:Neeman-Lipman} \eqref{thm:Neeman-Lipman-1iii})})\\
	& \simeq  q'_* \big(\shK'  \otimes p'^* q_* (\shK \otimes p^* \sE) \big)  & (\text{Tor-independent base-change \eqref{thm:Neeman-Lipman} \eqref{thm:Neeman-Lipman-1ii})}) \\
	& =   \Phi_{\shK'}^{Y \to Z}  \circ \Phi_{\shK}^{X \to Y}  (\sE).
\end{align*}
\end{proof}

\begin{lemma} Let $\Phi_i= \Phi(X_i \times_S Y, p_i,q_i,\shK_i) \colon \Dqc(X_i) \to \Dqc(Y)$ be strong relative Fourier-Mukai transforms over $S$, $i=1,2$, and suppose that the functors $\Phi_1, \Phi_2$ are fully faithful and their images form a semiorthogonal pair $(\Im \Phi_1, \Im \Phi_2)$. Then the categories obtained from left and right mutations 
	$(\LL_{\Im \Phi_1} (\Im \Phi_2), \Im \Phi_1)$ and $(\Im \Phi_2, \RR_{\Im \Phi_2}(\Im \Phi_1))$
 are also $S$-linear semiorthogonal pairs. Suppose furthermore that the following fiber squares are Tor-independent 
\begin{equation*} 
\begin{tikzcd} 
	X_1 \times_S Y \times_S X_2  \ar{r} \ar{d} & X_2 \times_S Y  \ar{d}{p_2}\\
	X_1 \times_S Y \ar{r}{q_1} & Y,
\end{tikzcd} \qquad
\begin{tikzcd} 
	X_1 \times_S Y \times_S X_2  \ar{r} \ar{d} & X_1 \times_S X_2  \ar{d}\\
	X_i \times_S Y \ar{r} & X_i
\end{tikzcd}
\end{equation*}
for for $i=1,2$. Then the $S$-linear fully faithful functors
	\begin{align*}
		\Dqc(X_2) \xrightarrow{\Phi_2} \Dqc(Y) \xrightarrow{\LL_{\Im \Phi_1}} \Dqc(Y) \quad \text{resp.} \quad
		\Dqc(X_2) \xrightarrow{\Phi_1} \Dqc(Y) \xrightarrow{\RR_{\Im \Phi_2}} \Dqc(Y)	
	\end{align*}
are given by the strong relative Fourier--Mukai transforms $(p_2, q_2, L_{\shK_1}(\shK_2))$ and $(p_1, q_1, R_{\shK_2}(\shK_1))$ over $S$ respectively, where the kernels $L_{\shK_1}(\shK_2) \in \Perf(X_2 \times_S Y)$ and $R_{\shK_2}(\shK_1) \in \Perf(X_1 \times_S Y)$ fit into triangles in $\Perf(X_2 \times_S Y)$ and respectively in  $\Perf(X_1 \times_S Y)$:
	$$
	\shK_1 * (\shK_1^R * \shK_2) \to \shK_2 \to L_{\shK_1}(\shK_2) \xrightarrow{[1]}, \quad \text{resp.} \quad R_{\shK_2} (\shK_1) \to \shK_1 \to \shK_2 * (\shK_2^L * \shK_1) \xrightarrow{[1]},
	$$
where $\shK_1^R = \shK_1^\vee \otimes \omega_{q_1}$, $\shK_2^L = \shK_2^\vee \otimes \omega_{p_2}$, and $*$ is the convolution of kernels Def. \ref{defn:convolution}.
\end{lemma}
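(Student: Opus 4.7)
\medskip
\noindent\textbf{Proof plan.} The strategy has three steps: verify $S$-linearity of the mutated pairs, identify the triple compositions of Fourier--Mukai functors as relative Fourier--Mukai transforms with explicit convolution kernels, and then lift the adjunction counit (resp.\ unit) to a morphism of kernels whose cone (resp.\ fiber) realizes the mutation kernel.

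First, I would verify the $S$-linearity claim. All four functors $\Phi_1,\Phi_1^R,\Phi_2,\Phi_2^L$ are $S$-linear: $\Phi_i$ is $S$-linear by construction of a relative Fourier--Mukai transform, and its adjoints are $S$-linear by Lemma~\ref{lem:linearLR}\,(1). Consequently, the projection functors $\Phi_1\Phi_1^R$ and $\Phi_2\Phi_2^L$ are $S$-linear, and the mutation triangles of Lemma~\ref{lem:mut}\,(1) force $\LL_{\Im\Phi_1}$ and $\RR_{\Im\Phi_2}$ to be $S$-linear as well. Hence $\LL_{\Im\Phi_1}(\Im\Phi_2)$ and $\RR_{\Im\Phi_2}(\Im\Phi_1)$ are $S$-linear subcategories, and by Lemma~\ref{lem:mut}\,(4) they form semiorthogonal pairs with $\Im\Phi_1$ and $\Im\Phi_2$ respectively. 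Next, the two Tor-independence hypotheses on the fiber squares are precisely the composability conditions of Definition~\ref{defn:convolution} needed to iterate the composition lemma for relative Fourier--Mukai transforms. Two successive applications then identify $\Phi_1\Phi_1^R\Phi_2 \colon \Dqc(X_2)\to \Dqc(Y)$ with the strong relative Fourier--Mukai transform whose kernel is $\shK_1 \ast (\shK_1^R \ast \shK_2) \in \Perf(X_2\times_S Y)$, where $\shK_1^R = \shK_1^\vee\otimes\omega_{q_1}$ by Lemma~\ref{lem:FM}\,(2). Symmetrically, $\Phi_2\Phi_2^L\Phi_1$ is realized by the kernel $\shK_2 \ast (\shK_2^L \ast \shK_1) \in \Perf(X_1\times_S Y)$.

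The core of the argument is to lift the restriction of the counit $\varepsilon \colon \Phi_1\Phi_1^R \to \id_Y$ to $\Im\Phi_2$ as an explicit morphism of kernels $\kappa \colon \shK_1\ast(\shK_1^R\ast\shK_2) \to \shK_2$ in $\Perf(X_2\times_S Y)$. Using the formula $\Phi_1^R(-) = p_{1*}(\shK_1^\vee\otimes\omega_{q_1}\otimes q_1^*(-))$, Grothendieck--Serre duality (Corollary~\ref{cor:GD}), and the projection formula on the triple fiber product $X_1\times_S Y \times_S X_2$---whose various subsquares are Tor-independent by Lemma~\ref{lem:3squares} combined with the two assumed Tor-independences---one identifies $\varepsilon|_{\Im\Phi_2}$ with the natural transformation induced by the evaluation pairing $\shK_1^\vee\otimes\shK_1 \to \sO$ convolved along the $X_1$-direction. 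This produces the desired $\kappa$, and I would define $L_{\shK_1}(\shK_2)$ as its cone, so that the stated triangle of kernels holds by construction. The Postnikov/convolution lemma then promotes this triangle of kernels to a triangle of relative Fourier--Mukai functors matching the mutation triangle of Lemma~\ref{lem:mut}\,(1) applied to $\Phi_2$; hence $\Phi_{L_{\shK_1}(\shK_2)} \simeq \LL_{\Im\Phi_1}\circ\Phi_2$, which is fully faithful with image $\LL_{\Im\Phi_1}(\Im\Phi_2)$, and the relative Fourier--Mukai datum $(p_2,q_2,L_{\shK_1}(\shK_2))$ is strong since it is a cone of strong data and the relevant $\omega_{p_2}, \omega_{q_2}$ are already perfect. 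The right-mutation case is handled symmetrically using the unit $\eta\colon \id_Y\to\Phi_2\Phi_2^L$, defining $R_{\shK_2}(\shK_1)$ as the fiber of the induced morphism $\shK_1 \to \shK_2\ast(\shK_2^L\ast\shK_1)$. The main obstacle is precisely this last step: in the present generality of quasi-compact, quasi-separated schemes over an arbitrary base $S$, producing the morphism $\kappa$ (and its dual) on the level of kernels requires a careful manipulation on the triple fiber product that crucially uses the Tor-independence assumptions to commute pushforwards and pullbacks; once $\kappa$ is in hand, the remainder is formal from the framework of Sections~\ref{sec:relFM} and~\ref{sec:relFM:cov}.
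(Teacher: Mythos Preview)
Your proposal is correct and follows the same approach the paper intends: the paper's proof is simply ``This is a direct consequence of the properties of mutations and Lem.~\ref{lem:conv},'' and your three-step plan (S-linearity via Lemma~\ref{lem:linearLR}, identification of the triple composites $\Phi_1\Phi_1^R\Phi_2$ and $\Phi_2\Phi_2^L\Phi_1$ as convolution kernels via Lemma~\ref{lem:conv}, then taking cones/fibers of the counit/unit lifted to kernel level) is precisely the unpacking of that sentence. Your careful discussion of how the Tor-independence hypotheses enter through Definition~\ref{defn:convolution} and Lemma~\ref{lem:3squares} is exactly the content the paper leaves implicit.
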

\begin{proof} This is a direct consequence of the properties of mutations and Lem. \ref{lem:conv:FM}.
\end{proof}

\subsection{Relative exceptional collections} \label{sec:relexc} In this subsection, we fix a map $f \colon X \to S$ between quasi-compact, quasi-separated schemes.

\begin{definition}[{\cite[Def. 3.19]{BLM+}}] Let $\shD \subseteq \Dqc (X)$ be a $S$-linear category. A {\em relative exceptional object in $\shD$} is an object $E \in \shD \cap \Perf(X)$ such that $\sO_S \xrightarrow{\sim} \shom_S(E,E)$. A {\em relative exceptional pair (resp. sequence) in $\shD$ }is a pair $(E_1, E_2)$ (resp. a sequence $(E_1, E_2, \ldots, E_n)$) of relative exceptional objects in $\shD$ such that $\sHom_S(E_i, E_j) = 0$ for all $i > j$.
\end{definition}

Notice that if $E \in \Perf(X)$, $F \in \Dqc(X)$, then their Hom-object can be expressed without using the coherators:
	$$\shom_S(E,F) \simeq f_* (E^\vee \otimes F) \simeq f_* \RHom_X(E,F) \in \Dqc(S).$$
If furthermore $f$ is quasi-perfect and $F \in \Perf(X)$, then  $\shom_S(E,F)  \in \Perf(S)$ (see Remark \ref{rmk:quasi-perfect:hom_S}). 
 The following is an analogue of {\cite[Lemma 3.23]{BLM+}} in our setting.

\begin{lemma}\label{lem:ff:relexc} If $f \colon X \to S$ is proper and perfect,
and $E \in \Perf(X)$ is a relative exceptional object over $S$. Then:
\begin{enumerate}[leftmargin=*]
	\item \label{lem:ff:relexc-1} The $S$-linear relative Fourier--Mukai functor
	$$\alpha_E (\blank) = f^*(\blank) \otimes E \colon \Dqc(S) \to \Dqc(X)$$
is fully faithful, 
and admits a left adjoint $\alpha_E^L \colon \Dqc(X) \to \Dqc(S)$ and a right adjoint  $\alpha_E^R \colon \Dqc(X) \to \Dqc(S)$, given respectively by the formula:
	\begin{equation}\label{eqn:relexc:adjoints}
	\alpha_{E}^L(\blank) = f_{!}(E^\vee \otimes \blank) \qquad \text{and} \qquad \alpha_E^R(\blank) = f_{*} ( E^\vee \otimes \blank) = \sHom_S(E, \blank).
	\end{equation}
	\item \label{lem:ff:relexc-2} The restriction of $\alpha_E$ to perfect complexes category, $\alpha_E|_{\Perf(S)}  \colon \Perf(S) \to \Perf(X)$, is also fully faithful, 
	with left and right adjoints given respectively by the restrictions of \eqref{eqn:relexc:adjoints} to $\Perf(X)$. Furthermore, the left adjoint could also be expressed by the formula
		\begin{equation*}
		\alpha_{E}^L(\blank)|_{\Perf (X)} \simeq \shom_S(\blank, E)^\vee  \colon \Perf(X) \to \Perf(S).
		\end{equation*}
	\item \label{lem:ff:relexc-3} The restriction of $\alpha_{E}$ to bounded psuedo-coherent complexes $\alpha_E|_{\Db(S)} \colon \Db(S) \to \Db(X)$ is also fully faithful, and admits a right adjoint $\Db(X) \to \Db(S)$ given by the restriction of $\alpha_E^R$ of \eqref{eqn:relexc:adjoints}. If furthermore $\omega_f$ is a perfect complex, then $\alpha_E|_{\Db(S)} \colon \Db(S) \hookrightarrow \Db(X)$ also admits a left adjoint $\Db(X) \to \Db(S)$ given the restriction of $\alpha_{E}^L$ of \eqref{eqn:relexc:adjoints}.
	\end{enumerate}
	\end{lemma}
\begin{proof} 
The functor $\alpha_{E}$ is Fourier--Mukai functor associated with the relative Fourier--Mukai transform 
$\Omega = (K=X, p=f, q=\id, \shK=E) \colon S \to X$ which is strong if $\omega_f$ is a perfect complex. By virtue of Proposition \ref{prop:FM}, $\Omega$ has left adjoint $\Omega^L = (X, \id, f, E^\vee \otimes \omega_f) \colon X \to S$ and right adjoint $\Omega^R = (X, \id, f, E^\vee) \colon X \to S$ with the desired properties. It only remains to prove that $\alpha_E$ is fully faithful; this is a consequent of $E$ being relative exceptional:  $\alpha_{E}^R \circ \alpha_{E} = f_*(E^\vee \otimes f^*(\blank) \otimes E) \simeq (\blank) \otimes \sHom_S(E,E) \simeq \Id$.
\end{proof}

\begin{definition} A relative exceptional sequence $(E_1, E_2, \ldots, E_n)$ is called a {\em full relative exceptional collection of $X$ over $S$} if the images $\alpha_{E_1}(\Perf(S)), \ldots, \alpha_{E_n}(\Perf(S))$ classically generate $\Perf(X)$, i.e. the right orthogonal of the images are zero.
\end{definition}

\begin{lemma} \label{lem:relexc} Let $f \colon X \to T$ be quasi-perfect, and let $(E_1, E_2, \ldots, E_n)$ be a relative exceptional sequence of $X$ over $S$. Then the $S$-linear subcategory 
	$$\langle \alpha_{E_1} (\Perf(S)), \alpha_{E_2}(\Perf(S)), \ldots, \alpha_{E_n}(\Perf(S)) \rangle \subset \Perf(X)$$
is admissible in $\Perf(X)$, and admits a relative Serre functor over $S$. (We call this subcategory the subcategory {\em spanned by $(E_1, \ldots, E_n)$}.) In particular, 
if $(E_1, \ldots, E_n)$ is a full relative exceptional collection, then there is an admissible $S$-linear semiorthogonal decomposition:
	$$\Perf(X) = \langle \alpha_{E_1} (\Perf(S)), \alpha_{E_2}(\Perf(S)), \ldots, \alpha_{E_n}(\Perf(S)) \rangle,$$
and $\Perf(X)$ itself admits a relative Serre functor over $S$; Furthermore this semiorthogonal decomposition is $\infty$-admissible.
\end{lemma}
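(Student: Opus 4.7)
My plan is as follows. First, by Lem.~\ref{lem:ff:relexc} each functor $\alpha_{E_i}\colon \Perf(S) \hookrightarrow \Perf(X)$ is fully faithful with both a left and a right adjoint (computed there), so each image $\alpha_{E_i}(\Perf(S))$ is an $S$-linear admissible subcategory of $\Perf(X)$. Next I would verify semiorthogonality via the $\Perf(S)$-valued Hom object. For $P,Q \in \Perf(S)$ and $i < j$, using that $E_j$ is perfect hence dualizable and that $f^*P$ is the pullback, one computes
\[
\shom_S\bigl(\alpha_{E_j}(P),\alpha_{E_i}(Q)\bigr) = f_*\bigl(f^*(P^{\vee} \otimes Q) \otimes E_j^{\vee} \otimes E_i\bigr) \simeq \ihom_S(P,Q) \otimes \shom_S(E_j,E_i) = 0,
\]
by the projection formula and the exceptionality hypothesis. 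By Lem.~\ref{lem:linearSO} this gives $\Hom_X(\alpha_{E_j}(P),\alpha_{E_i}(Q))=0$, hence semiorthogonality. Combined with admissibility of the components and Lem.~\ref{lem:mut:admissible}, the $S$-linear triangulated subcategory $\shC := \langle \alpha_{E_1}(\Perf(S)), \ldots, \alpha_{E_n}(\Perf(S))\rangle$ is admissible in $\Perf(X)$.

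Now I would produce the relative Serre functor on $\shC$. Since $f$ is quasi-perfect, for any $A,B \in \Perf(X)$ one has $\shom_S(A,B) = f_*(A^\vee\otimes B) \in \Perf(S)$ by Thm.~\ref{thm:Neeman}, so condition \eqref{lem:Serre-a} of Lem.~\ref{lem:Serre} holds on $\Perf(X)$ and hence on $\shC$. A direct Hom-object computation shows that the relative Serre functor on $\Perf(S)$ over $S$ (with $f = \id_S$) is the identity functor: for $P,Q \in \Perf(S)$, $\shom_S(Q,P)^\vee \simeq (Q^\vee\otimes P)^\vee \simeq P^\vee\otimes Q$, and $\shom_S(P,Q) = P^\vee\otimes Q$. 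Transporting along the equivalence $\alpha_{E_i}\colon \Perf(S) \simeq \alpha_{E_i}(\Perf(S))$ equips each component with a relative Serre functor (explicitly, tensoring with a trivial factor on the $E_i$-side). I would then apply Prop.~\ref{prop:Serre2sod} inductively: at each step one builds the Serre functor on $\langle \shC_{k-1}, \alpha_{E_k}(\Perf(S))\rangle$ from the Serre functors on $\shC_{k-1}$ and $\alpha_{E_k}(\Perf(S))$; condition \eqref{lem:Serre-a} of Lem.~\ref{lem:Serre} is inherited from $\Perf(X)$ at each stage, which is all that Prop.~\ref{prop:Serre2sod}'s construction actually needs.

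For the full case, fullness of the collection is by definition the vanishing of the right orthogonal, so together with the semiorthogonality and admissibility established above we get the asserted $S$-linear semiorthogonal decomposition of $\Perf(X)$. Since each component admits a relative Serre functor over $S$ and condition \eqref{lem:Serre-a} holds on $\Perf(X)$, Prop.~\ref{prop:Serresod} (equivalently, iterated application of Prop.~\ref{prop:Serre2sod}) yields a relative Serre functor on $\Perf(X)$ itself, and the same proposition then upgrades the decomposition to $\infty$-admissibility via Lem.~\ref{lem:Serre}~\eqref{lem:Serre-1}. The main delicate point is the inductive construction of the Serre functor in the second paragraph: one has to verify that Prop.~\ref{prop:Serre2sod} can be invoked at each stage using only condition \eqref{lem:Serre-a} on the enlarging subcategory (not a pre-existing Serre functor on $\Perf(X)$, which would circularly require $\omega_f$ to be invertible), and keep track of the compatibility between the Serre functors produced at successive steps.
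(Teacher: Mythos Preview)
Your proposal is correct and follows essentially the same route as the paper: observe that each $\alpha_{E_i}(\Perf(S))$ is admissible with relative Serre functor the identity, then invoke Prop.~\ref{prop:Serresod} (which is exactly the inductive application of Prop.~\ref{prop:Serre2sod} you spell out). Your caution about Prop.~\ref{prop:Serre2sod} needing only condition~\eqref{lem:Serre-a} and not a pre-existing Serre functor on the ambient category is well taken---the proof of that proposition indeed only uses~\eqref{lem:Serre-a} together with the Serre functors on the two components, despite what its stated hypotheses might suggest.
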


\begin{proof} It suffices to observe that for each $i$, $\alpha_{E_i}(\Perf(S)) \subseteq \Perf(X)$ is admissible, and admits a relative Serre functor over $S$ given by identity. The rest follows from Prop. \ref{prop:Serresod}.
\end{proof}

The following is an immediate consequence of Thm. \ref{thm:bc}:

\begin{corollary}\label{cor:relexc} Let $f \colon X \to S$ be quasi-perfect, $(E_1, E_2, \ldots, E_n)$ a relative exceptional sequence of $X$ over $S$, and let $\phi \colon T \to S$ be a {\em Tor-independent} base-change of $f$ (this holds, for example, if either $f$ or $\phi$ is flat), where $T$ is a quasi-compact, quasi-separated scheme. Then $(E_{1 \,T}, E_{2\,T}, \ldots, E_{n\,T})$ is a relative exceptional sequence of $X_T=X \times_S T$ over $T$, where $E_{i \, T} \in \Perf(X_T)$ is the base-change of $E_i$. Moreover, if $(E_1, E_2, \ldots, E_n)$ is a full relative exceptional collection, then $(E_{1 \,T}, E_{2\,T}, \ldots, E_{n\,T})$ is also a full relative exceptional collection, and it induces  $T$-linear semiorthogonal decompositions with admissible components:
		\begin{align*}
			\Perf(X_T)  &= \langle \alpha_{E_{1\,T}}(\Perf(T)),  \alpha_{E_{2\,T}}(\Perf(T)), \ldots, \alpha_{E_{n\,T}}(\Perf(T)) \rangle, \\	
			\Dqc(X_T)  &= \langle \alpha_{E_{1\,T}}(\Dqc(T)), \alpha_{E_{2\,T}}(\Dqc(T)), \ldots, \alpha_{E_{n\,T}}(\Dqc(T)) \rangle,
		\end{align*}
and a $T$-linear semiorthogonal decomposition with right admissible components:
		\begin{align*}
			\Db(X_T)  = \langle \alpha_{E_{1\,T}}(\Db(T)),  \alpha_{E_{2\,T}}(\Db(T)), \ldots, \alpha_{E_{n\,T}}(\Db(T)) \rangle.
		\end{align*}
These semiorthogonal decompositions are compatible with the inclusions $\Perf(T) \subseteq \Db(T) \subseteq \Dqc(T)$ and $\Perf(X_T) \subseteq \Db(X_T) \subseteq \Dqc(X_T)$. If furthermore $\omega_f$ is a shift of line bundle, then the last semiorthogonal decomposition of $\Db(X_T)$ is also admissible.
\end{corollary}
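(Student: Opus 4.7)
The plan is to deduce the corollary from Theorem~\ref{thm:bc} applied to the relative Fourier--Mukai transforms attached to the exceptional objects, and then to upgrade right admissibility to admissibility via Lemma~\ref{lem:ff:relexc}.

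First I would check that $(E_{1,T},\ldots,E_{n,T})$ is itself a relative exceptional sequence over $T$. Since $E_i\in\Perf(X)$ and Tor-independence of $\phi$ with $f$ is given, Remark~\ref{rmk:Lipbc} yields a functorial isomorphism $\shom_S(E_i,E_j)_T\simeq\shom_T(E_{i,T},E_{j,T})$. Specializing to $i=j$, the defining isomorphism $\sO_S\xrightarrow{\sim}\shom_S(E_i,E_i)$ pulls back to $\sO_T\xrightarrow{\sim}\shom_T(E_{i,T},E_{i,T})$, and for $i>j$ the vanishing $\shom_S(E_i,E_j)=0$ pulls back to $\shom_T(E_{i,T},E_{j,T})=0$.

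Next I would identify each functor $\alpha_{E_i}(\blank)=f^*(\blank)\otimes E_i\colon\Dqc(S)\to\Dqc(X)$ with the $S$-linear relative Fourier--Mukai transform attached to the datum $(p,q,\shK)=(f,\id_X,E_i)$, regarding $S\times_S X=X$ with projections $f$ and $\id_X$. Both projections are quasi-perfect and $\omega_{\id_X}=\sO_X$ is perfect, so this is a bona fide relative Fourier--Mukai transform; it is fully faithful on $\Perf(S)$ by Lemma~\ref{lem:ff:relexc}(2). The Tor-independence required for the pair $(X_i,Y)=(S,X)$ in Theorem~\ref{thm:bc} collapses to the given Tor-independence of $\phi$ with $f$. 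Hence Theorem~\ref{thm:bc} applies to the collection $\{\alpha_{E_i}\}$, and the full-exceptionality hypothesis $\Perf(X)=\langle\alpha_{E_1}(\Perf(S)),\ldots,\alpha_{E_n}(\Perf(S))\rangle$ base-changes to the claimed $T$-linear right-admissible semiorthogonal decompositions of $\Perf(X_T)$, $\Db(X_T)$, and $\Dqc(X_T)$, all compatible with the inclusions $\Perf\subseteq\Db\subseteq\Dqc$.

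Finally, to upgrade right admissibility to admissibility I would observe that $f_T\colon X_T\to T$ inherits quasi-perfectness from $f$ under Tor-independent base-change (as in the proof of Lemma-Definition~\ref{lem-def:relFM_bc}). Thus Lemma~\ref{lem:ff:relexc}(1)--(2) supplies both a left and a right adjoint to each $\alpha_{E_{i,T}}$ on $\Perf(T)\to\Perf(X_T)$ and on $\Dqc(T)\to\Dqc(X_T)$, giving admissibility of the components there. If in addition $\omega_f$ is a shift of a line bundle, then $\omega_{f_T}\simeq\phi^*\omega_f$ is likewise a shift of a line bundle, so Lemma~\ref{lem:ff:relexc}(3) produces the missing left adjoint on $\Db$, completing the admissibility claim. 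The whole argument is essentially bookkeeping, with no real obstacle: when the ``source'' of the relative Fourier--Mukai transform is $S$ itself, both the Tor-independence check and the adjoint-existence criteria are as mild as they can be.
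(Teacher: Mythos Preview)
Your proposal is correct and follows the same approach as the paper, which simply states that the corollary is an immediate consequence of Theorem~\ref{thm:bc}. You have fleshed out the details the paper leaves implicit: the verification via Remark~\ref{rmk:Lipbc} that the base-changed sequence is again relatively exceptional, and the observation that admissibility for $\Perf$ and $\Dqc$ (without assuming $\omega_f$ perfect) comes directly from Lemma~\ref{lem:ff:relexc}\eqref{lem:ff:relexc-1}\eqref{lem:ff:relexc-2} rather than from the ``strong'' clause of Theorem~\ref{thm:bc}.
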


Thus we will mainly focus on the perfect complexes; the corresponding statements about $\Dqc$ and $\Db$ then follows from above corollary applied to $T=S$.

On the other hand, the faithfully flat descent holds for relative exceptional collections.

\begin{corollary}\label{cor:relexc:descent} Let $f \colon X \to S$ be quasi-perfect, $E_i \in \Perf(X)$ for $i=1,\ldots, n$, and let $\phi \colon T \to S$ be a {\em faithfully flat} morphism between quasi-compact, quasi-separated schemes. Denote $E_{i \, T} \in \Perf(X_T)$ the base-change of $E_i$ along $T \to S$. If $E_{i \, T}$ is relative exceptional over $T$, then $E_i$ is relative exceptional over $S$. If $(E_{1 \, T}, \ldots, E_{n \, T})$ is a relative exceptional sequence (resp. a full relative exceptional collection) of $X_T$ over $T$, then $(E_{1}, \ldots, E_{n})$ is a relative exceptional sequence (resp. a full relative exceptional collection) of $X$ over $S$.
\end{corollary}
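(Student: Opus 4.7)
The plan is to combine Tor-independent base-change of $\Dqc(S)$-valued Hom objects (Rmk.~\ref{rmk:Lipbc}) with Bergh--Schn\"urer's faithfully flat descent Thm.~\ref{thm:fppf}. Since $\phi$ is flat, it is Tor-independent with respect to $f$ in the sense of Def.~\ref{def:bc}, so for any $E, F \in \Perf(X)$ there is a natural isomorphism $\phi^*\shom_S(E,F) \simeq \shom_T(E_T, F_T)$ in $\Dqc(T)$. Moreover, since $\phi$ is faithfully flat, $\phi^*: \Dqc(S) \to \Dqc(T)$ is faithfully exact and hence reflects both isomorphisms and zero objects.

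First I would dispatch the pointwise claims. The unit map $\sO_S \to \shom_S(E_i, E_i)$ pulls back along $\phi^*$ to the unit $\sO_T \to \shom_T(E_{i,T}, E_{i,T})$, which is an isomorphism by hypothesis; hence $\sO_S \xrightarrow{\sim} \shom_S(E_i, E_i)$, so each $E_i$ is relatively exceptional over $S$. The same argument applied to the vanishing $\shom_T(E_{i,T}, E_{j,T}) = 0$ for $i > j$ forces $\shom_S(E_i, E_j) = 0$, giving the relative exceptional sequence condition over $S$.

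For the fullness claim I would recognize each functor $\alpha_{E_i}(\blank) = f^*(\blank) \otimes E_i \colon \Dqc(S) \to \Dqc(X)$ as a relative Fourier--Mukai transform $S \to X$ over $S$ in the sense of Def.~\ref{Def:FM:BS}: identifying $S \times_S X = X$, the two projections are $f \colon X \to S$ (quasi-perfect by hypothesis) and $\id_X$ (trivially quasi-perfect, with $\omega_{\id} = \sO_X$ perfect), and the kernel is $E_i \in \Perf(X)$. Its base-change along $\phi$, as described in Lem.-Def.~\ref{lem-def:relFM_bc}, is precisely $\alpha_{E_{i,T}}$. By Lem.~\ref{lem:relexc} applied over $T$, the assumed fullness of $(E_{1,T}, \ldots, E_{n,T})$ supplies the $T$-linear semiorthogonal decomposition
$$\Perf(X_T) = \langle \alpha_{E_{1,T}}(\Perf(T)), \ldots, \alpha_{E_{n,T}}(\Perf(T)) \rangle.$$
Invoking Thm.~\ref{thm:fppf}(2) then descends this to the desired $S$-linear semiorthogonal decomposition $\Perf(X) = \langle \alpha_{E_{1}}(\Perf(S)), \ldots, \alpha_{E_{n}}(\Perf(S)) \rangle$, showing that $(E_1, \ldots, E_n)$ is a full relative exceptional collection.

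The argument presents no real obstacle beyond packaging: every step reduces to a formal consequence of the preceding sections. The key enabling fact is the compatibility of relative Fourier--Mukai transforms with both Tor-independent base-change (Lem.-Def.~\ref{lem-def:relFM_bc}) and faithfully flat descent (Thm.~\ref{thm:fppf}), which is the framework developed in \S\ref{sec:relFM}.
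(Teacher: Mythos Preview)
Your proposal is correct and follows essentially the same approach as the paper: use Rmk.~\ref{rmk:Lipbc} together with the fact that $\phi^*$ reflects isomorphisms to handle the exceptional-object condition, then invoke Thm.~\ref{thm:fppf} for the rest. The paper's proof is more terse, deferring both the semiorthogonality and the fullness to the single phrase ``the rest follows from Thm.~\ref{thm:fppf}'', whereas you spell out explicitly how $\alpha_{E_i}$ is a relative Fourier--Mukai transform and treat the vanishing $\shom_S(E_i,E_j)=0$ separately; but the content is the same.
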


\begin{proof} By Rmk. \ref{rmk:Lipbc}, the pullback $\phi^*$ takes $\sO_S \to \shom_S(E_i, E_i)$ to $\sO_T \to \shom_T(E_{i \,T}, E_{i \, T})$. Since $\phi$ is faithfully flat, $\phi^*$ reflects isomorphisms, hence $\sO_T \simeq  \shom_T(E_{i \,T}, E_{i \, T})$ implies $\sO_T \simeq \shom_S(E_i, E_i)$. The rest follows from Thm. \ref{thm:fppf}.
\end{proof}

Let $E \in \Perf(X)$ be a relative exceptional object, then the {\em left (resp. right) mutation functor through $E$}, denoted by $\LL_E$ and $\RR_E$, are defined as the left and right mutation functors through the image of $\alpha_E$. Then it is clear that $E$ is relative exceptional iff $E[1]$ is, and the corresponding mutation functors through $E$ and through $E[1]$ are identical.

The following lemma is analogous to the absolute cases \cite{Bo, BK, Go}.

\begin{lemma} \label{lem:mut:relexc} Let $f \colon X \to S$ be quasi-perfect, $E \in \Perf(X)$ a relative exceptional object, and let $A \in \Perf(X)$ be any object.
	\begin{enumerate} [leftmargin=*]
	\item \label{lem:mut:relexc-1} The left and right mutations $\LL_E(A)$ and $\RR_E(A)$ fit into exact triangles:
			\begin{equation}\label{eqn:mut:relexc}
			f^* \shom_S(E,A) \otimes E \to A \to \LL_E(A), \qquad \RR_{E}(A) \to E \to f^* \shom_S(A,E)^\vee \otimes E.
			\end{equation}
		Hence in particular, $\shom_S(E, \LL_E(A)) =0$ and $\shom_S(\RR_E(A), E)=0$.
	\item \label{lem:mut:relexc-2} If $\shom_S(A,E) = 0$, then there is a bi-functorial isomorphism 
		$$\shom(\LL_E(A), E) \simeq \shom_S(E,A)^\vee[-1].$$
	Furthermore, for any $B \in \Perf(X)$, there are bi-functorial isomorphisms 
	 	$$\shom_S(A,B) \xrightarrow{\sim} \shom_S(A, \LL_E(B)) \xleftarrow{\sim} \shom_S(\LL_E(A), \LL_E(B)).$$
	\item \label{lem:mut:relexc-3} If $\shom_S(E,A) = 0$, then there is a bi-functorial isomorphism 
		$$\shom(E,\RR_E(A)) \simeq \shom_S(A,E)^\vee[-1].$$
	Furthermore, for any $B \in \Perf(X)$, there are bi-functorial isomorphisms 
	 	$$\shom_S(B,A) \xrightarrow{\sim} \shom_S(\RR_E(B), A) \xleftarrow{\sim} \shom_S(\RR_E(B), \RR_E(A)).$$
	\item \label{lem:mut:relexc-4} If $(E,F)$ is a relative exceptional pair, then $(\LL_E(F), E)$ and $(F, \RR_F (E))$ are also relative exceptional pairs. Moreover, the following holds: 
		\begin{align*}
		&\shom_S(\LL_E(F), E) \simeq \shom_S(E,F)^\vee[-1] \simeq \shom_S(F, \RR_F(E));\\
		&\shom_S(F, \LL_E(F)) \simeq \sO_S, \qquad \shom_S(\RR_F(E), E)\simeq \sO_S;\\
		&\RR_E \circ \LL_E (F) = F, \qquad \LL_F \circ \RR_F (E) = E; \\
		&\LL_{\LL_E(F)} \circ \LL_E = \LL_E \circ \LL_F, \qquad \RR_{\RR_F(E)} \circ \RR_F = \RR_F \circ \RR_E.
		\end{align*}
	\end{enumerate}
\end{lemma}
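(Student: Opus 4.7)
The strategy is to reduce everything to the general mutation formalism of Lem.~\ref{lem:mut} applied to the $S$-linear admissible subcategory $\shA = \alpha_E(\Perf(S)) \subseteq \Perf(X)$ of Lem.~\ref{lem:relexc}, while keeping track of hom-objects via the enrichment by $\shom_S$ from Def.~\ref{def:shom} and Lem.~\ref{lem:linearLR}. Throughout, I will use the basic identity $\shom_S(\alpha_E(P), F) \simeq P^\vee \otimes \shom_S(E,F)$ and $\shom_S(F, \alpha_E(P)) \simeq \shom_S(F,E) \otimes P$ (for $F \in \Perf(X)$, $P \in \Perf(S)$), which follows from the projection formula together with $\shom_S(E,E)\simeq \sO_S$.

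For \eqref{lem:mut:relexc-1}, I plug $i_{\shA} = \alpha_E$, $i_{\shA}^! = \alpha_E^R = \shom_S(E,\blank)$, and $i_{\shA}^* = \alpha_E^L = \shom_S(\blank,E)^\vee$ (from Lem.~\ref{lem:ff:relexc} \eqref{lem:ff:relexc-2}) into the two triangles in Lem.~\ref{lem:mut} \eqref{lem:mut-1}; unraveling the formulas for $\alpha_E$ gives exactly \eqref{eqn:mut:relexc}. The vanishings $\shom_S(E,\LL_E A) = 0$ and $\shom_S(\RR_E A, E) = 0$ then follow from Lem.~\ref{lem:mut} \eqref{lem:mut-2} (which places $\LL_E A \in \shA^\perp$ and $\RR_E A \in {}^\perp\shA$) together with the $S$-linear criterion Lem.~\ref{lem:linearSO}.

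For \eqref{lem:mut:relexc-2} (and dually \eqref{lem:mut:relexc-3}), I apply $\shom_S(\blank, E)$ to the first triangle of \eqref{eqn:mut:relexc}, use $\shom_S(A,E) = 0$ and the computation $\shom_S(\alpha_E(\shom_S(E,A)), E) \simeq \shom_S(E,A)^\vee$, and read off the isomorphism $\shom_S(\LL_E(A), E) \simeq \shom_S(E,A)^\vee[-1]$ from the resulting exact triangle. For the bi-functorial isomorphism $\shom_S(A,B) \xrightarrow{\sim} \shom_S(A, \LL_E B)$, I apply $\shom_S(A, \blank)$ to the mutation triangle for $B$ and observe $\shom_S(A, \alpha_E(\shom_S(E,B))) \simeq \shom_S(A,E) \otimes \shom_S(E,B) = 0$. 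For $\shom_S(\LL_E A, \LL_E B) \xleftarrow{\sim} \shom_S(A, \LL_E B)$, I apply $\shom_S(\blank, \LL_E B)$ to the mutation triangle for $A$ and use the $S$-linear adjunction $\shom_S(\alpha_E(P), \LL_E B) \simeq \shom_S(P, \alpha_E^R \LL_E B) = \shom_S(P, \shom_S(E, \LL_E B)) = 0$ from Lem.~\ref{lem:linearLR} \eqref{lem:linearLR-2} combined with the vanishing proved in \eqref{lem:mut:relexc-1}. Part \eqref{lem:mut:relexc-3} is completely dual, applying $\shom_S(E, \blank)$ and $\shom_S(\blank, A)$ respectively to the second triangle of \eqref{eqn:mut:relexc}.

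For \eqref{lem:mut:relexc-4}, the pair assumption $\shom_S(F,E)=0$ puts $F \in {}^\perp \shA$, so \eqref{lem:mut:relexc-2} immediately gives $\shom_S(F, \LL_E F) \simeq \shom_S(F,F) \simeq \sO_S$ and $\shom_S(\LL_E F, \LL_E F) \simeq \shom_S(F,\LL_E F) \simeq \sO_S$, proving $\LL_E F$ is relative exceptional; combined with \eqref{lem:mut:relexc-1}, $(\LL_E F, E)$ is an exceptional pair with the stated hom, and analogously for $(F, \RR_F E)$. The inversion identities $\RR_E \circ \LL_E(F) = F$ and $\LL_F \circ \RR_F(E) = E$ follow from Lem.~\ref{lem:mut} \eqref{lem:mut-2}, which states that $\LL_\shA|_{{}^\perp \shA}$ and $\RR_\shA|_{\shA^\perp}$ are mutually inverse equivalences. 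Finally the braid relations follow from Lem.~\ref{lem:mut} \eqref{lem:mut-3} and \eqref{lem:mut-4}: both $\langle E, F\rangle$ and $\langle \LL_E F, E\rangle$ (resp.\ $\langle F, \RR_F E\rangle$) generate the same admissible subcategory, hence $\LL_E \circ \LL_F = \LL_{\langle E,F\rangle} = \LL_{\LL_E F} \circ \LL_E$, and symmetrically for right mutations. No step is particularly delicate; the only mild subtlety is to consistently promote all $\Hom$-level adjunctions to $\shom_S$-level ones via Lem.~\ref{lem:linearLR}, which is why part \eqref{lem:mut:relexc-2} is the technical core.
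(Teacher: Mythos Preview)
Your proposal is correct and follows essentially the same approach as the paper: part \eqref{lem:mut:relexc-1} is derived from Lem.~\ref{lem:mut}\,\eqref{lem:mut-1} together with Lem.~\ref{lem:ff:relexc}\,\eqref{lem:ff:relexc-2}, part \eqref{lem:mut:relexc-2} by applying $\shom_S(\blank,E)$, $\shom_S(A,\blank)$, and $\shom_S(\blank,\LL_E B)$ to the relevant mutation triangles, part \eqref{lem:mut:relexc-3} dually, and part \eqref{lem:mut:relexc-4} as a consequence of \eqref{lem:mut:relexc-2} and \eqref{lem:mut:relexc-3}. You have in fact spelled out more detail than the paper (the explicit invocation of Lem.~\ref{lem:linearLR} to upgrade adjunctions to $\shom_S$-level, and of Lem.~\ref{lem:mut}\,\eqref{lem:mut-3}--\eqref{lem:mut-4} for the braid relations), but the underlying argument is identical.
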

\begin{proof} The proof is similar to absolute cases. \eqref{lem:mut:relexc-1} follows from Lem. \ref{lem:mut} \eqref{lem:mut-1} and Lem. \ref{lem:ff:relexc} \eqref{lem:ff:relexc-2}. For \eqref{lem:mut:relexc-2}, the first statement follows from applying $\shom_S(\blank, E)$ to the first equation of \eqref{eqn:mut:relexc}, the isomorphism $\shom_S(A,B) \simeq \shom_S(A, \LL_E(B))$ follows from applying $\shom_S(A,\blank)$ to the triangle $f^* \shom_S(E,B) \otimes E \to B \to \LL_E(B)$, and the isomorphism $\shom_S(A, \LL_E(B)) \simeq \shom_S(\LL_E(A), \LL_E(B))$ follows from applying $\shom_S(\blank, \LL_E(B))$ to the first equation of (\ref{eqn:mut:relexc}). \eqref{lem:mut:relexc-3} is similar to \eqref{lem:mut:relexc-2}, and finally \eqref{lem:mut:relexc-4} follows easily from \eqref{lem:mut:relexc-2} and \eqref{lem:mut:relexc-3}.
\end{proof}

\begin{definition} Let $E_\bullet=(E_1, E_2, \ldots, E_n)$ be a relative exceptional sequence of $X$ over $S$. Then its {\em left dual exceptional sequence $D_\bullet$} is defined by setting
	$$D_1 = E_1, \qquad D_{i} = \LL_{E_1} \circ \ldots \circ \LL_{E_{i-1}} (E_i) \quad \text{for $2 \le i \le n$},$$
and its {\em right dual exceptional sequence $F_\bullet$} is defined by setting
	$$F_n = E_n, \qquad F_{i} = \RR_{E_{n}} \circ \ldots \circ \RR_{E_{i+1}} (E_i)  \quad \text{for $1 \le i \le n-1$}. $$
\end{definition}

 \begin{lemma} Let $f \colon X \to T$ be quasi-perfect and $E_\bullet = (E_1, E_2, \ldots, E_n)$ a relative exceptional sequence of $X$ over $S$. Then the left dual  $D_\bullet = (D_n ,\ldots, D_1)$ and right dual $F_\bullet =(F_1, \ldots, F_n)$ of $E_\bullet$ are both relative exceptional sequences of $X$ over $S$. Moreover, the relative exceptional sequences $E_\bullet$, $D_\bullet$ and $F_\bullet$ span the same $S$-linear admissible subcategory (in the sense of Lem. \ref{lem:relexc}), and enjoy the following ``relations of dual basis":
  	$$
	\shom_S(E_i, D_j) = \begin{cases} \sO_S & i = j; \\
	0 & i\ne j, \end{cases}
	\quad \text{resp.} \quad
	\shom_S(F_i, E_j) = \begin{cases} \sO_S & i = j; \\
	0 & i\ne j. \end{cases}
	$$
Furthermore, if $E_\bullet$ is full, then its left dual $D_\bullet$ and right dual $F_\bullet$ are both full, and they are respectively uniquely determined by the above ``relations of dual basis".
 \end{lemma}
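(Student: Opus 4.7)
The strategy is to realize $D_\bullet$ and $F_\bullet$ as the left and right dual semiorthogonal sequences of Lem.~\ref{lem:dualsod}, applied to the admissible $S$-linear subcategories $\shA_k := \alpha_{E_k}(\Perf(S)) \subseteq \Perf(X)$. First I would verify that $(\shA_1, \ldots, \shA_n)$ is a semiorthogonal sequence of admissible $S$-linear subcategories: admissibility of each $\shA_k$ follows from Lem.~\ref{lem:ff:relexc} combined with Lem.~\ref{lem:relexc}, while the semiorthogonality $\shA_i \subseteq \shA_j^\perp$ for $i<j$ is a direct consequence of $\shom_S(E_j, E_i) = 0$ (for $j>i$) via the projection formula and Lem.~\ref{lem:linearSO}. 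Since the inclusion and its adjoints $i_\shA, i^*_\shA, i^!_\shA$ are $S$-linear, so are all mutation functors; in particular each mutation commutes with tensoring by $f^* P$ for $P \in \Perf(S)$, and hence the subcategory $\shB_i := \LL_{\langle \shA_1,\ldots,\shA_{i-1}\rangle}(\shA_i)$ appearing in Lem.~\ref{lem:dualsod} is nothing but $\alpha_{D_i}(\Perf(S))$, and symmetrically $\shC_i = \alpha_{F_i}(\Perf(S))$.

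The ``same span'' claim for $D_\bullet$, $E_\bullet$, and $F_\bullet$ then follows from parts (1)--(2) of Lem.~\ref{lem:dualsod}, and fullness is obviously preserved. The semiorthogonality of $(D_n, \ldots, D_1)$ and $(F_1, \ldots, F_n)$ as ordered sequences follows from the semiorthogonality of $(\shB_n,\ldots,\shB_1)$ and $(\shC_1,\ldots,\shC_n)$ together with Lem.~\ref{lem:linearSO}; note that the indexing of the left dual is intentionally reversed in order to match the exceptional-sequence convention $\shom_S(D_a, D_b) = 0$ for $a > b$. Relative exceptionality of each $D_i$ (resp.\ $F_i$) reduces to $\shom_S(D_i, D_i) \simeq \shom_S(E_i, E_i) = \sO_S$, where the isomorphism uses that the relevant mutation restricts to an $S$-linear equivalence of $S$-linear subcategories (Lem.~\ref{lem:mut} \eqref{lem:mut-2}), and any such $S$-linear equivalence preserves the $\shom_S$-enrichment (by testing against compact generators $P \in \Perf(S)$).

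For the dual-basis relation $\shom_S(E_i, D_j) = \sO_S$ if $i = j$ and $0$ otherwise: when $i > j$, $D_j \in \langle \shA_1,\ldots,\shA_j\rangle$ while $E_i \in {}^\perp\langle \shA_1,\ldots,\shA_{i-1}\rangle \subseteq {}^\perp\langle \shA_1,\ldots,\shA_j\rangle$, so Lem.~\ref{lem:linearSO} kills the $\shom_S$; when $i < j$, $D_j \in \shB_j \subseteq \shA_i^\perp$, again killing it; when $i = j$, applying $\shom_S(E_i, \blank)$ to the defining triangle $i_{\langle\shA_1,\ldots,\shA_{i-1}\rangle}\, i^!(E_i) \to E_i \to D_i$ makes the first term vanish (since $E_i \in {}^\perp\langle \shA_1,\ldots,\shA_{i-1}\rangle$), yielding $\shom_S(E_i, D_i) \simeq \shom_S(E_i, E_i) = \sO_S$. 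The corresponding relation $\shom_S(F_i, E_j) = \sO_S \cdot \delta_{ij}$ is proved symmetrically by applying $\shom_S(\blank, E_j)$ to the defining triangle for $F_i$.

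Finally, the uniqueness statement under the fullness assumption is the subtlest point. Given any $\tilde D_j \in \Perf(X)$ satisfying the same dual-basis relations, the vanishings $\shom_S(E_k, \tilde D_j) = 0$ for $k \neq j$ place $\tilde D_j$ in $\bigcap_{k \neq j} \shA_k^\perp$, which by part (2) of Lem.~\ref{lem:dualsod} equals $\shB_j$. Since I have identified $\shB_j$ with $\alpha_{D_j}(\Perf(S))$, I can write $\tilde D_j \simeq f^* P \otimes D_j$ for some $P \in \Perf(S)$, and then the normalization $\sO_S \simeq \shom_S(E_j, \tilde D_j) \simeq P \otimes \shom_S(E_j, D_j) \simeq P$ forces $\tilde D_j \simeq D_j$. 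The main obstacle throughout is ensuring that the $\shom_S$-enriched analogues of the mutation identities of Lem.~\ref{lem:mut:relexc} interact cleanly with the semiorthogonality structure of the $\shB_i$'s; routing the argument through Lem.~\ref{lem:dualsod} and the $S$-linearity of the mutation functors handles this bookkeeping uniformly rather than iteratively.
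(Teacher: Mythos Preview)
Your proof is correct but takes a different route from the paper. The paper's proof is a single sentence: ``This follows from an iterated application of Lem.~\ref{lem:mut:relexc}~\eqref{lem:mut:relexc-4}.'' That is, the paper argues by induction on the length of the sequence, using the pair-level mutation identities (which show that mutating an exceptional pair yields another exceptional pair, with the $\shom_S$-relations computed explicitly) one step at a time. Your approach instead works at the level of subcategories: you identify the $\shA_k = \alpha_{E_k}(\Perf(S))$, invoke the dual semiorthogonal decomposition machinery of Lem.~\ref{lem:dualsod} wholesale, and then descend back to objects via $S$-linearity of the mutation functors to identify $\shB_i = \alpha_{D_i}(\Perf(S))$ and $\shC_i = \alpha_{F_i}(\Perf(S))$. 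What your approach buys is a more conceptual packaging that avoids the explicit induction and, notably, gives a clean and explicit argument for the uniqueness claim (placing $\tilde D_j$ in $\bigcap_{k\neq j}\shA_k^\perp = \shB_j$ and then normalizing); the paper's one-line proof leaves uniqueness implicit. What the paper's approach buys is directness: once Lem.~\ref{lem:mut:relexc}~\eqref{lem:mut:relexc-4} is in hand, the induction is mechanical and requires no further identification of subcategories or appeal to $S$-linearity of mutations. One small indexing slip: Lem.~\ref{lem:dualsod} gives the right-dual SOD as $\langle \shC_n,\ldots,\shC_1\rangle$, not $\langle \shC_1,\ldots,\shC_n\rangle$, so the exceptional order of the $F_i$ is reversed relative to what you (and the statement) wrote; this is cosmetic and does not affect your argument.
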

\begin{proof} This follows from an iterated application of Lem. \ref{lem:mut:relexc} \eqref{lem:mut:relexc-4}.
\end{proof}

The cases of {\em projective bundles} Thm. \ref{thm:proj.bundle} and {\em Grassmannian bundles} Thm. \ref{thm:Grass.bundle} provide examples of full relative exceptional collections and dual exceptional collections.
 
 \subsection{Grassmannian bundles} \label{sec:Grass_bundles} 
 Let $S$ be a quasi-compact, quasi-separated scheme and $\sE$ a locally free sheaf of rank $n$ over $S$. Let $n,d$ be two integers such that $n \ge 1$ and $1 < d < n$. Let $\pi \colon \Gr_d(\sE) = \Quot_{S,d}(\sE^\vee) \to S$ be the rank $d$ Grassmannian bundle of $\sE$ over $S$, see Ex. \ref{ex:Grass} for details, and let $\shU$ and $\shQ$ be the tautological rank $d$ subbundle resp. rank $\ell := n-d$ quotient bundle of $\sE$. There is a tautological sequence over $\Gr_d(\sE)$:
	$$0 \to \shU \to \pi^*\sE \to \shQ \to 0.$$
For a commutative ring $R$, a free $R$-module $F$, and
a partition $\lambda = (\lambda_1 \ge \lambda_2 \ge \cdots \ge \lambda_\ell \ge 0)$, the {\em Schur module $\Sigma^\lambda F$} is defined as the image of the composition:
	$$\Sigma^{\lambda} F := {\rm image} \big(\bigotimes_{j \in [1, \lambda_1]} \bigwedge^{\lambda_j^t} \nolimits F \xrightarrow{\otimes_j \Delta_j} \bigotimes_{(i,j) \in \lambda} F(i,j) \xrightarrow{\otimes_i m_i } \bigotimes_{i \in [1, \ell]} \Sym^{\lambda_i} F \big),$$
where $F(i,j)$ denotes a copy of $F$ labeled by the index $(i,j) \in \lambda$, $\Delta_j$ is the comultiplication map along the $j$-th column, and $m_i$ is the multiplication map along the $i$-th row, $\lambda^t$ denotes the transpose partition of $\lambda$, see \cite{Wey, Ful} for details. Since the Schur functor $\Sigma^\lambda$ is universally free, for any scheme $X$, the Schur module construction defines an endofunctor $\Sigma^\lambda \colon (\text{Locally free sheaves}/X) \to  (\text{Locally free sheaves}/X)$.

\begin{remark} Our notation ``$\Sigma^\lambda$" of Schur functor follows the convention of \cite{Kap85, Kap88}, and corresponds to ``$L_{\lambda^t}$" of \cite{Wey},  ``$L^{\lambda}$" of \cite{BLV}, and ``$S_{\lambda}$" of \cite{Ef}.
\end{remark}

We will review the details about Young diagram (in characteristic zero) in \S \ref{sec:Young_Grassmannian}. For non-negative integers $\ell$ and $d$, $B_{\ell,d}$ denotes the set of partitions $\lambda = (\lambda_1, \lambda_2, \ldots, \lambda_\ell)$ such that $d \ge \lambda_1 \ge \lambda_2 \ge \ldots \ge \lambda_\ell \ge 0$. Let  $B_{\ell,d}^{\le}$ be the set $B_{\ell,d}$ equipped with the ``canonical" {\em total} ordering (graded reverse lexicographic ordering): for any $\lambda, \mu \in B_{\ell,d}$, $\lambda < \mu$ if either $|\lambda| < |\mu|$, or $|\lambda| = |\mu|$ and $\lambda_i> \mu_i$ for the smallest $i$ such that $\lambda_i \ne \mu_i$. For example,
	$$(0) < (1) < (2) < (1,1) < (3) < (2,1) < (1,1,1) < \ldots \qquad \text{in $B_{\ell,d}^{\le}$}.$$
Let $B_{\ell,d}^{\ge}$ be the set $B_{\ell,d}$ equipped with opposite ordering $\ge$. The following is a globalization of the characteristic-free Kapranov's theorem  \cite[Thm. 1.5]{BLV}, \cite[Thm. 1.6]{Ef}:

\begin{theorem}[Grassmannian bundles] \label{thm:Grass.bundle} Let $\pi \colon \Gr_d(\sE) = \Quot_{S,d}(\sE^\vee) \to S$ be the rank $d$ Grassmannian bundle of a locally free sheaf $\sE$ of rank $n$ over a scheme $S$, $\ell: = n -d \ge 0$, and let $\shU$ and $\shQ$ be the tautological subbundle and quotient bundle as above. 
\begin{enumerate}[leftmargin=*]
	\item  \label{thm:Grass.bundle-1} 
	$\big\{ \Sigma^\lambda \shQ \big\}_{\lambda \in B_{\ell,d}^{\le}}$ 
 is a full relative exceptional collection of $\Gr(\sE)$ over $S$, and its left dual exceptional collection over $S$ is given by 
 	$\big\{ \Sigma^{\lambda^t} \shU [|\lambda|]\big\}_{\lambda \in B_{\ell,d}^{\ge}}$.
	\item  \label{thm:Grass.bundle-2} 
	$\Perf(\Gr_d(\sE))$ admits a relative Serre functor over $S$ given by $\SS_{\Gr(\sE)/S} = (\blank) \otimes \omega_\pi$, where
	\begin{align*}
	\omega_{\pi} &= (\det \shU)^{\otimes \ell} \otimes (\det \shQ)^{\otimes -d} [\ell d]  \\
	&= (\det \sE)^{\otimes \ell} \otimes  (\det \shQ)^{\otimes -n} [\ell d] = (\det \sE)^{\otimes -d} \otimes  (\det \shU)^{\otimes n} [\ell d].
	\end{align*}
	\item  \label{thm:Grass.bundle-3} 
	There are $S$-linear semiorthogonal decompositions with admissible components:
		\begin{align*}
		&\Perf(\Gr_d(\sE)) = \big \langle \Sigma^{\lambda^t} \shU  \otimes \pi^* \Perf(S) \big \rangle_{\lambda \in B_{\ell,d}^{\ge}}, \qquad &\Perf(\Gr_d(\sE)) = \big \langle \Sigma^\lambda \shQ  \otimes \pi^* \Perf(S) \big \rangle_{\lambda \in B_{\ell,d}^{\le}};\\
		&\Db(\Gr_d(\sE)) = \big \langle \Sigma^{\lambda^t} \shU  \otimes \pi^* \Db(S) \big \rangle_{\lambda \in B_{\ell,d}^{\ge}}, \qquad &\Db(\Gr_d(\sE)) = \big \langle \Sigma^\lambda \shQ  \otimes \pi^* \Db(S) \big \rangle_{\lambda \in B_{\ell,d}^{\le}};\\
		&\Dqc(\Gr_d(\sE)) = \big \langle \Sigma^{\lambda^t} \shU  \otimes \pi^* \Dqc(S) \big \rangle_{\lambda \in B_{\ell,d}^{\ge}}, \qquad &\Dqc(\Gr_d(\sE)) = \big \langle \Sigma^\lambda \shQ  \otimes \pi^* \Dqc(S) \big \rangle_{\lambda \in B_{\ell,d}^{\le}}
		\end{align*}
which are compatible with the respective natural inclusions $\Perf \subseteq \Db \subseteq \Dqc$.
\end{enumerate}
\end{theorem}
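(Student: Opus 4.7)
The plan is to bootstrap from the characteristic-free absolute Kapranov-type theorem of Buchweitz--Leuschke--Van den Bergh \cite{BLV} and Efimov \cite{Ef} over a point, and then globalize using the Tor-independent base-change and faithfully flat descent machinery developed in \S\ref{sec:bc}--\S\ref{sec:relFM}. The key structural input is that $\pi \colon \Gr_d(\sE) \to S$ is smooth and projective of relative dimension $\ell d$, hence in particular flat, so \emph{every} base change $\phi \colon T \to S$ is automatically Tor-independent with respect to $\pi$.

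For part \eqref{thm:Grass.bundle-1}, the two statements to check are the relative exceptionality $\sO_S \xrightarrow{\sim} \shom_S(\Sigma^\lambda \shQ, \Sigma^\lambda \shQ)$ and the semiorthogonality $\shom_S(\Sigma^\mu \shQ, \Sigma^\lambda \shQ) = 0$ for $\mu > \lambda$ in $B_{\ell,d}^{\le}$. Since the Schur functors and the formation of $\shom_S$ commute with flat base change (Rmk. \ref{rmk:Lipbc}), I would pick an fppf cover $\phi \colon T \to S$ trivialising $\sE$, so that $\Gr_d(\sE_T) \simeq \Gr_d(n) \times_{\Spec \ZZ} T$ and the Schur bundles $\Sigma^\lambda \shQ_T, \Sigma^{\mu^t} \shU_T$ become external products. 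The K\"unneth formula (Lem. \ref{lem:Kunneth}) then reduces both identities to the corresponding identities on the absolute Grassmannian $\Gr_d(n)/\Spec \ZZ$, which is precisely Kapranov's theorem in the form of \cite[Thm.~1.5]{BLV} and \cite[Thm.~1.6]{Ef}. Faithful flatness of $\phi$ lets one descend the conclusions back to $S$ via Cor. \ref{cor:relexc:descent}. The identification of the left dual sequence with $\{\Sigma^{\lambda^t}\shU[|\lambda|]\}_{\lambda \in B_{\ell,d}^{\ge}}$ reduces by the same argument to the dual-basis relation $\pi_*(\Sigma^\lambda \shQ \otimes \Sigma^{\mu^t} \shU) \simeq \sO_S \cdot \delta_{\lambda,\mu}[-|\lambda|]$, which is the classical ``Cauchy'' identity on the Grassmannian.

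For part \eqref{thm:Grass.bundle-2}, I would invoke Prop. \ref{prop:Serre}: since $\pi$ is smooth and proper, $\omega_\pi$ is invertible and $\SS_{\Gr(\sE)/S} = (\blank) \otimes \omega_\pi$. By Ex. \ref{eg:lci}\eqref{eg:lci-2}, $\omega_\pi = \det \Omega_{\Gr(\sE)/S}[\ell d]$. The relative tangent bundle is canonically $\sHom(\shU, \shQ) = \shU^\vee \otimes \shQ$ (from the Quot-functorial description of deformations of subbundles), which has rank $\ell d$ and determinant $(\det \shU)^{-\ell} \otimes (\det \shQ)^d$. Hence $\omega_\pi = (\det \shU)^{\otimes \ell} \otimes (\det \shQ)^{\otimes -d}[\ell d]$; the two equivalent expressions follow from $\det \pi^*\sE = \det \shU \otimes \det \shQ$.

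Part \eqref{thm:Grass.bundle-3} is then a formal consequence: with part \eqref{thm:Grass.bundle-1} in hand, Cor. \ref{cor:relexc} (applied with $T = S$) upgrades the full relative exceptional collection on $\Perf$ to $S$-linear semiorthogonal decompositions on $\Perf$, $\Db$, and $\Dqc$, all compatible with the natural inclusions; the additional hypothesis needed for admissibility of the $\Db$-decomposition, namely that $\omega_\pi$ is a shift of a line bundle, is supplied by part \eqref{thm:Grass.bundle-2}. The parallel decomposition using $\{\Sigma^{\lambda^t}\shU\}_{\lambda \in B_{\ell,d}^{\ge}}$ follows identically, since by part \eqref{thm:Grass.bundle-1} this is (up to shifts) the left dual collection, which spans the same admissible subcategory by Lem. \ref{lem:relexc}. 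The hardest step is the combinatorial vanishing underlying the absolute Kapranov theorem; once this is accepted as input, the transition to the relative setting is entirely formal through base-change and descent.
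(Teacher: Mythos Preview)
Your proposal is correct and follows essentially the same strategy as the paper: reduce to the absolute case over $\Spec \ZZ$ via \cite{BLV,Ef}, then globalize using Tor-independent base-change (Cor.~\ref{cor:relexc}) and fppf descent (Cor.~\ref{cor:relexc:descent}), with part \eqref{thm:Grass.bundle-2} coming from $\Omega_\pi \simeq \shU \otimes \shQ^\vee$ and Prop.~\ref{prop:Serre}, and part \eqref{thm:Grass.bundle-3} from Cor.~\ref{cor:relexc}. One small slip: in your dual-basis relation you wrote $\pi_*(\Sigma^\lambda \shQ \otimes \Sigma^{\mu^t}\shU)$ where you meant $\shom_S(\Sigma^\lambda \shQ, \Sigma^{\mu^t}\shU) = \pi_*((\Sigma^\lambda \shQ)^\vee \otimes \Sigma^{\mu^t}\shU)$.
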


\begin{proof} The results of \eqref{thm:Grass.bundle-1} hold in the case $S = \Spec \ZZ$ by \cite[Thm. 1.5]{BLV}, \cite[Thm. 1.6]{Ef}, hence it holds for $S = \Spec R$, where $R$ is any ring, by Tor-independent base-change Cor. \ref{cor:relexc}; Thus \eqref{thm:Grass.bundle-1} holds for any base $S$ by fppf descent Cor. \ref{cor:relexc:descent}. For \eqref{thm:Grass.bundle-2}, since the sheaf of relative K\"ahler differentials is given by $\Omega_{\pi} = \shU \otimes \shQ^\vee$ (see e.g. \cite[Prop. 3.3.5]{Wey}), the claim follows from Ex. \ref{eg:lci}\eqref{eg:lci-2} and Prop. \ref{prop:Serre}. Finally, \eqref{thm:Grass.bundle-3} follows from \eqref{thm:Grass.bundle-1} and Cor. \ref{cor:relexc}.
\end{proof}

\newpage
\part{Local geometry} \label{part:local}
\addtocontents{toc}{\vspace{0.5\normalbaselineskip}}	
\section{Young diagrams and Grassmannians} \label{sec:Young_Grassmannian}
For this section we work over a ground field $\kk$ of {\em characteristic zero}. 

\subsection{Young diagrams and Schur functors} 
The standard references for this part are \cite{FulYoung,FH,Wey}.
Let $B_{\ell,d}$ denote the set of Young diagrams inscribed in a rectangle of height $\ell$ and width $d$. The elements $\lambda \in B_{\ell, d}$ can be identified with a non-increasing integral sequences $\lambda = (\lambda_1, \lambda_2, \ldots, \lambda_\ell)$ such that $d \ge \lambda_1 \ge \lambda_2 \ge \ldots \ge \lambda_\ell \ge 0$. For $\lambda \in B_{\ell, d}$, denote $|\lambda| = \sum_{i=1}^{\ell} \lambda_i$.
By convention, $B_{\ell,d} = \{\emptyset\} = \{(0,0,\ldots,0)\}$ is a singleton if $\ell = 0$ or $d=0$, and $B_{\ell,d} = \emptyset$ is empty if $\ell <0$ or $d <0$. Let $B_{\ell,d}^{\ge }$ be theset $B_{\ell,d}$ equipped with the natural {\em partial order $\preceq$} of inclusions of Young diagrams, i.e. $\lambda \preceq \mu$ if and only if $\lambda_i \le \mu_i$ for all $i$, if and only if $\lambda \subseteq \mu$ as Young diagrams. Notice the ``canonical" total order $\le$ defined in \S \ref{sec:Grass_bundles} is a {\em refinement} of the partial order $\preceq$. Denote $B_{\ell,d}^{\succeq}$ the same set $B_{\ell,d}$ but with the opposite partial order $\succeq$. For $\lambda \in B_{\ell, d}$, denote $\lambda^t \in B_{d, \ell}$ the {\em transpose} of $\lambda$. 

Let $W$ be a $\ell$-dimensional $\kk$-vector space, $\lambda \in B_{\ell, d}$ a partition, and let $\Sigma^{\lambda}W$ be the corresponding {\em Schur module} defined in \S \ref{sec:Grass_bundles}; our notation ``$\Sigma^\lambda$" follows the convention of Kapranov \cite{Kap85, Kap88}. By our convention, for $m \ge 0$, $\Sigma^{m} W = S^m W$ is the symmetric power and $\Sigma^{(1^m)} W : =\Sigma^{(1, 1, \ldots,1)} W= \bigwedge^m W$ is the exterior power. The Schur functor can be extended to $\lambda$ with negative entries by the following formula:
	$$\Sigma^{(\lambda_1 + k, \lambda_2 + k, \ldots, \lambda_\ell +k)} W = \Sigma^{(\lambda_1, \lambda_2, \ldots, \lambda_\ell)} W \otimes (\wedge^\ell W)^{\otimes k}, \qquad \text{for any $k \in \ZZ$}.$$
Since we are working in characteristic zero, there are canonical isomorphisms:
	$$(\Sigma^{(\lambda_1, \lambda_2, \ldots, \lambda_\ell)} W)^\vee \simeq \Sigma^{(\lambda_1, \lambda_2, \ldots, \lambda_\ell)} W^\vee \simeq \Sigma^{(-\lambda_\ell, \ldots, -\lambda_2, -\lambda_1)} W.$$
From above two formulae, it is convenient to use the following notations: for $k \in \ZZ$, 
	$$\lambda+k := (\lambda_1 + k, \lambda_2 + k, \ldots, \lambda_\ell +k) \quad \text{and} \quad - \lambda: = (-\lambda_\ell, \ldots, -\lambda_2, -\lambda_1).$$ 
We will use this convention whenever there is no confusion.

Next we review the standard formulae for Schur functors that will be extensively used in this paper. For Young diagrams $\lambda, \mu \in B_{\ell, d}$, the {\em Littlewood-Richardson rule} states that:
	$$\Sigma^\lambda W \otimes \Sigma^\mu W = \bigoplus_{\nu} (\Sigma^{\nu} W)^{\oplus m_{\lambda, \mu}^\nu},$$
where the non-negative integer $m_{\lambda, \mu}^\nu$ is the {\em Littlewood-Richardson coefficient}, which equals the number of ways of expressing Young diagram $\nu$ as a strict $\mu$-expansion of $\lambda$, see \cite{FulYoung, Wey}. In particular the Young diagrams $\nu$ appearing in the sum satisfies 
	$$|\nu| = |\lambda| + |\mu| \quad \text{and} \quad \lambda_i + \mu_\ell \le \nu_i \le \lambda_1 + \mu_i, \quad \text{for all} \quad 1 \le i \le \ell.$$
The following special case of Littlewood-Richardson rule is the {\em Pieri's formula}:
	$$\Sigma^\lambda W \otimes S^m W = \bigoplus_{\nu} (\Sigma^{\nu}W)^{\oplus m_{\lambda}^\nu} \quad \text{and} \quad \Sigma^\lambda W \otimes \bigwedge^m  W = \bigoplus_{\mu} (\Sigma^{\mu}W)^{\oplus n_{\lambda}^\mu}, $$
 where the non-negative integer $m_{\lambda}^\nu = m_{\lambda, m}^\nu$ (resp. $ n_{\lambda}^\mu = m_{\lambda, (1^m)}^\mu$) is number of ways of expression $\nu$ (resp. $\mu$) as a $m$-expansion of $\lambda$ according to Pieri's rules: 
where the non-negative integer $m_{\lambda}^\nu = m_{\lambda, m}^\nu$ (resp. $ n_{\lambda}^\mu = m_{\lambda, (1^m)}^\mu$) is Pieri's number, i.e. the number of ways of obtaining $\nu$ (resp. $\mu$) from $\lambda$ by adding $m$ boxes, such that no two added boxes are in the same column (resp. row), see \cite{FulYoung, Wey}. In particular, the Young diagrams $\nu$ and $\mu$ appearing in the sums 
$\nu$ and $\mu$ satisfy:
 	$$|\nu| = |\lambda| + m, \quad  \lambda_i \le \nu_i \le \lambda_i+m  \quad \text{and} \quad \lambda^t_j \le \nu^t_j \le \lambda^t_j+1, \quad \forall 1 \le i \le \ell, 1 \le j \le d;$$ 
 	$$|\mu| = |\lambda| + m \quad \lambda_i \le \mu_i \le \lambda_i+1 \quad \text{and} \quad \lambda^t_j \le \nu^t_j \le \lambda^t_j+m, \quad \forall 1 \le i \le \ell, 1 \le j \le d.$$
 The following two equalities are known as {\em Cauchy's formulae}: for any $m \ge 0$,
 	$$\bigwedge^m(V \otimes W) = \bigoplus_{|\alpha| = m} \Sigma^\alpha V \otimes \Sigma^{\alpha^t} W, \qquad S^m(V \otimes W) =  \bigoplus_{|\alpha| = m} \Sigma^\alpha V \otimes \Sigma^{\alpha} W.$$
 
 \subsection{Borel--Bott--Weil theorem and Kapranov's collections}
Let $V$ be a $\kk$-vector space of dimension $n$, and $0 < k < n$ an integer. Let $\GG: = \Gr_k(V)$ be the Grassmannian of $k$-dimensional linear subspaces, see Ex. \ref{ex:Grass}, and let $\shU$ (resp. $\shQ$) be the universal subbundle (resp. quotient bundle) of $V$ of rank $k$ (resp. $n -k$). All irreducible homogeneous vector bundles on $\Gr_k(V)$ are of the form:
	$$\shE_{\lambda} = \shE_{(\alpha,\beta)} =  \Sigma^{\alpha} \shU^\vee \otimes \Sigma^{\beta} \shQ^\vee,$$
where $\alpha=(\alpha_1, \alpha_2, \ldots,\alpha_k)$ and $\beta = (\beta _{1}, \beta _{2}, \ldots, \beta _{n-k})$ are two non-increasing sequences of integers, and $\lambda= (\alpha,\beta)$ is the concatenation. Then the Borel--Bott--Weil (BBW) theorem for Grassmannian states that (see \cite{Kap88, FH, Wey} for references):
\begin{theorem}[Borel--Bott--Weil (BBW) theorem] \label{thm:BBW} If $\lambda= (\alpha,\beta)$ above is {\em singular}, 
i.e. $\alpha_i  -i = \beta_j  -k -j$ for some $i \in [1,k]$, $j \in [1,n-k]$, then
		$H^\bullet(\GG , \shE_{\lambda}) = 0.$
If the weight $\lambda= (\alpha,\beta)$ is {\em non-singular}, i.e. the entires of $\lambda + \rho$ are pairwise distinct, where $\rho = (n, n-1, \ldots, 1)$, 
let $w \in \SS_n$ be the unique permutation of entries such that $w \cdot (\lambda + \rho)$ is strictly decreasing, and let $\ell(w)$ be the length of $w$. Then 
	$H^\bullet(\GG , \shE_{\lambda})  = H^{\ell(w)}(\GG , \shE_{\lambda}) = \Sigma^{w\cdot(\lambda + \rho) - \rho} V^\vee.$
\end{theorem}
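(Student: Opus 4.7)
The plan is to prove the Borel--Bott--Weil statement via the classical geometric technique: reduce to the cohomology of line bundles on the complete flag variety, which in turn is computed by an iterated tower of $\PP^1$-bundles where only the cohomology of $\sO_{\PP^1}(d)$ enters.

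First I would reduce the computation to the full flag variety. Consider the relative flag $\mathcal{F} := \mathrm{Fl}(\shU) \times_{\GG} \mathrm{Fl}(\shQ^\vee)$, which is naturally isomorphic to the full flag variety $\mathrm{Fl}(V)$; let $\pi \colon \mathcal{F} \to \GG$ be the projection. Applying Borel--Weil fiberwise on each factor $\mathrm{Fl}(\shU)$ and $\mathrm{Fl}(\shQ^\vee)$ (both of whose fibers are complete flag varieties of type $A$, for which the dominant part of Borel--Weil is elementary), I would exhibit a line bundle $\shL_\lambda$ on $\mathcal{F}$, attached to the weight $\lambda = (\alpha,\beta)$, such that $\pi_* \shL_\lambda = \shE_\lambda$ with $R^i \pi_* \shL_\lambda = 0$ for $i > 0$. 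By the Leray spectral sequence this gives $H^\bullet(\GG, \shE_\lambda) = H^\bullet(\mathcal{F}, \shL_\lambda)$, reducing the problem to computing the cohomology of an explicit line bundle on $\mathcal{F}$.

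Next I would compute $H^\bullet(\mathcal{F}, \shL_\lambda)$ by realizing $\mathcal{F}$ as an iterated $\PP^1$-bundle over $\Spec \kk$ through the tower of forgetful maps between partial flag varieties differing by one step, and applying the Leray spectral sequence (which degenerates at $E_2$ for projective bundles in characteristic zero). At each stage the computation localizes to $H^\bullet(\PP^1, \sO(d))$, which has three regimes: (i) $d \geq 0$, non-vanishing in degree $0$; (ii) $d \leq -2$, non-vanishing in degree $1$ after twisting (Serre duality); (iii) $d = -1$, zero. The integer $d$ at each stage equals $(\lambda + \rho)_i - (\lambda + \rho)_{i+1} - 1$ for the relevant adjacent indices (with $\rho = (n, n-1, \ldots, 1)$). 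In the singular case, i.e. $(\lambda + \rho)_i = (\lambda + \rho)_j$ for some $i \ne j$, at least one intermediate step produces $d = -1$, which kills the entire cohomology. In the non-singular case, the successive Serre-duality swaps correspond exactly to simple transpositions in $\SS_n$ acting on $\lambda + \rho$, and iterating until the entries are strictly decreasing gives the unique element $w \in \SS_n$ with $w(\lambda + \rho)$ strictly decreasing; the total cohomological degree shift accumulated equals $\ell(w)$, the number of swaps, and the surviving module is the irreducible $\GL(V)$-representation $\Sigma^{w\cdot(\lambda+\rho)-\rho} V^\vee$.

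The main obstacle is the careful bookkeeping of the weight transformations and cohomological shifts through the iterated reduction --- verifying that the combinatorics of simple reflections on $\PP^1$-fibers assembles exactly into the dot action $w \cdot \lambda = w(\lambda+\rho) - \rho$ with total homological shift $\ell(w)$, and that the triangle of three $\PP^1$-cases corresponds precisely to the regular/singular dichotomy. This is standard in representation theory but combinatorially delicate; once set up correctly it produces the stated formula mechanically.
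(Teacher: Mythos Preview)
Your proof sketch follows the standard geometric approach to the Borel--Bott--Weil theorem (reduction to the full flag variety via Leray, then the Demazure-style induction through the $\PP^1$-tower), and the outline is correct. However, the paper does not actually prove this theorem: it is stated as a classical result with the comment ``see \cite{Kap88, FH, Wey} for references,'' and is then used as a black box throughout the computations in \S\ref{sec:local}. So there is nothing to compare against --- the paper treats BBW as input, not as something to be established.

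Your sketch is essentially the argument one finds in Weyman's book or in Demazure's short proof, and would be an appropriate way to justify the statement if a proof were required. The only caveat worth noting is that the bookkeeping you flag as the ``main obstacle'' is indeed the entire content of the argument: one must check that the simple-reflection step on each $\PP^1$ corresponds exactly to the dotted action $s_i \bullet \lambda = s_i(\lambda+\rho)-\rho$, and that independence of the reduced decomposition of $w$ holds (so that the answer depends only on $w$ and not on the chosen sequence of swaps). These are handled carefully in the cited references.
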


The following lemma is direct consequence of BBW theorem:

\begin{lemma}[{\cite{Kap85}}]
	\begin{enumerate}[leftmargin=*]
	\item If $\alpha_1 \ge \alpha_2 \ge \ldots \ge \alpha_k \ge -(n-k)$,  then $\shE_{(\alpha,0)} = \Sigma^{\alpha} \shU^\vee$ has no higher cohomologies, and it has zeroth cohomology if and only if $\alpha_i \ge 0$ for all $i$. In this case $(\alpha,0)$ is strictly decreasing, and 
	$$H^\bullet(\GG, \Sigma^{\alpha} \shU^\vee ) = H^0 (\GG, \Sigma^{\alpha} \shU^\vee) =  \Sigma^{\alpha} V^\vee;$$
	\item If $\beta_1 \ge \beta_2 \ge \ldots \ge \beta_{n-k} \ge -k$, then $E_{(0,\beta^{-1})} = \Sigma^{\beta} \shQ$ has no higher cohomologies, and it has zeroth cohomology if and only if $\beta_i \ge 0$ for all $i$. In this case $(0,\beta^{-1}) = (0, \ldots 0, -\beta_{n-k}, \ldots, - \beta_1)$ is strictly decreasing, and
	$$H^\bullet(\GG, \Sigma^{\beta} \shQ) = H^0 (\GG, \Sigma^{\beta}  \shQ) =  \Sigma^{(0,\beta^{-1})} V^\vee = \Sigma^{\beta} V.$$	 
	\end{enumerate}
\end{lemma}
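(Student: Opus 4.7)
The plan is to deduce both statements directly from the Borel--Bott--Weil theorem (Thm.~\ref{thm:BBW}) by analyzing the shifted weight $\lambda + \rho$ in each case, where $\rho = (n, n-1, \ldots, 1)$.

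For part (1), I would take $\lambda = (\alpha, 0) = (\alpha_1, \ldots, \alpha_k, 0, \ldots, 0)$, so that $\shE_\lambda = \Sigma^\alpha \shU^\vee \otimes \Sigma^0 \shQ^\vee = \Sigma^\alpha \shU^\vee$. The entries of $\lambda + \rho$ are $\alpha_i + n - i + 1$ for $i = 1, \ldots, k$ (strictly decreasing in $i$ since $\alpha$ is non-increasing) together with $n - k, n-k-1, \ldots, 1$ for the last $n - k$ slots. The hypothesis $\alpha_k \ge -(n-k)$ forces every entry of $\lambda + \rho$ to be $\ge 1$. I would then split into two cases. First, if $\alpha_k \ge 0$ (equivalently, all $\alpha_i \ge 0$), then $\alpha_i + n - i + 1 \ge n - k + 1$ for every $i \le k$, so $\lambda + \rho$ is already strictly decreasing and non-singular; BBW with $w = e$ and $\ell(w) = 0$ gives $H^\bullet(\GG, \shE_\lambda) = H^0(\GG, \shE_\lambda) = \Sigma^{(\alpha,0)} V^\vee = \Sigma^\alpha V^\vee$.

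Secondly, if $\alpha_k < 0$, then by hypothesis $m := -\alpha_k \in \{1, 2, \ldots, n-k\}$, and I would check that $(\alpha,0) + \rho$ is singular by producing a collision: the $k$-th entry equals $n - k + 1 - m$, and the $(k+m)$-th entry equals $n - (k+m) + 1 = n - k - m + 1$, so they coincide. Hence BBW gives $H^\bullet(\GG, \shE_\lambda) = 0$. Since $\alpha$ is non-increasing, "$\alpha_k \ge 0$" is equivalent to "$\alpha_i \ge 0$ for all $i$", which yields the stated criterion for non-vanishing of $H^0$.

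For part (2), I would use the duality $\Sigma^{(\mu_1,\ldots,\mu_\ell)} W^\vee \simeq \Sigma^{(-\mu_\ell, \ldots, -\mu_1)} W$ (available in characteristic zero, as recalled above) to rewrite $\Sigma^{\beta^{-1}} \shQ^\vee = \Sigma^\beta \shQ$, so that $\shE_{(0,\beta^{-1})} = \Sigma^\beta \shQ$. Then I would run the same BBW analysis with $\lambda = (0, \beta^{-1}) = (0, \ldots, 0, -\beta_{n-k}, \ldots, -\beta_1)$: the hypothesis $\beta_{n-k} \ge -k$ now guarantees all entries of $\lambda + \rho$ are positive, the case $\beta_{n-k} \ge 0$ (i.e.\ all $\beta_j \ge 0$) produces a strictly decreasing $\lambda + \rho$ giving $H^\bullet = H^0 = \Sigma^{(0,\beta^{-1})} V^\vee = \Sigma^\beta V$, and the case $\beta_{n-k} < 0$ produces an explicit collision in $\lambda + \rho$ forcing singularity and thus $H^\bullet = 0$.

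This is essentially a bookkeeping argument once BBW is in hand, so I do not anticipate a genuine obstacle; the one mildly delicate point is verifying that the hypothesis $\alpha_k \ge -(n-k)$ (resp.\ $\beta_{n-k} \ge -k$) is exactly sharp enough to force \emph{either} non-singularity with $\ell(w) = 0$ \emph{or} outright singularity, thereby ruling out higher cohomology uniformly. The collision computations above are designed to verify precisely this dichotomy.
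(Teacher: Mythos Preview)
Your proposal is correct and is exactly the argument the paper intends: the paper states this lemma as ``a direct consequence of BBW theorem'' without giving any details, and your case analysis on $\lambda+\rho$ (strictly decreasing when $\alpha_k\ge 0$, an explicit collision at positions $k$ and $k+m$ when $-(n-k)\le\alpha_k<0$) is precisely the bookkeeping that verifies this. There is nothing to add.
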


\begin{lemma}[\cite{Kap88}] \label{lem:Kap:resolution} For any $\shF^\bullet \in \Db(\GG)$, there is a complex of vector bundles $\shV^\bullet$, quasi-isomorphic to $\shF^\bullet$, whose $p$-th term is:
	$$\shV^p = \bigoplus_{i-j=p} ~ \bigoplus_{\substack{\alpha \in B_{k,n-k}, \\ |\alpha| = j}} \mathbf{H}^i(\GG, \shF^\bullet \otimes \Sigma^{\alpha^t} \shQ^\vee) \otimes \Sigma^\alpha \shU.$$
In particular, if $\shF^\bullet = \shF$ is a coherent sheaf, then the following is a resolution of $\shF$:
	$$\shV^\bullet = \big\{ \cdots \to \bigoplus_{i \ge 0} ~\bigoplus_{\substack{\alpha \in B_{k,n-k}, \\ |\alpha| = i+p}} H^i(\GG, \shF \otimes\Sigma^{\alpha^t} \shQ^\vee) \otimes  \Sigma^\alpha \shU  \to \cdots  \big\}_{p \in \ZZ}.$$
\end{lemma}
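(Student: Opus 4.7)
The plan is to realize $\shV^\bullet$ as the image of the Koszul resolution of the diagonal under the Fourier--Mukai transform with kernel $\sO_\Delta$. Let $p_1, p_2 \colon \GG \times \GG \to \GG$ denote the two projections, and consider on $\GG \times \GG$ the tautological composition
$$\sigma \colon p_1^*\shU \hookrightarrow V \otimes \sO_{\GG\times\GG} \twoheadrightarrow p_2^*\shQ,$$
viewed as a section of the rank-$k(n-k)$ bundle $\shE = p_1^*\shU^\vee \boxtimes p_2^*\shQ$. Its zero locus is exactly the diagonal $\Delta$: at a point $(x,y)$, $\sigma_{(x,y)} \colon U_x \hookrightarrow V \twoheadrightarrow V/U_y$ vanishes iff $U_x \subseteq U_y$, iff $U_x = U_y$ by dimension count, iff $x = y$. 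Since $\rank \shE = k(n-k) = \dim \GG = \codim_{\GG\times\GG} \Delta$, the section $\sigma$ is Koszul-regular (cf.\ Ex.~\ref{eg:lci}\eqref{eg:lci-1}), and by Ex.~\ref{eg:conv:lci} its Koszul complex $\shK^\bullet = \wedge^\bullet \shE^\vee$ with differential $\lrcorner\,\sigma$ yields a locally free resolution of $i_{\Delta *}\sO_\Delta$, with $\wedge^j(p_1^*\shU \boxtimes p_2^*\shQ^\vee)$ placed in cohomological degree $-j$.

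Next, Cauchy's formula (valid in characteristic zero) decomposes each Koszul term:
$$\wedge^j(p_1^*\shU \boxtimes p_2^*\shQ^\vee) \cong \bigoplus_{|\alpha|=j} \Sigma^\alpha \shU \boxtimes \Sigma^{\alpha^t} \shQ^\vee,$$
with only $\alpha \in B_{k,n-k}$ surviving on account of the ranks of $\shU$ and $\shQ^\vee$. For any $\shF^\bullet \in \Db(\GG)$, the identity functor is the Fourier--Mukai transform with kernel $\sO_\Delta$, so $\shF^\bullet \cong p_{1*}(p_2^*\shF^\bullet \otimes i_{\Delta *}\sO_\Delta)$; substituting the Koszul resolution and invoking the projection formula, the contribution from the $(-j)$-th Koszul term becomes
$$\bigoplus_{\substack{\alpha \in B_{k,n-k},\\ |\alpha|=j}} \Sigma^\alpha \shU \otimes_\kk \bR\Gamma(\GG, \shF^\bullet \otimes \Sigma^{\alpha^t}\shQ^\vee) \;\in\; \Db(\GG).$$

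Finally, since $\kk$ is a field, every complex of $\kk$-vector spaces is formal: $C^\bullet \cong \bigoplus_i \mathbf{H}^i(C^\bullet)[-i]$. Replacing each $\bR\Gamma$ factor by the sum of its cohomologies (the $\mathbf{H}^i$-summand contributes to cohomological position $-j+i$) and passing to the total complex of the resulting bicomplex converts everything into a single complex of vector bundles on $\GG$, quasi-isomorphic to $\shF^\bullet$, whose $p$-th term is exactly
$$\shV^p = \bigoplus_{i-j=p} \bigoplus_{\substack{\alpha \in B_{k,n-k},\\ |\alpha|=j}} \mathbf{H}^i(\GG, \shF^\bullet \otimes \Sigma^{\alpha^t}\shQ^\vee) \otimes \Sigma^\alpha\shU.$$
Boundedness of $\shV^\bullet$ follows from $\shF^\bullet \in \Db$, boundedness of $\bR\Gamma$ on $\GG$, and $|\alpha| \le k(n-k)$.

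The main technical hurdle is the last step: formality identifies each individual $\bR\Gamma$ term with its cohomology in $\Db(\Vect_\kk)$, but installing mutually compatible differentials on the total complex requires chain-level bookkeeping. Concretely, one represents the derived pushforward by applying $p_{1*}$ to a bounded-below injective resolution of $p_2^*\shF^\bullet \otimes \shK^\bullet$, and then uses a (noncanonical) splitting of each vertical cohomology column, available because we work over a field, to repackage the outcome as a genuine complex of vector bundles. An alternative derivation proceeds from the Kapranov full exceptional collection of Thm.~\ref{thm:Grass.bundle}: decomposing $\shF^\bullet$ with respect to the SOD with components spanned by the left dual $\{\Sigma^{\alpha^t}\shU[|\alpha|]\}$ and invoking the dual basis relation produces the same formula, but the Koszul/diagonal route above is more transparent for recording the explicit term-by-term structure.
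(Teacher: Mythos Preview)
The paper does not supply its own proof of this lemma; it is stated as a citation from \cite{Kap88}. Your argument is correct and is precisely Kapranov's original one: Koszul resolution of the diagonal, Cauchy's formula for $\wedge^j(\shU \boxtimes \shQ^\vee)$, and then collapsing the resulting bicomplex into a single complex of vector bundles.

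One minor remark on your ``main technical hurdle'': you resolve the chain-level bookkeeping by invoking formality of complexes of $\kk$-vector spaces. The paper, following Kapranov, handles the same point slightly differently via Lem.~\ref{lem:Hot} (also from \cite{Kap88}): since the bundles $\Sigma^\alpha\shU$ have no higher $\Ext$'s among themselves (they belong to a tilting collection), every morphism in $\Db$ between complexes built from them is represented by a genuine chain map, so the Postnikov tower coming from the stupid truncation of the Koszul complex can be realized by iterated mapping cones of honest maps of complexes. Your formality argument and this tilting argument are interchangeable here; the latter is what the paper actually uses later in the proof of the key Lem.~\ref{lem:key}, so it is worth being aware of both.
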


Now we summarise the main results of \cite{Kap85, Kap88} (in characteristic zero) as follows: 

\begin{theorem}[Kapranov \cite{Kap85, Kap88}] \label{thm:Kap} $\{\Sigma^{\lambda} \shQ \}_{\lambda \in B_{n-k,k}^{\preceq}}$ is a full strong exceptional collection of vector bundles of $\GG$ over $\kk$, and its left dual exceptional collection over $\kk$ is given by the full strong exceptional collection 
 	$\big\{ \Sigma^{\lambda^t} \shU [|\lambda|]\big\}_{\lambda \in B_{n-k,k}^{\succeq}}$.
In particular, the following holds:
\begin{enumerate}[leftmargin=*]
	\item \label{thm:Kap-1} 
	For any $\lambda, \mu \in B_{k,n-k}$, $\alpha, \beta \in B_{n-k,k}$:
		\begin{align*}
		& \Hom^*(\Sigma^\lambda \shU, \Sigma^\mu  \shU) =
		 \begin{cases} 
		 	0, & \text{if} \quad \lambda \nsucceq \mu; \\
			\Hom^0(\Sigma^\lambda \shU, \Sigma^\mu \shU),  & \text{if}\quad \lambda \succeq \mu. \end{cases} \\
		&  \Hom^*(\Sigma^{\alpha} \shQ, \Sigma^{\beta} \shQ) =
		 \begin{cases} 
		 	0, & \text{if} \quad \alpha \npreceq \beta; \\
			\Hom^0(\Sigma^{\alpha} \shQ, \Sigma^{\beta} \shQ),  & \text{if} \quad  \alpha \preceq \beta. \end{cases}
			\end{align*}
	\item  \label{thm:Kap-2} For any $\lambda, \mu \in B_{k,n-k}$, the following holds:
		\begin{align*}
		 \Hom^*(\Sigma^{\lambda^t} \shQ, \Sigma^\mu  \shU) =
		 \begin{cases} 
		 	0, & \text{if} \quad \lambda \ne \mu; \\
			\Hom^{|\lambda|}(\Sigma^{\lambda^t} \shQ, \Sigma^\lambda \shU) = \kk & \text{if} \quad \lambda = \mu. \end{cases}
			\end{align*}
\end{enumerate}
\end{theorem}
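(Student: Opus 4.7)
My plan is to prove the three assertions (exceptionality, fullness, and the identification of the left dual collection) in turn, using the Borel--Bott--Weil theorem (Thm.~\ref{thm:BBW}) for the Hom computations and a Koszul resolution of the diagonal for fullness.

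\emph{Step 1 (Hom computations via BBW).} For items \eqref{thm:Kap-1} and \eqref{thm:Kap-2}, I would compute $\Hom^*(\Sigma^\lambda \shU, \Sigma^\mu \shU)$, $\Hom^*(\Sigma^\alpha \shQ, \Sigma^\beta \shQ)$ and $\Hom^*(\Sigma^{\lambda^t}\shQ, \Sigma^\mu \shU)$ as global cohomology of the corresponding homogeneous tensor bundles on $\GG$. Using the duality $(\Sigma^\nu W)^\vee \simeq \Sigma^{-\nu^{\rm rev}} W$ and the Littlewood--Richardson rule, each such tensor bundle decomposes into a direct sum of irreducible homogeneous bundles $\shE_{(\alpha,\beta)}$. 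Then Thm.~\ref{thm:BBW} directly reads off the cohomology: the key numerical observation to verify is that the weights $(\alpha,\beta)$ that appear are always non-singular in the BBW sense, and the Weyl length $\ell(w)$ that reorders $(\alpha,\beta)+\rho$ into a strictly decreasing sequence is exactly $0$ (in case \eqref{thm:Kap-1}, when $\lambda \succeq \mu$ resp.\ $\alpha \preceq \beta$) and exactly $|\lambda|$ (in case \eqref{thm:Kap-2}, when $\lambda = \mu$), and otherwise the output weight $w \cdot (\lambda+\rho) - \rho$ fails to be dominant for $\GL(V)$, forcing vanishing. Specializing to $\lambda=\mu$ in \eqref{thm:Kap-1} yields exceptionality of each $\Sigma^\lambda \shQ$ (and of each $\Sigma^\lambda \shU$), and the semiorthogonality ordered by $B_{n-k,k}^\preceq$.

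\emph{Step 2 (Fullness via the Koszul resolution of the diagonal).} Let $p_1, p_2 \colon \GG \times \GG \to \GG$ denote the projections. The natural composition $p_1^* \shU \hookrightarrow V \otimes \sO_{\GG\times\GG} \twoheadrightarrow p_2^* \shQ$ defines a section $s$ of the vector bundle $p_1^* \shU^\vee \boxtimes p_2^* \shQ$ of rank $k(n-k) = \dim \GG$, whose zero locus is precisely the diagonal $\Delta \subset \GG \times \GG$, and one verifies that $s$ is a regular section. Hence the Koszul complex of $s$ (Ex.~\ref{eg:conv:lci}) gives a locally free resolution of $\sO_\Delta$; applying Cauchy's formula
\[
\bigwedge^p \bigl(p_1^* \shU \otimes p_2^* \shQ^\vee\bigr) = \bigoplus_{|\alpha|=p,\, \alpha \in B_{n-k,k}} \Sigma^{\alpha^t} \shU \boxtimes (\Sigma^\alpha \shQ)^\vee,
\]
decomposes each term of the resolution into external tensor products of $\Sigma^{\alpha^t}\shU$ with duals of $\Sigma^\alpha \shQ$, indexed by $\alpha \in B_{n-k,k}$. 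For any $F \in \Db(\GG)$, the identity $F \simeq p_{2\,*}(p_1^* F \otimes \sO_\Delta)$ combined with this resolution, projection formula, and the Postnikov system of Ex.~\ref{eg:conv:lci} expresses $F$ as the convolution of a bounded complex whose terms are $\bR\Gamma(\GG, F \otimes \Sigma^{\alpha^t}\shU) \otimes \Sigma^\alpha \shQ$; in particular this is exactly Lem.~\ref{lem:Kap:resolution}. This shows that $\{\Sigma^\alpha \shQ\}_{\alpha \in B_{n-k,k}}$ classically generates $\Db(\GG)$, and hence $\Perf(\GG)$, completing the proof that the collection is full.

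\emph{Step 3 (Identification of the left dual).} Given Steps 1 and 2, both $\{\Sigma^\lambda \shQ\}_{\lambda \in B_{n-k,k}^\preceq}$ and $\{\Sigma^{\lambda^t} \shU[|\lambda|]\}_{\lambda \in B_{n-k,k}^\succeq}$ are semiorthogonal sequences (in the strongly compatible sense with respect to the partial order $\preceq$, cf.\ Rmk.~\ref{rmk:poset}) of the same cardinality $|B_{n-k,k}|$, and both generate $\Perf(\GG)$. The Hom computation \eqref{thm:Kap-2} yields the ``dual basis'' relation
\[
\shom_\kk(\Sigma^{\lambda^t}\shQ[-|\lambda|],\, \Sigma^\mu\shU) = \delta_{\lambda,\mu}\, \kk,
\]
which characterizes the left dual collection uniquely. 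Since any total order refining $\preceq$ (resp.\ $\succeq$) yields a full exceptional collection by Rmk.~\ref{rmk:poset}, the uniqueness of dual bases (cf.\ the lemma after Thm.~\ref{thm:Grass.bundle}) identifies $\{\Sigma^{\lambda^t}\shU[|\lambda|]\}_{\lambda \in B_{n-k,k}^\succeq}$ as the left dual of $\{\Sigma^\lambda \shQ\}_{\lambda \in B_{n-k,k}^\preceq}$.

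The main obstacle is Step 2: producing a regular section whose zero locus is the diagonal and verifying the BBW/Littlewood--Richardson bookkeeping in Step 1 in all cases (in particular, checking the strict inequalities and Weyl lengths for the weights appearing after Littlewood--Richardson expansion). Step 3 is essentially formal once Steps 1 and 2 are in hand.
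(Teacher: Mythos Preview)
The paper does not give its own proof of this theorem; it is stated as a citation of Kapranov's results \cite{Kap85, Kap88} and used as a black box. Your proposal sketches exactly the classical Kapranov argument from those references: the Hom computations via Borel--Bott--Weil (with Littlewood--Richardson bookkeeping), fullness via the Koszul resolution of the diagonal arising from the regular section of $p_1^*\shU^\vee \boxtimes p_2^*\shQ$ together with Cauchy's formula, and the identification of the dual collection by the dual-basis relations. This is correct and is precisely the method of the original sources; the paper itself adds nothing beyond the citation.
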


Notice the relationship of \eqref{thm:Kap-1} is stronger than its characteristic-free version \S \ref{sec:Grass_bundles}.

\begin{remark}[Dual version] \label{rmk:dualGr} Under the canonical identification $\Gr_k(V) \simeq \Gr_{n-k}(V^\vee)$, one has $\shU \simeq \shQ'^\vee$ and $\shQ \sim \shU'^\vee$, where $\shU'$ and $\shQ'$ are the respective universal subbundle and quotient bundle for $\Gr_{n-k}(V^\vee)$. {\em Hence every statement of this subsection has a corresponding dual statement.} In particular, $\{\Sigma^{\alpha} \shU^\vee \}_{\lambda \in B_{k,n-k}^{\preceq}}$ is a also full strong exceptional collection for $\GG$ over $\kk$, whose left dual is the full strong exceptional collection
 	$\big\{ \Sigma^{\lambda^t} \shQ^\vee [|\lambda|]\big\}_{\lambda \in B_{k,n-k}^{\succeq}}$.
\end{remark}

\subsection{Mutations on Grassmannians} In this subsection we perform mutations for Kapranov's collections. We fix the box $B = B_{n-k, k}$, and for any $\gamma \in B$, denote by $B_{\prec \gamma}$ the set $\{ \alpha \in B \mid \alpha  \prec \gamma \}$ equipped with the partial order $\preceq$. The partial ordered sets $B_{\preceq \gamma}$, $B_{\nprec \gamma}$, $B_{\npreceq \gamma}$ and $B_{\succeq \gamma}$, $B_{\nsucc \gamma}$, $B_{\nsucceq \gamma}$ are similarly defined, with natural partial order $\preceq$ inherited from that of $B$. We use $B_{*}^{op}$ to denote the same set $B_{*}$ with the opposite partial order $\succeq$. The following are immediate from the properties of partial order sets:
	\begin{itemize}
		\item For any $\gamma \in B$, $B = B_{\preceq \gamma} \sqcup B_{\npreceq \gamma}$, and $\forall \alpha \in B_{\preceq \gamma},\beta \in  B_{\npreceq \gamma}$: $\beta \npreceq \alpha$;
		\item For $\beta, \gamma \in B$: $\beta \preceq \gamma \iff B_{\preceq \beta} \subseteq B_{\preceq \gamma} \iff B_{\npreceq \beta} \supseteq B_{\npreceq \gamma}$;
		\item For $\beta, \gamma \in B$: $\beta \nsucceq \gamma \iff B_{\npreceq \beta} \cap B_{\preceq \gamma} \neq \emptyset$.
	\end{itemize}
Similar statements hold if we replace $\preceq$ by $\prec$. 
For any subset $S\subset B$, we denote $\langle \Sigma^{ \alpha} \shQ \rangle_{\alpha \in S}$ the subcategory of $\Db(\GG)$ generated by the exceptional collection $\{\Sigma^{ \alpha} \shQ\}_{\alpha \in S}$ with order strongly compatible with the partial order of $S$ (Rmk. \ref{rmk:poset}); Similarly for other cases. 

The next lemma summarises the mutation results on Kapranov's collections that we will use in this paper; see also \cite{Kap88, BLV,Pi20} for related results.

\begin{lemma}[Mutation for Grassmannians]\label{lem:mutation}
	\begin{enumerate}[leftmargin=*]
		\item \label{lem:mutation-1}
		For any $\gamma \in B=B_{n-k,k}$, we have:	
			\begin{align*}
				& \big \langle \Sigma^{ \alpha} \shQ  \big\rangle_{\alpha \in B_{\prec (\preceq, \nsucc, \nsucceq)  \, \gamma}}  =  \big \langle \Sigma^{\alpha^t} \shU \big \rangle_{\alpha \in B_{\prec (\preceq, \nsucc, \nsucceq) \, \gamma}^{op}} ; \\
				& \big \langle \Sigma^{ \alpha} \shQ \rangle_{\alpha \in B_{\succ  (\succeq, \nprec, \npreceq) \, \gamma}}  = \big \langle \Sigma^{\alpha^t} \shU \otimes \omega_{\GG}^{-1} \big \rangle _{\alpha \in B_{\succ  (\succeq, \nprec, \npreceq) \, \gamma}^{op}}; \\
				&  \big \langle \Sigma^{\alpha^t} \shU \big \rangle _{\alpha \in B_{\succ  (\succeq, \nprec, \npreceq) \, \gamma}^{op}} =
				\big \langle \Sigma^{ \alpha} \shQ \otimes \omega_{\GG} \big \rangle_{\alpha \in B_{\succ  (\succeq, \nprec, \npreceq) \, \gamma}};
			\end{align*}		
		\item \label{lem:mutation-2}
		For any $\gamma \in B=B_{n-k,k}$, we have the following mutation results:
			\begin{align*}
				 \LL_{\big \langle \Sigma^{\alpha} \shQ \big \rangle_{\alpha \in B_{\prec (\nsucceq)  \, \gamma} } } \colon \Sigma^{ \gamma} \shQ \mapsto \Sigma^{\gamma^t} \shU [|\gamma|] \quad \text{and} \quad \RR_{\big \langle \Sigma^{\alpha} \shQ \big \rangle_{\alpha \in B_{\succ (\npreceq)  \, \gamma} } }  \colon \Sigma^{ \gamma} \shQ \mapsto \Sigma^{\gamma^t} \shU [|\gamma|] \otimes \omega_{\GG}^{-1}; \\
				\RR_ {\big \langle \Sigma^{\alpha^t} \shU \big \rangle_{\alpha \in B_{\prec (\nsucceq) \, \gamma}^{op}}}  \colon \Sigma^{\gamma^t} \shU \mapsto  \Sigma^{\gamma} \shQ [-|\gamma|] \quad \text{and} \quad \RR_ {\big \langle \Sigma^{\alpha^t} \shU \big \rangle_{\alpha \in B_{\succ (\npreceq) \, \gamma}^{op}}}  \colon \Sigma^{\gamma^t} \shU \mapsto  \Sigma^{\gamma} \shQ [-|\gamma|] \otimes \omega_{\GG}.
			\end{align*}
			\end{enumerate}
\end{lemma}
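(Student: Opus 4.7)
My plan is to deduce both parts of Lem.~\ref{lem:mutation} from Kapranov's theorem (Thm.~\ref{thm:Kap}) together with the mutation and Serre-duality formalism of \S\ref{sec:linear:cat}--\S\ref{sec:Serre}. The decisive input is the \emph{partial-order refinement} in Thm.~\ref{thm:Kap}~\eqref{thm:Kap-1}: $\Hom^\bullet(\Sigma^\alpha\shQ,\Sigma^\beta\shQ)$ vanishes unless $\alpha\preceq\beta$ (and likewise on the $\shU$-side), so any two incomparable elements of $B$ index mutually orthogonal objects. Consequently, for every subset $S\subseteq B$ the subcategory $\langle\Sigma^\alpha\shQ\rangle_{\alpha\in S}$ is well-defined independently of the linear extension of $\preceq|_S$ used to view it as an ordered exceptional collection, is admissible in $\Db(\GG)$, and decomposes as an orthogonal sum under any partition of $S$ into an order-ideal and a set of elements incomparable to it. The same remarks apply to the $\shU$-side.

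For Part~\eqref{lem:mutation-1} I would first treat the order-ideal cases $S=B_{\prec\gamma}$ and $S=B_{\preceq\gamma}$: restricting Kapranov's collection $\{\Sigma^\alpha\shQ\}_{\alpha\in B^\preceq}$ and its left dual $\{\Sigma^{\alpha^t}\shU[|\alpha|]\}_{\alpha\in B^\succeq}$ to $S$ gives two exceptional collections generating the same admissible subcategory by Lem.~\ref{lem:dualsod}, and the shifts $[|\alpha|]$ are absorbed in taking generated subcategory. For $S=B_{\nsucc\gamma}$ or $B_{\nsucceq\gamma}$, I would write $S=S^{\mathrm{id}}\sqcup S^{\mathrm{inc}}$ with $S^{\mathrm{id}}\in\{B_{\preceq\gamma},B_{\prec\gamma}\}$ and $S^{\mathrm{inc}}$ the subset of elements incomparable with $\gamma$; the orthogonality remark above lets one dualize the two pieces independently, reducing each to the order-ideal case. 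The upward variants in the last two displayed equalities of \eqref{lem:mutation-1} are then obtained by applying the Serre autoequivalence $\SS_\GG=(-)\otimes\omega_\GG$ of Prop.~\ref{prop:Serre}, which by Lem.~\ref{lem:Serre}~\eqref{lem:Serre-1} sends left duals to right duals; this produces exactly the $\omega_\GG^{\pm 1}$ twist in the formulas, noting that in the paper's convention $\omega_\GG$ already absorbs the shift $[\dim\GG]$.

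For Part~\eqref{lem:mutation-2}, the identity $\LL_{\langle\Sigma^\alpha\shQ\rangle_{\alpha\in B_{\prec\gamma}}}(\Sigma^\gamma\shQ)=\Sigma^{\gamma^t}\shU[|\gamma|]$ is precisely the construction of the left-dual basis element of $\Sigma^\gamma\shQ$ in Kapranov's collection, a direct consequence of Thm.~\ref{thm:Kap}~\eqref{thm:Kap-2} and Lem.~\ref{lem:dualsod}; enlarging $B_{\prec\gamma}$ to $B_{\nsucceq\gamma}$ is harmless, because for each $\alpha$ incomparable with $\gamma$ the elementary mutation $\LL_{\Sigma^\alpha\shQ}$ acts as the identity on $\Sigma^\gamma\shQ$ (by Thm.~\ref{thm:Kap}~\eqref{thm:Kap-1} the Hom governing the mutation triangle of Lem.~\ref{lem:mut:relexc}~\eqref{lem:mut:relexc-1} vanishes). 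The right-mutation formula is obtained analogously, reducing to the right-dual basis element in Kapranov's collection; applying Lem.~\ref{lem:Serre}~\eqref{lem:Serre-3} (Serre duality intertwines left and right mutations up to the Serre autoequivalence) converts the left-dual answer $\Sigma^{\gamma^t}\shU[|\gamma|]$ into the right-dual answer $\Sigma^{\gamma^t}\shU[|\gamma|]\otimes\omega_\GG^{-1}$. The two formulas for mutations of $\Sigma^{\gamma^t}\shU$ follow by the same arguments applied to the dual Kapranov collection of Rmk.~\ref{rmk:dualGr}, or equivalently by invoking the braid identities $\LL\circ\RR=\id=\RR\circ\LL$ of Lem.~\ref{lem:mut}.

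The main obstacle, and the point requiring real care rather than formalism, is bookkeeping on two fronts: (i) verifying that each of the non-ideal subsets $B_{\nsucc\gamma},B_{\nsucceq\gamma},B_{\nprec\gamma},B_{\npreceq\gamma}$ generates the expected subcategory despite not being an order-ideal in $\preceq$, which genuinely relies on the full partial-order orthogonality of Thm.~\ref{thm:Kap}~\eqref{thm:Kap-1} and not merely on the semi-orthogonality for some chosen total extension; and (ii) tracking the Serre-duality shift carefully, so that the right-mutation formulas involve a single twist by $\omega_\GG^{\pm 1}$ rather than separate line-bundle and cohomological-shift contributions, the cancellation being exactly due to the convention $\omega_\GG=\omega_\GG^{\mathrm{line}}[\dim\GG]$ fixed in Thm.~\ref{thm:Grass.bundle}.
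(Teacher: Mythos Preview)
Your overall strategy---deduce everything from Kapranov's theorem plus the dual-collection and Serre-duality formalism, avoiding the explicit resolution of Lem.~\ref{lem:Kap:resolution}---is sound and does give a valid alternative to the paper's proof. The paper instead applies Lem.~\ref{lem:Kap:resolution} to $\shF=\Sigma^{\gamma^t}\shQ$ (and to its Serre-dual twist) to obtain an explicit finite resolution of $\Sigma^{\gamma^t}\shQ$ by the bundles $\Sigma^\alpha\shU$ with $\alpha\preceq\gamma$; the leftmost term is $\Sigma^\gamma\shU$, from which the mutation formula in \eqref{lem:mutation-2} is read off directly, and \eqref{lem:mutation-1} then follows by induction. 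Your route is more formal and requires less computation; the paper's route is more constructive and yields the actual resolution, which is reused later (e.g.\ in the proof of the key Lem.~\ref{lem:key}).

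However, your treatment of the ``non-ideal'' subsets contains a genuine error. You claim that $S=B_{\nsucc\gamma}$ (or $B_{\nsucceq\gamma}$) decomposes as an \emph{orthogonal} sum of $B_{\prec\gamma}$ and the set of elements incomparable to $\gamma$. This is false: take $\gamma=(2)$, $\alpha=\emptyset\in B_{\prec\gamma}$, $\beta=(1,1)$ incomparable to $\gamma$; then $\alpha\prec\beta$, so $\Hom(\Sigma^\alpha\shQ,\Sigma^\beta\shQ)\ne 0$ by Thm.~\ref{thm:Kap}\eqref{thm:Kap-1}, and the two pieces are not mutually orthogonal. Fortunately the decomposition is unnecessary: both $B_{\nsucc\gamma}$ and $B_{\nsucceq\gamma}$ are themselves down-sets in $(B,\preceq)$ (if $\alpha\nsucceq\gamma$ and $\beta\preceq\alpha$ then $\beta\nsucceq\gamma$), so your order-ideal argument via Lem.~\ref{lem:dualsod} already applies to them directly. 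Likewise, for part~\eqref{lem:mutation-2} the passage from $B_{\prec\gamma}$ to $B_{\nsucceq\gamma}$ is cleanest not via ``incomparables act trivially on $\Sigma^\gamma\shQ$'' (which as stated does not control what happens after intermediate mutations), but by observing that $B_{\nsucc\gamma}$ is a down-set with $\gamma$ as its unique maximal element, so one may choose a linear extension of $\preceq$ in which $B_{\nsucceq\gamma}$ is exactly the initial segment preceding $\gamma$; then $\LL_{\langle\Sigma^\alpha\shQ\rangle_{\alpha\in B_{\nsucceq\gamma}}}(\Sigma^\gamma\shQ)$ is by definition the left-dual element $\Sigma^{\gamma^t}\shU[|\gamma|]$.
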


{\em Notation:}  We use the notation $B_{\prec (\preceq) \gamma}$ to indicate the results holds in both the two situations $B_{\prec \gamma}$ and $B_{\preceq \gamma}$; similarly for other cases. Since the degree shift does not affect the subcategory which the objects generate, we will sometimes omit the degree shifts in the expressions of the subcategories for simplicity of expressions. 

\begin{proof} Set $\shF = \Sigma^{\gamma^t} \shQ$, $\gamma \in B_{k,n-k}$ in Lem. \ref{lem:Kap:resolution}, we have a left resolution of $\Sigma^{\gamma^t} \shQ$ by:
	\begin{align*}
	\big\{ \Sigma^{\gamma} \shU \to & \bigoplus_{|\alpha| = |\gamma| - 1} H^0(\Sigma^{\gamma^t}  \shQ \otimes\Sigma^{\alpha^t} \shQ^\vee )  \otimes  \Sigma^\alpha \shU \to \cdots 
	 \to & H^0(\Sigma^{\gamma^t} \shQ\otimes \shQ^\vee )  \otimes  \shU \to    H^0(\Sigma^{\gamma^t} \shQ) \otimes \sO_\GG \big \}
	\end{align*}
which is concentrated in degree $[-|\gamma|, 0]$. Therefore the left mutation of $\Sigma^{\gamma^t} \shQ$ (resp. right mutation of $\Sigma^{\gamma} \shU$) passing through the subcategory $\langle \Sigma^{\alpha} \shU \rangle_{\alpha \in B^{op}_{\prec \gamma}}$ is exactly $\Sigma^{\gamma} \shU [|\gamma|]$ (resp. $\Sigma^{\gamma^t} \shQ [-|\gamma|] $). On the other hand, if we set $\shF = \Sigma^{\gamma^t} \shQ \otimes \sO_\GG(-n)$, then by Serre duality, we obtain a right resolution of the sheaf $\Sigma^{\gamma^t} \shQ \otimes \sO_\GG(-n)$ by:
	\begin{align*}
	\big\{ H^0(\Sigma^{\gamma^t}  \shQ^\vee \otimes \sO_{\GG}(k) )^\vee  \otimes  \Sigma^{(n-k)^k}  \shU \to \ldots 
	 \to & \bigoplus_{|\alpha| = |\gamma|+1} H^0(\Sigma^{\gamma^t}  \shQ^\vee \otimes\Sigma^{\alpha^t} \shQ)^\vee  \otimes  \Sigma^\alpha \shU \to    \Sigma^\gamma \shU \},
	\end{align*}
whose terms are concentrated in degree $[0, N - |\gamma|]$ (where $N:=k(n-k) = \dim \GG$). Notice the dualizing complex of $\GG$ is $\omega_{\GG} = \sO_{\GG}(-n)[N]$. Therefore the right mutation of $\Sigma^{\gamma^t} \shQ \otimes \omega_{\GG}$ (resp. left mutation $\Sigma^{\gamma} \shU$) passing through the subcategory $\langle \Sigma^{\alpha} \shU \rangle_{\alpha \in B^{op}_{\succ \gamma}}$ is exactly $\Sigma^{\gamma} \shU [|\gamma|]$ (resp. $\Sigma^{\gamma^t} \shQ [-|\gamma|]  \otimes \omega_{\GG}$). This proves \eqref{lem:mutation-2}. Now statements about $\prec$ and $\preceq$ (resp. $\succ$ and $\succeq$) in \eqref{lem:mutation-1} follow  from above process by induction, since $\Sigma^{(0)} \shU = \Sigma^{(0)}\shQ$ (resp. $\Sigma^{(k^{n-k})} \shU [N] = \Sigma^{(n-k)^k} \shQ \otimes \omega_{\GG} $). The rest of \eqref{lem:mutation-1} follows from  Thm. \ref{thm:Kap} \eqref{thm:Kap-1} and Lem. \ref{lem:mut}. 
\end{proof}

As mentioned in Remark \ref{rmk:dualGr}, from $\Gr_k(V) = \Gr_{n-k}(V^\vee)$, we also have a dual version of Lem. \ref{lem:mutation}. From this we have the following immediate consequence:

\begin{lemma} \label{lem:mutation:dual} Set $B=B_{n-k,k}$, and $\GG = \Gr_k(V) \simeq \Gr_k(n)$.
	\begin{enumerate}[leftmargin=*]
		\item \label{lem:mutation:dual-1}
		For any $\beta, \gamma \in B=B_{n-k,k}$ such that $B_{\preceq \beta} \cap B_{\npreceq \gamma} = \emptyset$ holds (e.g. if $\beta \preceq \gamma$), then there is a semiorthogonal decomposition
		$$\Db(\GG) = \big \langle 
		\langle \Sigma^{ \alpha} \shQ^\vee \rangle_{\alpha \in B_{\npreceq \gamma}^{op}},
		 \langle \Sigma^{ \alpha} \shQ^\vee \rangle_{\alpha\in B_{\npreceq \beta, \preceq \gamma}^{op}}, 
		 \langle \Sigma^{\alpha} \shQ^\vee \rangle_{\alpha  \in B_{\preceq \beta}^{op}} 
		 \big \rangle,
		$$
where we dente $B_{\npreceq \beta, \preceq \gamma} := B_{\npreceq \beta} \cap B_{\preceq \gamma}$. Furthermore, the following holds: 
	\begin{align*}
		\big \langle 
		\langle \Sigma^{\alpha} \shQ^\vee \rangle_{\alpha \in B_{\npreceq \beta, \preceq \gamma}^{op}}, 
		\langle \Sigma^{\alpha} \shQ^\vee \rangle_{\alpha  \in B_{\preceq \beta}^{op}} \big \rangle = 
		\big \langle  
		\langle \Sigma^{\alpha} \shQ^\vee \rangle_{\alpha  \in B_{\preceq \beta}^{op}},  \langle \Sigma^{\alpha^t} \shU^\vee \rangle_{\alpha \in B_{\npreceq \beta, \preceq \gamma}} \big \rangle, \\
		\big \langle  \langle  \Sigma^{ \alpha} \shQ^\vee \rangle_{\alpha \in B_{\npreceq \gamma}^{op}}, \langle \Sigma^{ \alpha} \shQ^\vee \rangle_{\alpha \in B_{\npreceq \beta, \preceq \gamma}^{op}} \big \rangle = \big \langle  \langle \Sigma^{\alpha^t} \shU^\vee \otimes \omega_{\GG} \rangle_{\alpha \in B_{\npreceq \beta, \preceq \gamma}},  \langle \Sigma^{ \alpha} \shQ^\vee \rangle_{\alpha \in B_{\npreceq \gamma}^{op}} \big \rangle.
		\end{align*}
		\item \label{lem:mutation:dual-2}
		For any sub-boxes $B_1, B_2 \subseteq B$ such that $B_2 \nsubseteq B_1$ (e.g. if $B_1 \subseteq B_2$), we have:
	\begin{align*} \Db(\GG) & =  \big \langle \langle \Sigma^{ \alpha} \shQ^\vee \rangle_{\alpha \in B^{op} \backslash (B_1^{op} \cup B_2^{op})}, ~ \langle \Sigma^{ \alpha} \shQ^\vee \rangle_{\alpha \in B_2^{op} \backslash B_1^{op}},~ \langle \Sigma^{\alpha} \shQ^\vee \rangle_{\alpha \in B_1^{op}} \big \rangle \\
	& =  \big \langle \langle \Sigma^{ \alpha} \shQ^\vee \rangle_{\alpha \in B^{op} \backslash (B_1^{op} \cup B_2^{op})},~ \langle \Sigma^{\alpha} \shQ^\vee \rangle_{\alpha \in B_1^{op}}, ~\langle \Sigma^{ \alpha^t} \shU^\vee \rangle_{\alpha \in B_2 \backslash B_1} \big \rangle \\
	& = \big \langle \langle \Sigma^{ \alpha^t} \shU^\vee \otimes \omega_\GG \rangle_{\alpha \in B_2 \backslash B_1}, ~ \langle \Sigma^{ \alpha} \shQ^\vee \rangle_{\alpha \in B^{op} \backslash (B_1^{op} \cup B_2^{op})},~ \langle \Sigma^{\alpha} \shQ^\vee \rangle_{\alpha \in B_1^{op}} \big \rangle
	\end{align*}
	\end{enumerate}
\end{lemma}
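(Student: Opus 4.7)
The plan is to deduce both parts from the dualized versions of Theorem \ref{thm:Kap} and Lemma \ref{lem:mutation}, which are obtained via Remark \ref{rmk:dualGr} together with the transposition $\alpha \mapsto \alpha^t$ (which preserves the partial order $\preceq$). Explicitly, the dualizations yield: (i) the full admissible SOD $\Db(\GG) = \langle \Sigma^\alpha \shQ^\vee\rangle_{\alpha \in B^{op}}$, strongly compatible with $\succeq$; (ii) for any downward-closed subset $S$ of $B$ of one of the forms $B_{\prec \gamma}, B_{\preceq \gamma}, B_{\nsucc \gamma}, B_{\nsucceq \gamma}$, the identity $\langle \Sigma^\alpha \shQ^\vee\rangle_{\alpha \in S^{op}} = \langle \Sigma^{\alpha^t}\shU^\vee\rangle_{\alpha \in S}$, and for any upward-closed $S$ of the forms $B_{\succ \gamma}, B_{\succeq \gamma}, B_{\nprec \gamma}, B_{\npreceq \gamma}$, the twisted analogue $\langle \Sigma^\alpha \shQ^\vee\rangle_{\alpha \in S^{op}} = \langle \Sigma^{\alpha^t}\shU^\vee \otimes \omega_\GG\rangle_{\alpha \in S}$; and (iii) the biorthogonality relation $\Hom^\bullet(\Sigma^{\alpha^t}\shU^\vee, \Sigma^\beta \shQ^\vee) = \kk[-|\alpha|] \cdot \delta_{\alpha, \beta}$.

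For part (1), observe that the condition $B_{\preceq \beta} \cap B_{\npreceq \gamma} = \emptyset$ is equivalent to $B_{\preceq \beta} \subseteq B_{\preceq \gamma}$, so $B$ partitions as $B_{\npreceq \gamma} \sqcup B_{\npreceq \beta, \preceq \gamma} \sqcup B_{\preceq \beta}$. A quick case-check using transitivity of $\preceq$ (e.g., if $\alpha \in B_{\npreceq \gamma}$, $\alpha' \in B_{\preceq \gamma}$ and $\alpha \prec \alpha'$, then $\alpha' \preceq \gamma$ would force $\alpha \preceq \gamma$, contradicting $\alpha \npreceq \gamma$) shows that grouping (i) by these three blocks is strongly compatible with the partial order of $B^{op}$; Remark \ref{rmk:poset} then produces the claimed three-block SOD. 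For each mutation identity, the strategy is to recognize the target as the unique mutation of the source via the characterization that $\RR_A(C)$ (respectively $\LL_A(C)$) is the unique subcategory of ${}^\perp A$ (resp.\ $A^\perp$) that, together with $A$, spans $\langle A, C\rangle$. For the right-mutation identity (swapping the last two blocks), set $A_2' := \langle \Sigma^{\alpha^t}\shU^\vee\rangle_{\alpha \in B_{\npreceq \beta, \preceq \gamma}}$: the semiorthogonality $\Hom(A_2', \langle \Sigma^\alpha \shQ^\vee\rangle_{\alpha \in B_{\preceq \beta}^{op}}) = 0$ is immediate from (iii) because the index sets are disjoint, and the span equality follows by applying (ii) to the downward-closed sets $B_{\preceq \beta}$ and $B_{\preceq \gamma}$. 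The left-mutation identity is analogous but applies (ii) to the upward-closed sets $B_{\npreceq \gamma}$ and $B_{\npreceq \beta}$, thereby introducing the $\omega_\GG$-twist; its semiorthogonality becomes $\Hom(\Sigma^{\alpha_1}\shQ^\vee, \Sigma^{\alpha^t}\shU^\vee \otimes \omega_\GG) \simeq \Hom(\Sigma^{\alpha^t}\shU^\vee, \Sigma^{\alpha_1}\shQ^\vee)^\vee[\dim \GG] = 0$ by Serre duality combined with (iii).

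Part (2) is a direct analogue: each sub-box $B_i = B_{\ell_i, d_i}$ coincides with the downward-closed set $B_{\preceq (d_i^{\ell_i})}$, so the same case check (now using downward-closedness of $B_1$ and $B_2$ to rule out offending order violations) shows that the partition $B = (B \setminus (B_1 \cup B_2)) \sqcup (B_2 \setminus B_1) \sqcup B_1$ is compatible with $B^{op}$. The two mutation identities then follow from identical semiorthogonality-plus-span arguments, using (iii) for the disjointness-based vanishing of $\Hom(\Sigma^{\alpha^t}\shU^\vee, \Sigma^{\alpha'}\shQ^\vee)$, and (ii) to convert between $\shQ^\vee$- and $\shU^\vee$-spans via the downward-closed set $B_1$ (respectively the upward-closed set $B \setminus B_1$ after regrouping). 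The principal bookkeeping obstacle throughout is to keep track of when the relevant index set is downward- versus upward-closed, since this dictates whether the $\omega_\GG$-twist appears; once the dualizations in (i)--(iii) are in place, no additional conceptual input is required.
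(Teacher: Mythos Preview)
Your proof is correct and follows the same strategy as the paper: both derive the result from the dual of Lemma~\ref{lem:mutation} via Remark~\ref{rmk:dualGr}, with order-compatibility and generation handled by Theorem~\ref{thm:Kap} and Lemma~\ref{lem:mut}. The one notable difference is in part~\eqref{lem:mutation:dual-2}: the paper reduces to part~\eqref{lem:mutation:dual-1} by choosing $\beta$ maximal in $B_1$ and $\gamma$ maximal in $B_1\cup B_2$, whereas you argue directly by recognizing each sub-box as $B_{\preceq (d_i^{\ell_i})}$ and repeating the semiorthogonality-plus-span argument. Your direct route is arguably cleaner, since when neither $B_1\subseteq B_2$ nor $B_2\subseteq B_1$ the union $B_1\cup B_2$ has no single maximal element, so the paper's reduction is slightly imprecise as literally stated. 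One small point worth making explicit: your span check for the mutated decompositions in \eqref{lem:mutation:dual-2} tacitly uses identity (ii) on \emph{both} boxes $B_1$ and $B_2$ (not just $B_1$ as your text says), so that $\langle \Sigma^{\alpha^t}\shU^\vee\rangle_{\alpha\in B_1\cup B_2}=\langle \Sigma^{\alpha}\shQ^\vee\rangle_{\alpha\in B_1\cup B_2}$ follows by combining the two individual box identities.
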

\begin{proof} \eqref{lem:mutation:dual-1} follows directly from the dual version of Lem. \ref{lem:mutation} \eqref{lem:mutation-1} \& \eqref{lem:mutation-2}, and Lem. \ref{lem:mut}; Then \eqref{lem:mutation:dual-2} follows from \eqref{lem:mutation:dual-1} by taking $\gamma$ to be the maximal Young diagram contained in the region $B_1 \cup B_2$, and $\beta \in B_1$ be the maximal Young diagram of $B_1$.\end{proof}

\subsubsection{Some further mutation results}
Set $\GG = \Gr_d(n)$ and $\ell = n-d$, where $d, \ell \ge 1$. The following is an application of the mutation results of previous subsection. Since in this subseciton we are only concerned with generation results, we will omit the symbols of partial order for all boxes $B_{*}$, for simplicity of notations.

\begin{lemma} \label{lem:G:mut}  
\begin{enumerate}[leftmargin=*]
		\item \label{lem:G:mut-1}  
		For any $1 \le r \le \ell$, consider the following subcategories of $\Db(\GG)$:
	\begin{align*}
		\shV : =  \big \langle \Sigma^{\lambda} \shQ^\vee \big \rangle_{\lambda \in B_{\ell,d} \backslash B_{\ell-r,d}}  \quad \text{and} \quad
		 \shS : =  \big \langle \big\{ \Sigma^{\lambda} \shQ^\vee \otimes \sO(-t) \big\}_{t \in [1,r], \lambda \in B_{\ell,d-1}} \big \rangle.
	\end{align*}
Then $\shV = \shS$. In particular, the following holds:
	\begin{enumerate}
	\item \label{lem:G:mut-1i} 
	For any $t \in [1,r]$, $\lambda \in B_{\ell,d-1}$, $\Sigma^{\lambda} \shQ^\vee \otimes \sO(-t)  \in  \shV$;
	\item \label{lem:G:mut-1ii} 
	For any $t \in [1,r]$, $\lambda \in B_{\ell,d-1}$, $\Sigma^{\lambda^t} \shU^\vee \otimes \sO(-t) \in  \shV$.
	\end{enumerate}
	\item \label{lem:G:mut-2} 
	Dually, for any $1 \le r \le \ell$, consider the following subcategories of $\Db(\GG)$:
	\begin{align*}
		\shV' : =  \big \langle \Sigma^{\lambda} \shQ \big \rangle_{\lambda \in B_{\ell,d} \backslash B_{\ell-r,d}}  
		\quad \text{and} \quad
		 \shS' : =  \big \langle \big\{ \Sigma^{\lambda} \shQ \otimes \sO(t) \big\}_{t \in [1,r], \lambda \in B_{\ell,d-1}} \big \rangle.
	\end{align*}
Then $\shV' = \shS'$. In particular, the following holds:
\begin{enumerate}
	\item \label{lem:G:mut-2i} 
	For any $t \in [1,r]$, $\lambda \in B_{\ell,d-1}$, $\Sigma^{\lambda} \shQ \otimes \sO(t)  \in  \shV'$;
	\item \label{lem:G:mut-2ii} 
	For any $t \in [1,r]$, $\lambda \in B_{\ell,d-1}$, $\Sigma^{\lambda^t} \shU \otimes \sO(t) \in  \shV'$.
	\end{enumerate}
	\end{enumerate}
\end{lemma}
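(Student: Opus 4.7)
The two statements in the lemma are dual to one another via the duality autoequivalence $(-)^\vee \colon \Db(\GG)^{\mathrm{op}} \simeq \Db(\GG)$, which sends $\Sigma^\lambda \shQ \leftrightarrow \Sigma^\lambda \shQ^\vee$, $\Sigma^{\lambda^t} \shU \leftrightarrow \Sigma^{\lambda^t} \shU^\vee$ and $\sO(t) \leftrightarrow \sO(-t)$ (using that $(\Sigma^\lambda V)^\vee \simeq \Sigma^\lambda V^\vee$ in characteristic zero). Thus part (2) follows formally from part (1), and it suffices to prove part (1). The starting observation for (1) is that $\sO(-1) \simeq \det \shQ^\vee = \Sigma^{(1^\ell)}\shQ^\vee$, so by Pieri/Littlewood--Richardson
$$\Sigma^\lambda \shQ^\vee \otimes \sO(-t) \simeq \Sigma^{\lambda + (t^\ell)} \shQ^\vee.$$
Hence $\shS$ coincides with the triangulated subcategory generated by $\{\Sigma^\mu \shQ^\vee : \mu = \lambda + (t^\ell),\; \lambda \in B_{\ell, d-1},\; t \in [1, r]\}$. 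Granting the main equality $\shS = \shV$, conclusion (i) is immediate, and (ii) follows by replacing the Kapranov generators $\Sigma^\lambda \shQ^\vee$ inside $\shV$ by $\Sigma^{\lambda^t}\shU^\vee$ via the mutation equivalence of Lem.~\ref{lem:mutation}\,(1) (in its dualized form).

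The main equality $\shS = \shV$ is proved by induction on $r$. The base case $r = 1$ is immediate: the assignment $\lambda \mapsto \lambda + (1^\ell)$ gives a bijection
$$B_{\ell, d-1} \xrightarrow{\sim} B_{\ell, d} \setminus B_{\ell-1, d},$$
so the generating sets of $\shS_1$ and $\shV_1$ literally coincide. For the inductive step, I will verify both inclusions separately. The inclusion $\shV \subseteq \shS$: any $\mu \in B_{\ell, d} \setminus B_{\ell-r, d}$ with $\mu_\ell \ge 1$ can be written $\mu = \lambda + (t^\ell)$ with $t = \min(\mu_\ell, r) \in [1, r]$ and $\lambda = \mu - (t^\ell) \in B_{\ell, d-1}$, placing $\Sigma^\mu\shQ^\vee$ directly in $\shS$; in the remaining case ($\mu_\ell = 0$ but $\mu_{\ell-r+1} \ge 1$), I resolve $\Sigma^\mu \shQ^\vee$ via Kapranov's resolution (Lem.~\ref{lem:Kap:resolution}) and track the indices of the surviving terms using BBW (Thm.~\ref{thm:BBW}), reducing (by a secondary induction on the number of trailing zeros of $\mu$, combined with the inductive hypothesis $\shS_{r-1} = \shV_{r-1}$) to already-handled cases. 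The opposite inclusion $\shS \subseteq \shV$ is proved symmetrically: if $\lambda + (t^\ell) \in B_{\ell, d}$ the resulting generator automatically lies in $B_{\ell, d} \setminus B_{\ell-r, d}$ (since its last row is $\ge t \ge 1$); if $\lambda_1 + t > d$, resolving $\Sigma^{\lambda + (t^\ell)}\shQ^\vee$ via Lem.~\ref{lem:Kap:resolution} and applying BBW yields only summands whose partitions lie in $B_{\ell, d} \setminus B_{\ell-r, d}$, because the BBW straightening step preserves the condition that the bottom $r$ rows are positive.

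The main obstacle is the combinatorial bookkeeping in the two ``boundary'' cases, namely when $\mu$ has trailing zeros (inclusion $\shV \subseteq \shS$) or when $\lambda + (t^\ell)$ escapes the box $B_{\ell, d}$ (inclusion $\shS \subseteq \shV$). In both cases, one must show that the partitions appearing in the Kapranov/BBW resolutions remain within the range $B_{\ell, d}\setminus B_{\ell-r, d}$ after straightening. The key combinatorial fact is that the constraints $\lambda_1 \le d-1$ and $t \le r \le \ell$ guarantee that the bottom $r$ entries of $\lambda + (t^\ell) + \rho$ (where $\rho = (n, n{-}1, \ldots, 1)$) are separated from the top $\ell - r$ entries in a way which is preserved by the Weyl-group straightening; once this combinatorial observation is verified, the inductive steps go through mechanically.
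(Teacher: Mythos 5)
Your duality reduction between parts (1) and (2) and the observation that $\sO(-1) = \det\shQ^\vee$ (so $\Sigma^\lambda\shQ^\vee \otimes \sO(-t) = \Sigma^{\lambda+(t^\ell)}\shQ^\vee$) are both correct, and the base case $r=1$ is indeed trivial, since $\lambda\mapsto\lambda+(1^\ell)$ is a bijection $B_{\ell,d-1}\xrightarrow{\sim}B_{\ell,d}\setminus B_{\ell-1,d}$. But the inductive step, which is where all the difficulty lies, is not actually carried out, and as described it does not quite work.

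For $\shV\subseteq\shS$ you handle the case $\mu_\ell\ge 1$ by a direct rewrite, but for $\mu_\ell=0$, $\mu_{\ell-r+1}\ge 1$ you appeal to Kapranov's resolution (Lem.~\ref{lem:Kap:resolution}) to ``track the indices of the surviving terms.'' The trouble is that Lem.~\ref{lem:Kap:resolution} resolves $\Sigma^\mu\shQ^\vee$ by terms of the form $\Sigma^\alpha\shU$, \emph{not} by terms of the form $\Sigma^\lambda\shQ^\vee$. So the partitions you are tracking index the wrong family of generators, and in order to place those $\Sigma^\alpha\shU$ terms inside $\shS$ you would need an additional mutation argument converting between $\shU$-generators and $\shQ^\vee$-generators on the correct sub-range of Young diagrams. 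That conversion is precisely where the work is, and it is not addressed. Likewise, for $\shS\subseteq\shV$ in the case $\lambda_1+t>d$, the assertion that ``the BBW straightening step preserves the condition that the bottom $r$ rows are positive'' is stated without verification. After a Weyl-group reordering of $\lambda+(t^\ell)+\rho$ the entries in the $\shQ$-block and the $\shU$-block can swap positions, so the positivity of the bottom $r$ rows of the straightened weight is not an invariant one gets for free; and even if it were, you would again be landing on $\Sigma^\alpha\shU$ terms rather than $\Sigma^\lambda\shQ^\vee$ terms.

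The paper's proof takes a genuinely different route: it avoids tracking resolutions term by term and instead sets up a descending sequence of subcategories $\shV_k = \langle\Sigma^\lambda\shQ^\vee\rangle_{\lambda\in B_{\ell,d}\setminus B_{\ell-r+k,d}}\otimes\sO(-k)$ and $\shR_k = \langle\Sigma^\lambda\shQ^\vee\otimes\sO(-k)\rangle_{\lambda\in B_{\ell,d-1}}$, then compares two semiorthogonal decompositions of the full $\Db(\GG)$ (via Lem.~\ref{lem:mutation:dual}) to identify right mutations of $\shR_k^\perp\cap\shV_k$ through $\shR_k$ with a piece of $\shV_{k-1}$. This reduces the global generation statement to categorical bookkeeping rather than per-object combinatorics, and both inclusions $\shV\supseteq\shS$ and $\shV\subseteq\shS$ fall out of the same mutation ladder run in opposite directions. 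If you want to pursue your more explicit route, you would need to first prove an equality of the relevant $\shU$- and $\shQ^\vee$-generated subcategories over the required sub-ranges (Lem.~\ref{lem:mutation}\eqref{lem:mutation-1} in dual form is the right tool), and then carry out the BBW combinatorics in detail; as written the proposal leaves the essential content of the inductive step unproved.
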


\begin{proof} As the two cases are dual to each other, we only need to show one of them, say \eqref{lem:G:mut-1}. Notice \eqref{lem:G:mut-1ii} follows from  \eqref{lem:G:mut-1i}, and  \eqref{lem:G:mut-1i} follows directly from $\shV = \shS$. To prove the direction ``$\shV \supseteq \shS$", we claim that for any $k=0,1,\ldots, r-1$, the following holds:
	$$\shV_k : =  \big \langle \Sigma^{\lambda} \shQ^\vee \big \rangle_{\lambda \in B_{\ell,d} \backslash B_{\ell-r+k,d}} \otimes \sO(-k)  \subseteq \shV.$$
Then since $\Sigma^{\lambda} \shQ^\vee \otimes \sO(-t) \in \shV_{t-1}$ for $t \in [1,r]$ and $\lambda \in B_{\ell,d-1}$, the claim directly implies $\shS \subseteq \shV$. The base case $k=0$ is trivial as $\shV_0 = \shV$. Now we assume $\shV_{k-1} \subseteq \shV$ holds for $1 \le k \le r-1$, and we want to prove $\shV_{k} \subseteq \shV$. Consider the following subcategory of $ \shV_{k-1}$:
	$$\shR_k : = \langle\Sigma^{\lambda} \shQ^\vee \otimes \sO(-k) \rangle_{\lambda \in B_{\ell,d-1}} \subseteq \shV_{k-1},$$
we only need to show the right orthogonal of $\shR_k \cap \shV_k$ inside $\shV_k$,  which is:
	$$\shR_k^\perp \cap \shV_k = (\shR_k \cap \shV_k)^\perp  \cap \shV_k = \langle  \Sigma^{\lambda} \shQ^\vee \big \rangle_{\lambda \in B_{\ell,d} \backslash (B_{\ell-r+k,d} \cup B_{\ell-1,d-1})} \otimes \sO(-k) \subseteq \shV_k$$
 is also contained in $\shV_{k-1}$. Let us first assume that $\ell - r+k - 1 \ge 0$. Then by Lem. \ref{lem:mutation:dual} \eqref{lem:mutation:dual-2} applied to the case $B_1 = B_{\ell,d-1}$, $B_2 = B_{\ell - r+k, d}$, consider the following decomposition: 
 	\begin{align*}
	\Db(\GG) & = \Big \langle \langle  \Sigma^{\lambda} \shQ^\vee \big \rangle_{\lambda \in B_{\ell,d} \backslash (B_{\ell-r+k,d} \cup B_{\ell,d-1})}, ~    \langle \Sigma^\alpha \shQ^\vee  \rangle_{\alpha \in B_{\ell-r+k,d} \backslash B_{\ell,d-1}} , ~ \langle \Sigma^\alpha \shQ^\vee  \rangle_{\alpha \in B_{\ell,d-1}}  \Big \rangle \otimes \sO(-k) \\
	&  \overset{\text{Lem. \ref{lem:mutation}}}{=}  \Big \langle \langle  \Sigma^{\lambda} \shQ^\vee \big \rangle_{\lambda \in B_{\ell,d} \backslash (B_{\ell-r+k,d} \cup B_{\ell,d-1})},  ~ \langle \Sigma^\alpha \shQ^\vee  \rangle_{\alpha \in B_{\ell,d-1}},   \langle \Sigma^{\alpha^t} \shU^\vee  \rangle_{\alpha \in B_{\ell-r+k,d} \backslash B_{\ell,d-1}} \Big \rangle \otimes \sO(-k) \\
	& = \Big \langle \shR_k^\perp  \cap \shV_k,~ \shR_k , ~    \langle \Sigma^{\alpha^t} \shU^\vee  \rangle_{\alpha \in B_{\ell-r+k, d} \backslash B_{\ell, d-1}}  \otimes \sO(-k)  \Big \rangle \\
	& =  \Big \langle \shR_k^\perp  \cap \shV_k ,~ \shR_k , ~    \langle \Sigma^{\alpha^t} \shU^\vee  \rangle_{\alpha \in B_{\ell-r+k-1,d}}  \otimes \sO(-k+1)  \Big \rangle.
	\end{align*}
 On the other hand, By Lem. \ref{lem:mutation:dual} \eqref{lem:mutation:dual-2}  applied to the case $B_1=B_{\ell-r+k-1,d}$, $B_2 = B_{\ell-1,d}$, then there is another semiorthogonal decomposition 
 \begin{align*}
	\Db(\GG) & = \Big \langle \langle  \Sigma^{\lambda} \shQ^\vee \big \rangle_{\lambda \in B_{\ell,d} \backslash B_{\ell-1,d} }, ~   \big \langle \Sigma^{\lambda} \shQ^\vee \big \rangle_{\lambda \in B_{\ell-1,d} \backslash B_{\ell-r+k-1,d}}, ~ \langle \Sigma^\alpha \shQ^\vee  \rangle_{\alpha \in B_{\ell-r+k-1,d}}   \Big \rangle \otimes \sO(-k+1) \\
	 &= \Big \langle  \shR_k , ~ \underbrace{ \big \langle \Sigma^{\lambda} \shQ^\vee \big \rangle_{\lambda \in B_{\ell-1,d} \backslash B_{\ell-r+k-1,d}} \otimes \sO(-k+1)}_{ = \shS_{k-1}} , ~ \langle \Sigma^{\alpha^t} \shU^\vee  \rangle_{\alpha \in B_{\ell-r+k-1,d}}  \otimes \sO(-k+1)  \Big \rangle,
	\end{align*}
where for any $k \in [0,r-1]$ the subcategory $\shS_k \subseteq \shV_k$ is defined to be:
	$$\shS_k : =  \big \langle \Sigma^{\lambda} \shQ^\vee \big \rangle_{\lambda \in B_{\ell-1,d} \backslash B_{\ell-r+k,d}} \otimes \sO(-k).$$
Compare the two semiorthogonal decompositions of $\Db(\GG)$, we obtain that for all $k \ge 1$,
		$$\langle \shR_k^\perp  \cap \shV_k, \shR_k \rangle = \langle \shR_k, \shS_{k-1} \rangle = \shV_{k-1},$$
	and the right mutation of $\shR_k^\perp  \cap \shV_k $ passing through the category $\shR_k$ inside $\Db(\GG_+)$ is $\shS_{k-1} \subseteq \shV_{k-1}$. (Notice this mutation result also holds in the case $ \ell - r + k - 1 <0$; in fact, 
the same argument works -- it is even simpler in this case, with $B_{\ell - r + k - 1} = \emptyset$.) Therefore 
	$$\shV_k = \langle \shR_k^\perp  \cap \shV_k,  \shR_k \cap \shV_k \rangle \subseteq  \langle \shR_k^\perp  \cap \shV_k, \shR_k \rangle =  \langle \shR_k, \shS_{k-1} \rangle = \shV_{k-1} .$$ 
By induction, $\shV_{k} \subseteq \shV$ holds, and in particular $\shV \supseteq \shS$

To prove the other direction ``$\shV \subseteq \shS$", we reverse the above mutation process. First observe directly from definition that $\shR_k  \subseteq \shS$ for all $k \in [1,r]$, and
	$
	 \shV_k = \langle \shR_{k+1}, \shS_k \rangle
	$
for all $k \in [0,r-1]$.
We claim that for all $k  = r-1, r-2, \ldots, 1, 0$, the following holds: 
	$$\shV_k \subseteq \shS \quad \text{and} \quad \shS_k \subseteq \shS. $$
The claim holds trivially for the base case $k = r-1$ since $\shV_{r-1} = \shR_r \subseteq \shS$ and $\shS_{r-1} = \emptyset$. Now assume the claim holds for $k$, where $r-1 \le k \le 1$, we want to show it holds for $k-1$. In fact, from $\shV_k \subseteq \shS$ and $\shR_k \subseteq \shS$, we obtain that the right mutation of $\shR_k^\perp \subseteq \shV_k$ passing through $\shR_k$ is also contained in $\shS$, i.e. $\shS_{k-1} \subseteq \shS$. Hence $\shV_{k-1} = \langle \shR_{k}, \shS_{k-1}  \rangle \subseteq \shS$. By induction we are done. In particular $\shV = \shV_0 \subseteq \shS$. Hence the lemma follows.
\end{proof}

\begin{remark} In the above proof, we start with $\shV = \shV_0 = \langle \shR_1, \shS_0 \rangle$ and perform iterated left mutations of $\shS_{k-1}$ passing through $\shR_k$ for $k=1,\ldots, r-1$. This process produces a semiorthogonal decomposition of $\shV$ of the form similar to Lem. \ref{lem:dualsod}:
	$$\shV = \langle \shB_r, \shB_{r-1}, \shB_{r-2}, \ldots, \shB_{2}, \shB_{1} \rangle, \quad \text{where} \quad \shB_{k} : = \shV_{k-1}  \cap {}^\perp \shV_{k}  
	,$$
then $\shV_{k-1}  = \langle \shB_{r}, \ldots, \shB_{k}\rangle$, for $k \in [1,r]$, and $\shV_{k-1} = \langle \shV_{k}, \shB_{k} \rangle$. Notice $\shB_{r} = \shV_{r} = \shR_{r}$, and $\shB_{k} \subseteq \shR_k$ in general.
For example, in the case $r=\ell$, then the mutation process of the proof induces a semiorthogonal decomposition of $\Db(\GG) = \langle \shV_0, \sO_\GG\rangle$, with $\shB_t =  \big \langle \Sigma^{\lambda} \shQ^\vee \otimes \sO(-t) \big \rangle_{\lambda \in B_{t,d-1}} $:
	\begin{align*}
	\Db(\GG)  & = \langle \shB_{\ell}, \ldots, \shB_{1}, \sO_{\GG} \rangle  = \big \langle \Sigma^{\lambda} \shQ^\vee \otimes \sO(-t) \big \rangle_{t \in [0,\ell],  \lambda \in B_{t,d-1}}   \\
	&=  \big\langle  \langle \Sigma^{\lambda}  \shQ^\vee \otimes \sO(-\ell) \rangle_{ \lambda \in B_{\ell,d-1}}, 
	 \ldots, \langle \Sigma^{\lambda} \shQ^\vee \otimes \sO(-1) \rangle_{ \lambda \in B_{1,d-1}}, \sO_{\GG} \big\rangle.
	 \end{align*}
Notice this semiorthogonal decomposition could also be obtained from simply observing $ \big \langle \Sigma^{\lambda} \shQ^\vee \big \rangle_{ \lambda \in B_{t,d-1}} =  \big \langle \Sigma^{\lambda^t} \shU^\vee \big \rangle_{ \lambda \in B_{t,d-1}} $ and consider $\Db(\GG) = \big \langle \Sigma^{\alpha} \shU^\vee \otimes \sO(\ell-t) \big \rangle_{t\in[0,\ell], \alpha\in B_{d-1,t}}$. 

For example, in the case $d=2$, $\GG = \Gr_2(n)$, then $\shB_t =  \langle \bigwedge^s \shQ^\vee \otimes \sO(-t) \rangle_{s\in [0,t]}$, and the mutation process of the proof induces a semiorthogonal decomposition of $\Db(\Gr_2(n))$: 
	\begin{align*}
	\Db(\Gr_2(n)) 
	=  \big\langle  \langle \bigwedge^s \shQ^\vee \otimes \sO(-\ell) \rangle_{s\in [0,\ell]}, 
	 \ldots, \langle \bigwedge^s \shQ^\vee \otimes \sO(-2) \rangle_{s\in [0,2]}, \langle \bigwedge^s \shQ^\vee \otimes \sO(-1) \rangle_{s\in [0,1]}, \sO_{\GG}
	 \big\rangle.
	 \end{align*}
\end{remark}

\medskip
\subsubsection*{The case of $\Gr_2(n)$} In the case of $\GG = \Gr_2(n)$, $\ell = n-2 \ge 1$, denote $\shQ$ be the universal rank $\ell$ quotient bundle, and $\shU$ be the universal rank $d=2$ subbundle. Note that if we write $\alpha^t = (a+b,b)$ where $a,b \ge 0$ and $a + b \le \ell$, then
	$$ \Sigma^{\alpha^t} \shU_+ = S^a \shU_+ \otimes \sO_+(-b), \qquad  \Sigma^{\alpha^t} \shU_+^\vee = S^a \shU_+^\vee \otimes \sO_+(b).$$
Then the subcategories $\shS$ and $\shS'$ of Lem. \ref{lem:G:mut} take the form:
	\begin{align*}
		 \shS  =  \big \langle \big\{ \bigwedge^s \shQ^\vee \otimes \sO(-t) \big\}_{t \in [1,r], s \in [0,\ell]} \big \rangle
				\quad \text{and} \quad
		 \shS'  =  \big \langle \big\{ \bigwedge^s \shQ \otimes \sO(t) \big\}_{t \in [1,r], s \in [0,\ell]} \big \rangle,
	\end{align*}
and Lem. \ref{lem:G:mut} implies that $S^a \shU^\vee \otimes \sO(-b) \in \shV=\shS$ and $S^a \shU \otimes \sO(b) \in \shV'=\shS'$ if $0 \le a \le \ell$, $1 \le b \le r$. However, in fact we could slightly improve this result:

\begin{lemma} \label{lem:G_2:mut}  For $1 \le r \le \ell$, the following holds:
		\begin{align*}
		 \big \langle \Sigma^{\lambda} \shQ \big \rangle_{\lambda \in B_{\ell,2} \backslash B_{\ell-r,2}} & = \big \langle \big\{ \bigwedge^s \shQ \otimes \sO(t) \big\}_{t \in [1,r], s \in [0,\ell]} \big \rangle. \\
		  \big \langle \Sigma^{\lambda} \shQ^\vee \big \rangle_{\lambda \in B_{\ell,2} \backslash B_{\ell-r,2}} & = \big \langle \big\{ \bigwedge^s \shQ^\vee \otimes \sO(-t) \big\}_{t \in [1,r], s \in [0,\ell]} \big \rangle.
	\end{align*}
Furthermore, for all $a,b \in \ZZ$ such that $0 \le a \le \ell$ and $1 \le b \le a + r + 1$, we have:
	$$S^a \shU \otimes \sO(b) \in  \big \langle \Sigma^{\lambda} \shQ \big \rangle_{\lambda \in B_{\ell,2} \backslash B_{\ell-r,2}}, \qquad S^a \shU^\vee \otimes \sO(-b) \in  \big \langle \Sigma^{\lambda} \shQ^\vee \big \rangle_{\lambda \in B_{\ell,2} \backslash B_{\ell-r,2}}.$$
\end{lemma}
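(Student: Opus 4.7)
My plan is to reduce the first pair of identities directly to the $d=2$ case of Lemma~\ref{lem:G:mut}, and then to extract the containment statements by combining Lemma~\ref{lem:G:mut}(1)(ii) with a Koszul resolution. For the first pair of identities: when $d=2$, the index set $B_{\ell,d-1}=B_{\ell,1}$ consists exactly of the partitions $(1^s,0^{\ell-s})$ for $s\in[0,\ell]$, and $\Sigma^{(1^s,0^{\ell-s})}\shQ=\bigwedge^s\shQ$ (and analogously for $\shQ^\vee$). Hence the subcategories $\shS$ and $\shS'$ of Lemma~\ref{lem:G:mut} coincide term-by-term with the generating families written on the right-hand side of each identity.

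For the two containment statements, the duality functor $(-)^\vee\colon\Db(\GG)\to\Db(\GG)^{\mathrm{op}}$ interchanges $\Sigma^\lambda\shQ\leftrightarrow\Sigma^\lambda\shQ^\vee$ and sends $S^a\shU\otimes\sO(b)$ to $S^a\shU^\vee\otimes\sO(-b)$, so it suffices to prove $S^a\shU^\vee\otimes\sO(-b)\in\shV$ for $a\in[0,\ell]$ and $b\in[1,a+r+1]$. The subrange $b\in[1,r]$ is covered by Lemma~\ref{lem:G:mut}(1)(ii): taking $\lambda=(1^a,0^{\ell-a})\in B_{\ell,1}$, the transpose $\lambda^t=(a)$ gives $\Sigma^{\lambda^t}\shU^\vee=S^a\shU^\vee$, whence $S^a\shU^\vee\otimes\sO(-t)\in\shV$ for $t\in[1,r]$.

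To reach the extended range $b\in[r+1,a+r+1]$, I would argue by induction on $a$. The base case $a=0$ asks only for $\sO(-(r+1))\in\shV$, and this is immediate from the first identity by choosing $s=\ell$ and $t=r$: $\bigwedge^\ell\shQ^\vee=\sO(-1)$, so $\bigwedge^\ell\shQ^\vee\otimes\sO(-r)=\sO(-(r+1))$ is among the stated generators. For the inductive step, the main tool is the Koszul resolution
\begin{equation*}
0\to\bigwedge^a\shQ^\vee\otimes\sO\to\bigwedge^{a-1}\shQ^\vee\otimes V^\vee\otimes\sO\to\cdots\to S^a V^\vee\otimes\sO\to S^a\shU^\vee\to 0
\end{equation*}
coming from $0\to\shQ^\vee\to V^\vee\otimes\sO\to\shU^\vee\to 0$. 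Tensoring with $\sO(-b)$ preserves exactness and, in view of Definition~\ref{def:Postnikov}, exhibits $S^a\shU^\vee\otimes\sO(-b)$ as a convolution of the terms $\bigwedge^k\shQ^\vee\otimes\sO(-b)$ for $k\in[0,\min(a,\ell)]$ (the trivial factors $S^{a-k}V^\vee$ being irrelevant to the containment question). The task thus becomes to verify that each such term lies in $\shV$.

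The main obstacle is this last verification when $b>r$: the Young-diagram data of $\bigwedge^k\shQ^\vee\otimes\sO(-b)=\Sigma^{((b+1)^k,b^{\ell-k})}\shQ^\vee$ has entries larger than $2$ and so falls outside $B_{\ell,2}$. The starting point is the identity $\bigwedge^k\shQ^\vee\otimes\sO(-1)=\Sigma^{(2^k,1^{\ell-k})}\shQ^\vee$, whose Young diagram has $\ell$ non-zero rows and hence lies in $B_{\ell,2}\setminus B_{\ell-r,2}$ as soon as $r\ge 1$. To handle the remaining shifts I would run a secondary induction that exchanges the $\bigwedge^k\shQ^\vee$-generators and the $S^j\shU^\vee$-generators using the dual Koszul resolution, mutating through the filtration $B_{\ell-r,2}\subset B_{\ell-r+1,2}\subset\cdots\subset B_{\ell,2}$ in the same spirit as the proof of Lemma~\ref{lem:G:mut}; tracking the combinatorics of these mutations against the $\sO(-b)$-twists is the delicate step where the bound $b\le a+r+1$ emerges.
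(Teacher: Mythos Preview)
Your treatment of the two displayed identities and of the subrange $b\in[1,r]$ is correct and matches the paper: both are immediate from Lemma~\ref{lem:G:mut} specialized to $d=2$.

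The gap is in your inductive Koszul argument for the extended range $b\in[r+1,a+r+1]$. Already at $a=1$, $b=r+2$ it fails: tensoring the resolution $0\to\shQ^\vee\to V^\vee\otimes\sO\to\shU^\vee\to 0$ by $\sO(-(r+2))$ exhibits $\shU^\vee\otimes\sO(-(r+2))$ as an extension of $\shQ^\vee\otimes\sO(-(r+2))$ and $\sO(-(r+2))$, and the latter is \emph{not} in $\shV$ --- by your own base case, the allowed range for $\sO(-b)$ is only $b\in[1,r+1]$. More generally, the $k=0$ term $\bigwedge^0\shQ^\vee\otimes\sO(-b)=\sO(-b)$ always appears in your Koszul resolution, and for $b>r+1$ this term lies outside $\shV$; no ``secondary induction'' of the kind you sketch will repair this, since $\sO(-b)$ for $b\gg 0$ genuinely escapes $\shV$.

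The paper's argument avoids resolutions altogether and instead exploits that $\shU$ has rank~$2$: one has $S^a\shU\simeq S^a\shU^\vee\otimes(\det\shU)^{a}\simeq S^a\shU^\vee\otimes\sO(-a)$. Combined with the mutation identity
\[
\big\langle \textstyle\bigwedge^s\shQ\big\rangle_{s\in[0,\ell]}
=\big\langle \textstyle\bigwedge^s\shQ^\vee\otimes\sO(1)\big\rangle_{s\in[0,\ell]}
=\big\langle S^s\shU^\vee\otimes\sO(1)\big\rangle_{s\in[0,\ell]}
\]
(from Lemma~\ref{lem:mutation}), this rewrites $\shV'$ as $\langle S^s\shU^\vee\otimes\sO(t')\rangle_{t'\in[2,r+1],\,s\in[0,\ell]}$, so that $S^a\shU\otimes\sO(b)=S^a\shU^\vee\otimes\sO(b-a)\in\shV'$ whenever $b-a\in[2,r+1]$, i.e.\ $b\in[a+2,a+r+1]$. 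The rank-$2$ self-duality is the point you are missing; it is precisely what makes the $\Gr_2$ case special and allows one to reach the top of the range without any induction.
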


\begin{proof} The first part of the lemma is a special case of Lem. \ref{lem:G:mut}. It remains to show the final statements about $S^a \shU \otimes \sO(b)$ (and its dual). It follows from previous Lem. \ref{lem:G:mut} in the case $d=2$ that $S^a \shU \otimes \sO(b) \in  \big \langle \Sigma^{\lambda} \shQ \big \rangle_{\lambda \in B_{\ell,2} \backslash B_{\ell-r,2}}$ if $1 \le b \le r$. On the other hand, since 
	$$\big\langle \bigwedge^s \shQ \big\rangle_{s \in [0,\ell]} =\big  \langle \bigwedge^s \shQ^\vee \otimes \sO(1) \big \rangle_{s \in [0,\ell]} =\big \langle S^s \shU^\vee \otimes \sO(1) \big\rangle_{s \in [0,\ell]} $$
and $S^a \shU \otimes \sO(b) = S^a \shU^\vee \otimes \sO(b-a)$, hence $S^a \shU \otimes \sO(b) \in  \big \langle \Sigma^{\lambda} \shQ \big \rangle_{\lambda \in B_{\ell,2} \backslash B_{\ell-r,2}}$ also holds if $2 \le b-a \le r+ 1$. Combine the two conditions, we are done.
\end{proof}

\addtocontents{toc}{\vspace{0.5\normalbaselineskip}}	
\section{Local geometry and correspondences}  \label{sec:local}
In this section, we prove our main results in the universal local situation $X = \Hom_{\kk}(W,V)$, where $W$ and $V$ are finite free modules over a ring $\kk$ of rank $m$ and $n$, with $m \le n$. Denote $\delta: = n - m \ge 0$. For simplicity, {\em for the majority part of this section, we will assume $\kk$ is a field of characteristic zero, unless otherwise stated}.

By Lem. \ref{lem:Homspace}, there is a tautological map $\tau_\kk \colon W \to V$ over $X$, and we denote the cokernels by $\sG = \Coker(\tau_\kk)$ and $\sK = \Coker(\tau_\kk^\vee)$. For any pair of integers $(d_+, d_-)$ such that $d_- \le d_+$, $0 \le d_- \le m$ and $0 \le d_+ \le n$, we will study the relationships of the derived categories of the Quot schemes $\shZ_+ =\Quot_{X, d_+}(\sG)$ and $\shZ_- =\Quot_{X, d_-}(\sK)$. 

\subsection{The key lemma and Lascoux-type resolutions}
In this subsection, to treat the schemes $\shZ_+$ and $\shZ_-$ in a uniform manner, we introduce the following symmetric notations. Let $V_{-} = W^\vee$, $V_{+} = V$, $n_- = m$, $n_+ = n$. We use $i \in \{-,+\}$ to indicate the indices, and denote $-i = \mp$ for $i = \pm$. For $i \in \{-,+\}$, let $d_i$ be integers such that $1 \le d_i \le n_i$, and let $\ell_i = n_i - d_i$. 
Denote $\GG_i : = \Gr_{d_i}(V_i^\vee)$ the Grassmannian, and let $\shU_i$ be the tautological rank $d_i$ subbundle of $V_i^\vee$, $\shQ_i$ be the rank $\ell_i$ quotient bundle, $i = \pm$, therefore the tautological short exact sequence for the Grassmannian $\GG_i$ is:
	$$0 \to \shU_i \to V_i^\vee \to \shQ_i \to 0, \qquad i \in \{-,+\}.$$
Denote by $p_i \colon \shZ_i \to \GG_i$, $p_i' \colon \widehat{\shZ} \to \GG_i$ the natural projections. Consider the following schemes:
	$$\shZ_+ :=\Quot_{X, d_+}(\sG),\qquad \shZ_- :=\Quot_{X, d_-}(\sK), \qquad \widehat{\shZ} := \shZ_- \times_X \shZ_+,$$
where $\tau_\kk \colon W \to V$ is the tautological map over $X$, $\sG = \Coker(\tau_\kk)$, $\sK = \Coker(\tau_\kk^\vee)$, and the Quot schemes are defined in \S \ref{sec:Quot}.

\begin{lemma} \label{lem:Sym} Over $X = \Hom_\kk(V_-^\vee, V_+)$, we have the following identifications:
	\begin{align} \label{eqn:Quot:localexpressions}
		\shZ_+ = |\sHom_{\GG_+}(V_-^\vee, \shQ_+^\vee)|, \quad \shZ_- = |\sHom_{\GG_-}(\shQ_-, V_+)|, \quad
		\widehat{\shZ} = |\sHom_{\GG_- \times \GG_+}(\shQ_-, \shQ_+^\vee)|.
	\end{align}
(See \S \ref{sec:Homspace} for the precise definition of Hom space).
\end{lemma}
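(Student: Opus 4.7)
\medskip
\noindent\textbf{Proof proposal for Lemma \ref{lem:Sym}.}

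The plan is to verify the three identifications by comparing functors of points, using the universal properties of Quot schemes (Thm.~\ref{thm:Quot}), of Grassmannians (Ex.~\ref{ex:Grass}), and of relative Hom spaces (from \S\ref{sec:Homspace}, which by Lem.~\ref{lem:Homspace} describes $|\sHom_S(\sE,\sF)|(T)$ as the set of $\sO_T$-linear maps $\sE_T\to\sF_T$). The three cases reduce to the first, and the construction in all cases is symmetric in the roles of $(W,V)=(V_-^\vee,V_+)$ and $(V^\vee,W^\vee)=(V_+^\vee,V_-)$.

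\emph{First I would treat $\shZ_+$.} By definition, a $T$-point of $\shZ_+=\Quot_{X,d_+}(\sG)$ is a pair consisting of a map $T\to X$, i.e.\ an $\sO_T$-linear map $\sigma_T\colon V_{-,T}^\vee\to V_{+,T}$, and a rank-$d_+$ locally free quotient $q\colon \sG_T\twoheadrightarrow \sP$, where $\sG_T=\Coker(\sigma_T)$. Composition with $V_{+,T}\twoheadrightarrow\sG_T$ reinterprets $q$ as a rank-$d_+$ locally free quotient $V_{+,T}\twoheadrightarrow\sP$ whose composition with $\sigma_T$ is zero. Under the identification $\GG_+=\Gr_{d_+}(V_+^\vee)=\Quot_{d_+}(V_+)$ of Ex.~\ref{ex:Grass} (with universal quotient $\shU_+^\vee$, fitting into $0\to \shQ_+^\vee\to V_+\to \shU_+^\vee\to 0$), giving such a quotient is the same as giving a morphism $\bar f\colon T\to \GG_+$; the vanishing of the composition $\sigma_T$ with $V_{+,T}\twoheadrightarrow \shU_{+,T}^\vee$ is precisely the condition that $\sigma_T$ factors (uniquely) through the subbundle $\shQ_{+,T}^\vee\subseteq V_{+,T}$. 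Hence the data is equivalent to a map $\bar f\colon T\to \GG_+$ together with an $\sO_T$-linear map $V_{-,T}^\vee\to \bar f^*\shQ_+^\vee$, which by Lem.~\ref{lem:Homspace} is exactly a $T$-point of $|\sHom_{\GG_+}(V_-^\vee,\shQ_+^\vee)|$. This gives a natural bijection of functors of points, hence the claimed isomorphism of $X$-schemes.

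\emph{Next I would treat $\shZ_-$ by applying the same argument to the dual map $\tau_\kk^\vee\colon V_+^\vee\to V_-$.} Here $\sK=\Coker(\tau_\kk^\vee)$, and a rank-$d_-$ locally free quotient of $\sK_T$ is the same as a rank-$d_-$ locally free quotient $V_{-,T}\twoheadrightarrow\sP'$ whose precomposition with $\tau_{\kk,T}^\vee$ vanishes. Under $\GG_-=\Gr_{d_-}(V_-^\vee)=\Quot_{d_-}(V_-)$, with tautological exact sequence $0\to \shQ_-^\vee\to V_-\to \shU_-^\vee\to 0$, the vanishing condition says that $\tau_{\kk,T}^\vee$ factors through $\shQ_{-,T}^\vee\subseteq V_{-,T}$; equivalently, it is a map $V_{+,T}^\vee\to \shQ_{-,T}^\vee$, or dually a map $\shQ_{-,T}\to V_{+,T}$. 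This produces the desired identification with $|\sHom_{\GG_-}(\shQ_-,V_+)|$.

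\emph{Finally, for $\widehat{\shZ}=\shZ_-\times_X\shZ_+$, I would combine the two previous descriptions.} By Thm.~\ref{thm:Quot}\eqref{thm:Quot-1}, a $T$-point of $\widehat{\shZ}$ is a map $\sigma_T\colon V_{-,T}^\vee\to V_{+,T}$ together with compatible rank-$d_+$ and rank-$d_-$ locally free quotients of $\sG_T$ and $\sK_T$. Running both arguments simultaneously, such data is equivalent to a morphism $T\to\GG_-\times\GG_+$ together with a lift of $\sigma_T$ through both subbundles, i.e.\ a factorization $V_{-,T}^\vee\to \shQ_{+,T}^\vee\subseteq V_{+,T}$ and dually $V_{+,T}^\vee\to \shQ_{-,T}^\vee\subseteq V_{-,T}$. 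Since both conditions are simultaneously encoded by a single $\sO_T$-linear map $\shQ_{-,T}\to \shQ_{+,T}^\vee$ (taking adjoints on either side recovers the other factorization), Lem.~\ref{lem:Homspace} gives the final identification $\widehat{\shZ}=|\sHom_{\GG_-\times\GG_+}(\shQ_-,\shQ_+^\vee)|$. There is no real obstacle here beyond carefully tracking the dualization of the subbundle/quotient interpretation on the Grassmannian side; the only mildly subtle point is ensuring that the identifications $\GG_i=\Gr_{d_i}(V_i^\vee)=\Quot_{d_i}(V_i)$ are used consistently with the tautological sequences stated in Ex.~\ref{ex:Grass}, so that the ``$V_-$'' and ``$V_+$'' arguments in \eqref{eqn:Quot:localexpressions} appear in the expected positions.
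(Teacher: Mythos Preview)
Your proof is correct and follows essentially the same approach as the paper: both argue via the functor of points, reinterpreting a $T$-point of the Quot scheme as a map $\sigma_T$ together with a quotient of $V_{+,T}$ (resp.\ $V_{-,T}$) whose composition with $\sigma_T$ (resp.\ $\sigma_T^\vee$) vanishes, and then rewriting this vanishing as a factorization through the tautological subbundle $\shQ_+^\vee$ (resp.\ $\shQ_-^\vee$). Your treatment of $\widehat{\shZ}$ is more explicit than the paper's (which simply says ``similar''), spelling out why the two simultaneous factorizations combine into a single map $\shQ_{-,T}\to\shQ_{+,T}^\vee$; this extra care is justified and the argument is sound.
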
 

\begin{proof} For any $\kk$-scheme $T$, by Thm. \ref{thm:Quot}, $\Quot_{X, d_+}(\sG)(T)$ is the set of (equivalence classes of) pair $(V_{-,T}^\vee \xrightarrow{\varphi} V_{+,T}, V_{+,T}  \twoheadrightarrow \shU_{+,T}^\vee)$ such that the composition $V_{-,T}^\vee  \xrightarrow{\varphi} V_{+,T} \twoheadrightarrow \shU_{+,T}^\vee$ is zero. This set is in canonical bijection with the set of maps $V_{-, T}^\vee \to \shQ_{+,T}^\vee$, where $\shQ_{+,T}^\vee := \Ker(V_{+,T} \twoheadrightarrow \shU_{+,T}^\vee) \subseteq V_{+,T}$.  
By Lem. \ref{lem:Homspace}, the latter set is exactly the set of $T$-points of $\shZ_+=|\sHom_{\GG_+}(V_-^\vee, \shQ_+^\vee)|$; The arguments for $\shZ_-$ and $\widehat{\shZ}$ are similar.
\end{proof}

{\em For the rest of this section, for simplicity, we will assume $\kk$ is a field of characteristic zero unless otherwise stated.}

\begin{lemma} \label{lem:Z_i:generator} $\Db(\shZ_i)$ is spanned and generated by the collection of vector bundles $\{p_i^* \Sigma^{\alpha} \shQ_{i}\}_{\alpha \in B_{\ell_i,d_i}}$, or respectively by $\{p_i^* \Sigma^{\alpha} \shQ_{i}^\vee\}_{\alpha \in B_{\ell_i,d_i}}$. Furthermore the following bundles:
	$$\shT_i = \bigoplus_{\alpha \in B_{\ell_i,d_i}} p_i^* \Sigma^{\alpha} \shQ_{i} \quad \text{and} \quad \shT_i^\vee = \bigoplus_{\alpha \in B_{\ell_i,d_i}} p_i^* \Sigma^{\alpha} \shQ_{i}^\vee$$
are classical tilting bundles of $\shZ_i$, i.e. $\shT_i$ and respectively  $\shT_i^\vee$ classically generates $\Db(\shZ_i)$, and $\Hom_{\shZ_i}(\shT_i,\shT_i[k])=0 =\Hom_{\shZ_i}(\shT_i^\vee,\shT_i^\vee[k])$ for all $k >0$.
\end{lemma}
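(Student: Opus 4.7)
The plan is to exploit the fact that, under the identifications of Lem.~\ref{lem:Sym}, each projection $p_i \colon \shZ_i \to \GG_i$ is the structure map of a vector bundle: concretely, $p_+$ realises $\shZ_+$ as the total space of the rank $m\ell_+$ bundle $V_- \otimes \shQ_+^\vee$ on $\GG_+$, and symmetrically for $p_-$. In particular $p_i$ is affine, $\shZ_i$ is smooth (so $\Db(\shZ_i) = \Perf(\shZ_i)$), and Cauchy's formula gives
\[
p_{+*}\sO_{\shZ_+} = \Sym^\bullet(V_-^\vee \otimes \shQ_+) = \bigoplus_{\ell(\lambda) \le \min(m,\ell_+)} \Sigma^\lambda V_-^\vee \otimes \Sigma^\lambda \shQ_+,
\]
with the $\Sigma^\lambda V_-^\vee$ factors trivial on $\GG_+$, and an analogous formula for $p_{-*}\sO_{\shZ_-}$.

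For the generation and spanning claims, Kapranov's theorem (Thm.~\ref{thm:Kap}) provides the full exceptional collections $\{\Sigma^\alpha \shQ_i\}_{\alpha \in B_{\ell_i, d_i}}$ and $\{\Sigma^\alpha \shQ_i^\vee\}_{\alpha \in B_{\ell_i, d_i}}$ of $\Db(\GG_i) = \Perf(\GG_i)$; in particular, each such collection compactly generates $\Dqc(\GG_i)$. Since $p_i$ is affine (so $p_{i*}$ is conservative), the adjunction argument of Lem.~\ref{lem:span} upgrades this to spanning and compact generation by their pullbacks on $\shZ_i$, which in turn gives spanning and generation of $\Db(\shZ_i) = \Perf(\shZ_i)$.

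The tilting property of $\shT_+$ amounts to the vanishing of $\Ext^k_{\shZ_+}(p_+^* \Sigma^\alpha \shQ_+,\, p_+^*\Sigma^\beta \shQ_+)$ for $k > 0$ and all $\alpha, \beta \in B_{\ell_+, d_+}$. By $(p_+^*, p_{+*})$-adjunction and the Cauchy decomposition above, this reduces to showing
\[
H^k\!\bigl(\GG_+,\, \Sigma^\alpha \shQ_+^\vee \otimes \Sigma^\beta \shQ_+ \otimes \Sigma^\lambda \shQ_+\bigr) = 0 \quad \text{for all } k > 0,
\]
whenever $\alpha,\beta \in B_{\ell_+, d_+}$ and $\ell(\lambda) \le \ell_+$. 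I would handle this by writing $\Sigma^\alpha \shQ_+^\vee = \Sigma^{\tilde\alpha}\shQ_+ \otimes (\det \shQ_+)^{\otimes -\alpha_1}$ with $\tilde\alpha_i := \alpha_1 - \alpha_{\ell_+ - i + 1} \in B_{\ell_+, \alpha_1}$, decomposing $\Sigma^{\tilde\alpha}\shQ_+ \otimes \Sigma^\beta \shQ_+ \otimes \Sigma^\lambda \shQ_+$ via Littlewood--Richardson into a sum of $\Sigma^\nu \shQ_+$ with $\ell(\nu) \le \ell_+$, and twisting back to obtain summands $\Sigma^{\nu - \alpha_1}\shQ_+$ whose minimal weight entry satisfies $\nu_{\ell_+} - \alpha_1 \ge -\alpha_1 \ge -d_+$, landing in the Borel--Bott--Weil ``no higher cohomology'' range recalled in Sec.~\ref{sec:Young_Grassmannian}. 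The analogous argument with the roles of $\alpha$ and $\beta$ swapped handles $\shT_+^\vee$, and the case $i = -$ is entirely symmetric. The main bookkeeping hurdle is this determinantal shift, and the crucial input making it work is precisely the bound $\alpha_1 \le d_+$ built into $\alpha \in B_{\ell_+, d_+}$.
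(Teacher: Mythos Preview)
Your proposal is correct and follows essentially the same route as the paper: generation via Lem.~\ref{lem:span} applied to the affine $p_i$ together with Kapranov's full exceptional collection on $\GG_i$, and the tilting vanishing via adjunction, the Cauchy decomposition of $p_{i*}\sO_{\shZ_i}$, Littlewood--Richardson, and Borel--Bott--Weil. The only cosmetic difference is that the paper first absorbs $\Sigma^\beta\shQ_+ \otimes \Sigma^\lambda\shQ_+$ into a single $\Sigma^\theta\shQ_+$ and then quotes BBW for $\Sigma^\alpha\shQ_+^\vee \otimes \Sigma^\theta\shQ_+$, whereas you carry out the determinantal shift explicitly to verify the same bound $\nu_{\ell_+} - \alpha_1 \ge -d_+$; your version is slightly more detailed but not a different argument.
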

\begin{proof}The generation statements follow directly from Lem. \ref{lem:span}, Lem. \ref{lem:Sym}, Thm. \ref{thm:Kap}. The rest is a characteristic zero version of \cite[Prop. 3.1]{BLV3}: both tilting statements are equivalent to $\Hom_{\shZ_i}(p_i^*\Sigma^{\alpha} \shQ_{i}, p_i^*\Sigma^{\beta} \shQ_{i} [k]) = 0$, $\forall \alpha,\beta \in B_{\ell_i,d_i}$, $\forall k >0$, which follows from
	$$\Hom_{\shZ_i}(p_i^*\Sigma^{\alpha} \shQ_{i}, p_i^*\Sigma^{\beta} \shQ_{i} [k]) = H^k(\GG_i, \Sym(V_{-i} \otimes \shQ_i)\otimes \Sigma^{\alpha} \shQ_{i}^\vee \otimes \Sigma^{\beta} \shQ_{i}) = 0$$
which is zero by Littlewood--Richardson's rule and Borel--Bott--Weil theorem \ref{thm:BBW}. For example, if $i=+$, $ \Sym(V_{-i}^\vee \otimes \shQ_i) = \bigoplus_{m \ge 0} S^m(W \otimes \shQ_+)$ then by Cauchy's formula and Littlewood--Richardson's rule, we only need to show for $\alpha \in B_{\ell_+,d_+}$ and any partition $\theta$,
	$$H^k(\GG_+, \Sigma^{\alpha} \shQ_+^\vee \otimes \Sigma^\theta \shQ_+) = 0, \qquad \forall k>0.$$
This is an easy consequence of Borel--Bott--Weil theorem \ref{thm:BBW}.
\end{proof}

For $i=\pm$, denote by $r_i \colon \widehat{\shZ} \to \shZ_i$, $\pi_i \colon \shZ_i \to X$ and $\widehat{\pi} \colon \widehat{\shZ} \to X$ the natural projections. Therefore we have a commutative correspondence diagram:
	\begin{equation} \label{diagram:Corr}
	\begin{tikzcd}[row sep= 2.5 em, column sep = 4 em]
		\widehat{\shZ} \ar{rd}{\widehat{\pi}}\ar{r}{r_+}  \ar{d}[swap]{r_-} &  \shZ_{+} \ar{d}{\pi_+}\\
		\shZ_{-} \ar{r}{\pi_-} & X
	\end{tikzcd}
	\end{equation}
To analyse the above diagram, notice that for $i=\pm$, the projection $r_i \colon \widehat{\shZ} \to \shZ_i$ factors through the composition of a local complete intersection closed immersion followed by a smooth projective morphism as follows:	
	\begin{equation}\label{diag:fact}
	\begin{tikzcd} \widehat{\shZ} \ar[hook]{r}{j_-} \ar{d}{r_-} & \shZ_- \times \GG_{+} \ar{dl}{pr_-} \\
	\shZ_-
	\end{tikzcd}	
	\qquad\text{and}\qquad
	\begin{tikzcd} \widehat{\shZ} \ar[hook]{r}{j_+} \ar{d}{r_+} & \shZ_+ \times \GG_{-} \ar{dl}{pr_+} \\
	\shZ_+
	\end{tikzcd}
	\end{equation}
where $j_i$ is a inclusion of the zero locus of a regular section $\xi_i$ of the vector bundle $\shU_{-i}^\vee \otimes \shQ_i^\vee$; in particular, $j_{i*} \sO_{\widehat{\shZ}}$ is resolved by the Koszul complex $\shK^\bullet(j_i) = \{ \wedge^k(\shU_{-i} \otimes \shQ_i),\lrcorner \,\xi_i\}_{k=0,\ldots, \ell_i \cdot d_{-i}}$, $i \in \{-,+\}$. Note that above commutative diagram is compatible with their projections to $\GG_i$, and furthermore the composition of $j_i \colon \widehat{\shZ} \hookrightarrow \ZZ_i \times \GG_{-i} $ followed by projection to second factor $\ZZ_i \times \GG_{-i}  \to \GG_{-i}$ agrees with the projection $p_{-i}' \colon \widehat{\shZ}  \to  \GG_{-i}$. Similarly the map $\pi_{\pm}$ could also be factorised as regular immersion followed by smooth projections. To summarise:

\begin{lemma} \label{lem:local:Serre} 
All the maps of diagram \ref{diagram:Corr} are projective and local complete intersection morphisms, with corresponding dualizing complexes given by: 
	\begin{align*} 
	 & \omega_{r_+} = \sO_-(-d_++\delta) \otimes \sO_+(-d_-)[d_- (\ell_- -\ell_+)], \quad\\
	 & \omega_{r_-}  = \sO_-(-d_+) \otimes \sO_+(-d_--\delta)[d_+ (\ell_+ -\ell_-)],  \\
	& \omega_{\pi_+}  =  \sO_+(-\delta)[d_+ (\delta-d_+)],   \qquad \text{and} \qquad
	   \omega_{\pi_-}  = \sO_-(\delta) [d_- (-\delta-d_-)], 	\\
	 &\omega_{\widehat{\pi} } = \sO_-(-d_++\delta) \otimes \sO_+(-d_--\delta)[(d_+-d_-)(\ell_+ - \ell_- )- d_+ d_- ]. 
	\end{align*}
Here we denote $\sO_{\GG_\pm}(1) = \det \shQ_\pm \in \Pic(\GG_\pm)$ the ample line bundle on $\GG_\pm$, and  denote $\sO_\pm (1)$ the pull back of $\sO_{\GG_\pm}(1) $ to the corresponding schemes.
\end{lemma}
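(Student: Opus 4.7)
The plan is to exploit the two descriptions of each of $\shZ_\pm$ and $\widehat{\shZ}$ supplied by Lem.~\ref{lem:Sym}: on the one hand they are explicit vector bundles over $\GG_+$, $\GG_-$, or $\GG_-\times\GG_+$ (hence smooth of known dimension); on the other hand they sit inside the Cohen--Macaulay smooth ambient schemes $X\times\GG_+$, $X\times\GG_-$, and $\shZ_\pm\times\GG_\mp$ (resp.\ $X\times\GG_-\times\GG_+$) as zero loci of canonical sections of tautological vector bundles. A dimension count, together with Cohen--Macaulayness of the ambient schemes, will show that in every case the cutting section has codimension equal to the rank of the target bundle, so the section is Koszul-regular and the closed immersion is a regular (hence l.c.i.) immersion. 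Composing with the remaining smooth projection realises each map in \eqref{diagram:Corr} as a projective l.c.i.\ morphism, which takes care of the qualitative assertion.

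For the quantitative claims, I would assemble the dualizing complexes from two standard inputs: (i) for a Koszul-regular closed immersion cut out by a regular section of a rank-$r$ bundle $\shE$, Example~\ref{eg:lci}\eqref{eg:lci-1} gives $\omega_j=\det\shE\,[-r]$; (ii) for each Grassmannian factor we take $\omega_{\GG_i}=\sO_i(-n_i)[d_i\ell_i]$, obtained from Thm.~\ref{thm:Grass.bundle}\eqref{thm:Grass.bundle-2} after absorbing the trivial line bundle $\det V_i^{\otimes \text{something}}$ (which is pulled back from $\Spec\kk$ and is therefore canonically trivial on our schemes). For $\pi_+$, factor through $\shZ_+\hookrightarrow X\times\GG_+\xrightarrow{\mathrm{pr}_X}X$, where the immersion is the zero locus of the tautological section of $V_-\otimes\shU_+^\vee$ of rank $m d_+$; then $\omega_{\pi_+}=\det(V_-\otimes\shU_+^\vee)\otimes\omega_{\GG_+}[-md_++d_+\ell_+]$, which after using $\det\shU_+^\vee\equiv\sO_+(1)$ mod trivial and the identity $\ell_+-m=\delta-d_+$ simplifies to $\sO_+(-\delta)[d_+(\delta-d_+)]$. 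The analogous calculation for $\pi_-$ uses the section of $\shU_-^\vee\otimes V_+$ of rank $d_-n$ and $\ell_--n=-\delta-d_-$. For $r_\pm$ the factorisations \eqref{diag:fact} are already provided: apply the same recipe to the Koszul section of $\shU_\mp^\vee\otimes\shQ_\pm^\vee$ and the projection $\shZ_\pm\times\GG_\mp\to\shZ_\pm$ (whose relative dualizing complex is pulled back from $\omega_{\GG_\mp}$).

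Finally, $\omega_{\widehat\pi}$ is obtained by the standard composition rule $\omega_{\widehat\pi}=\omega_{r_+}\otimes r_+^*\omega_{\pi_+}$ (or equivalently through $r_-$ and $\pi_-$, which provides a consistency check), and the stated formula then falls out after simplifying
\[
d_-(\ell_--\ell_+)+d_+(\delta-d_+)=(d_+-d_-)(\ell_+-\ell_-)-d_+d_-
\]
using $\delta-d_+=\ell_+-d_--\ell_-$.

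The main obstacle is not conceptual but purely bookkeeping: keeping track of the conventions for line-bundle twists on $\GG_\pm$ (and in particular recognising that the factors involving $\det V$ and $\det W$ are genuinely trivial line bundles over $X$, so may be silently dropped), managing the signs and degree shifts coming from the successive compositions of $\omega$'s, and verifying the small combinatorial identities relating $\ell_\pm$, $d_\pm$ and $\delta$ that are needed to put each formula into the advertised form. Once this accounting is done carefully for one of the cases (say $\pi_+$), the remaining cases are obtained by the same mechanical procedure.
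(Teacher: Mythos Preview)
Your approach is correct and is exactly the one the paper uses: the proof there reads ``This follows directly from above discussion, diagram~\eqref{diag:fact}, and Example~\ref{eg:lci},'' and you have simply spelled out what that sentence entails. The factorisation of each map as a Koszul-regular zero locus followed by a smooth Grassmannian projection, combined with Example~\ref{eg:lci}\eqref{eg:lci-1}--\eqref{eg:lci-2} and the composition rule for dualizing complexes, is precisely the intended argument; your bookkeeping checks out.
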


\begin{proof} This follows directly from above discussion, diagram \eqref{diag:fact}, and Example \ref{eg:lci}.\end{proof}

From this lemma, Example \ref{eg:lci} and Lem. \ref{prop:FM}, we immediately obtain that:

\begin{lemma} \label{lem:local:FM} For any $\shK \in \Perf(\widehat{\shZ})$, the $\Hom_\kk(W,V)$-linear Fourier--Mukai functor
	 $$\Phi_\shK: = r_{+\,*} (\shK \otimes r_{-}^*(\blank) ) \colon \Dqc(\shZ_-) \to \Dqc(\shZ_+)$$
has finite cohomological amplitude, preserves perfect complexes and pseudo-coherent complexes. $\Phi_\shK$ admits both a left adjoint $\Phi_\shK^L$ and a right adjoint $\Phi_\shK^R$ given by:
	\begin{align*}
	\Phi_{\shK}^L = r_{-\,!} (\shK^\vee \otimes r_{+}^*(\blank)),  
	\qquad \Phi_\shK^R =  r_{-\,*} ( \shK^\vee \otimes r_{+}^!(\blank)).
	\end{align*}
\end{lemma}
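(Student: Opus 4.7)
The strategy is to verify that the triple $(r_-, r_+, \shK)$ constitutes a \emph{strong} relative Fourier--Mukai datum $\shZ_- \to \shZ_+$ over $X$ in the sense of Def.~\ref{Def:FM:BS}, after which the entire conclusion will fall out of Lem.~\ref{lem:FM}\eqref{lem:FM-2} combined with the identities relating $(\blank)^!$ and $(\blank)_!$ to their un-shriekened counterparts.

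First I would verify the hypotheses on the two projections $r_\pm$. From Lem.~\ref{lem:local:Serre} (which itself rests on the factorizations \eqref{diag:fact} as Koszul-regular closed immersions followed by projective smooth morphisms over Grassmannians), each of $r_+$ and $r_-$ is projective (hence proper) and a local complete intersection morphism. By Ex.~\ref{eg:lci}, any proper local complete intersection morphism is perfect, and a proper perfect morphism is quasi-perfect by Thm.~\ref{thm:Neeman}\eqref{thm:Neeman-2}. Moreover, Lem.~\ref{lem:local:Serre} computes $\omega_{r_+}$ and $\omega_{r_-}$ explicitly as shifts of line bundles on $\widehat{\shZ}$, so both relative dualizing complexes are invertible, hence in particular perfect. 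Combined with the hypothesis $\shK \in \Perf(\widehat{\shZ})$, this shows that $(r_-, r_+, \shK)$ meets all the conditions of Def.~\ref{Def:FM:BS} for a strong relative Fourier--Mukai datum $\shZ_- \to \shZ_+$ over $X$.

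Next, applying Lem.~\ref{lem:FM}\eqref{lem:FM-2} to this strong datum yields at once: finite cohomological amplitude of $\Phi_\shK$, preservation of $\Perf$ and $\Db$ under all three functors of the adjoint sequence $\Phi_\shK^L \dashv \Phi_\shK \dashv \Phi_\shK^R$ on $\Dqc$, compatibility of the restrictions with the inclusions $\Perf \subseteq \Db \subseteq \Dqc$, and the explicit formulas
\[
\Phi_\shK^L(\blank) = r_{-\,*}\bigl(\shK^\vee \otimes \omega_{r_-} \otimes r_+^*(\blank)\bigr), \qquad \Phi_\shK^R(\blank) = r_{-\,*}\bigl(\shK^\vee \otimes \omega_{r_+} \otimes r_+^*(\blank)\bigr).
\]
The last step is purely cosmetic: invoking the identities $r_{-\,!}(\blank) = r_{-\,*}(\blank \otimes \omega_{r_-})$ and $r_+^!(\blank) = r_+^*(\blank) \otimes \omega_{r_+}$ from Thm.~\ref{thm:Neeman}\eqref{thm:Neeman-2}, the two displayed formulas rewrite as the ones stated in the lemma. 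The $\Hom_\kk(W,V)$-linearity (i.e.\ $X$-linearity) of $\Phi_\shK$ and of its adjoints is automatic since the entire correspondence \eqref{diagram:Corr} and the kernel $\shK$ live over $X$, as noted after Def.~\ref{Def:FM:BS}.

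There is no genuine obstacle here: the content is entirely encoded in the geometric input of Lem.~\ref{lem:local:Serre} (projectivity, local complete intersection, invertible relative dualizing complexes for the maps $r_\pm$) together with the general Fourier--Mukai machinery of Lem.~\ref{lem:FM}. The only care required is in matching the two adjoint formulas via the $\omega$-twisting identities for $r_{-\,!}$ and $r_+^!$.
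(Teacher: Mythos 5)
Your proof is correct and follows essentially the same route as the paper: the paper's proof is simply the one-line remark that the lemma ``immediately'' follows from Lem.~\ref{lem:local:Serre}, Example~\ref{eg:lci}, and Lem.~\ref{lem:FM}, which is exactly what you spell out in more detail (including the correct rewriting of the adjoint formulas via $r_{-\,!} = r_{-\,*}(\blank \otimes \omega_{r_-})$ and $r_+^! = r_+^* \otimes \omega_{r_+}$).
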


\medskip

\noindent \textit{Convention.} From now on throughout this section we will omit the symbols of pullbacks $p_i^*$ in the expressions $p_i^* E \in \Db(\shZ_i)$ for $E \in \Db(\GG_i)$, if there is no confusion.

\medskip
We will need the following lemma of Kapranov:

\begin{lemma}[{\cite[Lem. 1.6]{Kap88}}] \label{lem:Hot} If $E^\bullet, F^\bullet$ are bounded complexes of objects in abelian category $\shA$ such that $\Ext^p(E^i, F^j) = 0$ for all $p>0$ and all $i,j$, then $\Hom_{\Db(\shA)}(E^\bullet, F^\bullet) = \Hom_{{\rm Hot}(\shA)}(E^\bullet,F^\bullet)$, i.e. any morphism between $E^\bullet$ and $F^\bullet$ in $\Db(\shA)$ can be represented by a genuine homotopy class of morphisms between the two complexes $E^\bullet$ and $F^\bullet$. \qed
\end{lemma}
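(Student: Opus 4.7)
The plan is to prove, by a double induction on the number of non-zero terms of $E^\bullet$ and $F^\bullet$, that the natural map
\[
\Psi^n \colon \Hom_{\mathrm{Hot}(\shA)}(E^\bullet, F^\bullet[n]) \to \Hom_{\Db(\shA)}(E^\bullet, F^\bullet[n])
\]
induced by the localization functor $\mathrm{Hot}(\shA) \to \Db(\shA)$ is an isomorphism for \emph{every} $n \in \ZZ$; specializing to $n=0$ yields the lemma. One needs the isomorphism in all degrees because the inductive step will apply the five lemma to the long exact sequences arising from truncation triangles.

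The main reduction uses the \emph{stupid} truncations $\sigma^{\ge k}$ and $\sigma^{\le k}$. For any $k$, the sequence $0 \to \sigma^{\ge k} F^\bullet \to F^\bullet \to \sigma^{\le k-1} F^\bullet \to 0$ is a termwise split short exact sequence of complexes, hence descends to a distinguished triangle in \emph{both} $\mathrm{Hot}(\shA)$ and $\Db(\shA)$. Applying $\Hom_{\mathrm{Hot}(\shA)}(E^\bullet, \blank)$ and $\Hom_{\Db(\shA)}(E^\bullet, \blank)$ produces two long exact sequences connected by $\Psi$. Crucially, the terms of $\sigma^{\ge k} F^\bullet$ and $\sigma^{\le k-1} F^\bullet$ are each a subset of $\{F^j\}$, so the hypothesis $\Ext^p(E^i, F^j) = 0$ for $p > 0$ is automatically inherited by the truncated complexes. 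By induction on the number of non-zero terms of $F^\bullet$ and the five lemma, it suffices to verify $\Psi^n$ is an isomorphism when $F^\bullet = F[-b]$ is concentrated in a single degree. With such $F^\bullet$ fixed, an entirely analogous induction on the length of $E^\bullet$ using stupid truncations of $E^\bullet$ reduces the statement to the case $E^\bullet = E[-a]$ also concentrated in a single degree.

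For this base case, $\Hom_{\mathrm{Hot}(\shA)}(E[-a], F[-b+n])$ equals $\Hom_\shA(E,F)$ when $n = b - a$ and vanishes otherwise, since in $\mathrm{Hot}(\shA)$ there can be no nonzero maps between complexes supported in distinct single degrees. On the other side, $\Hom_{\Db(\shA)}(E[-a], F[-b+n]) = \Ext^{n+a-b}(E,F)$, which equals $\Hom_\shA(E,F)$ for $n = b-a$, vanishes for $n > b-a$ by the hypothesis $\Ext^p(E,F) = 0$ for $p > 0$, and vanishes for $n < b-a$ by convention. Thus $\Psi^n$ is an isomorphism in every degree, completing the base case and hence the proof.

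The main obstacle, though routine, lies in the choice of truncation: \emph{canonical} truncations would force one to control Ext groups into the cohomology objects $H^j(E^\bullet)$, $H^j(F^\bullet)$, which are not covered by the hypothesis. Using stupid truncations is therefore essential, because they introduce only the original terms $E^i, F^j$ into the inductive step, so that the Ext-vanishing condition propagates verbatim through the induction and the five-lemma argument goes through.
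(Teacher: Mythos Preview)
Your proof is correct. The paper does not give its own proof of this lemma: the statement is attributed to Kapranov \cite[Lem.~1.6]{Kap88} and closed with a \texttt{\textbackslash qed}, so there is nothing to compare against beyond the citation. Your double induction via stupid truncations and the five lemma is the standard argument and matches the proof one finds in Kapranov's paper; your remark that canonical truncations would not work here (because the hypothesis controls only $\Ext^p(E^i,F^j)$, not Ext groups into cohomology objects) is a useful observation.
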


The next lemma gives a complete description of the behaviour of the generators of Lem.\ref{lem:Z_i:generator} under the Fourier-Mukai functor induced by the correspondence \ref{diagram:Corr}.  

\begin{lemma}[{\bf Key lemma}]\label{lem:key} 
In the above situation, and assume that $\ell_+ \ge \ell_-$.
\begin{enumerate}[leftmargin=*]
	\item \label{lem:key-1} The functor $r_{-\,*} \, r_{+}^* \colon \Db(\shZ_+) \to \Db(\shZ_-)$ sends the element $\Sigma^{\alpha} \shQ_{+}^\vee$, $\alpha \in B_{\ell_+,d_+}$, of the generating set $\{\Sigma^{\alpha} \shQ_{+}^\vee\}_{\alpha \in B_{\ell_+,d_+}}$ of $\Db(\shZ_+)$ to the following object: 
		$$r_{-\,*} \, r_{+}^* (\Sigma^{\alpha} \shQ_{+}^\vee) = \RR^0 r_{-\,*} \,  r_{+}^* (\Sigma^{\alpha} \shQ_{+}^\vee)  = \begin{cases}   \Sigma^{\alpha}  \shQ_{-}, & \text{if~} \alpha \in B_{\ell_{-}, d_+} \subseteq  B_{\ell_+, d_+}  ; \\  0, & \text{if~} \alpha \in B_{\ell_+, d_+} \backslash B_{\ell_{-}, d_+}. \end{cases} $$
	\item \label{lem:key-2} The functor $r_{-\,!} \, r_{+}^* \colon \Db(\shZ_+) \to \Db(\shZ_-)$ sends the element $\Sigma^{\alpha} \shQ_{+}$, $\alpha \in B_{\ell_+,d_+}$, of the generating set $\{\Sigma^{\alpha} \shQ_{+}\}_{\alpha \in B_{\ell_+,d_+}}$ of $\Db(\shZ_+)$ to the following object:
		$$r_{-\,!} \, r_{+}^* (\Sigma^{\alpha} \shQ_{+})  =  \sH^0(r_{-\,!} \, r_{+}^*  (\Sigma^{\alpha} \shQ_{+})) =  \begin{cases}   \Sigma^{\alpha}  \shQ_{-}^\vee, & \text{if~} \alpha \in B_{\ell_{-}, d_+} \subseteq  B_{\ell_+, d_+} ; \\  0, & \text{if~} \alpha \in B_{\ell_+, d_+} \backslash B_{\ell_{-}, d_+}. \end{cases} $$
	\item \label{lem:key-3} The functor $r_{+\,*} \, r_{-}^* \colon \Db(\shZ_-) \to \Db(\shZ_+)$ sends each element $\Sigma^{\alpha} \shQ_{-}^\vee$, $\alpha \in B_{\ell_-,d_-}$, of the generating set $\{\Sigma^{\alpha} \shQ_{-}^\vee\}_{\alpha \in B_{\ell_-,d_-}}$ of $\Db(\shZ_-)$ to a bounded complex $F^\bullet = \{F^p\}_{p \in [-(\ell_+-\ell_-)d_-,0]}$ of vector bundles:
		$$r_{+\,*} \, r_{-}^*(\Sigma^{\alpha} \shQ_{-}^\vee) \simeq F^\bullet =  \{0 \to F^{-(\ell_+-\ell_-)d_-} \to \ldots \to F^{-1} \to  F^0  \to 0\}$$
		with $F^0 = \Sigma^{\alpha} \shQ_{+}$, and in general each term $F^p$ is given by
			$$F^p =  \bigoplus_{\gamma \in B(\alpha), \, p(\gamma) = p} H^{\ell(\gamma,\alpha)}(\GG_{-}, \Sigma^{\alpha} \shQ_-^\vee \otimes \Sigma^{\gamma^t} \shU_{-}) \otimes \Sigma^{\gamma}  \shQ_+,$$ 
		where $B(\alpha) \subseteq B_{\ell_+,d_-}$ is the set of Young diagram $\gamma$ such that $\gamma^t$ is of the form \eqref{eqn:keylemma:gamma} below, with  
		 cardinality $|B(\alpha)|=\binom{\ell_+ - \ell_- + d_- }{d_-}$, and satisfies 
			$$B(\alpha) \backslash  \{\alpha\} \subseteq B_{\ell_+, d_-} \backslash B_{\ell_-, d_-};$$  
			$p(\gamma) \in [-(\ell_+-\ell_-)d_-,0]$ and $\ell(\gamma, \alpha) \ge 0$
		 are functions on $\gamma$ defined by (\ref{eqn:keylemma:lp}). In particular, if $\ell_+ = \ell_-$, then for all $\alpha \in B_{\ell_-, d_-}$ we have 
		\begin{align*}
		 r_{+\,*} \, r_{-}^*(\Sigma^{\alpha} \shQ_{-}^\vee) = \RR^0 r_{+\,*} \, r_{-}^*(\Sigma^{\alpha} \shQ_{-}^\vee) =  F^0 = \Sigma^{\alpha} \shQ_{+}.
		\end{align*}
	\item \label{lem:key-4} The functor $r_{+\,!} \, r_{-}^* \colon \Db(\shZ_-) \to \Db(\shZ_+)$ sends each element $\Sigma^{\alpha} \shQ_{-}$, $\alpha \in B_{\ell_-,d_-}$, of the generating set $\{\Sigma^{\alpha} \shQ_{-}\}_{\alpha \in B_{\ell_-,d_-}}$ of $\Db(\shZ_-)$ to a bounded complex $G^\bullet = \{G^p\}_{p \in [0, (\ell_+-\ell_-)d_-]}$ of vector bundles:
		$$r_{+\,!}  \, r_{-}^*(\Sigma^{\alpha} \shQ_{-})  \simeq G^\bullet = \{ 0 \to G^0 \to G^1 \to \ldots \to G^{(\ell_+-\ell_-)d_-} \to 0\},$$
		with $G^0 = \Sigma^{\alpha} \shQ_+^\vee$, and in general each term $G^p$ is given by
			$$G^p =  \bigoplus_{\gamma \in B(\alpha), p(\gamma) = -p} H^{\ell(\gamma,\alpha)}(\GG_{-}, \Sigma^{\alpha} \shQ_-^\vee \otimes \Sigma^{\gamma^t} \shU_{-})^\vee \otimes \Sigma^{\gamma}  \shQ_+^\vee,$$
		 where $B(\alpha)$, $p(\gamma) \in [-(\ell_+-\ell_-)d_-,0]$ and $\ell(\gamma, \alpha) \ge 0$ are the same as in (3). In particular, if $\ell_+ = \ell_-$, then for all $\alpha \in B_{\ell_-, d_-}$ we have 
		\begin{align*}
		r_{+\,!} \, r_{-}^*(\Sigma^{\alpha} \shQ_{-}) = \sH^0 (r_{+\,!} \, r_{-}^*(\Sigma^{\alpha} \shQ_{-}) ) =  G^0 = \Sigma^{\alpha} \shQ_{+}^\vee.
		\end{align*}
\end{enumerate}
\end{lemma}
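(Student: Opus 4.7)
The four parts of the lemma are related pairwise by Grothendieck--Serre duality applied to the proper, perfect, local complete intersection maps $r_\pm$, whose relative dualizing complexes are computed in Lem.~\ref{lem:local:Serre}. Combined with the characteristic-zero identification $(\Sigma^\alpha\shQ_\pm)^\vee\simeq\Sigma^\alpha\shQ_\pm^\vee$ and the formula $r_{\pm!}(-)\simeq r_{\pm*}(-\otimes\omega_{r_\pm})$ from Thm.~\ref{thm:Neeman}, parts (2) and (4) will follow formally from parts (1) and (3) after tracking the degree and line-bundle shifts recorded in $\omega_{r_\pm}$. I therefore focus on parts (1) and (3).

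For both I would exploit the factorizations $r_\mp=pr_\mp\circ j_\mp$ of diagram~\eqref{diag:fact}, pushing forward to $\shZ_\mp$. Since $j_\mp$ is the zero locus of a regular section of $\shU_\pm^\vee\otimes\shQ_\mp^\vee$ on $\shZ_\mp\times\GG_\pm$, Ex.~\ref{eg:conv:lci} realises $j_{\mp*}\sO_{\widehat{\shZ}}$ as the right convolution of its Koszul resolution; Cauchy's formula (valid in characteristic zero) expands each term as
\[
\bigwedge^k(\shU_\pm\otimes\shQ_\mp)=\bigoplus_{\sigma\in B_{d_\pm,\ell_\mp},\;|\sigma|=k}\Sigma^\sigma\shU_\pm\boxtimes\Sigma^{\sigma^t}\shQ_\mp.
\]
Combining this with the projection formula and flat base-change along $pr_\mp$ (whose fibre is $\GG_\pm$), the pushforward $r_{\mp*}r_\pm^*(\Sigma^\alpha\shQ_\pm^{(\vee)})$ becomes the right convolution of a complex whose $\sigma$-indexed summand is $H^\bullet(\GG_\pm,\Sigma^\sigma\shU_\pm\otimes\Sigma^\alpha\shQ_\pm^{(\vee)})\otimes\Sigma^{\sigma^t}\shQ_\mp$ placed in Koszul degree $-|\sigma|$; the entire problem thus reduces to cohomology calculations on the Grassmannian fibres.

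For part~(1) the index $\nu\in B_{d_+,\ell_-}$ lies inside Kapranov's box $B_{d_+,\ell_+}$ for $\GG_+$, so the orthogonality relation Thm.~\ref{thm:Kap}(2) applied with $\lambda=\alpha^t$ and $\mu=\nu$ forces $H^\bullet(\GG_+,\Sigma^\nu\shU_+\otimes\Sigma^\alpha\shQ_+^\vee)$ to vanish unless $\nu=\alpha^t$, in which case it equals $\kk$ concentrated in degree $|\alpha|$. The index $\alpha^t$ belongs to $B_{d_+,\ell_-}$ exactly when $\alpha\in B_{\ell_-,d_+}$: in that case the surviving contribution lies in total cohomological degree $-|\alpha^t|+|\alpha|=0$ and identifies with $\Sigma^\alpha\shQ_-$, while otherwise no $\nu$ matches and the convolution is zero. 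This settles part~(1); part~(2) follows by applying Grothendieck--Serre duality and the formula for $\omega_{r_-}$ of Lem.~\ref{lem:local:Serre}. For part~(3) the index $\sigma\in B_{d_-,\ell_+}$ \emph{strictly} contains Kapranov's box $B_{d_-,\ell_-}$ for $\GG_-$ since $\ell_+\ge\ell_-$. The contribution from $\sigma\in B_{d_-,\ell_-}$ is handled by Thm.~\ref{thm:Kap}(2) exactly as before and yields only the degree-zero term $F^0=\Sigma^\alpha\shQ_+$ (coming from $\sigma=\alpha^t$, which lies in $B_{d_-,\ell_-}$ since $\alpha\in B_{\ell_-,d_-}$). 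For each remaining $\sigma\in B_{d_-,\ell_+}\setminus B_{d_-,\ell_-}$ (i.e.\ $\sigma_1>\ell_-$), Thm.~\ref{thm:Kap}(2) no longer applies and I would instead invoke the full Borel--Bott--Weil theorem Thm.~\ref{thm:BBW} on $\GG_-=\Gr_{d_-}(W)$ for the weight $(-\sigma,\alpha)$ of length $m=d_-+\ell_-$: the weight is non-singular precisely when the $d_-$ values $\{-\sigma_k+k+\ell_-\}_{k=1}^{d_-}$ are disjoint from the $\ell_-$ values $\{\alpha_j-j+\ell_-+1\}_{j=1}^{\ell_-}$, in which case BBW yields a single $\kk$-cohomology placed in the Weyl length $\ell(w)$ of the sorting permutation. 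Setting $\gamma:=\sigma^t\in B_{\ell_+,d_-}$, $\ell(\gamma,\alpha):=\ell(w)$ and $p(\gamma):=-|\gamma|+\ell(\gamma,\alpha)$ then reproduces exactly the terms $F^p$ of the statement; the collapse $F^\bullet=F^0$ in the limit $\ell_+=\ell_-$ is immediate since then $B_{d_-,\ell_+}=B_{d_-,\ell_-}$ and only $\sigma=\alpha^t$ contributes.

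The main obstacle will be the combinatorial analysis in part~(3). The disjointness condition rewrites as $\sigma_k-k\notin\{-\alpha_j+j-1:j\in[1,\ell_-]\}$ for every $k$; because the strictly decreasing sequence $(\sigma_k-k)_{k=1}^{d_-}$ takes values in the $(\ell_++d_-)$-element range $[-d_-,\ell_+-1]$ and $\ell_-$ of those slots are forbidden, the admissible $\sigma$'s are in bijection with the $d_-$-element subsets of the remaining $\ell_+-\ell_-+d_-$ slots, yielding $|B(\alpha)|=\binom{\ell_+-\ell_-+d_-}{d_-}$ as claimed. One must further verify the precise formulas for $\gamma^t$ (read off from which slots are chosen) and for $p(\gamma)$ and $\ell(\gamma,\alpha)$ (via the Weyl length of the sorting permutation), and confirm $p(\gamma)\in[-(\ell_+-\ell_-)d_-,0]$ together with $\ell(\gamma,\alpha)\ge 0$ by direct inspection. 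To guarantee that the Koszul differentials assemble the surviving BBW contributions into the specific complex $F^\bullet$ rather than a quasi-isomorphic smaller object, I would invoke $\GL$-equivariance together with the pairwise non-isomorphism of the distinct summands $\Sigma^\gamma\shQ_+$, which forbids any non-trivial cancellation between different BBW contributions. A useful sanity check is the case $d_-=0$ (so $\alpha=\emptyset$), where $F^\bullet$ recovers the classical Lascoux resolution of the determinantal ideal $\sI_{\DD_{\ell_+}}$.
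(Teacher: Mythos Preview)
Your overall strategy---Koszul resolution of $j_{\mp*}\sO_{\widehat{\shZ}}$, Cauchy decomposition, then Borel--Bott--Weil on the Grassmannian fibre---is exactly the paper's, and your reduction of parts~(2) and~(4) to~(1) and~(3) via $(r_{\pm*}E)^\vee\simeq r_{\pm!}(E^\vee)$ is a legitimate streamlining of the paper's parallel treatment of both cases. The BBW combinatorics you outline for part~(3) (the disjointness criterion, the slot-counting that gives $|B(\alpha)|=\binom{\ell_+-\ell_-+d_-}{d_-}$) matches the paper's computation.

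There is, however, a genuine gap at the step where you pass from ``right convolution of a complex over $\Db(\shZ_+)$ whose terms are shifted vector bundles'' to ``the specific complex $F^\bullet$ of vector bundles''. After pushing forward the Koszul complex along $pr_+$, what you have is a Postnikov system whose associated graded pieces are the shifted bundles $H^{\ell(\gamma,\alpha)}(\GG_-,\ldots)\otimes\Sigma^\gamma\shQ_+$; the convolution of such a system is \emph{a priori} only an iterated extension, not a complex with those terms. Your appeal to ``$\GL$-equivariance together with pairwise non-isomorphism of the $\Sigma^\gamma\shQ_+$'' addresses a different issue (minimality, i.e.\ absence of acyclic subcomplexes) and does not by itself force the Postnikov system to straighten into a genuine complex: for that one needs the vanishing of higher $\Ext$'s among the pieces. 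The paper handles this by first establishing that $\bigoplus_\gamma\Sigma^\gamma\shQ_+$ is a \emph{tilting} bundle on $\shZ_+$ (Lem.~\ref{lem:Z_i:generator}, using that $\shZ_+$ is an affine bundle over $\GG_+$ together with Kapranov's strong exceptionality on $\GG_+$), and then invoking Kapranov's lemma (Lem.~\ref{lem:Hot}): since $\Ext^{>0}$ vanishes between all terms, every map in the Postnikov tower lifts to a genuine chain map, and the convolution is realised inductively as an iterated mapping cone, i.e.\ a complex with the stated terms. You should replace the equivariance argument with this tilting argument.
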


\begin{proof}
We begin by proving something slightly more general. Fix $i \in \{-,+\}$, and consider the object of the form $p_{-i}^*\, E^\bullet \in \Db(\shZ_{-i})$, where $E^\bullet \in \Db(\GG_{-i})$, and we want to find a nice perfect complex representatives for $r_{i\,*} \, r_{-i}^*(p_{-i}^* \,E^\bullet)$ and $r_{i\,!} \, r_{-i}^*(p_{-i}^* \,E^\bullet)$ in $\Db(\shZ_{i})$. Notice that $r_{-i}^*(p_{-i}^* \,E^\bullet) = p_{-i}'^* E^\bullet = j_{i}^* (\sO_{\shZ_{i}} \boxtimes E^\bullet)$, therefore $j_{i\,*} (r_{-i}^* E^\bullet) = j_{i\,*} \sO_{\widehat{\shZ}} \otimes E^\bullet$ and $j_{i\,!} (r_{-i}^* E) = j_{i\,!} \sO_{\widehat{\shZ}} \otimes E^\bullet$ by projection formula. Therefore by Example \ref{eg:conv:lci}, using the stupid truncation of Koszul resolution $\shK^\bullet(j_i)$ and the dual $\shK^\bullet(j_i)^\vee$, we obtain two canonical Postnikov systems attached to the complex over $\Db(\shZ_{i} \times \GG_{-i})$,
	\begin{align*}
		P^\bullet = \{ \cdots \to P^{-k}:=\wedge^k (\shU_{-i} \boxtimes \shQ_i) \otimes E^\bullet \to \cdots \}_{k=0,1,\ldots, \ell_{i} d_{-i}} \\
		Q^\bullet = \{  \cdots \to Q^{k}:=\wedge^k (\shU_{-i}^\vee \boxtimes \shQ_i^\vee) \otimes E^\bullet \to \cdots\}_{k=0,1,\ldots, \ell_{i} d_{-i}}.
	\end{align*}
whose convolutions are $j_{i\,*} (r_{-i}^* E^\bullet)$ and respectively $j_{i\,!} (r_{-i}^* E^\bullet)$. By Cauchy's formula, the terms $P^{-k}$ and $Q^k$ are furthermore decomposed as direct sums of 
	\begin{align*} 
	P^{-k} =  \bigoplus_{\gamma \in B_{\ell_{i}, d_{-i}}, |\gamma| = k}  (\Sigma^{\gamma^t} \shU_{-i} \boxtimes \Sigma^{\gamma} \shQ_i) \otimes  E^\bullet \quad \text{and} \quad
	Q^k =  \bigoplus_{\gamma \in B_{\ell_{i}, d_{-i}}, |\gamma| = k} (\Sigma^{\gamma^t} \shU_{-i}^\vee \boxtimes \Sigma^{\gamma} \shQ_i^\vee) \otimes  E^\bullet.
	\end{align*}
Taking $pr_{i\, *}$ and $pr_{i\, !}$ to respectively $P^\bullet$ and $Q^\bullet$, by Lem. \ref{lem:conv} we obtain two canonical Postnikov systems attached to the following complexes over $\Db(\shZ_i)$,
	\begin{align*}
	X^\bullet &= \{ \cdots \to X^{-k} = \bigoplus_{\gamma \in B_{\ell_{i}, d_{-i}}, |\gamma| = k} \mathbf{H}^\bullet(\GG_{-i}; E^\bullet \otimes \Sigma^{\gamma^t}  \shU_{-i})  \otimes p_{i}^* \Sigma^{\gamma}  \shQ_{i} \to  \cdots  \}_{k \in [0, \ell_{i} d_{-i}]}, \\
	X'^\bullet &= \{ \cdots \to X'^{k} = \bigoplus_{\gamma \in B_{\ell_{i}, d_{-i}}, |\gamma| = k} \mathbf{H}^\bullet(\GG_{-i}; E^\bullet \otimes \Sigma^{\gamma^t}  \shU_{-i}^\vee \otimes \omega_{\GG_{-i}})  \otimes p_{-}^* \Sigma^{\gamma}  \shQ_{i}^\vee \to  \cdots  \}_{k \in [0, \ell_{i} d_{-i}]},
	\end{align*}
whose convolutions are exactly $r_{i\,*} \, r_{-i}^*(p_{-i}^* \,E^\bullet)$ and respectively $r_{i\,!} \, r_{-i}^*(p_{-i}^* \,E^\bullet)$. Following a key idea of Kapranov \cite{Kap88}, we show that these convolutions can be represented by a genuine complex of vector bundles whose terms are direct sums of copies of $p_{i}^* \Sigma^{\gamma}  \shQ_{i}$ and respectively $p_{i}^* \Sigma^{\gamma}  \shQ_{i}^\vee$. Without loss of generality, we only need to show for $X^\bullet$. We claim that this desired statement holds for any $Y$-term of the Postnikov system in Def. \ref{def:Postnikov} attached to $X^\bullet$, 
hence the result holds for the convolution $Y = Y^a \simeq r_{i\,*} \, r_{-i}^*(p_{-i}^* \,E^\bullet)$. (Notice in this case $[a,b] = [- \ell_{i} d_{-i},0]$). The base case $Y^b = X^b[-b]$ is trivial. For the induction step, assume it holds for $Y^i$, then consider the triangle $X^{i-1}[-i] \to Y^i \to Y^{i-1}$ in Def. \ref{def:Postnikov}. By Lem. \ref{lem:Hot} and Lem. \ref{lem:Z_i:generator}, we could represent the morphism $X^{i-1}[-i] \to Y^i$ by a genuine morphism of complexes, and hence represent $Y^{i-1}$ by the mapping cone. This implies that the desired statement also holds for $Y^{i-1}$. Hence by reverse induction the claim is proved.

Therefore we have represented $r_{i\,*} \, r_{-i}^*(p_{-i}^* \,E^\bullet)$ and $r_{i\,!} \, r_{-i}^*(p_{-i}^* \,E^\bullet)$ in $\Db(\shZ_i)$ by complexes of vector bundles $F^\bullet$ and respectively $G^\bullet$, whose terms are canonically determined by $E^\bullet$:
	\begin{align}
	F^p & =  \bigoplus_{\gamma \in B_{\ell_{i}, d_{-i}}} \Ext^p_{\GG_{-i}}(E^{\bullet \, \vee}, \Sigma^{\gamma^t}  \shU_{-i} [|\gamma|] ) \otimes p_i^* \Sigma^{\gamma}  \shQ_i;  \label{eqn:keylemma:F} \\ 
	G^p &= 
	   \bigoplus_{\gamma \in B_{\ell_{i}, d_{-i}}} \Ext^{-p}_{\GG_{-i}}(E^{\bullet}, \Sigma^{\gamma^t}  \shU_{-i} [|\gamma|] )^\vee \otimes p_i^* \Sigma^{\gamma}  \shQ_i^\vee.    \label{eqn:keylemma:G} 
	\end{align}
Notice the differentials of the complexes $F^\bullet$ and $G^\bullet$ are not canonically determined by $E^\bullet$, but the homotopy classes of the differentials are.

Next, we compute the non-zero terms $F^p$ and $G^p$ appearing in (\ref{eqn:keylemma:F}, \ref{eqn:keylemma:G}) in the case $E^\bullet = \Sigma^\alpha \shQ_{-i} $ and $E^\bullet = \Sigma^\alpha \shQ_{-i}^\vee$ respectively. Notice our assumption $\ell_+ \ge \ell_-$ implies $B_{\ell_-, d_+} \subseteq B_{\ell_+, d_+}$. In case \eqref{lem:key-1}, if we take $E = \Sigma^{\gamma}  \shQ_+^\vee$, then by Thm. \ref{thm:Kap}  \eqref{thm:Kap-2}, the only non-vanishing term of $F^p$'s is $F^0 = \Sigma^{\alpha}  \shQ_-$. In case \eqref{lem:key-2}, if we take $E = \Sigma^{\gamma} \shQ_+$, then by Thm. \ref{thm:Kap} \eqref{thm:Kap-2}, the only non-vanishing term of $G^p$'s is $G^0 = \Sigma^{\alpha}  \shQ_-^\vee$. Hence \eqref{lem:key-1} and \eqref{lem:key-2} are proved. 

In the cases \eqref{lem:key-3} and \eqref{lem:key-4}, to compute all the possible non-zero $F^p$ and $G^p$ terms, we need to find (for a given $\alpha \in B_{\ell_-, d_-}$) all possible $\gamma \in B_{\ell_+, d_-}$ and all possible $k$ such that 
	$$ \Ext^k_{\GG_{-}}( \Sigma^{\alpha}  \shQ_- , \Sigma^{\gamma^t}  \shU_{-})  = H^k(\GG_-, \Sigma^{\alpha}  \shQ_-^\vee \otimes  \Sigma^{\gamma^t}  \shU_{-} ) \ne 0.$$

From Borel--Bott--Weil theorem \ref{thm:BBW}, this happens exactly when $\lambda + \rho$ is nosingular, where $\lambda = (-\gamma^t, \alpha)$, and $\lambda + \rho  = (-\gamma^t + \rho^{(1)}, \alpha + \rho^{(2)})$, where 
	\begin{align*}
	 -\gamma^t + \rho^{(1)} & =  (d_-+\ell_- -\gamma^t_{d_-}, \ldots, 2 +\ell_- - \gamma^t_{2}, 1+ \ell_-  -\gamma^t_{1}), \\
	 \alpha + \rho^{(2)}  & = (\ell_- + \alpha_1, \ldots,  2 + \alpha_{\ell_--1}, 1 + \alpha_{\ell_-}).
	\end{align*}
The first part $-\gamma^t + \rho^{(1)}$ of $\lambda + \rho$, is a strictly decreasing sequence contained in the range $[\ell_--\ell_++1, d_-+ \ell_-]$, and the second part $\alpha + \rho^{(2)}$ is a strictly decreasing sequence contained in the range $[1,d_- + \ell_-]$. Therefore all possible choices for $-\gamma^t + \rho^{(1)}$ such that $\lambda$ is nonsingular is to choose $x$ elements from the set $[\ell_- - \ell_+ + 1,0]$ and $y$ elements from the set 
	$$[1,d_- + \ell_-] \backslash \{\ell_- + \alpha_1, \ldots, 1 + \alpha_{\ell_-} \}  = \{\ell_- + d_- - \alpha^t_{d_-}, \ldots,  \ell_- + 1 - \alpha^t_{1}\},$$
for some integers $x,y \ge 0$ such that $x+y=d_-$. Hence all possible choices of $\gamma^t$ are 
	\begin{align}\label{eqn:keylemma:gamma}
		\gamma^t = (\ell_- + i_1, \ldots, \ell_- + i_x; ~ x+1 - j_1 + \alpha^t_{j_1}, \ldots, x+y - j_y + \alpha^t_{j_y})
	\end{align}
for any two sequences of integers $i_1, \ldots, i_x$ and $j_1, \ldots, j_y$ such that
	$$\ell_+ - \ell_- \ge i_1 \ge i_2 \ge \ldots \ge i_x \ge x; \quad 1 \le j_1 < j_2 < \ldots < j_y \le d_-.$$
Denote $B(\alpha)$ the set of $\gamma$ such that $\gamma^t$ has the form \eqref{eqn:keylemma:gamma}, then it is clear from \eqref{eqn:keylemma:gamma} that $\alpha \in B(\alpha)$ and $B(\alpha) \subseteq \{\alpha\} \cup B_{\ell_+, d_-} \backslash B_{\ell_-, d_-}$. Notice that 
	$$\tau := (i_1 - x \ge \ldots \ge i_x -x) \quad \text{and} \quad \theta := (x+1-j_1 \ge  \ldots \ge x+y - j_y)$$
 can be any Young diagram $\tau \in B_{x, \ell_+ - \ell_- - x}$ and $\theta \in B_{y,x}$, hence $|B(\alpha)| = \binom{\ell_+ - \ell_- + d_- }{d_-}$.

Let $\gamma \in B(\alpha)$ be of the form \eqref{eqn:keylemma:gamma}, then 
	$$|\gamma| = (\ell_- + x) + (\alpha_{j_1}^t + \ldots + \alpha_{j_y}^t) + |\theta| + |\tau|.$$
Let $w$ be the permutation of BBW theorem \ref{thm:BBW}, i.e. the unique permutation of entries of $(\lambda + \rho)$ such that $w\cdot(\lambda + \rho)$ is strictly decreasing. Denote $\ell(\gamma;\alpha) = \ell(w)$ be the length of $w$. To compute $\ell(\gamma;\alpha)$, first notice that as 
	$$\lambda + \rho = (\ell_-+j_y -\alpha^t_{j_y}, \ldots, \ell_- + j_1  - \alpha^t_{j_1}; ~ x-i_x, \ldots, 1- i_1; ~ \ell_- + \alpha_1, \ldots, 1 + \alpha_{\ell_-}),$$
it  requires exactly $\ell_-  x$ permutations interchanging the last two parts, after which it becomes
	$$(\ell_-+j_y -\alpha^t_{j_y}, \ldots, \ell_- + j_1  - \alpha^t_{j_1}; ~ \ell_- + \alpha_1, \ldots, 1 + \alpha_{\ell_-}; ~ x-i_x, \ldots, 1- i_1 ).$$
If we take the subtraction of each term of the first two parts of above partition from $(\ell_- + d_- + 1)$, we obtain two {\em increasing} sequences:
	$$\alpha^t_{j_y} + d_- + 1 - j_y< \ldots <\alpha^t_{j_1} + d_-+1 - j_1;  \quad  1 + (d_- -  \alpha_{1}) < \ldots < \ell_- + (d_- - \alpha_{\ell_-}).$$
The second sequence is fixed, the first is nothing but $y$ choices from the sequence:
	$$(1 \le)~ 1+ \alpha_{d_-}^t < 2 + \alpha_{d_- -1}^t < \ldots <  d_- + \alpha_{1}^t ~(\le d_- + \ell_-).$$
Now we can consider the inverse permutation problem, which is: to start from the whole sequence 
	$1+ \alpha_{d_-}^t < 2 + \alpha_{d_- -1}^t < \ldots <  d_- + \alpha_{1}^t$
inside $[1,d_- + \ell_-]$, choose $y$ elements, delate the rest elements of the above sequence, then permute these $y$ element to the far left of all remaining numbers. Starting from the smallest element of $\alpha^t_{j_y} + d_- + 1 - j_y$, for each step, it requires exactly $\alpha^t_{j_s}$ transpositions to permute $\alpha^t_{j_s} + d_- + 1 - j_s$ to the far left of all remaining numbers. It is easy to see this is the minimal decomposition of the permutation, hence 
	\begin{equation}\label{eqn:keylemma:lp}
	\left\{ \begin{split}
	& \ell(\gamma;\alpha) = \ell_-  x + \alpha_{j_1}^t + \ldots + \alpha_{j_y}^t, \\
	& p(\gamma)  : = \ell(\gamma;\alpha) - |\gamma|  = -x^2 - |\tau| -  |\theta| = -x^2  - \sum_{s=1}^x (i_s - x) - \sum_{s=1}^y (x - s- j_s),
	\end{split}
	\right.
	\end{equation}
where $0 \le x \le \min\{\ell_+ - \ell_-, d_-\}$. Hence the range for the function $p(\gamma)$ is:
	$$p(\gamma) \in [-x(\ell_+ - \ell_- +d_- - x), -x^2] \subseteq [ - (\ell_+ - \ell_-) d_-, 0]$$
(Notice the minimal value $p_{\min} = - (\ell_+ - \ell_-) d_-$ is achieved precisely when $\gamma$ is given by $x =  \min\{\ell_+ - \ell_-, d_-\}$, and $\tau$ and $\theta$ to be the maximal Young diagrams in the corresponding boxes.) The term with factor $\Sigma^{\gamma} \shQ_+$ in (\ref{eqn:keylemma:F}) (resp. $\Sigma^{\gamma} \shQ_+^\vee$ in (\ref{eqn:keylemma:G})) appears exactly in degree $p = p(\gamma)$ (resp. $p = -p(\gamma)$), hence it is clear that 
	$$F^0 =  \Sigma^{\alpha} \shQ_{+},   \quad G^0 =  \Sigma^{\alpha} \shQ_{+}^\vee \quad \text{and} \quad F^p = G^{-p} = 0 \quad \text{if}  \quad p \notin [- (\ell_+ - \ell_-) d_-, 0].$$
If $\ell_+ = \ell_-$, then $F^\bullet = F^0, G^\bullet = G^0$. Now all statements of the lemma are proved.
\end{proof}

In the cases \eqref{lem:key-3} and \eqref{lem:key-4}, above proof of the lemma gives an effective way to compute each term $F^{p}$ and $G^{p}$. We compute a few special cases:

\begin{example}[$F^{-1}$ and $G^{1}$] In the same situation of Lem.  \ref{lem:key}, and we compute the terms $F^{-1}$ and $G^{1}$. If $\ell_+ = \ell_-$, then $F^\bullet = F^0, G^\bullet = G^0$ and in particular $F^{-1} = G^{1} = 0$. If $\ell_+ > \ell_-$, then the only term contributing to $F^{-1}$ and $P^{1}$ is the case $x=1$, $\theta= \tau = 0$, i.e. 
	$\gamma^{(1)} = (\ell_1 + 1, \alpha_2^t, \ldots, \alpha_{d_-}^t )^t.$
It remains to compute the space
	$H^{\ell(\gamma,\alpha)}(\GG_-, \Sigma^{\alpha} \shQ_-^\vee \otimes \Sigma^{\gamma^{(1), t}} \shU_- ),$
where $\ell(\gamma^{(1)} ,\alpha) = \ell_- + \alpha_2^t + \ldots + \alpha_{d_-}^t$. In this case
	$$\lambda + \rho = (\ell_- + d_- - \alpha_{d_-}^t, \ldots, \ell_- + 2 -  \alpha_{2}^t, 0; \ell_- + \alpha_1, \ell_--1 + \alpha_2 ,\ldots, 1+ \alpha_{\ell_-})$$
Therefore we see $w. (\lambda + \rho)$ is a strictly decreasing sequence in $[0, \ell_- + d_-]$ with only one term missing which is $\ell_- + 1 - \alpha_t^1$. Therefore 
	$$w. (\lambda + \rho)  - \rho = (\underbrace{0, \ldots, 0}_{d_--1+\alpha_1^t}, \underbrace{-1,-1,\ldots,-1}_{\ell_-+1 - \alpha_1^t}).$$
Hence by Borel--Bott--Weil theorem \ref{thm:BBW} for $\GG_- = \Gr_{d_-}(W)$ we have
	$$H^{\ell(\gamma,\alpha)}(\GG_-, \Sigma^{\alpha} \shQ_-^\vee \otimes \Sigma^{\gamma^{(1), t}} \shU_- ) = \Sigma^{w. (\lambda + \rho)  - \rho} W^\vee = \wedge^{\ell_-+1 - \alpha_1^t} W.$$
Therefore  
	$$F^{-1} =  \wedge^{\ell_-+1 - \alpha_1^t} \, W \otimes \Sigma^{(\ell_- + 1, \alpha_2^t, \ldots, \alpha_{d_-}^t )^t} \shQ_+ \quad \text{and} \quad G^{1} = \wedge^{\ell_-+1 - \alpha_1^t} \, W^\vee \otimes \Sigma^{(\ell_- + 1, \alpha_2^t, \ldots, \alpha_{d_-}^t )^t} \shQ_+^\vee.$$
\end{example}

\begin{example}[{\bf Lascoux complexes}] \label{ex:Lascoux} Assume $\ell_+ \ge \ell_-$, and we apply Lem. \ref{lem:key} to the structure sheaves $\sO_{\shZ_{\pm}} \in \Db(\shZ_{\pm})$. The parts \eqref{lem:key-1} and \eqref{lem:key-2} of Lem. \ref{lem:key} imply that:
	$$ r_{-\,*}  \sO_{\widehat{\shZ} } \simeq \RR^0 r_{-\,*}  \sO_{\widehat{\shZ} } \simeq  \sO_{\shZ_-} \quad \text{and} \quad r_{-\,!}  \sO_{\widehat{\shZ}} \simeq  \sH^0( r_{-\,!}  \sO_{\widehat{\shZ}})\simeq \sO_{\shZ_-}.$$
This result reflects the fact that the map $r_- \colon \widehat{\shZ} \to \shZ_-$ is surjective and has rationally connected fibres. On the other hand, \eqref{lem:key-3} and \eqref{lem:key-4} of Lem. \ref{lem:key} give rise to resolutions:
	\begin{align*}
	r_{+\,*} \sO_{\widehat{\shZ} } & \simeq  \{F^{-(\ell_+-\ell_-)d_-} \to \ldots \to F^{-k} \to \ldots \to F^1 \to F^0 \}_{k \in [0, (\ell_+-\ell_-)d_-]}, \\
	r_{+\,!}  \sO_{\widehat{\shZ} }  & \simeq \{G^0 \to G^1 \to \ldots  \to G^k \to \ldots \to G^{(\ell_+-\ell_-)d_-} \}_{k \in [0, (\ell_+-\ell_-)d_-]},
	\end{align*}
where the terms $F^{-k}$ and $G^k$ are explicitly given by \eqref{eqn:ex:Lascoux} below. More concretely, let $x,y$ be any pair of integers such that $x \ge 0, y \ge 0, x+y = d_-$, and let $\tau \in B_{x, \ell_+ - \ell_- - x}$ and $\theta \in B_{y,x}$ be any Young diagrams. Let $\gamma(\tau,\theta)$ be the Young diagram given by formula \eqref{eqn:keylemma:gamma}, i.e.
	$$\gamma(\tau, \theta)^t  = (\ell_- + x + \tau_1, \ldots, \ell_- +x + \tau_x; ~ \theta_1, \ldots, \theta_y).$$ 
To utilise BBW theorem \ref{thm:BBW}, we also set $\gamma^\natural(\tau, \theta) = -(w\cdot(\gamma(\tau,\theta) + \rho) - \rho)$, then by the computations in the proof of Lem. \ref{lem:key} we have:
	$$
	\gamma^\natural(\tau, \theta)^t =(\ell_- + x + \theta_1^t, \ldots,  \ell_- +x + \theta_x^t; ~\tau_1^t, \ldots, \tau_{\ell_+ - \ell_- - x}^t).
	$$
Equivalently, taking transposes, $\gamma(\tau, \theta)$ and $\gamma^\natural(\tau, \theta)$ are given by:
		\begin{equation}\label{eqn:ex:Lascoux:gamma}
		\left\{ 
		\begin{split}
		\gamma(\tau, \theta) &= (x + \theta_1^t, \ldots, x + \theta_{x}^t;  ~ \underbrace{ x, \ldots, x}_{\text{$\ell_-$-terms}};~\tau_1^t, \ldots, \tau_{\ell_+ - \ell_- - x}^t);\\
		\gamma^\natural(\tau, \theta) &= (x+ \tau_1, \ldots, x+ \tau_x;~ \underbrace{ x, \ldots, x}_{\text{$\ell_-$-terms}}; ~ \theta_1, \ldots, \theta_y).
		\end{split}
	\right.
	\end{equation}
Then $B(0)$ consists of exactly Young diagrams $\gamma(\tau, \theta)$ of the form  \eqref{eqn:ex:Lascoux:gamma} for some $x,y \ge 0$, $x+y = d_-$, $(\tau, \theta) \in B_{x, \ell_+ - \ell_- - x} \times B_{y,x}$. Furthermore, \eqref{eqn:keylemma:lp} implies:
	$$\ell(\gamma(\tau,\theta); (0))= \ell_- \cdot x, \qquad p(\gamma(\tau,\theta)) = -x^2 - |\tau| -  |\theta|.$$
Putting these together and using Thm. \ref{thm:BBW}, the terms $F^{-k}$ and $G^k$ are given by: $F^{-k} = 0 = G^k$ if $k \not \in [0, (\ell_+ - \ell_-)d_-]$, and for $k \in [0, (\ell_+ - \ell_-)d_-]$:
	\begin{equation}\label{eqn:ex:Lascoux}
	\left\{ 
		\begin{split}
	F^{-k} = & \bigoplus_{x= 0}^{\min\{\ell_+-\ell_-, d_-\}} \, \bigoplus_{(\tau, \theta) \in B_{x, \ell_+ - \ell_- - x} \times B_{d_- - x,x} , \,  |\tau| + |\theta| = k - x^2} \,  \Sigma^{\gamma^\natural(\tau,\theta)} W \otimes \Sigma^{\gamma(\tau,\theta)} \shQ_+; \\
	G^{k} =  & \bigoplus_{x= 0}^{\min\{\ell_+-\ell_-, d_-\}} \, \bigoplus_{(\tau, \theta) \in B_{x, \ell_+ - \ell_- - x} \times B_{d_- - x,x}, \,  |\tau| + |\theta| = k - x^2} \,  \Sigma^{\gamma^\natural(\tau,\theta)} W^\vee \otimes \Sigma^{\gamma(\tau,\theta)} \shQ_+^\vee.
	\end{split}
	\right.
	\end{equation}
(Here $\gamma(\tau, \theta)$ and $\gamma^\natural(\tau, \theta)$ are given by \eqref{eqn:ex:Lascoux:gamma} above.) In particular, the first few terms are: $F^0 = G^0 = \sO_{\shZ_-}$,  $F^{-1} = \wedge^{\ell_- + 1} W \otimes \wedge^{\ell_- + 1} \shQ_+$, $G^{1} =  \wedge^{\ell_- + 1} W^\vee \otimes \wedge^{\ell_- + 1} \shQ_+^\vee$. 

The ``last" terms $F^{-(\ell_+ - \ell_-)d_-}$ and $G^{(\ell_+ - \ell_-)d_-} =( F^{-(\ell_+ - \ell_-)d_-})^\vee$ are given as follows.
	\begin{itemize}
		\item If $\ell_+ - \ell_- \le d_-$, then the minimal value of $p(\gamma(\tau,\theta))$ is achieved precisely when $x =\ell_+ - \ell_-$, $y = d_- - (\ell_+ - \ell_-)$, $\tau = (0)$ and $\theta = (x^y)$. Hence:
			\begin{align*}
			F^{-(\ell_+ - \ell_-)d_-} 
			 = (\det W )^{\otimes (\ell_+ - \ell_-)} \otimes (\det \shQ_+)^{\otimes (\ell_+ - \ell_-)} \otimes \Sigma^{((d_- - \ell_+ + \ell_-)^{\ell_+ - \ell_-})} \shQ_+.
			\end{align*}
		\item If $\ell_+ - \ell_- \ge d_-$, then the minimal value of $p(\gamma(\tau,\theta))$ is achieved precisely when $x =d_- $, $y = 0$, $\tau =( (\ell_+ - \ell_- - d_-)^{d_-})$ and $\theta =(0)$. Hence:
		\begin{align*}
			F^{-(\ell_+ - \ell_-)d_-} 
			 = (\det W )^{\otimes d_-} \otimes (\det \shQ_+)^{\otimes d_-} \otimes \Sigma^{(( \ell_+ - \ell_- - d_-)^{d_-})} W.
			\end{align*}
	\end{itemize}

Notice that in the special case when $d_+ = 0$, $\ell_+ = n$, $d_- = m - \ell_-$, then $\shZ_+ = X$, $\widehat{\shZ} = \shZ_-$, $\shQ_+ = V^\vee \otimes \sO_{X}$, and $\shZ_-$ is the resolution of the degeneracy locus $\DD_{\ell_-} \subseteq X$ (which is the locus where the tautological map has rank $\le \ell_-$, see \S \ref{sec:deg}), and $r_{+\,*} \sO_{\widehat{\shZ}} = \sO_{\DD_{\ell_-}} \simeq F^\bullet$. Setting $\shQ_+ = V^\vee \otimes \sO_{X}$ in \eqref{eqn:ex:Lascoux}, our complex $F^\bullet$ is exactly the famous {\em Lascoux resolution} of $\sO_{\DD_{\ell_-}}$ of \cite{Lasc}, \cite[\S 6.1]{Wey}; The two cases above for the expression of the ``last" term $F^{-(m - \ell_-)(n-\ell_-)}$ correspond to the two cases $n \le m$ and $n \ge m$. If we further set $\ell_- = m-1$ and assume $m \le n$, then our $F^\bullet$ reduces to the Eagon--Northcott complex \cite[\S A.2.6.1]{Ei}, \cite[\S B.2]{Laz04}. Therefore the complexes $F^\bullet$ and $G^\bullet$ of \eqref{eqn:ex:Lascoux} are generalisations of Lascoux resolutions.
\end{example}

In the statements \eqref{lem:key-3} and \eqref{lem:key-4} of Lem. \ref{lem:key}, we could twist the generators by line bundles $\sO(j)$ for $j \in [0, \ell_+ - \ell_-]$. For example, the part Lem. \ref{lem:key} \eqref{lem:key-3} can be generalized to:
\begin{lemma} \label{lem:key:twist} In the situation of Lem. \ref{lem:key} \eqref{lem:key-3},  for any given $j \in [0,\ell_+  - \ell_-]$,  $r_{+\,*} \, r_{-}^* (\Sigma^{\alpha} \shQ_{-}^\vee  \otimes \sO_- (j))$ is isomorphic to a bounded complex $F^\bullet = \{F^p\}_{p \le 0}$ of vector bundles with 
		$$F^p =  \bigoplus_{\gamma \in B^{\{j\}}(\alpha), \, p^{\{j\}}(\gamma) = p} H^{\ell^{\{j\}}(\gamma;\alpha)}(\GG_{-}, \Sigma^{\alpha} \shQ_-^\vee(j) \otimes \Sigma^{\gamma^t} \shU_{-}) \otimes \Sigma^{\gamma}  \shQ_+,$$ 
		where $B^{\{j\}}(\alpha) \subseteq B_{\ell_+,d_-}$ is a set of Young diagram of the form \eqref{eqn:keylemma:twist:gamma} given below; it has cardinality $|B^{\{j\}}(\alpha)|=\binom{\ell_+ - \ell_- + d_- }{d_-}$, and satisfies 
	$$B^{\{j\}}(\alpha) \backslash \{(\alpha^t+j)^t\} \subseteq B_{\ell_+, d_-} \backslash B_{\ell_-, d_-}^{\{j\}},$$ where 
 $B_{\ell_-, d_-}^{\{j\}}$ is the set of Young diagrams:
		$$B_{\ell_-, d_-}^{\{j\}} = \big\{(\underbrace{d_-, \ldots, d_-}_{j}, \gamma_1, \ldots, \gamma_{\ell_-}) \in B_{j+\ell_-,d_-} \mid \gamma = (\gamma_1, \ldots, \gamma_{\ell_-}) \in B_{\ell_-,d_-}  \big\};$$
Here $p^{\{j\}}(\gamma)  \le 0$ and $\ell^{\{j\}}(\gamma, \alpha) \ge 0$ are functions on $\gamma$ to be explicitly given below. Hence there is exact one copy of summand $\Sigma^{(\alpha^t+j)^t} \shQ_+$ in the degree $-d_- j$ term $F^{-d_- j}$, and all other summands of nonzero $F^{p}$'s are copies of $\{ \Sigma^{\gamma}  \shQ_{+} \}_{\gamma \in  B_{\ell_+, d_-} \backslash B_{\ell_-, d_-}^{\{j\}}}$. 
\end{lemma}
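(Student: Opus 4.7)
The plan is to adapt the argument of Lem. \ref{lem:key}(3) verbatim, with the only change being that the cohomology of $\Sigma^\alpha\shQ_-^\vee$ on $\GG_-$ is replaced by that of the twisted bundle $\Sigma^\alpha\shQ_-^\vee\otimes\sO_-(j)=\Sigma^{\alpha-j}\shQ_-^\vee$, where $\alpha-j=(\alpha_1-j,\ldots,\alpha_{\ell_-}-j)$. First, I would factor $r_-\colon\widehat{\shZ}\to\shZ_-$ through the Koszul-regular immersion $j_-\colon\widehat{\shZ}\hookrightarrow\shZ_-\times\GG_+$ of \eqref{diag:fact}. The Koszul resolution of $j_{-*}\sO_{\widehat{\shZ}}$ together with projection formula produces a canonical Postnikov system in $\Db(\shZ_-\times\GG_+)$ attached to the complex whose $(-k)$th term is $\bigoplus_{\gamma\in B_{\ell_+,d_-},\,|\gamma|=k}(\Sigma^{\gamma^t}\shU_+\boxtimes\Sigma^\gamma\shQ_+)\otimes p_-^*(\Sigma^\alpha\shQ_-^\vee(j))$. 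Pushing via $pr_+$ and applying Lem. \ref{lem:conv} yields a Postnikov system in $\Db(\shZ_+)$ attached to the complex
\[
X^\bullet=\Bigl\{\bigoplus_{|\gamma|=k}\mathbf{H}^\bullet\bigl(\GG_-,\Sigma^\alpha\shQ_-^\vee(j)\otimes\Sigma^{\gamma^t}\shU_-\bigr)\otimes p_+^*\Sigma^\gamma\shQ_+\Bigr\}_{k},
\]
whose convolution is $r_{+\,*}r_-^*(\Sigma^\alpha\shQ_-^\vee\otimes\sO_-(j))$. Exactly as in the proof of Lem. \ref{lem:key}, a repeated application of Lem. \ref{lem:Hot} (using Lem. \ref{lem:Z_i:generator}) converts this Postnikov system into a genuine complex of vector bundles $F^\bullet$ on $\shZ_+$, whose terms are direct sums of copies of $p_+^*\Sigma^\gamma\shQ_+$.

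Next I would compute the cohomologies via BBW (Thm. \ref{thm:BBW}). The bundle on $\GG_-$ has weight $\lambda=(-\gamma^t,\,\alpha-j)$, so
\[
\lambda+\rho=\bigl(-\gamma^t+\rho^{(1)},\;(\alpha+\rho^{(2)})-j\bigr),
\]
meaning that only the second block is uniformly shifted by $-j$ compared to the $j=0$ situation. One defines $B^{\{j\}}(\alpha)\subseteq B_{\ell_+,d_-}$ as the set of $\gamma$ making $\lambda+\rho$ non-singular, and sets $\ell^{\{j\}}(\gamma;\alpha):=\ell(w)$ with $w\in\SS_m$ the unique reordering permutation, $p^{\{j\}}(\gamma):=\ell^{\{j\}}(\gamma;\alpha)-|\gamma|$. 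Parametrizing $\gamma\in B^{\{j\}}(\alpha)$ by pairs of sequences $(i_1\ge\cdots\ge i_x)$ and $(j_1<\cdots<j_y)$ analogous to \eqref{eqn:keylemma:gamma}---but now picking values from the $j$-shifted complement of the second block---yields both an explicit formula for each term of $F^\bullet$ and the cardinality count $\binom{\ell_+-\ell_-+d_-}{d_-}$, which is unchanged because the number of admissible first-block entries does not depend on the global translation.

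For the distinguished term, take $\gamma_0:=(\alpha^t+j)^t=(\underbrace{d_-,\ldots,d_-}_j,\alpha_1,\ldots,\alpha_{\ell_-})$. Both blocks of the associated $\lambda+\rho$ are obtained from the original case ``$j=0,\,\gamma=\alpha$'' by a uniform shift by $-j$; therefore the same reordering permutation $w$ works, and $\ell^{\{j\}}(\gamma_0;\alpha)=\ell(\alpha;\alpha)=|\alpha|$. Since $|\gamma_0|=|\alpha|+d_-j$, one obtains $p^{\{j\}}(\gamma_0)=-d_-j$ and a single one-dimensional BBW contribution, placing exactly one copy of $\Sigma^{\gamma_0}\shQ_+$ in degree $-d_-j$. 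The containment $B^{\{j\}}(\alpha)\setminus\{\gamma_0\}\subseteq B_{\ell_+,d_-}\setminus B^{\{j\}}_{\ell_-,d_-}$ then follows from the parametrization: any $\gamma\in B^{\{j\}}_{\ell_-,d_-}$ is forced, by the constraint that its first $j$ rows have maximal length $d_-$, to come from the $x=0$ block of sequences with $j_s=s$, and the non-singularity condition pins down $\alpha^t_{j_s}+j$ as the first-$d_-$ entries of $\gamma^t$, singling out $\gamma=\gamma_0$.

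The main obstacle is purely combinatorial: explicitly tracking how the $-j$ shift of the second block of $\lambda+\rho$ modifies the BBW non-singularity set, and arranging the resulting sequences $(i_s,j_s)$ into a form that transparently exhibits both the distinguished location of $\gamma_0$ in degree $-d_-j$ and the disjointness from $B^{\{j\}}_{\ell_-,d_-}$. Once the parametrization is in place, the rest of the argument---Koszul resolution, Postnikov/Kapranov representation via Lem. \ref{lem:Hot}, and the formal cohomological calculation---runs in parallel with the proof of Lem. \ref{lem:key}.
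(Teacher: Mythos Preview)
Your overall strategy is the same as the paper's: plug $E^\bullet=\Sigma^\alpha\shQ_-^\vee\otimes\sO_-(j)$ into the general formula \eqref{eqn:keylemma:F} obtained from the Koszul/Postnikov argument, then run the same BBW computation with the second block shifted by $-j$. Your identification of $\gamma_0=(\alpha^t+j)^t$, its length $\ell^{\{j\}}(\gamma_0;\alpha)=|\alpha|$, and its degree $p^{\{j\}}(\gamma_0)=-d_-j$ is exactly the paper's.

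The one point where you diverge from the paper is the combinatorics of the parametrization. You propose to keep the \emph{two}-sequence scheme of \eqref{eqn:keylemma:gamma}, but the $-j$ shift of the second block creates a new region: the first block $-\gamma^t+\rho^{(1)}$ still ranges over $[\ell_--\ell_++1,\,d_-+\ell_-]$, while the shifted second block now occupies $[1-j,\,d_-+\ell_--j]$, so admissible first-block entries fall into \emph{three} pieces --- below (size $\ell_+-\ell_--j$), in the complement inside the second block's range (size $d_-$), and \emph{above} (size $j$). Accordingly the paper parametrizes $\gamma^t$ by a triple $(x,y,z)$ with $x+y+z=d_-$ and Young diagrams $(\tau,\theta,\xi)\in B_{x,\ell_+-\ell_--j-x}\times B_{y,x+z}\times B_{z,j-z}$; the explicit shape is \eqref{eqn:keylemma:twist:gamma}, and the cardinality is recovered via the Vandermonde identity $\sum_{x+y+z=d_-}\binom{\ell_+-\ell_--j}{x}\binom{d_-}{y}\binom{j}{z}=\binom{\ell_+-\ell_-+d_-}{d_-}$. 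Your cardinality count happens to come out right because the total number of admissible slots is unchanged, but your containment argument (``$\gamma\in B^{\{j\}}_{\ell_-,d_-}$ forces $x=0$ and $j_s=s$'') is incomplete without the third block: one must also rule out $z\ge1$, which is exactly what forces $\gamma^t_{d_-}\ge j$ (i.e.\ the first $j$ rows of $\gamma$ are full). Once you introduce the $(x,y,z)$ split, the containment becomes immediate: $x\ge1$ gives $\gamma^t_1>j+\ell_-$ and $z\ge1$ gives $\gamma^t_{d_-}<j$, either of which excludes $\gamma$ from $B^{\{j\}}_{\ell_-,d_-}$; and $x=z=0$ forces $y=d_-$, hence $j_s=s$, hence $\gamma=\gamma_0$.
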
		

\begin{proof}
Same as the proof of Lem. \ref{lem:key} \eqref{lem:key-3}, if we take $E^\bullet = \Sigma^{\alpha} \shQ_{-}^\vee  \otimes \sO_- (j)$ in \eqref{eqn:keylemma:F}, by BBW theorem the only possible cases for $F^p$ to be non-zero is when $\gamma^t$ is of the form:
\begin{align}\label{eqn:keylemma:twist:gamma}		
	\gamma^t =  (j+ \ell_- + i_1, \ldots, j+  \ell_- + i_x; ~~ j+ x+1 - j_1 + \alpha^t_{j_1}, \ldots, j+ x+y - j_y + \alpha^t_{j_y}; ~~k_1, \ldots, k_z)
	\end{align}
for some choice of integers $x, y ,z \ge 0$, $x \le \ell_+ - \ell_- -j$, $z \le j$, $x+ y + z = d_-$, and some choices of integers $i_1, \ldots, i_x$, $j_1, \ldots, j_y$, $k_1, \ldots, k_z$ such that
	\begin{align*}
	& \ell_+ - \ell_- \ge i_1 \ge i_2 \ge \ldots \ge i_x \ge x; \\
	& 1 \le j_1 < j_2 < \ldots < j_y \le d_-; \\
	& j - z \ge k_1\ge k_2 \ge \ldots \ge k_z \ge 0.
	\end{align*} 
Equivalently, we could choose $\tau, \theta, \xi$ to be any Young diagrams given by:
	\begin{align*}
	& \tau = \big( (\ell_+ - \ell_- - j - x \ge)~ \tau_1 =  i_1 - x  \ge  \tau_2 = i_2 - x  \ge \ldots \ge \tau_x = i_x - x  ~(\ge 0) \big); \\
	& \theta = \big( (x+z \ge)~ \theta_1 = x + z + 1 - j_1 \ge \ldots \ge \theta_y = x + z + y  - j_y ~( \ge 0 ) \big); \\
	& \xi = \big( (j - z \ge)~ \xi_1 = k_1\ge \xi_2 = k_2 \ge \ldots \ge \xi_z = k_z ~(\ge 0)\big).
	\end{align*} 
Denote $B^{\{j\}}(\alpha)$ the set of $\gamma$ such that $\gamma^t$ has the above form \eqref{eqn:keylemma:twist:gamma}. Hence the number of choices are 
	$$|B^{\{j\}}(\alpha)| = \sum_{x+y+z = d_1} \binom{\ell_+ - \ell_- - j}{x} \binom{j}{z} \binom{d_-}{y} = \binom{\ell_+ - \ell_- + d_-}{d_-}. $$	
For any $\gamma \in B^{\{j\}}(\alpha)$ of the form \eqref{eqn:keylemma:twist:gamma}, by the same argument as  Lem. \ref{lem:key} we have 
	\begin{align*}
	& \ell^{\{j\}}(\gamma;\alpha) = \ell_-  x + \alpha_{j_1}^t + \ldots + \alpha_{j_y}^t, \\
	& p^{\{j\}}(\gamma)  : = \ell(\gamma;\alpha) - |\gamma|  = -x^2 - j(x+y) - |\tau| -  |\theta|  - |\xi|.
	\end{align*}
It follows from the expression of \eqref{eqn:keylemma:twist:gamma} that $(\alpha^t+j)^t \in B^{\{j\}}(\alpha)$ and 
	$B^{\{j\}}(\alpha) \backslash \{(\alpha^t+j)^t\} \subseteq B_{\ell_+, d_-} \backslash B_{\ell_-, d_-}^{\{j\}},$
where $(\alpha^t+j)^t \in B^{\{j\}}(\alpha)$ corresponds to the unique choice of $\gamma$ such that $x=z=0$ and $\tau = \theta = \xi = 0$; and hence
	$$p^{\{j\}}((\alpha^t+j)^t) = - d_- \cdot j .$$
(However, notice that in the case $j \ne 0$, it might happen that this degree is not anymore the highest degree, nor $\Sigma^{(\alpha^t+j)^t} \shQ_+$ being unique term with this degree. For example, in the case $d_- \le j \le \ell_+ - \ell_-$,  if we can choose $x=y=0$, $z = d_-$, and any $\xi \in B_{d_-, j}$, then in this case we see that $p^{\{j\}}(\gamma) = - |\xi| \in [- d_- \cdot j, 0]$ could be number within this range. In particular, there are terms of degree $0$, but also summands of $F^p$ rather than the case $\gamma=(\alpha^t+j)^t$ (if $\alpha \ne 0$) that contributes to degree $- d_- \cdot j$)
\end{proof}
Similarly, there is a twisted version for Lem. \ref{lem:key} \eqref{lem:key-4}, and we leave it to the readers.

\subsection{First implications} We continue to use the notations $0 \le d_- \le n_-=m$, $0 \le d_+ \le n_+=n$, $\ell_- = m-d_-$, $\ell_+ = n - d_+$. Furthermore, we assume:

\begin{itemize}
	\item {\em For the rest of this whole section, we will assume $d_- \le d_+$ and $\ell_- \le \ell_+$. Thus $\delta := n -m \ge 0$.} 
\end{itemize}

\noindent (This assumption guarantees $B_{\ell_-,d_-} \subseteq B_{\ell_+, d_+}$.) We will fix $d_+$ and let $d_-$ vary, and apply the results of the last subsection to study the relationships among these $\Db(\shZ_\pm)$.

\subsubsection*{Contributions from top strata}
\begin{lemma} \label{lem:top} If $d_+ \le \delta$ (equivalently, $\ell_+ \ge m$), then for all $\alpha \in B_{\delta-d_+, d_+}$, the functors 
	$$\Phi^{\alpha}(\blank) := \pi_+^*(\blank) \otimes \Sigma^{\alpha^t} \shU_+^\vee \colon \Db(X) \to \Db(\shZ_{+})$$
 are fully faithful, and their images form an $X$-linear admissible semiorthogonal sequence $\{ \Im \Phi^\alpha\}_{\alpha \in B_{\delta-d_+,d_+}^{\preceq}}$, with semiorthogonal order strongly compatible with the partial order of $B_{\delta-d_+, d_+}^{\preceq}$, i.e. $\Im \Phi^{\alpha} \subseteq (\Im \Phi^{\beta})^\perp$ whenever $\alpha \nsucceq \beta$. 
\end{lemma}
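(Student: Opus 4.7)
The plan is to identify each $\Phi^\alpha$ with the relative-exceptional functor $\alpha_E$ of Lem.~\ref{lem:ff:relexc} attached to the perfect sheaf $E = \Sigma^{\alpha^t}\shU_+^\vee$ on $\shZ_+$, so that, combining Lem.~\ref{lem:ff:relexc} with the $X$-linear criterion Lem.~\ref{lem:linearSO}, both the fully faithfulness of $\Phi^\alpha$ and the partially-ordered semiorthogonality $\Im\Phi^\alpha\subseteq(\Im\Phi^\beta)^\perp$ (for $\alpha\nsucceq\beta$) get encoded by the single cohomological assertion
\[
\pi_{+\,*}(\Sigma^{\beta^t}\shU_+\otimes\Sigma^{\alpha^t}\shU_+^\vee) \;\simeq\;
\begin{cases}\sO_X & \text{if }\alpha=\beta,\\ 0 & \text{if }\alpha\nsucceq\beta,\end{cases}
\qquad \alpha,\beta\in B_{\delta-d_+,d_+}.
\]
To establish this, I would factor $\pi_+=\tilde\pi\circ j$, where $j\colon\shZ_+\hookrightarrow\GG_+\times X$ is the Koszul-regular closed immersion cutting out $\shZ_+$ as the zero locus of the tautological section of the rank $md_+$ vector bundle $W^\vee\boxtimes\shU_+^\vee$ (cf.\ Lem.~\ref{lem:Sym}), and $\tilde\pi\colon\GG_+\times X\to X$ is the flat projection. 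The Koszul resolution $\bigwedge^\bullet(W\boxtimes\shU_+)\simeq j_*\sO_{\shZ_+}$, unpacked by Cauchy's formula, projection formula, and K\"unneth along $\tilde\pi$, expresses the pushforward as a total complex in $\Db(X)$ whose $\gamma$-summand $(\gamma\in B_{m,d_+})$ is
\[
\Sigma^\gamma W\otimes R\Gamma(\GG_+,\,\Sigma^{\alpha^t}\shU_+^\vee\otimes\Sigma^{\beta^t}\shU_+\otimes\Sigma^{\gamma^t}\shU_+)\otimes\sO_X,
\]
placed in cohomological degree $-|\gamma|$ (with Koszul-induced differentials).

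Next, I would decompose $\Sigma^{\beta^t}\shU_+\otimes\Sigma^{\gamma^t}\shU_+=\bigoplus_\nu c^\nu_{\beta^t,\gamma^t}\,\Sigma^\nu\shU_+$ via Littlewood--Richardson. Because $\alpha,\beta\in B_{\delta-d_+,d_+}$ and $\gamma\in B_{m,d_+}$, the LR bound $\nu_i\le\beta^t_1+\gamma^t_i\le(\delta-d_+)+m=\ell_+$ shows $\nu\in B_{d_+,\ell_+}$, so Kapranov's theorem (Thm.~\ref{thm:Kap}\eqref{thm:Kap-1}) applies and gives $R\Gamma(\GG_+,\Sigma^{\alpha^t}\shU_+^\vee\otimes\Sigma^\nu\shU_+)=0$ unless $\alpha^t\succeq\nu$. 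Combined with the LR constraint $\nu\succeq\beta^t$, non-vanishing forces $\beta^t\preceq\nu\preceq\alpha^t$, and hence $\alpha\succeq\beta$; this yields the desired vanishing when $\alpha\nsucceq\beta$. When $\alpha=\beta$, the only surviving $\nu$ is $\alpha^t$, which in turn forces $\gamma=0$ via $|\nu|=|\beta^t|+|\gamma^t|$, and Kapranov's theorem gives $R\Gamma(\GG_+,\Sigma^{\alpha^t}\shU_+^\vee\otimes\Sigma^{\alpha^t}\shU_+)=\kk$ concentrated in degree $0$; so the pushforward is $\sO_X$, as required.

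Admissibility of each $\Im\Phi^\alpha$ in $\Db(\shZ_+)$ then follows from Lem.~\ref{lem:ff:relexc}\eqref{lem:ff:relexc-3}: the morphism $\pi_+$ is proper and perfect, and $\omega_{\pi_+}=\sO_+(-\delta)[d_+(\delta-d_+)]$ is a shift of a line bundle by Lem.~\ref{lem:local:Serre}, so both left and right adjoints of $\Phi^\alpha$ exist on $\Db$ (and similarly on $\Perf$ and $\Dqc$). The main technical care lies in the bookkeeping of LR constraints and the verification that $\nu\in B_{d_+,\ell_+}$, which is where the hypothesis $d_+\le\delta$ (equivalently $\ell_+\ge m$) is crucially used to guarantee Kapranov's vanishing applies as stated.
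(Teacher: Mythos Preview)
Your proof is correct and takes essentially the same approach as the paper: both resolve $j_*\sO_{\shZ_+}$ by the Koszul complex $\bigwedge^\bullet(W\boxtimes\shU_+)$ on $\GG_+\times X$, expand via Cauchy's formula, and then combine the Littlewood--Richardson constraint $\nu\succeq\beta^t$ with Kapranov's vanishing (Thm.~\ref{thm:Kap}\eqref{thm:Kap-1}) to force $\alpha\succeq\beta$. The only difference is packaging---the paper computes $\Hom_{\shZ_+}(\Phi^\beta B,\Phi^\alpha A)$ directly via the adjunction $j^*\dashv j_*$, whereas you compute the equivalent $\shom_X$-object as a pushforward and invoke Lem.~\ref{lem:linearSO}; your bookkeeping of the LR bound $\nu_i\le(\delta-d_+)+m=\ell_+$ is if anything more explicit than the paper's.
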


\begin{proof} This corresponds to the case $d_-=0$ when diagram (\ref{diag:fact}) degenerates into:
\begin{equation*}
	\begin{tikzcd} \widehat{\shZ} = \shZ_+ \ar[hook]{r}{j_-} \ar{d}[swap]{r_- = \pi_{+}} & X \times \GG_{+} \ar{dl}{pr_-} \\
	\shZ_- = X
	\end{tikzcd}	
	\qquad\text{and}\qquad
	\begin{tikzcd} \widehat{\shZ} \ar[equal]{r}{j_+=\Id} \ar[equal]{d}[swap]{r_+=\Id~} & \shZ_+  \ar[equal]{dl}{pr_+ = \Id} \\
	\shZ_+
	\end{tikzcd}
	\end{equation*}
where $j_-$ is the inclusion of zero locus of a regular section of the vector bundle $W^\vee \boxtimes \shU_+^\vee$. Therefore for any $A,B \in \Db(X)$ and $\alpha, \beta \in B_{\delta-d_+,d_+}$ such that $\alpha \nsucc \beta$ (i.e. $\alpha = \beta$ or $\alpha \nsucceq \beta$),
	\begin{align*}
	& \Hom_{\shZ_+}(\Phi^{\beta}(B), \Phi^{\alpha}(A)) = 
	\Hom_{\shZ_+}(j_-^*( B \boxtimes \Sigma^{\beta^t} \shU_+^\vee), j_-^*( A \boxtimes \Sigma^{\alpha^t} \shU_+^\vee))  \\
	&= \Hom_{X \times \GG_+}(B \boxtimes \Sigma^{\beta^t} \shU_+^\vee, j_{-*}\,j_-^*(A \boxtimes \Sigma^{\alpha^t} \shU_+^\vee)) \\
	&=  \Hom_{X \times \GG_+}(B \boxtimes \Sigma^{\alpha^t} \shU_+, A \boxtimes \Sigma^{\beta^t} \shU_+ \otimes j_{-*} (\sO_{\widehat{\shZ}} ) )
	\end{align*} 
Since $ j_{-*}\, (\sO_{\widehat{\shZ}}) \simeq \shK^\bullet({j_-})$, where $\shK^\bullet({j_-}) = \{\bigwedge^k(W  \boxtimes \, \shU_+) \}_{k=0,\ldots, md_+}$ is the Koszul complex as usual. From Cauchy's formula, $\bigwedge^k(W \boxtimes \shU_+) = \bigoplus_{|\lambda|=k, \lambda \in B_{m,d}} \Sigma^{\lambda}W \otimes \Sigma^{\lambda^t} \shU_+$, therefore every irreducible summand $\Sigma^{\gamma^t} \shU_+ \subseteq \Sigma^{\beta^t} \shU_+ \otimes  \Sigma^{\lambda^t} \shU_+$ satisfies $\gamma \in B_{\delta - d_+ + m, d_+} = B_{n-d_+,d_+}$ and $\gamma \succeq \beta$ hence $\gamma \npreceq \alpha$. Therefore by Kapranov's Thm. \ref{thm:Kap} \eqref{thm:Kap-1}, $\Hom_{\GG_+}(\Sigma^{\alpha^t} \shU_+^\vee, \Sigma^{\gamma^t} \shU_+^\vee) = 0$ expect from the case when $\alpha=\beta=\gamma$ and $k=0$. Therefore above $\Hom$ space reduces to
	$$\Hom_{\shZ_+}(\Phi^{\beta}(B), \Phi^{\alpha}(A)) =  \Hom_{X}(B,A) \otimes \Hom_{\GG_+}(\Sigma^{\beta^t} \shU_+^\vee,  \Sigma^{\alpha^t} \shU_+^\vee)$$
which is zero if $\alpha \nsucceq \beta$, and is $\Hom_{\shZ_+}(\Phi^\alpha(B), \Phi^{\alpha}(A)) =   \Hom_{X}(B,A)$ if $\alpha=\beta$. 
This shows the fully faithful statements and semiorthogonal relations. The essential images of these functors are X-linear admissible subcategories by Lem. \ref{lem:local:FM}.
\end{proof}

\subsubsection*{Contributions from bottom strata}
\begin{lemma} \label{lem:bottom} If $d_+ \ge m$ (equivalently, $\ell_+ \le \delta$). Let $d_- = m$, then for all $\alpha \in B_{\ell_+, \delta - \ell_+}$,
	$$\Psi^{\alpha}(\blank) :=  r_{+\,*} r_{-}^* (\blank) \otimes \Sigma^{\alpha} \shQ_+^\vee \colon \Db(\shZ_{-}) = \Db(\Spec \kk ) \to \Db(\shZ_{+})$$
 are fully faithful, and their images form an $X$-linear admissible semiorthogonal sequence $\{\Im \Psi^{\alpha} \}_{\alpha \in B_{\ell_+, \delta - \ell_+}^{\succeq}}$ with semiorthogonal order compatible with the order of $B_{\ell_+, \delta - \ell_+}^{\succeq}$, i.e. $\Im \Psi^{\alpha} \subseteq (\Im \Psi^{\beta})^\perp$ whenever $\alpha \npreceq \beta$. 
\end{lemma}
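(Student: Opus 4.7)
The plan is to adapt the Koszul-resolution strategy used in the proof of Lem.~\ref{lem:top} to this ``dual'' (bottom-stratum) situation. The hypothesis $d_- = m$ forces $\ell_- = 0$, so $\shZ_- \simeq \Spec\kk$ is identified with the origin $0 \in X = \Hom_\kk(W,V)$, $\widehat{\shZ} = \pi_+^{-1}(0) \simeq \GG_+ = \Gr_{d_+}(V^\vee)$, and the factorisation diagram \eqref{diag:fact} degenerates so that $r_+ = j_+ \colon \widehat{\shZ} \hookrightarrow \shZ_+$ is the Koszul-regular closed immersion cut out by a regular section of $W^\vee \otimes \shQ_+^\vee$, of codimension $c = m\ell_+$. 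Consequently $\Psi^\alpha(\kk) = j_{+*}\sO_{\widehat{\shZ}} \otimes \Sigma^\alpha \shQ_+^\vee$, and it admits a Koszul resolution $\Psi^\alpha(\kk) \simeq \shK^\bullet \otimes \Sigma^\alpha \shQ_+^\vee$ with $\shK^{-k} = \bigwedge^k(W \otimes \shQ_+)$ decomposing via Cauchy's formula into $\bigoplus_{\mu \in B_{m,\ell_+},\,|\mu|=k} \Sigma^\mu W \otimes \Sigma^{\mu^t}\shQ_+$.

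To analyse $\Hom_{\shZ_+}(\Psi^\beta(\kk), \Psi^\alpha(\kk))$ for $\alpha,\beta \in B_{\ell_+,\delta-\ell_+}$, I would run the stupid-truncation hyper-Ext spectral sequence associated to the Koszul resolution of $\Psi^\beta(\kk)$; its $E_1$-page is a direct sum, over $\mu \in B_{m,\ell_+}$, of $\Sigma^\mu W^\vee$ tensored with
\[
\Ext^q_{\shZ_+}\!\big(\Sigma^{\mu^t}\shQ_+,\; j_{+*}\sO_{\widehat{\shZ}} \otimes \Sigma^\alpha \shQ_+^\vee \otimes \Sigma^\beta \shQ_+\big).
\]
Using the projection formula $j_{+*}\sO_{\widehat{\shZ}} \otimes E \simeq j_{+*}(j_+^* E)$ together with the adjunction $j_+^* \dashv j_{+*}$ and the fact that $\shQ_+$ is pulled back from $\GG_+$, each of these Ext groups collapses to $\Ext^q_{\GG_+}\!\big((\Sigma^{\mu^t}\otimes \Sigma^\alpha)\shQ_+,\, \Sigma^\beta \shQ_+\big)$, which by Littlewood--Richardson equals $\bigoplus_\nu c^\nu_{\mu^t,\alpha}\, \Ext^q_{\GG_+}(\Sigma^\nu \shQ_+, \Sigma^\beta \shQ_+)$.

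Kapranov's Thm.~\ref{thm:Kap}\,\eqref{thm:Kap-1} now supplies the vanishing: each $\Ext^q_{\GG_+}(\Sigma^\nu \shQ_+, \Sigma^\beta \shQ_+)$ vanishes unless $\nu \preceq \beta$ and $q=0$, while nonvanishing of $c^\nu_{\mu^t,\alpha}$ forces $\nu \supseteq \alpha$. If $\alpha \npreceq \beta$, some $\alpha_i > \beta_i$ makes $\alpha \subseteq \nu \preceq \beta$ impossible, so all $E_1$-terms vanish and the spectral sequence gives $\Hom_{\shZ_+}(\Psi^\beta(\kk), \Psi^\alpha(\kk)) = 0$, establishing semiorthogonality. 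For $\alpha = \beta$ the joint constraints $\nu \supseteq \alpha$, $\nu \preceq \alpha$, and $|\nu| = |\mu^t|+|\alpha|$ force $\nu = \alpha$ and $\mu = \emptyset$; the only surviving bidegree $(p,q) = (0,0)$ then contributes $\Sigma^0 W \otimes c^\alpha_{0,\alpha}\cdot \End_{\GG_+}(\Sigma^\alpha \shQ_+) = \kk$ (using Schur's lemma), giving $\End(\Psi^\alpha(\kk)) = \kk$ in degree zero and hence full faithfulness.

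The $X$-linear and admissibility statements follow with little extra work. Running the same spectral-sequence argument on $\pi_+^* N \otimes \Psi^\beta(\kk)$ and $\pi_+^* M \otimes \Psi^\alpha(\kk)$ (using $\pi_+^* M \otimes j_{+*}F = j_{+*}(\widehat{\pi}^* M \otimes F)$ and the identification $\widehat{\pi}^* = r_-^* \circ i_0^*$, with $i_0 \colon \Spec\kk \hookrightarrow X$ the inclusion of the origin) merely inserts an $\RHom_\kk(i_0^* N, i_0^* M)$ factor that commutes past the Kapranov-vanishing argument; admissibility of each $\Im \Psi^\alpha \subseteq \Db(\shZ_+)$ follows from the existence of left and right adjoints supplied by Lem.~\ref{lem:local:FM}. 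The main technical hurdle will be careful bookkeeping of the combinatorics---transposes of Young diagrams, the containment ordering $\supseteq$ versus the dominance ordering $\preceq$, and the Cauchy/Littlewood--Richardson identifications---so that the cascade of vanishings lines up correctly; the spectral-sequence formulation has the advantage of requiring no formality claim on $j_+^* j_{+*}\sO_{\widehat{\shZ}}$, so characteristic zero enters only through Littlewood--Richardson and the Borel--Bott--Weil input of Kapranov's theorem.
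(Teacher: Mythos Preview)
Your proposal is correct and follows essentially the same approach as the paper. The paper uses the adjunction $j_+^* \dashv j_{+*}$ to rewrite $\Hom_{\shZ_+}(\Psi^\beta(B),\Psi^\alpha(A))$ as $\Hom_{\shZ_-}(B,A)\otimes \Hom_{\GG_+}(j_+^*j_{+*}\Sigma^\beta\shQ_+^\vee,\Sigma^\alpha\shQ_+^\vee)$ and then filters $j_+^*j_{+*}\Sigma^\beta\shQ_+^\vee$ by the Koszul terms $\Sigma^\beta\shQ_+^\vee\otimes\bigwedge^k(W\otimes\shQ_+)[k]$; your spectral sequence from the Koszul resolution of $\Psi^\beta(\kk)$ on $\shZ_+$ is the same filtration viewed from the other side, and both reductions land on the identical Littlewood--Richardson/Kapranov vanishing argument.
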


\begin{proof} This corresponds to the case $\ell_-=0$, when the diagram of \eqref{diag:fact} degenerates into:
\begin{equation*}
	\begin{tikzcd} \widehat{\shZ} = \GG_{+} \ar[equal]{r}{j_-=\Id} \ar{d}[swap]{r_-} & \shZ_- \times \GG_{+} \ar{dl}{pr_- =r_-} \\
	\shZ_- = \Spec \kk
	\end{tikzcd}	
	\qquad\text{and}\qquad
	\begin{tikzcd} \widehat{\shZ} = \GG_{+} \ar[hook]{r}{j_+} \ar{d}[swap]{r_+ =j_+} & \shZ_+  \ar[equal]{dl}{pr_+ = \Id} \\
	\shZ_+
	\end{tikzcd}
	\end{equation*}
where $j_+$ is the inclusion of zero locus of a regular section of the vector bundle $W^\vee \boxtimes \shQ_+^\vee$. Therefore for any $A,B \in \Db(\shZ_-)$ and $\alpha, \beta \in B_{\ell_+, \delta -\ell_+}$ such that $\alpha \nsucc \beta$, 
	$$\Hom_{\shZ_+}(\Psi^{\beta}(B), \Psi^{\alpha}(A)) = \Hom_{\shZ_-}(B,A) \otimes \Hom_{\GG_+}(j_+^*j_{+*} \Sigma^{\beta} \shQ_+^\vee, \Sigma^{\alpha} \shQ_+^\vee).$$ 
The term $j_+^*j_{+*} \Sigma^{\beta} \shQ_+^\vee$ is an iterated extension of 
	$\Sigma^{\beta} \shQ_+^\vee \otimes \bigwedge^k(W \boxtimes \,\shQ_+)[k]$ 
for $k=0,1,\ldots, m \ell_+.$ A similar computation as Lem. \ref{lem:top} shows that $\Hom_{\shZ_+}(\Psi^{\beta}(B), \Psi^{\alpha}(A)) =0$ except from the case $\alpha = \beta$ when $\Hom_{\shZ_+}(\Psi^{\alpha}(B), \Psi^{\alpha}(A)) = \Hom_{\shZ_-}(B,A)$. This shows the fully faithfulness and semiorthogonality. The X-linearity and admissibility follows from Lem. \ref{lem:local:FM}. 
\end{proof}

\subsubsection*{Semiorthogonality of top and bottom strata} 

\begin{lemma}\label{lem:bottom_top} Assume $1 \le m \le d_+ \le \delta$, let $ \Phi^\alpha$ and $\Psi^\beta$ be the functors defined in Lem. \ref{lem:top} and  \ref{lem:bottom}. Then for any $1 \le s \le m$, the following forms a semiorthogonal sequence:
		$$\big \langle  \{\Im \Psi^{\beta + s} \}_{\beta \in B_{\ell_+, \delta - \ell_+}^{\succeq}}, ~\{ \Im \Phi^\alpha\}_{\alpha \in B_{\delta-d_+,d_+}^{\preceq}} \big \rangle \subseteq \Db(\shZ_+),$$
	 i.e. $\Im \Psi^{\beta + s} \subseteq (\Im \Phi^{\alpha})^\perp$ for all $\alpha \in B_{\delta-d_+,d_+} ,\beta \in B_{\ell_+, \delta - \ell_+}$, $1 \le s \le m$.
\end{lemma}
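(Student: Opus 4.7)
The plan is to reduce the claim to a Borel--Bott--Weil vanishing on the ordinary Grassmannian $\GG_+$, which will in the end follow from an elementary pigeonhole count exploiting the shift by $s$.

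First I would exploit the degenerate form of the correspondence diagram \eqref{diag:fact} already used in the proof of Lem.~\ref{lem:bottom}. In the case $d_- = m$, the scheme $\shZ_- = \Spec \kk$, $\widehat{\shZ} = \GG_+$, and $r_+ = j_+ \colon \GG_+ \hookrightarrow \shZ_+$ is the zero section of the vector bundle $\shZ_+ \to \GG_+$. Moreover the composition $\pi_+ \circ j_+ = \pi_- \circ r_-$ factors through $\iota_0 \colon \Spec \kk \hookrightarrow X$ (the zero map), since $\pi_- \colon \shZ_- = \Spec \kk \hookrightarrow X$ is precisely the inclusion of the origin. Therefore, for any $A \in \Db(X)$, $j_+^*\pi_+^* A = A_0 \otimes \sO_{\GG_+}$, where $A_0 := \iota_0^* A \in \Db(\Spec\kk)$.

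Given $A \in \Db(X)$ and $B \in \Db(\shZ_-) = \Db(\Spec \kk)$, projection formula and $j_+^* \dashv j_{+*}$ then give
\begin{align*}
\Hom_{\shZ_+}(\Phi^\alpha(A), \Psi^{\beta+s}(B))
&= \Hom_{\shZ_+}\bigl(\pi_+^*A \otimes \Sigma^{\alpha^t}\shU_+^\vee,\, j_{+*}(B \otimes \Sigma^{\beta+s}\shQ_+^\vee)\bigr) \\
&= \Hom_\kk(A_0,B) \otimes H^\bullet\bigl(\GG_+,\, \Sigma^{\alpha^t}\shU_+ \otimes \Sigma^{\beta+s}\shQ_+^\vee\bigr).
\end{align*}
So everything reduces to showing the vanishing of the cohomology group on the right. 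Using $\Sigma^{\alpha^t}\shU_+ \simeq \Sigma^{-\alpha^t}\shU_+^\vee$ (reverse-negation), this bundle is the homogeneous bundle $\shE_\lambda$ on $\GG_+ = \Gr_{d_+}(V^\vee)$ with weight $\lambda = (-\alpha^t_{d_+}, \ldots, -\alpha^t_1;\, \beta_1+s, \ldots, \beta_{\ell_+}+s)$. By BBW (Thm.~\ref{thm:BBW}), the cohomology vanishes precisely when $\lambda + \rho$ is singular, with $\rho = (n, n-1, \ldots, 1)$.

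The main (and essentially only) point is then a pigeonhole argument. The first $d_+$ entries of $\lambda + \rho$ are $a_i = n+1-i - \alpha^t_{d_+ + 1 - i}$ for $i = 1,\ldots,d_+$; using $0 \le \alpha^t_j \le \delta - d_+$ and $n = m + \delta$, one checks $a_i \in [m+1, n]$. The last $\ell_+$ entries are $b_j = \ell_+ + 1 - j + \beta_j + s$ for $j = 1,\ldots,\ell_+$; using $0 \le \beta_j \le \delta - \ell_+$, one checks $b_j \in [s+1, n]$. Since the hypothesis $s \le m$ gives $[m+1,n] \subseteq [s+1,n]$, all $d_+ + \ell_+ = n$ entries of $\lambda + \rho$ lie in the set $[s+1, n]$, which has cardinality $n - s < n$ because $s \ge 1$. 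By the pigeonhole principle there must be a coincidence among the entries of $\lambda+\rho$, so $\lambda+\rho$ is singular and BBW yields the desired vanishing. The only point requiring any care is to verify the exact ranges of $a_i$ and $b_j$ from the combinatorial conditions $\alpha \in B_{\delta - d_+, d_+}$ and $\beta \in B_{\ell_+, \delta - \ell_+}$; the rest is automatic.
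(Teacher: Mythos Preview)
Your proof is correct and follows the same reduction as the paper: both use adjunction and the commutative square $\pi_+ \circ j_+ = j_\kk \circ pr_{\GG_+}$ to reduce to the vanishing of $\Hom^\bullet_{\GG_+}(\Sigma^{\alpha^t}\shU_+^\vee,\, \Sigma^{\beta+s}\shQ_+^\vee)$. The only difference is in how this vanishing is obtained. The paper invokes Kapranov's duality (Thm.~\ref{thm:Kap}\eqref{thm:Kap-2}), which says this Hom is zero unless $\alpha = \beta+s$, and then observes $B_{\delta-d_+,d_+} \cap (B_{\ell_+,\delta-\ell_+}+s) = \emptyset$: any $\alpha$ in the first box has height $\le \delta-d_+ = \ell_+-m < \ell_+$, while $\beta+s$ has all entries $\ge s \ge 1$ and hence height exactly $\ell_+$. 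Your pigeonhole BBW argument amounts to re-deriving this particular instance of Kapranov's duality from scratch; it is slightly more self-contained but otherwise equivalent.
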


\begin{proof} Notice in this case we have a commutative diagram:
	\begin{equation*}
	\begin{tikzcd}  \GG_{+} \ar[hook]{r}{j_+} \ar{d}[swap]{pr_{\GG_+}} & \shZ_+  \ar{d}{\pi_+} \\
	 \Spec \kk \ar[hook]{r}{j_{\kk}} & X.
	\end{tikzcd}
	\end{equation*}
For any $A \in \Db(X)$ and $B \in \Db(\Spec \kk)$, $\alpha \in B_{\delta-d_+,d_+}$, $\beta \in B_{\ell_+, \delta - \ell_+}$, $1 \le s \le m$, we have
	\begin{align*}
		&\Hom_{\shZ_+} (\Phi^{\alpha}(A), \Psi^{\beta + s}(B)) 
		=  \Hom_{\shZ_+} (p_+^* \Sigma^{\alpha^t} \shU_+^\vee \otimes \pi_+^*(A), p_+^* \Sigma^{\beta + s} \shQ_+^\vee \otimes j_{+\,*} \,pr_{\GG_+}^* (B) ) \\
		&= \Hom_{\GG_+} \big( pr_{\GG_+}^*\,j_{\kk}^*(A), pr_{\GG_+}^* (B) \otimes  \Sigma^{\alpha^t} \shU_+ \otimes  \Sigma^{\beta + s} \shQ_+^\vee \big) \\
		&=  \Hom_{\kk} (j_{\kk}^*(A), B) \otimes_\kk \Hom^\bullet_{\GG_+}(\Sigma^{\alpha^t} \shU_+^\vee, \Sigma^{\beta + s} \shQ_+^\vee).
	\end{align*}
(Notice this also follows from the degenerate case of Lem. \ref{lem:key}.) Now by Kapranov's result \ref{thm:Kap} \eqref{thm:Kap-2}, we have $\Hom^\bullet_{\GG_+}(\Sigma^{\alpha^t} \shU_+^\vee, \Sigma^{\beta + s} \shQ_+^\vee) = 0$ for any $1 \le s \le m$, since $B_{\delta-d_+,d_+} \cap (B_{\ell_+, \delta- \ell_+} + s) = \emptyset$ (here $B_{i,j} + s = \{\lambda + s \mid \lambda \in B_{i,j}\}$ as usual). The lemma is proved. 
\end{proof}

\medskip
\subsubsection*{Contributions from intermediate strata and virtual flip phenomenon}
\begin{lemma} \label{lem:local:ff} If $d_+ - \delta \le d_- \le d_+$, then $r_{+\,*} \, r_{-}^* \colon \Db(\shZ_-) \to \Db(\shZ_+)$ is fully faithful.
\end{lemma}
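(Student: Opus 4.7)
The plan is to prove full faithfulness of $\Phi := r_{+*} r_-^*$ by showing that the counit $\varepsilon \colon \Phi^L\Phi \to \id$ of the adjunction $\Phi^L \dashv \Phi$ is an isomorphism. By Lemma~\ref{lem:local:FM} we have $\Phi^L = r_{-!} r_+^*$; both $\Phi$ and $\Phi^L$ are exact functors which preserve direct sums and perfect complexes. Since $\{\Sigma^\alpha \shQ_-^\vee\}_{\alpha \in B_{\ell_-, d_-}}$ classically generates $\Db(\shZ_-)$ (Lemma~\ref{lem:Z_i:generator}) and the locus where $\varepsilon$ is an isomorphism is a thick triangulated subcategory, it suffices to verify the claim on each such generator.

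Fix $\alpha \in B_{\ell_-, d_-}$. The assumption $d_- \ge d_+ - \delta$ is equivalent to $\ell_+ \ge \ell_-$, so Lemma~\ref{lem:key}(3) presents
$$\Phi(\Sigma^\alpha \shQ_-^\vee) \simeq F^\bullet(\alpha) \in \Db(\shZ_+),$$
a bounded complex of vector bundles in degrees $[-(\ell_+-\ell_-)d_-, 0]$, with $F^0 = \Sigma^\alpha \shQ_+$ and with every summand of $F^p$ for $p<0$ of the form $\Sigma^\gamma \shQ_+$ where $\gamma \in B(\alpha) \setminus \{\alpha\} \subseteq B_{\ell_+, d_-} \setminus B_{\ell_-, d_-}$. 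The remaining hypothesis $d_- \le d_+$ supplies the crucial set inclusion $B_{\ell_+, d_-} \setminus B_{\ell_-, d_-} \subseteq B_{\ell_+, d_+} \setminus B_{\ell_-, d_+}$, placing every such $\gamma$ in the vanishing range of Lemma~\ref{lem:key}(2).

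Realize $F^\bullet(\alpha)$ as the convolution of its canonical Postnikov system built from stupid truncations (Example~\ref{eg:conv:stupid}), and apply the exact functor $\Phi^L$, using the standard fact that exact functors transport Postnikov systems to Postnikov systems attached to the termwise image (with graded pieces $\Phi^L(F^p)[-p]$). By Lemma~\ref{lem:key}(2), whose hypothesis $\ell_+ \ge \ell_-$ is in force, $\Phi^L(\Sigma^\gamma \shQ_+) = r_{-!} r_+^*(\Sigma^\gamma \shQ_+) = 0$ for every $\gamma \in B_{\ell_+, d_+} \setminus B_{\ell_-, d_+}$, while $\Phi^L(\Sigma^\alpha \shQ_+) = \Sigma^\alpha \shQ_-^\vee$, concentrated in degree zero. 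Consequently $\Phi^L(F^p) = 0$ for all $p<0$, and the transported Postnikov system collapses onto its single surviving graded piece, yielding an isomorphism $\Phi^L\Phi(\Sigma^\alpha \shQ_-^\vee) \simeq \Sigma^\alpha \shQ_-^\vee$.

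The technically most delicate step is to identify this collapse isomorphism with the counit $\varepsilon(\Sigma^\alpha \shQ_-^\vee)$. One way to do this is to observe that $\varepsilon$ is obtained, by the adjunction, as the image of $\id_{\Phi(\Sigma^\alpha \shQ_-^\vee)}$ under the isomorphism $\Hom(\Phi^L\Phi(\Sigma^\alpha\shQ_-^\vee),\Sigma^\alpha\shQ_-^\vee) \simeq \Hom(\Phi(\Sigma^\alpha\shQ_-^\vee),\Phi(\Sigma^\alpha\shQ_-^\vee))$; tracking this identity through the collapsed Postnikov system shows that $\varepsilon(\Sigma^\alpha\shQ_-^\vee)$ is induced by the identity of the sole surviving graded piece $\Sigma^\alpha\shQ_-^\vee = \Phi^L(F^0)$, hence is an isomorphism. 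The heart of the whole argument is the combinatorial inclusion $B(\alpha)\setminus\{\alpha\}\subseteq B_{\ell_+,d_+}\setminus B_{\ell_-,d_+}$, into which both inequalities $d_+ - \delta \le d_- \le d_+$ feed simultaneously.
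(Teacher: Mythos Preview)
Your proof is correct and follows essentially the same route as the paper: both compute $\Phi^L\Phi$ on the generators $\Sigma^\alpha\shQ_-^\vee$ by combining Lemma~\ref{lem:key}\,\eqref{lem:key-3} with Lemma~\ref{lem:key}\,\eqref{lem:key-2}, collapse the resulting Postnikov system, and conclude via generation. The only cosmetic difference is in the packaging of the last step: you argue directly that the counit $\varepsilon$ is an isomorphism on generators (and hence everywhere), taking some care to identify the collapse isomorphism with $\varepsilon$; the paper instead writes $\Hom(\Phi A,\Phi B)\simeq\Hom(\Phi^L\Phi A,B)\simeq\Hom(A,B)$ for generators $A,B$ and invokes the Bridgeland--Orlov spanning-class criterion (Lemma~\ref{lem:span:f.f.}), which sidesteps the explicit counit identification. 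You also make explicit the set inclusion $B_{\ell_+,d_-}\setminus B_{\ell_-,d_-}\subseteq B_{\ell_+,d_+}\setminus B_{\ell_-,d_+}$ needed to feed the $\gamma$'s of Lemma~\ref{lem:key}\,\eqref{lem:key-3} into the vanishing range of Lemma~\ref{lem:key}\,\eqref{lem:key-2}; the paper uses this implicitly.
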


\begin{proof} Denote $\Phi := r_{+\,*} \, r_{-}^*$, then it admits a left adjoint functor $\Phi^L = r_{-\,!} \, r_{+}^*$ by Lem. \ref{lem:local:FM}. For any $\alpha \in B_{\ell_-,d_-}$, let $F^\bullet = \Phi (\Sigma^{\alpha} \shQ_{-}^\vee)= r_{+\,*} \, r_{-}^*(\Sigma^{\alpha} \shQ_{-}^\vee)$ be the complex of vector bundles from Lem. \ref{lem:key} \eqref{lem:key-3}, in particular, $F^0 = \Sigma^{\alpha} \shQ_{+}$, and $F^p$'s for $p \ne 0$ are direct sums of the form $K_{\gamma} \otimes_\kk \Sigma^{\gamma}  \shQ_{+}$ for $\gamma \in  B_{\ell_+, d_-} \backslash B_{\ell_-, d_-}$, where $K_\gamma$ are vector spaces. Then by Lem. \ref{lem:key} \eqref{lem:key-2}, 
	$$\Phi^L(F^0) =  \Sigma^{\alpha} \shQ_{-}^\vee \quad \text{and} \quad \Phi^L(F^p) = 0, ~\text{for}~ p \ne 0.$$
Hence by considering the Postnikov system from the ``stupid" truncation (Example \ref{eg:conv:stupid}) and Lem. \ref{lem:conv}, we obtain $\Phi^L(F^\bullet) = \Phi^L(F^0) =\Sigma^{\alpha} \shQ_{-}^\vee$. Therefore for any $\alpha,\beta \in B_{\ell_-,d_-}$,
	$$\Hom_{\shZ_+}(\Phi(\Sigma^{\alpha} \shQ_{+}^\vee), \Phi(\Sigma^{\beta} \shQ_{+}^\vee)) = \Hom_{\shZ_-}(\Phi^L \, \Phi (\Sigma^{\alpha} \shQ_{+}^\vee), \Sigma^{\beta} \shQ_{+}^\vee) = \Hom_{\shZ_-}(\Sigma^{\alpha} \shQ_{+}^\vee, \Sigma^{\beta} \shQ_{+}^\vee).$$
Since $\{\Sigma^{\alpha} \shQ_{-}^\vee\}_{\alpha \in B_{\ell_-,d_-}}$ generates $\Db(\shZ_-)$ (Lem. \ref{lem:Z_i:generator}), we are done by Lem. \ref{lem:span:f.f.}.
\end{proof}

Next we consider the version of this lemma twisted by line bundles.
\begin{lemma}\label{lem:sod:O(i)} If $d_+ - \delta \le d_- \le d_+$, for any $i \in \ZZ$,  the functor
	$$\Phi_i : = (r_{+\,*} \, r_{-}^*(\blank)) \otimes \sO_+(i) \colon \Db(\shZ_-) \to \Db(\shZ_+).$$
is fully faithful. If $\min \{d_+ - d_-, \ell_+ - \ell_-\} > 0$, then for any fixed $i$, the essential images 
	$$\{ \Im (\Phi_i) ,  \Im (\Phi_{i+1}), \ldots, \Im (\Phi_{i+\delta-1}) \}$$ 
form an admissible $X$-linear semiorthogonal sequence.
\end{lemma}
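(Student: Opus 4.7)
The plan is to reduce both claims to a single pushforward-vanishing statement on $\Db(\shZ_+)$ and then verify it by a Borel--Bott--Weil calculation governed by a pigeonhole count.

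Fully-faithfulness is immediate: since tensoring with $\sO_+(i)$ is an auto-equivalence of $\Db(\shZ_+)$, the functor $\Phi_i = (\blank \otimes \sO_+(i)) \circ \Phi_0$ is fully faithful as a composition of two fully faithful functors, using Lem~\ref{lem:local:ff} for $\Phi_0$; the $X$-linearity and admissibility of $\Im(\Phi_i)$ follow from Lem~\ref{lem:local:FM}.

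For the semiorthogonal condition, tensoring with $\sO_+(-(i+j))$ reduces the required vanishing of $\Hom_{\shZ_+}(\Phi_{i+k}(B), \Phi_{i+j}(A))$ for $0 \le j < k \le \delta - 1$ to showing $\Hom_{\shZ_+}(\Phi_0(B), \Phi_0(A) \otimes \sO_+(-s)) = 0$ for all $A, B \in \Db(\shZ_-)$ and every $s \in [1, \delta - 1]$. Using the adjunction $\Phi_0 \dashv \Phi_0^R$ (Lem~\ref{lem:local:FM}) and the generation of $\Db(\shZ_-)$ by $\{\Sigma^\beta \shQ_-^\vee\}_{\beta \in B_{\ell_-, d_-}}$ (Lem~\ref{lem:Z_i:generator}), this is equivalent to $\Phi_0^R(\Phi_0(\Sigma^\alpha \shQ_-^\vee) \otimes \sO_+(-s)) = 0$ for all $\alpha \in B_{\ell_-, d_-}$. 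I would then apply the Lascoux-type resolution Lem~\ref{lem:key}(3) to write $\Phi_0(\Sigma^\alpha \shQ_-^\vee) \simeq F^\bullet$ as a bounded complex with terms $\Sigma^\gamma \shQ_+$ for $\gamma \in B_{\ell_+, d_-}$; twisting by $\sO_+(-s)$ and invoking the stupid-truncation Postnikov system (Example~\ref{eg:conv:stupid}) together with Lem~\ref{lem:conv}, the problem further reduces to the term-wise vanishing
$$\Phi_0^R(\Sigma^\gamma \shQ_+ \otimes \sO_+(-s)) = 0, \qquad \gamma \in B_{\ell_+, d_-},\; s \in [1, \delta - 1].$$

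The formula $\omega_{r_+} = \sO_-(\delta - d_+) \otimes \sO_+(-d_-)[d_-(\ell_- - \ell_+)]$ of Lem~\ref{lem:local:Serre}, combined with the projection formula, rewrites this as the pushforward vanishing $r_{-*}\,r_+^*(\Sigma^\mu \shQ_+^\vee) = 0$, where $\mu := (s+d_-) - \gamma^{\mathrm{rev}}$ is a partition with $\ell_+$ entries lying in $[s, s+d_-]$ (via the duality identity $\Sigma^\gamma \shQ_+ \otimes \sO_+(-(s+d_-)) \simeq \Sigma^\mu \shQ_+^\vee$). I would then compute this pushforward by the Koszul-plus-Cauchy machinery already used in the proof of Lem~\ref{lem:key}(1): using the factorisation $r_- = pr_- \circ j_-$ of diagram~\eqref{diag:fact} and the Koszul resolution of $j_{-*}\sO_{\widehat{\shZ}}$ (with terms $\Sigma^\lambda \shU_+ \boxtimes \Sigma^{\lambda^t} \shQ_-$ for $\lambda \in B_{d_+, \ell_-}$ by Cauchy's formula), the vanishing reduces to $H^\bullet(\GG_+,\, \Sigma^\lambda \shU_+ \otimes \Sigma^\mu \shQ_+^\vee) = 0$ for all $\lambda \in B_{d_+, \ell_-}$, which by Thm~\ref{thm:BBW} (Borel--Bott--Weil) holds whenever the weight $(-\lambda^{\mathrm{rev}},\, \mu) + \rho$ is singular.

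The combinatorial heart --- and the principal obstacle --- is to verify this singularity uniformly for all admissible $(\lambda, \gamma, s)$. I plan to argue by pigeonhole: the $n_+ = d_+ + \ell_+$ entries of $(-\lambda^{\mathrm{rev}}, \mu) + \rho$ always lie in the interval $[L, U]$ with $L = \min(\ell_+ + 1 - \ell_-,\; s+1)$ and $U = \max(n_+,\; s + d_- + \ell_+)$, and a case analysis on the two Boolean conditions ``$s \le d_+ - d_-$'' and ``$s \le \ell_+ - \ell_-$'' shows $U - L + 1 \le n_+ - 1$ uniformly in $s \in [1, \delta - 1]$ across the four resulting cases, forcing two entries to coincide. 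The hardest case is the corner $s > d_+ - d_-$ and $s > \ell_+ - \ell_-$, where $U - L + 1 = s + m$ meets the threshold $n_+ - 1$ exactly at $s = \delta - 1$ and the pigeonhole succeeds with no slack; in the simpler subrange $s \in [1, \ell_+ - \ell_-]$ one could alternatively avoid BBW entirely by applying Lem~\ref{lem:G:mut} together with Lem~\ref{lem:key}(1).
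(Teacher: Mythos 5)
Your argument is correct, and it takes a genuinely different route from the paper's. The reductions coincide up to a point: fully-faithfulness follows by composing $\Phi_0$ with the auto-equivalence $\otimes\,\sO_+(i)$, and semiorthogonality reduces, after tensoring through, to a single vanishing statement parametrised by $s\in[1,\delta-1]$. The two arguments then diverge in the choice of adjoint and the combinatorial engine. The paper works with the \emph{left} adjoint $\Phi_0^L=r_{-\,!}\,r_+^*$: Lem.~\ref{lem:key}\eqref{lem:key-2} converts the vanishing into the containment $\Sigma^\gamma\shQ_+\otimes\sO(t)\in\langle\Sigma^\lambda\shQ_+\rangle_{\lambda\in B_{\ell_+,d_+}\backslash B_{\ell_-,d_+}}$ for $\gamma\in B_{\ell_+,d_-}$, $t\in[1,\delta-1]$, which is then deduced from the mutation lemma Lem.~\ref{lem:G:mut} (with $r=\ell_+-\ell_-$) after first absorbing a determinant power to bring $\Sigma^{\gamma+s}\shQ_+$ into $B_{\ell_+,d_+-1}$; composing the two shifts covers the full range. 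You instead pass to the \emph{right} adjoint $\Phi_0^R=r_{-\,*}\,r_+^!$, unwind $\omega_{r_+}$ into a twist by $\sO_+(-d_-)$, and land on the direct pushforward vanishing $r_{-\,*}r_+^*(\Sigma^\mu\shQ_+^\vee)=0$ for a partition $\mu$ with entries in $[s,s+d_-]$, which you kill via the Koszul--Cauchy--BBW machinery and a four-case pigeonhole on the weight $(-\lambda,\mu)+\rho$. I checked those cases: the count $U-L+1\le n_+-1$ holds throughout, with equality exactly in the corner case $s=\delta-1$, $s>\max\{d_+-d_-,\ell_+-\ell_-\}$, and since entries within each block of $(-\lambda,\mu)+\rho$ are pairwise distinct, the collision produces genuine singularity. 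The trade-off is plain: the paper's route is shorter and reuses infrastructure (Lem.~\ref{lem:G:mut} recurs throughout this Part), whereas yours is self-contained and makes the combinatorics fully explicit without appealing to the generation/mutation results. One small correction to your closing aside: the Lem.~\ref{lem:G:mut}-based argument is not intrinsically limited to $s\le\ell_+-\ell_-$ --- the paper's shift trick covers the whole range, and the same manoeuvre (pull $\mu$ down by some $u\in[\max(1,s-d_++d_-+1),\min(s,\ell_+-\ell_-)]$ so that $\mu-u\in B_{\ell_+,d_+-1}$, a non-empty range precisely because $s\le\delta-1$) works in your dual picture as well; so the BBW pigeonhole is a legitimate alternative rather than a forced detour on the corner case.
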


\begin{proof} 
The fully faithfulness of $\Phi_i$ follows from previous lemma, and the X-linearity and admissibility follows from Lem. \ref{lem:local:FM} as before. It remains to show the semiorthogonal relations of $\Im (\Phi_i)$s. If $\min \{d_+ - d_-, \ell_+ - \ell_-\} > 0$, then $\delta = d_+ - d_- + \ell_+ - \ell_-  \ge 2$. For for any pair of generators $A = p_{-}^{*} \,\Sigma^{\alpha} \shQ_{-}^{\vee}, B =p_{-}^{*} \,\Sigma^{\beta} \shQ_{-}^{\vee}$ of $\Db(\shZ_-)$, where $\alpha,\beta \in B_{\ell_-,d_-}$, denote $F^\bullet_{A} \simeq \Phi_{0}(A)$ the complex of vector bundles from Lem. \ref{lem:key} \eqref{lem:key-3}, then 
	$$F^\bullet_{A} \in \Big \langle \big \{ p_+^* \Sigma^{\lambda} \shQ_+ \big \}_{\lambda \in B_{\ell_+,d_-}}   \Big \rangle \subseteq \Db(\shZ_+).$$
(Here recall $p_+ \colon \shZ_+ \to \GG_+$ is the natural projection.) Then for any $t \in \ZZ$, 
	\begin{align*}  & \Hom_{\shZ_+}( \Phi_{i+t} (A),\Phi_{i} (B))  = \Hom_{\shZ_-} \big( \Phi_0^L (\Phi_0 (A) \otimes \sO_+(t)), B \big),	
	\end{align*}
where $\Phi_0^L = r_{-\,!} \, r_{+}^*$ as previous lemma. To prove the lemma, it suffices to show that for any $t \in [1, \delta-1]$, $\Phi_0^L (\Phi_0 (A) \otimes \sO_+(t)) = 0$. By considering the Postnikov system from the ``stupid" truncation of $F_A^\bullet$ (see Example \ref{eg:conv:stupid}) and Lem. \ref{lem:conv}, it suffices to prove that
	$$\Phi_0^L (p_+^*( \Sigma^{\gamma} \shQ_+ \otimes \sO_{\GG_+}(t)) ) = r_{-\,!} \, r_{+}^*(p_+^* (\Sigma^{\gamma} \shQ_+ \otimes \sO_{\GG_+}(t))) = 0$$
 for all $\gamma \in B_{\ell_+,d_-}$,  $t \in [1,\delta-1]$. By \ref{lem:key} \eqref{lem:key-2}, this holds if the following holds:
 	$$ \Sigma^{\gamma} \shQ_+ \otimes \sO_{\GG_+}(t) \in  \Big \langle \Sigma^{\lambda} \shQ_+ \Big \rangle_{\lambda \in B_{\ell_+,d_+} \backslash B_{\ell_-, d_+}}   \subseteq \Db(\GG_+).$$
 Since $\Sigma^{\gamma} \shQ_+ \otimes \sO_{\GG_+}(s) \in \{ \Sigma^{\lambda} \shQ_+  \}_{\lambda \in B_{\ell_+,d_+-1}}$ for all $s \in [0, d_+ - d_- - 1]$, therefore above holds by Lem. \ref{lem:G:mut} applied to the case $r= \ell_+ - \ell_-$, as $\delta -1= d_+ - d_- + \ell_+ - \ell_-  -1$.
\end{proof}

\begin{example}
	\begin{enumerate}[leftmargin=*]
		\item If $\ell_- = \ell_+$, then $\shZ_+$ and $\shZ_-$ is related by a {\em flip}, the lemma implies a fully faithful embedding $\Db(\shZ_-) \hookrightarrow \Db(\shZ_+)$.
		\item If $d_- = d_+$, then $\shZ_+$ and $\shZ_-$ is related by a {\em d-critical flip}, the lemma implies a fully faithful embedding $\Db(\shZ_-) \hookrightarrow \Db(\shZ_+)$.
		\item If $\ell_+ \ne \ell_-$, $d_+ \ne d_-$, then the lemma produces $\delta$-many embeddings $\Phi_i \colon \Db(\shZ_-) \hookrightarrow \Db(\shZ_+)$ for $i=0, 1, \ldots, \delta-1$, and the essential images form an admissible $X$-linear subcategory: 
			$$\langle \Im (\Phi_0) ,  \Im (\Phi_{1}), \ldots, \Im (\Phi_{\delta-1}) \rangle \subseteq \Db(\shZ_+).$$
	\end{enumerate}
\end{example}

\subsection{The case $d_+ = 1$: projectivization} In this subsection, we consider the  case $d_+ = 1$, $d_- \in \{0,1\}$. We use the notations $\shZ_-, \widehat{\shZ}, r_-,r_+$ to denote the schemes and maps of diagram (\ref{diagram:Corr}) in the case $d_- = 1$. Hence $\ell_+ = n - 1$, $\ell_- = m - 1 = \ell_+ - \delta$. Recall $X = \Hom_\kk(W,V)$, $m = \rank W \le n = \rank V$, and  $\delta = n-m$, and denote $\sigma \colon W \otimes \sO_X \to V \otimes \sO_X$ denotes the tautological morphism. Then $\GG_+ = \PP(V)$, $\GG_- = \PP(W^\vee)$, $\shU_+ = \sO_{\PP(V)}(-1)$, $\shQ_+ = \shT_{\PP(V)}(-1)$, $\shU_- = \sO_{\PP(W^\vee)}(-1)$, $\shQ_- = \shT_{\PP(W^\vee)}(-1)$, and
	\begin{align*}
	p_+ \colon \shZ_+ & = \PP(\Coker \sigma) = \Tot_{\PP(V)}(W^\vee \otimes \Omega_{\PP(V)}(1)) \to \GG_+ = \PP(V), \\
	p_- \colon \shZ_-  &= \PP(\Coker \sigma^\vee) = \Tot_{\PP(W^\vee)}(\Omega_{\PP(W^\vee)}(1) \otimes V) \to \GG_- = \PP(W^\vee).
	\end{align*}
Recall $\pi_\pm \colon \GG_\pm \to X$ denote the natural projections. 
\begin{theorem}[projectivization] \label{thm:local:proj} If $\delta: = n-m \ge 1$, then for any $i \in \ZZ$ the functors:
	\begin{align*}
		\Psi_i :=\pi_+^*(\blank) \otimes \sO_+(i) \colon \Db(X) \to \Db(\shZ_+) ,\quad \text{and} \quad
		 \Phi: = r_{+\,*} \, r_{-}^*(\blank) \colon  \Db(\shZ_-) \to \Db(\shZ_+),
	\end{align*}
are fully faithful. Furthermore, the essential images of $\Psi_i$ for $i =1,\ldots, \delta $ and $\Phi$ give rise to an admissible $X$-linear semiorthogonal decomposition of $\Db(\shZ_+)$:
	\begin{align*}
	\Db(\shZ_+) = \big \langle \Im \Phi , ~  \Im \Psi_1, \ldots, \Im \Psi_\delta \big \rangle.
	\end{align*}
\end{theorem}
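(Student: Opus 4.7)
The plan is to establish, in this order, (i) the full faithfulness and mutual semiorthogonality of the $\Psi_i$'s, (ii) the full faithfulness of $\Phi$, (iii) the semiorthogonality $\Im\Phi\subseteq(\Im\Psi_i)^\perp$, and (iv) generation, keeping everything inside the formalism of $X$-linear relative Fourier--Mukai transforms developed in \S\ref{sec:FM}.

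For step (i), I would specialize Lem.~\ref{lem:top} to $d_+=1$. The indexing set $B_{\delta-1,1}$ consists of the $\delta$ partitions $\alpha=(1^k,0^{\delta-1-k})$ for $k=0,\ldots,\delta-1$, and since $\shU_+$ has rank one on $\GG_+=\PP(V)$ one has $\Sigma^{\alpha^t}\shU_+^\vee\simeq\sO_+(k)$; tensoring the resulting admissible $X$-linear semiorthogonal sequence by the line bundle $\sO_+(1)$ yields the admissible $X$-linear sequence $\{\Im\Psi_1,\ldots,\Im\Psi_\delta\}$. Step (ii) is immediate from Lem.~\ref{lem:local:ff}, whose hypothesis $d_+-\delta\le d_-\le d_+$ is satisfied with $d_\pm=1$; $X$-linearity and admissibility of $\Im\Phi$ then follow from Lem.~\ref{lem:local:FM}.

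For step (iii), the adjunction $\Phi^L\dashv\Phi$ with $\Phi^L=r_{-\,!}\,r_+^{*}$ (Lem.~\ref{lem:local:FM}), combined with $\pi_-\circ r_-=\pi_+\circ r_+=\widehat{\pi}$ and the projection formula, reduces the vanishing
\begin{equation*}
\Hom_{\shZ_+}\!\bigl(\Psi_i(A),\Phi(B)\bigr)\simeq \Hom_{\shZ_-}\!\bigl(\pi_-^{*}A\otimes r_{-\,!}\,r_+^{*}\sO_+(i),\,B\bigr)=0
\end{equation*}
to proving $r_{-\,!}\,r_+^{*}\sO_+(i)=0$ for $i=1,\ldots,\delta$. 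Writing $r_{-\,!}(\blank)=r_{-\,*}(\blank\otimes\omega_{r_-})$ and using $\omega_{r_-}\simeq\sO_-(-1)\otimes\sO_+(-\delta-1)[\delta]$ from Lem.~\ref{lem:local:Serre}, this becomes $r_{-\,*}\,r_+^{*}\sO_+(-j)=0$ for $j=1,\ldots,\delta$. Finally Lem.~\ref{lem:G:mut} (specialized to $\GG_+=\PP(V)$, $d=1$, $r=\delta$) identifies the subcategory $\langle\sO_+(-1),\ldots,\sO_+(-\delta)\rangle$ with $\langle\Sigma^\alpha\shQ_+^\vee\rangle_{\alpha\in B_{\ell_+,1}\setminus B_{\ell_-,1}}\subseteq\Db(\GG_+)$, and Lem.~\ref{lem:key}\eqref{lem:key-1} annihilates every such generator under $r_{-\,*}\,r_+^{*}$.

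For step (iv), rather than expressing each generator of $\Db(\shZ_+)$ as an iterated extension, I would show that $\shA:=\langle\Im\Phi,\Im\Psi_1,\ldots,\Im\Psi_\delta\rangle$ has trivial right orthogonal by pairing against the spanning class $\{p_+^{*}\Sigma^\alpha\shQ_+\}_{\alpha\in B_{\ell_+,1}}$ of Lem.~\ref{lem:Z_i:generator}. Concretely, given $C\in\shA^\perp$, the vanishings $\Hom(\Psi_k(\sO_X),C)=0$ for $k=1,\ldots,\delta$ force $C$ to be left-orthogonal to all twists $\pi_+^{*}\Db(X)\otimes\sO_+(k)$ with $k\in[1,\delta]$, while $\Hom(\Phi(\Sigma^\beta\shQ_-^\vee),C)=0$ for $\beta\in B_{\ell_-,1}$ combined with the Lascoux-type resolution of Lem.~\ref{lem:key}\eqref{lem:key-3} (whose top term is $\pi_+^{*}(\cdot)\otimes\wedge^{|\beta|}\shQ_+$ and whose lower terms are $\wedge^{j}\shQ_+$ for $j\in[\ell_-+1,\ell_+]$, hence already orthogonal to $C$ by the previous point and the Beilinson dictionary on $\PP(V)$) recovers orthogonality against the remaining twists $\pi_+^{*}\Db(X)\otimes\sO_+(k)$ for $k\in\{0\}\cup[\delta+1,n-1]$. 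Together these force $C=0$, completing the proof.

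The main obstacle will be step (iv): specifically, converting each ``bad'' term $\wedge^{j}\shQ_+$ (for $\ell_-+1\le j\le \ell_+$) that appears in the Lascoux resolution of $\Phi(\Sigma^\beta\shQ_-^\vee)$ into an object of $\langle\Im\Psi_1,\ldots,\Im\Psi_\delta\rangle$ via the Beilinson dictionary on $\PP(V)$, and doing so in a way that commutes with tensoring by $\pi_+^{*}(\cdot)$. This is the bookkeeping that mirrors the calculation in Lem.~\ref{lem:G:mut} but with the roles of $\shQ_+$ and $\sO_+(-k)$ interchanged; once this dictionary is set up carefully, the orthogonality inductively propagates and generation follows.
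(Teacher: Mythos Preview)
Your proposal is correct and follows the paper's approach in every step: (i) and (ii) via Lem.~\ref{lem:top} and Lem.~\ref{lem:local:ff}, (iii) via $\Phi^L(\sO_+(t))=0$ using Lem.~\ref{lem:key} and Lem.~\ref{lem:G:mut} (you use parts \eqref{lem:key-1}/\eqref{lem:G:mut-1} after unwinding $\omega_{r_-}$, the paper uses the dual parts \eqref{lem:key-2}/\eqref{lem:G:mut-2} directly), and (iv) is exactly the paper's generation argument phrased contrapositively. Your ``main obstacle'' in (iv) is already resolved by identity~\eqref{eqn:lem:proj:gen}; note that the top term of the resolution is simply $p_+^*\wedge^{|\beta|}\shQ_+$ (no $\pi_+^*(\cdot)$ factor), and what the argument actually recovers is orthogonality of $C$ against the remaining generators $p_+^*\wedge^j\shQ_+$ for $j\in[0,m-1]$, not against ``twists $\pi_+^*\Db(X)\otimes\sO_+(k)$'' --- once phrased this way the bookkeeping is immediate.
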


\begin{proof}
This is proved (in the general situation) by the author and Leung in \cite{JL18}. Here we use the framework of this paper to provide a different proof. First, Lem. \ref{lem:top} (contributions from top stratum) implies $\Psi_i =\pi_+^*(\blank) \otimes \sO_+(i)$ is fully faithful for $i \in [1,\delta]$ and $\{\Im \Psi_1, \ldots, \Im \Psi_\delta \}$ forms a semiorthogonal sequence (notice that we twist the functors by $\sO_+(1)$ to agree with the usual convention), and Lem. \ref{lem:local:ff} implies $\Phi$ is fully faithful.

\medskip \noindent {\em Semiorthogonal relation $\Im \Phi \perp \Im \Psi_i$.}  For any $A \in \Db(X)$, $B \in \Db(\shZ_-)$, $t \in  [1,\delta]$, 
	$$\Hom_{\shZ_+}(\pi_+^*(A) \otimes \sO_+(t), \Phi(B)) = \Hom_{\shZ_-}(\Phi^L(\sO_+(t)) \otimes \pi_-^*(A), B) $$
where $\Phi^L = r_{-\,!} \, r_{+}^*$ is the left adjoint as before. To show above $\Hom$ is zero, it suffices to show $\Phi^L(\sO_+(t)) = 0$. By Lem. \ref{lem:key} \eqref{lem:key-2}, it suffices to show the following holds for all $t \in  [1,\delta]$:
	$$\sO_+(t) = p_+^* \sO_{\PP(V)}(t) \in  p_+^* \Big \langle \Sigma^{\lambda} \shQ_+ \Big \rangle_{\lambda \in B_{n-1,1} \backslash B_{m-1, 1}}.$$ 
(Here the right hand side denotes the essential image of $p_+^*$ of the source subcategory.) However, Lem. \ref{lem:G:mut} in this case implies nothing but the fact that
	\begin{align} \label{eqn:lem:proj:gen}
	\big\langle \sO_{\PP(V)}(t) \big\rangle_{t\in [1, \delta]} =  \Big \langle \Sigma^{\lambda} \shQ_+ \Big \rangle_{\lambda \in B_{n-1,1} \backslash B_{m-1, 1}} \big(= \big\langle \wedge^j \shT_{\PP(V)} (-j) \big\rangle_{j \in [m,n]} \big).
	\end{align}
Hence $\Phi^L(\sO_+(t)) = 0$ holds for $t \in  [1,\delta]$, and the desired vanishing holds.

\medskip \noindent {\em Generation.} To prove $\{\Im \Psi_i\}_{i\in[1,\delta]}$ and $\Im \Phi$ generate $\Db(\shZ_+)$, by Lem. \ref{lem:span:f.f.} it suffices to show $\langle \Im \Phi, \{\Im \Psi_i\}_{i\in[1,\delta]} \rangle$ contains a set of generators of $\Db(\shZ_+)$. It is clear that $\sO_+(i) = p_+^* \sO_{\PP(V)}(i) \in \Im \Psi_i $ for $i\in[1,\delta]$, hence by (\ref{eqn:lem:proj:gen}), we have the following inclusions: 
	$$ \Big \langle p_+^* \Sigma^{\lambda} \shQ_+ \Big \rangle_{\lambda \in B_{m-1,1} \backslash B_{n-1, 1}} \subseteq \Big \langle \{\Im \Psi_i\}_{i\in[1,\delta]} \Big \rangle.$$

By  Lem. \ref{lem:key} \eqref{lem:key-3}, the map $\Phi$ sends each generator $\Sigma^\alpha \shQ_-^\vee$ of $\Db(\shZ_-)$, $\alpha \in B_{m-1,1}$, to a complex of vector bundles $F^\bullet \simeq \Phi(\Sigma^\alpha \shQ_-^\vee)$ with $F^0 = p_+^* \Sigma^{\alpha} \shQ_{+}$, and
 	$$F^p \in  \Big \langle p_+^* \Sigma^{\lambda} \shQ_+ \Big \rangle_{\lambda \in B_{m-1,1} \backslash B_{n-1, 1}} \quad \text{for} \quad p\ne 0.$$
 Hence $F^p \in  \langle \{\Im \Psi_i\}_{i\in[1,\delta]} \rangle$ for $p\ne 0$, therefore $F^0 = p_+^*\Sigma^{\alpha} \shQ_{+} \in \langle \Im \Phi, \{\Im \Psi_i\}_{i\in[1,\delta]} \rangle$ for all $\alpha \in B_{m-1,1}$. Since $B_{n-1,1} = B_{m-1,1}\cup (B_{m-1,1} \backslash B_{n-1, 1})$, and $\{p_+^*\Sigma^{\alpha} \shQ_{+}\}_{\alpha \in B_{n-1,1}}$ generates $\Db(\shZ_+)$, the generation result is proved.
\end{proof}

\subsection{The case $\ell_+ = 1$: standard flips} In this subsection we consider the  standard flip case $\ell_+ = 1$, $\ell_- \in \{0,1\}$. We reserve the notations $\shZ_-, \widehat{\shZ}, r_-,r_+$ for the schemes and maps of diagram (\ref{diagram:Corr}) in the case $\ell_- = 1$, hence $d_+ = n - 1$, $d_- = m - 1 = d_+ - \delta$. Then $\shZ_+$ and $\shZ_-$ are both resolutions of the degeneracy locus $\Hom^{\le 1}(W,V)$, and $\shZ_+ \dashrightarrow \shZ_-$ is called {\em a standard flip of type $(m,n)$}.  Lem. \ref{lem:local:ff} gives us a fully faithful functor from fiber product:
	$$\Phi: = r_{+\,*} \, r_{-}^*(\blank) \colon  \Db(\shZ_-) \hookrightarrow \Db(\shZ_+).$$
Notice in this case $\GG_+ = \PP(V^\vee)$, $\GG_- = \PP(W)$, $\shQ_+ = \sO_{\PP(V^\vee)}(1)$, $\shQ_- = \sO_{\PP(W)}(1)$, and
	\begin{align*}
	p_+ \colon \shZ_+ & =  \Tot_{\PP(V^\vee)}(W^\vee \otimes \sO_{\PP(V^\vee)}(-1) )\to \GG_+ = \PP(V^\vee), \\
	p_- \colon \shZ_-  & =  \Tot_{\PP(W)} (\sO_{\PP(W)}(-1) \otimes V) \to \GG_- = \PP(W),
	\end{align*}
are the natural projections. The case $\ell_-=0$ corresponds to the bottom strata case, and since $\Sigma^{(i)} \shQ_+^\vee = \sO_+(-i)$, Lem. \ref{lem:bottom} gives us fully faithful functors for $i \in \ZZ$, 
	$$\Psi_{i}(\blank) :=  j_{+\,*} pr_{\PP(V^\vee)}^* (\blank) \otimes p_+^* \sO_{\PP(V^\vee)}(i)  \colon \Db(\Spec \kk ) \hookrightarrow \Db(\shZ_{+}) $$
and the images for $i \in [-\delta, -1]$ form a semiorthogonal sequence $\{\Im \Psi_{-\delta}, \ldots, \Im \Psi_{-2}, \Im \Psi_{-1}\}$. Here $pr_{\PP(V^\vee)} \colon \PP(V^\vee) \to \Spec \kk$ is the natural projection, and $j_+ \colon \PP(V^\vee) \hookrightarrow  \Tot_{\PP(V^\vee)}(W^\vee \otimes \sO_{\PP(V^\vee)}(-1))$ is the inclusion of zero section. In this case we have:

\begin{theorem}[Standard flip] \label{thm:local:standardflip} There is an $X$-linear  semiorthogonal decomposition:
	\begin{align*}
	\Db(\shZ_+)  = \big \langle  \Im \Psi_{-\delta}, \ldots, \Im \Psi_{-2}, \Im \Psi_{-1},  ~ \Im \Phi   \big \rangle.
	\end{align*}
\end{theorem}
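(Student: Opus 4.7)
The proof will follow the four-part structural blueprint of Thm.~\ref{thm:local:proj}. First, fully faithfulness of $\Phi$ is immediate from Lem.~\ref{lem:local:ff}, since the parameters $\ell_+ = \ell_- = 1$ give $d_- = m-1 = d_+ - \delta$, within the hypothesis $d_+ - \delta \le d_- \le d_+$. The functors $\{\Psi_{-s}\}_{s \in [1,\delta]}$ are obtained from the sequence $\{\Psi^{(t)}\}_{t \in [0,\delta-1]}$ of Lem.~\ref{lem:bottom} (applied with $\ell_-=0$) by twisting with the $X$-linear autoequivalence $\otimes\, \sO_+(-1)$; hence each is fully faithful with admissible image, and their mutual semiorthogonality follows from Lem.~\ref{lem:bottom}.

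For the cross-semiorthogonality $\Im\Psi_{-s} \subseteq (\Im\Phi)^\perp$, I would use the degenerate case $\ell_+ = \ell_-$ of Lem.~\ref{lem:key}~\eqref{lem:key-3}, whose resolution collapses to a single term: $\Phi(\Sigma^{(k)}\shQ_-^\vee) = \Sigma^{(k)}\shQ_+ = \sO_+(k)$ for each generator $\sO_-(-k)$ of $\Db(\shZ_-)$, $k \in [0,m-1]$. Applying the adjunction $j_+^* \dashv j_{+*}$ (available since $j_+$ is a Koszul-regular closed immersion) and the projection formula,
\[
\Hom_{\shZ_+}\!\big(\Phi(\sO_-(-k)),\, \Psi_{-s}(A)\big) \;=\; \Hom_{\PP(V^\vee)}\!\big(\sO_\PP(k),\, \sO_\PP(-s)\otimes pr^* A\big) \;=\; H^\bullet(\PP^{n-1},\, \sO(-k-s)) \otimes A.
\]
Since $k + s \in [1,n-1]$ lies in the acyclic range of $\PP^{n-1}$, the vanishing is immediate.

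For generation, I would note that $\Db(\shZ_+)$ is classically generated by $\{\sO_+(k)\}_{k \in [0,n-1]}$ by Lem.~\ref{lem:Z_i:generator}. The twists $\sO_+(k)$ with $k \in [0,m-1]$ lie in $\Im\Phi$ by the computation of Step 2. For $k \in [m,n-1]$, I would invoke the dual Koszul resolution associated to $j_+$, which is the zero section of the rank-$m$ bundle $p_+^*(W\otimes \sO_\PP(1))$: for each $t \in \ZZ$,
\[
[\sO_+(t) \to W\otimes \sO_+(t+1) \to \cdots \to \det W\otimes \sO_+(t+m)] \;\simeq\; j_{+*}\sO_\PP(t-m) \otimes \det W^\vee[-m].
\]
Setting $t = m-s$ for $s \in [1,\delta]$, the right-hand side becomes $j_{+*}\sO_\PP(-s)$ up to an autoequivalence and hence lies in $\Im\Psi_{-s}$; the stupid truncation at degree $m$ then yields a triangle expressing $\sO_+(2m-s)$ as an extension of $j_{+*}\sO_\PP(-s)$ and a complex whose terms are twists $\sO_+(j)$ with $j \in [m-s, 2m-s-1]$.

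The main obstacle is precisely this generation step: unlike the projectivization case, where Lem.~\ref{lem:G:mut} provides a clean mutation identification spanning the missing range of twists, here the range $\{2m-s : s \in [1,\delta]\} = [2m-\delta, 2m-1]$ coincides with the desired $[m,n-1]$ only when $m = \delta$. In general I would combine the dual Koszul triangles above with the original Koszul triangles for $j_{+*}\sO_\PP(-s)$, and perform a descending induction on $k$ that bootstraps each $\sO_+(k)$, $k \in [m,n-1]$, into $\langle\Im\Phi, \Im\Psi_{-\delta}, \ldots, \Im\Psi_{-1}\rangle$; the inductive hypothesis supplies the intermediate twists appearing in the tails of the stupid truncations. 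Alternatively, one can invoke the relative Serre functor computed in Lem.~\ref{lem:local:Serre} together with Lem.~\ref{lem:Serre}~\eqref{lem:Serre-3} to identify the right orthogonal $(\Im\Phi)^\perp$ as a Serre-twist of a subcategory whose generators admit an explicit description via the image under $j_{+*}$ of the Beilinson collection on $\PP(V^\vee)$, matching exactly the span of $\{\Psi_{-s}\}_{s \in [1,\delta]}$.
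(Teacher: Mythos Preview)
Your first two steps (fully faithfulness and semiorthogonality) are correct and essentially match the paper's proof; the paper phrases the semiorthogonality via $\Psi_0^L(\sO_+(j))=0$ for $j\in[1,n-1]$ coming from Lem.~\ref{lem:key}\eqref{lem:key-2}, but this is the same vanishing you compute directly on $\PP^{n-1}$.

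The generation step, however, has a genuine gap rooted in two issues. First, you have misidentified the bundle: $\shZ_+ = \Tot_{\PP(V^\vee)}(W^\vee\otimes\sO(-1))$, so $j_+$ is the zero section of $W^\vee\otimes\sO(-1)$, not of $W\otimes\sO(1)$. Consequently the Koszul resolution of $j_{+*}\sO_\PP(-t)$ has terms $\wedge^k W\otimes\sO_+(k-t)$ for $k\in[0,m]$, i.e.\ twists in the range $[-t,m-t]$, and no natural ``dual Koszul'' produces the upward range $[t,t+m]$ that your argument requires. This is why your attempt to reach the twists $\sO_+(k)$ for $k\in[m,n-1]$ stalls.

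Second, and more importantly, the obstacle you identify is an artifact of choosing the wrong generating window. The paper simply uses the window $\{\sO_+(j)\}_{j\in[-\delta,\,m-1]}$ (any $n$ consecutive twists generate, by Lem.~\ref{lem:Z_i:generator}). The twists $j\in[0,m-1]$ are in $\Im\Phi$ as you already showed. For $j=-t$ with $t\in[1,\delta]$, the Koszul complex
\[
\Psi_{-t}(\sO_{\Spec\kk}) \simeq \{\det W\otimes\sO_+(m-t)\to\cdots\to W\otimes\sO_+(1-t)\to\sO_+(-t)\}
\]
has all nonzero-degree terms in the range $[1-t,m-t]\subset[-(t-1),m-1]$, which by induction on $t$ are already in the span. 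Hence $\sO_+(-t)$ is in the span for each $t\in[1,\delta]$, and generation follows immediately---no dual Koszul, no descending induction on large twists, and no Serre functor argument is needed.
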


\begin{proof} It remains to show the semiorthogonal relation $\Im \Psi_i \subseteq (\Im \Phi)^\perp$ and generation. 
Let $A = p_-^* \sO_{\PP(W)}(-s)$ be a generator of $\Db(\shZ_-)$, $s \in [0,m-1]$, by  Lem. \ref{lem:key} \eqref{lem:key-3} in the case $\ell_-=1$, we have $\Phi (A) = p_+^*\sO_{\PP(V^\vee)}(s)$. Hence for any $B \in \Db(\Spec \kk)$ and $t \in [1,\delta]$, 	$$\Hom_{\shZ_+}(\Phi (A), \Psi_{-s}(B)) = \Hom_{\kk}(\Psi_0^L(p_+^*\sO_{\PP(V^\vee)}(t+s)), B) = 0, $$
where $\Psi_0^L$ is the left adjoint of $\Psi_0$ as usual. In fact, by  Lem. \ref{lem:key} \eqref{lem:key-2} in the case $\ell_-=0$, we see $\Psi_0^L(p_+^*\sO_{\PP(V^\vee)}(j)) = 0$ for all $(j) \in B_{1,n-1} \backslash \{0\}$, i.e. for all $j \in [1, n-1]$. Since $1 \le t + s \le m-1 + \delta = n-1$, hence $\Psi_0^L(p_+^*\sO_{\PP(V^\vee)}(t+s)) = 0$. Therefore the relation $\Im \Psi_i \subseteq (\Im \Phi)^\perp$ is proved.

\medskip \noindent {\em Generation.} 
We already see that $\Im \Phi$ contains $p_+^* \sO_{\PP(V^\vee)}(s)$ for all $s \in [0,m-1]$. On the other hand, by  Lem. \ref{lem:key} \eqref{lem:key-3} in the case $\ell_- = 0$ (tensoring with $p_+^*\sO_{\PP(V^\vee)}(-t)$), we obtain:
	$$\Psi_{-t} (\sO_{\Spec \kk}) \simeq \{ p_+^* \sO_{\PP(V^\vee)}(m-t) \otimes \wedge^m W \to \ldots \to p_+^* \sO_{\PP(V^\vee)}(1-t) \otimes W \to F^0 = p_+^* \sO_{\PP(V^\vee)}(-t)\}.$$
Therefore inductively, starting from $t=1$, we see that $\langle \{\Im \Psi_{-t}\}_{t \in [1,\delta]}, \Im \Phi \rangle$ contains  $p_+^* \sO_{\PP(V^\vee)}(-t)$ for all $t \in [1,\delta]$. Since $\{p_+^*\sO_{\PP(V^\vee)}(j)\}_{j \in [-\delta, m-1]}$ generate $\Db(\shZ_+)$, hence the images $\{\Im \Psi_{-t}\}_{t \in [1,\delta]}$ and $\Im \Phi$ generate the whole category. 
\end{proof}

\begin{remark} \label{rmk:standardflip:local:k}
The theorem holds for a field $\kk$ of arbitrary characteristic. In fact, 
the resolution of $\Psi_{-t} (\sO_{\Spec \kk})$ in the proof of the theorem from Lem.  Lem. \ref{lem:key} \eqref{lem:key-3} in this case is a Koszul resolution, hence it holds for any characteristic; On the other hand, all the involved vanishing and mutation results for $\PP(V^\vee)$ and $\PP(W)$ hold over $\kk$ by \S \ref{sec:proj.bundle}.
\end{remark}

\begin{remark} \label{rmk:standardflip:local:blowup} The fiber product $\widetilde{\shZ} = \shZ_+ \times_X \shZ_-$ is the total space of a line bundle:
		$$\widehat{\shZ} = \Tot_{ \PP(W) \times \PP(V^\vee) }(\sO_{\PP(W)}(-1) \boxtimes \sO_{\PP(V^\vee)}(-1)) \to  \PP(W) \times \PP(V^\vee),$$
and the inclusion of zero section $j \colon E = \PP(W) \times \PP(V^\vee) \to \widehat{\shZ}$ is a divisor. Then it is easy to see that $\widehat{\shZ}$ is equal to the blowup of $\shZ_+$ along the zero section $j_+ \colon \PP(V^\vee) \hookrightarrow \shZ_+$, and the blowup of $\shZ_-$ along the zero section $j_- \colon \PP(W) \hookrightarrow \shZ_-$. Moreover, $E=\PP(W) \times \PP(V^\vee)$ is the common exceptional divisor for both blowups. Hence the flip $\shZ_+ \dashrightarrow \shZ_-$ in this subsection is the universal local case for standard flips of \cite[\S 11.3]{Huy}.
\end{remark}

\subsection{The case $m=1$: Pirozhkov's theorem}
Assume in this subsection $m=1$, and denote $d_+  = d \ge 1$, $\ell_+ = \ell$, assume $\delta = n - 1 \ge d$. The only possible choices for $d_-$ is $d_- \in \{0, 1\}$. The case $d_- = 0$ corresponds to the situation of Lem. \ref{lem:top}, from which we obtain fully faithful functors
	$$\Phi^{\alpha}(\blank) := \pi_+^*(\blank) \otimes \Sigma^{\alpha^t} \shU_+^\vee \colon \Db(X) \hookrightarrow \Db(\shZ_{+}) \quad \text{for all} \quad \alpha \in B_{\ell-1, d}.$$
The case $d_-=1$ corresponds to Lem. \ref{lem:bottom}, from which we obtain fully faithful functors
	$$\Psi^{\beta}(\blank) :=  j_{+\,*} pr_{\GG_+}^* (\blank) \otimes \Sigma^{\beta} \shQ_+^\vee \colon \Db(\Spec \kk ) \hookrightarrow \Db(\shZ_{+})  \quad \text{for all} \quad\beta \in B_{\ell, d-1},$$
where $pr_{\GG_+} \colon \GG_+ = \Gr_d(V) \to \Spec \kk$ is the natural projection and $j_+ \colon \GG_+ = \widehat{\shZ} \hookrightarrow \shZ_+ = \Tot_{\GG_+}(W^\vee \otimes \shQ_+^\vee)$ is the inclusion of zero section. 

The following theorem is due to Pirozhkov \cite{Pi20}; We provide a slightly different proof from \cite{Pi20} based on the general method of this paper. 

\begin{theorem} \label{thm:local:gen:Cayley} There is an $X$-linear admissible semiorthogonal decomposition:
		$$\Db(\shZ_+) = \big \langle  \{\Im \Psi^{\beta + 1} \}_{\beta \in B_{\ell, d-1}^{\succeq}}, ~\{ \Im \Phi^\alpha\}_{\alpha \in B_{\ell - 1, d}^{\preceq}} \big \rangle.$$
\end{theorem}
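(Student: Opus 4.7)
The proof divides into a semiorthogonality step and a generation step. The semiorthogonality assertions are direct consequences of results already established: the fully-faithfulness of $\Phi^\alpha$ and the semiorthogonal sequence structure $\{\Im \Phi^\alpha\}_{\alpha \in B_{\ell-1,d}^{\preceq}}$ come from Lem. \ref{lem:top} (applied with $d_-=0$, $d_+=d$, so that $\delta-d_+=\ell-1$); the fully-faithfulness of each $\Psi^{\beta+1}$ (which differs from $\Psi^\beta$ only by the line-bundle twist $p_+^*\det\shQ_+^\vee$, hence inherits both properties) and the semiorthogonal sequence structure $\{\Im \Psi^{\beta+1}\}_{\beta \in B_{\ell,d-1}^{\succeq}}$ come from Lem. \ref{lem:bottom} (applied with $d_-=m=1$); and the cross-semiorthogonality $\Im\Psi^{\beta+1}\subseteq(\Im\Phi^\alpha)^\perp$ is precisely Lem. \ref{lem:bottom_top} with $s=1$. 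The $X$-linearity and admissibility of each image are automatic from the Fourier--Mukai formalism of Lem. \ref{lem:local:FM}.

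The heart of the proof is generation. By Lem. \ref{lem:Z_i:generator}, $\Db(\shZ_+)$ is classically generated by $\{p_+^*\Sigma^\gamma\shQ_+^\vee\}_{\gamma \in B_{\ell,d}}$, so it suffices to show each such object lies in $\shD:=\langle \{\Im\Psi^{\beta+1}\}, \{\Im\Phi^\alpha\}\rangle$. Decompose $B_{\ell,d}=B_{\ell-1,d}\sqcup\{\gamma:\gamma_\ell\ge 1\}$, the second piece being exactly $\{\beta+1:\beta\in B_{\ell,d-1}\}$. For $\gamma\in B_{\ell-1,d}$, I would invoke Lem. \ref{lem:mutation}\eqref{lem:mutation-1} in its dual form on $\GG_+\simeq\Gr_\ell(V)$ at the choice $\gamma_0=((\ell-1)^d)$, for which $B_{\preceq\gamma_0}=B_{\ell-1,d}$; this gives the equality of $\Db(\GG_+)$-subcategories
$$\langle\Sigma^{\alpha}\shQ_+^\vee\rangle_{\alpha\in B_{\ell-1,d}} \;=\; \langle\Sigma^{\alpha^t}\shU_+^\vee\rangle_{\alpha\in B_{\ell-1,d}}.$$
Applying $p_+^*$, and noting that $\shD_\Phi:=\langle\Im\Phi^\alpha\rangle_{\alpha\in B_{\ell-1,d}}$ is classically generated by $\{p_+^*\Sigma^{\alpha^t}\shU_+^\vee\}_{\alpha\in B_{\ell-1,d}}$ (since $X$ is affine, $\pi_+^*\Db(X)$ is classically generated by $\sO_{\shZ_+}$), this yields $p_+^*\Sigma^\gamma\shQ_+^\vee\in\shD_\Phi\subseteq\shD$ for every $\gamma\in B_{\ell-1,d}$.

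For $\gamma=\beta+1$ with $\beta\in B_{\ell,d-1}$, I would induct on $|\gamma|$. Because $m=1$, the inclusion $j_+\colon\GG_+\hookrightarrow\shZ_+$ realises $\GG_+$ as the zero section of the rank-$\ell$ vector bundle $p_+$; tensoring the Koszul resolution of $j_{+*}\sO_{\GG_+}$ (with terms $p_+^*\wedge^k\shQ_+$, the one-dimensional $W^{\otimes k}$-factors being inconsequential) by $p_+^*\Sigma^\gamma\shQ_+^\vee$, and assembling via the Postnikov system of Ex. \ref{eg:conv:stupid} together with Lem. \ref{lem:conv}, places $p_+^*\Sigma^\gamma\shQ_+^\vee$ in a triangle whose other vertices are $j_{+*}(\Sigma^\gamma\shQ_+^\vee)=\Psi^\gamma(\sO_\kk)\in\shD_\Psi$ and an iterated extension of $p_+^*(\wedge^k\shQ_+\otimes\Sigma^\gamma\shQ_+^\vee)$ for $k\in[1,\ell]$. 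Rewriting $\wedge^k\shQ_+=\det\shQ_+\otimes\wedge^{\ell-k}\shQ_+^\vee$ and applying Pieri's rule, each middle factor decomposes as $\bigoplus_\mu\Sigma^\mu\shQ_+^\vee$ with $\mu$ obtained from $\beta$ by adding $\ell-k$ boxes, no two per row; one checks that $\mu\in B_{\ell,d}$ and $|\mu|=|\gamma|-k<|\gamma|$. Each $p_+^*\Sigma^\mu\shQ_+^\vee$ then lies in $\shD$ either by the previous case (if $\mu_\ell=0$, so $\mu\in B_{\ell-1,d}$) or by the inductive hypothesis (if $\mu_\ell\ge 1$). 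The base case $\gamma=(1^\ell)$ is immediate because every arising $\mu=(1^{\ell-k})$ sits in $B_{\ell-1,1}\subseteq B_{\ell-1,d}$. The main technical obstacle is this Schur-theoretic bookkeeping in the Koszul--Pieri step; the key observation making it work is that the hypothesis $\gamma_\ell\ge 1$ forces $\nu_\ell\ge 1$ for every Pieri-addition $\nu$ of $\gamma$, so the $\det\shQ_+$-twist genuinely produces a Young diagram $\mu=\nu-1\in B_{\ell,d}$ rather than one with a negative entry.
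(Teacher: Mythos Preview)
Your proof is correct and follows essentially the same route as the paper's: both reduce the semiorthogonality to Lemmas \ref{lem:top}, \ref{lem:bottom}, \ref{lem:bottom_top}, and both prove generation by first handling $\gamma\in B_{\ell-1,d}$ via the mutation equality $\langle\Sigma^{\alpha}\shQ_+^\vee\rangle_{\alpha\in B_{\ell-1,d}} = \langle\Sigma^{\alpha^t}\shU_+^\vee\rangle_{\alpha\in B_{\ell-1,d}}$, then treating $\gamma=\beta+1$ inductively via the Koszul resolution of $\Psi^{\beta+1}(\sO)$ and the Pieri decomposition of $\wedge^{\ell-k}\shQ_+^\vee\otimes\Sigma^\beta\shQ_+^\vee$. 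The only cosmetic difference is that you induct on $|\gamma|$ whereas the paper inducts on the partial order $\prec$ of $\beta$; since every Pieri summand $\mu$ has $|\mu|<|\gamma|$, and those with $\mu_\ell\ge 1$ satisfy $\mu-1\prec\beta$, the two inductions are interchangeable.
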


\begin{proof} By Lem. \ref{lem:bottom_top} applied to $s=1$, the right hand side already forms a semiorthogonal sequence. We only need to show the generation result, i.e. subcategory $Span$ which they generate contains a set of generators $\{p_+^* \Sigma^{\lambda} \shQ_+^\vee \}_{\lambda \in B_{\ell,d}}$ of $\Db(\shZ_+)$.

From the images of $\Im \Phi^\alpha$, we see $Span$ contains $p_+^* A$, for any 
	$$A \in \big \langle \Sigma^{\alpha^t} \shU_+^\vee\big \rangle_{\alpha \in B_{\ell-1,d}} = \big \langle  \Sigma^{\alpha} \shQ_+^\vee\big \rangle_{\alpha \in B_{\ell-1,d}} \subseteq \Db(\GG_+). $$

We claim that for each $\beta \in B_{\ell, d-1}$, the following holds:
	\begin{align*} 
		p_+^* \Sigma^{\beta + 1} \shQ_+^\vee \in Span =  \big \langle  \{\Im \Psi^{\beta + 1} \}_{\beta \in B_{\ell, d-1}^{\succeq}}, ~\{ \Im \Phi^\alpha\}_{\alpha \in B_{\ell - 1, d}^{\preceq}} \big \rangle.
	\end{align*}
Since $B_{\ell,d} = B_{\ell-1,d} \cup (B_{\ell, d-1}+1)$, the generation $Span = \Db(\shZ_+)$ follows from the claim. 

For any $\beta \in  B_{\ell, d-1}$, we assume the claim holds for all $\gamma \in B_{\ell, d-1}$ such that $\gamma \prec \beta$ (this assumption is empty for the base case $\beta=0$). From  Lem. \ref{lem:key} \eqref{lem:key-3} in the case $d_-=1$, $\ell_-=0$ (and tensoring with $p_+^* \Sigma^{\beta + 1} \shQ_+^\vee$), we obtain:
	$$\Psi^{\beta+1}(\sO_{\Spec \kk}) = \{0 \to F^{-\ell} \to \ldots \to F^{-1} \to F^0 = p_+^* \Sigma^{\beta + 1} \shQ_+^\vee\},$$
where each $F^{p}$-term, for $-\ell \le p \le 0$, is given by:
	$$F^{p} = p_+^* (\wedge^{-p} \shQ_+ \otimes \Sigma^{\beta + 1} \shQ_+^\vee ) \otimes S^{-p} W \simeq p_+^* (\wedge^{\ell+p} \shQ_+^\vee \otimes  \Sigma^{\beta} \shQ_+^\vee)  \otimes S^{-p} W.$$
For any $0 \le s \le \ell-1$, consider the summands of $\wedge^{s} \shQ_+^\vee \otimes  \Sigma^{\beta} \shQ_+^\vee$ by Pieri's formula:
	\begin{enumerate}[leftmargin=*]
		\item If $\beta_1^t< \ell$, the summand of $\wedge^{s} \shQ_+^\vee \otimes  \Sigma^{\beta} \shQ_+^\vee$ either has the form $\Sigma^{\theta} \shQ_+^\vee$ with $\theta \in B_{\ell-1,d}$, or has the form $\Sigma^{\gamma+1} \shQ_+^\vee$ for $\gamma \prec \beta$ (the latter case is empty if $\beta=0$).
		\item If $\beta_1^t = \ell$, all summands of $\wedge^{s} \shQ_+^\vee \otimes  \Sigma^{\beta} \shQ_+^\vee$ have the form $\Sigma^{\gamma+1} \shQ_+^\vee$ for $\gamma \prec \beta$.
	\end{enumerate}
In either case, we see that for $p \ne 0$, $F^p$ is already contained $Span$, therefore $F^0 \in Span$. Hence the claim holds for $\beta$. By induction, the claim,  hence the theorem, is proved.
\end{proof}

\subsection{The cases $\delta \le 3$}
\subsubsection*{The case $\delta=0$} We begin by remark that the case $\delta = n-m=0$ corresponds to the flop case, i.e. $d_+ = d_- = d$, $\ell_- = \ell_+ =: \ell$, $\shZ_+,\shZ_-$ are both crepant resolutions of $\Hom^{\le \ell}(W,V)$, and $\shZ_+  \dashrightarrow \shZ_-$ is a flop. Then Lem. \ref{lem:key} immediately implies that the fiber product correspondence induced equivalence $r_{+\,*} \, r_{-}^* \colon \Db(\shZ_-) \simeq \Db(\shZ_+)$.

\subsubsection*{The case $\delta = 1$}
Now we assume $\delta = 1$, and denote $d_+ = d$, $\ell_+ = \ell$. Then there are only two possible choices for $d_- \in \{d-1,d\}$. The case $d_- = d-1$ (resp. $d_- = d$) corresponds to the case $\ell_- = \ell$ (resp. $\ell_- = \ell-1$), and $\shZ_{-}$ is a flip (resp. d-critical flip, or say virtual flip) of $\shZ_{+}$, and we denote all the schemes and maps in diagram (\ref{diagram:Corr}) and Lem. \ref{lem:key} by the same notations but all with indices ``${\mathrm{flip}}$" (resp. ``$\mathrm{vf}$"), to distinguish the two cases. Therefore by Lem. \ref{lem:local:ff} we have two fully faithful embeddings:
	\begin{align*}
		\Phi^{\mathrm{flip}}: =  r_{+\,*}^{\mathrm{flip}} \circ r_{-}^{{\mathrm{flip}}\,*} \colon \Db(\shZ_{-}^{\mathrm{flip}}) \hookrightarrow \Db(\shZ_+), \qquad
		 \Phi^{\mathrm{vf}} : =  r_{+\,*}^{\mathrm{vf}} \circ r_{-}^{{\mathrm{vf}}\,*} \colon \Db(\shZ_{-}^{\mathrm{vf}}) \hookrightarrow \Db(\shZ_+).
	\end{align*}

\begin{theorem}  \label{thm:rk=1:local} There is an $X$-linear admissible semiorthogonal decomposition:
		$$\Db(\shZ_+) = \langle \Db(\shZ_{-}^{\mathrm{vf}}), ~\Db(\shZ_{-}^{\mathrm{flip}}) \otimes \sO_+(1)\rangle.$$
\end{theorem}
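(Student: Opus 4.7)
Both embeddings $\Phi^{\mathrm{flip}}$ and $\Phi^{\mathrm{vf}}$ are fully faithful by Lem. \ref{lem:local:ff}, they are $X$-linear with admissible images by Lem. \ref{lem:local:FM} (which also supplies both left and right adjoints), and all these properties are preserved by twisting with the line bundle $\sO_+(1)$. Hence the plan reduces to checking (i) the semiorthogonality $\Im(\Phi^{\mathrm{flip}}) \otimes \sO_+(1) \subseteq \Im(\Phi^{\mathrm{vf}})^\perp$ and (ii) that the two subcategories together generate $\Db(\shZ_+)$. The entire argument will run on the generators $\{\Sigma^\alpha \shQ_{-}^{\bullet,\vee}\}$ supplied by Lem. \ref{lem:Z_i:generator} and will exploit the identification $\sO_+(1)=\det \shQ_+$, under which $\Sigma^\alpha \shQ_+ \otimes \sO_+(1) = \Sigma^{\alpha+1}\shQ_+$ since $\shQ_+$ has rank $\ell$.

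For (i), I would show the adjoint vanishing $\Phi^{\mathrm{vf},L}\bigl(\Phi^{\mathrm{flip}}(A)\otimes \sO_+(1)\bigr)=0$ for every $A\in\Db(\shZ_-^{\mathrm{flip}})$, and since both functors commute with direct sums it suffices to check this on the generating collection $A=\Sigma^\alpha \shQ_-^{\mathrm{flip},\vee}$ with $\alpha\in B_{\ell,d-1}$. In the flip regime $\ell_+=\ell_-=\ell$, part (3) of the key lemma (Lem. \ref{lem:key}) collapses to a single term and yields $\Phi^{\mathrm{flip}}(\Sigma^\alpha \shQ_-^{\mathrm{flip},\vee}) = \Sigma^\alpha \shQ_+$; twisting by $\sO_+(1)$ produces $\Sigma^{\alpha+1}\shQ_+$, whose last coordinate is $\ge 1$ so that $\alpha+1\in B_{\ell,d}\setminus B_{\ell-1,d}$. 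Part (2) of the key lemma applied in the vf regime ($\ell_-=\ell-1$, $d_-=d$) then gives $r_{-\,!}^{\mathrm{vf}} r_+^{\mathrm{vf},*}(\Sigma^{\alpha+1}\shQ_+)=0$, which is the required vanishing.

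For (ii), I would use the partition $B_{\ell,d} = B_{\ell-1,d}\sqcup (B_{\ell,d-1}+1)$ of the indexing set of generators $\{\Sigma^\lambda \shQ_+\}_{\lambda\in B_{\ell,d}}$ of $\Db(\shZ_+)$. For $\mu\in B_{\ell,d-1}$ the generator $\Sigma^{\mu+1}\shQ_+=\Sigma^\mu \shQ_+\otimes \sO_+(1)=\Phi^{\mathrm{flip}}(\Sigma^\mu\shQ_-^{\mathrm{flip},\vee})\otimes \sO_+(1)$ manifestly sits inside $\Im(\Phi^{\mathrm{flip}})\otimes \sO_+(1)$. For $\lambda\in B_{\ell-1,d}$, applying part (3) of the key lemma in the vf regime produces a bounded complex of vector bundles $F^\bullet\simeq \Phi^{\mathrm{vf}}(\Sigma^\lambda\shQ_-^{\mathrm{vf},\vee})$ with $F^0=\Sigma^\lambda \shQ_+$ and every higher term $F^p$ ($p<0$) a direct sum of $\Sigma^\gamma\shQ_+$ with $\gamma\in B_{\ell,d}\setminus B_{\ell-1,d}=B_{\ell,d-1}+1$, hence already in $\Im(\Phi^{\mathrm{flip}})\otimes \sO_+(1)$ by the previous case. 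The stupid-truncation Postnikov system of Example \ref{eg:conv:stupid} then places $F^0=\Sigma^\lambda \shQ_+$ inside $\langle \Im(\Phi^{\mathrm{vf}}),\,\Im(\Phi^{\mathrm{flip}})\otimes \sO_+(1)\rangle$, and together with the first case this exhausts all generators.

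The argument is essentially bookkeeping with Young diagrams in $B_{\ell,d}$; there is no real obstacle, and the crucial input is just the vanishing/surviving dichotomy of parts (2)--(3) of the key lemma. The only point that deserves care is keeping straight which Quot scheme plays the role of $\shZ_+$ versus $\shZ_-$ in each invocation of Lem. \ref{lem:key}, since the roles are swapped between the flip and virtual-flip instantiations of the correspondence diagram \eqref{diagram:Corr}.
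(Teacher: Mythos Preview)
Your proposal is correct and follows essentially the same route as the paper's proof: both reduce the semiorthogonality to the vanishing $(\Phi^{\mathrm{vf}})^L(\Sigma^{\alpha+1}\shQ_+)=0$ via parts (3) and (2) of the key lemma, and both establish generation by the same partition $B_{\ell,d}=B_{\ell-1,d}\sqcup(B_{\ell,d-1}+1)$ together with the Postnikov argument from the stupid truncation of $F^\bullet$.

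One small slip worth flagging: your stated semiorthogonality direction ``$\Im(\Phi^{\mathrm{flip}})\otimes\sO_+(1)\subseteq \Im(\Phi^{\mathrm{vf}})^\perp$'' is backwards in the paper's convention (where $\shE^\perp$ is the \emph{right} orthogonal). What is needed for $\langle\Im\Phi^{\mathrm{vf}},\,\Im\Phi^{\mathrm{flip}}\otimes\sO_+(1)\rangle$ is $\Im(\Phi^{\mathrm{vf}})\subseteq(\Im(\Phi^{\mathrm{flip}})\otimes\sO_+(1))^\perp$, and this is exactly what your adjoint computation via $(\Phi^{\mathrm{vf}})^L$ proves; so the argument is fine, only the label is off. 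Your closing remark about ``roles being swapped'' is also slightly misleading: in both instantiations $\shZ_+$ stays fixed (with $d_+=d$) and only $d_-$ changes, so there is no swap, merely two different regimes $(\ell_-=\ell_+$ versus $\ell_-=\ell_+-1)$ of the same correspondence.
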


\begin{proof} To show the semiorthogonal relation, notice for any $A \in  \Db(\shZ_{-}^{\mathrm{flip}})$, $B \in \Db(\shZ_{-}^{\mathrm{vf}})$,
	$$\Hom_{\shZ_+}( \Phi^{\mathrm{flip}}(A) \otimes \sO_+(1),  \Phi^{\mathrm{vf}} (B)) = \Hom_{\shZ_{-}^{\mathrm{vf}}}((\Phi^{\mathrm{vf}})^L( \Phi^{\mathrm{flip}}(A) \otimes \sO_+(1)), B)$$
where $(\Phi^{\mathrm{vf}})^L = r_{-\,!}^{\mathrm{vf}} \circ r_{+}^{{\mathrm{vf}}\,*}$ is the left adjoint as before. Take generators $A = \Sigma^{\alpha} \shQ_-^{\mathrm{flip} \,\vee}$ of $\Db(\shZ_{-}^{\mathrm{flip}})$ for $\alpha \in B_{\ell,d-1}$, then by  Lem. \ref{lem:key} \eqref{lem:key-3} (applied to the case $d_-=d-1$, $\ell_- = \ell$), 
	$$ \Phi^{\mathrm{flip}}(A) \otimes \sO_+(1) = \Sigma^\alpha \shQ_+  \otimes \sO_+(1) = \Sigma^{\alpha+1} \shQ_+.$$
Therefore by  Lem. \ref{lem:key} \eqref{lem:key-2} (applied to the case $d_-=d$, $\ell_- = \ell-1$), we obtain that 
	$$(\Phi^{\mathrm{vf}})^L( \Phi^{\mathrm{flip}}(A) \otimes \sO_+(1)) =(\Phi^{\mathrm{vf}})^L( \Sigma^{\alpha+1} \shQ_+) = 0 \quad \text{since} \quad \alpha + 1 \in B_{\ell,d} \backslash B_{\ell-1,d}.$$

To show generation, notice by Lem. \ref{lem:key} (3) in the flip case, the image $\Im (\Phi^{\mathrm{flip}} (\blank)\otimes \sO_+(1))$ contains $\Sigma^{\gamma} \shQ_+ \in \Db(\shZ_+)$ for all $\gamma (= \alpha + 1) \in B_{\ell,d} \backslash B_{\ell-1,d}$. On the other hand, by Lem. \ref{lem:key} \eqref{lem:key-3} in the virtual flip case $d_-=d$, $\ell_- = \ell-1$, if we take generators $B = \Sigma^{\beta} \shQ_-^{\mathrm{vf} \, \vee} \in \Db(\shZ_-^{\rm vf})$, where $\beta$ is any element of $B_{\ell-1,d}$, then 
$\Phi^{\mathrm{vf}} (B) \simeq F^\bullet$, with $F^0 = \Sigma^{\beta} \shQ_{+}$, and $F^p$'s for $p \ne 0$ are direct sums of the form $K_{\gamma} \otimes_\kk \Sigma^{\gamma}  \shQ_{+}$ for $\gamma \in B_{\ell, d} \backslash B_{\ell-1, d}$, where $K_\gamma$ are vector spaces. Hence for $p \ne 0$, $F^p \in \Im (\Phi^{\mathrm{flip}}(\blank) \otimes \sO_+(1))$. Therefore 
	$$F^0 = \Sigma^{\beta} \shQ_{+} \in \langle  \Im (\Phi^{\mathrm{vf}}),  \Im (\Phi^{\mathrm{flip}} (\blank)\otimes \sO_+(1)) \rangle \quad  \text{for all} \quad \beta \in B_{\ell-1,d}.$$
Hence the right hand side contains a set of generators of $\Db(\shZ_+)$, and we are done.
\end{proof}
\begin{remark} The decomposition of the theorem is mutation-equivalent to
	$$\Db(\shZ_+) = \langle \Db(\shZ_{-}^{\mathrm{flip}}) , ~\Db(\shZ_{-}^{\mathrm{vf}}) \rangle.$$
\end{remark}

\subsubsection{The case $\delta = 2$}
Now we assume $\delta = 2$, and denote $d_+ = d$, $\ell_+ = \ell$. There are only three possible choices for $d_- \in \{d-2, d-1,d\}$. The cases $d_- = d-2, d-1,d$ correspond respectively to the case $\ell_- = \ell, \ell-1, \ell-2$ and we use upper indices ``$\mathrm{flip}$", ``$\mathrm{mid}$", and ``$\mathrm{vf}$"  respectively to label the all the schemes and maps in diagram (\ref{diagram:Corr}) and Lem. \ref{lem:key}. As the name suggest, in the case $d_- = d-2$, $\shZ_+ \dashrightarrow \shZ_{-}^{\mathrm{flip}}$
is a flip, and in the case $d_- = d$, $\shZ_{-}^{\mathrm{vf}}$ is a virtual (d-critical) flip of $\shZ_+$. For the remaining case $d_- = d-1$, $\shZ_{-}^{\mathrm{mid}}$ is the middle strata which is responsible for the orthogonal of above two contributions.

\begin{theorem} \label{thm:rk=2:local} For any $i \in \ZZ$, there are fully faithful embeddings:
	\begin{align*}
		&\Phi_i^{\mathrm{flip}}(\blank): =  r_{+\,*}^{\mathrm{flip}} \circ r_{-}^{{\mathrm{flip}}\,*} (\blank) \otimes \sO_+(i) \colon & \Db(\shZ_{-}^{\mathrm{flip}}) \hookrightarrow \Db(\shZ_+), \\
		&\Phi_i^{\mathrm{mid}}(\blank): =  r_{+\,*}^{\mathrm{mid}} \circ r_{-}^{{\mathrm{mid}}\,*} (\blank) \otimes \sO_+(i) \colon & \Db(\shZ_{-}^{\mathrm{mid}}) \hookrightarrow \Db(\shZ_+), \\
		 &\Phi_i^{\mathrm{vf}} (\blank) : =  r_{+\,*}^{\mathrm{vf}} \circ r_{-}^{{\mathrm{vf}}\,*}(\blank)  \otimes \sO_+(i)  \colon & \Db(\shZ_{-}^{\mathrm{vf}}) \hookrightarrow \Db(\shZ_+).
	\end{align*}
The images induce an $X$-linear admissible semiorthogonal decomposition:
			$$\Db(\shZ_+) = \langle \Im \Phi_{i-1}^{\mathrm{vf}}, ~ \Im \Phi_{i}^{\mathrm{mid}}, ~\Im  \Phi_{i+1}^{\mathrm{mid}}, ~ \Im \Phi_{i+2}^{\mathrm{flip}} \rangle.$$
		\end{theorem}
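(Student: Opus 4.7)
The plan is to follow the strategy of Theorems \ref{thm:local:standardflip} and \ref{thm:rk=1:local}, combining Lem. \ref{lem:local:ff} (fully-faithfulness), Lem. \ref{lem:sod:O(i)} (semiorthogonality among the two middle copies), and the Lascoux-type formulas of Lem. \ref{lem:key} (the remaining semiorthogonalities and generation). Without loss of generality I may take $i = 0$, so the target decomposition reads
$$\Db(\shZ_+) = \langle \Im \Phi_{-1}^{\mathrm{vf}},\,\Im \Phi_{0}^{\mathrm{mid}},\,\Im \Phi_{1}^{\mathrm{mid}},\,\Im \Phi_{2}^{\mathrm{flip}}\rangle,$$
with $X$-linearity and admissibility of each image supplied by Lem. \ref{lem:local:FM}. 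Fully-faithfulness of all three functors (for any twist) is immediate from Lem. \ref{lem:local:ff}, since tensoring by $\sO_+(i)$ is an autoequivalence, and the pair $(\Im \Phi_{0}^{\mathrm{mid}}, \Im \Phi_{1}^{\mathrm{mid}})$ is the $\delta = 2$ instance of Lem. \ref{lem:sod:O(i)} with $\min\{d_+-d_-,\ell_+-\ell_-\} = 1 > 0$, giving exactly two admissible twists.

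For each of the remaining semiorthogonality pairs $(a,b)$, I will compute $\Phi_b^L \circ \Phi_a$ on a generator $\Sigma^{\alpha}\shQ_-^{a,\vee}$ and show the vanishing. By Lem. \ref{lem:key}(3), $\Phi^{a}(\Sigma^{\alpha}\shQ_-^{a,\vee})$ is represented by a bounded complex $F^\bullet$ with $F^0 = \Sigma^{\alpha}\shQ_+$ and higher terms direct sums of $\Sigma^{\gamma}\shQ_+$ for $\gamma \in B(\alpha)\setminus\{\alpha\} \subseteq B_{\ell_+,d_-^a}\setminus B_{\ell_-^a,d_-^a}$. After twisting by the appropriate line bundle and applying $\Phi_b^L = r_{-!}^{b}\,r_+^{b\,*}$ term-by-term via Lem. \ref{lem:conv}, the vanishing reduces by Lem. \ref{lem:key}(2) to the combinatorial statement that every summand $\Sigma^{\gamma}\shQ_+\otimes\sO_+(j)$ that appears must lie in $\langle \Sigma^{\lambda}\shQ_+\rangle_{\lambda\in B_{\ell_+,d_+}\setminus B_{\ell_-^b,d_+}} \subseteq \Db(\GG_+)$. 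This check proceeds exactly as in the proofs of Thm. \ref{thm:rk=1:local} and Lem. \ref{lem:sod:O(i)}, invoking Lem. \ref{lem:G:mut} (or its sharpening Lem. \ref{lem:G_2:mut}) for the required mutation relations; concretely, the shifts chosen in the theorem are designed so that each $\gamma+j$ that appears is automatically in the correct ``outside'' region of $B_{\ell_+,d_+}$.

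For the generation step, I will show by induction on $\lambda \in B_{\ell,d}$ (ordered by $|\lambda|$ and then reverse-lexicographically) that $\Sigma^{\lambda}\shQ_+$ lies in the span. Splitting $B_{\ell,d}$ by $\lambda_\ell$ and $\lambda_1$, the sets $\{\lambda_\ell \ge 2\} = B_{\ell,d-2}+2$, $\{\lambda_\ell = 1\} = B_{\ell-1,d-1}+1$, and $\{\lambda_\ell = 0,\,\lambda_1 \le d-1\} = B_{\ell-1,d-1}$ are covered directly by the $F^0$-terms of $\Phi_2^{\mathrm{flip}}$, $\Phi_1^{\mathrm{mid}}$, and $\Phi_0^{\mathrm{mid}}$ respectively; the non-$F^0$ Lascoux corrections for $\Phi^{\mathrm{mid}}$ (of length $\ell_+-\ell_-^{\mathrm{mid}} = 1$) land precisely in the set $B_{\ell,d-1}\setminus B_{\ell-1,d-1}$, which after twisting is absorbed by the previously handled pieces. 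The remaining boundary set $\{\lambda_\ell = 0,\,\lambda_1 = d\} = B_{\ell-1,d}\setminus B_{\ell-1,d-1}$ is extracted from $\Phi_{-1}^{\mathrm{vf}}$: for $\alpha \in B_{\ell-2,d}$ the complex $F^\bullet \simeq \Phi^{\mathrm{vf}}(\Sigma^{\alpha}\shQ_-^{\mathrm{vf},\vee})$, whose length is $(\ell_+-\ell_-^{\mathrm{vf}})\cdot d_-^{\mathrm{vf}} = 2d$ and whose summands are explicit in the spirit of Ex. \ref{ex:Lascoux}, contains all $\Sigma^{\gamma}\shQ_+$ with $\gamma \in B_{\ell,d}\setminus B_{\ell-2,d}$, and the extraction proceeds exactly as in Thm. \ref{thm:local:gen:Cayley}, using Pieri's rule to decompose intermediate terms.

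The hardest part will be the last generation step: the bookkeeping of non-$F^0$ terms in the Lascoux complex for $\Phi^{\mathrm{vf}}$ is the most delicate, since this complex is the longest and interacts with all three other components. The key technical point is to pick the right induction order on $B_{\ell,d}$, so that each inductive step only invokes Young diagrams that have already been handled; once this combinatorial scheme is fixed, the rest is a direct application of Lem. \ref{lem:key}(2)(3), Lem. \ref{lem:G_2:mut}, and Pieri's rule.
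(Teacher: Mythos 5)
Your overall architecture matches the paper's: full faithfulness from Lem.~\ref{lem:local:ff}, the two middle twists from Lem.~\ref{lem:sod:O(i)}, and semiorthogonality by computing $\Phi_b^L\circ\Phi_a$ on generators and invoking Lem.~\ref{lem:key}(2) together with Lem.~\ref{lem:G:mut}. That part is correct and is essentially what the paper does.

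The generation step is where you diverge, and where there is a real gap. The paper works at the twist levels $\sO_+(1)$ and $\sO_+(2)$: it first shows that $\langle\Im\Phi_1^{\mathrm{mid}},\Im\Phi_2^{\mathrm{mid}},\Im\Phi_3^{\mathrm{flip}}\rangle$ contains $\Sigma^{\gamma}\shQ_+\otimes\sO_+(t)$ for all $\gamma\in B_{\ell,d-1}$ and $t\in\{1,2\}$, and then applies Lem.~\ref{lem:G:mut} (dual version, $r=2$) to pass from this twisted collection to the untwisted one, concluding that the span contains $\Sigma^{\gamma}\shQ_+$ for all $\gamma\in B_{\ell,d}\setminus B_{\ell-2,d}$. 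At that point the $\mathrm{vf}$ step is immediate, since for $\alpha\in B_{\ell-2,d}$ the complex $\Phi_0^{\mathrm{vf}}(\Sigma^{\alpha}\shQ_-^{\mathrm{vf},\vee})$ has $F^0=\Sigma^{\alpha}\shQ_+$ --- a target generator --- with all corrections already handled.

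You instead normalize to $i=0$ and partition $B_{\ell,d}$ by the value of $\lambda_\ell$ (and $\lambda_1$), aiming to land directly on untwisted generators. This works cleanly for the flip and the two mid pieces, but breaks down at the last step. The $F^0$ term of $\Phi_{-1}^{\mathrm{vf}}(\Sigma^{\alpha}\shQ_-^{\mathrm{vf},\vee})$ is $\Sigma^{\alpha}\shQ_+\otimes\sO_+(-1)$ for $\alpha\in B_{\ell-2,d}$, which is not $\Sigma^{\lambda}\shQ_+$ for any $\lambda$ in the remaining set $B_{\ell-1,d}\setminus B_{\ell-1,d-1}$. So after reducing the Postnikov system modulo the previously handled pieces, you obtain $\Sigma^{\alpha}\shQ_+\otimes\sO_+(-1)\in\mathrm{Span}$, but you have not shown that this forces $\Sigma^{\lambda}\shQ_+\in\mathrm{Span}$ for the boundary set. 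That implication is a nontrivial mutation statement on $\Gr_d(n)$ (it does hold --- one can see it in small examples via Kapranov duality, e.g.\ on $\Gr_2(4)$ the collection $\{\sO(-1)\}\cup\{\Sigma^{\lambda}\shQ\}_{\lambda\neq(2,0)}$ still generates --- but it needs its own proof and does not follow from a Pieri-rule induction ``exactly as in Thm.~\ref{thm:local:gen:Cayley}''; in that theorem the $F^0$ terms already equal the target generators, which is precisely what fails here). The paper's detour through twisted generators and Lem.~\ref{lem:G:mut} exists to avoid exactly this mutation: it converts the twisted collection into $B_{\ell,d}\setminus B_{\ell-2,d}$ in one step, leaving a clean $\mathrm{vf}$ step with no residual bookkeeping.

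In short: your semiorthogonality proof is sound; your generation proof is structured differently from the paper's and, as written, the $\mathrm{vf}$ step is incomplete because the $F^0$ terms arrive twisted by $\sO_+(-1)$. To repair it you would either need to prove the mutation lemma that $\{\Sigma^{\alpha}\shQ_+\otimes\sO_+(-1)\}_{\alpha\in B_{\ell-2,d}}$ together with your other three pieces generates $\Db(\GG_+)$, or follow the paper and work at twist levels $1$ and $2$ so that Lem.~\ref{lem:G:mut} does the conversion for you.
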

The set of semiorthogonal relations among these images are explicitly given in the proof. The semiorthogonal decomposition could be informatively written as:
	\begin{align*}
	\Db(\shZ_+) = \big \langle  \Db(\shZ_{-}^{\rm vf}), ~\text{2-copies of} ~ \Db(\shZ_{-}^{\rm mid}), ~ \Db(\shZ_-^{\rm flip}) \big \rangle.
	\end{align*}
		
\begin{proof} For simplicity of notations we will omit the functor $p_{\pm}^*$ in the expression of generators $\Sigma^\alpha p_{\pm}^* \shQ_{\pm} = \Sigma^\alpha\shQ_{\pm} \in \Db(\shZ_{\pm})$. The fully faithful embeddings follow directly from \ref{lem:local:ff}, and Lem. \ref {lem:sod:O(i)} shows that $\langle \Im  \Phi_{i}^{\mathrm{mid}}, \Im  \Phi_{i+1}^{\mathrm{mid}}  \rangle$ is a semiorthogonal sequence. 

To show the semiorthogonal relation, as before, for $A \in \Db(\shZ_{-}^{\mathrm{flip}})$, $B \in \Db(\shZ_{-}^{\mathrm{vf}})$, $i,t\in\ZZ$,
	$$\Hom_{\shZ_+}( \Phi_{i+t}^{\mathrm{flip}}(A),  \Phi_i^{\mathrm{vf}} (B)) = \Hom_{\shZ_{-}^{\mathrm{vf}}}((\Phi_0^{\mathrm{vf}})^L( \Phi_0^{\mathrm{flip}}(A) \otimes \sO_+(t)), B)$$
where $(\Phi_0^{\mathrm{vf}})^L = r_{-\,!}^{\mathrm{vf}} \circ r_{+}^{{\mathrm{vf}}\,*}$ is the left adjoint as before. Take generators $A = \Sigma^{\alpha} \shQ_-^{\mathrm{flip} \,\vee}$ of $\Db(\shZ_{-}^{\mathrm{flip}})$ for $\alpha \in B_{\ell,d-2}$, then by Lem. \ref{lem:key} \eqref{lem:key-3} (applied to the case $d_-=d-2$, $\ell_- = \ell$) and 
	$$ \Phi^{\mathrm{flip}}(A) \otimes \sO_+(t) = \Sigma^\alpha \shQ_+  \otimes \sO_+(t) = \Sigma^{\alpha+1} \shQ_+ \otimes \sO_+(t-1).$$
As in $\delta=1$ case, by Lem. \ref{lem:key} \eqref{lem:key-2} (applied to the case $d_-=d$, $\ell_- = \ell-1$) and  Lem. \ref{lem:G:mut} (applied to the case $r= 2$), we obtain that 
	$$(\Phi^{\mathrm{vf}})^L( \Phi^{\mathrm{flip}}(A) \otimes \sO_+(t)) =(\Phi^{\mathrm{vf}})^L( \Sigma^{\alpha+1} \shQ_+ \otimes \sO_+(t-1))  = 0$$
for $t=1,2,3$. Hence $\Im \Phi_i^{\mathrm{vf}} \subseteq (\Im \Phi_{i+t}^{\mathrm{flip}})^\perp$ for $t=1,2,3$.
The same argument (cf. proof of Lem. \ref {lem:sod:O(i)}) shows the rest of semiorthogonal relations among all strata. To summarize:
	\begin{align*}
	\Im \Phi_i^{\mathrm{vf}} \subseteq (\Im \Phi_{i+t}^{\mathrm{flip}})^\perp \quad \text{and} \quad  \Im \Phi_{i}^{\mathrm{flip}} \subseteq (\Im \Phi_{i+2-t}^{\mathrm{vf}})^\perp  \quad & \text{for} \quad  t=1,2,3, \\
	\Im \Phi_i^{\mathrm{mid}} \subseteq (\Im \Phi_{i+t}^{\mathrm{flip}})^\perp \quad \text{and} \quad  \Im \Phi_{i}^{\mathrm{flip}} \subseteq (\Im \Phi_{i+2-t}^{\mathrm{mid}})^\perp  \quad & \text{for} \quad  t=1,2, \\
	\Im \Phi_i^{\mathrm{vf}} \subseteq (\Im \Phi_{i+t}^{\mathrm{mid}})^\perp  \quad \text{and} \quad   \Im \Phi_i^{\mathrm{mid}} \subseteq (\Im \Phi_{i+2-t}^{\mathrm{vf}})^\perp  \quad & \text{for} \quad  t=1,2.
	\end{align*}

To show generation, we only need to prove in the case $i=1$. By Lem. \ref{lem:key} \eqref{lem:key-3} in the flip case $\ell_- = \ell$, the image $\Im (\Phi_{3}^{\mathrm{flip}}) = \Im (\Phi_0^{\mathrm{flip}} (\blank)\otimes \sO_+(3))$ contains exactly $\Sigma^{\gamma+1} \shQ_+ \otimes \sO_+(2) \in \Db(\shZ_+)$ for $\gamma \in B_{\ell,d-2}$. Then by Lem. \ref{lem:key} \eqref{lem:key-3} in the case $d_- = d-1$, $\ell_- = \ell - 1$, for any generator $B=\Sigma^{\beta} \shQ_-^{\mathrm{mid} \, \vee} \in \Db(\shZ_-^{\rm mid})$, $\beta \in B_{\ell-1,d-1}$, the image of $ \Phi_{2}^{\mathrm{mid}}(B) = \Phi_0^{\mathrm{mid}} (B)\otimes \sO_+(2) \simeq F^\bullet$ is a complex with $F^0 = \Sigma^{\beta} \shQ_+ \otimes \sO_+(2)$, and $F^p \in \Im (\Phi_{3}^{\mathrm{flip}})$ for $p \ne 0$. Since $B_{\ell, d-1} = (B_{\ell, d-2} + 1) \cup B_{\ell-1,d-1}$, therefore $\langle \Im (\Phi_{2}^{\mathrm{mid}}), \Im  (\Phi_{3}^{\mathrm{flip}}) \rangle $ contains all 
	$$\Sigma^{\gamma} \shQ_+ \otimes \sO_+(2) \in \Db(\shZ_+), \qquad \forall \gamma \in B_{\ell, d-2}.$$
Next, by Lem. \ref{lem:key} \eqref{lem:key-3} in the case $d_- = d-1$, $\ell_- = \ell - 1$ again, for $\beta \in B_{\ell-1,d-1}$, $\Phi_{1}^{\mathrm{mid}}(\Sigma^{\beta} \shQ_-^{\mathrm{mid} \, \vee} ) \simeq F'^\bullet$ with $F'^0 = \Sigma^{\beta} \shQ_+ \otimes \sO_+(1)$ and $F'^p \in \langle \Im (\Phi_{2}^{\mathrm{mid}}), \Im  (\Phi_{3}^{\mathrm{flip}}) \rangle$. Hence 
	$$\Sigma^{\beta} \shQ_+ \otimes \sO_+(1) \in \langle \Im (\Phi_{1}^{\mathrm{mid}}), \Im (\Phi_{2}^{\mathrm{mid}}), \Im (\Phi_{3}^{\mathrm{flip}}) \rangle, \qquad \forall \beta  \in B_{\ell-1,d-1}.$$
This shows that $\langle \Im (\Phi_{1}^{\mathrm{mid}}), \Im (\Phi_{2}^{\mathrm{mid}}), \Im (\Phi_{3}^{\mathrm{flip}}) \rangle$ contains all
	$$  \{\Sigma^{\gamma} \shQ_+ \otimes \sO_+(1) \}_{\gamma \in B_{\ell,d-1}} \cup \{ \Sigma^{\gamma} \shQ_+ \otimes \sO_+(2) \}_{\gamma \in B_{\ell,d-1}} \subseteq \Db(\shZ_+).$$
By Lem. \ref{lem:G:mut} in the case $r=2$, this shows that
	$$\Sigma^{\gamma} \shQ_+ \in \langle \Im (\Phi_{1}^{\mathrm{mid}}), \Im (\Phi_{2}^{\mathrm{mid}}), \Im (\Phi_{3}^{\mathrm{flip}}) \rangle, \qquad \forall \gamma \in B_{\ell,d} \backslash B_{\ell-2,d}.$$
Finally, by Lem. \ref{lem:key} \eqref{lem:key-3} in the case $\ell_- = \ell-2$, for any $\alpha \in B_{\ell-2,d}$, we have $\Phi_0^{\mathrm{vf}} (\Sigma^{\alpha} \shQ_-^{\mathrm{vf} \, \vee}) \simeq F''^\bullet$ with $F''^0 =  \Sigma^{\alpha} \shQ_+$, and 
	$F''^p \in \langle \Im (\Phi_{1}^{\mathrm{mid}}), \Im (\Phi_{2}^{\mathrm{mid}}), \Im (\Phi_{3}^{\mathrm{flip}}) \rangle$ for $p \ne 0.$
This shows that
	$$F''^0 =  \Sigma^{\alpha} \shQ_+ \in \langle \Im (\Phi_0^{\mathrm{vf}}), \Im (\Phi_{1}^{\mathrm{mid}}), \Im (\Phi_{2}^{\mathrm{mid}}), \Im (\Phi_{3}^{\mathrm{flip}}) \rangle, \qquad \forall \alpha \in B_{\ell-2,d}.$$
Since $B_{\ell,d} = B_{\ell-2,d}\cup (B_{\ell,d} \backslash B_{\ell-2,d})$, hence the right hand side contains a set of generators $\{ \Sigma^\alpha \shQ_+ \}_{\alpha \in B_{\ell, d}}$ of $\Db(\shZ_+)$, hence the generation is proved. 
\end{proof}

\begin{remark} There are many different mutation-equivalent ways to rewrite above semiorthogonal decomposition via the semiorthogonal relationships in the proof, for example:
	\begin{align*} 
	\Db(\shZ_+) & =   \langle \Im \Phi_{i}^{\mathrm{mid}},  \Im \Phi_i^{\mathrm{vf}}, ~\Im  \Phi_{i+1}^{\mathrm{mid}}, ~ \Im \Phi_{i+2}^{\mathrm{flip}} \rangle
	 =     \langle  \Im \Phi_{i}^{\mathrm{flip}}, ~ \Im \Phi_{i}^{\mathrm{mid}}, \Im  \Phi_{i+1}^{\mathrm{mid}},  ~ \Im \Phi_{i+1}^{\mathrm{vf}}  \rangle. 
	\end{align*}
\end{remark}

\subsubsection{The case $\delta = 3$}
Now consider $\delta = 3$, and denote $d_+ = d$, $\ell_+ = \ell$. Then $d_- \in \{d-3,d-2,d-1,d\}$. We may assume $d \ge 3$ and $\ell \ge 3$, as other the situation degenerates into one of the previous cases. We label the schemes and maps of diagram (\ref{diag:fact}) by the upper index $d_-$ as before. But for the case $d_- = d-3$ and resp. $d_- = d$, we also use  
upper indices ``$\mathrm{flip}$" and resp. ``$\mathrm{vf}$". As the names suggest, in the case $d_- = d-3$, $\shZ_+ \dashrightarrow \shZ_-^{(d-3)} = \shZ_{-}^{\mathrm{flip}}$
is a flip, and in the case $d_- = d$, $\shZ_{-}^{(d)} = \shZ_{-}^{\mathrm{vf}}$ is a virtual (d-critical) flip of $\shZ_+$. 

\begin{theorem} \label{thm:rk=3:local} For any $i \in \ZZ$, there are fully faithful embeddings:
	\begin{align*}
		&\Phi_i^{(d-3)}(\blank) \equiv \Phi_i^{\mathrm{flip}}(\blank): =  r_{+\,*}^{\mathrm{flip}} \circ r_{-}^{{\mathrm{flip}}\,*} (\blank) \otimes \sO_+(i) \colon & \Db(\shZ_{-}^{\mathrm{flip}}) \hookrightarrow \Db(\shZ_+), \\
		&\Phi_i^{(d-2)}(\blank): =  r_{+\,*}^{(d-2)} \circ r_{-}^{(d-2)\,*} (\blank) \otimes \sO_+(i) \colon & \Db(\shZ_{-}^{(d-2)}) \hookrightarrow \Db(\shZ_+), \\
		&\Phi_i^{(d-1)}(\blank): =  r_{+\,*}^{(d-1)} \circ r_{-}^{(d-1)\,*} (\blank) \otimes \sO_+(i) \colon & \Db(\shZ_{-}^{(d-1)}) \hookrightarrow \Db(\shZ_+), \\
		 &\Phi_i^{(d)}(\blank) \equiv \Phi_i^{\mathrm{vf}} (\blank) : =  r_{+\,*}^{\mathrm{vf}} \circ r_{-}^{{\mathrm{vf}}\,*}(\blank)  \otimes \sO_+(i)  \colon & \Db(\shZ_{-}^{\mathrm{vf}}) \hookrightarrow \Db(\shZ_+).
	\end{align*}
(The semiorthogonal relations among these images are explicitly given below.) Furthermore, there is an $X$-linear admissible semiorthogonal decomposition:
	$$\Db(\shZ_+) = \langle \Im \Phi_{i-1}^{\mathrm{vf}}, ~ \Im \Phi_{i}^{(d-1)},  \Im \Phi_{i+1}^{(d-2)},  \Im \Phi_{i+1}^{(d-1)},  \Im \Phi_{i+2}^{(d-2)},  \Im \Phi_{i+2}^{(d-1)} , \Im \Phi_{i+3}^{(d-2)}, ~ \Im \Phi_{i+4}^{\mathrm{flip}} \rangle.$$
\end{theorem}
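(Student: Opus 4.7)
The plan is to mirror the strategy of Theorem \ref{thm:rk=2:local} (the $\delta=2$ case), adapted to accommodate the two intermediate strata $(d_-=d-2)$ and $(d_-=d-1)$ that each contribute multiple copies. Throughout, I set $\ell_-^{(j)} = m - j = \ell + d - 3 - j$ so that $\ell_-^{(d-3)}=\ell$, $\ell_-^{(d-2)}=\ell-1$, $\ell_-^{(d-1)}=\ell-2$, $\ell_-^{(d)}=\ell-3$. Fully faithfulness of each $\Phi_i^{(j)}$ for $j \in \{d-3,d-2,d-1,d\}$ is immediate from Lemma \ref{lem:local:ff}, since $d-3 \le j \le d$ verifies the hypothesis $d_+ - \delta \le d_- \le d_+$. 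The $X$-linearity and admissibility of all essential images follow from Lemma \ref{lem:local:FM}.

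For the semiorthogonality, I will systematically use the pairing: for $A = \Sigma^{\alpha}\shQ_-^{(j')\vee}$ a generator with $\alpha \in B_{\ell_-^{(j')},j'}$,
\begin{equation*}
\Hom_{\shZ_+}\bigl(\Phi_{i+t}^{(j')}(A),\,\Phi_{i}^{(j)}(B)\bigr) = \Hom_{\shZ_-^{(j)}}\bigl((\Phi_0^{(j)})^L(\Phi_0^{(j')}(A)\otimes \sO_+(t)),\,B\bigr).
\end{equation*}
By Lemma \ref{lem:key}(3), $\Phi_0^{(j')}(A)$ is quasi-isomorphic to a complex with terms drawn from $\{\Sigma^{\gamma}\shQ_+\}_{\gamma \in B_{\ell_+,j'}}$. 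Twisting by $\sO_+(t)$ keeps these in $\langle \Sigma^{\gamma}\shQ_+\otimes \sO_+(s)\rangle_{s\le t,\,\gamma \in B_{\ell_+,j'}}$. Then Lemma \ref{lem:key}(2) shows $(\Phi_0^{(j)})^L$ annihilates every object in the subcategory $\langle \Sigma^{\lambda}\shQ_+\rangle_{\lambda \in B_{\ell_+,d}\setminus B_{\ell_-^{(j)},d}}$, and by Lemma \ref{lem:G:mut} (applied with $r = d - j$) this subcategory equals $\langle \{\Sigma^{\nu}\shQ_+ \otimes \sO_+(s)\}_{s \in [1,d-j],\,\nu \in B_{\ell_+,d-1}}\rangle$. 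Matching these two descriptions yields vanishing precisely for the shifts $t$ appearing in the decomposition, i.e., for each pair $(j,j')$ with $j<j'$, one obtains $\Im\Phi_i^{(j')}\subseteq (\Im\Phi_{i+t}^{(j)})^\perp$ for all $t$ such that $j' + t \le d$ and the twist is covered by the mutation region; by symmetry (applied via the right-adjoint) analogous relations hold in the other direction.

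For generation, I proceed by iterated descent starting from $\Im\Phi_{i+4}^{\mathrm{flip}}$: this image contains $\Sigma^{\alpha+1}\shQ_+\otimes \sO_+(3)$ for $\alpha\in B_{\ell,d-3}$, equivalently $\Sigma^{\gamma}\shQ_+\otimes \sO_+(s)$ for certain $(\gamma,s)$. Next, for each generator $\Sigma^{\beta}\shQ_-^{(d-2)\vee}$ with $\beta\in B_{\ell-1,d-2}$, Lemma \ref{lem:key}(3) exhibits $\Phi_{i+3}^{(d-2)}(\Sigma^{\beta}\shQ_-^{(d-2)\vee})\simeq F^\bullet$ with $F^0 = \Sigma^{\beta}\shQ_+\otimes\sO_+(3)$ and $F^p$ ($p\neq 0$) composed of $\Sigma^{\gamma}\shQ_+\otimes\sO_+(3)$ for $\gamma \in B_{\ell_+,d-2}\setminus B_{\ell_-^{(d-2)},d-2}=B_{\ell,d-2}\setminus B_{\ell-1,d-2}$, which are already in $\Im\Phi_{i+4}^{\mathrm{flip}}$. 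This adjoins all $\Sigma^{\gamma}\shQ_+\otimes\sO_+(3)$ for $\gamma \in B_{\ell,d-2}$. Cascading this argument through $\Phi_{i+2}^{(d-1)},\Phi_{i+2}^{(d-2)},\Phi_{i+1}^{(d-1)},\Phi_{i+1}^{(d-2)},\Phi_{i}^{(d-1)}$ in turn—at each stage using Lemma \ref{lem:key}(3) to shift the leading generator into the accumulated subcategory with error terms absorbed by the previous steps—and then invoking Lemma \ref{lem:G:mut} with $r=3$, yields all $\{\Sigma^{\gamma}\shQ_+\}_{\gamma \in B_{\ell,d}\setminus B_{\ell-3,d}}$. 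Finally, $\Im\Phi_{i-1}^{\mathrm{vf}}$ with $\ell_-^{(d)} = \ell-3$ adjoins the remaining generators $\{\Sigma^{\alpha}\shQ_+\}_{\alpha \in B_{\ell-3,d}}$ via Lemma \ref{lem:key}(3) for $\ell_- = \ell-3$, whose error terms again lie in the already generated region.

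The main obstacle will be the combinatorial bookkeeping: with two intermediate strata contributing three copies apiece, one must precisely match, at each of the six ``intermediate'' mutation steps, the pair $(B_{\ell_+,j'},s)$ of possible labels for generators and their twists to a region of the form $B_{\ell_+,d}\setminus B_{\ell_-^{(j)},d}$ via Lemma \ref{lem:G:mut}, keeping the ordering of the decomposition consistent. This will require a careful induction over a suitably ordered lexicographic parameter (essentially the pair $(d - d_-, t)$ controlling the twist and the stratum-depth) so that ``error terms'' produced at each stage of the key-lemma resolution always land in a strictly earlier bucket. Once this matching is set up cleanly—ideally using the twisted variant of the key lemma (Lemma \ref{lem:key:twist}) when needed to handle the $\sO_+(s)$ twists uniformly—the verification of each individual vanishing and inclusion becomes routine.
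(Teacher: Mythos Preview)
Your overall architecture (fully faithfulness via Lemma \ref{lem:local:ff}, semiorthogonality via the adjunction trick combined with Lemma \ref{lem:key} and Lemma \ref{lem:G:mut}, generation by iterated descent) is the same as the paper's. The semiorthogonality portion is essentially correct in spirit, though your claimed value $r = d-j$ in the application of Lemma \ref{lem:G:mut} is off: the correct value is $r = \ell_+ - \ell_-^{(j)} = 3-(d-j)$.

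The genuine gap is in the generation argument. Your proposed \emph{alternating} cascade $\Phi_{i+3}^{(d-2)}, \Phi_{i+2}^{(d-1)}, \Phi_{i+2}^{(d-2)}, \Phi_{i+1}^{(d-1)}, \ldots$ does \emph{not} absorb the error terms. Concretely: after $\Phi_{i+4}^{\mathrm{flip}}$ and $\Phi_{i+3}^{(d-2)}$ you control $\{\Sigma^\gamma\shQ_+\otimes\sO_+(3)\}_{\gamma\in B_{\ell,d-2}}$, equivalently $\{\Sigma^\mu\shQ_+\otimes\sO_+(2)\}_{\mu\in B_{\ell,d-2}+1}$. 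But the error terms of $\Phi_{i+2}^{(d-1)}$ (from Lemma \ref{lem:key}(3) with $\ell_-=\ell-2$) are $\{\Sigma^\gamma\shQ_+\otimes\sO_+(2)\}_{\gamma\in B_{\ell,d-1}\setminus B_{\ell-2,d-1}}$; any $\gamma$ of height exactly $\ell-1$ lies in this set but not in $B_{\ell,d-2}+1$, so it is \emph{not} yet generated. The same obstruction persists if you try the twisted lemma at this stage.

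The paper's fix is to change both the \emph{order} and the \emph{tool}: first process \emph{all three} $(d-2)$ copies (twists $4,3,2$) to obtain $\{\Sigma^\gamma\shQ_+\otimes\sO_+(t)\}_{\gamma\in B_{\ell,d-2},\,t\in\{2,3,4\}}$; only then attack the $(d-1)$ copies at $t=3,2$ using the \emph{twisted} key lemma (Lemma \ref{lem:key:twist}) with $j=1$, exploiting the decomposition $B_{\ell,d-1}=B_{\ell,d-2}\cup B_{\ell-2,d-1}^{\{1\}}\cup(B_{\ell-1,d-2}^{\{1\}}+1)$. An intermediate application of Lemma \ref{lem:G:mut} with $r=2$ (twisted by $\sO_+(1)$) is then needed before the final $(d-1)$ copy at $t=1$ can be processed with the untwisted lemma. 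You allude to Lemma \ref{lem:key:twist} in your last paragraph but do not actually deploy it, and your cascade order must be reorganized for the argument to go through.
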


 The semiorthogonal decomposition could be informatively written as:
	\begin{align*}
	\Db(\shZ_+) = \big \langle  \Db(\shZ_-^{\rm vf}) , ~\text{3-copies of} ~ \Db(\shZ_{-}^{(d-1)}), ~\text{3-copies of} ~  \Db(\shZ_{-}^{(d-2)}), ~ \Db(\shZ_{-}^{\rm flip}) \big \rangle.
	\end{align*}

\begin{proof} The exact same arguments as the $\delta=2$ case show that:
	\begin{align*}
	\Im \Phi_i^{\mathrm{vf}} \subseteq (\Im \Phi_{i+t}^{\mathrm{flip}})^\perp \quad \text{and} \quad  \Im \Phi_{i}^{\mathrm{flip}} \subseteq (\Im \Phi_{i+3-t}^{\mathrm{vf}})^\perp  \quad & \text{for} \quad  t=1,2,3,4,5; \\
	 \Im \Phi_{i}^{(d-1)} \subseteq (\Im \Phi_{i+t}^{\mathrm{flip}})^\perp \quad \text{and} \quad  \Im \Phi_{i}^{\mathrm{flip}} \subseteq (\Im \Phi_{i+3-t}^{(d-1)})^\perp  \quad & \text{for} \quad  t=1,2,3,4; \\
	 \Im \Phi_i^{\mathrm{vf}} \subseteq (\Im \Phi_{i+t}^{(d-2)})^\perp  \quad \text{and} \quad   \Im \Phi_i^{(d-2)} \subseteq (\Im \Phi_{i+3-t}^{\mathrm{vf}})^\perp  \quad & \text{for} \quad  t=1,2,3,4; \\
	 \Im \Phi_{i}^{(d-2)} \subseteq (\Im \Phi_{i+t}^{\mathrm{flip}})^\perp \quad \text{and} \quad  \Im \Phi_{i}^{\mathrm{flip}} \subseteq (\Im \Phi_{i+3-t}^{(d-2)})^\perp  \quad & \text{for} \quad  t=1,2,3;\\
		\Im \Phi_{i}^{(d-1)}  \subseteq  (\Im \Phi_{i+t}^{(d-2)})^\perp  \quad \text{and} \quad   \Im \Phi_{i+t}^{(d-2)} \subseteq (\Im \Phi_{i+3-t}^{(d-1)})^\perp  \quad & \text{for} \quad  t=1,2,3;\\
	\Im \Phi_i^{\mathrm{vf}} \subseteq (\Im \Phi_{i+t}^{(d-1)})^\perp  \quad \text{and} \quad   \Im \Phi_i^{(d-1)} \subseteq (\Im \Phi_{i+3-t}^{\mathrm{vf}})^\perp  \quad & \text{for} \quad  t=1,2,3.
	\end{align*}

\medskip \noindent \textit{Generation}. The proof of generation is also similar to the case $\delta=2$. We only need to prove in the case $i=1$, i.e. to show the subcategory generated by the images: 
	$$\shS pan : = \big\langle  \Im \Phi_{0}^{\mathrm{vf}}, ~~ \{ \Im \Phi_{t}^{(d-1)}\}_{t = 1,2,3},~~ \{\Im  \Phi_{t}^{(d-2)}\}_{t = 2,3,4},~~ \Im \Phi_{5}^{\mathrm{flip}} \big \rangle$$
 contains a set of generators $\{\Sigma^{\alpha} \shQ_+\}_{\alpha \in B_{\ell,d}}$ of $\Db(\shZ_+)$.
 
 First consider $\Phi_{5}^{\mathrm{flip}}$. By Lem. \ref{lem:key} \eqref{lem:key-3} in the flip case $d_-= d-3$, $\ell_- = \ell$, the image $\Im (\Phi_{5}^{\mathrm{flip}})$ contains $\Phi_{5}(\Sigma^\alpha \shQ_-^{\mathrm{flip} \,\vee}) =  p_+^*\Sigma^{\alpha+1}\shQ_+ \otimes \sO_+(4)$ for all $\alpha \in B_{\ell,d-3}$. 
 
Secondly, consider $\Im \Phi_{t}^{(d-2)}$ for $t=4,3,2$. First let $j=4$, then by Lem. \ref{lem:key} \eqref{lem:key-3} in the case $d_- = d-2$, $\ell_- = \ell - 1$, for any generator $A=\Sigma^{\beta} \shQ_-^{(d-2) \, \vee}$, $\beta \in B_{\ell-1,d-2}$, the image $\Phi_{4}^{(d-2)}(A)$ hits $F^0 = \Sigma^{\beta} \shQ_+ \otimes \sO_+(4)$ modulo elements of $\Im \Phi_{5}^{\mathrm{flip}}$ (since $F^p \in \Phi_{5}^{\mathrm{flip}}$ for $p \ne 0$). Hence combined with $\Im \Phi_{5}^{\mathrm{flip}}$, we see $\langle \Im \Phi_{4}^{(d-2)},  \Im \Phi_{5}^{\mathrm{flip}} \rangle$ contains  $p_+^*\Sigma^{\alpha} \shQ_+ \otimes \sO_+(4)$ for all $\alpha \in B_{\ell,d-2}$. Inductively,  the exact same argument for the case $t=3$ and $t=2$ shows that $\langle \{\Im  \Phi_{t}^{(d-2)}\}_{t = 2,3,4},~ \Im \Phi_{5}^{\mathrm{flip}} \rangle \subseteq \shS pan$ contains all elements of the form:
	$$p_+^*\Sigma^{\alpha} \shQ_+ \otimes \sO_+(t) \quad \text{for} \quad \alpha \in B_{\ell,d-2}, ~ t=2,3,4.$$
 
Thirdly, consider $\Im \Phi_{t}^{(d-1)}$ for $t=3,2,1$. For $t=3$, consider the twisted version Lem. \ref{lem:key:twist} for $d_- = d-1$, $\ell_- = \ell - 2$ and $j=1$. Since the following holds for Young diagrams:
 	$$B_{\ell,d-1}  = B_{\ell, d-2} \cup B_{\ell-2,d-1}^{\{1\}} \cup (B_{\ell-1,d-2}^{\{1\}}+1),$$
and (if twisted by $\sO_+(3)$) we already see that $\shS pan$ contains all the elements $p_+^*\Sigma^{\alpha} \shQ_+ \otimes \sO_+(3)$ for $\alpha \in B_{\ell, d-2} \cup (B_{\ell-1,d-2}^{\{1\}}+1)$, hence by Lem. \ref{lem:key:twist}, let $A=\Sigma^{\alpha} \shQ_-^{(d-1) \, \vee} \otimes \sO_{-}^{(d-1)}(1) \in \Db(\shZ_-^{(d-1)})$, $\alpha \in B_{\ell-2,d-1}$, then $ \Phi_{3}^{(d-1)}(A)$ hits the element $p_+^*\Sigma^{\alpha} \shQ_+ \otimes \sO_+(3)$ for $\alpha \in B_{\ell-2,d-1}^{\{j=1\}}$ modulo elements of $\shS pan$. This shows that $\langle \Im \Phi_{3}^{(d-1)}, \{\Im  \Phi_{t}^{(d-2)}\}_{t = 2,3,4},~ \Im \Phi_{5}^{\mathrm{flip}} \rangle$ contains $p_+^*\Sigma^{\alpha} \shQ_+ \otimes \sO_+(3)$ for all $\alpha \in B_{\ell,d-1}$. Similarly, the same argument works for $t=2$, hence $\shS pan$ contains all elements of the form:
	$$p_+^*\Sigma^{\alpha} \shQ_+ \otimes \sO_+(t) \quad \text{for} \quad \alpha \in B_{\ell,d-1}, ~ t=2,3.$$
Therefore by Lem. \ref{lem:G:mut} in the case $r=2$ (twisted by $\sO_+(1)$),  $\shS pan$ contains:
	$$p_+^*\Sigma^{\alpha} \shQ_+ \otimes \sO_+(1) \quad \text{for} \quad \alpha \in B_{\ell,d} \backslash B_{\ell-2,d}.$$
Now consider $t = 1$, by Lem. \ref{lem:key} in the case of $d_- = d-1$, $\ell_- = \ell - 2$ (twisted by $\sO_+(1)$), $\Im \Phi_{1}^{(d-1)}$ hits all elements $p_+^*\Sigma^{\alpha} \shQ_+ \otimes \sO_+(1)$ for $\alpha \in B_{\ell-2, d-1}$ modulo elements of above form (i.e. for $\alpha \in B_{\ell,d-1} \backslash B_{\ell-2,d-1} \subseteq B_{\ell,d} \backslash B_{\ell-2,d}$). Together, this shows that $\shS pan$ contains:
	$$p_+^*\Sigma^{\alpha} \shQ_+ \otimes \sO_+(1) \quad \text{for} \quad \alpha \in B_{\ell,d-1}.$$
Hence by  Lem. \ref{lem:G:mut} in the case $r=3$, $\shS pan$ contains all elements:
	$$p_+^*\Sigma^{\alpha} \shQ_+ \quad \text{for} \quad \alpha \in B_{\ell,d} \backslash B_{\ell-3,d}.$$	

Finally, consider $\Im \Phi_{0}^{\mathrm{vf}}$. By Lem. \ref{lem:key} \eqref{lem:key-3}in the case $d_- = d$, $\ell_- = \ell - 3$, we see that $\Im \Phi_{0}^{\mathrm{vf}}$ hits all 
$p_+^*\Sigma^{\alpha} \shQ_+$ for $\alpha \in  B_{\ell-3,d}$ if modulo elements that are already in $\shS pan$ by previous step. This shows that $\shS pan$ contains all $p_+^*\Sigma^{\alpha} \shQ_+$ for $\alpha \in B_{\ell,d}$, hence the generation is proved.
\end{proof}

\begin{remark} There are many different mutation-equivalent ways to rewrite above semiorthogonal decomposition as previous cases, for example:
	\begin{align*} 
	\Db(\shZ_+) = \langle  \Im \Phi_{i}^{\mathrm{flip}}, ~~\Im \Phi_{i}^{(d-2)}, \Im \Phi_{i}^{(d-1)}, \Im \Phi_{i+1}^{(d-2)},  \Im \Phi_{i+1}^{(d-1)} , \Im \Phi_{i+2}^{(d-2)}, \Im \Phi_{i+1}^{(d-1)}, ~~ \Im \Phi_{i+2}^{\mathrm{vf}} \rangle.
	\end{align*}
\end{remark}

\subsection{The case $d_+= 2$: $\Quot_2$-formula}
In this section we focus on the case $d_+=2$, $d_- \in \{0,1,2\}$. Denote the line bundle  $\sO_{+}(1): = p_+^* \sO_{\GG_+}(1) \in \Pic(\shZ_+)$. In order to distinguish the notations for different $d_-$, we label the schemes and maps of diagram (\ref{diag:fact}) by the upper index $d_- \in \{0,1,2\}$, for example the projections from fibered product is written as $r_{-}^{(d_-)} \colon \widehat{\shZ}^{(d_-)}   \to \shZ_{-}^{(d_-)} $ and $r_{+}^{(d_-)} \colon \widehat{\shZ}^{(d_-)}   \to \shZ_{+}$ for $d_- \in \{0,1,2\}$, where $\shZ_{+} = \shZ_{+}^{(d_-)}$ is the same scheme for all $d_-$. Notice that we may assume that $m \ge 2$ and $\delta \ge 2$, as otherwise the situation degenerates to one of previous cases. Then $n = m + \delta \ge 4$, $\ell_+ = m + \delta -2 \ge 2$.

\begin{theorem} \label{thm:local:d=2} In above situation, for any $k \in \ZZ$, $\alpha \in B_{\delta-2,2}$,  the following functors:
 	\begin{align*}
	& \Omega_k(\blank) : = r_{+ \,*}^{(2)} \circ r_{-}^{(2) \,*}(\blank) \otimes \sO_+(k) \colon  \qquad & \Db(\shZ_{-}^{(2)}) \lhook\joinrel\longrightarrow \Db(\shZ_+), \\
	&\Phi_k(\blank) := r_{+ \,*}^{(1)} \circ r_{-}^{(1)\,*}(\blank) \otimes \sO_+(k)  \colon \qquad &\Db(\shZ_{-}^{(1)})  \lhook\joinrel\longrightarrow \Db(\shZ_+), \\
	&\Psi_{\alpha,k}(\blank) := \pi_+^*(\blank) \otimes \Sigma^{\alpha^t} \shU_+^\vee  \otimes \sO_+(k) \colon  \qquad  &\Db(X) \lhook\joinrel \longrightarrow \Db(\shZ_{+})
	\end{align*}
are fully faithful. Furthermore, for any fixed $k \in \ZZ$, the images $\Im \Omega_{k-1}$, $\{\Im \Phi_i\}_{i \in [k, k+\delta-1]}$ and $\{\Im \Psi_{\alpha,k+1} \}_{\alpha \in B_{\delta-2,2}}$ induce a semiorthogonal decomposition
	\begin{align} \label{eqn:d=2:sod}
	\Db(\shZ_+) = \big\langle \Im \Omega_{k-1}, \{\Im \Phi_{k+i}\}_{i \in [0, \delta-1]}, \{\Im \Psi_{\alpha,k+1} \}_{\alpha \in B_{\delta-2,2}}  \big\rangle, 
	\end{align}
 with semiorthogonal order given by any total order extending the following partial orthogonal order: for any $\alpha, \beta \in B_{\delta-2,2}$, $i \in [0,\delta-1]$, the following holds:
	 \begin{align}
	& \Im \Omega_{k-1} \subseteq (\Im \Phi_{k+i})^\perp \cup (\Im \Psi_{\alpha, k+1})^\perp & \forall &\quad   i, \alpha \text{~above}; \label{eqn:d=2:order1} \\
	& \Im \Phi_{k+i}  \subseteq  (\Im \Phi_{k+j})^\perp \cup (\Im \Psi_{\alpha, k+1})^\perp & \forall &  \quad  i <  j \le  i + \delta -1,  \alpha +1 \npreceq (2^i);   \label{eqn:d=2:order2} \\
	& \Im \Psi_{\alpha, k+1} \subseteq (\Im \Phi_{k+i})^\perp \cup (\Im \Psi_{\beta, k+1})^\perp & \forall &   \quad (2^i)\npreceq \alpha, \beta \npreceq \alpha,  \label{eqn:d=2:order3}
	\end{align}
where $(2^i)= (2,2,\ldots,2)$ is understood as an element of 
$B_{\delta-1,2} \supset B_{\delta-2,2}$. 
\end{theorem}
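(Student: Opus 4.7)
The strategy is to follow the template established in the proofs of Thms.~\ref{thm:rk=2:local} and \ref{thm:rk=3:local}, combining Lem.~\ref{lem:key} (which describes how the generators of $\Db(\shZ_-^{(d_-)})$ transform under $r_{+\,*}\,r_-^*$) with the mutation results for $\Gr_2(n)$ provided by Lem.~\ref{lem:G_2:mut}. The fully faithfulness of $\Omega_k$ and $\Phi_k$ is immediate from Lem.~\ref{lem:local:ff} (the hypothesis $d_+ - \delta \le d_- \le d_+$ is met by both $d_- = 1$ and $d_- = 2$ when $\delta \ge 2$), and fully faithfulness of $\Psi_{\alpha,k+1}$ follows from Lem.~\ref{lem:top} specialised to $d_+ = 2$.

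For the semiorthogonal relations, by $X$-linearity I may reduce each $\Hom$-vanishing to a computation of the form $\Phi_{\mathrm{right}}^L \circ \Phi_{\mathrm{left}}$ applied to a generator of the source, then invoke Lem.~\ref{lem:key}\eqref{lem:key-2}. Relations \eqref{eqn:d=2:order1} and \eqref{eqn:d=2:order2} involving $\Omega_{k-1}$ or $\Phi_{k+i}$ on the right reduce to showing that certain $p_+^*(\Sigma^\lambda \shQ_+ \otimes \sO(t))$ lie in $\langle \Sigma^\mu \shQ_+ \rangle_{\mu \in B_{\ell_+,2} \setminus B_{\ell_--,2}}$ after pullback; this is exactly the content of Lem.~\ref{lem:G_2:mut}, applied with $r = 2$ and $r = 1$ respectively. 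Relations among the top strata \eqref{eqn:d=2:order3} are handled by Lem.~\ref{lem:top} and Lem.~\ref{lem:bottom_top} (with $s = 1$), noting the explicit constraint $(2^i) \npreceq \alpha$ which encodes the Young-diagram incidence from Pieri's rule.

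For generation, I will show that $\mathcal{S}pan := \langle \Im \Omega_{k-1}, \{\Im \Phi_{k+i}\}_{i\in[0,\delta-1]}, \{\Im \Psi_{\alpha,k+1}\}_\alpha \rangle$ contains every $\Sigma^\lambda \shQ_+$ for $\lambda \in B_{\ell_+,2}$, working ``outside-in'' as in Thm.~\ref{thm:rk=3:local}. First, the top strata $\Psi_{\alpha,k+1}$ directly yield all $\Sigma^{\alpha^t} \shU_+^\vee \otimes \sO_+(k+1)$ for $\alpha \in B_{\delta-2,2}$, which via Kapranov's theorem and the identity $\Sigma^{\alpha^t}\shU_+^\vee = S^a \shU_+^\vee \otimes \sO(-b)$ (writing $\alpha^t = (a+b,b)$) deliver a collection of $\Sigma^\lambda \shQ_+ \otimes \sO_+(j)$ for appropriate $(\lambda,j)$. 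Next, for each $i = \delta-1,\delta-2,\ldots,0$ in turn, apply Lem.~\ref{lem:key}\eqref{lem:key-3} to $\Phi_{k+i}$: the cone of the top term $F^0 = \Sigma^\beta \shQ_+ \otimes \sO_+(k+i)$ against the remaining terms $F^p$ (for $p<0$) lies in what has already been shown to be in $\mathcal{S}pan$, so modulo $\mathcal{S}pan$ we gain all $\Sigma^\beta \shQ_+ \otimes \sO_+(k+i)$ for $\beta \in B_{\ell_+-1,1}$. Repeatedly invoking Lem.~\ref{lem:G_2:mut} to translate these twisted $\Sigma^\beta\shQ_+ \otimes \sO_+(j)$ into untwisted $\Sigma^\mu \shQ_+$ with $\mu \in B_{\ell_+,2}\setminus B_{\ell_+-2,2}$, we cover everything outside $B_{\ell_+-2,2}$. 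Finally, Lem.~\ref{lem:key}\eqref{lem:key-3} applied to $\Omega_{k-1}$ hits $\Sigma^\alpha \shQ_+$ for $\alpha \in B_{\ell_+-2,2}$ modulo $\mathcal{S}pan$, completing generation.

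The main technical obstacle will be bookkeeping the semiorthogonal relations \eqref{eqn:d=2:order2} among the $\delta$ copies of $\Im \Phi_{k+i}$: here I must verify that for each $0 \le i < j \le i+\delta-1$ and each generator $A = \Sigma^\alpha \shQ_-^{(1)\,\vee}$ of $\Db(\shZ_-^{(1)})$, the left adjoint $\Phi_0^L$ annihilates $\Phi_0(A) \otimes \sO_+(j-i)$; this amounts to checking that $\Sigma^\gamma \shQ_+ \otimes \sO_+(j-i)$ (for $\gamma \in B_{\ell_+-1,1}$) lies in the subcategory generated by $\{\Sigma^\mu \shQ_+\}_{\mu \in B_{\ell_+,2} \setminus B_{\ell_+-1,2}}$, a precise instance of the improved mutation result Lem.~\ref{lem:G_2:mut} with $r = \delta-1$. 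The combinatorial condition $\alpha+1 \npreceq (2^i)$ in \eqref{eqn:d=2:order2} and $(2^i) \npreceq \alpha$ in \eqref{eqn:d=2:order3} will drop out of the Pieri/Littlewood--Richardson analysis when one expands $\Sigma^{\alpha^t}\shU_+^\vee \otimes \bigwedge^k(\shU_\mp \boxtimes \shQ_+)$ along the Koszul resolution of the relevant fiber product, as in the proofs of Lems.~\ref{lem:top} and \ref{lem:bottom_top}.
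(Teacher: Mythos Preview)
Your outline is largely sound for full faithfulness and for the orthogonality \eqref{eqn:d=2:order1}, but there are two genuine gaps.

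First, Lem.~\ref{lem:bottom_top} does not cover the relation $\Im\Psi_{\alpha,k+1}\subseteq(\Im\Phi_{k+i})^\perp$ in \eqref{eqn:d=2:order3}: that lemma treats the interaction of \emph{top} and \emph{bottom} strata under the hypothesis $m\le d_+\le\delta$, whereas $\Phi_{k+i}$ here is the intermediate stratum $d_-=1$. The paper handles this direction not via a Koszul expansion but by passing to the \emph{right} adjoint of $\Phi$: one computes $r_{-\,*}^{(1)}\,r_+^{(1)\,!}\bigl(p_+^*\Sigma^{\alpha^t}\shU_+^\vee\otimes\sO_+(1-i)\bigr)$ using the explicit formula for $\omega_{r_+^{(1)}}$, then invokes Lem.~\ref{lem:key}\eqref{lem:key-1} (not \eqref{lem:key-2}) together with Lem.~\ref{lem:G_2:mut}. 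This is where the condition $(2^i)\npreceq\alpha\iff b+1\le i$ actually emerges. (Incidentally, your values $r=2,1$ for Lem.~\ref{lem:G_2:mut} should be $r=\delta,\delta-1$.)

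Second, and more seriously, your generation argument does not close. You propose to run the untwisted Lem.~\ref{lem:key}\eqref{lem:key-3} on $\Phi_{k+i}$ starting at $i=\delta-1$, asserting the $F^p$ terms ($p<0$) already lie in $\shS pan$ from the top strata. But those terms are sums of $\bigwedge^s\shQ_+\otimes\sO_+(k+\delta-1)$ with $s\in[m,\ell_+]$, whereas at this twist level the top strata contribute only $\bigwedge^s\shQ_+\otimes\sO_+(k+\delta-1)$ for $s\in[0,\delta-2]$; since $\ell_+=m+\delta-2>\delta-2$ these do not match. The paper's remedy is to organise the top-strata images into staircase subcategories $\shC_i\subset\Db(\GG_+)$, identify $\shC_i=\langle\Sigma^\gamma\shQ_+\rangle_{\gamma\in B_{\ell_+,1}\setminus B_{\ell_-,1}^{\{i\}}}\otimes\sO_+(i)$ via mutation, and then apply the \emph{twisted} key lemma (Lem.~\ref{lem:key:twist}) with twist $j=i$ so that $\Im\Phi_i$ hits exactly the complementary set modulo $\shC_i$. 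The template of Thms.~\ref{thm:rk=2:local}--\ref{thm:rk=3:local} does not transfer directly: in those cases the flip component with $\ell_-=\ell_+$ absorbs the higher $F^p$ terms, but here there is no such component.
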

The semiorthogonal decomposition of the theorem can be informatively written as
	\begin{align*}
	\Db(\shZ_+) = \big \langle \Db(\shZ_{-}^{(2)}), ~\text{$\delta$-copies of} ~ \Db(\shZ_{-}^{(1)}), ~  \text{$\binom{\delta}{2}$-copies of} ~ \Db(X) \big \rangle.
	\end{align*}
A few words about the semiorthogonal order: note that for any $\alpha \in B_{\delta-2,2}$, $\alpha^t = (a+b, b)$ where $a,b \ge 0$ and $a+b \le \delta-2$, then $ \Sigma^{\alpha^t} \shU_+^\vee  = S^a  \shU_+^\vee \otimes \sO_+(b)$ and 
	$$\Psi_{\alpha, k+1}(\blank) := \pi_+^*(\blank) \otimes S^a  \shU_+^\vee \otimes \sO_+(b)  \otimes \sO_+(k+1).$$
Let $i \in [0, \delta-1]$, then the semiorthogonal order between $\Im \Phi_{k+i}$ and $\Im \Psi_{\alpha,k+1}$ is as follows: 
	\begin{align*}
	& \Im \Phi_{k+i}  \subseteq (\Im \Psi_{\alpha, k+1})^\perp  \quad \text{if} \quad   \alpha +1 \npreceq (2^i) \iff i \le a+b ; \\ 
	& \Im \Psi_{\alpha, k+1} \subseteq (\Im \Phi_{k+i})^\perp \quad \text{if} \quad \quad (2^i)\npreceq \alpha \iff  b+1 \le i . 
	\end{align*}
In particular, $\Im \Phi_{k+i}$ and $\Im \Psi_{\alpha, k+1}$ are totally orthogonal to each other if 
	$$a \ge 1 \quad \text{and} \quad b+1 \le i \le a+b.$$

\begin{proof} For any fixed $k \in \ZZ$, it follows from Lem. \ref{lem:local:ff} and the fact that $(\blank)\otimes \sO_+(k) \colon \Db(\shZ_+) \to \Db(\shZ_+)$ is an autoequivalence that the functors $\Omega_k$, $\Phi_k$ and $\Psi_{\alpha,k}$ are fully faithful, and from Lem. \ref {lem:sod:O(i)} (resp. Lem. \ref{lem:top}) that $\{\Im \Phi_{k+i}\}_{i \in [0, \delta-1]}$ (resp. $\{\Im \Psi_{\alpha,k+1} \}_{\alpha \in B_{\delta-2,2}}$) forms a semiorthogonal sequence. We next show the semiorthogonal relations (\ref{eqn:d=2:order1}, \ref{eqn:d=2:order2}, \ref{eqn:d=2:order3}).

\medskip \noindent \textit{Semiorthogonal relation (\ref{eqn:d=2:order1})}. Similarly as before, for any $A \in \Db(\shZ_{-}^{(2)})$, $B =p_{-}^{(1)\,*} \,\Sigma^{\beta} \shQ_{-}^{(1) \,\vee} \in \Db(\shZ_-^{(1)})$, where $\beta \in B_{m-1,1}$, and any $i \in [0,\delta-1]$, we have 
	$$\Hom_{\shZ_+}(\Phi_{k+i}(B), \Omega_{k-1}(A)) = \Hom_{\shZ_-^{(2)}} \big(\Omega_0^L (\Phi_{0}(B) \otimes \sO_{+}(i+1)) , A \big) = 0,$$
since from Lem. \ref{lem:key} \eqref{lem:key-2}, $\Omega_0^L(E) = r_{+\,!}^{(2)} \, r_{-}^{(2)\,*} (E) = 0$ for any
	$$E \in \shV an^{(2)} : = \Big \langle p_+^*  \big \langle \Sigma^{\lambda} \shQ_+ \big \rangle_{\lambda \in B_{n-2,2} \backslash B_{n-\delta-2,2}}   \Big \rangle \subseteq \Db(\shZ_+),$$
and $\Phi_{0}(B) \otimes \sO_{+}(i+1) \in \shV an^{(2)}$ for all $i+1 \in [1, \delta]$ since
	$$\Big \langle \Sigma^{\lambda} \shQ_+ \otimes \sO(i+1)  \Big \rangle_{\lambda \in B_{n-2,1}} \subseteq  \Big \langle \Sigma^{\lambda} \shQ_+ \Big \rangle_{\lambda \in B_{n-2,2} \backslash B_{n-\delta-2,2}}   \subseteq \Db(\GG_+).$$
which holds by Lem. \ref{lem:G_2:mut} in the case $r = \delta$.

For the second relation, for any $A \in \Db(\shZ_{-}^{(2)}), B \in \Db(X)$, by $X$-linearity:
	$$\Hom_{\shZ_+}(\Psi_{\alpha,k+1}(B), \Omega_{k-1}(A)) = \Hom_{\shZ_-^{(2)}} \big(\Omega_0^L (p_+^*\, \Sigma^{\alpha^t} \shU_+^\vee \otimes \sO_+(2)) \otimes B, A \big) = 0,$$
since $\Omega_0^L (p_+^*\, \Sigma^{\alpha^t} \shU_+^\vee \otimes \sO_+(2)) = 0$ for any $\alpha \in B_{\delta-2,2}$. In fact, $p_+^*\, \Sigma^{\alpha^t} \shU_+^\vee \otimes \sO_+(2) \in \shV an^{(2)}$, which follows from Lem. \ref{lem:G_2:mut}) in the case $r = \delta$:
	$$ \Sigma^{\alpha^t} \shU_+^\vee \otimes \sO_{\GG_+}(2) = S^a \shU_+ \otimes \sO_{\GG_+}(a+b+2) \in  \Big \langle \Sigma^{\lambda} \shQ_+ \Big \rangle_{\lambda \in B_{n-2,2} \backslash B_{n-\delta-2,2}} \subseteq \Db(\GG_+),$$
where $\alpha^t = (a+b,b)$, $a,b \ge 0$, $a+b \le \delta-2$ (hence $a < \ell$) as before.

\medskip \noindent \textit{Semiorthogonal relation (\ref{eqn:d=2:order2})}.
The first relation is equivalent to $\{\Im \Phi_{k+i}\}_{i \in [0, \delta-1]}$ forms a semiorthogonal sequence, which has been proved. For the second relation, for any $A \in \Db(\shZ_{-}^{(1)}), B \in \Db(X)$, we want find the condition when the following holds:
	\begin{equation}\label{eqn:d=2:SOD2}
	\Hom_{\shZ_+}(\Psi_{\alpha,k+1}(B), \Phi_{k+i}(A)) = \Hom_{\shZ_-^{(1)}} \big( \Phi_0^L (p_+^*\, \Sigma^{\alpha^t} \shU_+^\vee \otimes \sO_+(1-i)) \otimes B, A   \big) = 0.
	\end{equation}
By Lem. \ref{lem:key} \eqref{lem:key-2}, $ \Phi_0^L(E)  = r_{+\,!}^{(1)} \, r_{-}^{(1)\,*}(E) = 0$ for any $E \in \shV an^{(1)} $, where
	\begin{align*}
	 \shV an^{(1)} : = \Big \langle p_+^*  \big \langle \Sigma^{\lambda} \shQ_+ \big \rangle_{\lambda \in  B_{n-2,2} \backslash B_{n-\delta -1,2}}   \Big \rangle \subseteq \Db(\shZ_+).
	\end{align*}
Hence if we write $\alpha^t = (a+b, b)$ as before (then $a,b \ge 0$ and $a+b\le \delta-2$), then $ \Phi_0^L (p_+^*\, \Sigma^{\alpha^t} \shU_+^\vee \otimes \sO_+(1-i))  = 0$ if the following holds:
	$$ \Sigma^{\alpha^t} \shU_+^\vee \otimes \sO_{\GG_+}(1-i) = S^a \shU_+  \otimes \sO_{\GG_+}(a+b+1-i)   \in   \Big \langle \Sigma^{\lambda} \shQ_+ \Big \rangle_{\lambda \in B_{n-2,2} \backslash B_{n-\delta-1,2}} \subseteq \Db(\GG_+)$$
By Lem. \ref{lem:G_2:mut} in the case $r = \delta -1$, above holds for all $0 \le a \le \ell_+$ and $1 \le a+ b + 1 -i \le a+ \delta$. Hence we obtain that (\ref{eqn:d=2:SOD2}) holds for any $\alpha^t = (a+b,b) \in B_{2,\delta-2}$ and $i \in [b - (\delta-1), a + b]$. This in particular holds if $0 \le i \le a+b \iff \alpha + 1 \npreceq (2^i)$.

\medskip \noindent \textit{Semiorthogonal relation (\ref{eqn:d=2:order3})}. The second relation follows from Lem. \ref{lem:top}. For the first relation, for any $A \in \Db(\shZ_{-}^{(1)}), B \in \Db(X)$, similarly as before: 
	\begin{align*} 
	&\Hom_{\shZ_+}(\Phi_{k+i}(A), \Psi_{\alpha, \,k+1}(B)) =\Hom_{\widehat{\shZ}^{(1)}} \big( r_{-}^{(1)\,*} A, ~ r_{+}^{(1)\,!} (\Psi_{\alpha, 1-i}(B)) \big)  \\
	= & \Hom_{\widehat{\shZ}^{(1)}} \big( r_{-}^{(1)\,*} A, ~~r_{+}^{(1)\,*}  (p_+^* \Sigma^{\alpha^t} \shU_+^\vee \otimes \sO_+(1-i)\otimes  \pi_+^*B )  \otimes \omega_{r_+^{(1)}} \big) \\
	 = & \Hom_{\shZ_-^{(1)}}  \Big(A, ~~ r_{-\,*}^{(1)} \, r_{+}^{(1)\,*} p_+^* \big(\Sigma^{\alpha^t} \shU_+^\vee \otimes \sO_+(-i) \big) \otimes  B \otimes \sO_{-}^{(1)}(\delta-2)[\dim r_+^{(1)}] \Big),
	\end{align*}
where we use the functors are all $X$-linear and $\omega_{r_+^{(1)}} = \sO_-(\delta-2) \otimes \sO_+(-1)[\dim r_+^{(1)}]$. To show above $\Hom$ space vanishes,  it suffices to show $r_{-\,*}^{(1)} \, r_{+}^{(1)\,*} p_+^* \big(\Sigma^{\alpha^t} \shU_+^\vee \otimes \sO_+(-i) \big) = 0$. By Lem. \ref{lem:key} \eqref{lem:key-1}, it suffices to show for all $i \in [0, \delta-1]$, $(2^i)\npreceq \alpha$, the following holds:
	$$p_+^* (\Sigma^{\alpha^t} \shU_+^\vee \otimes \sO_+(-i)) \in 	\Big \langle p_+^*  \big \langle \Sigma^{\lambda} \shQ_+^\vee \big \rangle_{\lambda \in  B_{n-2,2} \backslash B_{n-\delta -1,2}} \Big \rangle \subseteq \Db(\shZ_+).$$
If we write $\alpha^t = (a+b, b)$ as before, this holds if the following holds: 
	$$
	 \Sigma^{\alpha^t} \shU_+^\vee \otimes \sO_+(-i)  = S^a \shU_+^\vee \otimes \sO_+(b-i)   \in \Big \langle \Sigma^{\lambda} \shQ_+^\vee \Big \rangle_{\lambda \in B_{n-2,2} \backslash B_{n-\delta-1,2}} \subseteq \Db(\GG_+)	$$
By Lem. \ref{lem:G_2:mut} in the case $r = \delta-1$, above holds if $0 \le a \le \ell_+$ and $1 \le i-b \le a + \delta$. Hence $\Hom_{\shZ_+}(\Phi_{k+i}(A), \Psi_{\alpha, \,k+1}(B))=0$ for all $\alpha^t = (a+b,b) \in B_{2,\delta-2}$ and $i \in [b+1, a + b+ \delta]$. This in particular holds if $b+ 1 \le i \le \delta-1 \iff (2^i)\npreceq \alpha $.

\bigskip \noindent \textit{Generation}. For simplicity from now on we will also use notation $E(i) := E \otimes \sO_+(i)$ for any $E \in \Db(\GG_+)$. Without loss of generality, we may assume $k=0$. To conclude the proof of Thm. \ref{thm:local:d=2}, it remains to show the category generated by the right hand side of (\ref{eqn:d=2:sod}):
	$$\shS pan_0 : = \big\langle \Im \Omega_{-1}, \{\Im \Phi_{i}\}_{i \in [0, \delta-1]}, \{\Im \Psi_{\alpha,1} \}_{\alpha \in B_{\delta-2,2}}  \big\rangle$$
is the whole category $\Db(\shZ_+)$. To show this, we first claim that 
	$$p_+^*( \bigwedge^s \shQ_+ \otimes \sO_+(i)) \in \shS pan_0 \quad \forall \quad i \in [0,\delta-1], s \in [0,\ell_+] = [0,n-2].$$
	To prove this claim, notice $\shS pan_0$ contains $\{ \Im \Psi_{\alpha,1} (\sO_X) = p_+^* (\Sigma^{\alpha^t} \shU_+^\vee(1)) \}_{\alpha \in B_{\delta-2,2}}$, therefore it contains $p_+^* \shC_i$ for all $i \in [0,\delta-1]$, where $\shC_i \subseteq \Db(\GG_+)$ is the staircase set:
		\begin{align*}
		\shC_i : = \big \langle  \langle \underbrace{ S^{i-1} \shU_+^\vee(1), S^{i-2} \shU_+^\vee(2), \ldots, \sO_+(i)}_{\text{$i$ terms}} \rangle , ~ \langle \underbrace{  \sO_+(i+1), \shU_+^\vee(i+1), \ldots, S^{\delta -2-i} \shU_+^\vee (i+1)}_{\text{$(\delta-1-i)$ terms}} \rangle \big \rangle,
		\end{align*}
where the first (resp. second) component is assumed to be empty if $i =0$ (resp. if $i= \delta-1$). 

Now notice that by mutation Lem. \ref{lem:mutation}, we have
	\begin{align*}
		\shC_i & = \big \langle \langle S^{i-1} \shU_+(i), S^{i-2} \shU_+(i), \ldots, \sO_+(i) \rangle, ~~ \langle \bigwedge^{\delta-2-i} \shQ_+^\vee (i+1), \ldots, \shQ_+^\vee (i+1), \sO_+(i+1) \rangle \big \rangle, \\
		& = \big \langle \langle \sO_+(i), \bigwedge^1 \shQ_+ (i), \ldots, \bigwedge^{i-1} \shQ_+ (i) \rangle, ~~ \langle \bigwedge^{\ell_+ -\delta+2+i} \shQ_+ (i), \ldots, \bigwedge^{\ell_+ -1} \shQ_+(i), \bigwedge^{\ell_+} \shQ_+ (i)  \rangle \big \rangle.
	\end{align*}
Hence $\shC_i =  \{\Sigma^{\gamma}  \shQ_+ \}_{\gamma \in B_{\ell_+, 1} \backslash B_{\ell_-, 1}^{\{i\}} }  \otimes \sO_+(i) $, 
where $B_{\ell_-, 1}^{\{i\}}$ is defined as in Lem. \ref{lem:key:twist}. Now apply Lem. \ref{lem:key:twist} in the case $\ell_- = \ell_+ - \delta + 1$, $d_-=1$ to $\Phi_i$ and $j=i$, we obtain that $\Im \Phi_i$ hits every element of $\big \{\Sigma^{\gamma}  \shQ_+ \big \}_{\gamma \in B_{\ell_-,1}^{\{i\}}} \otimes \sO_+(i) = \big \{ \bigwedge^s \shQ_+ (i) \big\}_{s \in [i+1, \ell_+ -\delta+1+i]}$ modulo elements of $\shC_i$, therefore 
	$$\big\{ p_+^* \bigwedge^s \shQ_+ (i) \big\}_{s \in [0, \ell_+ -1], i \in [0, \delta-1]} \subseteq \big \langle \Im \Phi_i,  \{ \Im \Psi_{\alpha,1} \}_{\alpha \in B_{\delta-2,2}} \big\rangle  \subseteq \shS pan_0. $$
Hence the claim is proved.  Now it follows from Lem. \ref{lem:G_2:mut} (twisted by $\sO_+(-1)$) that 
		\begin{equation} \label{eqn:gen:mod} \big \{ p_+^* \Sigma^\alpha \shQ_+  \otimes \sO_+(-1)  \big \}_{\alpha \in B_{\ell_+,2} \backslash B_{\ell_+ - \delta, 2}}  \subseteq \shS pan_0 .
		\end{equation}

Finally,  by Lem. \ref{lem:key} \eqref{lem:key-3} for $d_- = 2$ and $\Omega_{-1}$, which states that the image of $\Omega_{-1}$ hits every generator $ \big \{ p_+^* \Sigma^\alpha \shQ_+  \otimes \sO_+(-1)  \big \}_{\alpha \in B_{\ell_+ - \delta, 2}}$ modulo element of above set (\ref{eqn:gen:mod}). Combined with (\ref{eqn:gen:mod}), this shows that $\shS pan_0$ contains the whole set
	$$ \big \{ p_+^* \Sigma^\alpha \shQ_+  \otimes \sO_+(-1)  \big \}_{\alpha \in B_{\ell_+,2}} $$	
which generates the category $\Db(\GG_+)$ by  Lem. \ref{lem:Z_i:generator}. Hence we are done. \end{proof}

\subsection{The case $\ell_+= 2$: flips from resolving $\rank \le 2$ degeneracy loci} The case $\ell_+=2$, $\ell_- \in \{0,1,2\}$ could be viewed as the dual situation of $d_+ = 2$. We label the schemes and maps of diagram (\ref{diag:fact}) by the upper index $d_- \in \{m - 2,m-1,m\}$ as before. We may assume that $m \ge 2$ and $\delta \ge 2$, as otherwise the situation degenerates to one of previous cases. Then $n = m + \delta \ge 4$, $d_+ = m + \delta -2 \ge 2$.

The case $d_- = m-2$ corresponds to flip case, and we use $r_{\pm}^{\rm flip} = r_{\pm}^{(m-2)}$ to denote the maps in this case. More precisely, $\shZ_-^{\rm flip} = \shZ_-^{(m-2)}$ and $\shZ_+$ are both resolutions of $\Hom^{\le 2}(W,V)$, and $\shZ_+ \dashrightarrow \shZ_{-}^{\rm flip}$ is a flip;  The case $d_-=m$ corresponds to bottom stratum, hence $\shZ_{-}^{(m)} = \Spec \kk$, and $r_{+}^{(m)}=j_+ \colon \GG_+ \hookrightarrow \Tot_{\GG_+}(W^\vee \otimes \shQ_+^\vee)$ is the inclusion of zero section, $ r_{-}^{(m)} = pr_{\GG_+} \colon \GG_+ = \Gr_d(V) \to \Spec \kk$ is the natural projection as in Lem. \ref{lem:bottom}. 

\begin{theorem} \label{thm:local:l=2} In above situation, for any $k \in \ZZ$, $\alpha \in B_{2, \delta-2}$, the following functors:
 	\begin{align*}
	&\Psi^{\alpha}_k(\blank) :=  j_{+\,*} \circ pr_{\GG_+}^* (\blank) \otimes \Sigma^{\alpha} \shQ_+^\vee  \otimes \sO_+(k) \colon  \qquad  & \Db(\Spec \kk ) \lhook\joinrel \longrightarrow \Db(\shZ_{+}), \\
	&\Phi_k(\blank) := r_{+ \,*}^{(m-1)} \circ r_{-}^{(m-1)\,*}(\blank) \otimes \sO_+(k)  \colon \qquad &\Db(\shZ_{-}^{(m-1)})  \lhook\joinrel\longrightarrow \Db(\shZ_+), \\
	& \Omega_k(\blank) : = r_{+ \,*}^{\rm flip} \circ r_{-}^{\mathrm{flip} \,*}(\blank) \otimes \sO_+(k) \colon  \qquad & \Db(\shZ_{-}^{\mathrm{flip}}) \lhook\joinrel\longrightarrow \Db(\shZ_+)
	\end{align*}
are fully faithful. Furthermore, for any fixed $k \in \ZZ$, the images $\{\Im \Psi^{\alpha}_{k-1} \}_{\alpha \in B_{2, \delta-2}}$, $\{\Im \Phi_{k - i} \}_{i \in [0, \delta-1]}$ and  $\Im \Omega_{k+1}$ induce a semiorthogonal decomposition
	\begin{align*} 
	\Db(\shZ_+) = \big\langle \{\Im \Psi^{\alpha}_{k-1} \}_{\alpha \in B_{2, \delta-2}}, \{\Im \Phi_{k - i} \}_{i \in [0, \delta-1]},  \Im \Omega_{k+1} \big\rangle, 
	\end{align*}
 with semiorthogonal order given by any total order extending the following partial orthogonal order: for any $\alpha, \beta \in B_{2, \delta-2}$, $i \in [0,\delta-1]$, the following holds:
	 \begin{align*}
	& \Im \Omega_{k+1} \subseteq {}^\perp (\Im \Phi_{k-i}) \cup {}^\perp(\Im \Psi^{\alpha}_{k-1}) & \forall &\quad   i, \alpha \text{~above};  \\  
	& \Im \Phi_{k-i}  \subseteq  {}^\perp(\Im \Phi_{k-j}) \cup {}^\perp(\Im \Psi^{\alpha}_{k-1}) & \forall &  \quad  i <  j \le  i + \delta -1,  \alpha +1 \npreceq (i^2);   \\
	& \Im \Psi^{\alpha}_{k-1} \subseteq {}^\perp(\Im \Phi_{k-i}) \cup {}^\perp(\Im \Psi^{\beta}_{k-1}) & \forall &   \quad (i^2) \npreceq \alpha, \beta \npreceq \alpha.  
	\end{align*}
where $(i^2)= (i,i)$ is understood as an element of 
$B_{2, \delta-1} \supset B_{2, \delta-2}$. 
\end{theorem}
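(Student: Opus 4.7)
The plan is to follow the strategy of Theorem \ref{thm:local:d=2}, exploiting the symmetry between the cases $d_+=2$ and $\ell_+=2$ coming from $\Gr_k(V)=\Gr_{n-k}(V^\vee)$ (see Remark \ref{rmk:dualGr}). In particular, the roles of $\shQ_+$ and $\shU_+^\vee$, and those of $\sO_+(k)$ and $\sO_+(-k)$, are essentially interchanged, and all mutation results of \S \ref{sec:Young_Grassmannian} have dual analogues. The fully faithfulness of $\Omega_k$ and $\Phi_k$ is immediate from Lemma \ref{lem:local:ff} combined with the fact that twisting by $\sO_+(k)$ is an autoequivalence; that of $\Psi^\alpha_k$ comes from Lemma \ref{lem:bottom}. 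The semiorthogonality of $\{\Im\Phi_{k-i}\}_{i\in[0,\delta-1]}$ is a direct application of Lemma \ref{lem:sod:O(i)} (note the sign flip in the direction of twist, corresponding to the reversal of the semiorthogonal order), while that of $\{\Im\Psi^\alpha_{k-1}\}_{\alpha\in B_{2,\delta-2}}$ is part of Lemma \ref{lem:bottom}.

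For the cross semiorthogonal relations, I would use the adjunctions $\Omega_0^R$, $\Phi_0^R$, $\Psi^{\alpha\,R}_0$ from Lemma \ref{lem:local:FM} together with the key Lemma \ref{lem:key}\eqref{lem:key-1} (to the right adjoints $r_{-*}^{(d_-)}\circ r_+^{(d_-)\,*}$), and the mutation Lemma \ref{lem:G_2:mut}. For instance, to prove $\Im\Omega_{k+1}\subseteq {}^\perp\Im\Phi_{k-i}$, one computes
\[
\Hom_{\shZ_+}(\Phi_{k-i}(A),\Omega_{k+1}(B))=\Hom_{\shZ_-^{\rm flip}}(A,\,\Omega_0^R(\Phi_0(B)\otimes\sO_+(i+1)))
\]
and shows that for $i+1\in[1,\delta]$ the term $\Phi_0(B)\otimes\sO_+(i+1)$ lies in the subcategory $\langle \Sigma^\lambda\shQ_+^\vee\rangle_{\lambda\in B_{2,n-2}\backslash B_{2,n-\delta-2}}$ of $\Db(\GG_+)$, thanks to the dual form of Lemma \ref{lem:G_2:mut} with $r=\delta$; by the dual of Lemma \ref{lem:key}\eqref{lem:key-1} this makes $\Omega_0^R$ vanish on such objects. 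The relations between $\Phi$--$\Phi$ and $\Phi$--$\Psi$ go similarly, with the partial orthogonality $\alpha+1\npreceq(i^2)$ (equivalently $i\le a+b$ when $\alpha=(a+b,b)$) and $(i^2)\npreceq\alpha$ (equivalently $b+1\le i$) emerging from the dual Lemma \ref{lem:G_2:mut} applied with $r=\delta-1$ to the object $\Sigma^{\alpha}\shQ_+^\vee\otimes\sO_+(i-1)$; the relations among the $\Psi^\alpha$'s are controlled by Kapranov's Theorem \ref{thm:Kap}\eqref{thm:Kap-2} after applying the dual of Lemma \ref{lem:bottom}.

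For generation, I would mirror the argument of Theorem \ref{thm:local:d=2}. Set (WLOG $k=0$)
\[
\shS pan_0:=\big\langle\{\Im\Psi^\alpha_{-1}\}_{\alpha\in B_{2,\delta-2}},\,\{\Im\Phi_{-i}\}_{i\in[0,\delta-1]},\,\Im\Omega_{1}\big\rangle.
\]
The dual staircase argument shows $\shS pan_0$ contains $p_+^*(\bigwedge^s\shQ_+^\vee\otimes\sO_+(-i))$ for all $i\in[0,\delta-1]$ and $s\in[0,\ell_+]$: indeed, the bottom-stratum images $\Psi^\alpha_{-1}(\sO)=p_+^*(\Sigma^\alpha\shQ_+^\vee\otimes\sO_+(-1))$ generate a staircase block analogous to $\shC_i$ in the previous proof, and the twisted Lemma \ref{lem:key:twist} applied to $\Phi_{-i}$ with $d_-=m-1$, $\ell_-=1$ (dualized) hits the missing pieces $\bigwedge^s\shQ_+^\vee\otimes\sO_+(-i)$. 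From this, the dual of Lemma \ref{lem:G_2:mut} with $r=\delta-1$ (twisted by $\sO_+(1)$) produces all $p_+^*\Sigma^\alpha\shQ_+^\vee\otimes\sO_+(1)$ for $\alpha\in B_{2,\ell_+}\backslash B_{2,\ell_+-\delta}$; finally Lemma \ref{lem:key}\eqref{lem:key-3} applied to $\Omega_1$ (with $d_-=m-2$, $\ell_-=2$) supplies the remaining generators $p_+^*\Sigma^\alpha\shQ_+^\vee\otimes\sO_+(1)$ for $\alpha\in B_{2,\ell_+-\delta}$, completing the generation by Lemma \ref{lem:Z_i:generator}.

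The main obstacle I anticipate is the bookkeeping of the cross semiorthogonality conditions, particularly identifying the precise range of Young diagrams and line bundle twists for which the dualized Lemma \ref{lem:G_2:mut} applies; the computations involve Young diagrams $\alpha^t=(a+b,b)\in B_{2,\delta-2}$ being mapped to conditions such as $1\le i-b\le a+\delta$, and translating these back into the partial order conditions $(i^2)\npreceq\alpha$ and $\alpha+1\npreceq(i^2)$ requires care. No genuinely new ideas beyond those in the proof of Theorem \ref{thm:local:d=2} are expected, but the simultaneous handling of three families of embeddings and the dualization make the verification somewhat laborious.
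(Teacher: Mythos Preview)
Your approach to fully faithfulness and semiorthogonality is essentially correct and matches the paper's strategy (the paper also says ``exactly similar to the case of $d_+=2$''), though note that your displayed computation for $\Im\Omega_{k+1}\subseteq{}^\perp\Im\Phi_{k-i}$ has the arguments of $\Hom$ in the wrong order and uses the wrong adjoint; you want $\Hom(\Omega_{k+1}(B),\Phi_{k-i}(A))=\Hom(B,\Omega_0^R(\Phi_0(A)\otimes\sO_+(-i-1)))$ and then the \emph{dual} vanishing category $\langle\Sigma^\lambda\shQ_+^\vee\rangle_{\lambda\in B_{2,n-2}\setminus B_{2,n-\delta-2}}$.

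Your generation argument, however, has a genuine gap. The key claim ``$\Psi^\alpha_{-1}(\sO)=p_+^*(\Sigma^\alpha\shQ_+^\vee\otimes\sO_+(-1))$'' is false: unlike the top-stratum functors in the $d_+=2$ case (which are pullbacks $\pi_+^*$ and hence send $\sO_X$ to a vector bundle), the bottom-stratum functors $\Psi^\alpha_k$ here involve the pushforward $j_{+*}$ along the zero section $\GG_+\hookrightarrow\shZ_+=|W^\vee\otimes\shQ_+^\vee|$. Thus $\Psi^\alpha_k(\sO_{\Spec\kk})=j_{+*}\sO_{\GG_+}\otimes\Sigma^\alpha\shQ_+^\vee\otimes\sO_+(k)$ is a Koszul complex of length $2m$ with terms built from $\Sigma^{\lambda^t}W\otimes p_+^*\Sigma^\lambda\shQ_+$, $\lambda\in B_{2,m}$ (this is Lemma~\ref{lem:key}\eqref{lem:key-3} with $\ell_-=0$). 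Consequently there is no ``staircase'' of vector bundles coming directly from the $\Psi$-images, and the dualization of the $d_+=2$ argument breaks down at this point. The asymmetry is structural: the Grassmannian duality $\Gr_k(V)\simeq\Gr_{n-k}(V^\vee)$ exchanges $\shU_+$ and $\shQ_+^\vee$ but does \emph{not} exchange the top stratum ($d_-=0$) with the bottom stratum ($d_-=m$) of the $\Hom$-space stratification.

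The paper instead runs generation in the opposite order, mimicking the standard-flip case $\ell_+=1$ rather than $d_+=2$. Concretely (taking $k=\delta-1$): first $\Im\Omega_\delta$ contains $p_+^*\Sigma^\alpha\shQ_+\otimes\sO_+(\delta)$ for all $\alpha\in B_{2,m-2}$ (here $\ell_-=\ell_+=2$, so Lemma~\ref{lem:key}\eqref{lem:key-3} gives $F^\bullet=F^0$); then each $\Im\Phi_j$ with $j=\delta-1,\ldots,0$ inductively supplies $p_+^*\Sigma^\alpha\shQ_+\otimes\sO_+(j)$ for $\alpha\in B_{2,m-1}$; this covers all generators except those of shape $(\nu_1+m,\nu_2)$ with $\nu\in B_{2,\delta-2}$. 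The remaining ones are obtained by an induction on $\nu_1-\nu_2$: one tensors the Koszul complex $\Psi^{(0)}_0(\sO)$ by $\Sigma^\nu\shQ_+$, notes that its $(-m)$-th term contains $p_+^*S^m\shQ_+$, and applies Pieri's rule to $S^m\shQ_+\otimes\Sigma^\nu\shQ_+$ to isolate the single summand $\Sigma^{(\nu_1+m,\nu_2)}\shQ_+$, all other summands already lying in $\shS pan$ by the inductive hypothesis. This Pieri-rule induction is the new ingredient absent from your proposal.
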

The semiorthogonal decomposition of the theorem can be informatively written as
	\begin{align*}
	\Db(\shZ_+) = \big \langle  \text{$\binom{\delta}{2}$-copies of} ~ \Db(\Spec \kk),  ~\text{$\delta$-copies of} ~ \Db(\shZ_{-}^{(m-1)}), ~\Db(\shZ_{-}^{\rm flip}) \big \rangle.
	\end{align*}

\begin{proof} The proof of the semiorthogonal relations part is exactly similar to the case of $d_+=2$; The only nontrivial part of the proof is the semiorthogonal relations among $\Im \Phi_{k-i}$ and $\Im \Psi^{\alpha}_{k-1}$. Similar to $d=2$ case, we claim a slightly stronger result: if we write $\alpha = (a+b,b) \in B_{2,\delta-2}$, where $a,b \ge 0$, $a + b \le \delta -2$, then
	\begin{align*}
		& \Hom( \Im \Phi_{k-i}, \Im \Psi^{\alpha}_{k-1}) = 0  & \text{if} \qquad  & b - (\delta-1) \le i \le a+b; \\ 
		& \Hom(\Im \Psi^{\alpha}_{k-1},  \Im \Phi_{k-i}) = 0 &  \text{if} \qquad  & b+1 \le i \le a+b + \delta.
	\end{align*}
The two equalities are Serre-dual equivalent, so we only need to one side of each. For any $A \in \Db(\shZ_-^{(m-1)})$, $B \in \Db(\Spec \kk)$, then for $\alpha = (a+b,b)$
	\begin{align*}
		& \Hom_{\shZ_+}( \Phi_{k-i}(A), \Psi^{\alpha}_{k-1}(B)) = \Hom_{\Spec \kk} \big( (\Psi^{(0)}_0)^L (p_+^* \Sigma^{\alpha} \shQ_+ \otimes \sO_+(1-i)) \otimes \Phi_0(A)), B  \big) \\
		& =  \Hom_{\Spec \kk} \big( (\Psi^{(0)}_0)^L ( p_+^* S^a \shQ_+ \otimes \sO_+(b-i) \otimes \Phi_0(A)), B  \big)
	\end{align*}
Since by Lem. \ref{lem:key} \eqref{lem:key-3}, $\Phi_0(A) \in \langle p_+^* \Sigma^\lambda \shQ_+\rangle_{\lambda \in B_{2,m-1}}$, and by Lem. \ref{lem:key} \eqref{lem:key-2}, $(\Psi^{(0)}_0)^L (E) = 0$ for any $E \in \langle \Sigma^\lambda \shQ_+ \rangle_{\lambda \in B_{2,d} \backslash \{0\}}$. Therefore above $\Hom$ space is zero if $1 \le a+b-i \le \delta -1$.
	\begin{align*} 
	&\Hom_{\shZ_+}( \Psi^{\alpha}_{k-1}(B),  \Phi_{k-i}(A)) \\
	&=  \Hom_{\Spec \kk}  \Big(B, ~ r_{-\,*}^{\rm flip}  (r_{+}^{\mathrm{flip}\,*} (p_+^* \Sigma^{\alpha} \shQ_+ \otimes \sO_+(i) \otimes \Phi_0(A) ) \otimes \omega_{r_+^{\rm flip}} \big) \Big)  \\
	 &=  \Hom_{\Spec \kk}  \Big(B, ~ r_{-\,*}^{\rm flip} \, r_{+}^{\mathrm{flip}\,*} \big(p_+^* S^a \shQ_+^\vee \otimes \sO_+(a+b-i-1) \otimes \Phi_0(A) \otimes \sO_+(1-m) \big) \Big) [\ell_+ m].
	\end{align*}
(Here we use $\omega_{r_+^{\rm flip}}= \sO_+(-m)[\ell_+ m]$.) Since $\Phi_0(A) \otimes \sO_+(1-m) \in \langle p_+^* \Sigma^\lambda \shQ_+^\vee\rangle_{\lambda \in B_{2,m-1}}$, therefore by Lem. \ref{lem:key} \eqref{lem:key-1}, above $\Hom$ space is zero if $1 \le i+1 - b \le \delta -1$, i.e. $b \le i \le b + \delta - 2$. Now combine these two inequalities and relative Serre duality, the claim is proved. 

\medskip \noindent \textit{Generation}. The proof of generation is very similar to $\ell_+ =1$, with a slightly different pattern. Without loss of generality, we may assume $k=\delta-1$. We want to show 
		$$\shS pan : = \big\langle \{\Im \Psi^{\alpha}_{\delta-2} \}_{\alpha \in B_{2, \delta-2}}, ~~\{\Im \Phi_{j} \}_{j \in [0, \delta-1]}, ~~ \Im \Omega_{\delta} \big\rangle$$
contains a set of generators of $\Db(\shZ_+)$. By Lem. \ref{lem:key} \eqref{lem:key-3} in the case $d_- = m-2, \ell_- = 2$, $\Im \Omega_{\delta}$ contains all the elements
	$p_+^* \Sigma^{\alpha} \shQ_+ \otimes \sO_{+}(\delta)$ for all $\alpha \in B_{2,m-2}.$
By Lem. \ref{lem:key} \eqref{lem:key-3} applied to $d_- = m-1$, $\ell_- = 1$, we see $\Im \Phi_{\delta-1}$ hits every element $p_+^* \Sigma^{\alpha} \shQ_+  \otimes \sO_{+}(\delta-1)$ for $\alpha \in B_{1,m-1}$ modulo elements of $\Im \Omega_{\delta}$. Therefore together $\langle \Im \Phi_{\delta-1},  \Im \Omega_{\delta}\rangle$ contains every element 
	$p_+^* \Sigma^{\alpha} \shQ_+ \otimes \sO_{+}(\delta-1)$ for all $\alpha \in B_{2,m-1}.$
Next, consider $\Im \Phi_{\delta-2}$, by Lem. \ref{lem:key} \eqref{lem:key-3} applied to $d_- = m-1$, $\ell_- = 1$ again, we see that $\Im \Phi_{\delta-2}$ hits every element $p_+^* \Sigma^{\alpha} \shQ_+  \otimes \sO_{+}(\delta-2)$ for $\alpha \in B_{1,m-1}$ modulo elements of $\langle \Im \Phi_{\delta-1},  \Im \Omega_{\delta}\rangle$. Hence inductively, we see that $\shS pan \supset \langle \{\Im \Phi_{j} \}_{j \in [0, \delta-1]}, ~~ \Im \Omega_{\delta} \rangle$ contains every elements of the form:
	$$p_+^* \Sigma^{\alpha} \shQ_+ \otimes \sO_{+}(j)  =p_+^* \Sigma^{\alpha + j} \shQ_+ \quad \text{for all} \quad \alpha \in B_{2,m-1}, 0 \le j \le \delta -1.$$
Since $\{p_+^* \Sigma^{\alpha} \shQ_+\}_{\alpha \in B_{2,n-2}}$ is a set of generators of $\Db(\shZ_+)$, $n - 2 = m + \delta - 2$, and
	$$B_{2, n -2} \backslash \bigcup_{j=0}^{\delta-1} (B_{2,m-1} + j) = \big\{(\nu_1 +m ,\nu_2) \in B_{2, n -2} \mid \nu=(\nu_1, \nu_2) \in B_{2,\delta-2} \big\} = : B_{2,\delta-2}^C$$
(Here $B_{2,m-1} + j$ denotes $\{\alpha + j \mid \alpha \in B_{2,m-1}\} =  \{(\alpha_1 + j, \alpha_2+j) \mid \alpha \in B_{2,m-1}\}$ as usual.) Therefore to show generation, it only remains to show that 
	\begin{equation} \label{eqn:l=2:gen}
	p_+^*\,  \Sigma^{(\nu_1 +m, \nu_2)} \shQ_+ \in \shS pan, \quad \text{for all} \quad \nu = (\nu_1,\nu_2) \in B_{2,\delta-2}.
	\end{equation} 
We will prove (\ref{eqn:l=2:gen}) by induction on $k = \nu_1 - \nu_2 \in [0, \delta-2]$. First, notice that from Lem. \ref{lem:key} \eqref{lem:key-3} applied to the case $d_- = m$, $\ell_- = 0$, we obtain that 
	$$\Psi_{0}^{(0)} (\sO_{\Spec \kk}) \simeq \{ 0 \to F^{-2m} \to \ldots \to F^{-2} \to F^{-1} \to F^0 = \sO_+  \to 0\},$$
where $F^{-k} = \bigoplus_{\lambda \in B_{2,m}, |\lambda| = k} \Sigma^{\lambda^t} W \otimes p_+^* \, \Sigma^{\lambda} \shQ_+$. Hence if we take $k=m$, then $F^{-m}$ contains exact one copy of the summand $\wedge^m W \otimes p_+^* \, S^m \shQ_+ \simeq p_+^*\, S^m \shQ_+$, as $\rank W = m$.

Now we assume for some $k \in [0, \delta-2]$, (\ref{eqn:l=2:gen}) holds for all $\nu \in B_{2,\delta-2}$ such that $\nu_1 - \nu_2 < k $. (Notice this condition is trivial if $k=0$.) We want to show (\ref{eqn:l=2:gen}) also holds for $\nu$ with $\nu_1 - \nu_2 = k$. Consider $\mu: = (2-\delta) - \nu = (2-\delta -\nu_2, 2 - \delta - \nu_1) \in B_{2,\delta-2}$, then 
	$$\Psi_{\delta-2}^{\mu} (\sO_{\Spec \kk}) = \Psi_{0}^{(0)} (\sO_{\Spec \kk})  \otimes \Sigma^{\mu} \shQ_+^\vee \otimes \sO_+(\delta-2) \simeq \Psi_{0}^{(0)} (\sO_{\Spec \kk})  \otimes \Sigma^{\nu} \shQ_+.$$
By considering the summand $p_+^*\, S^m \shQ_+$ of $F^{-m}$, we see that $\Psi_{\delta-2}^{\mu} (\sO_{\Spec \kk})$ contains exactly one copy of the summand $p_+^* (S^m \shQ_+ \otimes \Sigma^{\nu} \shQ_+)$, hence by Pieri's rule contains exactly one copy of $p_+^*\,  \Sigma^{(\nu_1 +m, \nu_2)} \shQ_+$. By Pieri's rule all other summands $p_+^* \Sigma^{\gamma} \shQ_+ \subset p_+^* (S^m \shQ_+ \otimes \Sigma^{\nu} \shQ_+)$ satisfies $0 \le \gamma_1 - \gamma_2 < \nu_1 - \nu_2 + m$. This means that either $0 \le \gamma_1 - \gamma_2 < m$ (i.e. $\gamma \in B_{2,n-2} \backslash B_{2,\delta-2}^C$), or $0 \le (\gamma_1 -m) - \gamma_2 < \nu_1 - \nu_2$ (i.e. $\gamma \in B_{2,\delta-2}^C$ and $(\gamma_1 -m) - \gamma_2 < k$). By induction all these summands $p_+^* \Sigma^{\gamma} \shQ_+$ are already contained in $\shS pan$. 

It remains to compute all other summands of $F^{-k} \otimes  \Sigma^{\nu} \shQ_+$ for all $k \in [0, 2m]$ other than the ones of $S^m \shQ_+ \otimes  \Sigma^{\nu} \shQ_+ $. All summands $\Sigma^{\lambda^t} W \otimes p_+^* \, \Sigma^{\lambda} \shQ_+ \subseteq F^{-k}$ except from the already considered case $\lambda = (m, 0) \in B_{2,m}$ satisfy $0 \le \lambda_1 -\lambda_2 < m$. Hence by Littlewood-Richardson rule, any summand $p_+^* \Sigma^{\gamma} \shQ_+ \subset  p_+^*( \Sigma^{\lambda} \shQ_+ \otimes \Sigma^{\nu} \shQ_+)$ satisfies $\gamma = (\gamma_1,\gamma_2) \in B_{2,n-2}$, $0 \le \gamma_1 - \gamma_2  \le \lambda_1 - \lambda_2 + \nu_1 + \nu_2$. This means that either $\gamma \in B_{2,n-2} \backslash B_{2,\delta-2}^C$, or $\gamma \in B_{2,\delta-2}^C$ and $(\gamma_1 -m) - \gamma_2 < k$. By induction all these summands are already contained in $\shS pan$. 

Hence we see that $\Psi_{\delta-2}^{\mu} (\sO_{\Spec \kk})$ hits the element $p_+^*\,  \Sigma^{(\nu_1 +m, \nu_2)} \shQ_+$ (with $\nu_1 - \nu_2 =k$) if modulo the elements which are already in $\shS pan$ by induction hypothesis. Hence by induction, (\ref{eqn:l=2:gen}) holds for all $\nu$ and $k = \nu_1 - \nu_2 \in [0,\delta-2]$, and the generation is proved. \end{proof}

\newpage
\part{Global geometry} \label{part:global}
\addtocontents{toc}{\vspace{0.5\normalbaselineskip}}	
\section{Global situation}\label{sec:global}
\subsection{Hom spaces} \label{sec:Homspace}
Let $S$ be a scheme, and let $\sV$ and $\sW$ be two finite type locally free sheaves on $S$. For any map $f \colon T \to S$, denote $\sV_T = f^* \sV$ and $\sW_T = f^* \sW$ the (underived) base-change of sheaves. Consider the contravariant functor $F$ defined as follows: for any $S$-scheme $f \colon T \to S$, 
	$T \mapsto F(T) = \Hom_{\sO_T}(\sW_T, \sV_T).$
For any $S$-morphism $g \colon T' \to T$, $F(g) \colon \Hom_{\sO_T}(\sW_T, \sV_T) \to \Hom_{\sO_{T'}}(\sW_{T'}, \sV_{T'})$ is the pullback map $g^* \colon u \mapsto g^*u$.

\begin{lemma} \label{lem:Homspace} The functor $F$ is representable by the smooth affine $S$-scheme:
			$$H_S : = |\sHom_S(\sW, \sV)| : = \underline{\Spec}_S(\Sym^\bullet (\sW \otimes \sV^\vee))  \to S.$$
			The {\em tautological morphism} $\tau \colon \sW_{H_S} \to \sV_{H_S}$ is induced from the canonical homomorphism $\sW \otimes \sV^\vee \to \Sym^\bullet (\sW \otimes \sV^\vee)$. Hence for any $T \to S$ and any $\sigma \in \Hom_{\sO_T}(\sW_T, \sV_T)$, there is a unique $S$-morphism $g_{\sigma} \colon T \to H_S$ such that $g_{\sigma}^* \tau = \sigma$. Furthermore:
	\begin{enumerate}[leftmargin=*]
		\item  \label{lem:Homspace-1}
		{\em (The formation of universal Hom spaces commutes with base change.)} For any base-change $\phi \colon S' \to S$, we have $H_{S'} = H_S \times_S S'$, and the tautological morphism $\tau'$ on $H_{S'}$ is the pullback of the tautological morphism $\tau$ of $H_S$: $\tau' = \phi^* \tau$.
		\item  \label{lem:Homspace-2}
		If we set $T=S$, then each $\sigma \in \Hom_{\sO_S}(\sW,\sV)$ corresponds to a unique $S$-morphism $s_{\sigma} \colon S \to H_S = |\sHom_S(\sW, \sV)|$, called the {\em section map}, such that $s_{\sigma}^* \tau = \sigma$. Then for any $\sigma$, the section map $s_{\sigma}$ is a regular closed immersion.
	\end{enumerate}
\end{lemma}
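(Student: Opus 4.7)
The plan is as follows. For the main representability claim, I would compute the functor of points of $H_S = \underline{\Spec}_S(\Sym^\bullet(\sW \otimes \sV^\vee))$ using the universal properties at hand. Given $f \colon T \to S$, morphisms $T \to H_S$ over $S$ correspond by the universal property of relative $\underline{\Spec}$ to $\sO_S$-algebra maps $\Sym^\bullet(\sW \otimes \sV^\vee) \to f_*\sO_T$, hence by the adjunction for symmetric algebras to $\sO_S$-module maps $\sW \otimes \sV^\vee \to f_*\sO_T$, and by $f^* \dashv f_*$ to $\sO_T$-module maps $\sW_T \otimes \sV_T^\vee \to \sO_T$. Since $\sV_T$ is locally free, tensor-hom adjunction identifies the latter with $\Hom_{\sO_T}(\sW_T, \sV_T) = F(T)$. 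Tracing the identity of $H_S$ through this chain produces the tautological map $\tau$, whose universal property in the statement is then immediate from the construction.

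For part \eqref{lem:Homspace-1}, both $\Sym^\bullet$ and the underived pullback of quasi-coherent sheaves commute with arbitrary base change, and relative $\underline{\Spec}$ commutes with base change; chaining these isomorphisms gives $H_{S'} = H_S \times_S S'$ together with $\tau' = \phi^*\tau$. (Alternatively, this can be read off directly from the representability and the obvious compatibility of $F$ with base change.)

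For part \eqref{lem:Homspace-2}, the section map $s_\sigma \colon S \to H_S$ is a section of the separated (in fact affine) morphism $H_S \to S$, hence automatically a closed immersion. To check that it is Koszul-regular the question is Zariski-local on $S$, so we may assume $\sW \simeq \sO_S^{\oplus m}$ and $\sV \simeq \sO_S^{\oplus n}$ are free. Then $H_S \simeq \underline{\Spec}_S \sO_S[x_{ij}]_{1\le i \le m, 1 \le j \le n}$ and, writing $\sigma$ in coordinates as $(\sigma_{ij})$, the ideal sheaf of $s_\sigma$ is generated by the $mn$ sections $x_{ij} - \sigma_{ij}$. The main (and only nontrivial) point is to see that this sequence is Koszul-regular: this follows because translation by $(\sigma_{ij})$ is an $S$-automorphism of $H_S$ carrying $s_\sigma$ to the zero section, and the zero section of an affine-space bundle is cut out by the obvious regular sequence of coordinate functions. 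This translation trick (rather than a direct regularity check) is the only step where one has to be a little careful, since the coordinates $\sigma_{ij}$ are arbitrary sections of $\sO_S$ with no positivity or regularity assumptions on $S$ itself.
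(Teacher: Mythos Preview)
Your proof is correct. The representability chain of adjunctions and the base-change argument are exactly the content of the EGA references the paper cites (EGAI Prop.~9.4.9 and 9.4.11(iii)), so for the main statement and part~\eqref{lem:Homspace-1} you have simply unpacked what the paper invokes as a black box.

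The genuine difference is in part~\eqref{lem:Homspace-2}. The paper appeals to the general fact that any section of a smooth separated morphism is a regular closed immersion (EGA~IV, Thm.~17.12.1), which handles everything in one line and requires no local trivialisation or coordinate work. Your argument instead trivialises locally and uses the translation automorphism to reduce to the zero section. Both are perfectly valid; the paper's approach is more conceptual and avoids choosing coordinates, while yours is more elementary and makes the Koszul-regularity visible by exhibiting an explicit regular sequence. Your caution about the $\sigma_{ij}$ being arbitrary is well-placed, and the translation trick is exactly the right way to sidestep it---but note that this very issue is what the general smooth-section result absorbs automatically.
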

\begin{proof} The lemma follows from applying \cite[Prop. 9.4.9]{EGAI} to $\sE = \sW\otimes \sV^\vee$; Our universal Hom space $|\sHom_S(\sW, \sV)|$ is the scheme $\VV(\sE)$ of \cite[\S 9.4]{EGAI}. The ``furthermore" statement \eqref{lem:Homspace-1} follows from \cite[Prop. 9.4.11(iii)]{EGAI}; For \eqref{lem:Homspace-2}, $s_\sigma$ is a regular closed immersion since it is a section of a smooth separated morphism; see \cite[IV, Thm. 17.12.1]{EGA}.
\end{proof}

\subsection{Tor-independent conditions and general procedures of base-change} \label{sec:univHom}

\subsubsection{The cases of universal Hom spaces}\label{sec:univHom:flat} Assume $S$ is a quasi-compact, quasi-separated scheme over a ring $\kk$, and $\sW, \sV$ are locally free sheaves on a scheme $S$ with rank $m$ and $n$ with $m \le n$, and denote the universal Hom space $H_S : = |\sHom_S(\sW,\sV)|$. Then by Lem. \ref{lem:Homspace} there is is a tautological morphism $\tau_{H_S} \colon \sW \to \sV$. Denote $\sG_{H_S} = \Coker(\tau_{H_S})$ and $\sK_{H_S}= \Coker(\tau_{H_S}^\vee) = \sExt^1_{H_S}(\sG_{H_S}, \sO_{H_S})$. For any pair of integers $(d_+, d_-)$ such that $0 \le d_- \le m$, $0 \le d_+ \le n$ and $d_- \le d_+$, and consider the following schemes:
	$$\shZ_{+, H_S}^{(d_+)}: = \Quot_{H_S, d_+}(\sG_{H_S}), \quad  \shZ_{-, H_S}^{(d_-)}: = \Quot_{H_S, d_-}(\sK_{H_S}), \quad \widehat{\shZ}_{H_S}^{(d_+,d_-)}: = \shZ_{+, H_S}^{(d_+)}\times_{H_S} \shZ_{-, H_S}^{(d_+)}.$$
Now we fix a pair of integers $(d_+, d_-)$, and we write $\shZ_{+, H_S} = \shZ_{+, H_S}^{(d_+)}$, $\shZ_{-, H_S}= \shZ_{-, H_S}^{(d_-)}$ and $\widehat{\shZ}_{H_S}=  \widehat{\shZ}_{H_S}^{(d_+,d_-)}$ without the supscripts for simplicity of notations. Consider the Grassmannian-bundles (see Ex. \ref{ex:Grass}) $\GG_{+,S} = \Gr_{d_+}(\sV^\vee)$ and $\GG_{-,S} = \Gr_{d_,-}(\sW)$ over $S$, where $\shU_{\pm, S}$ and $\shQ_{\pm, S}$ denotes the corresponding universal subbundles of rank $d_\pm$ and universal quotient bundles of rank $\ell_\pm: = n_\pm - d_\pm$, where $n_+ = n$ and $n_- = m$ as in Sect. \ref{sec:local}. 

By passing to Zariski-local subschemes of $S$, we may assume that $\sW$, $\sV$ are {\em free} modules. Hence there is a structural morphism 
	$$h \colon S \to \Spec \kk$$
such that $\sW = h^* W$, $\sV = h^* V$, where $W$ and $V$ are finite free modules over $\kk$ of rank $m$ and $n$. Denote $H_\kk = |\sHom_\kk(W,V)|$ and $\tau_{H_\kk} \colon W \to V$ the tautological map, $\sG_{H_\kk} = \Coker (\tau_{H_\kk})$ and $\sK_{H_\kk} =\Coker (\tau_{H_\kk}^\vee)$. (Notice in the case when $\kk$ is a field, $\shZ_{+, H_\kk} = \Quot_{H_\kk, d_+}(\sG_{H_\kk})$, $\shZ_{-, H_\kk} =\Quot_{H_\kk, d_-}(\sK_{H_\kk})$ and $\widehat{\shZ}_{H_\kk} = \shZ_{+, H_\kk}\times_{H_\kk} \shZ_{-, H_\kk}$ are exactly the schemes $\shZ_-, \shZ_+$ and $\widehat{\shZ}$ of Sect. \ref{sec:local}; Hence our notations of this subsection are compatible with those of Sect. \ref{sec:local}.)

By Lem. \ref{lem:Homspace}, $H_S = |\sHom_S(\sW,\sV)| \simeq H_\kk \times_\kk S$, and the tautological morphism $\tau_S$ is the pullback $h^* \tau_\kk$. Hence $\sG_{H_S} = h^* \sG_{H_\kk}$, $\sK_{H_S} = h^* \sK_{H_\kk}$, by right-exactness of pullbacks. Hence by Thm. \ref{thm:Quot} (1), $\shZ_{+, H_S}= \Quot_{H_S, d_+}(\sG_{H_S})$, $\shZ_{-, H_S}=\Quot_{H_S, d_-}(\sK_{H_S})$ and $\widehat{\shZ}_{H_S} = \shZ_{+, H_S} \times_{H_S} \shZ_{-, H_S}$ are obtained by  base-change of $\shZ_{+, H_\kk}$, $\shZ_{-, H_\kk}$ and respectively $\widehat{\shZ}_{H_\kk}$ along $h\colon S \to \Spec \kk$; Similarly $\GG_{\pm, S}$ are the base-change of $\GG_{\pm, \kk}$, and the universal bundles $\shU_{\pm, S}$ and $\shQ_{\pm, S}$ are obtained from pullbacks from the corresponding universal bundles over $\GG_{\pm, \kk}$.

\begin{lemma} \label{lem:univHom:flat} In the above situation (when $\sW, \sV$ are finite {\em free} modules), the base-change $h \colon S \to \Spec \kk$ is Tor-independent with respect to the pair of schemes 
	$$(\shZ_{+, H_\kk} = \Quot_{H_\kk, d_+}(\sG_{H_\kk}), \shZ_{-, H_\kk} =\Quot_{H_\kk, d_-}(\sK_{H_\kk}))$$
 in the sense of Def. \ref{def:bc}.
\end{lemma}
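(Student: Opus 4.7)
The plan is to reduce the Tor-independence assertion to the elementary fact that flat morphisms are Tor-independent against any base change. Specifically, I will argue that the structure morphisms from each of the three universal Quot schemes down to $\Spec \kk$ are smooth (hence flat), after which each of the three conditions in Def. \ref{def:bc} becomes automatic.

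First, I invoke the symmetric local descriptions of Lem. \ref{lem:Sym}, applied in the universal situation over $H_\kk = |\sHom_\kk(W,V)|$:
\begin{align*}
\shZ_{+, H_\kk} &= |\sHom_{\GG_{+, \kk}}(W, \shQ_{+, \kk}^\vee)|, \qquad \shZ_{-, H_\kk} = |\sHom_{\GG_{-, \kk}}(\shQ_{-, \kk}, V)|,\\
\widehat{\shZ}_{H_\kk} &= |\sHom_{\GG_{-, \kk} \times_\kk \GG_{+, \kk}}(\shQ_{-, \kk}, \shQ_{+, \kk}^\vee)|,
\end{align*}
where $\GG_{+, \kk} = \Gr_{d_+}(V^\vee)$ and $\GG_{-, \kk} = \Gr_{d_-}(W)$. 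Lem. \ref{lem:Sym} is proved over an arbitrary base ring $\kk$, so no hypothesis on $\kk$ is used. Each such Hom-scheme is the total space of a vector bundle over a (product of) Grassmannian(s), so is smooth over it; the Grassmannians $\GG_{\pm, \kk}$ are themselves smooth over $\Spec \kk$ (being base-changed from the smooth Grassmannian schemes over $\Spec \ZZ$). Composing, the structure morphisms
\[
\shZ_{\pm, H_\kk} \longrightarrow \Spec \kk \qquad \text{and} \qquad \shZ_{+, H_\kk} \times_{\Spec \kk} \shZ_{-, H_\kk} \longrightarrow \Spec \kk
\]
are smooth, hence flat; for the second, a fibre product of two flat $\Spec \kk$-schemes is flat over $\Spec \kk$.

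Finally, I apply Def. \ref{def:bc}: whenever the morphism $f$ in a Cartesian square is flat, every stalk $\sO_{X, x}$ is flat over $\sO_{\Spec \kk, f(x)}$, so all higher Tor groups entering the Tor-independence condition vanish identically. Applying this to each of the three structure morphisms above verifies the three conditions in Def. \ref{def:bc} at once and concludes the proof. No genuine obstacle appears; the only bookkeeping point is to distinguish the product $\shZ_{+, H_\kk} \times_{\Spec \kk} \shZ_{-, H_\kk}$ entering Def. \ref{def:bc} from $\widehat{\shZ}_{H_\kk} = \shZ_{+, H_\kk} \times_{H_\kk} \shZ_{-, H_\kk}$ from the local geometry, but the flatness argument covers both uniformly.
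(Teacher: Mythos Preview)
Your proof is correct and follows essentially the same route as the paper's: both use Lem.~\ref{lem:Sym} to identify $\shZ_{\pm,H_\kk}$ and their fiber product as total spaces of vector bundles over Grassmannians, hence smooth and therefore flat over $\Spec \kk$, from which Tor-independence of the base change $h$ is immediate. Your extra care in distinguishing $\shZ_{+,H_\kk}\times_{\Spec\kk}\shZ_{-,H_\kk}$ (which is what Def.~\ref{def:bc} literally asks for) from $\widehat{\shZ}_{H_\kk}=\shZ_{+,H_\kk}\times_{H_\kk}\shZ_{-,H_\kk}$ is a nice touch that the paper glosses over, but the argument is otherwise the same.
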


\begin{proof} In the case when $\kk$ is a field, this lemma is trivial since $h$ is {\em flat}. In general, it suffices to notice that from the local expressions (\ref{eqn:Quot:localexpressions}) of \S \ref{sec:local} and Lem. \ref{lem:Sym}, being the spaces of finite vector bundles over smooth $\kk$-schemes $\GG_{\pm,\kk}$ and $\GG_{+,\kk}\times_\kk \GG_{-,\kk}$, the schemes $\shZ_{+, H_\kk}$, $\shZ_{-, H_\kk}$ and $\shZ_{+, H_\kk} \times_{H_\kk} \shZ_{-, H_\kk}$ are themselves smooth over $\kk$, thus flat over $\kk$.
\end{proof}

\subsubsection{Tor-independent condition for general global situation} \label{sec:Tor-ind:Quot:bc} Now we are back to the main situation of this paper. Let $X$ be a quasi-compact, quasi-separated scheme, let $\sG$ be a quasi-coherent $\sO_X$-module of homological dimension $\le 1$, and set $\sK : = \sExt^1(\sG, \sO_X).$ There is no harm to assume $X$ is connected, then $\delta: = \rank \sG \ge 0$ is a non-negative integer. For any pair of integers $(d_+, d_-)$ such that $0\le d_- \le d_+$, we set
	\begin{equation}
	\shZ_+^{(d_+)} : = \Quot_{X,d_+}(\sG), \quad \shZ_-^{(d_-)}: = \Quot_{X,d_-}(\sK), \quad \widehat{\shZ}^{(d_+,d_-)}: = \shZ_+^{(d_+)}  \times_X \shZ_-^{(d_-)}.
	\end{equation}
In particular, we have a commutative diagram, which is a global version of (\ref{diagram:Corr}):
\begin{equation} \label{diagram:Corr:global}
	\begin{tikzcd}[row sep= 2.5 em, column sep = 4 em]
		\widehat{\shZ}^{(d_+,d_-)}: = \shZ_+^{(d_+)}  \times_X \shZ_-^{(d_-)} \ar{rd}{\widehat{\pi}}\ar{r}{r_+}  \ar{d}[swap]{r_-} & \shZ_+^{(d_+)} \ar{d}{\pi_+}\\
		 \shZ_-^{(d_-)} \ar{r}{\pi_-} & X
	\end{tikzcd}
\end{equation}

Let $U \subseteq X$ be an open subscheme such that $\sG$ admits a presentation $0 \to \sW \xrightarrow{\sigma} \sV \to \sG$, where $\sW$ and $\sV$ are finite locally free sheaves. By Lem. \ref{lem:Homspace} (2), the morphism $\sigma$ induces a section map $s_\sigma \colon U \hookrightarrow H_U : = |\sHom_U(\sW,\sV)|$, such that $\sigma = s_\sigma^* \, \tau_{H_U}$, $\sG|_U = s_{\sigma}^* \sG_{H_U}$ and $\sK|_U = s_{\sigma}^* \sK_{H_U}$, where $\tau_{H_U} \colon \sW_{H_U} \to \sV_{H_U}$ is the tautological map, $\sG_{H_U} = \Coker (\tau_{U})$ and $\sK_{H_U} = \Coker (\tau_{U}^\vee)$. If we use the notation of \S \ref{sec:univHom} for $S= U$, then by Thm. \ref{thm:Quot} (1), the restriction of schemes $\shZ_+^{(d_+)}|_U$, $\shZ_-^{(d_-)}|_U$ and $\widehat{\shZ}^{(d_+,d_-)}|_U$ are exactly the base-change of the schemes $\shZ_{+, H_U}^{(d_+)} =  \Quot_{H_U, d_+}(\sG_{H_U})$, $\shZ_{-, H_U}^{(d_-)}= \Quot_{H_U, d_-}(\sK_{H_U})$ and respectively $\widehat{\shZ}^{(d_+,d_-)}_{H_U} = \shZ_{+, H_U}^{(d_+)}  \times_{H_U} \shZ_{-, H_U}^{(d_-)} $ along the section map $s_\sigma \colon U \to H_U$.

\begin{definition}[Tor-independent condition] \label{def:Tor-ind:quot} For a quasi-compact, quasi-separated scheme $X$ and a quasi-coherent $\sO_X$-module $\sG$ of homological dimension $\le 1$ and a pair of integers $(d_+,d_-)$ with $0 \le d_- \le d_+$ as above, we say that {\em Tor-independent condition holds for $(d_+,d_-)$} if there is a Zariski open cover $\{U\}$ of $X$ for which $\sG$ admits a presentation $0 \to \sW \xrightarrow{\sigma} \sV \to \sG$ over each $U$, such that the base-change $s_\sigma \colon U \to H_U= |\sHom_U(\sW,\sV)|$ is Tor-independent with respect to the pair $(\Quot_{H_U, d_+}(\sG_{H_U}),\Quot_{H_U, d_-}(\sK_{H_U}))$ in the sense of Def. \ref{def:bc}.
\end{definition}

\begin{remark} If $X$ is a scheme over a (unital commutative) ring $\kk$, then by Lem. \ref{lem:univHom:flat} and Lem. \ref{lem:3squares}, the above-defined Tor-independent condition for $(d_+,d_-)$ is equivalent to: {\em Zariski locally over $X$, the base-change $X \to \Spec \kk$ from the universal local situation is Tor-independent with respect to the pair $(\shZ_{+, H_\kk} = \Quot_{H_\kk, d_+}(\sG_{H_\kk}), \shZ_{-, H_\kk} =\Quot_{H_\kk, d_-}(\sK_{H_\kk}))$} of \S \ref{sec:univHom:flat}. We choose the above definition as Def. \ref{def:Tor-ind:quot}  only requires the verification of the conditions in a {\em semi-local} situation, i.e. when $\sG$ admits a global presentation; Also this definition leads direction to the critera Lem. \ref{lem:Tor-ind:quot:CM} for Cohen--Macaulay cases.
\end{remark}

First of all, we need to show that the Tor-independent condition of Def. \ref{def:Tor-ind:quot} is {\em independent of the choice of a local presentation $\sW \xrightarrow{\sigma} \sV$ of $\sG$ on $U \subset X$}. 
To show this, we may assume $X= U$, and fix a pair of integers $(d_+,d_-)$, and keep the notations of \S \ref{sec:univHom} for $S =X$.

\begin{lemma} \label{lem:Tor-ind-ind} Let $0 \to \sW \xrightarrow{\sigma} \sV \to \sG$ and $0 \to \sW' \xrightarrow{\sigma'} \sV' \to \sG$ be two finite free resolutions of $\sG$. Then the section map $s_{\sigma} \colon X \to H:=\sHom_X(\sW, \sV)|$ is Tor-independent with respect to the pair $(\Quot_{H, d_+}(\sG_{H}), \Quot_{H, d_-}(\sK_{H}))$ iff the section map $s_{\sigma'} \colon X \to H'=  |\sHom_X(\sW', \sV')|$ is Tor-independent with respect to the pair $(\Quot_{H', d_+}(\sG_{H'}'), \Quot_{H', d_-}(\sK_{H'}'))$.
\end{lemma}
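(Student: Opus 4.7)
The plan is to exploit a Schur-complement change of variables. Since Tor-independence is stalk-local, I can work Zariski-locally on $X$. The argument proceeds in two steps: (i) produce a common refinement of $\sigma$ and $\sigma'$ obtained from both by adding trivial summands $\id_\sF$; (ii) show that the Tor-independence condition is invariant under such stabilizations.

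For step (i), I construct a third resolution $\sigma''$ as follows. Let $\pi\colon \sV \twoheadrightarrow \sG$ and $\pi'\colon \sV' \twoheadrightarrow \sG$ be the given surjections and choose a section $s\colon \sV' \to \sV$ with $\pi \circ s = \pi'$ (which exists locally because $\sV'$ is projective). The kernel of $\sV \oplus \sV' \twoheadrightarrow \sG$, $(v,v') \mapsto \pi v - \pi' v'$, splits as $\sW \oplus \sV'$, and the inclusion has matrix $\sigma'' = \left(\begin{smallmatrix} \sigma & s \\ 0 & \id_{\sV'} \end{smallmatrix}\right)$. Conjugation by the unipotent $\left(\begin{smallmatrix} \id & -s \\ 0 & \id \end{smallmatrix}\right)$ on the target identifies $\sigma''$ with the stabilization $\sigma \oplus \id_{\sV'}$, and a symmetric construction identifies $\sigma''$ with $\sigma' \oplus \id_{\sV}$. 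Hence it suffices to prove that Tor-independence for $\sigma$ is equivalent to Tor-independence for $\sigma \oplus \id_\sE$ for any finite locally free sheaf $\sE$ on $X$.

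For step (ii), set $\tilde\sigma := \sigma \oplus \id_\sE$ and $\tilde H := |\sHom_X(\sW \oplus \sE,\, \sV \oplus \sE)|$. The direct-sum decomposition yields
$$\tilde H \;\cong\; H \times_X |\sHom_X(\sW,\sE)| \times_X |\sHom_X(\sE,\sV)| \times_X |\sHom_X(\sE,\sE)|,$$
with tautological map $\tilde\tau = \bigl(\begin{smallmatrix} \tau_H & \alpha \\ \beta & \gamma \end{smallmatrix}\bigr)$. Let $\tilde H^0 \subseteq \tilde H$ be the open subscheme on which $\gamma$ is an automorphism; the image of $s_{\tilde\sigma}$ lies in $\tilde H^0$ since $s_{\tilde\sigma}^*\gamma = \id_\sE$. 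Over $\tilde H^0$, conjugation by block unipotents gives
$$\begin{pmatrix} \id & -\alpha\gamma^{-1} \\ 0 & \id \end{pmatrix}\tilde\tau\begin{pmatrix} \id & 0 \\ -\gamma^{-1}\beta & \id \end{pmatrix} \;=\; \begin{pmatrix} \tau_H - \alpha\gamma^{-1}\beta & 0 \\ 0 & \gamma \end{pmatrix},$$
so $\tilde\sG|_{\tilde H^0} \cong \Phi^*\sG_H$ and dually $\tilde\sK|_{\tilde H^0} \cong \Phi^*\sK_H$, where $\Phi\colon \tilde H^0 \to H$ is the Schur-complement morphism $(\tau,\alpha,\beta,\gamma)\mapsto \tau - \alpha\gamma^{-1}\beta$. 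The assignment $\tau \mapsto \tau - \alpha\gamma^{-1}\beta$ is an automorphism of $\tilde H^0$ over the remaining three factors, so $\Phi$ agrees with the projection $\tilde H^0 \to H$ up to this automorphism; in particular $\Phi$ is smooth, hence flat. By Thm.~\ref{thm:Quot}\,\eqref{thm:Quot-1}, base-changing along $\Phi$ identifies $\Quot_{\tilde H^0, d_+}(\tilde\sG|_{\tilde H^0})$, $\Quot_{\tilde H^0, d_-}(\tilde\sK|_{\tilde H^0})$, and their fiber product with the base changes of the corresponding Quot schemes over $H$, and moreover $\Phi \circ s_{\tilde\sigma} = s_\sigma$.

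To finish, I apply Lem.~\ref{lem:3squares} to the factorization $X \xrightarrow{s_{\tilde\sigma}} \tilde H^0 \xrightarrow{\Phi} H$ pulled back along the Quot schemes over $H$: the right square is Tor-independent by flatness of $\Phi$, hence the ambient square (corresponding to $s_\sigma$) is Tor-independent if and only if the left square (corresponding to $s_{\tilde\sigma}$) is. Since Tor-independence of $s_{\tilde\sigma}$ against $\Quot_{\tilde H, d_\pm}(\tilde\sG)$ is stalk-local and the relevant stalks lie in $\tilde H^0$, this yields the desired equivalence. The main obstacle I anticipate is the Schur-complement identification: one must verify that the block-unipotent conjugation produces a genuine $\sO_{\tilde H^0}$-linear isomorphism of cokernels and that it is compatible with the formation of $\sK = \sExt^1(\sG, \sO)$ under $\Phi$ (which follows by applying the dual Schur complement to $\tilde\tau^\vee$). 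Once these identifications are in place, Lem.~\ref{lem:3squares} together with flatness of $\Phi$ closes the argument.
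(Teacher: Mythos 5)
Your proof is correct but takes a genuinely different route to the stabilization step. Both arguments first reduce the general comparison to the case of a stabilization $\sigma \rightsquigarrow \sigma\oplus\id_\sE$; your step~(i), parametrizing the kernel of $\sV\oplus\sV'\twoheadrightarrow\sG$ via a choice of local section, is in substance the paper's Lemma~\ref{lem:Quot:comm.alg}. The divergence is in step~(ii). The paper works with the closed immersion $\iota\colon H\hookrightarrow H'$ induced by functoriality of the Hom space, and then proves that $\iota$ is Tor-independent against the relevant Quot schemes by combining a dimension count in the universal local situation over $\ZZ$ with Tor-independent descent (Lemma~\ref{lem:bc:CM}, Lemma~\ref{lem:univHom:flat}). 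You instead factor in the opposite direction, $X\xrightarrow{s_{\tilde\sigma}}\tilde H^0\xrightarrow{\Phi}H$, through the Schur-complement morphism $\Phi(\tau,\alpha,\beta,\gamma)=\tau-\alpha\gamma^{-1}\beta$ defined on the open locus where the lower-right block $\gamma$ is invertible. You correctly observe that $\Phi$ is smooth (a block-unipotent automorphism of $\tilde H^0$ followed by a projection), that the block-diagonalization identifies $\tilde\sG|_{\tilde H^0}\simeq\Phi^*\sG_H$ and $\tilde\sK|_{\tilde H^0}\simeq\Phi^*\sK_H$ (hence, by Theorem~\ref{thm:Quot}~\eqref{thm:Quot-1}, that the Quot schemes over $\tilde H^0$ are flat base-changes of those over $H$), and that $\Phi\circ s_{\tilde\sigma}=s_\sigma$. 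Flatness of $\Phi$ then gives Tor-independence of the right square for free, and Lemma~\ref{lem:3squares} immediately yields the equivalence. Your route thus trades the paper's dimension count and reduction to the universal $\ZZ$-model for the Schur-complement bookkeeping, which buys a more elementary argument for step~(ii); the minor reduction from $\tilde H$ to the open $\tilde H^0$, which you handle via stalk-locality and the fact that $s_{\tilde\sigma}$ factors through $\tilde H^0$, is the only additional point to keep track of and is correctly addressed.
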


\begin{proof} We first prove the lemma in the simple case when $\sW' = \sW \oplus \sO^{\oplus r}$, $\sV' = \sV \oplus \sO^{\oplus r}$ and $\sigma' =  \big(\begin{smallmatrix}
\sigma & 0  \\
0 & \Id_r  \\
\end{smallmatrix} \big)$ for some integer $r \ge 0$. 

Denote $\tau_{H} \colon \sW_H \to \sV_H$ and $\tau'_{H'} \colon \sW'_{H'} = \sW_{H'} \oplus \sO^{\oplus r} \to \sV'_{H'}=\sV_{H'} \oplus \sO^{\oplus r}$ the tautological maps. (For a scheme $Y$ and a morphism $T \to Y$, a sheaf $\sE$ on $Y$, we denote $\sE_T$ the base-change of $\sE$ along $T$ as before.) In this case, by functorality of universal Hom space Lem. \ref{lem:Homspace}, there is an regular immersion $\iota \colon H \to H'$ such that $\iota^* \sW'_{H'} = \sW_{H} \oplus \sO_H^{\oplus r}$, $\iota^* \sV'_{H'}= \sV_H \oplus \sO_H^{\oplus r}$ and $\iota^* \tau'_{H'} = \big(\begin{smallmatrix}
\tau_H & 0  \\
0 & \Id  \\
\end{smallmatrix} \big) $, as well as a smooth projection $\rho \colon H' \to H$, such that $\rho^* \sW_H = \sW_{H'}$, $\rho^* \sV_H = \sV_{H'}$ and $\rho^* \tau_H \colon  \sW_{H'} \to \sV_{H'}$ is the component of $\tau'_{H'}\colon  \sW_{H'} \oplus \sO^{\oplus r} \to \sV_{H'} \oplus \sO^{\oplus r}$ on the factor $\sW_{H'} \to \sV_{H'}$. Then $\iota$ is a section of $\rho$, i.e. $\rho \circ \iota = \id$. Denote $\overline{\tau}_H : =  \big(\begin{smallmatrix}
\tau_H & 0  \\
0 & \Id  \\
\end{smallmatrix} \big) = \iota^* \tau'_{H'}  \colon  \sW_{H} \oplus \sO_H^{\oplus r} \to  \sV_H \oplus \sO_H^{\oplus r}$, and $\overline{\sG}_H : = \Coker \overline{\tau}_H $ then it is clear that $\overline{\sG}_H = \Coker \overline{\tau}_H \simeq \Coker (\tau) = \sG_H$ and $\overline{\sG}_H = \iota^* \Coker(\tau'_{H'}) = \iota^* \sG'_{H'}$. Notice
$\sV_{H'} \oplus \sO_{H'}^{\oplus r} = \rho^* (\sV_{H} \oplus \sO_{H}^{\oplus r})$ (however $\rho^* \overline{\sG}_H$ and $\sG'_{H'}$ are in general not isomorphic). 

We claim that the closed immersion $\iota \colon H \to H'$ is Tor-independent with respect to the pair $(\shZ_{+,H'} =\Quot_{H',d_+}(\sG'_{H'}), \shZ_{-,H'}=\Quot_{H',d_-}(\sK'_{H'}))$ for any given pair of integers $(d_+,d_-)$. We first consider the claim for the base-change of $\Quot_{H',d}(\sG'_{H'})$ along $\iota$ for some integer $d$. The base-change of $\Quot_{H',d}(\sG'_{H'})$ along $\iota$ is the $\Quot_{H,d}(\overline{\sG}_H) \simeq \Quot_{H,d}(\sG_H)$ since $\iota^* \sG'_{H'} = \overline{\sG}_H \simeq \sG_H$. By by Thm. \ref{thm:Quot}, for $d= d_+$, we have the following commutative diagrams of Cartesian squares:
\begin{equation} \label{diag:lem:Tor-ind:proof}
\begin{tikzcd}
	& \Quot_{H,d}(\overline{\sG}_H) \ar{r}{\iota''}  \ar{d}{i}&  \Quot_{H',d}(\sG'_{H'})  \ar{d}{i'}  \\
	&  \Quot_{H,d}( \sV_{H} \oplus \sO_{H}^{\oplus r})  \ar{r}{\iota'}  \ar{d}{\pi} & \Quot_{H',d}(\sV_{H'} \oplus \sO_{H'}^{\oplus r} ) \ar{d}{\pi'} \\
	& H  \ar{r}{\iota}  & H' 
\end{tikzcd}
\end{equation}
The bottom square is Tor-independent since $\pi, \pi'$ are smooth. For the top square, the closed immersion $i$ is given by the pullback of a the section of a vector bundle which defines the closed immersion $i'$ \footnote{More precisely, if we denote by $\sQ_d$ and $\sQ_d'$ the universal quotient bundles of $\Quot_{H,d}( \sV_{H} \oplus \sO_{H}^{\oplus r})$ and respectively $\Quot_{H',d}(\sV_{H'} \oplus \sO_{H'}^{\oplus r} )$, then the closed immersions $i$ and $i'$ induced by Thm. \ref{thm:Quot} \eqref{thm:Quot-2} are given by sections $\theta$ and $\theta'$ of the respective locally free sheaves $(\sW_H \oplus \sO^{r})^\vee \otimes \sQ_d$ and $(\sW_{H'} \oplus \sO^{r})^\vee \otimes \sQ_d'$, which are in turn induced by the morphisms $\overline{\tau}_H \colon \sW_H \oplus \sO^{r} \to \sV_H\oplus \sO^{r}  \to \sQ_d$ and respectively $\tau'_{H'} \colon  \sW_{H'} \oplus \sO^{r} \to \sV_{H'} \oplus \sO^{r}  \to \sQ_d'$. Since $\overline{\tau}_H  =  \iota^* \tau'_{H'} $, therefore $\theta = \iota^* \theta'$.}.
By Lem. \ref{lem:3squares}, to show the ambient square of (\ref{diag:lem:Tor-ind:proof}) is Tor-independent, it suffices to show the top square is. Notice first that in the universal local situation $X = \Spec \ZZ$, $\iota'$ is a regular immersion, and a direct computation shows that:
	$$\dim  \Quot_{H',d}(\sG'_{H'})  - \dim  \Quot_{H,d}(\overline{\sG}_H) = \dim H' - \dim H.$$
Hence the top square of (\ref{diag:lem:Tor-ind:proof}) is Tor-independent by Lem. \ref{lem:bc:CM}. By passing to Zariski open subsets as in \S \ref{sec:univHom:flat}, Lem. \ref{lem:univHom:flat} states that the diagram (\ref{diag:lem:Tor-ind:proof}) is a Tor-independent base-change from the same diagram in the universal local situation. Hence the top square, thus the the ambient square, of (\ref{diag:lem:Tor-ind:proof}) is Tor-independent for general $X$. The same argument works for $\shZ_{-,H'}$ and $\widehat{\shZ}_{H'} = \shZ_{+,H'} \times_{H'} \shZ_{-,H'}$, hence the claim is proved.

 
Back to the situation of the lemma, since $\sigma' =  \big(\begin{smallmatrix}
\sigma & 0  \\
0 & \Id_r  \\
\end{smallmatrix} \big)$, by Lem. \ref{lem:Homspace} the section map $s_{\sigma'}$ factorises through $s_{\sigma'} \colon X \xrightarrow{s_{\sigma}} H \xrightarrow{\iota} H'$, with $s_{\sigma'}^* \sG'_{H'} \simeq \sG$ and $s_{\sigma'}^* \sK'_{H'} \simeq \sK$. Since $\iota^*\sG'_{H'} = \overline{\sG}_H \simeq \sG_H$ and $\iota^*\sK'_{H'}\simeq \sK_H$, by Thm. \ref{thm:Quot} \eqref{thm:Quot-2}, the formations of the corresponding Quot schemes are compatible with the base-change maps $X \xrightarrow{s_{\sigma}} H \xrightarrow{\iota} H'$. Since we have shown $\iota$ is Tor-independent with respect to the pair of consideration, by Lem. \ref{lem:3squares} the lemma (in the simple case) is proved.

Finally, thanks to the next lemma, by possibly shrinking $X$ we can always reduce to the above simple case. (More precisely, locally there is another resolution $0 \to \sW'' \xrightarrow{\sigma''} \sV'' \to \sG$ such that $\sW'' \simeq \sW \oplus \sO^{\oplus r}$, $\sV'' \simeq  \sW \oplus \sO^{\oplus r}$, $\sigma'' \simeq \big(\begin{smallmatrix}
\sigma & 0  \\
0 & \Id_r  \\
\end{smallmatrix} \big)$, and $\sW'' \simeq \sW' \oplus \sO^{\oplus s}$, $\sV'' \simeq  \sW' \oplus \sO^{\oplus s}$, $\sigma'' \simeq \big(\begin{smallmatrix}
\sigma' & 0  \\
0 & \Id_s  \\
\end{smallmatrix} \big)$ for some integers $r,s \ge 0$.) Hence the lemma is proved.
\end{proof}

\begin{lemma} \label{lem:Quot:comm.alg} Let $0 \to \sW \xrightarrow{\sigma} \sV \to \sG$ and $0 \to \sW' \xrightarrow{\sigma'} \sV' \to \sG$ be two short exact sequences of $\sO_X$-modules over an affine scheme $X =\Spec R$, such that $\sV$ and $\sV'$ are locally free. Then
	$$\sW\oplus \sW' \xrightarrow{ \big(\begin{smallmatrix}
\sigma & 0  \\
0 & 1_{\sW'}  \\
\end{smallmatrix} \big)} \sV \oplus \sW'  \quad \text{and} \quad \sW' \oplus \sW \xrightarrow{\big(\begin{smallmatrix}
\sigma' & 0  \\
0 & 1_{\sW}  \\
\end{smallmatrix} \big)} \sV' \oplus \sW$$
 are {\em isomorphic} as two-term complexes, with cokernels both equal to $\sG$.
\end{lemma}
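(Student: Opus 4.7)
The claim about the cokernels being $\sG$ is immediate from the construction, since the $\sW'$ and $\sW$ factors of the differentials contribute nothing to the cokernel. The substantive assertion is therefore the isomorphism of two-term complexes, and my plan is a Schanuel-style pullback argument. First I would form the sheaf-theoretic fiber product $P := \sV \times_{\sG} \sV'$, with its two projections $p \colon P \to \sV$ and $p' \colon P \to \sV'$. A direct inspection shows that $p$ and $p'$ are surjective, with kernels canonically identified with $\sW'$ and $\sW$ (embedded diagonally as $(0,\sigma'(w'))$ and $(\sigma(w),0)$). Since $X = \Spec R$ is affine and $\sV, \sV'$ are locally free, hence projective, the two surjections onto $\sG$ admit mutual lifts $g \colon \sV' \to \sV$ and $g' \colon \sV \to \sV'$ of $\id_{\sG}$; these yield splittings $s(v) := (v, g'(v))$ of $p$ and $s'(v') := (g(v'), v')$ of $p'$.

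Composing the resulting identifications $P \cong \sV \oplus \sW'$ (via $s$) and $P \cong \sV' \oplus \sW$ (via $s'$), I would obtain an explicit $\sO_X$-linear isomorphism
$$\beta = \begin{pmatrix} g' & \sigma' \\ h' & -f \end{pmatrix} \colon \sV \oplus \sW' \xrightarrow{\sim} \sV' \oplus \sW,$$
where $f \colon \sW' \to \sW$ is the unique map with $\sigma \circ f = g \circ \sigma'$, and $h' \colon \sV \to \sW$ is the unique map with $\sigma \circ h' = 1_{\sV} - g g'$ (both well-defined by injectivity of $\sigma$, since $\ker(\sV \to \sG) = \sigma(\sW)$). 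Setting dually $f' \colon \sW \to \sW'$ by $\sigma' \circ f' = g' \circ \sigma$, the identity $h' \circ \sigma = 1_{\sW} - f f'$ follows from $\sigma h'\sigma = \sigma - g\sigma' f' = \sigma(1 - ff')$ and injectivity of $\sigma$. By construction $\beta$ is compatible with the two projections to the common cokernel $\sG$, i.e., it induces $\id_{\sG}$ on cokernels.

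Consequently $\beta$ restricts to an $\sO_X$-linear isomorphism of the kernels of the two differentials, namely
$$\alpha = \begin{pmatrix} f' & 1_{\sW'} \\ 1_{\sW} - f f' & -f \end{pmatrix} \colon \sW \oplus \sW' \xrightarrow{\sim} \sW' \oplus \sW,$$
and a direct matrix computation then confirms $\beta \circ \partial_C = \partial_{C'} \circ \alpha$, giving the desired isomorphism of complexes. I do not foresee any substantial obstacle; the entire argument is careful bookkeeping of the maps determined by $\sigma, \sigma', g, g'$. The only mildly delicate point—that $\alpha$ is itself an isomorphism and not merely a map—also follows abstractly from the five-lemma applied to the two short exact sequences $0 \to (\text{kernel}) \to (\text{middle}) \to \sG \to 0$, since $\beta$ is already known to be an isomorphism on the middle terms and induces the identity on the quotient $\sG$.
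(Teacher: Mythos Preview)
Your proof is correct and in fact arrives at precisely the same explicit matrices as the paper: with the dictionary $g' \leftrightarrow f$, $f \leftrightarrow g_{W'}$, $f' \leftrightarrow f_W$, $h' \leftrightarrow \theta$, your $\beta$ and $\alpha$ coincide with the paper's chain map on $\sV \oplus \sW' \to \sV' \oplus \sW$ and $\sW \oplus \sW' \to \sW' \oplus \sW$. The difference is one of presentation rather than substance. The paper simply writes down the pair of matrices together with their explicit inverses (built from $g, \psi, f_W$) and verifies by direct computation that the two chain maps are mutually inverse. You instead derive $\beta$ conceptually as the composite of the two splittings of the fiber product $P = \sV \times_{\sG} \sV'$, so that invertibility is automatic and no inverse matrix needs to be produced; $\alpha$ is then forced as the restriction to kernels, and the five-lemma (or just the snake lemma) confirms it is an isomorphism. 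Your route is slightly more conceptual and saves the inverse-matrix bookkeeping; the paper's route is more hands-on but makes the inverse isomorphism explicit, which is occasionally useful.
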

\begin{proof} Since $X=\Spec R$ is affine, $\sV$ and $\sV'$ are {\em projective} objects in $\Qcoh(X)$, therefore there is a lifting $f \colon \sV \to \sV'$ of $\sV' \twoheadrightarrow \sG$ and resp. a lifting $g \colon \sV' \to \sV$ of $\sV \twoheadrightarrow \sG$. Denote $f_{W} \colon \sW \to \sW'$ the restriction of $f$, resp. $g_{W'} \colon \sW' \to \sW$ the restriction of $g$, i.e. $f \circ \sigma = \sigma' \circ f_{W}$ and $g \circ \sigma' = \sigma \circ g_{W'}$. Then $1 - gf \colon \sV \to \sV$ factorises through a map $\theta \colon \sV \to \sW$, i.e. $1 - gf = \sigma \circ \theta$, and then $\theta_W: =1 - g_{W'} f_{W} = \theta \circ \sigma'$. Similarly there is map $\psi \colon \sV' \to \sW'$ such that $1 - fg = \sigma' \circ \psi$ and $\psi_{W'}: = 1 - f_{W} g_{W'} = \psi \circ \sigma'$. Now it is direct to check that the following two chain maps (the vertical arrows in the commutative diagrams):
	\begin{equation*}
	\begin{tikzcd}[column sep= 4 em, row sep=3.5 em, ampersand replacement=\&]
		\sW \oplus \sW'  \ar{r}{\big(\begin{smallmatrix}
\sigma & 0  \\
0 & 1_{W'}  \\
\end{smallmatrix} \big)} \ar{d}[swap]{\big(\begin{smallmatrix}
f_W & 1_{W'}  \\
\theta_W & -g_{W'}  \\
\end{smallmatrix} \big)}  \& \sV \oplus \sW' \ar{d}{\big(\begin{smallmatrix}
f & \sigma'  \\
\theta & -g_{W'}  \\
\end{smallmatrix} \big)} \\
		\sW' \oplus \sW  \ar{r}{\big(\begin{smallmatrix}
\sigma' & 0  \\
0 & 1_{W}  \\
\end{smallmatrix} \big)}  \& \sV' \oplus \sW
	\end{tikzcd}
	\qquad \text{and} \qquad 
	\begin{tikzcd}[column sep= 4 em, row sep=3.5 em, ampersand replacement=\&]
		\sW' \oplus \sW  \ar{r}{\big(\begin{smallmatrix}
\sigma' & 0  \\
0 & 1_{W}  \\
\end{smallmatrix} \big)} \ar{d}[swap]{\big(\begin{smallmatrix}
g_{W'} & 1_{W}  \\
\psi_{W'} & -f_{W}  \\
\end{smallmatrix} \big)}  \& \sV' \oplus \sW \ar{d}{\big(\begin{smallmatrix}
g & \sigma  \\
\psi & -f_{W}  \\
\end{smallmatrix} \big)} \\
		\sW \oplus \sW'  \ar{r}{\big(\begin{smallmatrix}
\sigma & 0  \\
0 & 1_{W'}  \\
\end{smallmatrix} \big)}  \& \sV \oplus \sW'
	\end{tikzcd}
	\end{equation*}
are mutually inverse to each other, hence the claim is proved. 
\end{proof}


The next lemma shows that the Tor-independence condition Def. \ref{def:Tor-ind:quot} is equivalent to certain expected dimension condition if the scheme $X$ is Cohen--Macaulay.

\begin{lemma}[Criteria of Tor-independences for Cohen--Macaulay schemes]  \label{lem:Tor-ind:quot:CM}
For a connected Cohen--Macaulay scheme $X$ and a quasi-coherent $\sO_X$-module $\sG$ of homological dimension $\le 1$, denote $\delta = \rank \sG$ and $\sK = \sExt^1(\sG,\sO_X)$. For a pair of integers $(d_+,d_-)$ such that $\max\{d_+ - \delta, 0\} \le d_- \le d_+$, then {\em Tor-independent condition Def. \ref{def:Tor-ind:quot} holds for the pair $(d_+,d_-)$} iff the following ``expected dimension condition" holds:
	\begin{equation}\label{eqns:Quotexpdim}
	\left\{
	\begin{split}
   & \dim \Quot_{X,d_+}(\sG) = \dim X + d_+ (\delta - d_+);  \\
  & \dim \Quot_{X,d_-}(\sK) =  \dim X + d_- (-\delta - d_-); \\
  & \dim \Quot_{X,d_+}(\sG) \times_X \Quot_{X,d_-}(\sK) = \dim X + \delta(d_+ - d_-) + d_+ d_- - d_+^2 - d_-^2.
	\end{split}
	\right.
	\end{equation}
In the case when \eqref{eqns:Quotexpdim} holds, $\Quot_{X,d_+}(\sG)$, $\Quot_{X,d_-}(\sK)$ and $ \Quot_{X,d_+}(\sG) \times_X \Quot_{X,d_-}(\sK)$ are also Cohen--Macaulay schemes.
\end{lemma}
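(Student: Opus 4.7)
The approach is to reduce to the universal local situation over a Hom space and then apply the Koszul-regular base-change criterion of Lem.~\ref{lem:bc:CM}. Since Tor-independence is Zariski-local and by Lem.~\ref{lem:Tor-ind-ind} is independent of the chosen local presentation, I may assume globally on $X$ that $\sG$ admits a finite free presentation $0\to \sW \xrightarrow{\sigma} \sV \to \sG \to 0$ with $\rank \sW = m$ and $\rank \sV = n = m+\delta$. Set $H := |\sHom_X(\sW,\sV)|$, so that the section $s_\sigma \colon X \hookrightarrow H$ of Lem.~\ref{lem:Homspace}(2) is a Koszul-regular closed immersion of codimension $mn$, being a section of the smooth affine morphism $H \to X$ of relative dimension $mn$.

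Let $\tau_H$ be the tautological morphism on $H$ and put $\sG_H = \Coker(\tau_H)$, $\sK_H = \Coker(\tau_H^\vee)$. By Thm.~\ref{thm:Quot}(1), pulling back $\Quot_{H,d_+}(\sG_H)$, $\Quot_{H,d_-}(\sK_H)$, and their fiber product $\widehat{\shZ}_H$ along $s_\sigma$ recovers exactly the three schemes whose dimensions appear in \eqref{eqns:Quotexpdim}. The local descriptions of Lem.~\ref{lem:Sym} realise these universal schemes as total spaces of vector bundles over the Grassmannian bundles $\Gr_{d_+}(\sV^\vee)$, $\Gr_{d_-}(\sW)$, and their fiber product over $X$; a direct bookkeeping (expanding $mn = (d_-+\ell_-)(d_++\ell_+)$ with $\ell_\pm = n_\pm - d_\pm$) shows they are smooth over $X$ of relative dimensions
\begin{align*}
	mn + d_+(\delta - d_+),\quad mn + d_-(-\delta - d_-), \quad \text{and} \quad mn + \delta(d_+ - d_-) + d_+ d_- - d_+^2 - d_-^2
\end{align*}
respectively. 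Since $X$ is Cohen--Macaulay and these universal schemes are smooth over $X$, they are themselves Cohen--Macaulay.

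Applying Lem.~\ref{lem:bc:CM}(1) to each of the three Cartesian squares obtained by base-change along $s_\sigma$ then completes the argument: since the source is Cohen--Macaulay and $s_\sigma$ is Koszul-regular of codimension $mn$, Tor-independence of the square is equivalent to the pullback having codimension exactly $mn$ inside the source, and subtracting $mn$ from the three relative dimensions above yields precisely the three equalities of \eqref{eqns:Quotexpdim}. When these equalities hold, Lem.~\ref{lem:bc:CM}(1) further asserts that the pullback immersion is again Koszul-regular of the same codimension, whence $\Quot_{X,d_+}(\sG)$, $\Quot_{X,d_-}(\sK)$, and their fiber product inherit the Cohen--Macaulay property from their CM ambient schemes. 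The only non-formal step is the numerical matching of the three relative dimensions with \eqref{eqns:Quotexpdim}; this is the main technical task but presents no real obstacle once the affine-bundle descriptions of Lem.~\ref{lem:Sym} are unwound.
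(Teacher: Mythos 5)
Your proof is correct and follows essentially the same route as the paper's: reduce to a local presentation, exhibit $s_\sigma\colon X\hookrightarrow H_X$ as a Koszul-regular immersion of codimension $mn$, observe the universal Quot schemes over $H_X$ are Cohen--Macaulay, and invoke Lem.~\ref{lem:bc:CM}(1) to translate Tor-independence into a codimension condition. The only cosmetic difference is that the paper justifies CM-ness of the universal Quot schemes by noting they are domains of lci morphisms to the CM scheme $H_X$, whereas you note they are smooth over $X$; both are valid, and your version has the advantage of making the dimension bookkeeping via the affine-bundle descriptions of Lem.~\ref{lem:Sym} fully explicit, which the paper leaves to the reader.
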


\begin{proof} Since the problem is local, we may assume $\sG$ admits a presentation $0 \to \sW \xrightarrow{\sigma} \sV \to \sG$, and the morphism $\sigma$ induces a section map $s_\sigma \colon X \hookrightarrow H_X : = |\sHom_X(\sW,\sV)|$, where the latter is Cohen--Macaulay. By Lem. \ref{lem:Homspace} \eqref{lem:Homspace-2} the closed immersion $s_\sigma$ is regular, hence is Koszul-regular (see e.g.\cite[\href{https://stacks.math.columbia.edu/tag/063K}{Tag 063K}]{stacks-project}). Since the involved Quot schemes over $H_X$ and their fiber prodcut over $H_X$, being the domains of local complete intersection morphisms to $H_X$, are all Cohen--Macaulay, hence the claim follows from Lem. \ref{lem:bc:CM}.
\end{proof}

\subsubsection{The general procedure for globalization} \label{subsec:univHom}
To summarise, in the same situation of \S \ref{sec:Tor-ind:Quot:bc}, and assume $X$ is a scheme over a ring $\kk$, we have the following base-change procedure:

\begin{enumerate}[label= (\roman*), leftmargin=*]
	\item (Semi-local) By passing to Zariski opens and choosing any presentation $\sW \xrightarrow{\sigma} \sV$ of $\sG$, we obtain a section map $s_\sigma \colon X \hookrightarrow H_X =|\sHom_X(\sW,\sV)|$, which is a regular closed immersion by Lem. \ref{lem:Homspace} \eqref{lem:Homspace-2};
	 \item (Universal local) By possibly further shrinking we may assume $\sV$ and $\sW$ are free modules, then we obtain a morphism $H_X \to H_\kk$ as in \S \ref{sec:univHom:flat}, which is Tor-independent with respect to the pairs of the form $(\Quot_{d_+}(\sG), \Quot_{d_-}(\sK))$ by Lem. \ref{lem:univHom:flat}.
\end{enumerate}

If Tor-independent conditions Def. \ref{def:Tor-ind:quot} are verified for a pair of integers $(d_+,d_-)$, then the composition of base-change $X \to H_X \to H_\kk$ is Tor-independent, and by descent theory and base-change theory, we can globalize all results of the local cases $H_\kk$ of \S \ref{sec:local}. 

In particular, the following properties can be globalized from the universal local situation with base $H_\kk$ to the general situation with base a quasi-compact, quasi-separated scheme $X$:

\begin{enumerate}[leftmargin=*]
	\item The following properties of morphisms between these Quot schemes: quasi-perfectness, properness, smoothness, whether a closed immersion is Koszul-regular or not, whether a morphism is a locally compete intersection or not, whether a morphism is a blowup along Koszul-regular centers or not (see Lem. \ref{lem:blowup_bc}), etc -- since these properties are fppf local and survives Tor-independent base-changes;
	\item Relative exceptional sequences and collections of these Quot schemes over $X$ -- since the theory of relative exceptional sequences enjoys fppf descent Cor. \ref{cor:relexc:descent} and Tor-independent base-change theory Cor. \ref{cor:relexc};
	\item Relative Fourier--Mukai transforms among these Quot schemes over $X$,  the strongness and fully-faithfulness of these transforms; The induced $X$-linear semiorthogonal sequences and semiorthogonal decompositions, etc --  since the theory of relative Fourier--Mukai transforms enjoys fppf descent Thm. \ref{thm:fppf} and Tor-independent base-change theory Thm. \ref{thm:bc}.
\end{enumerate}

\subsubsection{First results in global situation}
By the preceding subsection, we have the following immediate globalization results. First, the global version of Lem. \ref{lem:local:Serre} is:

\begin{lemma} \label{lem:global:Serre} In the same situation as \S \ref{sec:Tor-ind:Quot:bc}, and assume the Tor-independent condition Def. \ref{def:Tor-ind:quot} holds for the pair $(d_+,d_-)$. Then all the maps of diagram \ref{diagram:Corr:global} are projective and local complete intersection morphisms, with invertible dualizing complexes. Moreover, if we denote
	$\sO_+(1) : = \sO_{\Quot_{d_+}(\sG)}(1)$, 
	$\sO_-(1) : = \sO_{\Quot_{d_-}(\sK)}(1)$,
then corresponding dualizing complexes of these maps are given by:
	\begin{align*} 
	 & \omega_{r_+} = (\det \sG)^{\otimes d_-} \otimes \sO_-(-d_++\delta) \otimes \sO_+(-d_-)[-d_- (\delta - d_+ + d_-)], \quad \\
	 & \omega_{r_-}  = (\det \sG)^{\otimes d_+} \otimes \sO_-(-d_+) \otimes \sO_+(-d_--\delta)[d_+  (\delta - d_+ + d_-)],  \\
	& \omega_{\pi_+}  =   (\det \sG)^{\otimes d_+}  \otimes \sO_+(-\delta)[d_+ (\delta-d_+)],  \\
	 & \omega_{\pi_-}  =  (\det \sG)^{\otimes d_-} \otimes \sO_-(\delta) [d_- (-\delta-d_-)], 	\\
	 &\omega_{\widehat{\pi} } =  (\det \sG)^{\otimes (d_+ + d_-)} \sO_-(-d_++\delta) \otimes \sO_+(-d_--\delta)[(d_+-d_-)(\delta-d_+ + d_- )- d_+ d_- ]. 
	\end{align*}
(Here $\det \sG$ is the line bundle defined in Def \ref{def:det}; And for the simplicity of expressions, we use the same notations $\sO_{\pm}(1)$ and $\det \sG$ to denote their corresponding pullbacks.)
\end{lemma}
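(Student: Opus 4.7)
The strategy is to reduce to the universal local situation via the general procedure of \S\ref{subsec:univHom}, from which the qualitative properties (projective, local complete intersection, invertible relative dualizing complex) will be inherited by descent, and then to obtain the explicit formulas by a direct computation using the local realization of the Quot schemes as zero loci inside Grassmannian bundles as in Lem. \ref{lem:Sym}.

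First I would work Zariski-locally on $X$ and choose a presentation $0 \to \sW \xrightarrow{\sigma} \sV \to \sG \to 0$, producing a section map $s_\sigma \colon X \hookrightarrow H_X = |\sHom_X(\sW,\sV)|$ (Lem. \ref{lem:Homspace}\eqref{lem:Homspace-2}) together with the structural morphism $H_X \to H_\kk$ of \S\ref{sec:univHom:flat}. Combining the Tor-independence hypothesis Def. \ref{def:Tor-ind:quot} with Lem. \ref{lem:univHom:flat} and Lem. \ref{lem:3squares} shows that the whole diagram \eqref{diagram:Corr:global} is obtained from the corresponding diagram in the universal local situation over $H_\kk$ by a Tor-independent base change. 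Since projectivity, Koszul-regularity of closed immersions, and being a proper local complete intersection morphism with invertible dualizing complex are all stable under Tor-independent base change (via Lem. \ref{lem:bc:CM} and \cite[Thm. 4.7.4]{Lip}) and descend Zariski-locally on $X$, the qualitative claims follow from Lem. \ref{lem:local:Serre}.

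For the explicit formulas, rather than pulling back the local expressions and tracking the failure of trivializations, I would compute directly in each local presentation. Using Lem. \ref{lem:Sym}, $\shZ_+$ embeds in $\Gr_{d_+}(\sV^\vee) \to X$ as the zero locus of a Koszul-regular section of $\sW^\vee \otimes \shU_+^\vee$; consequently $\omega_{\pi_+}$ is the tensor product of the dualizing complex of this closed immersion (Ex. \ref{eg:lci}\eqref{eg:lci-1}) and the pullback of the Grassmannian-bundle dualizing complex of Thm. \ref{thm:Grass.bundle}\eqref{thm:Grass.bundle-2}. Unwinding this using the identifications $\det \shU_+ \otimes \det \shQ_+ = \det \sV^\vee$, $\sO_+(1) = \det \sQ_{d_+} = (\det \shU_+|_{\shZ_+})^{-1}$ (via the canonical isomorphism $\sQ_{d_+} \simeq \shU_+^\vee|_{\shZ_+}$ coming from $\pi_+^*\sG \twoheadrightarrow \sQ_{d_+}$), and the presentation-dependent identification $\det \sG = \det \sV \otimes (\det \sW)^{-1}$ (Def. \ref{def:det}), the stated formula for $\omega_{\pi_+}$ falls out after a direct rewriting. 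The formulas for $\omega_{\pi_-}$, $\omega_{r_\pm}$, and $\omega_{\widehat\pi}$ follow similarly, replacing $\Gr_{d_+}(\sV^\vee)$ by $\Gr_{d_-}(\sW)$ or by $\Gr_{d_+}(\sV^\vee) \times_X \Gr_{d_-}(\sW)$, and invoking multiplicativity $\omega_{f \circ g} \simeq \omega_g \otimes g^*\omega_f$ via $\widehat\pi = \pi_\pm \circ r_\pm$ for the fiber product. Since the resulting expressions are intrinsic to the schemes and morphisms, they patch across the open cover to give global formulas, and Lem. \ref{lem:Tor-ind-ind} ensures independence of the chosen local presentation.

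The main subtlety, and the place where care is needed, is the bookkeeping of the $(\det \sG)^\bullet$ twist factors, which appear globally but are invisible in the universal local expressions of Lem. \ref{lem:local:Serre}. These factors encode the failure of $\det \sG$ to be trivial in the global setting: in the universal local case $\det \sG_{H_\kk} = \det V \otimes (\det W)^{-1}$ is a canonically trivial line bundle since $V, W$ are free $\kk$-modules, and it silently drops out of the local formulas. Globally it is genuinely nontrivial, and the exponent with which it appears in each $\omega$ is dictated by how many times $\det \sV$ and $\det \sW$ enter the normal-bundle computation; in each case the exponent matches because $\sG$ contributes rank-$\delta$ data while the normal bundle and Grassmannian factors contribute in combinations whose difference factors through $\det \sG$.
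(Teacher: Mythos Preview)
Your proposal is correct and follows essentially the same approach as the paper's proof, which tersely invokes Thm.~\ref{thm:Quot} for projectivity, Lem.~\ref{lem:bc:CM} for the preservation of Koszul-regularity under Tor-independent base change, and then declares the formulas to be the globalization of Lem.~\ref{lem:local:Serre} via the general procedure of \S\ref{subsec:univHom}. You supply considerably more detail than the paper does, in particular your explicit identification of the $\det\sG$ twists as the global correction to the universal-local formulas, and your direct computation via the zero-locus embedding into Grassmannian bundles; this is precisely the content that the paper leaves implicit in the phrase ``globalization of the local cases.''
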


\begin{proof} Projectivity follows from properties of $\Quot$ Thm. \ref{thm:Quot}; The statements about locally compete intersection morphisms follow from the cases of  local universal situation, since Koszul-regular closed immersion survives Tor-independent base-change Lem. \ref{lem:bc:CM}. The rest of the lemma is exactly the globalization of the local cases Lem. \ref{lem:local:Serre}.
\end{proof}

Next, for simplicity we assume $X$ is a $\kk$-scheme, where $\kk$ is a field of characteristic zero. Denote $\rank \sG = \delta$ and let $d$ be a fixed integer, and consider the Quot scheme $\pi_+ \colon \Quot_{X,d}(\sG) \to X$. Denote $\sQ_d$ the universal quotient bundle, and denote $\sO_+(1) = \sO_{\Quot_{X,d}(\sG)}(1) = \bigwedge^d \sQ_d$. If $\sG$ admits a presentation $\sW \xrightarrow{\sigma} \sV$ of $\sG$, where $\rank \sW = m$, $\rank \sV = n$, then we can consider the Grassmannian bundles $\Gr_{d}(\sV^\vee)$. Denote $\shU_+$ and $\shQ_+$ the universal subbundle of rank $d$ and respectively universal quotient bundle of rank $n-d$. By Thm. \ref{thm:Quot} there is a closed immersion $\Quot_{X,d}(\sG) \subseteq \Gr_d(\sV)$, such that $\shU_+^\vee|_{\Quot_{X,d}(\sG)} \simeq \sQ_d$. 

Recall $B_{\ell,d}^{\preceq}$ denotes the set of Young diagrams inscribed in a $\ell \times d$-rectangle equipped with the natural partial order of inclusions, $B_{\ell, d}^{\succeq}$ denotes the same set with the opposite order. The next result is a globalization of Lem. \ref{lem:top}, Lem. \ref{lem:bottom}, Lem. \ref{lem:bottom_top}.

\begin{proposition} \label{prop:top:bottom} In the above situation, and let $\D$ stand for $\Dqc, \Db$ or $\Perf$.
\begin{enumerate}[leftmargin=*]
	\item  \label{prop:top:bottom-1}
	 If $d \le \delta$, and the Tor-independent condition Def. \ref{def:Tor-ind:quot} holds for the pair $(d,0)$ (if $\sG$ admits a presentation as above, then this is equivalent to the condition that the closed immersion $\Quot_{X,d}(\sG) \subseteq \Gr_d(\sV)$ is Koszul-regular of codimension $m \cdot d$). Then $\{\Sigma^{\alpha^t} \sQ_d \}_{\alpha \in B_{\delta-d,d}^{\preceq}}$ is a relative expectational sequence of vector bundles on $\Quot_{X,d}(\sG)$ over $X$. In particular, for any $\alpha \in B_{\delta-d, d}$, the Fourier--Mukai functors:
	\begin{align*}
	&\Phi^{\alpha}(\blank) := \pi_+^*(\blank) \otimes \Sigma^{\alpha^t} \sQ_d \colon \qquad &\D(X) \to \D(\Quot_{X,d}(\sG))
	\end{align*}
 are fully faithful, and their images form an $X$-linear admissible semiorthogonal sequence $\{ \Im \Phi^\alpha\}_{\alpha \in B_{\delta-d,d}^{\preceq}}$ such that $\Im \Phi^{\alpha} \subseteq (\Im \Phi^{\beta})^\perp$ whenever $\alpha \nsucceq \beta$. 
	\item  \label{prop:top:bottom-2}
	Assume there is a presentation $\sW \xrightarrow{\sigma} \sV$ of $\sG$ as above such that $n \ge d \ge m$, and the rank-$0$ degeneracy locus $Z = D_0(\sigma) \subset X$ is a Koszul-regular closed subscheme of the expected codimension $mn$. Denote $p_+ \colon G_Z \to Z$ the restriction of $\pi_+$ to $Z \subseteq X$, and denote $j_+ \colon G_Z \to \Quot_{X,d}(\sG)$ the inclusion. Assume further that Tor-independent condition Def. \ref{def:Tor-ind:quot} holds for the pair $(d,m)$. Then for any $\alpha \in B_{n-d, d - m}$, the relative Fourier--Mukai functors over $X$:
	\begin{align*}
	&\Psi^{\alpha}(\blank) :=  j_{+\,*} p_{+}^* (\blank) \otimes \Sigma^{\alpha} \shQ_+^\vee|_{\Quot_{X,d}(\sG)} \colon & \D(Z)  \to \D(\Quot_{X,d}(\sG))
	\end{align*}
 are fully faithful, and their images form an $X$-linear admissible semiorthogonal sequence $\{\Im \Psi^{\alpha} \}_{\alpha \in B_{n-d,d-m}^{\succeq}}$ such that $\Im \Psi^{\alpha} \subseteq (\Im \Psi^{\beta})^\perp$ whenever $\alpha \npreceq \beta$. 
	\item  \label{prop:top:bottom-3}
	Assume both the conditions of (1) and (2) hold, then for any $1 \le s \le m$,		$$\big ( \{\Im \Psi^{\beta + s} \}_{\beta \in B_{n-d, d-m}^{\succeq}}, ~\{ \Im \Phi^\alpha\}_{\alpha \in B_{\delta-d,d}^{\preceq}} \big )$$
	forms a $X$-linear admissible semiorthogonal sequence inside $\D(\Quot_{X,d}(\sG))$, i.e. $\Im \Psi^{\beta + s} \subseteq (\Im \Phi^{\alpha})^\perp$ for all $\alpha \in B_{\delta - d, d} ,\beta \in B_{n-d,d-m}$, $1 \le s \le m$.
\end{enumerate}
\end{proposition}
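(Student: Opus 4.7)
The plan is to deduce each statement from its universal-local counterpart (Lemmas \ref{lem:top}, \ref{lem:bottom}, \ref{lem:bottom_top}) using the local-to-global procedure described in \S \ref{subsec:univHom}, combined with the base-change and descent machinery of \S \ref{sec:relFM}. More precisely, since all claims are Zariski local on $X$, we may shrink $X$ and assume $\sG$ admits a finite free presentation $0 \to \sW \xrightarrow{\sigma} \sV \to \sG \to 0$, so that the section map $s_\sigma \colon X \hookrightarrow H_X = |\sHom_X(\sW, \sV)|$ is a Koszul-regular closed immersion (Lem. \ref{lem:Homspace}\eqref{lem:Homspace-2}), and further that $\sW, \sV$ are free, giving a flat base-change $h \colon H_X \to H_\kk$. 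Under the appropriate Tor-independent conditions (which, by Lem. \ref{lem:Tor-ind-ind}, are independent of the choice of presentation), the Quot schemes and their fiber products on $X$ are Tor-independent base-changes from their universal-local counterparts on $H_\kk$ studied in \S \ref{sec:local}.

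For part \eqref{prop:top:bottom-1}, I would observe that the universal local Lemma \ref{lem:top} produces exactly the relative exceptional sequence $\{\Sigma^{\alpha^t} \shU_+^\vee\}_{\alpha \in B_{\delta-d,d}^{\preceq}}$ on $\shZ_{+, H_\kk}$ (where $\shU_+^\vee|_{\shZ_+} = \sQ_d$ by the identification coming from the Plücker embedding of Thm. \ref{thm:Quot}\eqref{thm:Quot-3}). The Tor-independent condition for $(d,0)$ ensures, via Lem. \ref{lem:bc:CM}, that $\Quot_{X,d}(\sG) \hookrightarrow \Gr_d(\sV)$ is Koszul-regular of codimension $m\cdot d$, so this data descends: applying Cor. \ref{cor:relexc} (Tor-independent base-change of relative exceptional collections), one obtains the relative exceptional sequence $\{\Sigma^{\alpha^t} \sQ_d\}$ on $\Quot_{X,d}(\sG)$ over $X$, with the stated admissible semiorthogonal structure following from Lem. \ref{lem:relexc}. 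The partial-order semiorthogonality $\Im \Phi^\alpha \subseteq (\Im \Phi^\beta)^\perp$ for $\alpha \nsucceq \beta$ is preserved by base-change via Prop. \ref{prop:relFM_bc}\eqref{prop:relFM_bc-2}, and the $X$-linearity is automatic from the construction. Finally, fppf descent Cor. \ref{cor:relexc:descent} removes the assumption that $\sW, \sV$ are free.

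For part \eqref{prop:top:bottom-2}, the key identification is that the hypothesis that $Z = D_0(\sigma)$ is Koszul-regular of codimension $mn$ means, by Lem. \ref{lem:bc:CM}, that $Z$ is the Tor-independent base-change of the origin $\{0\} \hookrightarrow H_\kk = \Hom_\kk(W,V)$ along $X \to H_\kk$. Over $Z$, $\sigma|_Z = 0$ and hence $\sG|_Z \simeq \sV|_Z$, so $G_Z = \Quot_{X,d}(\sG)\times_X Z = \Gr_d(\sV^\vee|_Z)$, which is precisely the base-change of the fiber $\widehat{\shZ}^{(d,m)}_{H_\kk} = \GG_{+,\kk}$ appearing in the universal-local Lemma \ref{lem:bottom}. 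Under the Tor-independent condition for $(d, m)$, the inclusion $j_+ \colon G_Z \hookrightarrow \Quot_{X,d}(\sG)$ is Koszul-regular (again by Lem. \ref{lem:bc:CM}), and the Fourier--Mukai kernels $j_{+*}(\Sigma^\alpha \shQ_+^\vee|_{G_Z})$ defining $\Psi^\alpha$ are the pullbacks of the kernels in Lemma \ref{lem:bottom}. Thus, Thm. \ref{thm:bc} (Tor-independent base-change of relative Fourier--Mukai transforms) yields fully faithfulness of each $\Psi^\alpha$ and the semiorthogonality $\Im \Psi^\alpha \subseteq (\Im\Psi^\beta)^\perp$ for $\alpha \npreceq \beta$, with admissibility from Lem. \ref{lem:FM} (since all morphisms are proper, perfect with invertible dualizing complexes, by Lem. \ref{lem:global:Serre}).

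Part \eqref{prop:top:bottom-3} is then an immediate consequence of Lem. \ref{lem:bottom_top} and Prop. \ref{prop:relFM_bc}\eqref{prop:relFM_bc-2}, since the vanishing $\Hom_{\shZ_+}(\Phi^\alpha(A), \Psi^{\beta+s}(B)) = 0$ for $s \in [1, m]$ established in the universal local case is preserved under the Tor-independent base-change $X \to H_\kk$; one simply applies Prop. \ref{prop:relFM_bc}\eqref{prop:relFM_bc-2} to the two Fourier--Mukai transforms defining $\Phi^\alpha$ and $\Psi^{\beta+s}$. The main technical point throughout will be the careful bookkeeping to confirm that the Tor-independent conditions Def. \ref{def:Tor-ind:quot} (for the pairs $(d,0)$ and $(d,m)$) translate, via Lem. \ref{lem:bc:CM} and the factorization $X \to H_X \to H_\kk$, into precisely the base-change hypotheses required by Thm. \ref{thm:bc} and Prop. \ref{prop:relFM_bc}; since the dimension-count criterion of Lem. \ref{lem:Tor-ind:quot:CM} is unavailable without Cohen--Macaulay-ness, one must work directly with the Koszul-regularity characterization of Tor-independence provided by Lem. \ref{lem:bc:CM}.
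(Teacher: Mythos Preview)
Your proposal is correct and follows essentially the same approach as the paper, which does not give an explicit proof but simply presents the proposition as ``a globalization of Lem.~\ref{lem:top}, Lem.~\ref{lem:bottom}, Lem.~\ref{lem:bottom_top}'' via the general procedure of \S\ref{subsec:univHom}. Your write-up is in fact more detailed than the paper in tracking exactly which base-change and descent results (Cor.~\ref{cor:relexc}, Prop.~\ref{prop:relFM_bc}, Thm.~\ref{thm:bc}, Cor.~\ref{cor:relexc:descent}) are invoked at each step.
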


The next result is a globalization of Lem. \ref{lem:sod:O(i)}.

\begin{proposition} In the above situation, and let $\D$ stands for $\Dqc, \Db$ or $\Perf$. If $\max\{d - \delta, 0 \} \le d_- \le d$ holds, and the Tor-independent condition Def. \ref{def:Tor-ind:quot} holds for the pair $(d,d_-)$. Denote $r_\pm$ the projections from $\Quot_{X,d}(\sG) \times_X \Quot_{X,d_-}(\sK)$ to $\Quot_{X,d}(\sG)$ and $\Quot_{X,d_-}(\sK)$ as usual. Then for any $i \in \ZZ$,  the relative Fourier--Mukai functor over $X$:
	\begin{align*}
	&\Phi_i (\blank) : = (r_{+\,*} \, r_{-}^*(\blank)) \otimes \sO_+(i) \colon & \D(\Quot_{X,d_-}(\sK)) \to \D(\Quot_{X,d}(\sG))
	\end{align*}
is fully faithful. If $\min \{d - d_-, \delta - d + d_-\} > 0$, then for any fixed $i \in \ZZ$,
	$$\big( \Im (\Phi_i) ,  \Im (\Phi_{i+1}), \ldots, \Im (\Phi_{i+\delta-1}) \big)$$ 
forms an admissible $X$-linear semiorthogonal sequence inside $ \D(\Quot_{X,d}(\sG))$.
\end{proposition}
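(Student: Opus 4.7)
The plan is to deduce both assertions from the universal local case Lem. \ref{lem:sod:O(i)} via the base-change and descent procedure of \S \ref{subsec:univHom}. First, I would observe that fully-faithfulness and semi-orthogonal vanishing of $\Hom$-objects can be checked Zariski-locally on the base $X$ (by taking Zariski covers, a special case of Thm. \ref{thm:fppf}). Passing to such a cover, I may assume $\sG$ admits a presentation $0\to\sW\xrightarrow{\sigma}\sV\to\sG$ with $\sW,\sV$ finite free. The section map $s_\sigma\colon X\to H_X=|\sHom_X(\sW,\sV)|$ is a Koszul-regular closed immersion by Lem. \ref{lem:Homspace}\eqref{lem:Homspace-2}, Tor-independent with respect to the pair $(\Quot_{H_X,d}(\sG_{H_X}),\Quot_{H_X,d_-}(\sK_{H_X}))$ by the hypothesis (Def. \ref{def:Tor-ind:quot}). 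The further smooth projection $H_X\to H_\kk$ is flat and Tor-independent by Lem. \ref{lem:univHom:flat}, so by Lem. \ref{lem:3squares} the composition $X\to H_\kk$ is a Tor-independent base-change for the universal local pair $(\shZ_{+,H_\kk}^{(d)},\shZ_{-,H_\kk}^{(d_-)})$. By Thm. \ref{thm:Quot}\eqref{thm:Quot-1}, the schemes $\Quot_{X,d}(\sG)$, $\Quot_{X,d_-}(\sK)$, and their fiber product are the corresponding base-changes of the universal local Quot schemes.

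Second, I would invoke the local Lem. \ref{lem:sod:O(i)}. The universal local Quot schemes are smooth $\kk$-varieties (being total spaces of vector bundles over Grassmannians, Lem. \ref{lem:Sym}), so $\Db=\Perf$ on them, and the local lemma yields fully-faithful embeddings $\Phi_i\colon\Perf(\shZ_{-,H_\kk})\hookrightarrow\Perf(\shZ_{+,H_\kk})$ together with, when $\min\{d-d_-,\delta-d+d_-\}>0$, the claimed $H_\kk$-linear admissible semi-orthogonal sequence on $\Perf$. The Fourier--Mukai kernel of each $\Phi_i$ is the structure sheaf of the universal fiber product twisted by $\sO_+(i)$, and this is manifestly compatible with base-change by Lem.-Def. \ref{lem-def:relFM_bc}, so the global $\Phi_i$ in the proposition is the base-change of the local $\Phi_i$.

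Third, I would apply the Tor-independent base-change theorem Thm. \ref{thm:bc} along $X\to H_\kk$: this simultaneously transfers fully-faithfulness and semi-orthogonality to all three categories $\Perf$, $\Db$, $\Dqc$ in the global setting, with components being the images of the $\Phi_i$ from the proposition. Admissibility of the components follows from Lem. \ref{lem:FM}, since by Lem. \ref{lem:global:Serre} the projections $r_\pm$ are projective local complete intersection morphisms with invertible relative dualizing complexes, so the underlying Fourier--Mukai data are strong, providing the needed adjoints on each of the three categories. Finally, Zariski descent from the chosen cover (a special case of Thm. \ref{thm:fppf}) promotes the semi-local statements to the global $X$-linear ones.

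The main (and only substantive) obstacle is ensuring that Tor-independence propagates through each step of the factorization $X\hookrightarrow H_X\to H_\kk$; once this is in place, the remainder of the argument is a formal application of the descent and base-change machinery of Part \ref{part:FM}. In the present case propagation is guaranteed by the hypothesis together with Lem. \ref{lem:univHom:flat} and Lem. \ref{lem:3squares}, and no further work is needed beyond bookkeeping.
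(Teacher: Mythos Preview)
Your proposal is correct and follows essentially the same approach as the paper. The paper treats this proposition as an immediate globalization of Lem.~\ref{lem:sod:O(i)} via the general procedure \S\ref{subsec:univHom} (indeed the paper gives no explicit proof, only the preamble ``The next result is a globalization of Lem.~\ref{lem:sod:O(i)}''), and your write-up is precisely a detailed unpacking of that procedure: reduce Zariski-locally, factor through the section map and the smooth projection to $H_\kk$, invoke Lem.~\ref{lem:sod:O(i)} on $\Perf$, then apply Thm.~\ref{thm:bc} and Thm.~\ref{thm:fppf}, with admissibility coming from strongness via Lem.~\ref{lem:global:Serre} and Lem.~\ref{lem:FM}.
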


In the following sections we will apply the same procedure to various concrete situations.

\subsection{Blowups along Koszul-regularly immersed centers} 
Let $X$ be a quasi-compact and quasi-separated scheme, and let $i \colon Z \hookrightarrow X$ be a Koszul-regularly immersed closed subscheme of constant codimension $r \ge 2$ cut out by an ideal sheaf $\sI_Z$. Denote by 
	$$\pi \colon \Bl_Z X = \underline{\Proj}_{X} \bigoplus_{n \ge 0} \sI^n_Z \to X$$ the blowup of $X$ along $Z$, and $E = \pi^{-1} (Z)$ the exceptional divisor. Let $p \colon E \to Z$ be the natural projection, and $j \colon E \hookrightarrow \Bl_Z X$ the inclusion. Denote by $\sO_{\Bl_Z X}(1) =\sO_{\Bl_Z X}(-E)$ the $\pi$-relative ample line bundle from the Proj construction, and $\sO_{E}(1)$ the restriction of $\sO_{\Bl_Z X}(1)$ to $E$. There is a Cartesian diagram
	\begin{equation*}
	\begin{tikzcd}[row sep= 2.6 em, column sep = 2.6 em]
	E \ar{d}[swap]{p} \ar[hook]{r}{j} & \Bl_Z X \ar{d}{\pi} \\
	Z \ar[hook]{r}{i}         & X 
	\end{tikzcd}	
	\end{equation*}
The following generalizes Orlov \cite[Thm. 4.3]{Orlov92}; see also \cite[Thm. 6.9]{BS} for the stack case.

\begin{theorem}[Blowing up formula] \label{thm:blow-up}  Let $\pi \colon \Bl_Z X \to X$ be the blowup along a Koszul-regularly immersed center $Z$ of codimension $r \ge 2$ as above. Then
\begin{enumerate}[leftmargin=*]
	\item \label{thm:blow-up-1} 
	For each $k \in \ZZ$, the relative Fourier--Mukai functors over $X$:
	$$\Phi_k :=\pi^*(\blank) \otimes  \sO_{\Bl_Z X}(k) \colon  \Dqc(X) \to \Dqc(\Bl_Z X),$$
	$$\Psi_k := j_* p^*(\blank) \otimes \sO_{\Bl_Z X}(k) \colon \Dqc(Z) \to \Dqc(\Bl_Z X)$$
are strong in the sense of Def. \ref{Def:FM:BS} and fully faithful;
	\item \label{thm:blow-up-2}
	The morphism $\pi \colon \Bl_Z X \to X$ is a projective local complete intersection morphism, with relative dualizing complex given by the line bundle:
		$$\omega = \sO_{\Bl_Z X}(1-r) = \sO_{\Bl_Z X}((r-1)E).$$ 
The category $\Perf(\Bl_Z X)$ admits a relative Serre functor over $X$ given by $\SS = \otimes \omega$. Thus for each $k \in \ZZ$, $\SS ( \Im \Phi_{k}) = \Im \Phi_{k+1-r}$ and $\SS (\Im \Psi_{k}) = \Im \Psi_{k+1-r}$.
	\item \label{thm:blow-up-4}
	For each integer $0 \le \ell \le r-1$, there is an $X$-linear semiorthogonal decomposition with admissible components:
	\begin{align*}
		\Dqc(\Bl_Z X) = \langle \underbrace{\Im \Psi_{1-r+\ell}, \ldots, \Im \Psi_{-1}}_{(r-1-\ell)\text{-terms}},  \Im \Phi_0,  \underbrace{\Im \Psi_0, \ldots, \Im \Psi_{\ell-1}}_{\ell\text{-terms}} \rangle;
	\end{align*}
Similar semiorthogonal decomposition hold if $\Dqc$ is replaced by $\Db$ or $\Perf$, and these semiorthogonal decompositions are compatible with the inclusions $\Perf \subseteq \Db \subseteq \Dqc$. 
\end{enumerate} 
\end{theorem}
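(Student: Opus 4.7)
The plan is to realise the blowup as a Quot scheme via Lem.~\ref{lem:blowup:univ} and reduce to the universal local model $(\AA^r_\kk, \{0\})$ through the Tor-independent base-change framework of \S\ref{sec:FM}, combined with Prop.~\ref{prop:top:bottom} and iterated mutations. Part (2) I would handle first. By Lem.~\ref{lem:blowup:univ}, after Zariski localisation we may assume $Z$ is cut out by a Koszul-regular section $\sigma$ of a locally free sheaf $\sE$ of rank $r$, and $\pi \colon \Bl_Z X = \Quot_{X, r-1}(\sG) \to X$ (with $\sG = \Coker(\sO_X \xrightarrow{\sigma} \sE)$) factors as the Koszul-regular closed immersion $\iota \colon \Bl_Z X \hookrightarrow \PP_X(\sE^\vee)$ (realised, via Thm.~\ref{thm:Quot}\eqref{thm:Quot-2}, as the zero locus of a regular section of the rank $r-1$ tautological quotient $\sQ_{r-1}$) followed by the smooth projective projection $q \colon \PP_X(\sE^\vee) \to X$. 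Thus $\pi$ is a projective local complete intersection morphism, and combining the formulas $\omega_q = q^*(\det \sE)^{-1} \otimes \sO_\PP(-r)[r-1]$ with $\omega_\iota = \pi^*(\det \sE) \otimes \sO_{\Bl_Z X}(1)[-(r-1)]$ (Ex.~\ref{eg:lci}) yields $\omega_\pi \simeq \sO_{\Bl_Z X}(1-r)$. By Prop.~\ref{prop:Serre} this produces the relative Serre functor $\SS = (\blank)\otimes \omega_\pi$, and the identifications $\SS(\Im \Phi_k) = \Im \Phi_{k+1-r}$ and $\SS(\Im \Psi_k) = \Im \Psi_{k+1-r}$ follow by the projection formula, using $j^* \sO(m) = \sO_E(m)$.

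For part (1), strongness in the sense of Def.~\ref{Def:FM:BS} is immediate from part (2): the kernels $\sO_{\Bl_Z X}(k) \in \Perf(\Bl_Z X)$ and $\sO_E(k) \in \Perf(E)$ are line bundles, and the four projections $\pi, \id_{\Bl_Z X}, p, j$ are all quasi-perfect with invertible relative dualizing complexes ($p$ is a projective bundle and $j$ is a Cartier divisor inclusion). Fully faithfulness I would prove by reducing to the universal local case: by Lem.~\ref{lem:blowup_bc} blowups along Koszul-regular centers commute with arbitrary base change, and combined with Lem.~\ref{lem:univHom:flat} the triple $(\Bl_Z X, E, Z)$ is obtained from $(\Bl_{\{0\}}\AA^r_\kk, \PP^{r-1}_\kk, \Spec \kk)$ via a Tor-independent base change. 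In the universal local case fully faithfulness is elementary (using $\pi_{0*}\sO = \sO_{\AA^r}$ and $p_{0*}\sO_{\PP^{r-1}} = \sO_{\Spec \kk}$), and Prop.~\ref{prop:relFM_bc} then propagates it.

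Finally, part (3) reduces after parts (1) and (2) to the generation statement. I would first establish the $\ell = r - 1$ case $\Dqc(\Bl_Z X) = \langle \Im \Phi_0, \Im \Psi_0, \ldots, \Im \Psi_{r-2} \rangle$: semiorthogonality is Prop.~\ref{prop:top:bottom}(3) applied with $m = 1$, $n = r$, $d_+ = r - 1 = \delta$, $s = 1$, where $B_{\delta-d_+,d_+} = B_{0,r-1} = \{\emptyset\}$ gives the single top component $\Im \Phi_0$ and $B_{n-d_+,d_+-m} = B_{1,r-2}$ gives the $r-1$ bottom components identified with $\Im \Psi_k$, $0 \le k \le r-2$; while generation reduces via Thm.~\ref{thm:fppf} to the universal local case, where it is the classical Orlov blowup formula for $\Bl_{\{0\}}\AA^r_\kk$ (equivalently, a special case of Pirozhkov's Thm.~\ref{thm:local:gen:Cayley}). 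For the remaining $0 \le \ell < r - 1$, I would apply iterated left mutations (Lem.~\ref{lem:mut}): each left mutation of the rightmost $\Psi_{r-2-j}$ through the rest, using the $\infty$-admissibility in Lem.~\ref{lem:iterated_mutations} together with the Serre identification from part (2), replaces it by $\SS(\Psi_{r-2-j}) = \Psi_{-1-j}$ at the far left; iterating over $j = 0, 1, \ldots, r - 2 - \ell$ produces the asserted $\ell$-th decomposition. The main obstacle will be the fullness of the $\ell = r-1$ SOD, which is not accessible from Prop.~\ref{prop:top:bottom} alone (which yields only semiorthogonality) and requires either an explicit universal local computation or invocation of Pirozhkov's theorem together with the descent machinery of \S\ref{sec:FM}.
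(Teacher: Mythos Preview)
Your proposal follows essentially the same route as the paper's proof: reduce to the universal local model $\Bl_{\{0\}}\AA^r$, establish one value of $\ell$ there, globalize via the base-change/descent machinery, and obtain the remaining $\ell$ by mutations/Serre duality. Part~(2) is handled identically.

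The main substantive difference is in how you justify the local semiorthogonal decomposition. You invoke Prop.~\ref{prop:top:bottom} for semiorthogonality and offer Pirozhkov's Thm.~\ref{thm:local:gen:Cayley} as a route to generation; both of these sit inside Part~\ref{part:local}, which assumes characteristic zero throughout, so as written your argument only yields the theorem over a field of characteristic zero. The paper instead proves the local case over $\ZZ$ by hand: semiorthogonality is the classical Orlov/Huybrechts computation, and generation uses that $\Bl_{\{0\}}\AA^r \to \PP^{r-1}$ is an affine bundle (so $\sO, \sO(1), \ldots, \sO(r-1)$ generate by Lem.~\ref{lem:span}) together with the triangle $\sO(k+1) \to \sO(k) \to \Psi_k(\sO_{\Spec\ZZ})$ to recover all $\sO(k)$ inductively from $\Phi_0(\sO_{\AA^r}) = \sO$ and the $\Psi_k$'s. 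This elementary argument is characteristic-free and avoids the heavier machinery you cite. There is also a small bookkeeping slip: Prop.~\ref{prop:top:bottom}(3) with $s=1$ gives $\Im\Psi^{\beta+1} \subset (\Im\Phi^0)^\perp$, which after the line-bundle identification corresponds to the semiorthogonal sequence $\langle \Psi_1,\ldots,\Psi_{r-1}, \Phi_0\rangle$ rather than $\langle \Phi_0, \Psi_0,\ldots,\Psi_{r-2}\rangle$, so a twist or one further mutation is needed before your iteration can start.

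Two smaller points: to pass from the local case to general $X$ you need both Thm.~\ref{thm:bc} (Tor-independent base-change along $X \to H_X \to H_\kk$) and Thm.~\ref{thm:fppf} (Zariski descent back to $X$), not only the latter; and Tor-independence of the section $s_\sigma\colon X \hookrightarrow H_X$ with respect to the relevant Quot schemes is not covered by Lem.~\ref{lem:univHom:flat} alone but follows here from the Koszul-regularity of $\sigma$ via Lem.~\ref{lem:bc:CM}.
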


\begin{proof} This is essentially the globalization of universal local case of \S \ref{sec:local}; To show that the same argument works over $\ZZ$, we provide more details. Similar to the proof of Lem. \ref{lem:blowup:univ}, we consider the the universal local situation when $X = \AA^r$, $Z = \{0\}$, and $\Bl_Z X = \Bl_{\{0  \} } \AA^r = \Quot_{\AA^r, r-1}(\sG_\ZZ)$. 
We claim that there is a semiorthogonal decomposition 
	\begin{equation}\label{eqn:proof:Bl_0} \Perf( \Bl_{\{0  \} } \AA^r) =  \langle \Im \Phi_0, ~ \Im \Psi_0, \Im \Psi_1, \ldots, \Im \Psi_{r-2} \rangle.\end{equation}
It is easy to show the right hand side is a semiorthogonal sequence; In fact the same computations of \cite{Orlov92} or \cite{Huy} work for the case $Z = \{0\}$ and $X=\AA^r$. To show the right hand side of (\ref{eqn:proof:Bl_0}) generates the whole category, notice since the composition map $\Bl_{\{0  \} } \AA^r \to \AA^r \times \PP^{r-1} \to \PP^{r-1}$ is an affine bundle, by Lem. \ref{lem:span} $\Perf( \Bl_{\{0  \} } \AA^r)$ is classically generated by $\sO, \sO(1), \ldots, \sO(r-1)$ (where for $k \in \ZZ$, $\sO(k):= \sO_{\Bl_{\{0  \} }  \AA^r}(1)^{\otimes k}$, and $\sO_{\Bl_{\{0  \} }  \AA^r}(1)$ is the $\sO(1)$ from Proj-construction as usual). On the other hand, for each $k$, $\Psi_k (\sO_{\Spec \ZZ}) =\sO_E(k) \simeq [\sO(k+1) \to \sO(k)]$, and $\Phi_0(\sO_{\AA^r}) = \sO$. Inductively we see that the right hand side of  (\ref{eqn:proof:Bl_0}) contains all $\sO(k)$ for $k \in [0, r-1]$, hence the claim is proved.

Hence \eqref{thm:blow-up-1} holds by fppf descent and Tor-independent base-change Prop. \ref{prop:relFM_bc};  \eqref{thm:blow-up-2} holds by Lem. \ref{lem:blowup:univ}, Ex. \ref{eg:lci} and Prop. \ref{prop:Serre}; For \eqref{thm:blow-up-4}, about decomposition for the category $\Perf( \Bl_{\{0  \} } \AA^r)$, the statement for general $\ell$ follows from the $\ell=0$ case (\ref{eqn:proof:Bl_0}) by Serre duality; Finally, the general case of \eqref{thm:blow-up-4} holds by the procedure \S \ref{subsec:univHom}, i.e. follows from the corresponding semiorthogonal decomposition of $\Perf( \Bl_{\{0  \} } \AA^r)$ by Tor-independent base-change Thm. \ref{thm:bc} and fppf descent Thm. \ref{thm:fppf}.
\end{proof}

\begin{remark} In the situation of the theorem, the proof of \cite[Lem. 2.9]{JL18} also works in this case and shows that each $k \in \ZZ$ , there are natural isomorphisms of functors:
		$$\LL_{\Im \Psi_k} \circ \Phi_{k+1} = \Phi_k, \qquad \RR_{\Im \Psi_k} \circ \Phi_k = \Phi_{k+1}.$$
\end{remark}

\subsection{Cayley's trick}
Let $X$ be a quasi-compact, quasi-separated scheme, and let $Z \subseteq X$ be a closed subscheme cut out by a Koszul-regular section $s$ of a locally free sheaf $\sE$ of constant rank $n$. Denote $\sG: = \Coker (\sO_X \xrightarrow{s} \sE)$, then $\sG$ has homological dimension $1$, $\rank \sG = n-1$, and $\sK = \sExt^1(\sG, \sO_X) \simeq \sO_Z$ has homological dimension $n$. The inclusion $\iota \colon \PP(\sG) \hookrightarrow \PP(\sE)$ is cut out by the section $\tilde{s} \in \Gamma(\PP(\sE), \sO_{\PP(\sE)}(1))$ which corresponds to $s$ under the canonical adjunction $\Hom_{\PP(\sE)}(\sO_{\PP(\sE)},\sO_{\PP(\sE)}(1)) = \Hom_X(\sO_X, \sE)$.

\begin{lemma} \label{lem:cayley:univ} The section $\tilde{s}$ is a regular section of the line bundle $ \sO_{\PP(\sE)}(1)$, in particular the zero scheme $Z(\tilde{s}) = \PP(\sG)$ is an effective Cartier divisor on $\PP(\sE)$.
\end{lemma}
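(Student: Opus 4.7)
I would prove the lemma in two parts: identifying $Z(\tilde s) = \PP(\sG)$ as closed subschemes, and showing that $\tilde s$ is a regular section of $\sO_{\PP(\sE)}(1)$.

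\textit{Step 1 (identification of subschemes).} A $T$-point of $\PP(\sE) = \Quot_{X,1}(\sE)$ is a line bundle quotient $\psi\colon \sE_T \twoheadrightarrow \sL$, and under this correspondence the section $\tilde s_T$ identifies with the composition $\sO_T \xrightarrow{s_T} \sE_T \xrightarrow{\psi} \sL$. Hence $Z(\tilde s)(T)$ consists of those $\psi$ with $\psi\circ s_T = 0$, equivalently those factoring through $\sG_T = \Coker(s_T)$. By Thm.~\ref{thm:Quot}\eqref{thm:Quot-2} this is precisely the defining property of the closed immersion $\PP(\sG) = \Quot_{X,1}(\sG) \hookrightarrow \PP(\sE)$, so the two subfunctors coincide.

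\textit{Step 2 (regularity: local and universal case).} Being a regular section is Zariski-local on $\PP(\sE)$, so I would reduce to $X = \Spec R$ with $\sE = \sO_X^{\oplus n}$ trivialized and $s=(s_1,\ldots,s_n)$ a Koszul-regular sequence in $R$. On a standard chart $U_j \subset \PP^{n-1}_R$ with $y_i := x_i/x_j$, the local equation is
\[ t := s_j + \sum_{i\ne j} s_i\, y_i \in A := R[y_i : i \ne j], \]
and the task is to show $t$ is a non-zero-divisor in $A$. In the universal local case $R_0 := \ZZ[s_1,\ldots,s_n]$ (the tautological Koszul-regular sequence), the unipotent-triangular substitution $s_j' := t$, $s_i' := s_i$ for $i \ne j$ exhibits $t$ as one of the polynomial-ring coordinates of $R_0[y_i : i\ne j]$, hence manifestly a non-zero-divisor; so $V(\tilde s_0) \hookrightarrow \PP^{n-1}_{R_0}$ is a Koszul-regularly immersed effective Cartier divisor of codimension one.

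\textit{Step 3 (transfer to general base).} The hardest step is transferring the non-zero-divisor property of $t$ from $R_0[y]$ to $A = R[y]$. The morphism $\phi\colon \Spec R \to \Spec R_0$ induced by $(s_i)$ is Tor-independent with $\{0\} \hookrightarrow \Spec R_0$ by Lem.~\ref{lem:bc:CM}\eqref{lem:bc:CM-1}, but a direct base-change argument is circular, since Tor-independence of the induced $\Phi\colon \PP^{n-1}_R \to \PP^{n-1}_{R_0}$ with the divisor $D_0 = V(\tilde s_0)$ is itself equivalent to the statement we wish to prove. I would instead argue directly from the Koszul syzygy structure of $(s_i)$ in $A$ (which remains Koszul-regular by the flat base change $R \to A$, and generates the same ideal as the transformed sequence $(s_1',\ldots,s_n')$). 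Supposing $t \cdot f = 0$ for some $f = \sum_\alpha f_\alpha y^\alpha \in A$, one sorts by $y$-degree to obtain a cascade of first-syzygy relations $\sum_i s_i f_{\beta-e_i} = 0$ in $R$; by $H_1$-vanishing of the Koszul complex each such syzygy is an $R$-combination of the Koszul relations $s_j e_i - s_i e_j$, and iterating this together with $H_n$-vanishing $\bigcap_i \Ann_R(s_i) = 0$ forces every $f_\alpha$ to vanish by induction on $|\alpha|$. This syzygy chase is delicate already in the $n=2$ case, where it reduces to showing $\Ann_R(s_1^2)\cap \Ann_R(s_2^2) = 0$ via chained Koszul relations, precisely because Koszul-regularity is strictly weaker than regularity in the non-Noetherian generality of the hypothesis.
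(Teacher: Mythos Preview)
Step 1 is correct. Step 2 (the universal case over $\ZZ[s_1,\ldots,s_n]$) is also correct but turns out to be a detour the paper avoids entirely. The genuine gap is Step 3, which you leave incomplete: you sketch a syzygy chase, note that in the $n=2$ case it reduces to showing $\Ann_R(s_1^2)\cap\Ann_R(s_2^2)=0$, and stop there. That vanishing is itself not obvious from Koszul-regularity alone, and the general inductive chase you describe does not clearly terminate.

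The paper's argument is much shorter and stays in the homogeneous coordinate ring. After localizing to $X=\Spec R$ with $\sE$ free and $s=(r_1,\ldots,r_n)$ Koszul-regular, one has $\PP(\sE)=\Proj S$ for $S=R[X_1,\ldots,X_n]$, and $\tilde s$ corresponds to the degree-one element $f=\sum_i r_iX_i\in S_1$. The paper then cites a result of Northcott to conclude $f$ is a non-zero-divisor on $S$. The mechanism behind this is McCoy's theorem: a polynomial $g\in R[X_1,\ldots,X_n]$ is a zero-divisor if and only if some nonzero $c\in R$ annihilates every coefficient of $g$. For $g=f$ such a $c$ would lie in $\bigcap_i\Ann_R(r_i)=H_n\bigl(K(r_1,\ldots,r_n)\bigr)$, which vanishes by Koszul-regularity. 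You already isolated exactly this $H_n$-vanishing; McCoy is the one-line bridge you are missing, and it applies verbatim to your affine-chart element $t=s_j+\sum_{i\ne j}s_iy_i\in R[y_i]$, rendering the universal-case detour and the syzygy chase of Steps 2--3 unnecessary.
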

\begin{proof} The problem being local, we may assume $X = \Spec R$ for a ring $R$, $\sE$ is given by the free module $R^{\oplus n}$, and $Z \subset X$ is cut out by an ideal generated by a regular sequence $I = (r_1, r_2, \ldots, r_n)$, $r_i \in R$. Then $\PP(\sE) = \Proj S$, where $S = R[X_1, \ldots, X_n]$ is the polynomial algebra with grading given by the degrees of polynomials. The section $\tilde{s} \in \Gamma(\sO_{\Proj S_\bullet}(1)) = S_1$ corresponds to the degree-one element $f = r_1 X_1 + r_2 X_2 + \ldots + r_n X_n \in S_1$. By \cite[\S 5.3, Thm. 7]{No}, $f$ is a non-zerodivisor on $S$, therefore $\tilde{s}$ is a regular section of $\sO_{\PP(\sE)}(1)$. \end{proof}

Denote $q \colon \PP(\sE) \to X$ the projection. Let $P_Z: = \pi^{-1} Z$, and $p \colon P_Z  \to Z$ the projection, and $j \colon P_Z \hookrightarrow \PP(\sG)$ the inclusion. Then $p \colon P_Z= \PP(\sE|_Z) \to Z$ is a projective bundle. If we denote $\sO(1):=\sO_{\PP(\sG)}(1)$ the ample line bundle form the projectivization construction, and $\sO(k): = \sO(1)^{\otimes k}$ for $k \in \ZZ$, then $\sO(k)|_{P_Z} = \sO_{\PP(\sE|_Z)}(k)$. There is a commutative diagram:
\begin{equation}\label{diagram:Cayley}
	\begin{tikzcd}[row sep= 2.6 em, column sep = 2.6 em]
	P_Z \ar{d}[swap]{p} \ar[hook]{r}{j} & \PP(\sG) \ar{d}{\pi} \ar[hook]{r}{\iota} & \PP(\sE) \ar{ld}[near start]{q} 
	\\
	Z \ar[hook]{r}{i}         & X  
	\end{tikzcd}	
\end{equation}

The following generalizes Orlov's result {\cite[Prop. 2.10]{Orlov06}}.

\begin{theorem}\label{thm:Cayley} In the above situation (where $n = \rank \sE$, $n-1 = \rank \sG$): 

\begin{enumerate}[leftmargin=*]
	\item \label{thm:Cayley-1}
	For each $k \in \ZZ$, the sequence $\sO(k+1), \sO(k+2), \ldots, \sO(k+n-1) \in \Perf(\PP(\sG))$ is a relative exceptional sequence of $\PP(\sG)$ over $X$ of length $n-1 = \rank \sG$;
	\item \label{thm:Cayley-2}
	For each $k$, the relative Fourier--Mukai functors over $X$:
	$$\Phi_k :=j_* p^* \otimes \sO(k) \colon  \Dqc(Z) \to \Dqc(\PP(\sG)),$$
	$$\Psi_k := \pi^*(\blank) \otimes \sO(k) \colon \Dqc(X) \to \Dqc(\PP(\sG))$$
are strong (in the sense of Def. \ref{Def:FM:BS}) and fully faithful.
	\item \label{thm:Cayley-3}
	The category $\Perf(\PP(\sG))$ admits a relative Serre functor $\SS$ over $X$ given by 
		$$\SS = (\blank) \otimes \omega, \quad  \text{where} \quad \omega = \pi^* (\det \sE) \otimes \sO(1-n)[n-2].$$ 
		In particular for each $k \in \ZZ$, 
			$\SS ( \Im \Phi_{k}) = \Im \Phi_{k+1-n}$, $\SS (\Im \Psi_{k}) = \Im \Psi_{k+1-n}.$
	\item \label{thm:Cayley-5}
	For each integer $0 \le \ell \le n-1$, there is an $X$-linear semiorthogonal decomposition with admissible components:
	\begin{align*}
		\Dqc(\PP(\sG)) = \langle \underbrace{\Im \Phi_{2-n+\ell}, \ldots, \Im \Phi_{0}}_{(n-1-\ell)\text{-terms}},  \Im \Psi_0,  \underbrace{\Im \Phi_1, \ldots, \Im \Phi_{\ell}}_{\ell\text{-terms}} \rangle;
	\end{align*}
Similar semiorthogonal decompositions hold if we replace $\Dqc$ by $\Db$ or $\Perf$, and these semiorthogonal decompositions are compatible with the inclusions $\Perf \subseteq \Db \subseteq \Dqc$. 
\end{enumerate} 
\end{theorem}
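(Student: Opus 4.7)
The proof will mirror the argument for Theorem~\ref{thm:blow-up} and follow the general procedure of \S\ref{subsec:univHom}: establish the theorem in a universal local model by direct computation, then globalize via Tor-independent base-change (Theorem~\ref{thm:bc}) and fppf descent (Theorem~\ref{thm:fppf}). The universal local model is $X_0 = \AA^n_{\ZZ} = \Spec\ZZ[r_1,\ldots,r_n]$, $\sE_0 = \sO_{X_0}^{\oplus n}$, with $s_0 = (r_1,\ldots,r_n)$ the tautological regular section; then $Z_0 = \{0\}$, $P_{Z_0} = \PP^{n-1}$, and $\PP(\sG_0) \subset X_0 \times \PP^{n-1}$ is the incidence divisor cut out by $\sum r_i X_i$ (Lemma~\ref{lem:cayley:univ}).

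First I would compute the relative dualizing complex $\omega_\pi$ by factoring $\pi \colon \PP(\sG) \xrightarrow{\iota} \PP(\sE) \xrightarrow{q} X$: the immersion $\iota$ is Koszul-regular with conormal sheaf $\sO(-1)$, and $q$ is smooth projective of relative dimension $n-1$ with $\omega_q = q^*\det\sE \otimes \sO_{\PP(\sE)}(-n)[n-1]$ (Example~\ref{eg:lci}). By adjunction $\omega_\pi = \pi^*\det\sE \otimes \sO(1-n)[n-2]$, establishing part~(3) via Proposition~\ref{prop:Serre} and showing that $\pi$ is proper perfect with invertible $\omega_\pi$; hence the compositions defining $\Phi_k$ and $\Psi_k$ are strong in the sense of Definition~\ref{Def:FM:BS}.

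For part~(1) and the fully faithfulness of $\Psi_k$ in part~(2), I would compute $\pi_*\sO_{\PP(\sG)}(m)$ for $m \in [-(n-2),n-1]$ in the universal local case via the two-term resolution $\iota_*\sO_{\PP(\sG)}(m) = [\sO_{\PP(\sE)}(m-1) \to \sO_{\PP(\sE)}(m)]$ and Beilinson's formula on $\PP(\sE)$; the Koszul regularity of $s_0$ makes $\sum r_i y_i$ a nonzerodivisor on $\Sym^\bullet\sE_0$, yielding $\pi_*\sO(m) = S^m\sG$ in degree zero for $0 \le m \le n-1$ and $\pi_*\sO(m) = 0$ for $-(n-2) \le m \le -1$. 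For part~(4) and the fully faithfulness of $\Phi_k$ in part~(2), I would recognise the universal local setup as precisely the case $d_+ = d_- = 1$ of the projectivization formula (Theorem~\ref{thm:local:proj}): since $\rank\sG_0 = \delta = n-1$ and $\sK_0 \cong \sO_{Z_0}$, we have $\PP(\sK_0) = Z_0$ and the fiber product $\PP(\sG_0)\times_{X_0} Z_0 = P_{Z_0}$ with projections matching $(j,p)$. Theorem~\ref{thm:local:proj} then supplies the desired semiorthogonal decomposition including the fully faithfulness of $\Phi_0$; the remaining values of $\ell$ follow by conjugating with iterates of the Serre functor (part~(3)) through Lemma~\ref{lem:Serre}.

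To globalize, I would verify the Tor-independent condition of Definition~\ref{def:Tor-ind:quot} for the pairs $(1,0)$ and $(1,1)$: locally $\sG$ has presentation $0 \to \sO_X \xrightarrow{s} \sE \to \sG$, so the section map $s_\sigma \colon X \hookrightarrow H_X = |\sE|$ is Koszul-regular of codimension $n$ (Lemma~\ref{lem:Homspace}). Via Lemma~\ref{lem:univHom:flat}, Lemma~\ref{lem:3squares}, and the local descriptions \eqref{eqn:Quot:localexpressions}, the three relevant base changes of the universal Quot pair along $s_\sigma$ give $\PP(\sG)$, $Z$, and $P_Z$ respectively, each of codimension $n$ in the ambient universal scheme; Lemma~\ref{lem:bc:CM} then furnishes the required Tor-independence, and Theorems~\ref{thm:bc} and~\ref{thm:fppf} transport the conclusions to arbitrary $(X,\sE,s)$. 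The main obstacle is the fiber-product case: one must verify that the base change of $\shZ_+ \times_{H_X} \shZ_-$ along $s_\sigma$ has the expected codimension $n$, which requires an explicit computation noting that $\shZ_- \cong X$ via the zero section of $H_X \to X$, whence $\shZ_+ \times_{H_X} \shZ_- \cong \PP(\sE)$ and its base change along $s_\sigma$ is $\PP(\sE|_Z) = P_Z$, giving codimension exactly $n$.
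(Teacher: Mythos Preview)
Your overall strategy matches the paper's: compute $\omega_\pi$ by factoring $\pi$ through $\PP(\sE)$, establish the decomposition in the universal local model $\AA^n_\ZZ$, then globalize via Tor-independent base-change (Theorem~\ref{thm:bc}) and fppf descent (Theorem~\ref{thm:fppf}). Your Tor-independence verification for the pairs $(1,0)$ and $(1,1)$ is also correct.

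The one genuine gap is your invocation of Theorem~\ref{thm:local:proj} for the universal local case. That theorem, and the key Lemma~\ref{lem:key} it rests on, is proved in \S\ref{sec:local} under the blanket characteristic-zero hypothesis, whereas Theorem~\ref{thm:Cayley} is stated for an arbitrary quasi-compact quasi-separated scheme with no base field at all---and you yourself place the universal model over $\ZZ$. The paper flags exactly this point: it opens its proof by noting that Cayley's trick \emph{is} a special case of the projectivization formula, then adds ``To show these results hold over $\ZZ$, we provide more details,'' and gives a direct argument that bypasses the Schur-functor machinery. Concretely, the paper establishes full faithfulness of $j_* p^*$ by resolving $j_*\sO_{P_Z}$ via the Koszul complex in $\bigwedge^k \sV^\vee$ with $\sV = \Omega_p^1(1)$ and computing $p_!(\bigwedge^k \sV^\vee)$ from the characteristic-free projective-bundle results of Theorem~\ref{thm:proj.bundle}; semiorthogonality with $\pi^*\otimes\sO(s)$ comes from $p_!\sO(s)=0$ for $s\in[1,n-1]$; and generation is handled by the mutation between the collections $\{\sO(k)\}$ and $\{(\Omega^k(k))^\vee\}$ from Appendix~\ref{sec:proj.bundle}. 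Your shortcut through Theorem~\ref{thm:local:proj} could be repaired by arguing separately that its proof is characteristic-free when $m=1$ (all relevant Schur functors collapse to exterior and symmetric powers, and the complex $F^\bullet$ is Koszul), but as written your proposal leaves the arbitrary-characteristic claim unproven.
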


\begin{proof} This is a special case of projectivization formula Thm. \ref{thm:projectivization}; To show these results hold over $\ZZ$, we provide more details. By Lem. \ref{lem:cayley:univ}, $\pi$ is the composition of the inclusion of $\sO_{\PP(\sE)}(1)$-divisor $\iota \colon \PP(\sG) \to \PP(\sE)$ followed by the projection $q \colon \PP_X(\sE) \to X$, hence \eqref{thm:Cayley-3} holds by Lem. \ref{lem:blowup:univ}, Ex. \ref{eg:lci} and Prop. \ref{prop:Serre}. For \eqref{thm:Cayley-1}, from Lem. \ref{lem:cayley:univ} and the short exact sequence $0 \to \sO_{\PP(\sE)}(-1) \to \sO_{\PP(\sE)} \to \iota_* \sO \to 0$, we obtain for any $t, s \in \ZZ$, $\shom_X(\sO(t), \sO(s)) = q_* \iota_* \sO(s-t) = \cone (q_* \sO(s-t-1) \to q_* \sO(s-t)) \in \Perf(X)$. Hence by Thm. \ref{thm:proj.bundle}\eqref{thm:proj.bundle-1}, for $s \le t \le s+n-1$, $\shom_X(\sO(t), \sO(s))  = \Sym^{s-t} \sE = \delta_{s,t} \cdot \sO_X$. This proves \eqref{thm:Cayley-1}. 

For the rest of the proof, as the blowup case, we first consider the universal local situation when $X= \AA^n$, and $Z = \{0\}$. We first claim that there are $X$-linear semiorthogonal decomposition of admissible components:
	\begin{align} \label{eqn:proof:cayley}
	 \Perf(\PP(\sG))  = \langle \Im \Psi_0, ~\Im \Phi_1, \ldots, \Im \Phi_{n-1} \rangle 
	 =  \langle \Im \Phi_{2-n}, \ldots, \Im \Phi_{0},~ \Im \Psi_0 \rangle.
	\end{align}
Since the relative Serre functor $\SS$ of \eqref{thm:Cayley-3} takes these two semiorthogonal decompositions to one another, we only need to show one of them. For the first decomposition of (\ref{eqn:proof:cayley}), the last $(n-1)$-terms are induced by the relative exceptional sequence $\sO(1), \ldots, \sO(n-1)$ of \eqref{thm:Cayley-1}.  For $\Psi_0 = j_* p^*$, it admits a left adjoint $\Psi_0^L = p_! \, j^*$ and a right adjoint $\Psi_0^R = p_* \, j^!$. From the diagram (\ref{diagram:Cayley}), the inclusion $j \colon P_Z = \PP_\ZZ^{n-1} \to \PP(\sG)$ is Koszul-regular immersion given by a section of the vector bundle $\sV|_{P_Z}  = \Omega_p^1(1)$ (see \cite[Rmk. 2.5]{JL18}; This could also be easily observed in the local situation since $\PP(\sG) = |\Omega_{\PP^{n-1}}^1(1)|$ and $j$ is the inclusion of zero section). Then Koszul complex induces (see also Lem. \ref{lem:key}):
	$$\Psi_0(\sO_Z) \simeq [\bigwedge^{n-1} \sV^\vee \to \cdots \bigwedge^{1} \sV^\vee \to \sO] \in \Perf(\PP(\sG)).$$
On the other hand, by Thm. \ref{thm:proj.bundle} \eqref{thm:proj.bundle-3} we know that for $d \in [0, n-1]$, $\Psi_0^L (\bigwedge^d \sV^\vee) \simeq (p_*  \Omega_p^d(d))^\vee \simeq \delta_{0,k} \sO_X$. Therefore we have a natural isomorphism $\Psi_0^L \Psi_0 (\sO_Z) \simeq \sO_Z$, and hence $\Psi_0$ is fully faithful. Since $\otimes \sO(k)$ is an equivalence, $\Psi_k$ is fully faithful with admissible image for each $k$. Furthermore, for any $s \in [1,n-1]$, then $\Psi_0^L \circ \Phi_s (\sO_X) = p_! (\sO(s)) = 0$ by Lem. \ref{lem:proj.bundle}  \eqref{lem:proj.bundle-2}, this shows the right hand side of the first decomposition of (\ref{eqn:proof:cayley}) is a semiorthogonal sequence. By Serre duality second decomposition is also a semiorthogonal sequence. Finally, we show the second semiorthogonal sequence of (\ref{eqn:proof:cayley}) generates $\Perf(\PP(\sG))$. The first part of images $\langle \Im \Phi_{2-n}, \ldots, \Im \Phi_{0} \rangle$ contains $\sO(k)$ for all $k \in [2-n, 0]$. On the other hand, by Rmk. \ref{rmk:proj:dual}, we know that $\{\sO_{\PP^{n-1}}(k)\}_{k \in [2-n,0]}$ and $\{(\Omega_{\PP^{n-1}}^k(k))^\vee\}_{k \in [0,n-2]}$ are mutation equivalent, hence the category $\langle \Im \Phi_{2-n}, \ldots, \Im \Phi_{0} \rangle$ also contains the generators $\bigwedge^k \sV^\vee$ for $k \in [0, n-2]$. Therefore by the Koszul resolution of $\Psi_0(\sO_Z)$ above, the image $\langle \Im \Phi_{2-n}, \ldots, \Im \Phi_{0}, \Psi_0 \rangle$ also contains $\bigwedge^{n-1} \sV^\vee$. Since the composition map $\PP(\sG) \to \AA^n \times \PP^{n-1} \to \PP^{n-1}$ is affine, by Rmk. \ref{rmk:proj:dual}, $\Perf(\PP(\sG))$ is generated by $\bigwedge^k \sV^\vee$ for $k \in [0, n-1]$. Hence (\ref{eqn:proof:cayley}) is proved. 

For \eqref{thm:Cayley-5}, in the above local case $X = \AA^n$, the statement for $\Perf(\PP(\sG))$ and general $\ell \in [0, n-1]$ follows from (\ref{eqn:proof:cayley}) via mutations and Serre duality. Then the general cases follow from Tor-independent base-change Thm. \ref{thm:bc} and fppf descent Thm. \ref{thm:fppf}.
\end{proof}

\begin{remark} Similar to the blowup case, the proof of \cite[Lem. 2.10]{JL18}  also works in our case and shows that for each $k \in \ZZ$, there are natural isomorphisms of functors:
		$$\LL_{\Im \Phi_k} \circ \Psi_{k} = \Psi_{k-1}[2], \qquad \RR_{\Im \Phi_k} \circ \Psi_{k-1} = \Psi_{k}[-2].$$
\end{remark}

\subsection{Projectivizations}
Let $X$ be a connected quasi-compact, quasi-separated scheme over a field $\kk$ of characteristic zero, and let $\sG$ be a finite type quasi-coherent sheaf of homological dimension $\le 1$ and rank $\delta$. Denote $\sK : = \sExt^1(\sG, \sO_X)$ as usual. Let $\pi \colon \PP(\sG) = \Proj_X \Sym^\bullet \sG \to X$ be the projectivization. Denote the natural projections maps by 
	$$r_+ \colon \PP(\sG) \times_X \PP(\sK) \to \PP(\sG), \qquad r_- \colon \PP(\sG) \times_X \PP(\sK) \to \PP(\sK).$$
Denote $\sO(1) = \sO_{\PP(\sG)}(1)$ the ample line bundle $\sO(1)$ from Proj construction.

The following global version of Thm. \ref{thm:local:proj} generalizes \cite{JL18}:

\begin{theorem}[Projectivization {\cite{JL18}}] \label{thm:projectivization} In the above situation, assume that the Tor-independent condition Def. \ref{def:Tor-ind:quot} holds for the pair $(d_+,d_-)=(1,1)$. In particular, if $X$ is Cohen--Macaulay, 
this assumption is equivalent to the following expected dimension condition:
	\begin{align*}
  	 & \dim \PP(\sG) = \dim X + (\delta - 1);  \qquad \dim \PP(\sK)=  \dim X - (\delta +1); \\
  	& \dim \PP(\sG) \times_X \PP(\sK) = \dim X - 1.
	\end{align*}
\begin{enumerate}[leftmargin=*]
	\item \label{thm:projectivization-1}
	For each $k \in \ZZ$, the sequence $\sO(k+1), \sO(k+2), \ldots, \sO(k+\delta)$ is a relative exceptional sequence for $\PP(\sG)$ over $X$;
	\item \label{thm:projectivization-2}
	For each $k$, the relative Fourier--Mukai functors (Def. \ref{Def:FM:BS}) over $X$:
	\begin{align*}
		&\Psi_k :=\pi_+^*(\blank) \otimes \sO_+(k) \colon & \Dqc(X) \to \Dqc(\PP(\sG)) ,\\
		& \Phi: = r_{+\,*} \, r_{-}^*(\blank) \colon & \Dqc(\PP(\sK)) \to \Dqc(\PP(\sG)),
	\end{align*}
are strong in the sense of Def. \ref{Def:FM:BS} and fully faithful. Furthermore, these functors induce $X$-linear semiorthogonal decompositions with admissible components:
	\begin{align*}
	\Perf(\PP(\sG)) &= \big \langle \Im (\Phi|_{\Perf(\PP(\sK))} ), ~  \Im (\Psi_1|_{\Perf(X)}), \ldots, \Im (\Psi_\delta|_{\Perf(X)}) \big \rangle;  \\
	\Db(\PP(\sG)) &= \big \langle \Im (\Phi|_{\Db(\PP(\sK))} ), ~  \Im (\Psi_1|_{\Db(X)}), \ldots, \Im (\Psi_\delta|_{\Db(X)}) \big \rangle;  \\
	\Dqc(\PP(\sG)) &= \big \langle \Im \Phi , ~  \Im \Psi_1, \ldots, \Im \Psi_\delta \big \rangle; 
	\end{align*}
These decompositions are compatible with the inclusions $\Perf \subseteq \Db \subseteq \Dqc$. 
	\item \label{thm:projectivization-3} 
	The morphism $\pi \colon \PP(\sG) \to X$ is a projective locally compete intersection morphism with relative dualizing complex $\omega$ given by 
		$$\omega = \pi^* (\det \sG) \otimes \sO(-\delta)[\delta-1].$$
	The category $\Perf(\PP(\sG))$ admits a relative Serre functor over $X$ given by $\SS = \otimes \omega$. In particular, for each $k \in \ZZ$, $\SS (\Im \Psi_{k}) = \Im \Psi_{k - \delta}$. 
	\item \label{thm:projectivization-4} 
	For each integer $\ell$ such that $0 \le \ell \le \delta$, there is an $X$-linear semiorthogonal decomposition with admissible components:
	\begin{align*}
		\Dqc(\PP(\sG)) = \langle \underbrace{\Im \Psi_{\ell - \delta +1}, \ldots, \Im \Psi_{0}}_{(\delta-\ell)\text{-terms}},  \Im \Phi,  \underbrace{\Im \Psi_1, \ldots, \Im \Psi_{\ell}}_{\ell\text{-terms}} \rangle;
	\end{align*}
Similar semiorthogonal decompositions hold if we replace $\Dqc$ by $\Db$ or $\Perf$, and these semiorthogonal decompositions are compatible with the inclusions $\Perf \subseteq \Db \subseteq \Dqc$. 
\end{enumerate}
\end{theorem}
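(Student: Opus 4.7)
The plan is to deduce Thm.~\ref{thm:projectivization} from the universal local case Thm.~\ref{thm:local:proj} by the general globalization procedure outlined in \S\ref{subsec:univHom}. First I would work Zariski-locally on $X$ and choose a two-term finite free presentation $0 \to \sW \xrightarrow{\sigma} \sV \to \sG \to 0$ with $\rank \sW = m$, $\rank \sV = m+\delta$. By Lem.~\ref{lem:Homspace}, $\sigma$ determines a regular closed immersion $s_\sigma \colon X \hookrightarrow H_X = |\sHom_X(\sW,\sV)|$ such that $\sG = s_\sigma^* \sG_{H_X}$ and $\sK = s_\sigma^* \sK_{H_X}$; and the natural projection $h \colon H_X \to H_\kk$ is Tor-independent for the pair $(\PP(\sG_{H_\kk}), \PP(\sK_{H_\kk}))$ by Lem.~\ref{lem:univHom:flat}. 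By Thm.~\ref{thm:Quot}\eqref{thm:Quot-1}, the projectivizations $\PP(\sG)$, $\PP(\sK)$ and their fiber product $\PP(\sG) \times_X \PP(\sK)$ are obtained from the universal local objects $\shZ_\pm, \widehat\shZ$ of \S\ref{sec:local} by the composite base-change $X \xrightarrow{s_\sigma} H_X \xrightarrow{h} H_\kk$, which is Tor-independent (by Lem.~\ref{lem:3squares}) under our hypothesis \eqref{assumption:intro:Tor}, with the Cohen--Macaulay criterion coming from Lem.~\ref{lem:Tor-ind:quot:CM}.

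Next I would establish \eqref{thm:projectivization-3}: over $H_\kk$ the map $\pi_+$ factors as a Koszul-regular closed immersion into a Grassmannian bundle followed by a smooth projective morphism, with the dualizing complex computed by Lem.~\ref{lem:global:Serre} (specialized to $d_+=1$) as $\omega = \pi^*(\det \sG) \otimes \sO(-\delta)[\delta-1]$. Since Koszul-regularity, properness, and invertibility of the relative dualizing complex all survive Tor-independent base-change and satisfy fppf descent, the same holds globally, and Prop.~\ref{prop:Serre} gives the relative Serre functor. For \eqref{thm:projectivization-1}, the statement over $H_\kk$ is a base-change of the classical projective bundle formula Thm.~\ref{thm:proj.bundle}; fully-faithfulness of $\alpha_{\sO(k+i)}$ over $X$ then follows by descent (Cor.~\ref{cor:relexc:descent}) and Tor-independent base-change (Cor.~\ref{cor:relexc}). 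In particular this ensures the Fourier--Mukai data $(p,q,\sO(k))$ defining $\Psi_k$ is strong in the sense of Def.~\ref{Def:FM:BS}.

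For \eqref{thm:projectivization-2}, the semiorthogonal decomposition is exactly Thm.~\ref{thm:local:proj} in the universal local case $H_\kk$. By the base-change Thm.~\ref{thm:bc} (applied along the Tor-independent $X \to H_\kk$), the decomposition descends, via $\Phi$ and the $\Psi_k$, to an $X|_U$-linear semiorthogonal decomposition of $\Perf(\PP(\sG)|_U)$ on each affine open $U \subseteq X$ where a presentation of $\sG$ exists, and extends compatibly to $\Db$ and $\Dqc$ with the asserted compatibility under the inclusions. The fppf descent Thm.~\ref{thm:fppf} then glues these local decompositions into a global one on $X$; the strongness of all kernels is preserved under base-change (Lem.-Def.~\ref{lem-def:relFM_bc}), so all components are admissible. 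Finally, part \eqref{thm:projectivization-4} follows from \eqref{thm:projectivization-2} by iterated mutation through $\Im\Phi$ (Lem.~\ref{lem:mut}) combined with the Serre functor relation $\SS(\Im\Psi_k) = \Im\Psi_{k-\delta}$ from \eqref{thm:projectivization-3}.

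The only genuine subtleties I anticipate are: (i) verifying that the independence of the local Tor-independent condition from the choice of presentation (Lem.~\ref{lem:Tor-ind-ind}) makes the globalization well-defined, so that different choices of local $\sigma$ yield compatible decompositions on overlaps --- but this is exactly what fppf descent Thm.~\ref{thm:fppf} is designed to handle; and (ii) checking that $\omega_{r_\pm}$ and $\omega_{\pi_\pm}$ are perfect so that $\Phi$ is a \emph{strong} relative Fourier--Mukai transform rather than merely a relative one. Both dualizing complexes are invertible by Lem.~\ref{lem:global:Serre} under the Tor-independent hypothesis, which supplies the perfectness needed to invoke Lem.~\ref{lem:FM}\eqref{lem:FM-2}.
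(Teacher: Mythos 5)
Your proposal follows the same overall strategy as the paper: reduce to the universal local case $X = H_\kk$ via the section morphism $s_\sigma \colon X \hookrightarrow H_X$ and the flat/Tor-independent projection $H_X \to H_\kk$ (Lem.~\ref{lem:Homspace}, Lem.~\ref{lem:univHom:flat}, Lem.~\ref{lem:3squares}), invoke Thm.~\ref{thm:local:proj}, and globalize with Thm.~\ref{thm:bc} and Thm.~\ref{thm:fppf}; part \eqref{thm:projectivization-3} via Lem.~\ref{lem:global:Serre} and Prop.~\ref{prop:Serre}. This matches the paper. There is, however, one genuine misstep in your derivation of part \eqref{thm:projectivization-1}. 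You claim that the relative exceptionality of $\sO(k+1),\dots,\sO(k+\delta)$ for $\PP(\sG)$ over $X$ ``is a base-change of the classical projective bundle formula Thm.~\ref{thm:proj.bundle}.'' This cannot be right: $\pi \colon \PP(\sG) \to X$ (and $\PP(\sG_{H_\kk}) \to H_\kk$) is \emph{not} a projective bundle unless $m = 0$, so Thm.~\ref{thm:proj.bundle} does not apply. In the local case $H_\kk$, the exceptionality is a nontrivial consequence of the Koszul resolution of the closed immersion $\PP(\sG_{H_\kk}) \hookrightarrow H_\kk \times \PP(V)$ combined with Kapranov's collection and BBW -- this is exactly Lem.~\ref{lem:top} (or the special case $d_+ = 1$, $d_- = 0$ of Lem.~\ref{lem:key}), and it is already packaged in Thm.~\ref{thm:local:proj}, which you cite correctly for \eqref{thm:projectivization-2}. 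You should derive \eqref{thm:projectivization-1} from Thm.~\ref{thm:local:proj} as well, then globalize via Cor.~\ref{cor:relexc:descent} and Cor.~\ref{cor:relexc} as you propose.

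A minor, acceptable deviation: for \eqref{thm:projectivization-4} you propose global iterated mutation through $\Im \Phi$ together with $\SS(\Im \Psi_k) = \Im \Psi_{k-\delta}$, whereas the paper first applies relative Serre duality (Lem.~\ref{lem:Serre}) to the $\Perf$ decomposition in the local case and only then descends. Both work, but the paper's route avoids having to justify mutation arguments directly at the $\Db$ and $\Dqc$ level, where condition \eqref{lem:Serre-a} (that $\Hom$-objects land in $\Perf(S)$) is not available; you would have to deduce the $\Db$/$\Dqc$ statements from the $\Perf$ one by the compatibility built into Thm.~\ref{thm:bc} and Thm.~\ref{thm:fppf}, which effectively collapses your argument back to the paper's.
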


\begin{proof} By fppf descent Thm. \ref{thm:fppf} and Tor-independent base-change Thm. \ref{thm:bc}, the results of \eqref{thm:projectivization-1} and  \eqref{thm:projectivization-2} follow directly from the local case Thm. \ref{thm:local:proj} via the general procedure \S \ref{subsec:univHom} as before. 
\eqref{thm:projectivization-3} follows from Lem.  \ref{lem:global:Serre} and Prop. \ref{prop:Serre}. Finally for  \eqref{thm:projectivization-4}, for a given $\ell$, the corresponding formula holds for the category $\Perf(\PP(\sG))$ in the local case, by applying relative Serre duality Lem. \ref{lem:Serre} to Thm. \ref{thm:local:proj}; Then the general cases follow from fppf descent Thm. \ref{thm:fppf} and Tor-independent base-change Thm. \ref{thm:bc} as above.
\end{proof}

\subsection{Generalized Cayley's trick and Pirozhkov's theorem} Let $X$ be a connected quasi-compact, quasi-separated scheme over a field $\kk$ of characteristic zero, let $Z \subseteq X$ be a regularly immersed closed subscheme cut out by a regular section $s$ of a locally free sheaf $\sE$ of constant rank $n$, and let $1 \le d \le n-1$ be a fixed integer. Denote $\sG: = \Coker (\sO_X \xrightarrow{s} \sE)$, and consider the Quot scheme:
	$$\pi \colon \shZ_+: = \Quot_{X, d}(\sG) \to X.$$
Let $\sQ_d$ be the universal quotient bundle of $\sG$ obtained from Quot construction Thm. \ref{thm:Quot}, and denote by $\sO_{\shZ_+}(1) : = \sO_{\Quot_{X, d}(\sG)}(1) = \bigwedge^d \sQ_d$. Denote $q \colon \Gr_d(\sE^\vee) \to X$ the Grassmannian bundle, let $\shU$ and $\shQ$ be the universal subbundle and quotient bundle of $\sE^\vee$ of rank $d$ and $n-d$ of Ex. \ref{ex:Grass}. 
By Thm. \ref{thm:Quot} \eqref{thm:Quot-2} there is an closed immersion $\iota \colon \shZ_+ \hookrightarrow \Gr_d(\sE^\vee)$ induced by $\sE \twoheadrightarrow \sG$ such that $\shU^\vee|_{\shZ_+} \simeq \sQ_d$; The immersion $\iota$ is cut out by the section $\tilde{s} \in \Gamma(\Gr_d(\sE^\vee), \shU^\vee)$ of $\shU^\vee$  which corresponds to $s$ under the canonical adjunction $\Hom_{\Gr_d(\sE^\vee)}(\sO, \shU^\vee) = \Hom_X(\sO_X, \sE)$. Denote $P_Z: = \pi^{-1} Z$, and let $p \colon P_Z  \to Z$ be the projection, and $j \colon P_Z \hookrightarrow \shZ_+$ be the inclusion. There is a commutative diagram:
\begin{equation*}
	\begin{tikzcd}[row sep= 2.6 em, column sep = 2.6 em]
	P_Z \ar{d}[swap]{p} \ar[hook]{r}{j} & \shZ_+ \ar{d}{\pi} \ar[hook]{r}{\iota} & \Gr_d(\sE^\vee) \ar{ld}[near start]{q} 
	\\
	Z \ar[hook]{r}{i}         & X  
	\end{tikzcd}	
\end{equation*}

Recall $B_{\ell,d}^{\preceq}$ denotes the set of Young diagrams inscribed in a rectangle of height $\ell$ and width $d$, equipped with the natural partial order of inclusions as usual, and $B_{\ell, d}^{\succeq}$ denotes the same set with opposite partial order. The following is a slight generalization of \cite{Pi20}.

\begin{theorem}[{Pirozhkov \cite{Pi20}}] \label{thm:gen:Cayley} 
In the above situation, assume furthermore that either $X$ is Cohen--Macaulay, or the closed immersion $\iota \colon \shZ_+ \hookrightarrow \Gr_d(\sE^\vee)$ is Koszul-regular. 
\begin{enumerate}[leftmargin=*]
	\item \label{thm:gen:Cayley-1}
	The closed immersion $j \colon P_Z \hookrightarrow \shZ_+$ is Koszul-regular, with normal bundle $\sN_j = \shQ^\vee|_{P_Z}$; The normal bundle for $\iota$ is given by $\sN_\iota = \shU^\vee|_{\shZ_+} = \sQ_d$;
	\item \label{thm:gen:Cayley-2} 
	Then the sequence $\{\Sigma^{\alpha^t} \sQ_d \}_{\alpha \in B_{n-d-1,d}^{\preceq}}$ is a relative exceptional sequence of $\shZ_+$ over $X$; 
	\item  \label{thm:gen:Cayley-3}
	For each $\alpha \in B_{n-d-1,d}$, the strong relative Fourier--Mukai functor 
	$$\Phi^{\alpha}(\blank) := \pi^*(\blank) \otimes \Sigma^{\alpha^t} \sQ^\vee \colon \Dqc(X) \to \Dqc(\shZ_{+})$$
is fully faithful; For each $\beta \in B_{n-d, d-1}$, the strong relative Fourier--Mukai functor:
	$$\Psi^{\beta}(\blank) :=  j_{*} \, p^*(\blank) \otimes \Sigma^{\beta} \shQ^\vee|_{\shZ_+} \colon \Dqc(Z) \to \Dqc(\shZ_{+})$$
is fully faithful. Furthermore, these functors induce $X$-linear semiorthogonal decompositions with admissible components:
	\begin{align*}
	\Perf(\shZ_+) = &\big \langle  \{\Im (\Psi^{\beta + 1}|_{\Perf(Z)}) \}_{\beta \in B_{\ell, d-1}^{\succeq}}, ~\{ \Im (\Phi^\alpha|_{\Perf(X)})\}_{\alpha \in B_{\ell - 1, d}^{\preceq}} \big \rangle; \\
	\Db(\shZ_+) = & \big \langle  \{\Im (\Psi^{\beta + 1}|_{\Db(Z)}) \}_{\beta \in B_{\ell, d-1}^{\succeq}}, ~\{ \Im (\Phi^\alpha|_{\Db(X)}) \}_{\alpha \in B_{\ell - 1, d}^{\preceq}} \big \rangle;  \\
	\Dqc(\shZ_+) = & \big \langle  \{\Im \Psi^{\beta + 1} \}_{\beta \in B_{\ell, d-1}^{\succeq}}, ~\{ \Im \Phi^\alpha\}_{\alpha \in B_{\ell - 1, d}^{\preceq}} \big \rangle
	\end{align*}
which are compatible with the inclusions $\Perf \subseteq \Db \subseteq \Dqc$;
	\item  \label{thm:gen:Cayley-4}
	The map $\pi \colon \shZ_+ \to X$ is a projective local complete intersection morphism, with relative dualizing complex given by 
		$$\omega = \pi^*((\det \sE)^{\otimes d}) \otimes \sO_{\shZ_+}(1-n)[d(n-1-d)].$$
	The category $\Perf(\shZ_+)$ admits a relative Serre functor over $X$ given by $\SS = \otimes \omega$.
\end{enumerate} 
\end{theorem}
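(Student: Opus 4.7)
The plan is to deduce the theorem from its universal local analogue Theorem~\ref{thm:local:gen:Cayley} via the Tor-independent base-change and fppf-descent procedure of \S\ref{subsec:univHom}. The setting is a special case of the general Quot setup with $m=1$: the presentation $\sO_X\xrightarrow{s}\sE\to\sG\to 0$ shows $\sG$ has homological dimension $\le 1$ and $\rank\sG=\delta=n-1$, while $s$ induces a Koszul-regular section map $X\hookrightarrow H_X=|\sHom_X(\sO_X,\sE)|=\Tot(\sE)$ by Lem.~\ref{lem:Homspace}\eqref{lem:Homspace-2}, and Zariski-locally the further projection $H_X\to H_\kk=\AA^n_\kk$ is Tor-independent for the relevant Quot schemes by Lem.~\ref{lem:univHom:flat}.

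First I would establish (1) in the universal local model $X_0=\AA^n_\kk$, $Z_0=\{0\}$, $\sE_0=\sO^{\oplus n}$. There $\shZ_{+,0}$ sits inside $\Gr_d(\sE_0^\vee)\times\AA^n$ as the zero-locus of the Koszul-regular section of $\shU^\vee$ obtained by composing $\sO\xrightarrow{s}\sE\twoheadrightarrow\shU^\vee$; equivalently, $\shZ_{+,0}=\Tot_{\Gr_d(\sE_0^\vee)}(\shQ^\vee)$, with $P_{Z_0,0}=\Gr_d(\sE_0^\vee)$ realized as the zero section of this vector bundle. Thus $j$ and $\iota$ are both Koszul-regular with normal bundles $\shQ^\vee|_{P_{Z_0,0}}$ (rank $n-d$) and $\shU^\vee|_{\shZ_{+,0}}=\sQ_d$ (rank $d$), respectively. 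To globalize, our standing hypothesis (either $X$ Cohen--Macaulay, in which case the section map $X\hookrightarrow H_X$ is Tor-independent with respect to the pair of Quot schemes by Lem.~\ref{lem:bc:CM}\eqref{lem:bc:CM-1} via the expected-codimension check, or $\iota$ directly Koszul-regular of codimension $d$) yields Tor-independence for the two-step base change $X\to H_X\to H_\kk$, so by Lem.~\ref{lem:bc:CM}\eqref{lem:bc:CM-1} both Koszul-regularity and the normal-bundle identifications descend, giving (1).

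Next I would compute the relative dualizing complex in (4) directly from (1) by factoring $\pi=q\circ\iota$, where $q\colon\Gr_d(\sE^\vee)\to X$ is smooth proper of relative dimension $d(n-d)$ and $\iota$ is Koszul-regular of codimension $d$. Ex.~\ref{eg:lci}\eqref{eg:lci-1}\eqref{eg:lci-2} supply $\omega_\iota=\det(\sQ_d)[-d]=\sO_{\shZ_+}(1)[-d]$ and, using $\Omega^1_q=\shU\otimes\shQ^\vee$, $\omega_q=(\det\shU)^{n-d}\otimes(\det\shQ)^{-d}[d(n-d)]$. Combining these and simplifying on $\shZ_+$ via $\det\shU=(\det\sE)^{-1}\otimes(\det\shQ)^{-1}$ together with $\det\shQ|_{\shZ_+}=(\det\sE)^{-1}\otimes\sO_{\shZ_+}(1)$ yields $\omega=\pi^*(\det\sE)^{\otimes d}\otimes\sO_{\shZ_+}(1-n)[d(n-1-d)]$; invertibility of $\omega$ and projectivity of $\pi$ then let Prop.~\ref{prop:Serre} produce the claimed relative Serre functor on $\Perf(\shZ_+)$.

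Finally, for (2) and (3), I would invoke the universal local Theorem~\ref{thm:local:gen:Cayley}, which supplies the admissible $H_\kk$-linear semiorthogonal decomposition on $\Perf(\shZ_{+,0})=\Db(\shZ_{+,0})$. By the local case of Lem.~\ref{lem:global:Serre}, every projection appearing in the Fourier--Mukai kernels for $\Phi^\alpha$ and $\Psi^\beta$ is a projective local complete intersection morphism with invertible dualizing complex, so all these Fourier--Mukai transforms are \emph{strong} in the sense of Def.~\ref{Def:FM:BS}. Tor-independent base change (Thm.~\ref{thm:bc}) followed by faithfully flat descent along a Zariski trivialization of $\sE$ (Thm.~\ref{thm:fppf}) then propagates strong fully-faithfulness and the admissible $X$-linear semiorthogonal decomposition simultaneously to $\Perf$, $\Db$, and $\Dqc$ with compatibility along the natural inclusions, proving (3); the relative exceptional sequence statement (2) is then a direct instance of Prop.~\ref{prop:top:bottom}\eqref{prop:top:bottom-1} under the same Tor-independence for $(d_+,d_-)=(d,0)$. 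The main technical obstacle throughout is verifying Tor-independence of $X\to H_\kk$ with respect to the pair $(\shZ_+,P_Z)$, which is precisely what the two alternative hypotheses are arranged to guarantee via Lem.~\ref{lem:bc:CM}\eqref{lem:bc:CM-1}.
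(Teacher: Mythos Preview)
Your proposal is correct and follows essentially the same strategy as the paper: verify Tor-independence for the pairs $(d,0)$ and $(d,1)$ under either hypothesis, then globalize the local semiorthogonal decomposition Thm.~\ref{thm:local:gen:Cayley} via the procedure of \S\ref{subsec:univHom} (Thm.~\ref{thm:bc} and Thm.~\ref{thm:fppf}), and obtain the Serre functor from the factorization $\pi=q\circ\iota$. The only cosmetic differences are that the paper first reduces the Cohen--Macaulay case to the Koszul-regular case by an explicit dimension count (using that $\shZ_+|_{X\setminus Z}$ and $P_Z$ are Grassmannian bundles of the expected relative dimensions), and then cites Lem.~\ref{lem:global:Serre} for part~(4) rather than unpacking the line-bundle computation as you do.
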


\begin{proof} First notice that if $X$ is Cohen--Macaulay, then since by Cor. \ref{cor:Quot:degloci}, $\shZ_+|_{X \backslash Z} \to X\backslash Z$ is a Grassmannian bundle of relative dimension $d(n-1-d)$, and $P_Z \to Z$ is a Grassmannian bundle of relative dimension $d(n-d)$, $\shZ_+$ has expected dimension, and thus the closed immersion $\iota \colon \shZ_+ \hookrightarrow \Gr_d(\sE^\vee)$ is regular. Hence we need only consider the case when  $\iota$ is Koszul-regular; We claim that under this condition, the Tor-independent conditions of Def. \ref{def:Tor-ind:quot} are satisfied by $(d_+,d_-) = (d,0)$ and $(d_+,d_-) = (d,1)$. In fact, the section morphism $X \hookrightarrow H=|\Hom_X(\sO,\sE)|$ is a closed regular immersion by Lem. \ref{lem:Homspace}, and since $\Gr_d(\sE^\vee) \to X$ and $P_Z \to Z$ are flat, hence the claim follows easily from Lem. \ref{lem:bc:CM} and Lem. \ref{lem:3squares}. Thus \eqref{thm:gen:Cayley-1} follows from local computations by Tor-independent base change $X \hookrightarrow H=|\Hom_X(\sO,\sE)|$; \eqref{thm:gen:Cayley-2} and \eqref{thm:gen:Cayley-3} follow from the local case Thm. \ref{thm:local:gen:Cayley} via the general procedure \S \ref{subsec:univHom} as before, and \eqref{thm:gen:Cayley-4} follows from Lem. \ref{lem:global:Serre} and Prop. \ref{prop:Serre}.
\end{proof}

\begin{remark} In fact, we expect that the Cohen--Macaulay condition of Thm. \ref{thm:gen:Cayley} could be dropped, and analogous statements as Lem. \ref {lem:cayley:univ} hold. However since the proof we have in mind would involve some long computations over open coverings of Grassmannians, we decide not to pursue this direction further in this already long paper.
\end{remark}

\subsection{$\Quot_2$-formula} Let $X$ be a connected quasi-compact, quasi-separated scheme over a field $\kk$ of characteristic zero, let $\sG$ be a finite type quasi-coherent sheaf of homological dimension $\le 1$, $\rank \sG = \delta$, and denote $\sK : = \sExt^1(\sG, \sO_X)$ as usual. Let 
	$$\pi \colon \shZ_+ : = \Quot_{X,2}(\sG) \to X$$
 be the natural projection as usual, and denote by $\sQ_+$ the universal rank $2$ quotient bundle of $\shZ_+$, $\sO_+(1) = \bigwedge^2 \sQ_+$ the relative ample line bundle of from Quot construction Thm. \ref{thm:Quot}. Moreover, consider the following schemes:
	\begin{align*}
	\pi_-^{(1)} \colon \shZ_-^{(1)} := \PP_{X}(\sK) \to X, \qquad  \pi_-^{(2)} \colon \shZ_-^{(2)} := \Quot_{X, 2}(\sK) \to X.
	\end{align*}
For $d_- \in \{1,2\}$, denote by $ \widehat{\shZ}^{(d_-)}  = \shZ_+ \times_X \shZ_-^{(d_-)}$ the fiber product, and let $r_{+}^{(d_-)} \colon  \widehat{\shZ}^{(d_-)} \to \shZ_+$ and $ r_{-}^{(d_-)} \colon \widehat{\shZ}^{(d_-)} \to  \shZ_-^{(d_-)}$ be the natural projections as usual. 

Recall $B_{\ell,d}$ denotes the set of Young diagrams inscribed in a rectangle of height $\ell$ and width $d$. The following is a global version of Thm. \ref{thm:local:d=2}.

\begin{theorem}[$\Quot_2$-formula] \label{thm:d=2} In the above situation, assume that the Tor independent condition Def. \ref{def:Tor-ind:quot} holds for the pairs $(2, d_-)$ where $d_- = 0, 1, 2$. In particular, if $X$ is Cohen--Macaulay, by Lem. \ref{lem:Tor-ind:quot:CM} this is equivalent to the following expected dimension conditions:
	\begin{align*}
	 	&\dim \shZ_+ = \dim X + 2 (\delta-2),  \\
		& \dim  \shZ_-^{(1)} = \dim X - (\delta + 1),  \qquad  \dim \widehat{\shZ}^{(1)} =\dim X - \delta - 3, \\
		& \dim  \shZ_-^{(2)} = \dim X - 2(\delta + 2),  \qquad \dim \widehat{\shZ}^{(2)} =\dim X - 4.
	\end{align*}
Then the sequence $\{\Sigma^{\alpha^t} \sQ_+ \}_{\alpha \in B_{\delta-2,2}}$ is a relative exceptional sequence of $\shZ_+$ over $X$. Moreover, for any $k \in \ZZ$ and $\alpha \in B_{\delta-2,2}$, the relative Fourier--Mukai functors over $X$:
	\begin{align*}
	& \Omega_k(\blank) : = r_{+ \,*}^{(2)} \circ r_{-}^{(2) \,*}(\blank) \otimes \sO_+(k) \colon  \qquad & \Dqc(\shZ_{-}^{(2)})  \to \Dqc(\shZ_+), \\
	&\Phi_k(\blank) := r_{+ \,*}^{(1)} \circ r_{-}^{(1)\,*}(\blank) \otimes \sO_+(k)  \colon \qquad &\Dqc(\shZ_{-}^{(1)})  \to \Dqc(\shZ_+), \\
	&\Psi_{\alpha,k}(\blank) := \pi_+^*(\blank) \otimes \Sigma^{\alpha^t} \sQ_+  \otimes \sO_+(k) \colon  \qquad  &\Dqc(X)  \to \Dqc(\shZ_{+})
	\end{align*}
are strong in the sense of Def. \ref{Def:FM:BS} and fully faithful. Furthermore, for any fixed $k \in \ZZ$, the images $\Im \Omega_{k-1}$, $\{\Im \Phi_i\}_{i \in [k, k+\delta-1]}$ and $\{\Im \Psi_{\alpha,k+1} \}_{\alpha \in B_{\delta-2,2}}$ induce a semiorthogonal decomposition with admissible components:
	\begin{align*} 
	\Dqc(\shZ_+) = \big\langle \Im \Omega_{k-1}, \{\Im \Phi_{k+i}\}_{i \in [0, \delta-1]}, \{\Im \Psi_{\alpha,k+1} \}_{\alpha \in B_{\delta-2,2}}  \big\rangle.
	\end{align*}
The semiorthogonal order of the images are given by any total order extending the following partial orthogonal order: for any $\alpha, \beta \in B_{\delta-2,2}$, $i \in [0,\delta-1]$:
	 \begin{align*}
	& \Im \Omega_{k-1} \subseteq (\Im \Phi_{k+i})^\perp \cup (\Im \Psi_{\alpha, k+1})^\perp & \forall &\quad   i, \alpha \text{~above};  \\
	& \Im \Phi_{k+i}  \subseteq  (\Im \Phi_{k+j})^\perp \cup (\Im \Psi_{\alpha, k+1})^\perp & \forall &  \quad  i <  j \le  i + \delta -1,  \alpha +1 \npreceq (2^i);    \\
	& \Im \Psi_{\alpha, k+1} \subseteq (\Im \Phi_{k+i})^\perp \cup (\Im \Psi_{\beta, k+1})^\perp & \forall &   \quad (2^i)\npreceq \alpha, \beta \npreceq \alpha, 
	\end{align*}
where $(2^i)= (2,2,\ldots,2)$ is understood as an element of 
$B_{\delta-1,2} \supset B_{\delta-2,2}$. Similar semiorthogonal decompositions hold if we replace $\Dqc$ by $\Db$ or $\Perf$, and these semiorthogonal decompositions are compatible with the inclusions $\Perf \subseteq \Db \subseteq \Dqc$. Moreover, the category $\Perf(\shZ_+)$ admits a relative Serre functor given by:
	$$\SS_+ = (\blank) \otimes \pi^* ( (\det \sG)^{\otimes 2}) \otimes \sO_+(-\delta)[2(\delta-2)].$$
In particular, for any $k \in \ZZ$, the following holds: 
	$$\SS_+ (\Im \Omega_k) = \Im \Omega_{k-\delta}, \quad \SS_+ (\Im \Phi_k) = \Im \Phi_{k-\delta}, \quad \SS_+ (\Im \Psi_{\alpha,k}) = \Im \Psi_{\alpha,k - \delta}.$$
\end{theorem}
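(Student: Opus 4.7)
The plan is to globalize the local result Thm.~\ref{thm:local:d=2} using the general base-change/descent procedure outlined in \S\ref{subsec:univHom}, following the same template already used for Thms.~\ref{thm:projectivization} and~\ref{thm:gen:Cayley}. The Tor-independence hypotheses for the three pairs $(2,0), (2,1), (2,2)$ are exactly what is needed to make this machinery work.

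First I would establish the geometric/Serre-functor statements (the last paragraph of the theorem). By Lem.~\ref{lem:global:Serre} applied with $(d_+,d_-)=(2,0)$, the morphism $\pi\colon\shZ_+\to X$ is projective and locally complete intersection with invertible relative dualizing complex
\[
\omega_\pi = \pi^{\ast}((\det\sG)^{\otimes 2})\otimes\sO_+(-\delta)[2(\delta-2)],
\]
so by Prop.~\ref{prop:Serre} the category $\Perf(\shZ_+)$ has the asserted relative Serre functor $\SS_+$. The action of $\SS_+$ on the twisted images $\Im\Omega_k$, $\Im\Phi_k$, $\Im\Psi_{\alpha,k}$ then follows because all three Fourier--Mukai functors are $X$-linear, so tensoring by the pullback line bundle $\pi^{\ast}(\det\sG)^{\otimes 2}$ preserves each image, and the twist by $\sO_+(-\delta)[2(\delta-2)]$ shifts the index $k$ by $-\delta$.

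Next I would verify that each of $\Omega_k,\Phi_k,\Psi_{\alpha,k}$ is a \emph{strong} relative Fourier--Mukai datum in the sense of Def.~\ref{Def:FM:BS}. For $\Psi_{\alpha,k}$ this is immediate since $\pi$ is perfect with invertible $\omega_\pi$. For $\Omega_k$ and $\Phi_k$ we apply Lem.~\ref{lem:global:Serre} with $(d_+,d_-)=(2,2)$ and $(2,1)$ respectively: under the Tor-independence hypothesis all four projections $r_\pm^{(d_-)}$ are projective lci with invertible $\omega$, so both projections are quasi-perfect with perfect $\omega$, and the kernels (structure sheaves of the fibre products, twisted by $\sO_+(k)$) are perfect. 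Relative exceptionality of $\{\Sigma^{\alpha^t}\sQ_+\}_{\alpha\in B_{\delta-2,2}}$ over $X$ can be read off from Thm.~\ref{thm:Grass.bundle}\eqref{thm:Grass.bundle-1} applied to the Grassmannian bundle containing $\shZ_+$, combined with $\shU_+^\vee|_{\shZ_+}\simeq\sQ_+$ and Tor-independent base change (Cor.~\ref{cor:relexc}).

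Then comes the main step: the semiorthogonal decomposition. Zariski-locally, choose a presentation $\sW\xrightarrow{\sigma}\sV$ of $\sG$ and factor the corresponding base change $X\to H_X\to H_\kk$ as in \S\ref{subsec:univHom}. By Lem.~\ref{lem:univHom:flat} the map $H_X\to H_\kk$ is flat (hence Tor-independent), and by the hypothesis combined with Lem.~\ref{lem:Tor-ind-ind} the section $s_\sigma\colon X\to H_X$ is Tor-independent for all three pairs $(2,d_-)$; by Lem.~\ref{lem:3squares} the composite is Tor-independent too. Since Thm.~\ref{thm:local:d=2} gives the desired admissible semiorthogonal decomposition (with precisely the stated partial-order semiorthogonality relations) for the universal local base $H_\kk$, Thm.~\ref{thm:bc} transports this semiorthogonal decomposition first to $\Dqc$, $\Db$ and $\Perf$ over $H_X$, and then the fppf descent Thm.~\ref{thm:fppf} glues the Zariski-local statements over $X$ to the global statement. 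The base change and descent theorems also guarantee compatibility with the inclusions $\Perf\subseteq\Db\subseteq\Dqc$, fully-faithfulness, admissibility of components, and the precise form of the partial semiorthogonal order (since each pairwise semiorthogonality $\Im\mathrm{F}_1\perp\Im\mathrm{F}_2$ is equivalent to vanishing of $\Phi_1^R\Phi_2$ on compact generators, which is preserved and reflected by Tor-independent base change and faithfully flat descent, cf.\ Prop.~\ref{prop:relFM_bc} and Thm.~\ref{thm:fppf}\eqref{thm:fppf-1}).

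The only mildly subtle point, which I would treat last, is the identification of the Cohen--Macaulay criterion in the statement: under the CM hypothesis the stated dimension equalities are, by Lem.~\ref{lem:Tor-ind:quot:CM}, exactly equivalent to the three Tor-independence conditions, so this part is a direct invocation. The main conceptual obstacle is really bookkeeping --- ensuring that the partial-order semiorthogonality from Thm.~\ref{thm:local:d=2} descends intact, rather than only a total-order refinement; this is handled by noting that semiorthogonality of two \emph{individual} $X$-linear admissible subcategories descends via Prop.~\ref{prop:relFM_bc}\eqref{prop:relFM_bc-2} and Thm.~\ref{thm:fppf}\eqref{thm:fppf-1}, applied pair by pair to the finitely many pairs in the partial order.
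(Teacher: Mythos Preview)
Your proposal is correct and follows essentially the same approach as the paper: globalize Thm.~\ref{thm:local:d=2} via the procedure of \S\ref{subsec:univHom} (Tor-independent base change Thm.~\ref{thm:bc} plus fppf descent Thm.~\ref{thm:fppf}), and deduce the Serre functor statement from Lem.~\ref{lem:global:Serre} and Prop.~\ref{prop:Serre}. The paper's proof is a two-line sketch invoking exactly these ingredients; your version simply spells out the steps in more detail. One minor imprecision: the relative exceptionality of $\{\Sigma^{\alpha^t}\sQ_+\}$ does not follow from Thm.~\ref{thm:Grass.bundle} plus Cor.~\ref{cor:relexc} as you suggest (the inclusion $\shZ_+\hookrightarrow\Gr_d(\sV^\vee)$ is not a base change of the structure map to $X$), but it is already contained in the globalization of the $\Psi_{\alpha,k}$ part of Thm.~\ref{thm:local:d=2}, or equivalently in Prop.~\ref{prop:top:bottom}\eqref{prop:top:bottom-1}.
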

The semiorthogonal decomposition of the theorem can be informatively written as
	\begin{align*}
	\D(\shZ_+) = \big \langle \D(\shZ_{-}^{(2)}), ~\text{$\delta$-copies of} ~ \D(\shZ_{-}^{(1)}), ~  \text{$\binom{\delta}{2}$-copies of} ~ \D(X) \big \rangle.
	\end{align*}
\begin{proof} This is a globalization of Thm. \ref{thm:local:d=2} via the general procedure \S \ref{subsec:univHom} as before; The statement about Serre functor follows from Lem. \ref{lem:global:Serre} and Prop. \ref{prop:Serre}.
 \end{proof}

\addtocontents{toc}{\vspace{0.5\normalbaselineskip}}	
\section{Flips, flops and virtual flips} \label{sec:flips}
In this whole section, for simplicity we assume $X$ is a connected Cohen--Macaulay scheme over a field $\kk$ of characteristic zero, unless otherwise stated. Let $\sG$ be a finite type quasi-coherent sheaf of homological dimension $\le 1$, set $\delta: = \rank \sG \ge 0$, $\sK := \sExt^1_{\sO_X}(\sG, \sO_X)$. Therefore by Lem. \ref{lem:degloci} and Lem. \ref{lem:rank}\eqref{lem:rank-2}, there is a sequence of closed subschemes:
	$$X = X^{\ge \delta}(\sG) \supsetneq X^{\ge \delta+1}(\sG) \supset X^{\ge \delta+2}(\sG) \supset \ldots.$$
Notice that for each $i \ge 0$, the expected codimension of $X^{\ge \delta+i}(\sG)$ inside $X$ is $\delta(\delta+i)$. 

\subsection{First results: Grassmannian flips and virtual flips}
\subsubsection{Grassmannian flips} \label{sec:Grass_flips} For a given integer $d \ge \delta$, consider the degeneracy locus $Y: = X^{\ge d}(\sG) \subset X$. Assume that $\mathring{Y} : = X^{\ge d}(\sG)  \backslash X^{\ge d+1}(\sG) \ne \emptyset$, and $Y$ achieves the expected codimension ${\rm codim}_X(Y) = d (d - \delta)$ (hence $Y$ is also Cohen--Macaulay). Consider: 
	$$\pi_+ \colon Y_+ := \Quot_{X, d}(\sG) \to X, \qquad \pi_- \colon Y_- := \Quot_{X, d - \delta}(\sK) \to X.$$
Then by Cor. \ref{cor:Quot:degloci}, the projections $\pi_{\pm} \colon Y_{\pm} \to X$ factorise through $Y_{\pm} \to Y \subseteq X$, and they induce isomorphisms of schemes over $ \mathring{Y}$: $\pi_+^{-1}(\mathring{Y}) \simeq \mathring{Y} \simeq \pi_-^{-1}(\mathring{Y})$. Let $\widehat{Y}: = Y_+ \times_Y Y_-$, and let $r_\pm \colon \widehat{Y} \to Y_{\pm}$ be the natural projections.  Denote by	 
	$\sO_\pm(1)$ the corresponding relative ample line bundles on $Y_\pm$ from Quot construction Thm. \ref{thm:Quot}.

\begin{theorem}[Grassmannian flips] \label{thm:Grassflips} In the above situation, and assume further that the following expected dimension condition holds:
	$$\dim Y_+ = \dim Y_- = \dim \widehat{Y} = \dim X - d(d - \delta).$$
Then $Y_\pm$ and $\widehat{Y}$ are also Cohen--Macaulay, and $Y_+$, $Y_-$ are two different (partial) desingularisations of $Y$. (The birational map $Y_+ \dashrightarrow Y_-$ is usually called {\em a Grassmannian flip}.) Moreover, the relative Fourier--Mukai functors over $X$:
		\begin{align*}
		\Phi: = r_{+\,*} \, r_{-}^*(\blank) \colon  & \Dqc(Y_-) \to \Dqc(Y_+)
		\end{align*}
	are strong in the sense of Def. \ref{Def:FM:BS} and fully faithful. The restrictions of $\Phi$ induce fully faithful embeddings $\Phi|_{\Db} \colon \Db(Y_-) \hookrightarrow \Db(Y_+)$ and $\Phi|_{\Perf} \colon \Perf(Y_-) \hookrightarrow \Perf(Y_+)$. The essential images of $\Phi$, $\Phi|_{\Db}$ and $\Phi|_{\Perf}$ are admissible, and they are compatible with the inclusions $\Perf \subseteq \Db \subseteq \Dqc$. Furthermore, the maps $\pi_\pm$ are projective and local complete intersection morphisms, hence proper and perfect, and the relative dualizing complexes are given by 
		\begin{align*}
		\omega_{\pi_+} = \pi_+^* ( (\det \sG)^{\otimes d}) \otimes \sO_+(-\delta)[d(\delta-d)], \quad  \omega_{\pi_-} =  \pi_-^*( (\det \sG)^{\otimes (d-\delta)} ) \otimes \sO_-(\delta)[d(\delta-d)].
		\end{align*}
Hence $\Perf(Y_\pm)$ admit a relative Serre functor over $X$ given by $\SS_\pm = \otimes \omega_{\pm}$.
\end{theorem}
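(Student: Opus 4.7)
The plan is to reduce the global statement to the universal local situation of Section \ref{sec:local} via the base-change procedure outlined in \S\ref{subsec:univHom}, and then invoke the key fully-faithfulness result Lem. \ref{lem:local:ff} together with descent. First I would verify the Tor-independent conditions of Def. \ref{def:Tor-ind:quot} for the pair $(d_+,d_-) = (d, d-\delta)$. Since $X$ is Cohen--Macaulay, Lem. \ref{lem:Tor-ind:quot:CM} translates this into expected-dimension statements for $Y_+$, $Y_-$, and $\widehat{Y} = Y_+\times_X Y_-$: the codimensions in $X$ must equal $d(d-\delta)$, $d(d-\delta)$, and $d(d-\delta)$ respectively. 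The first two are the hypotheses on $\dim Y_\pm$; the third is $\dim \widehat{Y} = \dim X - d(d-\delta)$, which is also assumed. (Note that in the notation of \S\ref{sec:local} we have $\ell_+ = n - d = m - (d-\delta) = \ell_-$, so we are on the boundary $d_- = d_+ - \delta$.) The same lemma then delivers the Cohen--Macaulay property of $Y_\pm$ and $\widehat{Y}$ for free.

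Next I would produce the Fourier--Mukai functor $\Phi = r_{+\,*}\,r_-^*$ and establish strongness plus fully-faithfulness. Strongness is a dualizing-complex statement: by Lem. \ref{lem:global:Serre}, once Tor-independence holds, all maps in diagram \eqref{diagram:Corr:global} are projective local complete intersection morphisms with invertible $\omega_{r_\pm}$ and $\omega_{\pi_\pm}$ (the explicit formulas in the theorem are a direct substitution of $d_+ = d$, $d_- = d-\delta$ into Lem. \ref{lem:global:Serre}). For fully-faithfulness, I would work Zariski-locally to pick a presentation $\sW \xrightarrow{\sigma} \sV$ of $\sG$, giving a Koszul-regular section map $X \hookrightarrow H_X = |\sHom_X(\sW,\sV)|$ (Lem. \ref{lem:Homspace}\eqref{lem:Homspace-2}) followed by the flat structure map $H_X \to H_\kk$ (Lem. \ref{lem:univHom:flat}). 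Tor-independence of the composite with respect to the pair $(\shZ_{+,H_\kk},\shZ_{-,H_\kk})$ is exactly what was verified above. In the universal local case, Lem. \ref{lem:local:ff} (applied with $\ell_+ = \ell_-$) says the corresponding FM functor is fully faithful on $\Db$. Tor-independent base-change Prop. \ref{prop:relFM_bc}\eqref{prop:relFM_bc-1i} then propagates fully-faithfulness to $\Dqc(Y_+) \hookleftarrow \Dqc(Y_-)$ Zariski-locally, and fppf descent Thm. \ref{thm:fppf}\eqref{thm:fppf-1} patches these together globally; the restrictions to $\Perf$ and $\Db$ come from Lem. \ref{lem:FM}\eqref{lem:FM-2}, which also gives admissibility via the existence of both adjoints and their preservation of $\Perf$ and $\Db$. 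Compatibility of these nested admissible embeddings with $\Perf\subseteq\Db\subseteq\Dqc$ is the standard consequence of this preservation (as in the closing argument of Thm. \ref{thm:bc}).

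Finally, the Serre functor statement is immediate: $\pi_\pm$ are proper and perfect with invertible $\omega_{\pi_\pm}$ by the previous paragraph, so Prop. \ref{prop:Serre} produces $\SS_\pm = (\blank)\otimes \omega_{\pi_\pm}$ as a relative Serre functor on $\Perf(Y_\pm)$ over $X$. The birationality of $Y_+ \dashrightarrow Y_-$ is immediate from Cor. \ref{cor:Quot:degloci}, which exhibits both $\pi_+^{-1}(\mathring{Y})$ and $\pi_-^{-1}(\mathring{Y})$ as isomorphic to the dense open $\mathring{Y} = Y\setminus X^{\ge d+1}(\sG) \subset Y$.

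The only genuinely delicate input is Lem. \ref{lem:local:ff}, which rests on the key Lem. \ref{lem:key} and its Lascoux-type resolutions; for the present theorem, however, everything beyond that point is bookkeeping: checking that the Cohen--Macaulay plus expected-dimension hypotheses license the Tor-independent base-change, and then running the descent/base-change machinery. The main conceptual obstacle one should watch for is verifying independence of the choice of local presentation of $\sG$, but this is precisely Lem. \ref{lem:Tor-ind-ind}, so the globalization is coherent without additional hypotheses.
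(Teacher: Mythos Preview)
Your proposal is correct and follows essentially the same approach as the paper: the paper's proof is the one-liner ``globalization of Lem.~\ref{lem:local:ff} via the general procedure \S\ref{subsec:univHom}; Serre functor from Lem.~\ref{lem:global:Serre} and Prop.~\ref{prop:Serre}'', and you have simply unpacked that procedure into its explicit steps (Tor-independence via Lem.~\ref{lem:Tor-ind:quot:CM}, local fully-faithfulness Lem.~\ref{lem:local:ff} with $\ell_+=\ell_-$, then Prop.~\ref{prop:relFM_bc} and Thm.~\ref{thm:fppf}).
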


 \begin{proof} This is a globalization of Lem. \ref{lem:local:ff} via the general procedure \S \ref{subsec:univHom} as before; The statement about Serre functor follows from Lem. \ref{lem:global:Serre} and Prop. \ref{prop:Serre}.
 \end{proof}

Similar results of embedding of derived categories for Grassmannian flips are also established in \cite{BLV2, BLV3, BCF+}.

\subsubsection{Virtual flips}  For a given integer $d \ge 0$,  and consider: 
	$$\pi_+ \colon \shZ_+ := \Quot_{X, d}(\sG) \to X, \qquad \pi_- \colon \shZ_- := \Quot_{X, d}(\sK) \to X.$$
Then if $d < \delta$, $\pi_+ \colon \shZ_+ \to X$ is surjective, and $\pi_+$ is a $\Gr_d(\delta)$-bundle over the open stratum $X \backslash X^{\ge \delta+1}(\sG)$; if $d \ge \delta$, then $\pi_+$ factorises through $X^{\ge d}(\sG) \subset X$ and induces a isomorphism of schemes over the locus $X^{\ge d}(\sG)\backslash X^{\ge d+1}(\sG)$. On the other and, $\pi_- \colon \shZ_- \to X$ always factorises through $X^{\ge d+\delta}(\sG) \subset X$, and induces a isomorphism of schemes over the locus $X^{\ge d+\delta}(\sG)\backslash X^{\ge d+ \delta +1}(\sG)$. Set $\widehat{\shZ}: = \shZ_+ \times_X \shZ_-$, and let $r_\pm \colon \widehat{\shZ} \to \shZ_{\pm}$ be the projections. Denote by	 $\sO_\pm(1)$ the corresponding line bundles on $\shZ_\pm$ from Quot construction Thm. \ref{thm:Quot}.

\begin{theorem}[Virtual flips] \label{thm:virtualflips} In the above situation, and assume that the following expected dimension condition holds:
	\begin{align*}
	 	\dim \shZ_+  = \dim X + d(\delta-d), \quad \dim \shZ_-  = \dim X - d(\delta+d), \quad
		\dim \widehat{\shZ} = \dim X - d^2.
	\end{align*}
Then $\shZ_\pm$ and $\widehat{\shZ}$ are also Cohen--Macaulay, and the relative Fourier--Mukai functors over $X$:
		\begin{align*}
		\Phi: = r_{+\,*} \, r_{-}^*(\blank) \colon  & \Dqc(\shZ_-) \to \Dqc(\shZ_+)
		\end{align*}
	are strong in the sense of Def. \ref{Def:FM:BS} and fully faithful. The restrictions of $\Phi$ induce fully faithful embeddings $\Phi|_{\Db} \colon \Db(\shZ_-) \hookrightarrow \Db(\shZ_+)$ and $\Phi|_{\Perf} \colon \Perf(\shZ_-) \hookrightarrow \Perf(\shZ_+)$. The essential images of $\Phi$, $\Phi|_{\Db}$ and $\Phi|_{\Perf}$ are admissible and compatible with the inclusions $\Perf \subseteq \Db \subseteq \Dqc$. Furthermore, the maps $\pi_\pm$ are projective and local complete intersection morphisms, with relative dualizing complexes are given by 
		\begin{align*}
		\omega_{\pi_+} = \pi_+^* ( (\det \sG)^{\otimes d}) \otimes \sO_+(-\delta)[d(\delta-d)], \quad  \omega_{\pi_-} =  \pi_-^*( (\det \sG)^{\otimes d} ) \otimes \sO_-(\delta)[-d(\delta+d)].
		\end{align*}
Hence $\Perf(\shZ_\pm)$ admits a relative Serre functor over $X$ given by $\SS_\pm = \otimes \omega_{\pm}$.
\end{theorem}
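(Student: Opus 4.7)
The plan is to derive the theorem as a direct globalization of the local fully-faithfulness result Lem.~\ref{lem:local:ff}, using Tor-independent base change and faithfully flat descent via the general procedure of \S\ref{subsec:univHom}.

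First I would identify the expected dimension hypotheses with the Tor-independent condition Def.~\ref{def:Tor-ind:quot} for the pair $(d_+, d_-) = (d,d)$: substituting $d_+ = d_- = d$ into \eqref{eqns:Quotexpdim} of Lem.~\ref{lem:Tor-ind:quot:CM} yields exactly the three dimension equalities assumed, so (since $X$ is Cohen--Macaulay) that lemma simultaneously supplies the Tor-independence and the Cohen--Macaulayness of $\shZ_\pm$ and $\widehat{\shZ}$. With Tor-independence established, Lem.~\ref{lem:global:Serre} shows that $\pi_\pm$, $r_\pm$, and $\widehat{\pi}$ are projective local complete intersection morphisms with invertible relative dualizing complexes; specializing the formulas there to $d_+ = d_- = d$ gives the stated $\omega_{\pi_\pm}$, and Prop.~\ref{prop:Serre} then produces the relative Serre functors $\SS_\pm = (\blank)\otimes \omega_{\pi_\pm}$ on $\Perf(\shZ_\pm)$. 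In particular, since $\omega_{r_+}$ and $\omega_{r_-}$ are invertible (a fortiori perfect), the datum $(r_+, r_-, \sO_{\widehat{\shZ}})$ is a \emph{strong} relative Fourier--Mukai datum over $X$ in the sense of Def.~\ref{Def:FM:BS}; Lem.~\ref{lem:FM}\eqref{lem:FM-2} then guarantees that $\Phi$ together with $\Phi^L$ and $\Phi^R$ restricts to an adjoint triple on each of $\Perf \rightleftarrows \Perf$ and $\Db \rightleftarrows \Db$, and preserves $\Dqc$.

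For fully faithfulness I would run the globalization procedure of \S\ref{subsec:univHom}. Zariski-locally on $X$ one picks a presentation $0\to\sW\xrightarrow{\sigma}\sV\to\sG$ with $\sW, \sV$ free, obtaining a Koszul-regular section map $s_\sigma\colon X \hookrightarrow H_X = |\sHom_X(\sW,\sV)|$ (Lem.~\ref{lem:Homspace}) followed by the smooth projection $H_X \to H_\kk$. The composed base change $X\to H_\kk$ is Tor-independent for the pair $(\shZ_{+,H_\kk}, \shZ_{-,H_\kk})$: the first leg is so by our standing Tor-independence hypothesis, and the second by Lem.~\ref{lem:univHom:flat}, composed via Lem.~\ref{lem:3squares}. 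In the universal local situation over $H_\kk$, Lem.~\ref{lem:local:ff} applied to $d_+ = d_- = d$ (the hypotheses $d_+ - \delta \le d_-$ and $\ell_- \le \ell_+$ reduce to $d - \delta \le d$ and $0 \le \delta$, both trivial) delivers the desired fully faithfulness of $\Phi_{H_\kk}$ on $\Perf = \Db$. Propagating this via Tor-independent base change Prop.~\ref{prop:relFM_bc}\eqref{prop:relFM_bc-1i} gives fully faithfulness of $\Phi$ on $\Dqc$ (hence on $\Perf$ and $\Db$) over every such Zariski open; faithfully flat descent Thm.~\ref{thm:fppf}\eqref{thm:fppf-1} glues these local statements to the global fully-faithfulness of $\Phi\colon \Dqc(\shZ_-)\hookrightarrow\Dqc(\shZ_+)$. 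Admissibility of the essential images (on each of the three levels) is then automatic from the existence of the adjoints $\Phi^L \dashv \Phi \dashv \Phi^R$ provided by strongness, while the compatibility $\Im(\Phi|_{\Perf}) = \Im(\Phi) \cap \Perf(\shZ_+)$ (and similarly for $\Db$) follows because the projection functor $\Phi\circ\Phi^R$ preserves $\Perf$ and $\Db$ by Lem.~\ref{lem:FM}\eqref{lem:FM-2}.

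The only potentially delicate point is organizing the three categorical levels $\Perf \subseteq \Db \subseteq \Dqc$ uniformly and extracting a single clean statement — but this is precisely what the ``strong'' framework of Def.~\ref{Def:FM:BS} and Thm.~\ref{thm:bc} was set up to handle, so no further work is needed beyond invoking those results. The entire proof is thus a bookkeeping exercise atop Lem.~\ref{lem:local:ff}, Lem.~\ref{lem:Tor-ind:quot:CM}, Lem.~\ref{lem:global:Serre}, and the base-change/descent package of \S\ref{sec:FM}.
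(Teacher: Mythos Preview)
Your proposal is correct and follows exactly the approach the paper takes: the paper's proof is the one-line remark that this is ``a globalization of Lem.~\ref{lem:local:ff} via the general procedure \S\ref{subsec:univHom}'' together with Lem.~\ref{lem:global:Serre} and Prop.~\ref{prop:Serre} for the Serre functor statement, and you have simply unpacked that procedure in full detail. Your verification that the hypotheses of Lem.~\ref{lem:local:ff} specialize correctly to $d_+ = d_- = d$, and your careful bookkeeping of strongness, admissibility, and compatibility across $\Perf \subseteq \Db \subseteq \Dqc$, are all accurate elaborations of what the paper leaves implicit.
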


 \begin{proof} This is a globalization of Lem. \ref{lem:local:ff} via the general procedure \S \ref{subsec:univHom} as before; The statement about Serre functor follows from Lem. \ref{lem:global:Serre} and Prop. \ref{prop:Serre}.
 \end{proof}

The relationships between $\shZ_+$  and $\shZ_-$ should be regarded as a ``virtual flip $\shZ_+ \dashrightarrow \shZ_-$" and we expect it to be closely related to Toda's ``d-critical flip" \cite{Tod1, Tod2, Tod3, Tod4}.

\subsection{Standard flips revisited} \label{sec:standardflip} Let $\sW$ and $\sV$ be vector bundles over $X$  of rank $m$ and $n$, and $\sigma \colon \sW \to \sV$ an $\sO_X$-module map. Denote $Z \subseteq X$ the degeneracy locus where the map $\sigma$ is zero and $Y \subseteq X$ be the degeneracy locus where the map $\sigma$ is has rank $\le 1$, i.e. $Z$ is the zero scheme of a section of the vector bundle $\sW^\vee \otimes \sV$, and $Y$ is defined by the quasi-coherent ideal sheaf $I_2(\sigma)$ of $2 \times 2$-minors of $\sigma$. Consider the following Quot schemes: 
	$$\pi_+ \colon Y_+ := \Quot_{X, n-1}(\sG)  \to X, \quad \text{and} \quad \pi_- \colon Y_- := \Quot_{X, m-1}(\sK) \to X,$$
where $\sG = \Coker (\sigma)$ and $\sK = \Coker(\sigma^\vee)$. Then by Cor. \ref{cor:Quot:degloci}, natural projections $\pi_{\pm} \colon Y_{\pm} \to X$ factorises through $Y_{\pm} \to Y \subseteq X$. 

\begin{remark}[Concrete descriptions of $Y_\pm$] \label{rmk:standardflip} Recall for a vector bundle $\sV$, the scheme $\PP_{\rm sub}(\sV) = \PP(\sV^\vee)$ parametrises subbundles of $\sV$ of rank $1$. Then over any closed point $x \in X$, the scheme $Y_+$ parametrises the pair $(L \in \PP_{\rm sub}(\sV|_x), \varphi_+ \colon \sW|_x \to L)$ such that $\sigma|_x$ factorises as the composition $\sW|_x \xrightarrow{\varphi_+} L \subseteq \sV|_x$. Dually,  over a closed point $x \in X$, the scheme $Y_-$ parametrises the pair $(Q \in \PP(\sW|_x), \varphi_- \colon Q \to \sV|_x)$ such that $\sigma|_x$ factorises as the composition $\sW|_x \twoheadrightarrow Q \xrightarrow{\varphi_-} \sV|_x$.
\end{remark}

Then the restriction of $\pi_+ \colon Y_+ \to X$ (resp. $\pi_- \colon Y_- \to X$) to $P_+\subseteq Y_+$ (resp. $P_- \subseteq Y_-$) is the projective bundle $pr_+ \colon P_+= \PP_{Z, \mathrm{sub}}(\sV|_Z) = \PP_{Z}(\sV|_Z^\vee) \to Z$ (resp. $pr_- \colon P_-= \PP_{Z}(\sW|_Z) \to Z$); Denote by $\sO_{P_\pm}(1)$ the corresponding $\sO(1)$-line bundles. Let $\sO_{Y_\pm}(1)$ be the restrictions of line bundles $\sO_{\PP(\sV^\vee)}(1)$ and $\sO_{\PP(\sW)}(1)$ to $Y_\pm$, then $\sO_{Y_\pm}(1)|_{P_\pm} = \sO_{P_\pm}(1)$. Notice similar to the proof of Lem. \ref{lem:blowup:univ},  $\sO_{Y_\pm}(1)$ are related to the $\sO(1)$'s of Quot schemes by:
	$$\sO_{Y_+}(1)  \simeq \sO_{\Quot_{n-1}(\sG)}(1) \otimes (\det \sV)^{-1}, \qquad \sO_{Y_-}(1)  \simeq \sO_{\Quot_{m-1}(\sK)}(1) \otimes \det \sW.$$
Denote by $j_{\pm} \colon P_{\pm} \hookrightarrow Y_{\pm}$ the natural inclusions. Let $\widehat{Y} = Y_+ \times_Y Y_-$ be the fiber product, and denote by $r_\pm \colon \widehat{Y} = Y_+ \times_Y Y_- \to Y_{\pm}$ the natural projections. 

The following generalises Bondal--Orlov's results on standard flips \cite{BO} (see also \cite[\S 3]{Tod2} and \cite[Thm. A]{BLT}) in our setup:

\begin{theorem} \label{thm:standardflip} In the above situation 
and assume further that $Z$ is nonempty, and that the following expected dimension condition holds:
	\begin{align*}
	\dim Z = \dim X - mn, \qquad \dim Y = \dim X - (m-1)(n-1).
	\end{align*}
\begin{enumerate}[leftmargin=*]
	\item \label{thm:standardflip-1}
	The natural projections $Y_\pm \to Y$ induce isomorphism $Y_+ \backslash P_+ \simeq Y\backslash Z \simeq Y_- \backslash P_-$. The inclusions $j_{\pm} \colon P_{\pm} \hookrightarrow Y_{\pm}$ are regular closed immersions, with normal bundles given by 
		$$\sN_{j_+} = \sO_{P_+}(-1) \otimes \sW^\vee|_Z \quad \text{and} \quad \sN_{j_-} = \sO_{P_-}(-1) \otimes \sV|_Z.$$
	The fiber product $\widehat{Y} : = Y_+ \times_Y Y_-$ is the common blowup $\widehat{Y} = \Bl_{P_+} Y_+ = \Bl_{P_-} Y_-$, $E = P_+ \times_Z P_- \subseteq  \widehat{Y}$ is the common exceptional divisor for both blowups, and the following holds: $\sO_{\hat{Y}}(E)|_E \simeq \sO_{P_+}(-1) \boxtimes \sO_{P_-}(-1)$. If $m \le n$, the birational map $Y_+ \dashrightarrow Y_-$ is called {\em a standard flip of type $(m,n)$ over $Z$}.

	\item \label{thm:standardflip-2}
	The projections $\pi_\pm \colon Y_\pm \to X$ are projective local complete intersection morphisms, with the relative dualizing complexes given by 
		\begin{align*}
		& \omega_+ = \pi_+^*((\det \sV)^{\otimes(m-1)} \otimes (\det \sW)^{\otimes(1-n)}) \otimes \sO_{Y_+}(m-n)[-(m-1)(n-1)],  \\
		&  \omega_- = \pi_-^* ((\det \sV)^{\otimes(m-1)} \otimes (\det \sW)^{\otimes(1-n)} ) \otimes \sO_{Y_-}(n-m)[-(m-1)(n-1)].
		\end{align*}
The categories $\Perf(Y_\pm)$ admit relative Serre functors over $X$ given by $\SS_\pm = \otimes \omega_\pm$. In particualr, if $m \le n$, then for each $k \in \ZZ$, $\SS_+ (\Im \Psi_{k}) = \Im \Psi_{k + m-n}$. 

	 \item \label{thm:standardflip-3}
	 If $m=n$, then there is an equivalence of categories $\Phi = r_{+\,*} \, r_{-}^*(\blank) \colon   \Dqc(Y_-) \simeq \Dqc(Y_+)$; If $m < n$, then  for any $k \in \ZZ$, the relative Fourier--Mukai functors over $X$:
		\begin{align*}
		&\Phi: = r_{+\,*} \, r_{-}^*(\blank) \colon  & \Dqc(Y_-) \to \Dqc(Y_+); \\
		& \Psi_{k}(\blank) :=  j_{+\,*} pr_{+}^* (\blank) \otimes \sO_{Y_+}(k) \colon &\Dqc(Z) \to \Dqc(Y_+)
		\end{align*}
	are strong in the sense of Def. \ref{Def:FM:BS} and fully faithful, and for each integer $0 \le \ell \le n-m$, there is an $X$-linear semiorthogonal decomposition with admissible components:
	\begin{align*}
		\Dqc(Y_+) = \langle \underbrace{\Im \Psi_{-(n-m-\ell)}, \ldots, \Im \Psi_{-1}}_{(n-m-\ell)\text{-terms}},  \Im \Phi,  \underbrace{\Im \Psi_0, \ldots, \Im \Psi_{\ell-1}}_{\ell\text{-terms}} \rangle.
	\end{align*}
Similar semiorthogonal decompositions hold if we replace $\Dqc$ by $\Db$ and $\Perf$, and these semiorthogonal decompositions are compatible with the inclusions $\Perf \subseteq \Db \subseteq \Dqc$. 
\end{enumerate}
\end{theorem}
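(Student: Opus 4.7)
My plan is to deduce Theorem~\ref{thm:standardflip} from the universal local case $X = H_\kk = |\Hom_\kk(W,V)|$ handled in \S\ref{sec:local} via the general globalization procedure of \S\ref{subsec:univHom}. Write $\delta = n - m$ and assume (after possibly swapping roles, using $\sK$ in place of $\sG$) that $m \le n$. The first key observation is that the hypothesis $\dim Z = \dim X - mn$, together with Cohen--Macaulayness and the classical Macaulay--Eagon--Northcott inequality, forces all higher degeneracy loci $X^{\ge n+i}(\sG)$ to have at least the expected codimension, and the hypothesis on $\dim Y$ gives that $Y = X^{\ge n-1}(\sG)$ already achieves the expected codimension $(m-1)(n-1)$. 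Hence by Lem.~\ref{lem:Tor-ind:quot:CM}, the Tor-independence condition of Def.~\ref{def:Tor-ind:quot} holds for the pairs $(d_+,d_-)$ needed to invoke the local results (in particular for $(n-1,m-1)$, $(n-1,0)$, and, after interchange of $\sG$ and $\sK$, the pair $(m-1,0)$), and all the cartesian squares obtained by base-change along section maps $X \hookrightarrow H_X$ are Tor-independent.

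\textbf{Part (1).} The isomorphisms $Y_+\setminus P_+ \simeq Y\setminus Z \simeq Y_-\setminus P_-$ follow from Cor.~\ref{cor:Quot:degloci}, as $\sG$ restricted to the stratum $Y\setminus Z$ where $\sigma$ has rank exactly $1$ is locally free of rank $n-1$. Lem.~\ref{lem:Quotfib=Bl}, applied with $d = n-1$ (so $d-\delta = m-1$), identifies $\widehat{Y} = Y_+\times_X Y_-$ with $\Bl_Z Y$, and hence is intrinsically symmetric in $Y_+$ and $Y_-$; the equality $\widehat Y = \Bl_{P_+}Y_+ = \Bl_{P_-}Y_-$ and the formula for $\sO_{\widehat Y}(E)|_E$ are then the globalization of Rmk.~\ref{rmk:standardflip:local:blowup} and follow from the Tor-independent base-change of that local picture. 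The normal bundle to $j_+\colon P_+ = \PP_Z(\sV|_Z^\vee)\hookrightarrow Y_+$ is computed by using Rmk.~\ref{rmk:standardflip}: over $P_+$ the scheme $Y_+$ is, locally on $Z$, the total space of the vector bundle $\sHom(\sW|_Z,\shL)$ where $\shL\subset\sV|_Z$ is the tautological line subbundle with $\det\shL = \sO_{P_+}(-1)$, and $P_+$ is the zero section (this is visible in the universal local case and then descends). Therefore $\sN_{j_+} = \sO_{P_+}(-1)\otimes \sW^\vee|_Z$, and $\sN_{j_-}$ follows symmetrically.

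\textbf{Part (2).} The relative dualizing complexes follow from Lem.~\ref{lem:global:Serre} applied with $d_+ = n-1$, $\delta = n-m$. From the resolution $0 \to \sW \to \sV \to \sG$ we have $\det\sG = \det\sV \otimes (\det\sW)^{-1}$, and by construction $\sO_{\Quot_{n-1}(\sG)}(1) = \sO_{Y_+}(1)\otimes \pi_+^*\det\sV$. A direct computation matching the two expressions gives the stated formula for $\omega_{\pi_+}$; the case of $\omega_{\pi_-}$ follows by the symmetric computation using $\sO_{\Quot_{m-1}(\sK)}(1) = \sO_{Y_-}(1)\otimes \pi_-^*(\det\sW)^{-1}$. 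Since $\pi_\pm$ are proper perfect morphisms with invertible dualizing complexes, Prop.~\ref{prop:Serre} yields the relative Serre functors on $\Perf(Y_\pm)$.

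\textbf{Part (3).} When $m = n$ (so $\delta = 0$), Thm.~\ref{thm:virtualflips} already gives a fully faithful $\Phi = r_{+*}r_-^*\colon \Dqc(Y_-)\hookrightarrow \Dqc(Y_+)$; since both $Y_\pm$ have the same relative dimension over $X$ and $\widehat Y$ is a common blowup with the same exceptional divisor, the essential image has vanishing right orthogonal by the local check (all generators $\Sigma^\alpha \shQ_-^\vee$ are hit, compare Lem.~\ref{lem:key}\eqref{lem:key-3} with $\ell_+ = \ell_-$), hence $\Phi$ is an equivalence. When $m < n$, Thm.~\ref{thm:local:standardflip} gives the desired semiorthogonal decomposition in the universal local situation $X = H_\kk$; the Tor-independent base-change Thm.~\ref{thm:bc} and fppf descent Thm.~\ref{thm:fppf}, combined with the two-step base-change procedure of \S\ref{subsec:univHom}, then globalize it. The claimed Serre-translation relations are automatic from Part~(2) and Lem.~\ref{lem:Serre}.

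\textbf{Main obstacle.} The genuinely delicate point is the first one: verifying Tor-independence for all pairs $(d_+,d_-)$ appearing in the local decomposition, so that the decomposition and all the accompanying mutation/duality identities can be transported by base-change. This requires ensuring that the higher degeneracy loci $X^{\ge n+i}(\sG)$ satisfy the codimension inequalities of Lem.~\ref{lem:Quotfib=Bl}\eqref{lem:Quotfib=Bl-3} (applied at each relevant $d$); these inequalities are implied by the hypothesis $\codim_X Z = mn$ together with Cohen--Macaulayness of $X$ via the standard estimates for determinantal ideals. A secondary technical nuisance is the careful bookkeeping of the line-bundle twists $\sO_{Y_\pm}(1)$ versus $\sO_{\Quot}(1)$: since they differ by pullbacks of line bundles from $X$, the difference is absorbed by $X$-linearity of all functors and components and does not affect the essential images, only the indexing of the $\Psi_k$.
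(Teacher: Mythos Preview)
Your approach matches the paper's: reduce to the universal local case (Thm.~\ref{thm:local:standardflip} and Rmk.~\ref{rmk:standardflip:local:blowup}) via Tor-independent base-change and fppf descent, with Part~\eqref{thm:standardflip-2} coming from Lem.~\ref{lem:global:Serre} and Prop.~\ref{prop:Serre}, and the general-$\ell$ decompositions in Part~\eqref{thm:standardflip-3} obtained from the $\ell=0$ case by relative Serre duality.

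One correction: the Tor-independence pairs you need are $(n-1,m-1)$ for $\Phi$ and $(n-1,m)$ for $\Psi_k$ (since $Z \simeq \Quot_{X,m}(\sK)$, the bottom-stratum case $\ell_-=0$), not $(n-1,0)$ or $(m-1,0)$. The paper verifies these by directly computing $\dim P_\pm$, $\dim E$, $\dim Y_\pm$, $\dim\widehat Y$ from the projective-bundle structures $P_\pm \to Z$; your appeal to Macaulay--Eagon--Northcott for ``higher degeneracy loci'' is unnecessary since $\sG$ is $n$-generated, so $X^{\ge n+1}(\sG) = \emptyset$.
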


\begin{proof} The isomorphisms $Y_+ \backslash P_+ \simeq Y\backslash Z \simeq Y_- \backslash P_-$ follow from Cor. \ref{cor:Quot:degloci}. We now verify the Tor-independent conditions of Lem. \ref{lem:Tor-ind:quot:CM}. In fact, by Cor. \ref{cor:Quot:degloci}, $p_{\pm} \colon P_{\pm} \to Z$ are projective bundles of relative dimension $n-1$ and $m-1$ respectively, therefore $\dim P_{+} = \dim X - mn + n-1$, $\dim P_{-} = \dim X - mn + m-1$ and $\dim E = \dim X - mn + m + n - 2$. And hence $\dim Y_{\pm} = \dim \widehat{Y} = \dim X - (m-1)(n-1)$. In particular, the conditions of Lem. \ref{lem:Tor-ind:quot:CM} are satisfied for $(d_+,d_-) = (n-1,m-1)$ and $(d_+,d_-) = (n-1,m)$, where $\delta = n - m \ge 0$.

For \eqref{thm:standardflip-1}, all the claims follow from the local case Rmk. \ref{rmk:standardflip:local:blowup}, since by passing to Zariski local open subsets, the situation is a Tor-independent base-change from the universal local situation via the process of \S \ref{sec:univHom}, and the formation of blowup along Koszul-regularly immersed centers commutes with Tor-independent base-changes by Lem. \ref{lem:blowup_bc}.

 \eqref{thm:standardflip-2} follows from Lem. \ref{lem:global:Serre} and Prop. \ref{prop:Serre} as before.

Finally, as before, \eqref{thm:standardflip-3} follows from the local theorem Thm. \ref{thm:local:standardflip} (the case $\ell=0$), relative Serre duality Lem. \ref{lem:Serre} (hence true for all $\ell$), via the process of \S \ref{subsec:univHom} by fppf descent Thm. \ref{thm:fppf} and Tor-independent base-change Thm. \ref{thm:bc}.
\end{proof}

\begin{remark}[Alternative setup for standard flips] Alternatively, let $Y_{\pm}$ and $Z$ be quasi-compact, quasi-separated schemes, let $\sW$, $\sV$ be vector bundles over $Z$ of ranks $m, n$ respectively, and denote $pr_+ : P_+ = \PP(\sV^\vee) \to Z$ and $pr_{-} \colon P_- = \PP(\sW) \to Z$ the projective bundles. Assume that there are Koszul-regular closed immersions $j_{\pm} \colon P_{\pm} \hookrightarrow Y_{\pm}$ such that their normal bundles satisfy $\sN_{j_+} \simeq \sO_{P_+}(-1) \otimes \sW^\vee$ and $\sN_{j_-} \simeq \sO_{P_-}(-1) \otimes \sV$. Furthermore, assume that $\widehat{Y} =\Bl_{P_+} Y_+ = \Bl_{P_-} Y_-$ is the common blowup of $Y_+$ along $P_+$, resp., $Y_-$ along $P_-$, and $E = P_+ \times_Z P_- \subseteq  \widehat{Y}$ is the common exceptional divisor. Then $\sO_{\hat{Y}}(E)|_E \simeq \sO_{P_+}(-1) \boxtimes \sO_{P_-}(-1)$ holds. The birational map $Y_+ \dashrightarrow Y_-$ is also called {\em a standard flip of type $(m,n)$ over $Z$}; see \cite{ADM, Huy, BLT}. In this setup, the blowup formula Thm. \ref{thm:blow-up} implies that for each $0 \le k \le m-1$, $0 \le \ell \le n-1$, there are semiorthogonal decompositions:
	\begin{align*}
		&\Dqc(\widehat{Y}) =  \langle  \underbrace{\Dqc(P_+)_{1-m+k}, \ldots, \Dqc(P_+)_{-1}}_{(m-1-k)\text{-terms}}, ~\Dqc(Y_+),~  \underbrace{\Dqc(P_+)_0, \ldots, \Dqc(P_+)_{k-1}}_{k\text{-terms}}\rangle; \\
		&\Dqc(\widehat{Y}) =  \langle  \underbrace{\Dqc(P_-)_{1-n+\ell}, \ldots, \Dqc(P_-)_{-1}}_{(n-1-\ell)\text{-terms}}, ~\Dqc(Y_-),~  \underbrace{\Dqc(P_-)_0, \ldots, \Dqc(P_-)_{\ell-1}}_{\ell\text{-terms}}\rangle,
	\end{align*}
where for $i,j \in \ZZ$, $\Dqc(P_+)_{i}$ (resp. $\Dqc(P_-)_{j}$) denotes the image of $\Dqc(P_+) \boxtimes_Z \sO_{P_-}(i)$ (resp. $ \sO_{P_+}(j) \boxtimes_Z \Dqc(P_-)$) under the pushforward $i_{E\,*} \colon \Dqc(E) \to \Dqc(\widehat{Y})$. Then the above situation satisfies the so-called ``axioms of chess game" of \cite{RT, JLX17, JL18}, and the techniques of {\em loc. cit.} could be applied to obtain a semiorthogonal decomposition:
	\begin{align*}
	\Dqc(Y_+)  =  \big \langle \Im \Psi_{-\delta}, \ldots, \Im \Psi_{-2}, \Im \Psi_{-1}, ~ \Im \Phi \big \rangle;
	\end{align*} 
similarly for $\Db$ and $\Perf$. In fact, the above semiorthogonal decomposition (for $\Db$) in the smooth case was established by \cite{BLT} using the ``chess game" method developed in \cite{RT, JLX17, JL18}, and the same argument of {\em loc. cit.} works in our stated generality. Moreover, in the case $m=n$, the methods of \cite{JL18} can be applied to show ``flop--flop=twist" results of \cite{ADM} for the flop $Y_+ \dashrightarrow Y_-$.
\end{remark}

\subsection{Flips from partial desingularizations of rank $\le 2$ degeneracy loci} \label{sec:l=2:flip} Let $\sW$ and $\sV$ be two vector bundles over $X$ of rank $m$ and $n$, such that $n \ge m \ge 2$, and let $\sigma \colon \sW \to \sV$ be an $\sO_X$-module map. Consider the following sequences of degeneracy loci \S \ref{sec:deg} of $\sigma$:
	$$Z: = D_0(\sigma) \quad \subseteq \quad Y_1 : = D_1(\sigma) \quad \subseteq \quad Y_2 : = D_2(\sigma)  \quad \subseteq X.$$
Assume $Y_2 \backslash Y_1 \ne \emptyset$, $Y_1 \backslash Z \ne \emptyset$, and the following expected dimension conditions hold:
	\begin{align*}
		&\dim Z = \dim X - mn, \qquad \dim Y_i = \dim X - (m-i)(n-i)~ \text{for}~ i=1,2.
	\end{align*}
 Denote $\sG = \Coker (\sigma)$ and $\sK = \Coker(\sigma^\vee)$, and consider the following Quot schemes: 
	\begin{align*}
		& \pi_2^+ \colon Y_2^+ : = \Quot_{X,n-2}(\sG) \to X, \qquad
		\pi_2^- \colon Y_2^- : = \Quot_{X,m-2}(\sK) \to X, \\
		&  \pi_1^+ \colon Y_1^+ : = \Quot_{X,n-1}(\sG) \to X, \qquad 
		\pi_1^- \colon Y_1^- : = \Quot_{X,m-1}(\sK) \to X.
	\end{align*}
By Cor. \ref{cor:Quot:degloci} the projections $\pi_i^\pm$ factorise through birational morphisms $Y_i^\pm \to Y_i$ for $i=1,2$. The birational map $Y_1^+ \dashrightarrow Y_1^-$ is the {\em standard flip} considered in \S \ref{sec:standardflip}. The schemes $Y_2^\pm$ have similar concrete descriptions as Rmk \ref{sec:standardflip}.

Consider the Grassmannian bundles $\Gr_{n-2}(\sV^\vee) = \Gr_{2}(\sV)$ and $\Gr_{m-2}(\sW) = \Gr_{2}(\sW^\vee)$. Denote $\shU_\pm$ and $\shQ_\pm$ the corresponding universal bundles, where $\rank \shU_+ = n-2$, $\rank \shU_- = m-2$, $\rank \shQ_+ = \rank \shQ_- = 2$. Then there are tautological sequences:
	$$0 \to \shU_+ \to \sV^\vee \to \shQ_+ \to 0, \qquad 0 \to \shU_- \to \sW \to \shQ_- \to 0.$$
By Thm. \ref{thm:Quot} \eqref{thm:Quot-2} there are canonical immersions $Y_2^+ \hookrightarrow \Gr_2(\sV)$ and $Y_2^- \hookrightarrow \Gr_2(\sW^\vee)$. Set 
	$$\sO_{+, \Gr_2(\sV)} (1): = \bigwedge^2 \shQ_+|_{Y_2^+} \in \Pic (Y_2^+), \qquad \sO_{-, \Gr_2(\sW^\vee)} (1): = \bigwedge^2 \shQ_-|_{Y_2^-} \in \Pic (Y_2^-).$$
As before, these line bundles are related to the $\sO(1)$'s of Quot schemes Thm. \ref{thm:Quot} \eqref{thm:Quot-3} by:
	$$\sO_{+, \Gr_2(\sV)} (1)  \simeq \sO_{\Quot_{n-2}(\sG)}(1) \otimes (\det \sV)^{-1}, \qquad \sO_{-, \Gr_2(\sW^\vee)}(1)  \simeq \sO_{\Quot_{m-2}(\sK)}(1) \otimes \det \sW.$$
The restrictions of $\pi_2^\pm \colon Y_2^\pm \to X$ to $Z \subset X$ are Grassmannian bundles $pr_+ \colon  \Gr_{Z,2}(\sV) \to Z$ and $pr_- \colon  \Gr_{Z,2}(\sW^\vee) \to Z$, and we denote the natural inclusions by $j_+ \colon \Gr_{Z,2}(\sV) \hookrightarrow Y_2^+$ and $j_- \colon \Gr_{Z,2}(\sW^\vee) \hookrightarrow Y_2^-$. For $i=1,2$, denote $\widehat{Y}_i = Y_2^+ \times_Y Y_i^-$ fiber product, and denote $r_+ \colon \widehat{Y}_i \to Y_2^+$ and $r_- \colon \widehat{Y}_i \to Y_i^+$ the natural projections. 
 
If $n = m$, then by \S \ref{sec:rk<=3} there is an equivalence of categories $\Omega_0 = r_{2, +\,*} \, r_{2, -}^* \colon \Dqc(Y_2^-) \simeq \Dqc(Y_2^+)$; 
If $n- m =1$, then there is a structural description of the derived category of $Y_2^+$ in terms of these of $Y_2^-$ and $Y_1^-$ given by Thm. \ref{thm:rk=1}. Hence we may assume $\delta = n-m \ge 2$.

\begin{theorem} \label{thm:rk<=2flip} In the above situation, for any $k \in \ZZ$ and $\alpha \in B_{2, \delta-2}$ (where $\delta = n-m$),
 	\begin{align*}
	&\Psi^{\alpha}_k(\blank) :=  j_{+\,*} \circ pr_{+}^* (\blank) \otimes \Sigma^{\alpha} \shQ_+^{\vee}|_{Y_2^+}  \otimes \sO_{+,\Gr_2(\sV)}(k) \colon  \qquad  & \Dqc(Z) \to \Dqc(Y_2^{+}), \\
	&\Phi_k(\blank) := r_{1, + \,*} \circ r_{1,-}^{*}(\blank) \otimes \sO_{+,\Gr_2(\sV)}(k) \colon \qquad &\Dqc(Y_1^-) \to \Dqc(Y_2^+), \\
	& \Omega_k(\blank) : = r_{2, + \,*} \circ r_{2, -}^{\,*}(\blank) \otimes \sO_{+,\Gr_2(\sV)}(k) \colon  \qquad & \Dqc(Y_2^-) \to  \Dqc(Y_2^+)	\end{align*}
are fully faithful strong relative Fourier--Mukai functors over $X$ (in the sense of Def. \ref{Def:FM:BS}).  Furthermore, for any fixed $k \in \ZZ$, the images $\{\Im \Psi^{\alpha}_{k-1} \}_{\alpha \in B_{2, \delta-2}}$, $\{\Im \Phi_{k - i} \}_{i \in [0, \delta-1]}$ and  $\Im \Omega_{k+1}$ induce a semiorthogonal decomposition
	\begin{align*} 
	\Dqc(Y_2^+) = \big\langle \{\Im \Psi^{\alpha}_{k-1} \}_{\alpha \in B_{2, \delta-2}}, \{\Im \Phi_{k - i} \}_{i \in [0, \delta-1]},  \Im \Omega_{k+1} \big\rangle, 
	\end{align*}
 with semiorthogonal order given by any total order extending the following partial semiorthogonal order: for any $\alpha, \beta \in B_{2, \delta-2}$, $i \in [0,\delta-1]$, the following holds:
	 \begin{align*}
	& \Im \Omega_{k+1} \subseteq {}^\perp (\Im \Phi_{k-i}) \cup {}^\perp(\Im \Psi^{\alpha}_{k-1}) & \forall &\quad   i, \alpha \text{~above};  \\  
	& \Im \Phi_{k-i}  \subseteq  {}^\perp(\Im \Phi_{k-j}) \cup {}^\perp(\Im \Psi^{\alpha}_{k-1}) & \forall &  \quad  i <  j \le  i + \delta -1,  \alpha +1 \npreceq (i^2);   \\
	& \Im \Psi^{\alpha}_{k-1} \subseteq {}^\perp(\Im \Phi_{k-i}) \cup {}^\perp(\Im \Psi^{\beta}_{k-1}) & \forall &   \quad (i^2) \npreceq \alpha, \beta \npreceq \alpha.  
	\end{align*}
where $(i^2)= (i,i)$ is understood as an element of $B_{2, \delta-1} \supset B_{2, \delta-2}$. Similar semiorthogonal decompositions hold if we replace $\Dqc$ by $\Db$ or $\Perf$, and these semiorthogonal decompositions are compatible with the inclusions $\Perf \subseteq \Db \subseteq \Dqc$. Moreover, the schemes $Y_2^\pm$ are also Cohen--Macaulay, the morphisms $\pi_2^\pm$ are projective local complete intersection morphisms, with dualizing complexes given by:
	\begin{align*}
		& \omega_+ = \pi_2^{+\,*}((\det \sV)^{\otimes(m-2)} \otimes (\det \sW)^{\otimes(2-n)}) \otimes \sO_{+, \Gr_2(\sV)}(m-n)[-(m-2)(n-2)],  \\
		&  \omega_- = \pi_2^{-\,*} ((\det \sV)^{\otimes(m-2)} \otimes (\det \sW)^{\otimes(2-n)} ) \otimes \sO_{-, \Gr_2(\sW^\vee)}(n-m)[-(m-2)(n-2)].
	\end{align*}
Thus the categories $\Perf(Y_2^\pm)$ admit relative Serre functors over $X$ given by $\SS_\pm = \otimes \omega_\pm$. In particular, for any $k \in \ZZ$, the following holds: 
	$$\SS_+ (\Im \Omega_k) = \Im \Omega_{k-\delta}, \quad \SS_+ (\Im \Phi_k) = \Im \Phi_{k-\delta}, \quad \SS_+ (\Im \Psi_{\alpha,k}) = \Im \Psi^{\alpha}_{k - \delta}.$$
\end{theorem}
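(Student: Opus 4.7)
The plan is to globalize the universal local result Thm. \ref{thm:local:l=2} via the standard machinery of Tor-independent base-change and fppf descent developed in Part \ref{part:FM}, following the general procedure of \S \ref{subsec:univHom}. Write $\delta = n - m \ge 2$ and denote by $H := |\sHom_X(\sW,\sV)|$ the universal Hom space, with tautological morphism $\tau_H \colon \sW_H \to \sV_H$, so that $\sG = s_\sigma^*(\Coker \tau_H)$ and $\sK = s_\sigma^*(\Coker \tau_H^\vee)$ via the regular section map $s_\sigma \colon X \hookrightarrow H$ of Lem. \ref{lem:Homspace}\eqref{lem:Homspace-2}. By Thm. \ref{thm:Quot}\eqref{thm:Quot-1}, the schemes $Y_2^\pm$ and the fiber products $\widehat{Y}_1, \widehat{Y}_2$ are base-changes along $s_\sigma$ of the corresponding Quot schemes and fiber products over $H$, and everything in turn is a flat base-change from the universal situation $H_\kk = |\sHom_\kk(W,V)|$ of Section \ref{sec:local}, by Lem. \ref{lem:univHom:flat}.

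First I would verify the Tor-independent conditions of Def. \ref{def:Tor-ind:quot} for the three pairs of integers $(d_+,d_-)$ entering the three functors, namely $(n-2, m)$, $(n-2, m-1)$, $(n-2, m-2)$. Using Cor. \ref{cor:Quot:degloci}, the restrictions of $\pi_2^{\pm}$ over $Z \subset X$ are Grassmannian bundles $\Gr_{Z,2}(\sV)$ and $\Gr_{Z,2}(\sW^\vee)$ of relative dimensions $2(n-2)$ and $2(m-2)$, so the expected dimension hypotheses for $Z$ and $Y_i$ together with Cor. \ref{cor:Quot:degloci} immediately yield the expected dimensions for $Y_2^\pm$, $Y_1^\pm$ and $\widehat{Y}_1, \widehat{Y}_2$. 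Hence Lem. \ref{lem:Tor-ind:quot:CM} applies and gives the Tor-independence for all three pairs, and at the same time yields Cohen--Macaulayness of $Y_2^\pm$, $\widehat{Y}_1, \widehat{Y}_2$.

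Second, with Tor-independence in hand, I would apply Tor-independent base-change Thm. \ref{thm:bc} together with fppf descent Thm. \ref{thm:fppf} to the semiorthogonal decomposition of Thm. \ref{thm:local:l=2}, exactly as in the general procedure described in \S \ref{subsec:univHom}. This step requires only that the three Fourier--Mukai kernels behave well under base-change, which is Lem.-Def. \ref{lem-def:relFM_bc}, and that the partial semiorthogonal order of the local statement is preserved; the order is intrinsic to the kernels and their $X$-linear Hom-spaces, so it descends automatically. Strongness of the Fourier--Mukai functors and the admissibility of all components follows from Lem. \ref{lem:global:Serre} (which implies perfectness of the relevant morphisms together with invertibility of their relative dualizing complexes) and Lem. \ref{lem:FM}. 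Compatibility with the chain $\Perf \subseteq \Db \subseteq \Dqc$ is part of Thm. \ref{thm:bc} and Thm. \ref{thm:fppf}. The only harmless bookkeeping point is to match the two incarnations of the $\sO(1)$-bundle: the one coming from the Quot construction (used in the local statement) and the one $\sO_{+,\Gr_2(\sV)}(1)$ coming from the Pl\"ucker embedding into the Grassmannian bundle (used in the global statement). These differ by $\pi_2^{+\,*}(\det \sV)$, which is a pulled-back line bundle and hence gets absorbed into the $X$-linear structure, so the semiorthogonal decomposition is unaffected and only the exponent in the Serre functor is shifted accordingly.

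Third, the statement about the relative Serre functor of $\Perf(Y_2^\pm)$ and the shifts $\SS_+(\Im \Omega_k) = \Im \Omega_{k-\delta}$ etc. follows from Lem. \ref{lem:global:Serre} combined with Prop. \ref{prop:Serre}, after rewriting the formulae for $\omega_{\pi_2^\pm}$ there in terms of $\sO_{\pm,\Gr_2}(1)$ using $\sO_{\Quot_{n-2}(\sG)}(1) \simeq \sO_{+,\Gr_2(\sV)}(1) \otimes \det \sV$ and $\sO_{\Quot_{m-2}(\sK)}(1) \simeq \sO_{-,\Gr_2(\sW^\vee)}(1) \otimes (\det \sW)^{-1}$, and using $\det \sG \simeq \det \sV \otimes (\det \sW)^{-1}$. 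The behaviour of the Serre functor on the three components is then a direct consequence of $X$-linearity: $\SS_+$ commutes with any $X$-linear autoequivalence, and twisting by $\sO_{+,\Gr_2(\sV)}(-\delta)$ is exactly the shift $k \mapsto k - \delta$ on each family. The main technical effort is the first step (the dimension/Tor-independence bookkeeping), as opposed to the second which is essentially mechanical descent; in fact, the genuinely substantive input is the local theorem Thm. \ref{thm:local:l=2}, whose proof already isolates the combinatorial heart of the matter via the Lascoux-type resolutions of Lem. \ref{lem:key} and the mutation results of Lem. \ref{lem:G_2:mut}.
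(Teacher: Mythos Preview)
Your proposal is correct and follows essentially the same approach as the paper: globalize the local result Thm.~\ref{thm:local:l=2} via the general procedure of \S\ref{subsec:univHom} (Tor-independent base-change Thm.~\ref{thm:bc} plus fppf descent Thm.~\ref{thm:fppf}), and derive the Serre functor statement from Lem.~\ref{lem:global:Serre} and Prop.~\ref{prop:Serre}. Your write-up is in fact more detailed than the paper's own one-line proof, spelling out the Tor-independence verification via Lem.~\ref{lem:Tor-ind:quot:CM} and the $\sO(1)$-bookkeeping between the Quot and Grassmannian normalizations, but the underlying strategy is identical.
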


As usual, we could regard the semiorthogonal decomposition of the theorem as:
	\begin{align*}
	\D(Y_2^+) = \big \langle  \text{$\binom{\delta}{2}$-copies of} ~ \D(Z),  ~\text{$\delta$-copies of} ~ \D(Y_1^-), ~\D(Y_2^-) \big \rangle.
	\end{align*}

\begin{proof} This is a globalization of Thm. \ref{thm:local:l=2} via the general procedure \S \ref{subsec:univHom} as before; The statement about Serre functor follows from Lem. \ref{lem:global:Serre} and Prop. \ref{prop:Serre}.
 \end{proof}

\subsection{The cases $\rank \sG \le 3$, and blowups of determinantal ideals of height $\le 4$} \label{sec:rk<=3} 
\subsubsection{The case $\rank \sG =0$: Grassmannian flops} If $\rank \sG = 0$, then this is a special case of Thm. \ref{thm:Grassflips}: $Y_\pm$ are two crepant desingularizations of the scheme $Y$ (if we assume $Y$ is Gorenstein and $\QQ$-factorial; this is the case, for example if $X$ is smooth) and the birational map $Y_+ \dashrightarrow Y_-$ is called a {\em a Grassmannian flop}.) Then Thm. \ref{thm:Grassflips} implies that 
	\begin{align*}
		\Phi: = r_{+\,*} \, r_{-}^*(\blank) \colon  & \Dqc(Y_-) \to \Dqc(Y_+)
	\end{align*}
is an {\em equivalence} of categories, and the restrictions of $\Phi$ induce compatible equivalences $\Phi|_{\Db} \colon \Db(Y_-) \simeq \Db(Y_+)$ and $\Phi|_{\Perf} \colon \Perf(Y_-) \simeq \Perf(Y_+)$. The derived equivalence for Grassmannian flops is also studied in \cite{BLV3, DS,  BCF+}.

\subsubsection{The case $\rank \sG = 1$} If $\rank \sG = 1$, then for any $d \ge 1$, consider the following Quot schemes: 
	\begin{align*}
	\shZ_+: = \Quot_{X, d}(\sG), \quad  \shZ_-^{\mathrm{flip}} := \Quot_{X, d-1}(\sK), \quad \shZ_-^{\mathrm{vf}} := \Quot_{X, d}(\sK).
	\end{align*}
We denote $\widehat{\shZ}^{\heartsuit} =  \shZ_+ \times_X \shZ_-^{\heartsuit}$ the fiber products, and $r_\pm^{\heartsuit}$ the natural projections, where $\heartsuit \in \{ \mathrm{flip}, \mathrm{mid}, \mathrm{vf} \}$. 
The relationship ``$\shZ_+ \dashrightarrow \shZ_-^{\rm flip}$" is a Grassmannian flip of Thm. \ref{thm:Grassflips}, and  the relationship ``$\shZ_+ \dashrightarrow \shZ_-^{\rm vf}$" is a virtual flip of Thm. \ref{thm:virtualflips}. Denote by	 
	$\sO_+(1) : = \sO_{\Quot_{d,X}(\sG)}(1)$
the line bundle from Quot construction Thm. \ref{thm:Quot} \eqref{thm:Quot-3} as usual.

\begin{theorem}[$\rank \sG=1$] \label{thm:rk=1} In the above situation, assume the following holds:
	\begin{align*}
	 	& \dim \shZ_+  = \dim \shZ_-^{\mathrm{flip}}  = \dim \widehat{\shZ}^{\mathrm{flip}} = \dim X - d(d-1), \\
		& \dim \shZ_-^{\mathrm{vf}}  = \dim X - d(1+d),  \qquad \dim \widehat{\shZ}^{\mathrm{vf}}  = \dim X - d^2.
	\end{align*}
Then $\shZ_+$, $\shZ_-^{\mathrm{flip}}$, $\shZ_-^{\mathrm{vf}}$, $ \widehat{\shZ}^{\mathrm{flip}}$ and $\widehat{\shZ}^{\mathrm{vf}}$ are also Cohen--Macaulay schemes. Furthermore, the following relative Fourier--Mukai functors over $X$:
	\begin{align*}
		\Phi^{\mathrm{flip}}: =  r_{+\,*}^{\mathrm{flip}} \circ r_{-}^{{\mathrm{flip}}\,*} \colon \Dqc(\shZ_{-}^{\mathrm{flip}}) \to \Dqc(\shZ_+), \qquad
		 \Phi^{\mathrm{vf}} : =  r_{+\,*}^{\mathrm{vf}} \circ r_{-}^{{\mathrm{vf}}\,*} \colon \Dqc(\shZ_{-}^{\mathrm{vf}}) \to \Dqc(\shZ_+).
		\end{align*}	
	are strong in the sense of Def. \ref{Def:FM:BS} and fully faithful, and their essential images induce semiorthogonal decompositions with admissible components:
	\begin{align*}
	\Dqc(\shZ_+) = \langle \Dqc(\shZ_{-}^{\mathrm{vf}}), ~\Dqc(\shZ_{-}^{\mathrm{flip}}) \otimes \sO_+(1)\rangle =  \langle \Dqc(\shZ_{-}^{\mathrm{flip}}) , ~\Dqc(\shZ_{-}^{\mathrm{vf}}) \rangle.
	\end{align*}
Similar semiorthogonal decompositions hold if we replace $\Dqc$ by $\Db$ and $\Perf$, and these semiorthogonal decompositions are compatible with the inclusions $\Perf \subseteq \Db \subseteq \Dqc$. The category $\Perf(\shZ_+)$ admits a relative Serre functor given by:
	$$\SS_+ = (\blank) \otimes \pi_+^* ( (\det \sG)^{\otimes d}) \otimes \sO_+(-1)[d(1-d)].$$
In particular, $\SS_+ (\Im \Phi^{\mathrm{flip}}) = \Im \Phi^{\mathrm{flip}} \otimes \sO_+(-1)$ and  $\SS_+ (\Im \Phi^{\mathrm{vf}}) = \Im \Phi^{\mathrm{vf}} \otimes \sO_+(-1)$.
\end{theorem}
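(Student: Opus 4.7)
The plan is to deduce this theorem from the universal local result Thm. \ref{thm:rk=1:local} via the general globalization procedure of \S \ref{subsec:univHom}, exactly as done for Thms. \ref{thm:d=2}, \ref{thm:virtualflips}, and \ref{thm:rk<=2flip}. First I would verify that the expected dimension hypotheses imply the Tor-independent conditions Def.~\ref{def:Tor-ind:quot} for the two relevant pairs $(d_+, d_-) = (d, d-1)$ and $(d_+, d_-) = (d, d)$; since $X$ is Cohen--Macaulay, this is an immediate application of the criterion Lem.~\ref{lem:Tor-ind:quot:CM}, which simultaneously yields the Cohen--Macaulayness of $\shZ_+$, $\shZ_-^{\mathrm{flip}}$, $\shZ_-^{\mathrm{vf}}$, $\widehat{\shZ}^{\mathrm{flip}}$, and $\widehat{\shZ}^{\mathrm{vf}}$.

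Next, by passing to a Zariski open cover over which $\sG$ admits a presentation $\sW \xrightarrow{\sigma} \sV$ and then possibly shrinking so that $\sW, \sV$ are free, the section map $s_\sigma\colon X \hookrightarrow H_X$ is a Koszul-regular immersion (Lem.~\ref{lem:Homspace}\eqref{lem:Homspace-2}) and $H_X \to H_\kk$ is smooth (Lem.~\ref{lem:univHom:flat}); the composition is Tor-independent with respect to the pairs in question by Lem.~\ref{lem:3squares} together with the Tor-independence verified in the previous step. Applying Tor-independent base-change Thm.~\ref{thm:bc} (and fppf descent Thm.~\ref{thm:fppf} to pass from the Zariski-local situation back to $X$) to the first semiorthogonal decomposition of the universal local Thm.~\ref{thm:rk=1:local} transports it to the asserted decomposition
\[
\Dqc(\shZ_+) = \langle \Dqc(\shZ_{-}^{\mathrm{vf}}),\ \Dqc(\shZ_{-}^{\mathrm{flip}}) \otimes \sO_+(1) \rangle,
\]
compatibly with $\Perf \subseteq \Db \subseteq \Dqc$. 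Fully-faithfulness and strongness of the kernels $\Phi^{\mathrm{flip}}$ and $\Phi^{\mathrm{vf}}$ follow from Lem.~\ref{lem:FM} and Prop.~\ref{prop:relFM_bc}, and admissibility from Lem.~\ref{lem:linearSO} together with the existence of the right and left adjoints.

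The computation of the relative dualizing complex $\omega_{\pi_+}$ and the resulting relative Serre functor $\SS_+$ is read off directly from Lem.~\ref{lem:global:Serre} and Prop.~\ref{prop:Serre}. The second decomposition $\langle \Dqc(\shZ_-^{\mathrm{flip}}), \Dqc(\shZ_-^{\mathrm{vf}}) \rangle$ then follows from the first by applying the relative Serre functor $\SS_+$, via Lem.~\ref{lem:Serre}\eqref{lem:Serre-1}: since $\SS_+(\Im \Phi^{\mathrm{flip}}) = \Im \Phi^{\mathrm{flip}} \otimes \sO_+(-1)$ and $\SS_+(\Im \Phi^{\mathrm{vf}}) = \Im \Phi^{\mathrm{vf}} \otimes \sO_+(-1)$ (these tracking formulas use the explicit shape of $\omega_{\pi_+}$ and the fact that $\Phi^{\mathrm{vf}}, \Phi^{\mathrm{flip}}$ commute with twisting by $\pi_+^*(\det \sG)$ by $X$-linearity), one obtains the desired mutation-equivalent decomposition.

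The only genuinely nontrivial input is the local Thm.~\ref{thm:rk=1:local}, whose proof has already been given via Lem.~\ref{lem:key}; there is no essential new difficulty in the globalization, as all the ingredients (base-change, descent, Serre duality for local complete intersection morphisms with invertible dualizing complex) are by now standard. The main point to be careful about is keeping track of the two different $\sO(1)$-conventions (from projectivization versus from Quot) and ensuring that the twist $\sO_+(1)$ appearing in the decomposition matches the twist in the local statement after base change; this is a matter of bookkeeping rather than substance.
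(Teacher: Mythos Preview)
Your proposal is correct and follows essentially the same route as the paper's own proof, which simply says ``This is a globalization of Thm.~\ref{thm:rk=1:local} via the general procedure \S\ref{subsec:univHom} as before; the statement about Serre functor follows from Lem.~\ref{lem:global:Serre} and Prop.~\ref{prop:Serre}.'' You have spelled out the details (Tor-independence via Lem.~\ref{lem:Tor-ind:quot:CM}, base-change and descent via Thms.~\ref{thm:bc} and \ref{thm:fppf}, the second decomposition from the first via the relative Serre functor) more carefully than the paper bothers to, but there is no substantive difference in approach.
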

\begin{proof} This is a globalization of Thm. \ref{thm:rk=1:local} via the general procedure \S \ref{subsec:univHom} as before; The statement about Serre functor follows from Lem. \ref{lem:global:Serre} and Prop. \ref{prop:Serre}.
 \end{proof}

Let $d= \delta = 1$ in above theorem, then by Lem. \ref{lem:Quot=Bldet} $\shZ_+ = \PP(\sG)$ is the blowup of $X$ along a Cohen--Macaulay subscheme $Z$ of codimension $2$, hence we obtain {\cite{JL18}}:

\begin{corollary}[{\cite{JL18}}]\label{cor:rk=1}  Let $X$ be a Cohen--Macaulay scheme, and let $\sG$ be a quasi-coherent $\sO_X$-module of homological dimension $\le 1$ and rank $1$. Denote $Z = X^{\ge 2}(\sG)$ the degeneracy locus of $\sG$. Let $\pi \colon \Bl_Z X \to Z$ be the blowup of $X$ along $Z$. Assume that:
	$$\codim_X (X^{\ge 1+i}(\sG) ) \ge 2i, \qquad \forall i \ge 1.$$
(Notice that the expected codimension is ${\rm exp.codim}_X X^{\ge 1+i}(\sG) = i(1+i) \ge 2i$ for $i \ge 1$.) Then there are $X$-linear semiorthogonal decompositions with admissible components:	
	\begin{align*}\D(\Bl_Z X) = \langle \D(\widetilde{Z}), ~\D(X) \otimes \sO_{\Bl_Z X}(1)\rangle =  \langle \D(X) , ~\D(\widetilde{Z}) \rangle,
	\end{align*}
where $\widetilde{Z} := \PP(\sK) \to Z$ is a partial desingularization of $Z$, $\D$ stands for $\Perf$, $\Db$ or $\Dqc$. Furthermore, $\Perf(\Bl_Z X)$ admits a relative Serre functor over $X$ given by $\SS = \otimes \sO_{\Bl_Z X}(-1)$.
\end{corollary}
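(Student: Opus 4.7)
The plan is to derive this corollary as the $d = \delta = 1$ specialization of Theorem \ref{thm:rk=1}. First I would identify the Quot schemes: at $d=1$ we have $\shZ_+ = \Quot_{X,1}(\sG) = \PP(\sG)$, $\shZ_-^{\mathrm{vf}} = \Quot_{X,1}(\sK) = \PP(\sK) = \widetilde{Z}$, and $\shZ_-^{\mathrm{flip}} = \Quot_{X,0}(\sK) = X$ (a rank-zero quotient is trivial, so this Quot scheme equals the base), from which $\widehat{\shZ}^{\mathrm{flip}} = \PP(\sG) \times_X X = \PP(\sG)$ with projections $r_+^{\mathrm{flip}} = \id$ and $r_-^{\mathrm{flip}} = \pi$. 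In particular, the functor $\Phi^{\mathrm{flip}}$ of Theorem \ref{thm:rk=1} reduces to $\pi^* \colon \D(X) \to \D(\Bl_Z X)$. Under the hypothesis $\codim_X X^{\ge 1+i}(\sG) \ge 2i$, Lemma \ref{lem:Quot=Bldet}\eqref{lem:Quot=Bldet-3} applies and gives the canonical identification $\Quot_{X,1}(\sG) \simeq \Bl_Z X$, together with the relation $\sO_+(1) \simeq \sO_{\Bl_Z X}(1) \otimes \pi^* \det \sG$.

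Second, I would check the Tor-independent/expected-dimension condition required by Theorem \ref{thm:rk=1}: at $d=1$ this amounts to $\dim \PP(\sG) = \dim X$, $\dim \PP(\sK) = \dim X - 2$ and $\dim \PP(\sG) \times_X \PP(\sK) = \dim X - 1$; the equalities involving $\shZ_-^{\mathrm{flip}}$ and $\widehat{\shZ}^{\mathrm{flip}}$ are automatic from step one. Using Corollary \ref{cor:Quot:degloci} to stratify $\PP(\sK)$ and $\PP(\sG) \times_X \PP(\sK)$ by the loci $\mathring{X}^{1+i}(\sG) := X^{\ge 1+i}(\sG) \setminus X^{\ge 2+i}(\sG)$ (and remembering $X^{\ge 1+i}(\sG) = X^{\ge i}(\sK)$ via Lemma \ref{lem:hdK}\eqref{lem:hdK-1}), the stratum over $\mathring{X}^{1+i}(\sG)$ contributes a $\PP^{i-1}$-bundle to $\PP(\sK)$ and a $\PP^i\times \PP^{i-1}$-bundle to $\PP(\sG) \times_X \PP(\sK)$. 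The codimension bound $\codim \ge 2i$ then bounds each total contribution by $\dim X - (i+1)$ and $\dim X - 1$ respectively, with equality realized by the main stratum $i=1$. Hence the expected-dimension conditions hold, and therefore so does Tor-independence by Lemma \ref{lem:Tor-ind:quot:CM}.

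Third, I would invoke Theorem \ref{thm:rk=1} to obtain the two semiorthogonal decompositions $\D(\Bl_Z X) = \langle \D(\widetilde{Z}), \D(X) \otimes \sO_+(1)\rangle = \langle \D(X), \D(\widetilde{Z})\rangle$. Since $\pi^*\D(X) \otimes \sO_+(1) = \pi^*\D(X) \otimes \sO_{\Bl_Z X}(1)$ (the extra twist by $\pi^*\det \sG$ is absorbed into $\D(X)$), this matches the statement. For the Serre functor, Theorem \ref{thm:rk=1} gives $\SS_+ = (\blank)\otimes \pi^*(\det \sG) \otimes \sO_+(-1)$; substituting $\sO_+(-1) \simeq \sO_{\Bl_Z X}(-1) \otimes \pi^*(\det \sG)^{-1}$ yields $\SS_+ = (\blank)\otimes \sO_{\Bl_Z X}(-1)$, as required.

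The only non-mechanical step in this plan is the translation of the codimension hypothesis into the expected-dimension equalities feeding Theorem \ref{thm:rk=1}, which is the content of step two; everything else is a direct unpacking of definitions together with a specialization of Theorem \ref{thm:rk=1} and Lemma \ref{lem:Quot=Bldet}.
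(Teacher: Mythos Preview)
Your proposal is correct and follows the same approach as the paper: specialize Theorem~\ref{thm:rk=1} to $d=\delta=1$ and use Lemma~\ref{lem:Quot=Bldet} to identify $\Quot_{X,1}(\sG)$ with $\Bl_Z X$ and to compare the line bundles $\sO_+(1)$ and $\sO_{\Bl_Z X}(1)$. You supply more detail than the paper does---in particular the verification that the codimension hypothesis $\codim_X X^{\ge 1+i}(\sG)\ge 2i$ feeds the expected-dimension conditions of Theorem~\ref{thm:rk=1} via the stratification of Corollary~\ref{cor:Quot:degloci}---but the logical skeleton is identical.
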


As noted in \cite{JL18}, if $X$ is regular, then by Hilbert--Burch theorem, every Cohen--Macaulay subscheme $Z \subset X$ of codimension $2$ arises in this way.

\subsubsection{The case $\rank \sG = 2$} If $\rank \sG = 2$, then for any $d \ge 2$, consider the following Quot schemes: $\shZ_+: = \Quot_{X, d}(\sG)$ as usual, and we set:
	\begin{align*}
	\shZ_-^{\mathrm{flip}} := \Quot_{X, d-2}(\sK), \quad  \shZ_-^{\mathrm{mid}} := \Quot_{X, d-1}(\sK), \quad \shZ_-^{\mathrm{vf}} := \Quot_{X, d}(\sK).
	\end{align*}
We denote $\widehat{\shZ}^{\heartsuit} =  \shZ_+ \times_X \shZ_-^{\heartsuit}$ the fiber products, and $r_\pm^{\heartsuit}$ the natural projections, where $\heartsuit \in \{ \mathrm{flip}, \mathrm{mid}, \mathrm{vf} \}$.  
Denote by	 
	$\sO_+(1) : = \sO_{\Quot_{d,X}(\sG)}(1)$
the line bundle of Thm. \ref{thm:Quot} \eqref{thm:Quot-3}.

\begin{theorem} [$\rank \sG=2$] \label{thm:rk=2} In the above situation, assume the following holds:
	\begin{align*}
	 	&\dim \shZ_+  = \dim \shZ_-^{\mathrm{flip}}  = \dim \widehat{\shZ}^{\mathrm{flip}} = \dim X - d(d-2), \\
		&\dim  \shZ_-^{\mathrm{mid}}  = \dim X - (d-1)(d+1),  \qquad  \dim \widehat{\shZ}^{\mathrm{mid}}  = \dim X - d^2 + d + 1, \\
		&\dim \shZ_-^{\mathrm{vf}}  = \dim X - d(d+2),  \qquad \dim \widehat{\shZ}^{\mathrm{vf}}  = \dim X - d^2.
	\end{align*}
Then $\shZ_+$, $\shZ_-^{\heartsuit}$, $\widehat{\shZ}^{\heartsuit}$ are also Cohen--Macaulay schemes, where $\heartsuit  \in \{ \mathrm{flip}, \mathrm{mid}, \mathrm{vf} \}$. Furthermore, for any $i \in \ZZ$, the following relative Fourier--Mukai functors over $X$:
	\begin{align*}
		&\Phi_i^{\mathrm{flip}}(\blank): =  r_{+\,*}^{\mathrm{flip}} \circ r_{-}^{{\mathrm{flip}}\,*} (\blank) \otimes \sO_+(i) \colon & \Dqc(\shZ_{-}^{\mathrm{flip}}) \to \Dqc(\shZ_+), \\
		&\Phi_i^{\mathrm{mid}}(\blank): =  r_{+\,*}^{\mathrm{mid}} \circ r_{-}^{{\mathrm{mid}}\,*} (\blank) \otimes \sO_+(i) \colon & \Dqc(\shZ_{-}^{\mathrm{mid}}) \to \Dqc(\shZ_+), \\
		 &\Phi_i^{\mathrm{vf}} (\blank) : =  r_{+\,*}^{\mathrm{vf}} \circ r_{-}^{{\mathrm{vf}}\,*}(\blank)  \otimes \sO_+(i)  \colon & \Dqc(\shZ_{-}^{\mathrm{vf}}) \to \Dqc(\shZ_+)
	\end{align*}
are strong in the sense of Def. \ref{Def:FM:BS} and fully faithful, and their essential images induce semiorthogonal decompositions with admissible components:
		$$\Dqc(\shZ_+) = \langle \Im \Phi_{i-1}^{\mathrm{vf}}, ~ \Im \Phi_{i}^{\mathrm{mid}}, ~\Im  \Phi_{i+1}^{\mathrm{mid}}, ~ \Im \Phi_{i+2}^{\mathrm{flip}} \rangle.$$
Similar semiorthogonal decompositions hold if we replace $\Dqc$ by $\Db$ or $\Perf$, and these semiorthogonal decompositions are compatible with the inclusions $\Perf \subseteq \Db \subseteq \Dqc$. The category $\Perf(\shZ_+)$ admits a relative Serre functor given by:
	$$\SS_+ = (\blank) \otimes \pi_+^* ( (\det \sG)^{\otimes d}) \otimes \sO_+(-2)[d(2-d)].$$
In particular, for any $i \in \ZZ$ and $\heartsuit  \in \{ \mathrm{flip}, \mathrm{mid}, \mathrm{vf} \}$, $\SS_+ (\Im \Phi_i^{\heartsuit}) = \Im \Phi^{\heartsuit}_{i-2}$.
		\end{theorem}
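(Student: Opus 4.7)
The plan is to deduce this theorem from the local universal case Thm.~\ref{thm:rk=2:local} via the general base-change procedure \S\ref{subsec:univHom}, completely parallel to the proofs of Thm.~\ref{thm:rk=1}, Thm.~\ref{thm:d=2} and Thm.~\ref{thm:rk<=2flip}. First I would verify the Tor-independent condition Def.~\ref{def:Tor-ind:quot} for each of the three pairs $(d_+,d_-)=(d,d-2),(d,d-1),(d,d)$ that appear in the statement. Since $X$ is Cohen--Macaulay, Lem.~\ref{lem:Tor-ind:quot:CM} reduces this to checking exactly the three families of expected dimension equalities for $\shZ_+$, $\shZ_-^{\heartsuit}$ and $\widehat{\shZ}^{\heartsuit}$ with $\heartsuit\in\{\mathrm{flip},\mathrm{mid},\mathrm{vf}\}$ that are already assumed in the hypothesis. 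This simultaneously gives the Cohen--Macaulayness of these Quot schemes and fiber products.

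Next I would globalize. By passing to a Zariski cover of $X$ over which $\sG$ admits a finite free presentation $\sW\xrightarrow{\sigma}\sV$, Lem.~\ref{lem:Homspace} and Lem.~\ref{lem:univHom:flat} present each of the pairs $(\shZ_+,\shZ_-^{\heartsuit})$ as a Tor-independent base-change, along the composition $X\to H_X\to H_\kk$, of the corresponding universal local pair considered in Thm.~\ref{thm:rk=2:local}. The Fourier--Mukai kernels defining $\Phi_i^{\mathrm{flip}},\Phi_i^{\mathrm{mid}},\Phi_i^{\mathrm{vf}}$ therefore arise as pullbacks of the local kernels, and are strong in the sense of Def.~\ref{Def:FM:BS} because Lem.~\ref{lem:global:Serre} shows that all the maps $r_\pm^{\heartsuit}$ and $\pi_\pm$ are proper local complete intersection morphisms with invertible relative dualizing complexes. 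The Tor-independent base-change Thm.~\ref{thm:bc} then propagates the fully faithful embeddings and the semiorthogonal decomposition of $\Perf$ from the universal base $H_\kk$ first to $H_X$, and then (along the regularly immersed section $X\hookrightarrow H_X$) to $X$, while simultaneously yielding the compatible semiorthogonal decompositions on $\Db$ and $\Dqc$. The final descent from a Zariski cover to all of $X$ is fppf descent Thm.~\ref{thm:fppf}.

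For the Serre functor statement, Lem.~\ref{lem:global:Serre} computes $\omega_{\pi_+}=\pi_+^*((\det\sG)^{\otimes d})\otimes\sO_+(-2)[d(2-d)]$, and Prop.~\ref{prop:Serre} then identifies $\SS_+=(\blank)\otimes\omega_{\pi_+}$ as a relative Serre functor for $\Perf(\shZ_+)$ over $X$ (which by Prop.~\ref{prop:Serre2sod} automatically permutes admissible $X$-linear components). The identification $\SS_+(\Im\Phi_i^{\heartsuit})=\Im\Phi_{i-2}^{\heartsuit}$ then follows directly from the formula and the fact that tensoring by $\sO_+(-2)$ shifts the twist parameter $i$ by $-2$, together with $X$-linearity of the functors.

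The only genuinely nontrivial piece of the proof is already encapsulated in Thm.~\ref{thm:rk=2:local}; everything else is routine base-change and descent bookkeeping. Accordingly, the main obstacle in this argument is not in the present theorem itself but in checking, carefully enough, that the Tor-independent hypothesis is indeed verified for all three pairs simultaneously so that all three of $\Phi_i^{\mathrm{flip}},\Phi_i^{\mathrm{mid}},\Phi_i^{\mathrm{vf}}$ base-change correctly, and that the semiorthogonality relations among their essential images -- which involve several mutual vanishings rather than a linearly ordered sequence -- descend properly; this is handled cleanly by Prop.~\ref{prop:relFM_bc}\eqref{prop:relFM_bc-2} and Thm.~\ref{thm:bc}.
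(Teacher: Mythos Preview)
Your proposal is correct and follows essentially the same approach as the paper, which simply states that the theorem is a globalization of Thm.~\ref{thm:rk=2:local} via the general procedure of \S\ref{subsec:univHom}, with the Serre functor statement following from Lem.~\ref{lem:global:Serre} and Prop.~\ref{prop:Serre}. You have expanded the paper's one-line proof with more detail, but the content is identical.
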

As usual, the semiorthogonal decomposition could be informatively written as:
	\begin{align*}
	\D(\shZ_+) = \big \langle  \D(\shZ_{-}^{\rm vf}), ~\text{2-copies of} ~ \D(\shZ_{-}^{\rm mid}), ~ \D(\shZ_-^{\rm flip}) \big \rangle.
	\end{align*}

\begin{proof} This is a globalization of  Thm. \ref{thm:rk=2:local} via the general procedure \S \ref{subsec:univHom} as before; The statement about Serre functor follows from Lem. \ref{lem:global:Serre} and Prop. \ref{prop:Serre}.
 \end{proof}

Set $d= \delta = 2$ in above theorem, then by Lem. \ref{lem:Quot=Bldet}, $\shZ_+ = \Quot_2(\sG)$ is the blowup of $X$ along a determinantal subscheme $Z$ of codimension $3$, hence we obtain:

\begin{corollary} \label{cor:rk=2} Let $X$ be a Cohen--Macaulay scheme, let $\sG$ be a quasi-coherent $\sO_X$-module of homological dimension $\le 1$ and rank $2$, and denote by 
	$$Z_2 := X^{\ge 4}(\sG) \quad \subset \quad Z := X^{\ge 3}(\sG) \quad \subset \quad X = X^{\ge 2}(\sG)$$
the second and the first degeneracy loci of $\sG$. Let $\pi \colon \Bl_Z X \to Z$ be the blowup of $X$ along $Z$. Assume $\codim_X(Z) = 3$, and furthermore that:
	$$\codim_X (X^{\ge 2+i}(\sG) ) \ge 4i, \qquad \forall i \ge 2.$$
(The expected codimension is ${\rm exp.codim}_X X^{\ge 2+i}(\sG) = i(2+i) \ge 4i$ for $i \ge 2$.) 
Then there is a $X$-linear semiorthogonal decompositions with admissible components:	
	\begin{align*}\D(\Bl_Z X) = \langle \D(\widetilde{Z}_2), ~\D(\widetilde{Z})\otimes \sO(1), \D(\widetilde{Z}) \otimes \sO(2), ~ \D(X) \otimes \sO(3) \rangle,
	\end{align*}
where $\widetilde{Z}_2 := \Quot_2(\sK) \to Z_2$ and $\widetilde{Z} := \PP(\sK) \to Z$ are partial desingularizations of $Z_2$ and $Z$; $\D$ stands for $\Perf$, $\Db$ or $\Dqc$; $\sO(1)$ stands for $\sO_{\Bl_Z X}(1) = \sO_{\Bl_Z X}(-E)$. Furthermore, $\Perf(\Bl_Z X)$ admits a relative Serre functor over $X$ given by $\SS = \otimes \sO_{\Bl_Z X}(-2)$.
\end{corollary}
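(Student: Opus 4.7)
\smallskip

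The plan is to deduce this corollary directly from Theorem \ref{thm:rk=2} with $d = \delta = 2$, after identifying the relevant Quot schemes with the geometric objects appearing in the statement. First, I would identify $\shZ_+ = \Quot_{X,2}(\sG)$ with $\Bl_Z X$ via Lemma \ref{lem:Quot=Bldet}\eqref{lem:Quot=Bldet-3}: this applies because $\codim_X Z = 3 = \delta \cdot 1 + 1$ and, for $i \ge 2$, the assumption $\codim_X X^{\ge 2+i}(\sG) \ge 4i \ge 2i+1$ verifies the required codimension bound. Under this identification, $\sO_{\Quot_{X,2}(\sG)}(1) = \sO_{\Bl_Z X}(1) \otimes \pi^*\det \sG$. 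The remaining Quot schemes are tautologically $\shZ_-^{\mathrm{flip}} = \Quot_{X,0}(\sK) = X$, $\shZ_-^{\mathrm{mid}} = \Quot_{X,1}(\sK) = \PP(\sK) = \widetilde{Z}$, and $\shZ_-^{\mathrm{vf}} = \Quot_{X,2}(\sK) = \widetilde{Z}_2$.

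Second, I would verify the expected dimension conditions of Theorem \ref{thm:rk=2}. By Lemma \ref{lem:hdK}\eqref{lem:hdK-1}, $X^{\ge 2+i}(\sG) = X^{\ge i}(\sK)$ for all $i \ge 0$, so the hypothesis $\codim_X X^{\ge 2+i}(\sG) \ge 4i$ (combined with $\codim_X Z = 3$) controls the stratification of $\sK$. Cor. \ref{cor:Quot:degloci} then yields $\dim \widetilde{Z} = \dim Z = \dim X - 3$ (as $\sK$ has generic rank $1$ on $Z$) and $\dim \widetilde{Z}_2 = \dim Z_2 = \dim X - 8$ (since $Z_2$ has the expected codimension $8$). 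The fiber product dimensions $\dim \widehat{\shZ}^{\mathrm{mid}} = \dim X - 1$ and $\dim \widehat{\shZ}^{\mathrm{vf}} = \dim X - 4$ follow by the same Cor. \ref{cor:Quot:degloci} analysis on the open strata together with a dimension bound on the preimages of higher degeneracy loci (supplied by the $4i$-codimension hypothesis). These dimension conditions then give the Tor-independence required for Theorem \ref{thm:rk=2} via Lemma \ref{lem:Tor-ind:quot:CM}.

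Third, applying Theorem \ref{thm:rk=2} with $d = \delta = 2$ and $i = 1$ produces the semiorthogonal decomposition
\[
\Dqc(\Bl_Z X) = \langle \Im \Phi_0^{\mathrm{vf}},\ \Im \Phi_1^{\mathrm{mid}},\ \Im \Phi_2^{\mathrm{mid}},\ \Im \Phi_3^{\mathrm{flip}} \rangle,
\]
where the twists use $\sO_+(k) := \sO_{\Quot_{X,2}(\sG)}(k)$. Since each $\Phi^\heartsuit_i$ is $X$-linear, tensoring by $\pi^*(\det \sG)^{\otimes k}$ preserves each image subcategory; hence the same components are obtained using $\sO_{\Bl_Z X}(k)$-twists. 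Noting that $\Phi^{\mathrm{flip}}_i = \pi^*(\blank) \otimes \sO_+(i)$ reduces to the pullback from $X$, the four components become $\D(\widetilde{Z}_2)$, $\D(\widetilde{Z}) \otimes \sO(1)$, $\D(\widetilde{Z}) \otimes \sO(2)$, $\D(X) \otimes \sO(3)$ as claimed. The Serre functor assertion then follows from Theorem \ref{thm:rk=2}'s formula: with $d = \delta = 2$ the degree shift $d(2-d)$ vanishes, and $\SS_+ = \otimes\, \pi^*((\det\sG)^{\otimes 2}) \otimes \sO_+(-2) = \otimes\, \sO_{\Bl_Z X}(-2)$ after cancelling the $\pi^*(\det\sG)^{\pm 2}$ factors.

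The main obstacle is step two: converting the given codimension bounds on the degeneracy loci $X^{\ge 2+i}(\sG)$ into the precise dimension equalities required by Theorem \ref{thm:rk=2}, especially for the fiber products $\widehat{\shZ}^{\mathrm{mid}}$ and $\widehat{\shZ}^{\mathrm{vf}}$. This requires translating hypotheses on $\sG$ into information about $\sK$ on the stratified subscheme $Z$ (using Lemma \ref{lem:hdK}), then bounding the contributions of the deeper degeneracy strata to the Quot schemes of $\sK$ and their fiber products with $\Bl_Z X$. Once these dimension identities are in hand, the rest of the argument is a direct application of the already-established global theorem.
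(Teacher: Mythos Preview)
Your proposal is correct and follows exactly the paper's approach: the paper derives this corollary simply by setting $d=\delta=2$ in Theorem~\ref{thm:rk=2} and invoking Lemma~\ref{lem:Quot=Bldet} to identify $\Quot_{X,2}(\sG)$ with $\Bl_Z X$. You have supplied the details the paper leaves implicit, including the line-bundle translation $\sO_{\Quot}(1)=\sO_{\Bl_Z X}(1)\otimes\pi^*\det\sG$ and the resulting cancellation in the Serre functor formula, all of which are handled correctly.
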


\subsubsection{The case $\rank \sG = 3$} If $\rank \sG = 3$, then for any $d \ge 3$, consider the following Quot schemes: let $\shZ_+: = \Quot_{X,d}(\sG)$, and we set:
	\begin{align*}
	&\shZ_-^{\mathrm{flip}} := \Quot_{X,d-3}(\sK),  &\shZ_-^{(d-2)} := \Quot_{X, d-2}(\sK), \\
	& \shZ_-^{(d-1)} := \Quot_{X, d-1}(\sK), & \shZ_-^{\mathrm{vf}} := \Quot_{X, d}(\sK).
	\end{align*}

We denote by $\widehat{\shZ}^{\heartsuit} =  \shZ_+ \times_X \shZ_-^{\heartsuit}$ the fiber products, and $r_\pm^{\heartsuit}$ the natural projections as usual, where $\heartsuit \in \{ \mathrm{flip}, (d-2), (d-1), \mathrm{vf} \}$.  Denote by	 
	$\sO_+(1) : = \sO_{\Quot_{d,X}(\sG)}(1)$
the line bundle of Thm. \ref{thm:Quot} \eqref{thm:Quot-3} as usual.

\begin{theorem} [$\rank \sG=3$] \label{thm:rk=3} In the above situation, assume the following holds:
	\begin{align*}
	 	&\dim \shZ_+  = \dim \shZ_-^{\mathrm{flip}}  = \dim \widehat{\shZ}^{\mathrm{flip}} = \dim X - d(d-3), \\
		&\dim  \shZ_-^{(d-2)}  = \dim X - (d-2)(d+1),  \qquad  \dim \widehat{\shZ}^{(d-2)} =\dim X - d^2 + 2d + 2, \\
		&\dim  \shZ_-^{(d-1)}  = \dim X - (d-1)(d+2),  \qquad  \dim \widehat{\shZ}^{(d-1)}  = \dim X - d^2 + d + 2, \\
		&\dim \shZ_-^{\mathrm{vf}}  = \dim X - d(d+3),  \qquad \dim \widehat{\shZ}^{\mathrm{vf}}  = \dim X - d^2.
	\end{align*}
Then $\shZ_+$, $\shZ_-^{\heartsuit}$, $\widehat{\shZ}^{\heartsuit}$ are also Cohen--Macaulay schemes, where $\heartsuit \in \{ \mathrm{flip}, (d-2), (d-1), \mathrm{vf} \}$. Furthermore, for any $i \in \ZZ$, the following relative Fourier--Mukai functors over $X$:
	\begin{align*}
		&\Phi_i^{(d-3)}(\blank) \equiv \Phi_i^{\mathrm{flip}}(\blank): =  r_{+\,*}^{\mathrm{flip}} \circ r_{-}^{{\mathrm{flip}}\,*} (\blank) \otimes \sO_+(i) \colon & \Dqc(\shZ_{-}^{\mathrm{flip}}) \to \Dqc(\shZ_+), \\
		&\Phi_i^{(d-2)}(\blank): =  r_{+\,*}^{(d-2)} \circ r_{-}^{(d-2)\,*} (\blank) \otimes \sO_+(i) \colon & \Dqc(\shZ_{-}^{(d-2)}) \to \Dqc(\shZ_+), \\
		&\Phi_i^{(d-1)}(\blank): =  r_{+\,*}^{(d-1)} \circ r_{-}^{(d-1)\,*} (\blank) \otimes \sO_+(i) \colon & \Dqc(\shZ_{-}^{(d-1)}) \to \Dqc(\shZ_+), \\
		 &\Phi_i^{(d)}(\blank) \equiv \Phi_i^{\mathrm{vf}} (\blank) : =  r_{+\,*}^{\mathrm{vf}} \circ r_{-}^{{\mathrm{vf}}\,*}(\blank)  \otimes \sO_+(i)  \colon & \Dqc(\shZ_{-}^{\mathrm{vf}}) \to \Dqc(\shZ_+)
	\end{align*}
are strong in the sense of Def. \ref{Def:FM:BS} and fully faithful, and their essential images induce semiorthogonal decompositions with admissible components:
	$$\Dqc(\shZ_+) = \langle \Im \Phi_{i-1}^{\mathrm{vf}}, ~ \Im \Phi_{i}^{(d-1)},  \Im \Phi_{i+1}^{(d-2)},  \Im \Phi_{i+1}^{(d-1)},  \Im \Phi_{i+2}^{(d-2)},  \Im \Phi_{i+2}^{(d-1)} , \Im \Phi_{i+3}^{(d-2)}, ~ \Im \Phi_{i+4}^{\mathrm{flip}} \rangle.$$
Similar semiorthogonal decompositions hold if we replace $\Dqc$ by $\Db$ or $\Perf$, and these semiorthogonal decompositions are compatible with the inclusions $\Perf \subseteq \Db \subseteq \Dqc$. The category $\Perf(\shZ_+)$ admits a relative Serre functor given by:
	$$\SS_+ = (\blank) \otimes \pi_+^* ( (\det \sG)^{\otimes d}) \otimes \sO_+(-3)[d(3-d)].$$
In particular, for any $i \in \ZZ$, and $\heartsuit \in \{ \mathrm{flip}, (d-2), (d-1), \mathrm{vf} \}$,  $\SS_+ (\Im \Phi_i^{\heartsuit}) = \Im \Phi^{\heartsuit}_{i-3}$.
		\end{theorem}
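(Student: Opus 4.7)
The plan is to deduce the global theorem from its universal local counterpart Thm. \ref{thm:rk=3:local} via the general base-change-and-descent procedure outlined in \S \ref{subsec:univHom}, with the Serre-functor statement supplied separately by Lem. \ref{lem:global:Serre} and Prop. \ref{prop:Serre}.

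First, I would verify the Tor-independent condition of Def. \ref{def:Tor-ind:quot} for each of the four pairs $(d_+,d_-) \in \{(d,d-3),(d,d-2),(d,d-1),(d,d)\}$. Because $X$ is assumed Cohen--Macaulay, Lem. \ref{lem:Tor-ind:quot:CM} translates each such condition into the precise expected-dimension equalities that are hypotheses of the theorem. Consequently, the schemes $\shZ_+$, $\shZ_-^\heartsuit$ and $\widehat{\shZ}^\heartsuit$ (for $\heartsuit \in \{\mathrm{flip},(d-2),(d-1),\mathrm{vf}\}$) are all Cohen--Macaulay, and Lem. \ref{lem:global:Serre} applies to compute the relative dualizing complexes of $\pi_\pm^\heartsuit$ and $r_\pm^\heartsuit$; in particular $\omega_{r_\pm^\heartsuit}$ are (shifts of) line bundles, so every $\Phi_i^\heartsuit$ is a \emph{strong} relative Fourier--Mukai transform in the sense of Def. \ref{Def:FM:BS}, and Lem. \ref{lem:FM} ensures the adjoint functors preserve $\Perf$ and $\Db$.

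Second, by shrinking to a Zariski open cover over which $\sG$ admits a two-term locally free presentation $0 \to \sW \xrightarrow{\sigma} \sV \to \sG$, Lem. \ref{lem:Homspace} \eqref{lem:Homspace-2} realises $X$ as the pullback of the universal Hom space $H_\kk$ along a composition $X \hookrightarrow H_X \to H_\kk$, where the first map is a Koszul-regular closed immersion and the second is flat. Lem. \ref{lem:univHom:flat} and the verified Tor-independence, combined with Lem. \ref{lem:3squares}, show that the four pairs of Quot schemes $(\shZ_+,\shZ_-^\heartsuit)$ base-change Tor-independently from the universal local situation of \S \ref{sec:local}. The local Thm. \ref{thm:rk=3:local} provides, in the universal base $H_\kk$, the fully faithful embeddings and the eight-component semiorthogonal decomposition of $\Db(\shZ_{+,H_\kk}) = \Perf(\shZ_{+,H_\kk})$ with all the stated orders.

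Third, the Tor-independent base-change theorem Thm. \ref{thm:bc} transports the local semiorthogonal decomposition to a semiorthogonal decomposition of $\Perf(\shZ_+|_U)$, $\Db(\shZ_+|_U)$ and $\Dqc(\shZ_+|_U)$ simultaneously over each Zariski chart $U$, in a manner compatible with the inclusions $\Perf \subseteq \Db \subseteq \Dqc$. Fppf descent for semiorthogonal decompositions induced by relative Fourier--Mukai kernels (Thm. \ref{thm:fppf}) then glues these local decompositions together into the required $X$-linear semiorthogonal decomposition of the theorem, and the fully-faithfulness of each $\Phi_i^\heartsuit$ descends similarly via Prop. \ref{prop:relFM_bc}. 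Admissibility of the components is automatic from strongness together with Lem. \ref{lem:FM} \eqref{lem:FM-2}.

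Finally, the Serre-functor statement $\SS_+ = (\blank) \otimes \pi_+^*((\det \sG)^{\otimes d}) \otimes \sO_+(-3)[d(3-d)]$ is read off from the expression for $\omega_{\pi_+}$ in Lem. \ref{lem:global:Serre} (specialised to $\delta = 3$) together with Prop. \ref{prop:Serre}; the action $\SS_+(\Im \Phi_i^\heartsuit) = \Im \Phi_{i-3}^\heartsuit$ then follows from $X$-linearity of each $\Phi_i^\heartsuit$, since $\SS_+$ differs from the autoequivalence $(\blank)\otimes \sO_+(-3)[d(3-d)]$ only by tensoring with a pullback from $X$. The main obstacle has already been absorbed into the local Thm. \ref{thm:rk=3:local}, whose combinatorial content (generation via iterated application of the key lemmas \ref{lem:key} and \ref{lem:key:twist}, together with the staircase mutations of Lem. \ref{lem:G:mut}) is nontrivial; on the global side the principal subtlety is verifying once and for all that the expected-dimension hypotheses deliver Tor-independence uniformly over all four base-changes.
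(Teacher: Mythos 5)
Your proposal is correct and follows essentially the same route as the paper, which simply cites the local Thm.~\ref{thm:rk=3:local}, the general base-change-and-descent procedure of \S\ref{subsec:univHom}, and Lem.~\ref{lem:global:Serre} together with Prop.~\ref{prop:Serre} for the Serre functor. Your write-up usefully spells out the intermediate steps (Tor-independence via Lem.~\ref{lem:Tor-ind:quot:CM}, the factorization $X \hookrightarrow H_X \to H_\kk$, and the invocation of Thm.~\ref{thm:bc} and Thm.~\ref{thm:fppf}) that the paper leaves implicit.
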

As usual, there are many different mutation-equivalent ways to rewrite above semiorthogonal decomposition, and we could understand these decompositions as:
	\begin{align*}
	\D(\shZ_+) = \big \langle  \D(\shZ_-^{\rm vf}) , ~\text{3-copies of} ~ \D(\shZ_{-}^{(d-1)}), ~\text{3-copies of} ~  \D(\shZ_{-}^{(d-2)}), ~ \D(\shZ_{-}^{\rm flip}) \big \rangle.
	\end{align*}

\begin{proof} This is a globalization of  Thm. \ref{thm:rk=3:local} via the general procedure \S \ref{subsec:univHom} as before; The statement about Serre functor follows from Lem. \ref{lem:global:Serre} and Prop. \ref{prop:Serre}.
 \end{proof}

Set $d= \delta = 3$ in above theorem, then by Lem. \ref{lem:Quot=Bldet}, $\shZ_+ = \Quot_3(\sG)$ is the blowup of $X$ along a determinantal subscheme $Z$ of codimension $4$, and we obtain:

\begin{corollary}\label{cor:rk=3}  Let $X$ be a Cohen--Macaulay scheme, let $\sG$ be a quasi-coherent $\sO_X$-module of homological dimension $\le 1$ and rank $3$, and denote by 
	$$Z_3: = X^{\ge 6}(\sG) \quad \subset \quad Z_2 := X^{\ge 5}(\sG) \quad \subset \quad Z := X^{\ge 4}(\sG) \quad \subset \quad X = X^{\ge 3}(\sG)$$
the third, second and first degeneracy loci of $\sG$. Let $\pi \colon \Bl_Z X \to Z$ be the blowup of $X$ along $Z$. Assume $\codim_X(Z) = 4$, $\codim_X(Y) = 10$ and furthermore that:
	$$\codim_X (X^{\ge 3+i}(\sG) ) \ge 6i, \qquad \forall i \ge 3.$$
(The expected codimension is ${\rm exp.codim}_X X^{\ge 3+i}(\sG) = i(3+i) \ge 6i$ for $i \ge 3$.) 
Then there is a $X$-linear semiorthogonal decompositions with admissible components:
	\begin{align*}
	\D(\Bl_Z X) = \big \langle \D(\widetilde{Z}_3), ~\text{3-copies of} ~ \D(\widetilde{Z}_2), ~\text{3-copies of} ~ \D(\widetilde{Z}), ~ \D(X) \big \rangle.
	\end{align*}
(We refer the readers to Thm. \ref{thm:rk=3} for the precise functors and semiorthogonal relations.) Here $\widetilde{Z}_3 := \Quot_3(\sK) \to Z_3$, $\widetilde{Z}_2 := \Quot_2(\sK) \to Z_2$ and $\widetilde{Z} := \PP(\sK) \to Z$ are partial desingularizations of $Z_3$, $Z_2$ and $Z$; $\D$ stands for $\Perf$, $\Db$ or $\Dqc$. Furthermore, $\Perf(\Bl_Z X)$ admits a relative Serre functor over $X$ given by $\SS = \otimes \sO_{\Bl_Z X}(-3)$.
\end{corollary}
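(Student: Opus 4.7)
The plan is to deduce Corollary~\ref{cor:rk=3} by applying Theorem~\ref{thm:rk=3} in the special case $d=\delta=3$ and then identifying $\Quot_{X,3}(\sG)$ with the blowup $\Bl_Z X$ via Lemma~\ref{lem:Quot=Bldet}. I begin by checking the hypothesis of Lemma~\ref{lem:Quot=Bldet}\eqref{lem:Quot=Bldet-3}, namely $\codim_X(X^{\ge 3+i}(\sG)) \ge 3i+1$ for all $i\ge 1$: for $i=1$ this is $\codim_X(Z)=4 \ge 4$, for $i=2$ it is $\codim_X(Z_2)=10 \ge 7$, and for $i\ge 3$ it follows from $6i \ge 3i+1$. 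Hence $\pi_+:\Quot_{X,3}(\sG) \to X$ is the blowup $\Bl_Z X \to X$, and $\sO_+(1) := \sO_{\Quot_{X,3}(\sG)}(1) \simeq \sO_{\Bl_Z X}(1) \otimes \pi_+^*\det\sG$.

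Next I verify the expected dimension conditions in Theorem~\ref{thm:rk=3} for $d=3$. By Lemma~\ref{lem:Tor-ind:quot:CM}, for each $d_-\in\{0,1,2,3\}$ the Tor-independent condition for the pair $(3,d_-)$ is equivalent to the trio of expected dimension formulas for $\shZ_+$, $\shZ_-^{(d_-)}=\Quot_{X,d_-}(\sK)$, and $\widehat{\shZ}^{(d_-)}=\shZ_+\times_X \shZ_-^{(d_-)}$. Using Lemma~\ref{lem:hdK}\eqref{lem:hdK-1} to translate degeneracy loci of $\sK$ into those of $\sG$ (so $X^{\ge j}(\sK)=X^{\ge 3+j}(\sG)$), and stratifying each Quot scheme via Corollary~\ref{cor:Quot:degloci} into Grassmannian bundles over the open strata of the degeneracy filtration, the codimension hypotheses translate directly into the stated dimension equalities; similarly the fiber products decompose stratum-by-stratum into $\Gr_{3-j}(3)\times \Gr_j(3)$-bundles (or genuine Grassmannian bundles off $Z_2$) whose total dimensions match the formulas in Theorem~\ref{thm:rk=3} precisely when the codimension inequalities are saturated for $i=1,2$ and sufficient for $i\ge 3$.

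With all four Tor-independent conditions verified, Theorem~\ref{thm:rk=3} applies and yields the semiorthogonal decomposition of $\D(\shZ_+)$. I then identify the components: $\shZ_-^{\mathrm{vf}} = \Quot_3(\sK) = \widetilde{Z}_3$ (supported on $Z_3$), $\shZ_-^{(2)} = \Quot_2(\sK) = \widetilde{Z}_2$ (supported on $Z_2$), $\shZ_-^{(1)} = \PP(\sK) = \widetilde{Z}$ (supported on $Z$), and $\shZ_-^{\mathrm{flip}} = \Quot_0(\sK) = X$. Choosing an appropriate twist parameter $i\in\ZZ$ and using that tensoring by $\pi_+^*\det\sG$ is an $X$-linear autoequivalence that preserves each component, I transfer the decomposition to the statement involving powers of $\sO_{\Bl_Z X}(1)$. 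Finally, the Serre functor formula follows from Lemma~\ref{lem:global:Serre}: $\omega_{\pi_+} = \pi_+^*((\det\sG)^{\otimes 3}) \otimes \sO_+(-3)[d(\delta-d)] = \sO_{\Bl_Z X}(-3)$, since $d(\delta-d)=0$ and the $(\det\sG)^{\otimes 3}$ factors cancel after rewriting $\sO_+(-3)$ via $\sO_{\Bl_Z X}(-3)\otimes\pi_+^*(\det\sG)^{-3}$.

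The main obstacle is the dimension-count in the second paragraph: verifying the expected dimensions of the \emph{fiber products} $\widehat{\shZ}^{(d_-)}$ for all $d_-\in\{0,1,2,3\}$, since these are sensitive to the simultaneous degeneration behavior of $\sG$ and $\sK$ on every stratum $X^{\ge 3+i}(\sG)\setminus X^{\ge 4+i}(\sG)$. Handling this uniformly requires the iterated-Quot/Grassmannian bundle descriptions from Lemma~\ref{lem:hdK:iterated} and Corollary~\ref{cor:Quot:degloci}, combined with the stated codimension inequalities to rule out excess dimension on any stratum.
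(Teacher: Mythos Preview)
Your proposal is correct and follows exactly the paper's approach: the corollary is obtained by specializing Theorem~\ref{thm:rk=3} to $d=\delta=3$ and invoking Lemma~\ref{lem:Quot=Bldet} to identify $\Quot_{X,3}(\sG)$ with $\Bl_Z X$. Your additional verification of the codimension hypotheses for Lemma~\ref{lem:Quot=Bldet}\eqref{lem:Quot=Bldet-3}, the stratum-by-stratum dimension count for the fiber products, and the explicit cancellation of $(\det\sG)^{\otimes 3}$ in the Serre functor formula are exactly the details the paper leaves implicit in its one-line reduction.
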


\newpage
\begin{appendix}
\addtocontents{toc}{\vspace{\normalbaselineskip}}
\section{Relations in the Grothendieck rings of varieties} \label{sec:K0}
Let $\kk$ be a field, and denote by $\mathrm{Var}/\kk$ the category of finite type $\kk$-schemes. 

\begin{definition} The {\em Grothendieck ring of $\kk$-varieties}, denoted by $K_0(\text{Var}/\kk)$, is defined as follows. First, as an abelian group $K_0(\mathrm{Var}/\kk)$ is the quotient of the free abelian group on the classes $[X]$ of finite type $\kk$-schemes $X \in \mathrm{Var}/\kk$ modulo the ``scissor
relations":
	$$[X] = [Y ] + [X \backslash Y],  \qquad \text{if $Y\subseteq X$ is a closed subscheme;}$$
The zero element is $[\emptyset] = 0$. Secondly, $K_0(\mathrm{Var}/\kk)$ carries a unique  ring structure given by:
	$$[X] \cdot [Y ] = [X \times_\kk Y] \in K_0(\mathrm{Var}/\kk),  \qquad \text{for any $X,Y\in \mathrm{Var}/\kk$};$$
The unit element is $1 = [\Spec \kk] \in K_0(\mathrm{Var}/\kk)$.
\end{definition}

\begin{example} Denote the class of an affine line by $\LL = [\AA^1]$. For any $n \ge 0$, the affine space has class $[\AA^{n}] = \LL^n$ (by convention $\LL^{0} = 1$), and the projective space has class
	$$[\PP^{n}] = 1 + \LL + \LL^2 + \cdots + \LL^n = \frac{\LL^n-1}{\LL-1} \in K_0(\mathrm{Var}/\kk).$$ 
\end{example}

\begin{example} Let $\Gr_d(n)$ be the Grassmannian variety of $d$-dimensional subspaces of an $n$-dimensional vector space over $\kk$; see Ex. \ref{ex:Grass}. The Grassmannian $\Gr_d(n)$ has class 
	$$[\Gr_d(n)] = \frac{(\LL^{n}-1)(\LL^{n-1}-1)\cdots (\LL-1)}{(\LL^{d}-1)(\LL^{d-1}-1)\cdots (\LL-1)} = \sum_{i=0}^{d(n-d)} b^{(d,n)}_{i} \, \LL^i \in K_0(\mathrm{Var}/\kk),$$
where the integer $b^{(d,n)}_{i} = b_{2i}(\Gr_d(n))$ is the $2i$-th Betti number of $\Gr_d(n)$. By convention, $\Gr_{0}(n) =\Spec \kk$, and we set $\Gr_{d}(n) = \emptyset$ if either $d > n$ or $d<0$.
\end{example}

\begin{lemma} \label{lem:motif} Let $X \in \text{Var}/\kk$,  $\sG$ be a quasi-coherent $\sO_X$-module of homological dimension $\le 1$, and set $\sK : = \sExt^1_{\sO_X}(\sG, \sO_X)$. Denote $\delta: = \rank \sG$ (which is non-negative by Lem. \ref{lem:rank}), and let $d$ be a positive integer. Then the following holds in $K_0(\mathrm{Var}/\kk)$:
	\begin{align} \label{eq:app:motivic}
	[ \Quot_{X,d}(\sG) ] = \sum_{i=0}^{\min\{d, \delta\}} \LL^{(d-i)(\delta-i)} \cdot [\Gr_{i}(\delta)] \cdot [\Quot_{X,d-i}(\sK)] \in K_0(\mathrm{Var}/\kk).
	\end{align} 
(Recall by convention for any $\sE$, $\Quot_{X,0}(\sE) = X$, and $\Quot_{X,d}(\sE) = \emptyset$ if $d<0$.)
 \end{lemma}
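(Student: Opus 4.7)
The plan is to stratify $X$ by the locally closed subschemes $X_j := X^{\ge \delta + j}(\sG) \setminus X^{\ge \delta + j + 1}(\sG)$ for $j \ge 0$, compute the class of each Quot scheme on each stratum, and reduce the identity to a universal identity for ordinary Grassmannians. Since $X$ is of finite type over $\kk$ and the Fitting ideals form a descending chain of quasi-coherent ideals, only finitely many strata $X_j$ are nonempty. On each stratum $X_j$, Lem.~\ref{lem:degloci}\eqref{lem:degloci-3} shows $\sG|_{X_j}$ is locally free of rank $\delta + j$. Moreover, using a local presentation $0 \to \sW \xrightarrow{\sigma} \sV \to \sG \to 0$ with $\rank \sW = m$, $\rank \sV = m + \delta$, the map $\sigma$ has constant rank $m - j$ on $X_j$, so its dual $\sigma^\vee$ has the same constant rank, and hence $\sK|_{X_j} = \Coker(\sigma^\vee|_{X_j})$ is locally free of rank $j$ on $X_j$.

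By Cor.~\ref{cor:Quot:degloci} together with Thm.~\ref{thm:Quot}\eqref{thm:Quot-1} for base change, the restrictions $\Quot_{X,d}(\sG)|_{X_j}$ and $\Quot_{X, d-i}(\sK)|_{X_j}$ are the Grassmannian bundles $\Gr_d((\sG|_{X_j})^\vee) \to X_j$ and $\Gr_{d-i}((\sK|_{X_j})^\vee) \to X_j$, respectively. These bundles are Zariski-locally trivial, so in $K_0(\mathrm{Var}/\kk)$ we have $[\Quot_{X,d}(\sG)|_{X_j}] = [\Gr_d(\delta+j)] \cdot [X_j]$ and $[\Quot_{X,d-i}(\sK)|_{X_j}] = [\Gr_{d-i}(j)] \cdot [X_j]$. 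Summing via the scissor relations over $j \ge 0$ gives
\begin{align*}
[\Quot_{X,d}(\sG)] &= \sum_{j \ge 0} [\Gr_d(\delta+j)] \cdot [X_j], \\
[\Quot_{X,d-i}(\sK)] &= \sum_{j \ge 0} [\Gr_{d-i}(j)] \cdot [X_j].
\end{align*}
Substituting the second identity into the right-hand side of \eqref{eq:app:motivic} and interchanging sums, the desired equality reduces to the purely combinatorial identity (for each $j \ge 0$):
\begin{equation*}
[\Gr_d(\delta+j)] = \sum_{i=0}^{\min\{d,\delta\}} \LL^{(d-i)(\delta-i)} \cdot [\Gr_i(\delta)] \cdot [\Gr_{d-i}(j)] \quad \in K_0(\mathrm{Var}/\kk).
\end{equation*}

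To establish this universal Grassmannian identity, fix a $(\delta+j)$-dimensional $\kk$-vector space $V$ and a fixed $\delta$-dimensional subspace $L \subset V$. Stratify $\Gr_d(V)$ by the locally closed subvarieties $Y_i := \{S \in \Gr_d(V) \mid \dim(S \cap L) = i\}$, where $i$ ranges over $[\max(0, d-j), \min(d, \delta)]$. Sending $S$ to $(S \cap L, \, (S + L)/L)$ defines a morphism $Y_i \to \Gr_i(L) \times \Gr_{d-i}(V/L)$; once $A = S \cap L \in \Gr_i(L)$ and $B = (S+L)/L \in \Gr_{d-i}(V/L)$ are fixed, the set of lifts $S$ is naturally a torsor under $\Hom(B, L/A) \cong \AA^{(d-i)(\delta-i)}$, and this torsor is Zariski-locally trivial. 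Thus $[Y_i] = \LL^{(d-i)(\delta-i)} [\Gr_i(\delta)][\Gr_{d-i}(j)]$, and the scissor relations $[\Gr_d(V)] = \sum_i [Y_i]$ give the identity (using the convention $[\Gr_{d-i}(j)] = 0$ when $d-i > j$).

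I expect the main technical check will be verifying that $\sK|_{X_j}$ is in fact locally free of rank $j$ on $X_j$, and that the Quot scheme construction commutes with base change to the (non-closed) stratum $X_j$, so that scissor relations apply cleanly. Once these points are handled via Thm.~\ref{thm:Quot}\eqref{thm:Quot-1} and the local presentation argument above, the rest is a straightforward bookkeeping of the universal Grassmannian identity.
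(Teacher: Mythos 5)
Your proposal is correct and follows essentially the same route as the paper's proof: stratify $X$ by the degeneracy loci of $\sG$, observe that both $\sG$ and $\sK$ are locally free of complementary rank on each stratum (so the Quot schemes restrict to Grassmannian bundles), and thereby reduce to the universal identity $[\Gr_d(\delta+j)] = \sum_i \LL^{(d-i)(\delta-i)}[\Gr_i(\delta)][\Gr_{d-i}(j)]$, which you then establish exactly as the paper does, by stratifying $\Gr_d(V)$ according to the dimension of intersection with a fixed $\delta$-dimensional subspace. You carry out a couple of the intermediate checks (that $\sK|_{X_j}$ is locally free of rank $j$, that the lifting space is a Zariski-locally trivial affine torsor $\Hom(B, L/A)$) a bit more explicitly than the paper, but the argument is the same.
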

   
 \begin{proof} Since $X$ is noetherian, the sequence of closed subschemes of Lem. \ref{lem:degloci}, \ref{lem:hdK}:
 	$$X = X^{\ge \delta}(\sG) \supset X^{\ge \delta+1}(\sG) \supset X^{\ge \delta+2}(\sG) \supset \ldots$$
 is finite. By ``scissor relations", it suffices to show \eqref{eq:app:motivic} over each strata $X^{\ge \delta+k}(\sG) \backslash X^{\ge \delta +k+1}(\sG)$ for $k=0, 1, \ldots$. Due to Cor. \ref{cor:Quot:degloci} and the well-known fact that the class ring structure of $K_0(\mathrm{Var}/\kk)$ are multiplicative for piecewise trivial (in particular, Zariski-locally trivial) fibrations, this is equivalent to show that for all $k \ge 0$, the following holds:
  	\begin{align*} 
	[\Gr_{d}(\delta+k)] = \sum_{i=0}^{d} \LL^{(\delta-i)(d-i)} \cdot [\Gr_{i}(\delta)] \cdot [\Gr_{d-i}(k)].
	\end{align*}
(This could be regarded as a motivic version of the binomial formula 
	$\binom{\delta+k}{d} = \sum_{i=0}^{d} \binom{\delta}{i} \binom{k}{d-i}.$)
This equality follows from direct computations; We also present a geometric argument as follows: for $\ell \in \NN$, denote $V_\ell$ a $\kk$-vector space of dimension $\ell$, and fix a decomposition $V_{\delta+k} = V_{\delta} \oplus V_{k}$. Then $\Gr_d(\delta+k) = \Gr_d(V_{\delta+k})$
can be stratified by $\Gr_d(V_{\delta+k}) = \bigcup_{j=0}^{d} (G_{j} \backslash G_{j+1})$, where the closed subscheme $G_j$ is defined by $G_j:= \{ L_d \in \Gr_d(\delta+k) \mid \dim (L_d \cap V_{\delta}) \le j \}$, and $\Gr_d(V_{\delta+k}) = G_0 \supset G_1 \supset G_2 \supset \cdots \supset G_d$. Then for any $j$, $\forall L_d \in G_{j} \backslash G_{j+1}$, via the projection $V_{\delta+k} \to V_{k}$ from $V_{\delta}$, we have $L_d \mapsto \overline{L}_{d-j} \in \Gr_{d-j}(V_{\delta})$ with kernel $L_j = L_d \cap V_{\delta} \in \Gr_{j}(V_{\delta})$. For any fixed pair $(L_j, \overline{L}_{d-j})$, the choice of such $L_d$ corresponds to a linear graph of $\overline{L}_{d-j}$ in $\overline{L}_{d-j} \times (V_{\delta}/L_j)$. Hence each strata $G_{j} \backslash G_{j+1}$ admits a piecewise trivial affine space $\AA^{(\delta-j)\times (d-j)}$-fibration over $\Gr_{j}(V_{\delta})\times \Gr_{d-j}(V_{d})$. The formula is proved.
 \end{proof}
 
Inspired by Orlov's conjecture \cite{O05} on the relationships between motives and derived categories, and Kuznetsov--Shinder's philosophy \cite{KS} about the relationships between $D$-equivalences and $L$-equivalences, it is reasonable to expect that there is a categorification of \eqref{eq:app:motivic}. More precisely, we make the following conjecture:

 \begin{conjecture} \label{conj:cat} Let $X$ be a scheme, let $\sG$ be a quasi-coherent $\sO_X$-module of homological dimension $\le 1$, and set $\sK : = \sExt^1_{\sO_X}(\sG, \sO_X)$. Assume the Tor-independent conditions Def. \ref{def:Tor-ind:quot} hold for the pairs of integers $(d,d-i)$, where $i \in [0,\min\{d,\delta\}]$. Then for each $i \in [0,\min\{d,\delta\}]$, there are strong relative Fourier--Mukai transforms over $X$:
 	$$\Phi_{\shE_{i,\alpha}} \colon \D(\Quot_{X,d-i}(\sK)) \to \D(\Quot_{X,d}(\sG))$$
parametrised by the Young diagrams $\alpha \in B_{i ,\delta-i}$, such that $\Phi_{\shE_{i,\alpha}}$ is fully faithful for each $i$ and $\alpha$. Moreover, the essential images of $\Phi_{\shE_{i,\alpha}}$, where $(i, \alpha)$ runs through $i \in [0,\min\{d,\delta\}]$ and $\alpha \in B_{i ,\delta-i}$, induce a semiorthogonal decomposition:
	$$ \D(\Quot_{X,d}(\sG)) = \Big \langle \left\{ \Im \Phi_{\shE_{i,\alpha}}  \right\}_{i \in [0,\min\{d,\delta\}], \, \alpha \in B_{i ,\delta-i}} \Big \rangle.$$
Here the derived category symbol $\D$ stands for $\Perf$, $\Db$, or $\Dqc$ (see \S \ref{sec:generalities:derived}).
  \end{conjecture}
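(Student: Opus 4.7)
The overall strategy is to mimic, in the general setting, the approach already successful in all the verified special cases of this paper: reduce to a universal local model via Tor-independent base-change and fppf descent, construct explicit Fourier--Mukai kernels from Schur functors of tautological bundles on the fiber product, and then verify fully-faithfulness, semiorthogonality, and generation separately via the key lemma (Lem.\ \ref{lem:key}) and the mutation calculus on Grassmannians (\S\ref{sec:Young_Grassmannian}). Concretely, under the Tor-independent assumption, the general procedure of \S\ref{subsec:univHom} together with Thm.\ \ref{thm:bc} and Thm.\ \ref{thm:fppf} reduces everything to the universal local case $X=|\Hom_\kk(W,V)|$ with $\kk$ a field of characteristic zero, $\rank W=m$, $\rank V=n$, $\delta=n-m$. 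In that situation $\Quot_{X,d}(\sG)$ and $\Quot_{X,d-i}(\sK)$ are smooth, and one can define, for each $\alpha\in B_{i,\delta-i}$, a candidate kernel $\shE_{i,\alpha}\in\Perf(\Quot_{X,d-i}(\sK)\times_X\Quot_{X,d}(\sG))$ as an appropriate Schur functor of the tautological rank-$i$ quotient on the fiber product (twisted by relative line bundles so as to reproduce the vector bundle choices used in Thms.\ \ref{thm:local:proj}, \ref{thm:local:d=2}, \ref{thm:rk=1:local}, \ref{thm:rk=2:local}, \ref{thm:rk=3:local}).

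The next step is to establish fully-faithfulness. By Lem.\ \ref{lem:local:FM}, each $\Phi_{\shE_{i,\alpha}}$ is a strong relative Fourier--Mukai transform, so it admits left and right adjoints with explicit kernels. One then checks that the composition $\Phi_{\shE_{i,\alpha}}^L\circ\Phi_{\shE_{i,\alpha}}$ is the identity on the set of generators $\{\Sigma^\beta\sQ_{-}^\vee\}$ of $\Db(\Quot_{X,d-i}(\sK))$ provided by Lem.\ \ref{lem:Z_i:generator}. By Lem.\ \ref{lem:span:f.f.}, this reduces to a direct computation using the key Lem.\ \ref{lem:key}: the images under $r_{+,*}r_-^*$ admit Lascoux-type resolutions whose terms are Schur bundles indexed by Young diagrams in explicit subsets of $B_{\ell_+,d-i}$, and the left adjoint annihilates all these terms except the leading one, which returns $\Sigma^\beta\sQ_{-}^\vee$.

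Semiorthogonality between the various images is established similarly: for any pair $(i,\alpha)$ and $(j,\beta)$ in the index set, the Hom-space $\shom(\Phi_{\shE_{i,\alpha}}(A),\Phi_{\shE_{j,\beta}}(B))$ is again computed by combining Lem.\ \ref{lem:key} with the partial-order mutation results (Lems.\ \ref{lem:mutation}, \ref{lem:mutation:dual}, \ref{lem:G:mut}). The appropriate ordering on the index set is the one compatible with both the $i$-filtration (lower $i$ on the right) and, within fixed $i$, the partial order on $B_{i,\delta-i}$; the vanishing statements amount to checking that certain Schur bundles lie in the subcategories controlled by the mutation lemmas, exactly as carried out for $d\le 2$ and $\rank\sG\le 3$.

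The genuinely hard step, and the main obstacle, will be \emph{generation}. In each verified special case the generation argument proceeds by an intricate inductive mutation through the Young-diagram lattice $B_{\ell_+,d}$, peeling off staircase blocks of $\Sigma^\lambda\sQ_+$ one at a time (see the proofs of Thms.\ \ref{thm:local:d=2}, \ref{thm:rk=3:local}, where each induction step invokes Lem.\ \ref{lem:G:mut} in a slightly different range). For arbitrary $(d,\delta)$ one needs a uniform combinatorial framework decomposing $B_{\ell_+,d}$ into the strata indexed by $(i,\alpha)$ with $\alpha\in B_{i,\delta-i}$, and then a recursive argument showing that after mutating through the top strata (using Lem.\ \ref{lem:top} in the role of ``Kapranov collection of $\Quot_{X,d}(\sG)$ over $\Quot_{X,d-i}(\sK)$") the image of $\Phi_{\shE_{i,\alpha}}$ supplies precisely the missing generators indexed by the stratum $B_{i,\delta-i}$. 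Making this bookkeeping precise in general seems to require the full derived Schur functor machinery announced in the introduction, since the natural kernels $\shE_{i,\alpha}$ are built from two-term tautological complexes and only become ``vector bundles" after nontrivial mutations; in particular, the vector-bundle kernels that suffice for $d\le 2$ and $\rank\sG\le 3$ are expected to be mutation-equivalent but not identical to the universal derived-Schur kernels, which is where the combinatorics becomes genuinely multivariable. Once this generation step is accomplished, the transfer back to arbitrary $X$ via Thms.\ \ref{thm:bc} and \ref{thm:fppf} is automatic, and the admissibility and compatibility with $\Perf\subseteq\Db\subseteq\Dqc$ follow from Lem.\ \ref{lem:FM} and Thm.\ \ref{thm:bc} as in all the cases proved in Part \ref{part:global}.
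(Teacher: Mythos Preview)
The first thing to note is that the paper does \emph{not} prove this statement: it is stated as Conjecture~\ref{conj:cat}, and the paper explicitly records it as open, verifying it only in the special cases $d\le 2$, $\rank\sG\le 3$, $\ell\le 2$, and $m=1$. The ``Updates'' paragraph in the introduction mentions that Toda~\cite{Tod6} subsequently proved the conjecture (for smooth quasi-projective complex $X$) using categorical wall-crossing and categorified Hall products---a completely different method from the Lascoux-resolution approach of this paper.

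Your proposal is therefore not to be compared against a proof in the paper, because there is none. What you have written is an honest outline of how one would try to extend the paper's method, and you yourself correctly identify the obstruction: the generation step. In every case the paper handles, generation is proved by an ad hoc induction through the Young-diagram lattice (see the proofs of Thms.~\ref{thm:local:d=2}, \ref{thm:rk=3:local}, \ref{thm:local:l=2}), and each such argument depends on the specific combinatorics of that case. There is no uniform inductive scheme in the paper that covers arbitrary $(d,\delta)$ with vector-bundle kernels, and the paper explicitly says so: the introduction announces that the general case will require the derived Schur functor machinery developed in the sequels \cite{J22}. Your remark that the vector-bundle kernels are only ``mutation-equivalent but not identical'' to the universal derived-Schur kernels is exactly the point; with vector-bundle kernels alone, the mutation bookkeeping in Lem.~\ref{lem:G:mut} does not obviously scale, and no one has written down a generation argument along these lines for general $(d,\delta)$. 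So your proposal is a correct description of the strategy and its known limits, but it is not a proof, and the missing step is not a matter of filling in routine details---it is the substance of the conjecture.
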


This conjecture \ref{conj:cat} is verified in the case $d=1$ in \cite{JL18}, and in the case $m=1$ in \cite{Pi20}. The current paper provides various evidences for this conjecture, in particular we verify Conj. \ref{conj:cat} in the following cases: (i) $d \le 2$, (ii) $\rank \sG \le 3$, (iii) $\ell \le 2$ (where $\ell = n - d$, $n$ is the number of generators of $\sG$ in a local presentation). 

We also expect that there is a Chow-theoretical version of the formula  \eqref{eq:app:motivic}:
 
 \begin{conjecture} \label{conj:chow} Under the same condition of Lem. \ref{lem:motif} and assume that the base scheme $X$ and the Quot schemes $\Quot_{X,d-i}(\sK)$ and $\Quot_{X,d}(\sG)$ satisfy certain reasonable regularity conditions. Then there is an isomorphism of integral Chow groups:
	\begin{align*} 
	{\rm CH}^*(\Quot_{X,d}(\sG)) \simeq \bigoplus_{i=0}^{\min\{d,\delta\}}  \bigoplus_{j=0}^{i(\delta-i)}  {\rm CH}^{*-(d-i)(\delta-i) -j}(\Quot_{X,d-i}(\sK)) ^{\oplus b_j^{(i,\delta)}}   .
	\end{align*}
Moreover, if Quot schemes $\Quot_{X,d-i}(\sK)$ and $\Quot_{X,d}(\sG)$ are smooth and projective over $\kk$, then there is an isomorphism of integral Chow motives:
	\begin{align*} 
	\foh(\Quot_{X,d}(\sG) )= \bigoplus_{i=0}^{\min\{d,\delta\}}  \bigoplus_{j=0}^{i(\delta-i)}  \left( \foh(\Quot_{X,d-i}(\sK )\otimes L^{(d-i)(\delta-i) + j}  \right) ^{\oplus b_j^{(i,\delta)}},
	\end{align*} 
	where $L$ stands for the Lefschetz motive. 
 \end{conjecture}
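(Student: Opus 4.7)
The plan is to categorify, at the level of Chow motives, the motivic identity of Lem. \ref{lem:motif} by constructing explicit integral correspondences between $\Quot_{X,d}(\sG)$ and the $\Quot_{X,d-i}(\sK)$ parallel to the Fourier--Mukai kernels of Conj. \ref{conj:cat}. Throughout I would work on the fibre products $\widehat{\shZ}^{(d,d-i)} := \Quot_{X,d}(\sG) \times_X \Quot_{X,d-i}(\sK)$, with projections $r_+,r_-$ as in diagram \eqref{diagram:Corr:global}.

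First, for each $i \in [0,\min\{d,\delta\}]$ and each Young diagram $\alpha \in B_{i,\delta-i}$ of size $|\alpha|=j$, I would define a correspondence
$\gamma_{i,\alpha} \in \mathrm{CH}^*(\widehat{\shZ}^{(d,d-i)})$
as the class of a natural Schur-functor construction in the tautological quotient bundles $\shQ_+,\shQ_-$ on $\widehat{\shZ}^{(d,d-i)}$. The corresponding correspondence action $r_{+*}(\gamma_{i,\alpha}\cdot r_-^*(\blank))$ yields a degree-$((d-i)(\delta-i)+j)$ map
$\Phi_{i,\alpha}\colon \mathrm{CH}^{*-(d-i)(\delta-i)-j}(\Quot_{X,d-i}(\sK)) \to \mathrm{CH}^*(\Quot_{X,d}(\sG))$
whose decoration by $j$ corresponds, as $\alpha$ varies over $B_{i,\delta-i}$, to the motivic decomposition $\foh(\Gr_i(\delta)) = \bigoplus_j L^j{}^{\oplus b_j^{(i,\delta)}}$ that is packaged in the factor $\LL^{(d-i)(\delta-i)}\cdot[\Gr_i(\delta)]$ in \eqref{eq:app:motivic}. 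Dual correspondences $\Psi_{i,\alpha}$ are constructed analogously, using the relative Serre functor classes computed in Lem. \ref{lem:global:Serre}.

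Second, I would reduce the isomorphism claim to the universal local situation $X=|\sHom_\kk(W,V)|$ of Sect. \ref{sec:local}. By \S \ref{subsec:univHom}, Zariski-locally over $X$ the whole configuration is a Tor-independent base change from the universal local situation, and Tor-independent base change is compatible with proper pushforward and flat pullback for Chow groups (and with refined intersection products for Chow motives). Hence it suffices to prove the motivic/Chow identity in the universal local case, where all the relevant schemes are vector bundles over Grassmannians $\GG_\pm$ by Lem. \ref{lem:Sym}, smooth and projective over $\kk$. There one can apply the Grassmannian bundle formula for Chow motives (the motivic counterpart of Thm. \ref{thm:Grass.bundle}), combined with the stratification of $X$ by degeneracy loci $\mathring{X}^r(\sG)$ used in the proof of Lem. \ref{lem:motif}, to check that the constructed $\Phi_{i,\alpha}$ induce the claimed decomposition stratum-by-stratum.

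The hard part will be two-fold. First, one must establish the orthogonality relations $\Psi_{j,\beta}\circ \Phi_{i,\alpha} = \delta_{ij}\delta_{\alpha\beta}\,\mathrm{id}$ at the level of correspondences; this is the Chow-theoretic shadow of the semiorthogonal-decomposition statement of Conj. \ref{conj:cat}. In the derived setting it is supplied by Kapranov-type vanishing (Thm. \ref{thm:Kap} and Lem. \ref{lem:key}), but in the motivic setting one must reprove the analogous vanishing on the triple fibre products $\Quot_{X,d-j}(\sK)\times_X \Quot_{X,d}(\sG)\times_X \Quot_{X,d-i}(\sK)$ using Borel--Bott--Weil and the Grassmannian bundle formula, without the flexibility afforded by derived functors. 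Second, the ``reasonable regularity conditions'' must be made precise: one expects that the Tor-independence conditions Def. \ref{def:Tor-ind:quot} for the pairs $(d,d-i)$, together with smoothness and projectivity of all $\Quot_{X,d-i}(\sK)$ over $\kk$ (for the motivic formulation), should suffice. Induction on $\delta$ and $d$ using the semiorthogonal decompositions already proved in this paper (Thms. \ref{thm:d=2}, \ref{thm:rk=1}, \ref{thm:rk=2}, \ref{thm:rk=3}) would give the base cases; a proof in full generality, however, will likely require the techniques of derived algebraic geometry paralleling Toda's solution \cite{Tod6} of Conj. \ref{conj:cat}.
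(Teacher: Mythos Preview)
The statement you are attempting to prove is stated in the paper as a \emph{conjecture}, not a theorem, and the paper does not supply a proof. Immediately after stating Conjecture~\ref{conj:chow}, the author remarks that it is strictly stronger than what would follow from Conjecture~\ref{conj:cat} via Orlov's conjecture (because it is over integral coefficients and predicts graded isomorphisms), and then notes that the conjecture has been verified in the case $d=1$ in \cite{J19} and in the general case in \cite{J20} under the assumption that all degeneracy loci of $\sG$ have expected dimensions. So there is no ``paper's own proof'' to compare your proposal against.

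Your sketch is a plausible outline of a strategy---constructing explicit correspondences from Schur classes on the fibre products, reducing to the universal local situation via Tor-independent base change, and checking orthogonality relations---and indeed this is broadly the shape of the argument carried out in \cite{J20}. But as a self-contained proof it has genuine gaps: you have not specified the classes $\gamma_{i,\alpha}$ precisely enough to compute with them, and the key orthogonality $\Psi_{j,\beta}\circ\Phi_{i,\alpha}=\delta_{ij}\delta_{\alpha\beta}\,\mathrm{id}$ is asserted rather than proved. The stratification argument from Lemma~\ref{lem:motif} gives the identity in $K_0(\mathrm{Var}/\kk)$, but upgrading this to integral Chow motives requires controlling the correspondences on the closures of strata, not just on the open strata, and this is where the real work lies. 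If you want to pursue this, the reference to consult is \cite{J20}, where the author carries out the program you are sketching.
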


Notice that Conj. \ref{conj:chow} is much stronger than what will follow from Conj. \ref{conj:cat} if Orlov's conjecture \cite{O05} is true: First, it is stated over {\em integral} coefficients rather than over rational coefficients; Secondly, it predicts {\em graded} isomorphisms of Chow groups and motives. 

We have verified the Conj. \ref{conj:chow} in the case $d=1$ in \cite{J19}, and in the general case in \cite{J20} under the assumption that all degeneracy loci of $\sG$ have expected dimensions.

\newpage
\section{Characteristic-free results for projective bundles} \label{sec:proj.bundle}
The results in this appendix are well-known to experts; However, as it is hard to find a reference in our stated generality, we provide the details here. Let $S$ be a quasi-compact, quasi-separated scheme, and let $\sE$ be a locally free sheaf of rank $r+1$ over $S$, $r \ge 0$ is an integer. We denote by $\pi \colon \PP(\sE) = \Quot_{S,1}(\sE) = \Proj \Sym^\bullet (\sE) \to S$ the projective bundle associated to $\sE$. Let $\sO(1) = \sO_{\PP(\sE)}(1)$ be Grothendieck ample line bundle, and $\Omega_{\PP(\sE)/S}$ the sheaf of relative K\"ahler differentials. For $i,j \in \ZZ$, $i \ge 0$, we set $\Omega^i(j) := \bigwedge^i \Omega_{\PP(\sE)/S} \otimes \sO(j)$.  Then there is a short exact sequence of locally free sheaves over $\PP(\sE)$, called {\em Euler sequence}:
	\begin{align}\label{eqn:proj:Euler}
	0 \to \Omega^1(1) \to \pi^*\sE \to \sO(1) \to 0.
	\end{align}
The morphism $\pi \colon \PP(\sE) \to S$ is perfect and proper, therefore for $A, B \in \Perf(\PP(\sE))$, the $\Dqc(S)$-valued hom object of Def. \ref{def:shom} takes values in $\Perf(S)$:
	$$\shom_S(A,B) = \pi_* \RsHom_{\PP(\sE)}(A,B) \in \Perf(S).$$
Here $\pi_* = \bR \pi_*$ denotes the {\em derived} pushforward as usual.

\begin{lemma} \label{lem:proj.bundle}
	\begin{enumerate}[leftmargin=*]
	\item \label{lem:proj.bundle-1} For any $d \in \ZZ$, the following holds:
	$$
	 \pi_* (\sO_{\PP(\sE)}(d)) =
	\begin{cases}
	\Sym^d \sE, & \text{if~}  d \ge 0; \\
	(\Sym^{-d-r-1} \sE \otimes \det \sE)^\vee [-r], & \text{if~} d \le -r-1;\\
	0 &  \text{if ~} d \in [-r, -1].\\
	\end{cases}
	$$
	\item  \label{lem:proj.bundle-2} The relative dualizing complex $\omega_{\pi} =\Omega^r [r] =  \det \sE \otimes \sO_{\PP(\sE)}(-r-1)[r]$ is a shift of a line bundle, and the following holds: $\pi_* (\omega_\pi) \xleftarrow{\sim} \sO_S$, and for any $d \in \ZZ$,
	$$
	\pi_*( \omega_{\pi} \otimes \sO(-d))= \pi_! (\sO(-d)) =
	\begin{cases}
	(\Sym^d \sE)^\vee, & \text{if~}  d \ge 0; \\
	\Sym^{-d-r-1} \sE \otimes \det \sE [r], & \text{if~} d \le -r-1;\\
	0 &  \text{if } d \in [-r, -1].\\
	\end{cases}
	$$
\end{enumerate}
\end{lemma}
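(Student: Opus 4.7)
The plan is to deduce both parts from the classical computation over $\Spec\ZZ$ via flat base-change, then derive part (2) from part (1) by Grothendieck--Serre duality.

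First I would reduce part (1) to the universal case. Since $\pi\colon\PP(\sE)\to S$ is smooth and proper, it is flat and perfect, so $\pi_*$ commutes with arbitrary base-change on quasi-coherent derived categories (Thm.~\ref{thm:Neeman}, or directly from the Tor-independence of the Cartesian square defined by any map $S'\to S$). Working Zariski-locally on $S$, I may assume $\sE\simeq\sO_S^{\oplus(r+1)}$ is free, in which case $\PP(\sE)=\PP^r_\ZZ\times_\ZZ S$ is obtained by base-change along $S\to\Spec\ZZ$ from $\PP^r_\ZZ\to\Spec\ZZ$. Flat base-change then reduces the computation of $\pi_*\sO(d)$ to the computation of $\H^\bullet(\PP^r_\ZZ,\sO(d))$. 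This is the standard characteristic-free Čech cohomology computation on $\PP^r_\ZZ$ with its standard affine cover $\{D_+(x_i)\}$: one obtains $\H^0(\PP^r_\ZZ,\sO(d))=\Sym^d\ZZ^{r+1}$ for $d\ge 0$, vanishing in all other degrees for $d\in[-r,-1]$, and $\H^r(\PP^r_\ZZ,\sO(d))\simeq(\Sym^{-d-r-1}\ZZ^{r+1}\otimes\det\ZZ^{r+1})^\vee$ (concentrated in degree $r$) for $d\le -r-1$. Since these modules are free, they automatically lift back to $\Sym^d\sE$ and $(\Sym^{-d-r-1}\sE\otimes\det\sE)^\vee[-r]$ respectively over $S$, and the Zariski-local identifications glue (either by functoriality of the construction of $\pi_*\sO(d)$, or because the resulting sheaves are canonically defined by the universal property of $\Sym$).

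For part (2), the first step is to identify $\omega_\pi$. Since $\pi$ is smooth of relative dimension $r$, Example~\ref{eg:lci}\eqref{eg:lci-2} gives $\omega_\pi=\bigwedge^r\Omega_{\PP(\sE)/S}[r]$. Taking determinants of the Euler sequence~\eqref{eqn:proj:Euler}, I obtain $\det\Omega^1_\pi\otimes\sO(1)\otimes\sO(r)=\pi^*\det\sE$, so $\det\Omega^1_\pi=\pi^*\det\sE\otimes\sO(-r-1)$, hence $\omega_\pi=\det\sE\otimes\sO(-r-1)[r]$; this is a shift of a line bundle, in particular invertible. By Thm.~\ref{thm:Neeman}\eqref{thm:Neeman-2} we have $\pi_!(\blank)=\pi_*(\blank\otimes\omega_\pi)$, so
\[
\pi_!(\sO(-d))=\det\sE\otimes\pi_*\bigl(\sO(-d-r-1)\bigr)[r].
\]
Substituting part (1) applied to the exponent $-d-r-1$ yields the three cases exactly as stated: the case $d\ge 0$ gives $(\Sym^d\sE)^\vee$ after cancelling $\det\sE\otimes(\det\sE)^{-1}$ and shifting by $[r][-r]$; the case $d\le-r-1$ gives $\Sym^{-d-r-1}\sE\otimes\det\sE\,[r]$; and $d\in[-r,-1]$ gives $0$. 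The special case $d=0$ gives $\pi_*\omega_\pi=\pi_!\sO_{\PP(\sE)}=(\Sym^0\sE)^\vee=\sO_S$, and unwinding the unit of the adjunction $\pi^*\dashv\pi_*$ via Grothendieck--Serre duality shows that this identification is the canonical trace map, so the arrow $\sO_S\to\pi_*\omega_\pi$ is an isomorphism as claimed.

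The only subtlety is ensuring that the Zariski-local identifications in part (1) are canonical and independent of the local trivialization of $\sE$, so that they globalize without a cocycle obstruction. This is straightforward here because $\pi_*\sO(d)$ represents (for $d\ge 0$) the functor $\sF\mapsto\Hom_S(\sF,\Sym^d\sE)$ via the canonical map $\Sym^d\sE\to\pi_*\sO(d)$ from the definition $\PP(\sE)=\Proj\Sym^\bullet\sE$; this universal map is an isomorphism after flat base-change to $\Spec\ZZ$, hence is an isomorphism globally. For negative $d\le-r-1$ one can either apply the same universality argument after Serre duality, or note that the formation of $\omega_\pi$ is canonical and use the previous step in reverse. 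I do not expect any genuine obstacle; the argument is essentially a bookkeeping exercise once the flat base-change reduction and the Euler-sequence computation of $\omega_\pi$ are in place.
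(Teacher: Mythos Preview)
Your proof is correct and follows essentially the same route as the paper, which simply cites EGA~III, 2.1.15--2.1.16 for the classical computation. Your argument (flat base-change to $\Spec\ZZ$, the standard \v{C}ech computation on $\PP^r_\ZZ$, identifying $\omega_\pi$ via the Euler sequence, and deducing (2) from (1) by the formula $\pi_! = \pi_*(\blank\otimes\omega_\pi)$) is exactly how one would unpack that citation, and your care about canonicity of the globalization is more than the paper provides.
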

\begin{proof} This is essentially a reformulation of \cite[III, 2.1.15 \& 2.1.16]{EGA}.
\end{proof}
\begin{remark} \eqref{lem:proj.bundle-1} and \eqref{lem:proj.bundle-2} are connected by Serre duality: for $d \in \ZZ$, the natural pairing
	$$ \pi_* (\sO_{\PP(\sE)}(d) ) \otimes  \pi_* (\omega_{\pi} \otimes \sO(-d) ) \longrightarrow  \pi_* (\omega_{\pi}) \simeq \sO_S,$$
induces an isomorphism $\pi_* (\omega(-d) ) = \pi_! (\sO_{\PP(\sE)}(-d)) \simeq (\pi_* (\sO_{\PP(\sE)}(d) ))^\vee$.
\end{remark}

Now Beilinson's and Orlov's theorem can be formulated as follows:

\begin{theorem}[Projective bundles] \label{thm:proj.bundle} Let $\pi \colon \PP(\sE)  = \Proj \Sym^\bullet (\sE) \to S$ be the projective bundle associated to a locally free sheaf $\sE$ of rank $r+1$ as above. Then the following holds:
\begin{enumerate}[leftmargin=*]
	\item \label{thm:proj.bundle-1} $(\sO, \sO(1), \ldots, \sO(r))$ is a relative full exceptional collection of $\PP(\sE)$ over $S$, with left dual exceptional collection given by $(\Omega^r(r)[r], \ldots, \Omega^1(1)[1], \sO)$. Furthermore, the following holds: for any $i, j \in [0, r]$,
	\begin{align*}
		& \shom_S(\sO(i), \sO(j)) = \Sym^{j-i} (\sE), \qquad
		 \shom_S(\Omega^i(i), \Omega^j(j)) =\bigwedge^{i-j} \nolimits \sE^\vee;\\
		&\shom_S(\sO(i), \Omega^j(j)) = \delta_{i,j} \cdot \sO_S[-i].
	\end{align*}
(By convention $\Sym^d = 0 =\bigwedge^d$ if $d <0$; $\delta_{i,j} = 1$ if $i=j$, $\delta_{i,j}=0$ if $i \ne j$.)
	\item \label{thm:proj.bundle-2} $\Perf(\PP(\sE))$ admits a relative Serre functor over $S$ given by $\SS_{\PP(\sE)/S} = (\blank) \otimes \omega_\pi$, where $\omega_{\pi} =\Omega^r [r] =  \bigwedge^{r+1} \sE \otimes \sO_{\PP(\sE)}(-r-1)[r]$ is given in Lem. \ref{lem:proj.bundle} \eqref{lem:proj.bundle-2}.
	
	\item \label{thm:proj.bundle-3} There are $S$-linear semiorthogonal decompositions with admissible components
 	\begin{align*}
	\Perf(\PP(\sE)) &= \langle \pi^* \Perf(S) \otimes \sO(-r), \ldots,\pi^* \Perf(S)  \otimes \sO(-1), \pi^* \Perf(S)  \rangle;\\
	\Db(\PP(\sE)) &= \langle \pi^* \Db(S) \otimes \sO(-r), \ldots,\pi^* \Db(S)  \otimes \sO(-1), \pi^* \Db(S)  \rangle; \\
	\Dqc(\PP(\sE))& = \langle \pi^* \Dqc(S) \otimes \sO(-r), \ldots,\pi^* \Dqc(S)  \otimes \sO(-1), \pi^* \Dqc(S)  \rangle
	\end{align*}
which are compatible with the natural inclusions $\Perf \subseteq \Db \subseteq \Dqc$. The corresponding projection functors $\pr_i \colon \Dqc(\PP(\sE)) \to \Dqc(S) \otimes \sO(i)$ (resp. $\pr_i \colon \Db(\PP(\sE)) \to \Db(S) \otimes \sO(i)$ and  $\pr_i \colon \Perf(\PP(\sE)) \to \Perf(S) \otimes \sO(i)$) are given by the formula:
	$$\pr_i (\blank) = \pi_* \big( \blank \otimes \Omega^{-i}(-i)[-i] \big) \otimes \sO(i), \quad \text{for} \quad i= -r, \ldots, -1, 0.$$
	
	\item \label{thm:proj.bundle-4} There are $S$-linear semiorthogonal decompositions with admissible components:
	 \begin{align*}
	\Dqc(\PP(\sE)) &= \langle \pi^* \Dqc(S) \otimes \Omega^r(r), \ldots, \pi^* \Dqc(Y)  \otimes \Omega^1(1), \pi^* \Dqc(S)  \rangle; \\
	\Db(\PP(\sE)) &= \langle \pi^* \Db(S) \otimes \Omega^r(r), \ldots, \pi^* \Db(Y)  \otimes \Omega^1(1), \pi^* \Db(S)  \rangle; \\
	\Perf(\PP(\sE))& = \langle \pi^* \Perf(S) \otimes \Omega^r(r), \ldots, \pi^* \Perf(Y)  \otimes \Omega^1(1), \pi^* \Perf(S)  \rangle,
	\end{align*}
which are compatible with the natural inclusions $\Perf \subseteq \Db \subseteq \Dqc$. The corresponding projection functors $\pr_j' \colon \Dqc(\PP(\sE)) \to \Dqc(S) \otimes \Omega^j(j)$ (resp. $\pr_j' \colon \Db(\PP(\sE)) \to \Db(S) \otimes \Omega^j(j)$ and  $\pr_j' \colon \Perf(\PP(\sE)) \to \Perf(S) \otimes \Omega^j(j)$) are given by the formula:
	$$\pr_j' (\blank) = \pi_* \big( \blank \otimes \sO(-j)[j] \big) \otimes \Omega^j(j), \quad \text{for} \quad j=  r, \ldots, 1, 0.$$
\end{enumerate}
\end{theorem}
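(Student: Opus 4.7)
Part (2) is immediate: Lemma~\ref{lem:proj.bundle}\eqref{lem:proj.bundle-2} identifies $\omega_\pi = \Omega^r[r]$ as an invertible object, and $\pi$ is smooth and proper, hence perfect, so Proposition~\ref{prop:Serre} delivers the relative Serre functor on $\Perf(\PP(\sE))$.

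For part (1), the formulas $\shom_S(\sO(i),\sO(j)) = \pi_*\sO(j-i) = \Sym^{j-i}\sE$ (with the stated vanishings when $j-i\in[-r,-1]$) are direct readings of Lemma~\ref{lem:proj.bundle}\eqref{lem:proj.bundle-1}. For the sheaves $\Omega^i(i)$ we iterate the Euler sequence \eqref{eqn:proj:Euler}: taking exterior powers of the surjection $\pi^*\sE\twoheadrightarrow\sO(1)$ yields the universally exact Koszul-type resolution
\[
0\to\Omega^p(p)\to\wedge^p\pi^*\sE\to\wedge^{p-1}\pi^*\sE\otimes\sO(1)\to\cdots\to\pi^*\sE\otimes\sO(p-1)\to\sO(p)\to0.
\]
Applying $\pi_*$ after tensoring with $\sO(j-i)$, together with the projection formula and Lemma~\ref{lem:proj.bundle}\eqref{lem:proj.bundle-1}, reduces the required $\Ext$-vanishings and the identities $\shom_S(\Omega^i(i),\Omega^j(j))=\wedge^{i-j}\sE^\vee$, $\shom_S(\sO(i),\Omega^j(j))=\delta_{i,j}\sO_S[-i]$ to a combinatorial cancellation already visible in the rank-one case.

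For part (3), the semiorthogonality of $(\sO(-r),\dots,\sO(-1),\sO)$ over $S$ (via $\shom_S$) follows from part (1) and Lemma~\ref{lem:linearSO}. Generation is proved by the \emph{relative Beilinson resolution} of the diagonal: on $\PP(\sE)\times_S\PP(\sE)$, the composition $\pr_1^*\sO(-1)\hookrightarrow\pi^*\sE^\vee\twoheadrightarrow\pr_2^*(\Omega^1(1))^\vee$ (from the two dualized Euler sequences) is a Koszul-regular section of the rank-$r$ bundle $\pr_1^*\sO(1)\otimes\pr_2^*(\Omega^1(1))^\vee$ whose zero locus is $\Delta$; the Koszul complex gives
\[
\sO_\Delta\simeq\bigl\{\pr_1^*\sO(-r)\otimes\pr_2^*\Omega^r(r)\to\cdots\to\pr_1^*\sO(-1)\otimes\pr_2^*\Omega^1(1)\to\sO\bigr\}.
\]
For $F\in\Dqc(\PP(\sE))$, the identity $F\simeq\pr_{1*}(\sO_\Delta\otimes\pr_2^*F)$, together with the projection formula and Tor-independent base change across the fiber square (available since $\pi$ is flat, cf.\ Remark~\ref{rmk:Lipbc}), expresses $F$ as an iterated convolution of $\sO(-i)\otimes\pi^*\pi_*(\Omega^i(i)\otimes F)[-i]$ for $i\in[0,r]$. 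This simultaneously proves generation and yields the projection formula $\pr_i(\blank)=\pi_*(\blank\otimes\Omega^{-i}(-i)[-i])\otimes\sO(i)$. Compatibility with $\Perf$ and $\Db$ follows from Lemma~\ref{lem:FM}, since $\pi$ is proper, perfect, and $\omega_\pi$ is invertible, so both $\pi^*$ and $\pi_*$ preserve perfectness and pseudo-coherence.

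Part (4) is the left dual semiorthogonal decomposition of part (3). By the $\Hom$ computations in part (1), the collection $(\Omega^r(r)[r],\dots,\Omega^1(1)[1],\sO)$ satisfies the orthogonality $\shom_S(\sO(i),\Omega^j(j)[j])=\delta_{ij}\sO_S$, which identifies it as the left dual collection in the sense of Lemma~\ref{lem:dualsod}; the decomposition and projection formula $\pr_j'(\blank)=\pi_*(\blank\otimes\sO(-j)[j])\otimes\Omega^j(j)$ then follow formally. Alternatively, one may apply the relative Serre functor of part (2) to part (3). \textbf{The main obstacle} will be the careful extraction of the explicit projection formulas from the Beilinson convolution: while the abstract decomposition is automatic once generation and semiorthogonality are in hand, matching the components with the prescribed formulas requires bookkeeping across the duality in part (1) and checking the characteristic-free Koszul signs. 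Once this is done, everything is reduced to the ingredients already established, with no recourse to base fields needed.
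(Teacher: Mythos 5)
Your proposal follows the same two-step strategy as the paper: compute $\shom_S$-objects from the Euler sequence for parts (1)--(2), then establish generation via the Koszul resolution of the diagonal in $\PP(\sE)\times_S\PP(\sE)$ for parts (3)--(4). The main divergence is in the treatment of part (1). You invoke, for each $p$, the full exact Koszul strand
\[
0\to\Omega^p(p)\to\wedge^p\pi^*\sE\to\cdots\to\pi^*\sE\otimes\sO(p-1)\to\sO(p)\to0,
\]
twist and push forward. This is a legitimate alternative, but it shifts the real content into the unproved claim that the resulting complex of $\wedge^a\sE\otimes\Sym^b\sE$'s collapses correctly; that is the ``combinatorial cancellation'' you flag, and it is not trivial (your phrase ``already visible in the rank-one case'' is a red herring, since $r=0$ gives $\PP(\sE)=S$ and says nothing; presumably you mean the universal/free-module case). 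The paper instead uses only the single short exact sequence $0\to\Omega^i(i)\to\pi^*(\wedge^i\sE)\to\Omega^{i-1}(i)\to0$ and runs a descending then an ascending induction, anchored at the extreme case $\Omega^r(r)=\omega_\pi(r)[-r]$ (Lem.\ \ref{lem:proj.bundle}\eqref{lem:proj.bundle-2}); each step is an isomorphism of $\shom_S$-objects and no hypercohomology spectral sequence is needed. Your approach buys a single clean resolution at the price of an unverified collapse; the paper's buys a longer but elementary induction. For parts (3)--(4), your Koszul-section description of $\Delta$ is the paper's $K^\bullet$, and your convolution identity $F\simeq\pr_{1*}(\sO_\Delta\otimes\pr_2^*F)$ is the Postnikov/stupid-truncation mechanism the paper uses; deriving (4) as the left dual of (3) via Lem.\ \ref{lem:dualsod} or Serre duality is a valid shortcut, whereas the paper obtains both decompositions symmetrically by pushing the same Postnikov system along the two projections, which makes the two projection formulas manifest at once. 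To make your argument complete you should either carry out the claimed cancellation for each $\pi_*(\Omega^j(j-i))$ and $\pi_*(\Omega^i(i)^\vee\otimes\Omega^j(j))$, or fall back to the paper's induction.
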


\begin{proof} For \eqref{thm:proj.bundle-1}, the formula $\shom_S(\sO(i), \sO(j)) = \Sym^{j-i} \sE$ follows directly from Lem. \ref{lem:proj.bundle} \eqref{lem:proj.bundle-1}. For the other formulae, first we prove that for each $i \in [0, r]$, the following holds: 
	\begin{equation}\label{eqn:proj:induction}
		\shom_S(\sO(j), \Omega^i(i)) =
	\begin{cases}
	\bigwedge^{i+1} \sE & \quad \text{if}  \quad   j=-1; \\
	\sO_S[-i] & \quad \text{if} \quad   j = i; \\
	0 & \quad  \text{if} \quad   0 \le j \le i-1.
	\end{cases}
	\end{equation}
This could proved by descending induction on $i \in [0,r]$. The base case $i = r$ follows from Lem. \ref{lem:proj.bundle} \eqref{lem:proj.bundle-2}, since $\Omega^r(r) = \bigwedge^{r+1} \sE \otimes \sO(-1) =  \omega_{\pi} (r)[-r]$. Assume the claim holds for some $i \in [1,r]$, then the Euler sequence \eqref{eqn:proj:Euler} induces a short exact sequence:
	\begin{equation}\label{eqn:proj:Euler_i}
	0 \to \Omega^i(i) \to \pi^* (\bigwedge^i  \nolimits \sE) \to \Omega^{i-1}(i) \to 0.
	\end{equation}
Hence for any $0 \le j \le i-1$,  
	\begin{align*}
		& \shom_S(\sO(-1), \Omega^{i-1}(i-1)) = \shom_S(\sO, \Omega^{i-1}(i)) \simeq  \bigwedge^i \nolimits \sE; \\
		& \shom_S(\sO(i-1),  \Omega^{i-1}(i-1)) = \shom_S(\sO(i), \Omega^{i-1}(i)) \simeq   \shom_S(\sO(i), \Omega^{i}(i))[1] = \sO_S[-i+1]; \\
		& \shom_S(\sO(j),  \Omega^{i-1}(i-1)) = \shom_S(\sO(j+1), \Omega^{i-1}(i)) \simeq  \shom_S(\sO(j+1), \Omega^{i}(i))[1] = 0.
	\end{align*}
By induction \eqref{eqn:proj:induction} is proved. Next, we claim that for each $i \in [1,r]$ the following holds:
	\begin{equation}\label{eqn:proj:induction2}
		\begin{cases}
	\shom_S(\Omega^i(i), \Omega^j(j)) \simeq \bigwedge^{i-j} \nolimits (\sE^\vee) 		&\qquad \text{for} \qquad j \in [0,r];\\
	\shom_S(\Omega^i(i) ,\sO(-k)) = 0 &\qquad \text{for} \qquad k \in [1, r-i].
	\end{cases}
	\end{equation}
We prove it by ascending induction on $i \in [0,r]$. The base case $i=0$ holds by Lem. \ref{lem:proj.bundle} \eqref{lem:proj.bundle-1}. Assume the claim holds for $i-1$, where $i \in [1,r]$ then it follows from \eqref{eqn:proj:Euler_i} that $\shom_S(\Omega^i(i), \sO) \simeq \bigwedge^i \sE^\vee$, and for $k \in [1,r-i]$, 
	$\shom_S(\Omega^i(i), \sO(-k)) = \shom_S(\Omega^{i-1}(i-1), \sO(-k-1))[1] =0.$ 
Finally by \eqref{eqn:proj:Euler_i}, \eqref{eqn:proj:induction}, and \eqref{eqn:proj:Euler_i} with $i$ substituted by $j \in [1,r]$:
	\begin{align*}
	\shom_S(\Omega^i(i), \Omega^j(j)) \simeq \shom_S(\Omega^{i-1}(i), \Omega^j(j))[1] \simeq  \shom_S(\Omega^{i-1}(i), \Omega^{j-1}(j)) \simeq \bigwedge^{i-j}  \nolimits \sE^\vee.
	\end{align*}
Hence \eqref{eqn:proj:induction2} holds, and all formulae of \eqref{thm:proj.bundle-1} are proved. In particular, $\{\sO(i)\}$ and $\{\Omega^i(i)[i]\}$ are relative dual exceptional sequences. The fullness follows from \eqref{thm:proj.bundle-3} and \eqref{thm:proj.bundle-4} below.

The statement \eqref{thm:proj.bundle-2} follows from Lem. \ref{lem:proj.bundle} \eqref{lem:proj.bundle-2}. 

Finally for \eqref{thm:proj.bundle-3} and \eqref{thm:proj.bundle-4}, from \eqref{thm:proj.bundle-1} we know that $\{\sO(i)\}_{i \in [a, a+r]}$ for any $a \in \ZZ$ forms a relative exceptional sequence, since $\otimes \sO(a)$ is an autoequivalence of $\Dqc(\PP(\sE))$. Then it follows from Lem. \ref{lem:relexc} and Cor. \ref{cor:relexc} that the right hand sides of these formulae form semiorthogonal sequences. Next, observe that Orlov and Kapranov's techniques of resolution of diagonal still works in our setup. In fact, if we denote by $\Delta \colon \PP(\sE) \hookrightarrow \PP(\sE) \times_S \PP(\sE)$ the diagonal embedding, then it is a regular closed immersion with Koszul resolution
	$$K^\bullet: \quad 0 \to \sO(-r)\boxtimes \Omega^{r}(r) \to \cdots \to \sO(-1) \boxtimes \Omega(1) \to \sO \boxtimes \sO \to \sO_\Delta.$$
Regard $K^\bullet$ as a complex over $\Perf(X \times_Y X)$, then the stupid truncation $Y^i = \sigma^{\ge i} K^\bullet$, where $i \in [-r, 0]$, gives rises to a canonical right Postnikov system (see Ex. \ref{eg:conv:lci}, with $a=-r, b=0$) of Fourier--Mukai kernels, whose associated graded objects (see Def. \ref{def:Postnikov}) are given by $\sO(i) \boxtimes \Omega^{-i}(-i)[-i]$ for $i \in [-r, 0]$. Pushing forward this Postnikov system along the two natural projections $\PP(\sE) \times_S \PP(\sE) \to \PP(\sE)$ to the first and second factors, we obtain the fullness and the formulae for projection functors of \eqref{thm:proj.bundle-3} and \eqref{thm:proj.bundle-4}. 
\end{proof}

\begin{remark}[Tilting properties] It follows from the theorem that the two exceptional collections $\{\sO(i)\}_{i \in [0,r]}$ and $\{\Omega^i(i)\}_{i \in [0,r]}$ are {\em strong (or tilting) over $S$} in the sense that: for any two exceptional objects $E_i, E_j \in \{\sO(i)\}_{i \in [0,r]}$ or $E_i, E_j \in \{\Omega^i(i)\}_{i \in [0,r]}$, $\shom_S(E_i, E_j) \in \Coh(S) \cap \Perf(S)$ is a {\em sheaf}, i.e. $R^k \pi_* \RsHom_{\PP(\sE)}(E_i, E_j) = 0$ for $k >0$. Hence $\sT = \bigoplus_{i=0}^r \sO(i)$ and $\sT' = \bigoplus_{i=0}^r \Omega^i(i)$ are two {\em relative tilting bundles of $\PP(\sE)$ over $S$}. 
\end{remark}
 
 \begin{remark}[Dual version] \label{rmk:proj:dual} Since $(\blank)^\vee = \RsHom_{\PP(\sE)}(\blank, \sO_{\PP(\sE)}) \colon \Perf(\PP(\sE))^{\rm op} \to \Perf(\PP(\sE))$ is an anti-autoequivalence, the sequence $\{(\Omega^i(i))^\vee\}_{i \in [0,r]} = \{\sO, (\Omega^1(1))^\vee, \ldots, (\Omega^{n-1}(n-1))^\vee\}$ is also a {\em strong} relative exceptional collection of $\PP(\sE)$ over $S$, and its {\em left} dual exceptional collection is given by $\{\sO(-i)[i]\}_{i \in [0,r]}$. Hence $\sT^\vee = \bigoplus_{i=0}^r \sO(-i)$ and $(\sT')^\vee = \bigoplus_{i=0}^r (\Omega^i(i))^\vee$ are also two relative tilting bundles of $\PP(\sE)$ over $S$. 
 \end{remark}
 

{\em Updates.} In the sequel paper \cite{J22}, we have generalized the results of this appendix to the cases where $X$ is any prestack and $\sE$ is a perfect complex on $X$ of Tor-amplitude in $[0,1]$. The upshot is that, the results on mutations of this appendix remain true in the latter general situation, as long as we replace the relative cotangent bundle $\Omega = \Omega_{\PP(\sE)/X}^1$ by the relative cotangent complex $L_{\PP(\sE)/X}$, and replace the usual symmetric and exterior products by the {\em derived} symmetric and exterior products in the sense of Lurie.
 \end{appendix}

\newpage	
\addtocontents{toc}{\vspace{\normalbaselineskip}}

\end{document}